\pgfplotsset{compat=1.15}
\tikzset{
  symbol/.style={
    draw=none,
    every to/.append style={
      edge node={node [sloped, allow upside down, auto=false]{$#1$}}}
  }
}
\setlist{topsep=1em, itemsep=1em}
\def\mydefc#1{\expandafter\def\csname c#1\endcsname{\mathcal{#1}}}
\def\mydefallc#1{\ifx#1\mydefallc\else\mydefc#1\expandafter\mydefallc\fi}
\def\mydefb#1{\expandafter\def\csname b#1\endcsname{\mathbf{#1}}}
\def\mydefallb#1{\ifx#1\mydefallb\else\mydefb#1\expandafter\mydefallb\fi}
\def\mydeffrac#1{\expandafter\def\csname frac#1\endcsname{\mathscr{#1}}}
\def\mydeffracall#1{\ifx#1\mydeffracall\else\mydeffrac#1\expandafter\mydeffracall\fi}
\theoremstyle{plain}
\newtheorem{thm}{Theorem}[section]
\newtheorem{cor}[thm]{Corollary}
\newtheorem{lem}[thm]{Lemma}
\newtheorem{conj}{Conjecture}
\newtheorem{prop}[thm]{Proposition}
\newtheorem{quest}[thm]{Question}
\theoremstyle{definition}
\newtheorem{rem}[thm]{Remark}
\newtheorem{defn}[thm]{Definition}
\newtheorem{hyp}[thm]{Hypotheses}
\newtheorem{setup}[thm]{Setup}
\newtheorem{notn}[thm]{Notation}
\newtheorem{ex}[thm]{Example}
\newtheorem{introthm}{Theorem}
\newtheorem{introconj}{Conjecture}
\def\rm{\mathrm}
\DeclareMathOperator{\rk}{rank}
\newcommand{\DCoh}{\mathcal{D}^{b}}
\DeclareMathOperator{\Coh}{Coh}
\DeclareMathOperator{\Cone}{Cone}
\DeclareMathOperator{\End}{End}
\DeclareMathOperator{\ev}{ev}
\DeclareMathOperator{\Ext}{Ext}
\DeclareMathOperator{\GL}{GL}
\DeclareMathOperator{\Hom}{Hom}
\newcommand{\id}{\mathrm{id}}
\newcommand{\pt}{\mathrm{pt}}
\DeclareMathOperator{\Forg}{Forg}
\newcommand{\RHom}{\mathbf{R}{\Hom}} 
\newcommand{\norm}[1]{\left\lVert#1\right\rVert}
\DeclareMathOperator{\Stab}{Stab}
\DeclareMathOperator{\Sym}{Sym}
\DeclareMathOperator{\Pic}{Pic}
\DeclareMathOperator{\Ob}{Ob}
\DeclareMathOperator{\im}{im}
\DeclareMathOperator{\coker}{coker}
\DeclareMathOperator{\Bl}{Bl}
\DeclareMathOperator{\pr}{pr}
\DeclareMathOperator{\Hilb}{Hilb}
\DeclareMathOperator{\rep}{rep}
\DeclareMathOperator{\logZ}{logZ}
\def\bf{\mathbf}
\def\Astab{\mathcal{A}\Stab}
\newcommand{\CC}{\mathbf{C}}
\renewcommand{\H}{{\rm H}}
\newcommand{\Gr}{{\rm {Gr}}}
\newcommand{\ko}{\mathcal{O}}
\newcommand{\Ku}{\mathrm{Ku}}
\newcommand{\diag}{{\mathrm{diag}}}
\newcommand{\ch}{\mathrm{ch}}
\newcommand{\Ch}{\mathrm{Ch}}
\numberwithin{equation}{section}
\begin{document}

\title[Fano NMMP]{toward the noncommutative minimal model program for Fano varieties}

\author[T. Karube]{Tomohiro Karube}
\address{\parbox{0.9\textwidth}{The University of Tokyo,
Graduate School of Mathematical Sciences\\[1pt]
Meguro-ku, Tokyo, 153-8914,Japan.\vspace{1mm}}}
\email{\url{karube-tomohiro803@g.ecc.u-tokyo.ac.jp}}
\author[A. Robotis]{Antonios-Alexandros Robotis}
\address{\parbox{0.9\textwidth}{Columbia University, Department of Mathematics\\  
2990 Broadway, New York, NY, USA, 10027}
\vspace{1mm}}
\email{\url{a.robotis@columbia.edu}}

\author[V. Zuliani]{Vanja Zuliani}
\address{\parbox{0.9\textwidth}{Universit\'e Paris-Saclay, CNRS, Laboratoire de Math\'ematiques d'Orsay\\[1pt]
Rue Michel Magat, B\^at. 307, 91405 Orsay, France
\vspace{1mm}}}
\email{\url{vanja.zuliani@universite-paris-saclay.fr}}

\begin{abstract}
\linespread{1.0}\selectfont
   We study the noncommutative minimal model program, as proposed by Halpern-Leistner, for Fano varieties. We construct lifts of Iritani's quantum cohomology central charge in the following examples: Grassmannians, smooth quadrics, and smooth cubic threefolds and fourfolds. Moreover, we verify that these lifted paths are quasi-convergent and give rise to the expected semiorthogonal decompositions of the bounded derived category. We also construct geometric stability conditions in the examples above and observe that, after a suitable isomonodromic deform\-ation of the quantum cohomology central charge, the quasi-convergent paths for Grassmannians and quadrics can be chosen to start in the geometric region. 
\end{abstract}

\maketitle

\linespread{1.0}\selectfont
\tableofcontents

\addtocontents{toc}{\protect\setcounter{tocdepth}{2}}

\linespread{1.15}\selectfont

\section{Introduction}

Stability conditions on triangulated categories were introduced by Bridgeland \cite{Br07} to formalize the notion of $\Pi$-stability in theoretical physics \cite{DouglasPiStability}. Over the last twenty years, stability conditions have been the subject of intense study both in algebraic geometry \cites{arcara2013minimal,Bayer2013MMPFM, Bayer2011BridgelandSC, Feyzbakhsh2025StabilityCO, Nuer2019MMPVW, Li2015StabilityCO} and related fields \cites{Barbierirep,HKK}. From the perspective of mirror symmetry, when $X$ is a Calabi-Yau or Fano variety, the space $\Stab(X)$ of stability conditions on its derived category $\DCoh(X)$ is of particular interest. In the Calabi-Yau case, $\Stab(X)$ is expected to be closely related to the physicists' stringy K\"ahler moduli space -- see \cite{bridgelandspacesstabilityconditions}. Meanwhile, in the Fano case it is expected that $\Stab(X)$ is related to moduli spaces of Landau-Ginzburg models \cite{DKKCompactifications}, and that certain paths in $\Stab(X)$ should give rise to semiorthogonal decompositions of $\DCoh(X)$ \cites{NMMP,HLJR}.

The present work studies stability conditions on $\DCoh(X)$ when $X$ is a smooth Fano variety. In \Cref{S:constructionofstab}, we construct geometric stability conditions on $\DCoh(X)$, when $X$ is a finite product of Grassmannians, a smooth quadric hypersurface $Q\subset \bf{P}^n$, or a smooth cubic of dimension $3$ or $4$. As a consequence of these results, we construct ``almost geometric'' stability conditions on $\Hilb^n(\bf{P}^2)$, $\Hilb^n(\bf{P}^1\times \bf{P}^1)$ for all $n\ge 1$, and weighted projective stacks of the form $\bf{P}(a_0,\ldots, a_n)$ for $\gcd(a_0,\ldots, a_n) = 1$. 

The rest of the paper applies some of these existence results in the context of the \emph{Non\-commutative Minimal Model Program} (NMMP) of Halpern-Leistner \cite{NMMP}. In \Cref{S:nMMPFano}, we revisit some of the conjectures of the NMMP in the case where $X$ is a smooth Fano variety. We also establish results connecting Iritani's quantum cohomology central charge \cite{iritani_integral} to the space of stability conditions $\Stab(X)$ and show that the veracity of the Gamma II conjecture of \cite{GGI16} implies part of the conjectures of \cite{NMMP} when $X$ is a Grassmannian or quadric. In \Cref{S:examples}, we use these results to construct canonical paths in $\Stab(X)$ for Grassmannians, quadrics, and cubic threefolds and fourfolds.

The main results of the paper are summarized at the end of this introduction, written as Theorems \ref{T:introgeomstab}, \ref{intro:TB}, \ref{intro:TC}, \ref{T:conjecture2cases}, \ref{T:introconj2fullcases}. The reader interested exclusively in the construction of geometric stability conditions is invited to jump directly to \Cref{S:constructionofstab}, which can be read independently.

\subsubsection*{Recollection of the NMMP}
Recently, groundbreaking work of Katzarkov--Kontsevich--Pantev--Yu \cite{KKPY} has profitably used the analytic structure of quantum cohomology \cite{KontsevichManin} to define new birational invariants of varieties, called \emph{atoms}. Parallel to these developments, Halpern-Leistner \cite{NMMP} has proposed the \emph{Noncommutative Minimal Model Program} (NMMP), which aims to associate to a contraction $f:X\to Y$ of a smooth projective variety $X$ a canonical semiorthogonal decomp\-osition of $\DCoh(X)$, unique up to mut\-ation. In the present work, we consider the case where $X$ is Fano and $Y = \pt$. The fundamental idea is that one should associate to $X$ a family of paths in $\Stab(X)$, whose central charges are determined by solutions to the quantum differential equation of $X$. We present here a sketch of the relevant ideas, writing $\H^\bullet(X) = \H^\bullet(X,\bf{C})$ throughout. 

\smallskip

\textbf{Step 1.} By Bridgeland's deformation theorem \cite{Br07}, deformation of a stability condition $\sigma = (Z,\cP) \in \Stab(X)$ is controlled by deform\-ation of its central charge $Z \in \Hom(\H_{\rm{alg}}^\bullet(X),\bf{C})$. An intrinsic way to deform the central charge comes from the quantum differential equation (QDE) of $X$. This is encoded by the flat ``quantum'' connection $\nabla$ on the trivial $\H^\bullet(X)$-bundle, $\cH_X\to B\times \bf{P}^1$, where $B \subseteq \H^\bullet(X)$ is a suitable subspace on which the quantum product converges. Galkin--Golyshev--Iritani \cite{GGI16} study solutions to the \emph{quantum differential equation} at $\tau \in B$, whose solutions are flat sections of the following connection
    \begin{equation}
    \label{E:qde}
        \nabla_{w\partial_w}^\tau = w\frac{\partial}{\partial w} - \frac{1}{w}\cE_\tau\star_\tau(-) + \mu
    \end{equation}
    where $w$ is a holomorphic coordinate on $\bf{C} \subset \bf{P}^1$, $\cE$ is the ``Euler'' section of $\cH_X$, and $\mu \in \End(\H^\bullet(X))$ is the grading operator (see \Cref{SS:quantumcohomology}). 
    There is a canonical fund\-amental solution $\Phi_w^{\tau}:\bf{C}^*\to \End(\H^\bullet(X))$ of \eqref{E:qde}, and following Iritani \cite{iritani_integral} one can define a class of putative central charges
    \[
        \cZ_w^{\tau}(-) = (2\pi w)^{\dim X/2} \int_X \Phi_w^{\tau}(-),
    \]
    called \emph{quantum cohomology central charges} -- see \Cref{defn_perturbed_can_sol}. 

    \smallskip

    \textbf{Step 2.} Next, one lifts $\cZ_w^{\tau} \in \Hom(\H^\bullet_{\rm{alg}}(X),\bf{C})$ to a family of stability conditions $\sigma_w^\tau$ in $\Stab(X)$, for $w$ in an open sector $\mathscr{S} \subseteq \bf{C}^*$ near the origin. Constructing this lift involves proving a strong existence result for stability conditions on $X$, a difficult open problem. 

    \smallskip
    
    \textbf{Step 3.} Having constructed $\sigma_w^\tau$ in $\Stab(X)$, one chooses a generic ray $\bf{R}_{>0} \cdot e^{\mathtt{i}\varphi} \subset \mathscr{S}$ and sets $w(t) = t e^{\mathtt{i}\varphi}$ for $t\in \bf{R}_{\ge 0}$. This gives a path $\sigma_t^\tau := \sigma_{w(t)}^\tau$ in $\Stab(X)$ defined on $(0,a)$ for some $a>0$. One then verifies that as $t\to 0$ the path $\sigma_t^\tau$ is quasi-convergent in the sense of \cite{HLJR}.\footnote{An alternative formulation is that the corresponding path in $\Stab(X)/\bf{C}$ converges to a boundary point of the space of augmented stability conditions $\Astab(X)$ as introduced in \cite{augmented}.} Then, the results \emph{ibid.} give rise to a (polarized) semiorthogonal decomposition $\DCoh(X) = \langle \cD_1,\ldots, \cD_n\rangle$ whose factors are generated by ``limit semistable'' objects $E$ such that $\Im(\log\cZ_t^{\tau}(E))$ satisfies certain asymptotic conditions.

    \smallskip
    
    \textbf{Step 4.} It is expected that different generic choices of canonical fundamental solution $\Phi_w^\tau$ for $\tau \in B$, sectors $\mathscr{S}$, and lifts $\sigma_w^{\tau}$ should result in mutation equivalent semiorthogonal decompositions. Thus, as ab\-stract categories, without a preferred choice of embedding, the factors $\cD_1,\ldots, \cD_n$ are intrinsically attached to $\DCoh(X)$. This is to be contrasted with the Jordan-H\"older property for semi\-orthogonal decompositions of derived categories, which is false in general \cites{JHproperty,JHpolarizable}.

\begin{figure} 
\begin{center}
\begin{tikzpicture}[scale=2, every node/.style={font=\small}]

  \def\aouter{1.5}
  \def\ainner{1.35}
  \def\b{1}
  \def\cx{1}
  \def\cut{0.75}

  \begin{scope}[xshift=0.1cm]

    \fill[gray!10] (-1.2,-1.2) rectangle (1,1.2);

    \draw[thin] (1,-1.2) -- (1,1.2);

    \fill[red!30!pink!40]
      plot[domain=-asin(\cut/\b):asin(\cut/\b), samples=200]
        ({\cx - \aouter*cos(\x)}, {\b*sin(\x)})
      -- (\cx,\cut) -- (\cx,-\cut) -- cycle;

    \fill[blue!60, opacity=0.4]
      plot[domain=-asin(\cut/\b):asin(\cut/\b), samples=200]
        ({\cx - \ainner*cos(\x)}, {\b*sin(\x)})
      -- plot[domain=asin(\cut/\b):-asin(\cut/\b), samples=200]
        ({\cx - \aouter*cos(\x)}, {\b*sin(\x)}) -- cycle;

    \draw[dashed, thin]
      ({\cx - \aouter*cos(asin(\cut/\b))},  \cut) -- (\cx,  \cut);
    \draw[dashed, thin]
      ({\cx - \aouter*cos(asin(\cut/\b))}, -\cut) -- (\cx, -\cut);
    \draw[dashed, thin]
      plot[domain=-asin(\cut/\b):asin(\cut/\b), samples=200]
        ({\cx - \aouter*cos(\x)}, {\b*sin(\x)});

    \draw[thick, black]
      plot [smooth, tension=1]
        coordinates {
          ({\cx - \ainner*cos(25)}, .5)
          (.05, -.05)
          (0.55, 0.1)
          (1, 0.15)
        };

    \node[black, scale=1, rotate=38] at (0.30,-.03) {>};

    \node[black, font=\normalsize] at (0.6,0.22) {$\sigma_t$};

  \end{scope}

  \begin{scope}[xshift=1.3cm, yshift=0.3cm]
    \fill[red!30!pink!40] (0,0) rectangle (0.2,0.15);
    \node[right] at (0.25,0.075) {non-geom. glued region};

    \fill[blue!60, opacity=0.4] (0,-0.25) rectangle (0.2,-0.1);
    \node[right] at (0.25,-0.175) {geometric glued region};
  \end{scope}

\end{tikzpicture}
\begin{caption}
    {The path of stability conditions constructed in this paper. $\sigma_t$ is defined for $t$ in an interval $(0,a)$. The arrow-head indicates the direction as $t\to 0$ and the horizontal line can be interpreted as boundary points in the partial compactification $\Astab(X)$ constructed in \cite{augmented}.}
\end{caption}
\end{center}
\label{F:pathdiagram}
\end{figure}

\medskip

The strategy outlined above was first considered by Halpern-Leistner \cite{NMMP} in the case of $\bf{P}^1$, where complete results were obtained. The subsequent work \cite{Vanjapaper} describes quasi-convergent paths in $\Stab(\bf{P}^n)$ beginning in the geometric region with central charge of the form $\cZ_t^{\tau}$ as above, with limit as $t\to 0$ giving rise to a mutation of the standard Beilinson collection \cite{Beilinson1978}. These ideas were also used profitably in \cite{karube2024noncommutative} in the case of surfaces blown-up in a point to construct quasi-convergent paths recovering the canonical blow-up semiorthogonal decomposition of Orlov \cite{Orlovmonoidal}. While the full picture of the NMMP is still emergent, the present work makes the first steps toward an understanding of the Fano case.

\subsubsection*{Overview} Next, we give a conceptual overview of the paper. As stated above, one of the main ideas of the NMMP is that there should be a connection between paths of stability conditions and solutions to the quantum differential equation of $X$. The idea that quantum cohomology and spaces of stability conditions should be related is not new, and has appeared in work of Bridgeland \cite{BridgelandNCCY3} and Iritani \cite{iritani_integral}.

The main objective of \Cref{S:nMMPFano} is to explain a direct relationship between solutions to the quantum differential equation for $\tau \in \H^\bullet(X)$, and paths in $\Stab(X)$. This is elucidated through our elaboration of \cite{NMMP}*{Proposal III} in the case of Fano varieties, which we now explain. For the first part of this discussion, we constrain ourselves to the small quantum cohomology locus, i.e. when $\tau \in \H^2(X)$.

The quantum connection $\nabla^\tau$ at $\tau \in \H^2(X)$ has a regular singularity at $w = \infty$ and an irregular singularity at $w = 0$. The key player in determining the behavior of solutions as $w\to 0$ is the Euler operator $\cE_\tau\star_\tau(-)$ in \eqref{E:qde}. Note that in the small quantum cohomology locus, $\cE_\tau = c_1(X)$ -- see \eqref{E:eulerfield}. Denote by $\sigma(\cE_\tau)$ the multi-set of eigenvalues of the Euler operator and by $\lvert \sigma(\cE_\tau)\rvert$ the underlying set. Work of Sanda--Shamoto \cite{SandaShamoto}, identifies a so-called \emph{A-model mutation system} which consists of a decomposition of vector spaces
\begin{equation}
\label{E:Amodelmutation}
    \H^\bullet(X) = \bigoplus_{\lambda \in \lvert \sigma(\cE_\tau)\rvert} \cA_\lambda
\end{equation}
plus additional linear algebraic data. By \cite{SandaShamoto}*{Lem. 3.5}, $\cA_\lambda$ can be characterized as the set of cohomology classes $\alpha$ such that $\lVert e^{\lambda/w}\Phi_w^\tau(\alpha)\rVert \le O(\lvert w\rvert^{-m})$ as $w\to 0$, for some $m\in \bf{Z}_{\ge 0}$, and for any choice of norm $\lVert \:\cdot\:\rVert$ on $\H^\bullet(X)$.\footnote{This condition is sometimes called being of \emph{moderate growth}.} In \cite{SandaShamoto}, it is also shown that \eqref{E:Amodelmutation} admits a categorical lift, in a suitable sense, for smooth Fano complete intersections. Our first conjecture in the present work is that a corresponding result holds for all smooth Fano varieties, and furthermore that such decompositions arise from quasi-convergent paths in $\Stab(X)$, as developed in \cite{HLJR}:

\begin{introconj}
[ = \Cref{conj:noncommutativeGamma}\ref{conj2SOD}, simplified] For any smooth Fano variety $X$, there exist $\tau \in \H^\bullet(X)$, a sector $\mathscr{S}\subset \bf{C}^*$, $\epsilon>0$, and a map $\mathscr{S} \cap \{z\in \bf{C}^*:\lvert z\rvert < \epsilon\} \to \Stab(X)$ such that for an open dense set of $\{\varphi \in \bf{R}:\bf{R}_{>0}\cdot e^{\mathtt{i}\varphi}\subset \mathscr{S}\}$, there is a quasi-convergent path $\sigma_{t,\varphi}^\tau := (\cZ_{te^{\mathtt{i}\varphi}}^\tau,\cP_{t,\varphi})$ in $\Stab(X)$ defined as $t\to 0$. In addition, the semiorthogonal decomposition induced by $\sigma_{t,\varphi}^\tau$ as $t\to 0$ is of the form 
\[
    \DCoh(X) = \langle \cD_\lambda:\lambda \in \lvert \sigma(\cE_\tau)\rvert\rangle.
\]
Here, the ordering on $\lvert \sigma(\cE_\tau)\rvert$ is $\lambda<\mu$ if $\Im(-e^{-\mathtt{i}\varphi}\mu)>\Im(-e^{-\mathtt{i}\varphi}\lambda)$.  Furthermore, limit semistable objects $E\in \cD_\lambda$ satisfy certain asymptotic estimates \eqref{E:LSSasymptotic}.
\label{Conj:Introconj1}
\end{introconj}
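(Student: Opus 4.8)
The plan is to carry out the four-step strategy of \cite{NMMP} recalled above, feeding in the analytic structure of the quantum connection from \cite{GGI16,SandaShamoto} and the quasi-convergence formalism of \cite{HLJR}; I work with $X$ among the varieties treated in \Cref{S:constructionofstab} (a product of Grassmannians, a smooth quadric, a smooth cubic threefold or fourfold), since for a general Fano $X$ the argument stalls at the point flagged at the end. First, one builds the central charge and its lift: choose $\tau$ in the convergence domain $B$ — to begin with $\tau \in \H^2(X)$, so that $\cE_\tau = c_1(X)$ by \eqref{E:eulerfield}, and more generally a point obtained from such a $\tau$ by an isomonodromic deformation when the small quantum locus is not enough — together with a branch of the canonical fundamental solution $\Phi_w^\tau$ on an open sector $\mathscr{S}\ni 0$, and set $\cZ_w^\tau(-) = (2\pi w)^{\dim X/2}\int_X \Phi_w^\tau(-)$ as in \Cref{defn_perturbed_can_sol}. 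The substantive content of Step~2 is to lift $\cZ_w^\tau\in\Hom(\H^\bullet_{\rm{alg}}(X),\bf{C})$ to a genuine family $\sigma_w^\tau = (\cZ_w^\tau,\cP_w)$ in $\Stab(X)$ over a subsector: I would start from one of the geometric (or almost geometric) stability conditions produced in \Cref{S:constructionofstab}, arrange that its central charge equals $\cZ_{w_0}^\tau$ at a suitable basepoint $w_0$, and propagate it along the ray $w(t)=te^{\mathtt{i}\varphi}$ by Bridgeland's deformation theorem \cite{Br07}. The point to verify is that the path of central charges stays in the image of the local homeomorphism $\Stab(X)\to\Hom(\H^\bullet_{\rm{alg}}(X),\bf{C})$, i.e. that no wall is crossed along which a $\sigma_{w(t)}^\tau$-semistable object acquires vanishing central charge; this is a support-property estimate read off from the Hukuhara--Turrittin expansion of $\Phi_w^\tau$, which in our cases is controlled by the known presentation of the small quantum cohomology ring.

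Second, the asymptotics near the irregular singularity $w=0$. On a subsector adapted to a generic direction $\varphi$, the Hukuhara--Turrittin normal form exhibits $\Phi_w^\tau$ as $R(w)\,e^{-U/w}\,w^{\Lambda}$ with $R$ holomorphic-invertible, $\Lambda$ constant, and $U$ semisimple with spectrum $\lvert\sigma(\cE_\tau)\rvert$; the $\lambda$-eigenblock is exactly the summand $\cA_\lambda$ of the A-model mutation decomposition \eqref{E:Amodelmutation}, in agreement with the moderate-growth characterization $\lVert e^{\lambda/w}\Phi_w^\tau(\alpha)\rVert\le O(\lvert w\rvert^{-m})$ of \cite{SandaShamoto}*{Lem.~3.5}. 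Consequently, for $E\in\DCoh(X)$ whose class is ``$\lambda$-dominant'' along $\varphi$ one obtains $\cZ_{te^{\mathtt{i}\varphi}}^\tau(E) = e^{-\lambda e^{-\mathtt{i}\varphi}/t}\,t^{a}(\log t)^{k}(c+o(1))$ with $c\neq 0$, hence $\Im\log\cZ_{te^{\mathtt{i}\varphi}}^\tau(E) = \tfrac{1}{t}\,\Im(-e^{-\mathtt{i}\varphi}\lambda) + O(\log t)$. For $\varphi$ outside the finite set where $\bf{R}_{>0}e^{\mathtt{i}\varphi}$ is anti-Stokes or where two eigenvalues share the same value of $\Im(-e^{-\mathtt{i}\varphi}\,\cdot\,)$, these leading coefficients are pairwise distinct — which is precisely what is needed both to order the prospective factors and to force the Harder--Narasimhan filtrations of $\sigma_{te^{\mathtt{i}\varphi}}^\tau$ to stabilize as $t\to 0$.

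Third, quasi-convergence and the identification of the decomposition. I would verify the defining conditions of a quasi-convergent path from \cite{HLJR} — equivalently, convergence of the induced path in $\Stab(X)/\bf{C}$ to a boundary point of $\Astab(X)$ in the sense of \cite{augmented}: the phase functions $\tfrac{1}{\pi}\arg\cZ_{te^{\mathtt{i}\varphi}}^\tau(-)$ on a generating set of classes obey the estimate above, relative phases between classes lying in distinct eigenblocks diverge to $\pm\infty$ with signs governed by the order on $\lvert\sigma(\cE_\tau)\rvert$, and the hearts $\cP_w((0,1])$ vary compatibly enough that a limit heart filtered by the values $\Im(-e^{-\mathtt{i}\varphi}\lambda)$ exists. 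Running this through the machinery of \cite{HLJR} yields a polarized semiorthogonal decomposition $\DCoh(X) = \langle\cD_\lambda : \lambda\in\lvert\sigma(\cE_\tau)\rvert\rangle$ with the stated ordering, in which a limit-semistable $E$ lies in $\cD_\lambda$ exactly when $\Im\log\cZ_{te^{\mathtt{i}\varphi}}^\tau(E)\sim\tfrac{1}{t}\Im(-e^{-\mathtt{i}\varphi}\lambda)$, i.e. when $\Gammah_X\,\ch(E)$ relates nontrivially to $\cA_\lambda$ under the flat-section identification; this is the asymptotic estimate \eqref{E:LSSasymptotic}. Matching $\cD_\lambda$ with the categorical lift of $\cA_\lambda$ predicted in \cite{SandaShamoto} then becomes precisely the A-side Gamma conjecture of \cite{GGI16} for $X$, which is available for Grassmannians and for quadrics and is what lets the argument close in those cases (for cubics one argues instead with the explicit exceptional collections).

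The main obstacle is Step~2 together with the \emph{global} control it demands: even granting a lift at one point $w_0$, one must continue $\sigma_w^\tau$ along an entire ray down to $t=0$ without the path leaving $\Stab(X)$, and in general this is the open ``strong existence'' problem for stability conditions. In the examples it has to be forced by hand, combining the stability conditions of \Cref{S:constructionofstab} with explicit Stokes data for the quantum connection — obtained from the known small quantum cohomology and the standard exceptional collections — to exclude walls along the chosen ray. By contrast, the asymptotic analysis of Steps~2--3 and the identification of the $\cD_\lambda$ in Step~4, though technical, are comparatively formal once the Gamma conjecture and a presentation of quantum cohomology are in hand.
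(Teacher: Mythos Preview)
First, note this is a conjecture; the paper verifies it only for Grassmannians and quadrics (\Cref{P:GammaIIimpliesConj1}, via Gamma~II) and cubic three- and fourfolds (\Cref{T:conj2.1threefold}, \Cref{T:conj2.1fourfold}), and your proposal rightly restricts to these. The central divergence is in the lifting step. You start from a geometric stability condition at some $w_0$ and propagate along the ray by Bridgeland deformation, with the ``point to verify'' being that no wall with vanishing central charge is crossed. The paper does \emph{not} do this for part~\ref{conj2SOD}. It works entirely inside the glued region $\Omega_\cE$ of the asymptotically exponential full exceptional collection (or, for cubics, a Kuznetsov-type decomposition): by \Cref{L:gluingregion}, $\logZ_\cE$ is a biholomorphism $\Omega_\cE \to \cS_\cE \subset \bf{C}^N$ onto an explicit open set, the estimates \eqref{E:logZestimatesemisimple} (resp.\ \eqref{eq_asymp_cubic3}, \eqref{eq_asymp_cubic4}) force $(\log\cZ_t(E_j))_j \in \cS_\cE$ for all small $t$, and the lift $\sigma_t$ is \emph{defined} as the preimage --- see \Cref{L:extensionoflog} and \Cref{prop_gluing_paths_Ku}. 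Quasi-convergence and the identification of limit-semistable objects are then read off directly in these coordinates. No deformation argument, no wall-crossing analysis.

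Your acknowledged obstacle --- global control of the deformation path --- is the genuine gap, and it is not resolved the way you suggest. Showing that a path from a geometric $\sigma_{w_0}$ down to $t=0$ never leaves $\Stab(X)$ is not a support-property estimate one reads off from Hukuhara--Turrittin; it requires global knowledge of the wall-and-chamber structure of $\Stab(X)$, unavailable even for these $X$. The paper sidesteps this because the support property and Harder--Narasimhan property are automatic throughout $\Omega_\cE$ (\Cref{L:stabconditionfinitelength}). In fact the geometric stability conditions of \Cref{S:constructionofstab} play \emph{no role} in verifying part~\ref{conj2SOD}; they enter only for the stronger part~\ref{conj2isomonodromic}, where (\Cref{T:NCgammaforGrandQ}) an isomonodromic deformation is used to arrange that the asymptotically exponential collection is itself a Kapranov collection, so that the glued-region path already sits inside the geometric locus near its starting point. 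You have conflated these two parts. A secondary issue: the bare Hukuhara--Turrittin form does not yield the nonvanishing constant $C_E \neq 0$ in \eqref{E:LSSasymptotic}; the paper extracts this from the Frobenius-algebra structure via \Cref{L:Frobeniusdecomp} and \Cref{C:Zasymptoticestimate} in the non-semisimple case, and directly from Gamma~II in the semisimple one.
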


The quasi-convergent path $\sigma_{t,\varphi}^\tau$ is a lift of the path obtained from $\cZ_w^\tau$ by setting $w = te^{\mathtt{i}\varphi}$. The next part of the conjectures concerns the apparent dependence of \Cref{Conj:Introconj1} on the parameters $\varphi\in \bf{R}$ and $\tau \in \H^\bullet(X)$.

\begin{introconj}
[ = \Cref{conj:noncommutativeGamma}\ref{conj2independence}, simplified]
In the context of \Cref{Conj:Introconj1}, the quasi-convergent paths $\sigma_{t,\varphi}^\tau$ depend contiuously on $(\tau,\varphi)$ and the semiorthogonal decomp\-ositions of $\DCoh(X)$ obtained from deformations of $(\tau,\varphi)$ are related by mutation.
\label{Introconj:uniqueness}
\end{introconj}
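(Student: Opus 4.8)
The plan is to argue conditionally on \Cref{Conj:Introconj1} (i.e.\ on the relevant parts of \Cref{conj:noncommutativeGamma}), which supplies the lifts $\sigma_{t,\varphi}^\tau$ and their quasi-convergence, and then to reduce the assertion to a wall-crossing analysis in the parameter space $\{(\tau,\varphi)\}$. For the \emph{continuity} statement, the starting point is that the canonical fundamental solution $\Phi_w^\tau$ of \eqref{E:qde} depends holomorphically on $(\tau,w)$ away from the singular locus -- this is the usual analytic dependence of solutions of a meromorphic connection on parameters, already implicit in \cite{GGI16} -- so $\cZ_w^\tau \in \Hom(\H^\bullet_{\rm{alg}}(X),\bf{C})$ varies holomorphically in $(\tau,w)$, hence continuously (indeed real-analytically) in $(\tau,\varphi,t)$ upon setting $w = te^{\mathtt{i}\varphi}$. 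By Bridgeland's deformation theorem \cite{Br07} the forgetful map $\Stab(X)\to \Hom(\H^\bullet_{\rm{alg}}(X),\bf{C})$ is a local homeomorphism, so over any simply connected region of parameter space on which the lift exists one can choose $(\tau,\varphi)\mapsto \sigma_{t,\varphi}^\tau$ to vary continuously, the choice being pinned down by the prescribed germ as $t\to 0$. To upgrade this to continuity of the \emph{limit} as $t\to 0$, I would work in the partial compactification $\Astab(X)$ of \cite{augmented}: the quasi-convergent path $\sigma_{t,\varphi}^\tau$ determines a boundary point of $\Astab(X)$ together with its associated filtration, and one checks that these depend continuously on $(\tau,\varphi)$ using the uniform asymptotic estimates \eqref{E:LSSasymptotic} satisfied by limit semistable objects.

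For the \emph{mutation} statement I would decompose a generic path in parameter space into transversal crossings of two kinds of walls. Fixing $\tau$ and varying $\varphi$, the combinatorics of \Cref{Conj:Introconj1} change only when $\varphi$ crosses the direction $\arg(\mu-\lambda)\bmod\pi$ for a pair of eigenvalues $\lambda,\mu\in\lvert\sigma(\cE_\tau)\rvert$ that are adjacent in the induced order, at which point the two factors $\cD_\lambda,\cD_\mu$ swap. The task is to show that the new adjacent pair is obtained from the old one by mutation, i.e.\ that $\langle\dots,\cD_\lambda,\cD_\mu,\dots\rangle$ passes to $\langle\dots,\cD_\mu,\bb{L}_{\cD_\mu}\cD_\lambda,\dots\rangle = \langle\dots,\bb{R}_{\cD_\lambda}\cD_\mu,\cD_\lambda,\dots\rangle$. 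Locally near such a wall this should be an elementary Bridgeland-style wall-crossing: limit semistable objects of $\cD_\lambda$ and $\cD_\mu$ have vanishing $\Hom$'s in one direction on one side of the wall and in the other direction on the other side, so the semiorthogonality identities for admissible pairs identify the two decompositions as mutations. On the QDE side this crossing is precisely the elementary change of total order that induces a mutation of the A-model mutation system \eqref{E:Amodelmutation} in \cite{SandaShamoto}, and one needs only to check that the categorical lift tracks it.

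The genuine difficulty is in varying $\tau$. On a chamber of the relevant subspace of $\H^\bullet(X)$ on which the combinatorial type of $\sigma(\cE_\tau)$ and of the induced order is constant, the continuity established above forces the semiorthogonal decomposition to be locally constant up to equivalence, hence constant. The hard walls are the collision loci, where eigenvalues of $\cE_\tau\star_\tau(-)$ coincide: there \eqref{E:Amodelmutation} has strictly fewer summands, some $\cD_\lambda$ merge, and as the eigenvalues separate again on the far side the merged factor must re-split into a mutation of the original pieces. The tool I would use here is \emph{isomonodromic deformation} of $\nabla^\tau$ -- moving $\tau$ while keeping the Stokes data fixed -- so that the categorical decomposition is transported ``flatly'' and going around a collision wall acts on \eqref{E:Amodelmutation} by the braiding encoded in the Stokes structure; this is the mechanism underlying the isomonodromy remark in the abstract. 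Making this rigorous requires a \emph{categorical} incarnation of the isomonodromy connection: a flat connection, over $\tau$-space minus walls, on a ``bundle of semiorthogonal decompositions'' of $\DCoh(X)$ whose monodromy is realized by mutations. This is essentially a form of the Dubrovin-type expectations and of the structural theory around the atoms of \cite{KKPY}, and it presupposes the existence statement of \Cref{Conj:Introconj1} uniformly over a region of $\tau$; I expect this step -- not the continuity nor the $\varphi$-wall-crossing -- to be the main obstacle.
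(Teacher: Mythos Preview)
The statement you are attempting to prove is a \emph{conjecture}, not a theorem: the paper does not offer a proof of it in general. It is stated precisely because the authors do not know how to establish it. The paper only verifies \Cref{Introconj:uniqueness} in the specific cases of Grassmannians and smooth quadrics (as part of \Cref{T:introconj2fullcases}), and there the argument is a one-line reduction: since Gamma~II is known for these varieties, the claim follows from \cite{GGI16}*{Rem.~4.6.3}, which explains how the asymptotically exponential full exceptional collection transforms by mutation under change of admissible phase and deformation of $\tau$. No wall-crossing analysis in $\Stab(X)$ is carried out; the mutation statement is inherited from the cohomological level because in the semisimple case the categorical lift is a full exceptional collection and the cohomological mutation determines the categorical one.

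Your sketch is a plausible research outline, and the paper itself says essentially the same thing in \Cref{SS:Epilogue} for the non-semisimple cubic case: the $\varphi$-wall-crossing is controlled at the cohomological level by the Sanda--Shamoto mutation system, and ``it remains to show that this result can be lifted to the space of stability conditions.'' You correctly locate the obstruction in the $\tau$-variation and the need for a categorical isomonodromy connection, and you explicitly concede this is the main obstacle. But that concession means your proposal is not a proof: the step you flag as hard is exactly the content of the conjecture. In particular, your continuity argument via the local homeomorphism $\Stab(X)\to\Hom(\Lambda,\bf{C})$ does not by itself produce a global continuous family of lifts over the whole parameter space---different sheets could be selected on different regions---and pinning down the lift ``by the prescribed germ as $t\to 0$'' presupposes exactly the uniform existence that \Cref{Conj:Introconj1} does not supply globally.
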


Sanda--Shamoto \cite{SandaShamoto} show that in the Fano complete intersection case, the categorical lift of \eqref{E:Amodelmutation} comes from a mutation of the Kuznetsov decomposition $\DCoh(X) = \langle \Ku(X),\cO_X,\ldots, \cO_X(n-d)\rangle$, where $X \subset \bf{P}^n$ is cut out by equations with total degree $d$. Similarly, we expect that the canonical decomposition of $X$ predicted by the NMMP should be the Kuznetsov decomposition, up to mutation -- see \Cref{conj:compl_inter}. 

The final part of the conjectures deals with the existence of \emph{geometric} stability conditions, which are stability conditions $\sigma \in \Stab(X)$ with respect to which all structure sheaves of (closed) points of $X$ are stable of the same phase. The moduli spaces $\cM_\sigma(v)$ constructed using stability conditions in the geometric region typically have geometry closely related to that of the variety $X$ itself; for example, taking $v$ to be the Chern character of the structure sheaf of a point, one recovers $X$ as a moduli space of Bridgeland semistable objects. Variation of $\sigma$ can produce interesting birational transformations of the spaces $\cM_\sigma(v)$, which are related to the minimal model program of $X$ -- cf. \cites{arcara2013minimal,XiaoleietalNefHilbschemes,Bayer2013MMPFM,TodaSurface}. 

From the perspective of homological mirror symmetry, existence of geometric stability conditions is an enticing question, since it suggests ways to intrinsically construct from the data of a (pre-)triangulated (dg-)category $\cD$ a variety $X$ and an exact equivalence $\DCoh(X) \simeq \cD$. The space of stability conditions of $\DCoh(\bf{P}^1)$ has been extensively studied by Okada \cite{Ok06} and Halpern-Leistner \cite{NMMP}*{\S 3}. In the latter work, it is shown that the quantum cohomology central charge $\cZ_t$ lifts to a quasi-convergent path $\sigma_t$ in $\Stab(X)$ for $t\in \bf{R}_{>0}$. When one sends $t\to\infty$, i.e. toward the regular singularity of $\nabla$, $\sigma_t$ travels from the glued region \cite{Collins_Polischuk_2010} associated to the Beilinson collection $\langle \cO,\cO(1)\rangle$ to the geometric region. This suggests a mechanism for finding geometric stability conditions on $\DCoh(X)$, starting from the more easily constructed glued regions.

\medskip

It was observed in \cite{Vanjapaper} that the glued regions associated to certain full exceptional collections on $\DCoh(\bf{P}^n)$ contain geometric stability conditions for all $n\ge 1$. However, this property does not seem to be invariant under mutation. The heuristics of Dubrovin's conjecture \cite{Dubrovin1998} and its reformulation as the Gamma conjectures \cite{GGI16} suggest that in order to construct canonical paths $\sigma_w$ in $\Stab(X)$ lifting solutions of the QDE, one should study solutions of the differential equations $\nabla_{w\partial_w}^\tau = 0$, where $\tau$ is allowed to vary in a region $\mathscr{B}\subset \H^\bullet(X)$. 

In fact, one should consider certain \emph{isomonodromic deformations} of the quantum connection $\nabla$ (\Cref{D:isomonodromic_deformation}). Roughly, these are extensions of $\nabla$ to a flat connection $\widetilde{\nabla}$ over the trivial $\H^\bullet(X)$-bundle over a space $M\times \bf{P}^1$, such that for any fixed $x\in M$ there is a regular singularity at $\infty$ and an irregular singularity at $0$. Here, there is a fixed holomorphic embedding $\mathscr{B}\hookrightarrow M$. The deformation is isomonodromic in that the monodromy data at $\infty$ and the Stokes data at $0$ are constant as $x\in M$ varies.

When $X$ has an open set $\mathscr{B}\subset \H^\bullet(X)$ of points near $\tau = 0$ where the quantum product $\star_\tau$ converges and is semisimple, there is a canonical isomonodromic deformation of the quantum connection to $\widetilde{\mathscr{U}}_N\times \bf{P}^1$ \cite{dubrovin1998painleve}. Here, $\mathscr{U}_N$ is the configuration space of $N$ distinct and unordered points in $\bf{C}$. In \Cref{SS:isomonodromicdeformation}, we isolate some important properties of this isomonodromic deformation and observe that the canonical fundamental solution $\Phi_w$ of \cite{GGI16} can be extended to a fundamental solution $\Phi_w^u$, where $u\in \widetilde{\mathscr{U}}_N$. In particular, fixing $u$, we have a canonical fundamental solution of $\widetilde{\nabla}_{w\partial_w}^u = 0$. Using this, we define quantum cohomology central charges 
\[
    \cZ_w^u(-) = (2\pi w)^{\dim X/2} \int_X \Phi_w^u(-)
\]
depending on $u\in \widetilde{\mathscr{U}}_N$. This allows us to state:

\begin{introconj}
[ = \Cref{conj:noncommutativeGamma}\ref{conj2isomonodromic}] For any smooth Fano variety $X$, there is an isomonodromic deformation $(\nabla^u)_{u\in U}$ of the quantum connection, a sector $\mathscr{S}\subset \bf{C}^*$, constants $\epsilon,\rho>0$, and a holomorphic map 
\[
    U\times (\mathscr{S}\cap \{w:\lvert w\rvert<\rho+\epsilon\})\to \Stab(X),\;\; w\mapsto \sigma^u_w,
\] 
where $\sigma_w^u$ has central charge $\cZ_w^u$, such that \vspace{-2mm}
\begin{enumerate}[label=(\alph*)]
    \item $\sigma_w^u$ is quasi-convergent as $w\to 0$ along any ray-segment in $\mathscr{S} \cap \{w:\lvert w\rvert <\rho+\epsilon\}$;\vspace{-2mm}
    \item the semiorthogonal decompositions obtained from $\sigma_w^u$ and a choice of ray segment are all mutation equivalent; and \vspace{-2mm}
    \item $\sigma_w^u$ is geometric for all $w$ with $\rho-\epsilon<\lvert w\rvert <\rho+\epsilon$.\vspace{-2mm}
\end{enumerate}
\label{Introconj:isomonodromic}
\end{introconj}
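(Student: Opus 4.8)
Since \Cref{Introconj:isomonodromic} is wide open for a general Fano $X$, the plan is to establish it in the examples treated in this paper: $X$ a finite product of Grassmannians, a smooth quadric $Q\subset\bf{P}^n$, or a smooth cubic of dimension $3$ or $4$. The common first step is to make the quantum cohomology central charge $\cZ_w^u$ explicit. For Grassmannians and quadrics the small quantum product is convergent and semisimple near $\tau=0$, so Dubrovin's canonical isomonodromic deformation over $\widetilde{\mathscr{U}}_N\times\bf{P}^1$ is available (\Cref{SS:isomonodromicdeformation}), and the canonical fundamental solution $\Phi_w$ of \cite{GGI16} extends to $\Phi_w^u$. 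Granting the Gamma II conjecture of \cite{GGI16} — a theorem in several of these cases — one can evaluate $\cZ_w^u$ on a full exceptional collection in closed form, in terms of the $\widehat{\Gamma}$-class, the exponential Chern character, and the monodromy/Stokes data attached to $u$; this is what makes the asymptotic analysis below tractable. For the cubics, where $\star_\tau$ is not semisimple, one replaces the full configuration-space deformation by a partial one and treats the Kuznetsov component $\Ku(X)$ separately.

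For part (a), I would study $\Phi_w^u$ near the irregular singularity $w=0$ via its Hukuhara--Turrittin normal form: flat sections split, up to the Stokes phenomenon across the rays $\bf{R}_{>0}\cdot e^{\mathtt{i}\varphi}$, into summands given by $e^{\lambda/w}$ times a meromorphic gauge factor, indexed by the eigenvalues $\lambda$ of $\cE_\tau\star_\tau(-)$. Comparing with the $\cA$-model mutation system \eqref{E:Amodelmutation}, the classes lying in $\cA_\lambda$ are exactly those whose central charges $\cZ_{te^{\mathtt{i}\varphi}}^u$ have $\Im\log$ controlled by $\Im(-e^{-\mathtt{i}\varphi}\lambda)$ as $t\to0$; this separation of phases at distinct rates is precisely the hypothesis needed to verify quasi-convergence in the sense of \cite{HLJR} — namely that the Harder--Narasimhan filtrations eventually stabilize and the limit semistable objects of $\cD_\lambda$ satisfy \eqref{E:LSSasymptotic}. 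The induced ordering on the $\cD_\lambda$ is then the one dictated by $\lvert\sigma(\cE_\tau)\rvert$ in \Cref{Conj:Introconj1}, and one identifies each $\cD_\lambda$ with (a mutation of) the expected exceptional block, respectively with $\Ku(X)$, by matching $K$-theory classes and checking semistability of the candidate generators.

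For part (c), the input is the construction of geometric stability conditions on $\DCoh(X)$ in \Cref{S:constructionofstab}. Since the geometric chamber of $\Stab(X)$ is open and the central-charge map $\Stab(X)\to\Hom(\H^\bullet_{\mathrm{alg}}(X),\bf{C})$ is a local homeomorphism, it suffices to exhibit one radius $\rho$ and one $u$ with $\cZ_w^u$, $\lvert w\rvert=\rho$, lying in the image of the central charges of the geometric chamber; the lift then extends to the annulus $\rho-\epsilon<\lvert w\rvert<\rho+\epsilon$ by openness and path-lifting. The isomonodromic parameter $u$ is exactly the device for steering the explicit central charge into the geometric region — for $\bf{P}^n$ this was achieved instead by sending $t\to\infty$ toward the regular singularity, whereas here one deforms $u$ at bounded $\lvert w\rvert$. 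Concretely one compares the explicit $\cZ_w^u$, computed from the $\widehat{\Gamma}$-twisted Chern character, with the central charge of a tilt-constructed heart on $X$, using the Bogomolov--Gieseker-type inequalities underlying the existence results of \Cref{S:constructionofstab}. Part (b) follows by a connectedness argument: the parameter space of admissible $(u,\varphi)$ is connected, crossing a wall of the relevant stratification of $\Stab(X)$ changes the associated semiorthogonal decomposition by a mutation, and one rules out higher-codimension degenerations.

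The main obstacle is the interaction between part (c) and Step 2 of the NMMP recipe: it is not enough to know that geometric stability conditions exist abstractly — one must pin down the explicit value of $\cZ_w^u$ and certify that it is realized inside the geometric chamber for $\lvert w\rvert$ near $\rho$. This forces a simultaneous handle on the hard analytic side (Stokes data of $\Phi_w^u$, control of the deformation in $u$, convergence of $\star_\tau$) and the hard algebro-geometric side (a sufficiently flexible existence theorem for geometric stability conditions, together with the comparison of its central charge to the $\widehat{\Gamma}$-twisted data). The cubic threefold and fourfold cases are the most delicate, both because $\star_\tau$ fails to be semisimple — so the configuration-space deformation is only partial — and because $\Ku(X)$, a genuinely noncommutative Calabi--Yau-type category, must be incorporated into the limit semiorthogonal decomposition separately.
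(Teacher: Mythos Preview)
Your proposal has two genuine gaps relative to what the paper actually achieves.

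First, the cubic cases do not go through. The paper proves \Cref{Introconj:isomonodromic} only for Grassmannians and smooth quadrics (\Cref{T:NCgammaforGrandQ}); for cubic threefolds and fourfolds it establishes only part \ref{conj2SOD} of \Cref{conj:noncommutativeGamma} and explicitly states that parts \ref{conj2independence} and \ref{conj2isomonodromic} remain open. Your proposed ``partial configuration-space deformation'' for the non-semisimple cubics is precisely the content of the open \Cref{Q:isomonodromicexist?} in \Cref{SS:Epilogue}; no such deformation is constructed. What the paper does manage for cubics (Theorems~\ref{thm_pqccc_geo_cubic3} and \ref{thm_pqccc_geo_cubic4}) requires replacing $\cZ_w^u$ by $\cZ_w\circ A$ for an ad hoc $A\in\GL(\H^\bullet(X))$, which is not an isomonodromic deformation of the quantum connection.

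Second, even in the semisimple cases your mechanism for part (c) is off. The geometric stability conditions of \Cref{S:constructionofstab} are \emph{not} built via tilting and Bogomolov--Gieseker inequalities --- the paper stresses that its construction is independent of BG. They are glued from the Kapranov collection $\mathfrak{K}$ using resolutions of the diagonal (\Cref{C:geomstability}), and this is the only region where geometric stability conditions are known to sit. The crucial step you are missing is \Cref{P:mutationforestimate}: Gamma~II hands you an asymptotically exponential collection $\mathfrak{E}$ which is only a \emph{mutation} of $\mathfrak{K}$, so one must use the braid action on $\widetilde{\mathscr{U}}_N$ to deform to a point $v$ at which $\mathfrak{K}$ itself becomes asymptotically exponential with prescribed exponents. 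Only then does $\cZ_w^v$ land in the $\mathfrak{K}$-glued region. The paper then needs uniform-in-$u$ asymptotic control of $\cZ_w^u$ (obtained via the Laplace dual connection and the Watson-type \Cref{P:Watson}) together with \Cref{L:approximationlemma} to actually hit the inequalities of \Cref{H:stability} at some radius $t_0$; your appeal to openness of the geometric chamber alone does not produce the required $u$ and $\rho$.
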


\subsubsection*{Results} Much of the paper is dedicated to verifying these conjectures, rephrased as \Cref{conj:noncommutativeGamma}, in several cases. We consider the semisimple cases of projective quadrics $Q\subset \bf{P}^n$ and Grassmannian varieties $\Gr(k,n)$, as well as the non-semisimple cases of cubic threefolds and fourfolds.  We include a thorough discussion in \Cref{SS:conjectures} of how the conjectures in the present work are related to the NMMP conjectures \cite{NMMP}. In particular, we verify \cite{NMMP}*{Proposal III} for all smooth quadrics and Grassmannians -- see \Cref{C:originalNMMPholds}.

Much of the technical work in the paper, contained in \Cref{S:constructionofstab}, involves the construction of geometric stability conditions in several new cases. This involves a careful analysis of the gluing construction of Collins--Polishchuk \cite{Collins_Polischuk_2010} for full exceptional collections arising from resolutions of the diagonal \cites{Beilinson1978,Kapranov85,Kapranov1988}. Accordingly, our first theorem is:

\begin{introthm}
( = part of \Cref{T:geomexistence})
\label{T:introgeomstab}
    If $X$ is a smooth quadric hypersurface in $\bf{P}^n$ or any finite product of Grass\-mannian varieties $\Gr(k,n)$, then $X$ admits geometric stability conditions.
\end{introthm}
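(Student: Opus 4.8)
The plan is to produce geometric stability conditions on these Fano varieties by exploiting the full strong exceptional collections built from Beilinson-type resolutions of the diagonal, and then running the gluing machinery of Collins--Polishchuk. First I would fix the relevant exceptional collection: for a product of Grassmannians one takes the external product of Kapranov's exceptional collections $\{\Sigma^\lambda \mathcal{U}^\vee\}$ (indexed by Young diagrams in a $k\times(n-k)$ box) on each factor, which remains full and strong on the product by the standard Künneth argument; for the quadric $Q\subset \mathbf{P}^n$ one takes Kapranov's collection involving the spinor bundle(s) together with $\mathcal{O}, \mathcal{O}(1),\ldots$. The key structural input is that in each case the collection arises from an explicit resolution of $\mathcal{O}_\Delta$ on $X\times X$, so one has good control over the $\Ext$-groups between the exceptional objects and, crucially, over how a skyscraper sheaf $\mathcal{O}_x$ is resolved by the collection.

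The heart of the argument is the Collins--Polishchuk gluing criterion. Given a full exceptional collection $E_1,\ldots,E_m$ one builds a heart of a bounded $t$-structure by gluing the standard (shifted) hearts on each $\langle E_i\rangle \simeq D^b(\mathrm{Vect})$; the resulting glued subcategory carries a stability condition provided one can choose phases $\phi_1 > \phi_2 > \cdots > \phi_m$ compatible with the vanishing pattern $\Hom^{\le 0}(E_i,E_j)=0$ for $i<j$ (after suitable shifts), and provided a certain finiteness/Harder--Narasimhan condition holds. So the steps are: (1) record the $\Ext$-grading of the collection and verify the Collins--Polishchuk hypotheses, obtaining a stability condition $\sigma = (Z,\mathcal{P})$ supported on the glued heart; (2) compute the class $[\mathcal{O}_x]$ in terms of $[E_1],\ldots,[E_m]$ and its resolution, to locate $\mathcal{O}_x$ inside the glued heart; (3) choose the phases $\phi_i$ — equivalently the central charges $Z(E_i)$ — inside the allowed chamber so that $\mathcal{O}_x$ becomes $\sigma$-semistable, and in fact stable of a phase independent of $x$. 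Step (3) is where one uses that the resolution of $\mathcal{O}_\Delta$ is "linear" enough that $\mathcal{O}_x$ sits in the heart with a controlled Jordan--Hölder filtration; pushing the phases of the $E_i$ close together (a "small perturbation" of the Beilinson-type central charge) forces $\mathcal{O}_x$ to be a stable object rather than decomposing, and the uniformity in $x$ is automatic since the numerical data $\ch(\mathcal{O}_x)$ does not depend on $x$.

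I expect the main obstacle to be step (3): verifying that some point in the Collins--Polishchuk chamber actually makes all the $\mathcal{O}_x$ stable (not merely semistable, and not destabilized by some non-obvious subobject in the glued heart). For $\mathbf{P}^n$ this is essentially the content of the analysis in \cite{Vanjapaper}, and the strategy is to reduce to checking that the only subobjects of $\mathcal{O}_x$ in the glued heart are the ones visible from its diagonal resolution — this requires a cohomology-vanishing computation showing that no shift of an exceptional object $E_i$ maps into $\mathcal{O}_x$ in a way that would lower its phase. For Grassmannians the combinatorics of $\Sigma^\lambda\mathcal{U}^\vee$ and the Littlewood--Richardson-type decomposition of tensor products makes this bookkeeping heavier, and for the quadric the spinor bundles must be handled separately (their behavior under restriction to a point, and the periodicity mod $2$ of the collection in odd vs. even dimension). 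A secondary technical point is the Harder--Narasimhan / support property needed for $(Z,\mathcal{P})$ to be a genuine stability condition in Bridgeland's sense rather than just a pre-stability condition; here I would invoke the general fact (as in Collins--Polishchuk and subsequent refinements) that the glued heart is of finite length / Noetherian, so HN filtrations exist, and the support property can be arranged by the same openness that gives the chamber in step (1). Finally, the statements about $\Hilb^n$ and weighted projective stacks are deduced formally afterward, so they are not part of proving \Cref{T:introgeomstab} itself.
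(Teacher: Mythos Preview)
Your outline is correct in its broad strokes and uses the same ingredients as the paper: Kapranov's collections arising from a resolution of the diagonal, and the Collins--Polishchuk gluing to produce a heart containing $\cO_x$. Where you diverge is in step~(3), and there the paper's argument is substantially cleaner than what you anticipate.

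The paper does not push phases close together or analyze Littlewood--Richardson combinatorics. Instead, it observes that the resolution of $\cO_\Delta$ gives a \emph{grading} (they call it a ``norm'' $\nu$) on the collection by cohomological degree: $E_i$ sits in degree $\nu_i$ of the Koszul complex resolving $\cO_x$. One then takes the heart $\cA=\langle E_i[\nu_i]\rangle_{\rm ext}$, so that the resolution itself exhibits $\cO_x$ as an object of $\cA$. The stability argument is then purely formal: any coherent sheaf $F$ is automatically \emph{efficient}, meaning $\Hom(E_i[\nu_i],F)=\Ext^{-\nu_i}(E_i,F)=0$ for $\nu_i>0$; and the collection is \emph{sharp}, meaning only $E_n=\cO_X$ sits in degree~$0$, with $\pr_{E_n}(\cO_x)=1$. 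Together these force every nonzero subobject of $\cO_x$ to have $\pr_{E_n}=1$, hence every quotient lies in the span of the higher-phase simples. Choosing phases with $\phi(E_n)<\phi(\cO_x)<\phi(E_i[\nu_i])$ for all $\nu_i>0$ then makes $\cO_x$ stable. For products of Grassmannians, the box-product Koszul resolution $\Lambda^\bullet(E\oplus F)$ preserves sharpness and the grading, so the argument iterates with no extra combinatorics. For quadrics, Kapranov's spinor resolution has the same structure (the spinor bundles form the deepest block, $\cO_Q$ the top). So your worry about intricate $\Ext$-bookkeeping and subobject analysis is unfounded: the efficiency-plus-sharpness mechanism bypasses it entirely.
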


To our knowledge, \Cref{T:introgeomstab}, which is of independent interest, gives the first examples of higher dimensional varieties $X$ besides $\bf{P}^n$ admitting geometric stability conditions with central charge factoring through $\H^{\bullet}_{\rm{alg}}(X)$. Using symmetries of the stability conditions constructed in \Cref{T:introgeomstab}, we apply the induction procedure of \cite{macri2009inducing} and its refinement in \cite{DellHengLicata} to construct \emph{almost} geometric stability conditions (\Cref{D:almostgeometric}) in some other examples:

\begin{introthm}
[ = rest of \Cref{T:geomexistence}]
    For all $n\ge 1$, the Hilbert schemes $\Hilb^{n}(\bf{P}^2)$, $\Hilb^n(\bf{P}^1\times \bf{P}^1)$, and the weighted projective stacks $\bf{P}(a_0,\ldots, a_n)$ with $\gcd(a_0,\ldots, a_n) = 1$ admit almost geometric stability conditions. (See \Cref{P:Hilbnstab} and \Cref{C:weightedprojectivestab} for more precise statements of these results.)
    \label{intro:TB}
\end{introthm}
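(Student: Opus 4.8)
The plan is to deduce \Cref{intro:TB} from \Cref{T:introgeomstab} by an induction procedure, realizing each of $\Hilb^n(\bf{P}^2)$, $\Hilb^n(\bf{P}^1\times\bf{P}^1)$, and $\bf{P}(a_0,\ldots,a_n)$ as a quotient-type construction attached to a variety covered by \Cref{T:introgeomstab}. First I would recall the setup of \cite{macri2009inducing} and \cite{DellHengLicata}: given a geometric stability condition $\sigma$ on $\DCoh(Y)$ that is invariant (up to the $\bf{C}$-action and autoequivalences) under a finite group $G$ acting on $Y$, one induces a stability condition on the equivariant category $\DCoh_G(Y) \simeq \DCoh([Y/G])$, and more generally the refinement handles the case where $\sigma$ is only preserved by a subgroup and one must track the compatibility of the heart with the $G$-action. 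So the three cases reduce to: (i) for weighted projective stacks, take $Y = \bf{P}^n$ with the diagonal $\mu_{a_0}\times\cdots\times\mu_{a_n}$-action (or rather the relevant finite abelian group with $\gcd = 1$ ensuring $\bf{P}(a_0,\ldots,a_n)$ is the global quotient stack), use the geometric stability conditions on $\bf{P}^n$ from \cite{Vanjapaper} (a special case subsumed by \Cref{T:introgeomstab}), and check invariance; (ii) for $\Hilb^n(\bf{P}^2)$ and $\Hilb^n(\bf{P}^1\times\bf{P}^1)$, use the Bridgeland--King--Reid--Haiman equivalence $\DCoh(\Hilb^n(S)) \simeq \DCoh_{\mathfrak{S}_n}(S^n)$ for $S$ a smooth surface, take $Y = S^n$ where $S$ is $\bf{P}^2$ or $\bf{P}^1\times\bf{P}^1$, use the product of geometric stability conditions on each factor $\DCoh(S)$ — which exists by \Cref{T:introgeomstab} since $\bf{P}^1\times\bf{P}^1$ is a product of Grassmannians and $\bf{P}^2 = \Gr(1,3)$ — and induce along the $\mathfrak{S}_n$-action permuting factors.

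The key steps, in order, are: (1) verify that \Cref{T:introgeomstab} indeed supplies a geometric stability condition on each relevant $Y$, invoking the product construction for stability conditions on $S^n$ (external tensor product of hearts and central charges, with the stability of skyscrapers at points of $S^n$ following from stability of skyscrapers on each factor — this uses that a product of geometric stability conditions is geometric, which should be recorded as a lemma); (2) establish the group-invariance: for the symmetric group acting on $S^n$ by permutation, the product stability condition is manifestly permutation-invariant up to nothing (it is literally fixed, since the factors are identical), while for the weighted-projective case one must exhibit or choose the geometric stability condition on $\bf{P}^n$ inside the $(\mu_{a_0}\times\cdots)$-invariant locus — this may require first showing that the geometric region is preserved by these automorphisms and then using that the $\bf{C}$-action acts freely, so one can correct by an element of $\bf{C}$; (3) apply the induction theorem of \cite{macri2009inducing}/\cite{DellHengLicata} to obtain a stability condition on $\DCoh_G(Y)$, and transport it across the derived equivalence to $\DCoh(\Hilb^n(S))$ or $\DCoh(\bf{P}(a_0,\ldots,a_n))$; (4) check that the induced stability condition is \emph{almost geometric} in the sense of \Cref{D:almostgeometric} — i.e., that the skyscraper sheaves of points, or the appropriate substitute objects (e.g. the objects corresponding to $G$-orbits, which on $\Hilb^n$ become structure sheaves of length-$n$ reduced subschemes, and on the stack become skyscrapers at the stacky and non-stacky points), are semistable of the expected phase, possibly after excluding a locus where they become strictly semistable or decompose.

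The main obstacle I expect is step (4): upgrading ``induced from geometric'' to ``almost geometric,'' rather than merely ``a stability condition exists.'' Induction along a finite group does not automatically send point-skyscrapers on $[Y/G]$ to stable objects — a $G$-fixed point of $Y$ has stabilizer acting on its skyscraper, so the induced object is a sum of the irreducible-representation-twisted skyscrapers, which need not all be stable of the same phase, and a free orbit gives a single simple object whose stability must be argued from the invariance of the heart. This is precisely why the statement is ``almost'' geometric and why \Cref{D:almostgeometric} must be invoked: one must pin down exactly which points or orbits yield stable objects and with what phases, and verify the definition's precise requirements (likely: skyscrapers of points in the \emph{regular} locus, or the generic orbit, are stable of a common phase, while the exceptional locus is controlled). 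Handling the non-reduced subschemes in $\Hilb^n(S)$ — the points of $\Hilb^n$ lying over the small diagonal, whose corresponding $\mathfrak{S}_n$-equivariant sheaves on $S^n$ are not structure sheaves of orbits — is the subtlest part, and is why \Cref{P:Hilbnstab} is stated separately with a more precise formulation. A secondary technical point is confirming the hypotheses of \cite{DellHengLicata} (support/finiteness conditions, the numerical compatibility needed so that the induced central charge factors through $\H^\bullet_{\mathrm{alg}}$ of the quotient), which should be routine given that the construction of \cite{Collins_Polischuk_2010} underlying \Cref{T:introgeomstab} produces stability conditions with the full support property.
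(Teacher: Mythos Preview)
Your proposal is correct and follows essentially the same route as the paper: BKRH for Hilbert schemes, global-quotient presentation for weighted projective stacks, and induction via \cite{macri2009inducing}/\cite{DellHengLicata}. Two places where the paper streamlines what you flag as potential difficulties: for $G$-invariance on $\bf{P}^n$ the paper invokes \cite{PolishchukConstant}*{Cor.~3.5.2}, which says the $\GL_{n+1}(\bf{C})$-action on $\Stab(\bf{P}^n)$ is \emph{trivial}, so every stability condition is automatically invariant and your step~(2) for the weighted case is vacuous; and for the ``almost geometric'' verification in your step~(4), the paper isolates a single lemma showing that for any $G$-invariant geometric $\sigma$ on $X$, the orbit sheaves on $[X/G]$ are semistable for the induced condition and stable precisely when the orbit has cardinality $\lvert G\rvert$---this handles both the stacky points of $\bf{P}(a_0,\ldots,a_n)$ and, via BKRH (which sends $\cO_x$ to the associated $\mathfrak{S}_n$-cluster), the non-reduced subschemes in $\Hilb^n(S)$. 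One further difference in execution: rather than an abstract product of stability conditions on $S^n$, the paper builds the $\mathfrak{S}_n$-invariant geometric stability condition directly from the boxed Kapranov collection $\{\Omega(p_\bullet)\}$ on $(\bf{P}^d)^n$, with invariance following simply from assigning equal central-charge values to exceptional objects with the same norm $\sum p_i$.
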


The later parts of \Cref{S:constructionofstab} are extend the techniques of \Cref{SS:geomFEC} to cubic hypersurfaces in $\bf{P}^n$. This is achieved by establishing technical results on gluing stability conditions in the presence of a ``Kuznetsov-type'' semiorthogonal decomposition of $\DCoh(X)$.

\begin{introthm}
[ = \Cref{thm_glued_geom_stab_cubic3} + \Cref{thm_geomstab_cubic4_noplane}]\label{intro:TC}
    If $X$ is a cubic threefold or a cubic fourfold not containing a plane, then $\DCoh(X)$ admits geometric stability conditions with central charge factoring through the canonical morphism $\rm{K}_0(X) \to \rm{K}_0^{\rm{top}}(X)$.
\label{T:cubicthreeorfour}
\end{introthm}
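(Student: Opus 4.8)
The plan is to construct the stability condition by \emph{gluing}, in the sense of Collins--Polishchuk \cite{Collins_Polischuk_2010}, a stability condition on the Kuznetsov component with stability conditions on the exceptional subcategories, and then to arrange the glued data so that the result is geometric. Write $n=\dim X\in\{3,4\}$ and recall Kuznetsov's semiorthogonal decomposition $\DCoh(X)=\langle\Ku(X),\cO_X,\cO_X(1),\dots,\cO_X(n-3)\rangle$. One starts from the known existence of a stability condition $\sigma_{\Ku}=(Z_{\Ku},\cP_{\Ku})$ on $\Ku(X)$ with central charge factoring through $\rm{K}_0^{\rm{top}}(\Ku(X))$ --- Bayer--Lahoz--Macr\`i--Stellari for cubic threefolds, Bayer--Lahoz--Macr\`i--Nuer--Perry--Stellari for cubic fourfolds --- and one may take $\sigma_{\Ku}$ inside the distinguished $\widetilde{\GL}^+_2(\bf{R})$-orbit of Serre-invariant stability conditions. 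Since $\rm{K}_0^{\rm{top}}$ is additive along semiorthogonal decompositions, $\rm{K}_0^{\rm{top}}(X)=\rm{K}_0^{\rm{top}}(\Ku(X))\oplus\bigoplus_i\bf{Z}[\cO_X(i)]$, so any stability condition obtained by gluing $\sigma_{\Ku}$ to the exceptional pieces automatically has central charge factoring through $\rm{K}_0(X)\to\rm{K}_0^{\rm{top}}(X)$, which settles the last clause of the statement.

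Next I would carry out the gluing. Adjoin the line bundles $\cO_X(i)$ one at a time, for $i=n-3,n-4,\dots,0$, choosing at each stage the central charge value $z_i:=Z(\cO_X(i))\in\bf{C}$ and a shift $k_i\in\bf{Z}$ so that the hypotheses for Collins--Polishchuk gluing across this ``Kuznetsov-type'' semiorthogonal decomposition hold: the requisite $\Hom$-vanishing between the already-glued part and $\cO_X(i)$, and the ``weak seesaw'' inequality on phases. This is precisely what the gluing lemmas for Kuznetsov-type decompositions established in this section provide for cubics, and it produces a pre-stability condition $\sigma=(Z,\cP)$ on $\DCoh(X)$ whose heart $\cA$ is the glued heart of $\cP_{\Ku}((0,1])$ and the shifts $\cO_X(i)[k_i]$. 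I would then upgrade $\sigma$ to a bona fide stability condition by establishing the support property --- the main analytic step. Every $\sigma$-semistable object has a finite filtration in $\cA$ whose factors are $\sigma_{\Ku}$-semistable objects of $\Ku(X)$ or shifts of the $\cO_X(i)$; feeding the support property of $\sigma_{\Ku}$ and the discreteness of $\{[\cO_X(i)]\}$ in $\rm{K}_0^{\rm{top}}(X)$ into such filtrations yields a quadratic form $Q$ on $\rm{K}_0^{\rm{top}}(X)_{\bf{R}}$ that is negative definite on $\ker Z$ and non-negative on $\sigma$-semistable classes, provided the imaginary parts of the $z_i$ lie in a suitable window determined by the ``width'' of $\sigma_{\Ku}$ (a bound on the spread of phases of $\sigma_{\Ku}$-semistable objects of bounded mass).

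For geometricity, fix a closed point $x\in X$ and, using the Beilinson- and Kapranov-type resolutions of the diagonal that underlie the Kuznetsov decomposition, compute the components of $\cO_x$: there is a functorial triangle relating $\cO_x$, its projection $P_x:=\pr_{\Ku}(\cO_x)\in\Ku(X)$, and an iterated extension of shifts of the $\cO_X(i)$ (for $n=3$ one gets $P_x\cong K_x[2]$ with $K_x=\ker(\cO_X^{\oplus 4}\twoheadrightarrow\cI_x(1))$ of rank $3$; $n=4$ is analogous). The key input is that for $\sigma_{\Ku}$ in its distinguished orbit the objects $P_x$ are $\sigma_{\Ku}$-stable of a fixed class $v$ --- equivalently $x\mapsto P_x$ realises $X$ as (a component of) a moduli space of $\sigma_{\Ku}$-stable objects of $\Ku(X)$, which is known for cubic threefolds and, via the moduli theory on the noncommutative $\rm{K}3$ category, for cubic fourfolds containing no plane. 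Tuning the $z_i$ and $k_i$ so that $\cO_X(i)[k_i]$ and the shift of $P_x$ occurring in the triangle all lie in $\cA$ gives $\cO_x\in\cA$. Finally, for $0\to A\to\cO_x\to B\to0$ in $\cA$, apply the recollement functors of the semiorthogonal decomposition (compatible with the glued t-structure): exactness of the quotient functor onto $\langle\cO_X(i)\rangle_i$, $\sigma_{\Ku}$-stability of $P_x$, the one-dimensionality of $\Hom^\bullet(\cO_X(i),\cO_x)$ and $\Hom^\bullet(\cO_x,\cO_X(i))$ (each in a single degree), and the phase ordering arranged above force $A\in\{0,\cO_x\}$; hence $\cO_x$ is $\sigma$-stable, and since all $\cO_x$ share a numerical class they all have one phase, so $\sigma$ is geometric.

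The main obstacle is the mutual compatibility of the parameter choices: geometricity locks the phases of the $\cO_X(i)[k_i]$ relative to that of $P_x$, while the support-property estimate only runs for $z_i$ in a narrow window, and showing this window is non-empty needs genuinely quantitative control of the chosen $\sigma_{\Ku}$ on $\Ku(X)$. For cubic fourfolds there is the additional difficulty that $P_x$ lives in a possibly non-geometric noncommutative $\rm{K}3$ category, so constancy and stability of $v$, together with the exclusion of destabilising subobjects of $\cO_x$ that mix the $\Ku(X)$-part with the exceptional part, rest on the more delicate moduli theory for stability conditions on $\Ku(X)$ --- which is precisely where the no-plane hypothesis enters.
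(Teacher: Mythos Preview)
Your overall architecture matches the paper's: glue a stability condition on $\Ku(X)$ to one on $\langle\cO_X,\dots,\cO_X(n-3)\rangle$, use that the projection $P_x$ of $\cO_x$ to $\Ku(X)$ is $\sigma_{\Ku}$-stable, and arrange phases so that $\cO_x$ becomes stable in the glued heart. The central-charge-factoring claim is correct and for the reason you give. The support property does not need a separate argument: the gluing results used in the paper (via \cite{Collins_Polischuk_2010} together with \cite{karube2024noncommutative}) already produce an element of $\Stab(X)$, so your quadratic-form sketch is superfluous.

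The genuine gap is your simplicity claim for $\cO_x$. The assertion that a short exact sequence $0\to A\to\cO_x\to B\to 0$ in the glued heart $\cA$ forces $A\in\{0,\cO_x\}$ is false: the very triangle you write down, $N^x\to\cO_x\to P_x[\text{shift}]$, is a nontrivial short exact sequence in $\cA$, and $N^x$ itself has further subobjects coming from the $\cO_X(i)[k_i]$. So $\cO_x$ is far from simple, and stability must be proved by a phase comparison, not a simplicity argument. The paper's mechanism for this is a quantitative \emph{spiked} condition on the central charge on $\cN_X$ (a specific inequality involving $\Re Z(\cO_X(n-3))$ and the slopes of the other $Z(\cO_X(i)[k_i])$) together with the normalization $Z_{\Ku}(P_x)=-1$; these are exactly what make the destabilizing-subobject analysis go through when the subobject mixes the $\Ku(X)$-part with the exceptional part. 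Your ingredients (one-dimensionality of $\Hom^\bullet(\cO_X(i),\cO_x)$, stability of $P_x$, phase ordering) are all used, but they are not by themselves sufficient to rule out destabilizers of the mixed type.

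A secondary gap, specific to the fourfold case, is the $\Hom^{\le 0}$-vanishing needed for gluing. You defer this to ``the gluing lemmas established in this section'', but in the paper this is the bulk of the work: one must compute $\pr_R(\cO_X(kH))$ for $k=0,1,2$ via several mutations, push them through the equivalence $\Xi':\Ku(X)\to\DCoh(\bf{P}^3,\cB_0)$, and bound their position relative to the tilted heart $\cA_{\alpha,-1}$. These computations are not formal and do not follow from the general gluing framework. Finally, the no-plane hypothesis enters more concretely than you indicate: it is what allows one to identify $\pr_{\Ku}(\cO_x)$ with $F'_C[1]$ for a non-CM twisted cubic $C$ through $x$, via \cite{Li2018TwistedCO}, which is how stability of $P_x$ is established.
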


The threefold case of \Cref{T:cubicthreeorfour} has already been obtained in \cite{Bernardara2016BridgelandSC}. However, there, the result is obtained by constructing stability conditions on a heart obtained from $\Coh(X)$ by tilting and proving a suitable threefold Bogomolov-Gieseker (BG) inequality -- see \cite{Bayer2011BridgelandSC}. This strategy is the main one used in the literature to construct geometric stability conditions; unfortunately, it seems that proving the necessary BG inequalities becomes increasingly difficult as the dimension of $X$ increases. By contrast, the construction of geometric stability conditions in the present work is independent of the BG inequality. 

The fourfold case of \Cref{T:cubicthreeorfour} seems to be completely new. In \Cref{S:examples}, we use these new geometric stability conditions to verify the conjectures in some cases: 

\begin{introthm}
[ = \Cref{C:NMMPFanoexamples} + \Cref{T:conj2.1threefold} + \Cref{T:conj2.1fourfold}]
    \Cref{Conj:Introconj1} holds when $X$ is a smooth quadric, a Grassmannian $\Gr(k,n)$, a smooth cubic threefold, or a smooth cubic fourfold not containing a plane.
\label{T:conjecture2cases}
\end{introthm}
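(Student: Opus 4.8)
The plan is to verify \Cref{Conj:Introconj1} --- that is, the semiorthogonal-decomposition clause of \Cref{conj:noncommutativeGamma} --- for each of the four families by carrying out Steps 1--3 of the strategy in the introduction, with the geometric and almost geometric stability conditions of \Cref{T:geomexistence}, \Cref{thm_glued_geom_stab_cubic3}, and \Cref{thm_geomstab_cubic4_noplane} supplying the lift in Step 2. The semisimple cases (smooth quadrics, Grassmannians) and the non-semisimple cases (cubic threefolds and fourfolds not containing a plane) run in parallel, the difference being the input on the quantum connection. First I would fix $\tau$ in the small quantum cohomology locus $\H^2(X)$ and make the canonical fundamental solution $\Phi_w^\tau$ explicit near the irregular singularity $w = 0$. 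For quadrics and Grassmannians, $\star_\tau$ is semisimple and the eigenvalues of $\cE_\tau\star_\tau(-) = c_1(X)\star_\tau(-)$ organize the Kapranov/Kuznetsov exceptional collection into blocks indexed by $\lvert\sigma(\cE_\tau)\rvert$; the Gamma\,II conjecture of \cite{GGI16} --- which holds for these varieties --- then pins down the leading asymptotics of the flat sections dual to the exceptional objects $E_i$, yielding
\[
    \cZ_w^\tau(E_i) = (2\pi w)^{\dim X/2} e^{-u_i/w}\bigl(\gamma_i + o(1)\bigr), \qquad \gamma_i \neq 0,
\]
as $w\to 0$ in a sector, where $u_i\in\sigma(\cE_\tau)$ is the eigenvalue attached to $E_i$. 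For cubic threefolds and fourfolds, where $\star_\tau$ is not semisimple, I would instead invoke the $A$-model mutation system of Sanda--Shamoto \cite{SandaShamoto}: the decomposition \eqref{E:Amodelmutation}, its moderate-growth characterization \cite{SandaShamoto}*{Lem. 3.5}, and the fact --- established \emph{ibid.}\ for smooth Fano hypersurfaces --- that it lifts to a mutation of the Kuznetsov decomposition $\DCoh(X) = \langle \Ku(X), \cO_X, \ldots, \cO_X(\ast)\rangle$.

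Next I would produce the lift $\mathscr{S}\cap\{\lvert w\rvert < \epsilon\}\to\Stab(X)$. Using the Collins--Polishchuk gluing data attached to the relevant full exceptional collection (quadrics, Grassmannians) or to the Kuznetsov-type semiorthogonal decomposition (cubics) --- the very data used to prove \Cref{T:geomexistence} and \Cref{T:cubicthreeorfour} --- I would check that for $w$ in a sufficiently small sector $\mathscr{S}$ about a generic direction the central charge $\cZ_w^\tau$ satisfies the Collins--Polishchuk positivity inequalities, and hence lifts canonically to a stability condition $\sigma_w^\tau = (\cZ_w^\tau, \cP_w)$. The input is exactly the asymptotic formula above: it forces the pairwise phase differences of the $E_i$ (respectively, of the line bundles against $\Ku(X)$) to be large with the correct sign as $w\to 0$, which is precisely what the gluing construction requires.

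With the lift in hand, fix a generic $\varphi$ and put $w = te^{\mathtt{i}\varphi}$. From the asymptotics, $\Im\log\cZ_{te^{\mathtt{i}\varphi}}^\tau(E) = t^{-1}\,\Im(-e^{-\mathtt{i}\varphi}\lambda) + O(\log(1/t))$ for $E$ carrying the $\lambda$-asymptotic, so the phases separate linearly in $1/t$ with slopes indexed by $\lvert\sigma(\cE_\tau)\rvert$; this is exactly the data needed to verify the quasi-convergence axioms of \cite{HLJR} and to read off the limit semiorthogonal decomposition $\DCoh(X) = \langle \cD_\lambda\rangle$ with the stated ordering $\lambda < \mu \iff \Im(-e^{-\mathtt{i}\varphi}\mu) > \Im(-e^{-\mathtt{i}\varphi}\lambda)$. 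The factor $\cD_\lambda$ is generated by the limit semistable objects whose central charge carries the $\lambda$-asymptotic, and by the moderate-growth criterion \cite{SandaShamoto}*{Lem. 3.5} these are exactly the objects categorifying $\cA_\lambda$: in the semisimple cases each $\cD_\lambda$ is a block of the Kapranov/Kuznetsov collection, while for the cubics the block $\cA_\lambda$ carrying the primitive cohomology corresponds to $\Ku(X)$. The asymptotic estimates \eqref{E:LSSasymptotic} for limit semistable $E\in\cD_\lambda$ then follow from the same expansion together with control of the Harder--Narasimhan filtrations of $E$ with respect to $\sigma_t^\tau$ as $t\to 0$.

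The main obstacle is the passage from \emph{leading-order} asymptotics to an \emph{honest} lift valid on a full sector: one must control the subleading terms of $\Phi_w^\tau$ uniformly enough that the Collins--Polishchuk inequalities persist for all $w\in\mathscr{S}\cap\{\lvert w\rvert<\epsilon\}$, and one must show that \emph{every} limit semistable object --- not just the exceptional generators --- has central charge with one of the prescribed $\lvert\sigma(\cE_\tau)\rvert$ asymptotics and obeys \eqref{E:LSSasymptotic}. In the non-semisimple cubic cases this is compounded by the resonances in the Stokes structure at $w = 0$ and by the need to match the Sanda--Shamoto categorical lift with the specific geometric stability conditions produced in \Cref{thm_glued_geom_stab_cubic3} and \Cref{thm_geomstab_cubic4_noplane}.
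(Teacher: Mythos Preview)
For quadrics and Grassmannians your plan coincides with the paper's: Gamma II supplies the asymptotic expansion of $\cZ_w^\tau$ on the asymptotically exponential collection, and the lift is constructed in the glued region of that collection (this is \Cref{P:GammaIIimpliesConj1} via \Cref{L:extensionoflog}). One point you pass over is the coalescence phenomenon for Grassmannians: at $\tau\in\H^2(X)$ the eigenvalues of $\cE_\tau\star_\tau(-)$ typically have multiplicity, and \Cref{L:extensionoflog} then needs that exceptional objects sharing an exponent are mutually orthogonal; the paper imports this from \cite{CDGcoalescent}. Note also that the geometric stability conditions of \Cref{T:geomexistence} are nowhere used for \Cref{Conj:Introconj1} --- the lift lives in an arbitrary glued region, and geometricity is reserved for \Cref{Introconj:isomonodromic}.

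For the cubics there is a genuine gap. You propose to glue along the Kuznetsov decomposition $\langle \Ku(X),\cO,\ldots\rangle$, i.e.\ the one underlying \Cref{thm_glued_geom_stab_cubic3} and \Cref{thm_geomstab_cubic4_noplane}. But the asymptotics of $\cZ_w^\tau$ at $\tau=0$ are incompatible with that ordering: the eigenvalue $0$ is the one carrying $\Ku(X)$, and for a cubic threefold the spectrum is $\{-T,0,T\}$, so under the $\Im(-e^{-\mathtt{i}\varphi}\lambda)$ ordering the Kuznetsov-type factor must sit in the \emph{interior}, flanked by exceptionals on both sides. The Collins--Polishchuk inequalities for $\langle \Ku(Y),\cO,\cO(1)\rangle$ simply fail for $\cZ_w^0$ as $w\to 0$. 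One must instead glue along the Sanda--Shamoto mutation $\langle \cO(1),\cT,\cO(2)\rangle$ (resp.\ $\langle \cO(2),\bf{R}_{\cO(2)}\cO,\cT,\bf{R}_{\cO(2)}\cO(1)\rangle$ for the fourfold), which requires a separate two-sided gluing criterion with exceptionals on either side of the residual component --- this is exactly what \Cref{prop_gluing_paths_Ku} and \Cref{rem_gluing_pathKu} provide, and what the paper invokes in \Cref{T:conj2.1threefold} and \Cref{T:conj2.1fourfold}. Your closing worry about ``matching the Sanda--Shamoto categorical lift with the specific geometric stability conditions'' is therefore not an obstacle to be overcome but a mismatch to be avoided: the geometric stability conditions on cubics play no role in \Cref{Conj:Introconj1}, and the paper only brings them in separately (\Cref{thm_pqccc_geo_cubic3}, \Cref{thm_pqccc_geo_cubic4}), and even then at the cost of replacing $\cZ_t$ by $\cZ_t\circ A$ for some $A\in\GL(\H^\bullet(X))$.
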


Finally, we have: 

\begin{introthm}
[ = \Cref{T:NCgammaforGrandQ}] Conjectures \ref{Conj:Introconj1}, \ref{intro:TB}, and \ref{intro:TC} hold for smooth projective quadrics and Grass\-mannians $\Gr(k,n)$. 
\label{T:introconj2fullcases}
\end{introthm}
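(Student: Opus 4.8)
The plan is to reduce \Cref{T:NCgammaforGrandQ} --- i.e.\ \Cref{Conj:Introconj1}, \Cref{Introconj:uniqueness}, and \Cref{Introconj:isomonodromic} for $X$ a smooth quadric $Q\subset\bf{P}^n$ or a Grassmannian $\Gr(k,n)$ --- to three inputs and then assemble them. Set $N=\dim\H^\bullet(X)$. The inputs are: \textbf{(i)} the quantum product of $X$ converges on an open $\mathscr{B}\subset\H^\bullet(X)$ near $0$ and is semisimple on a dense open subset, with the Euler operator having pairwise distinct eigenvalues there, so that the canonical isomonodromic deformation of \Cref{SS:isomonodromicdeformation} over $\widetilde{\mathscr{U}}_N\times\bf{P}^1$ exists together with its fundamental solutions $\Phi_w^u$ and central charges $\cZ_w^u$; \textbf{(ii)} the Gamma II conjecture of \cite{GGI16} holds for $X$; and \textbf{(iii)} $X$ admits geometric stability conditions with central charge factoring through $\H^\bullet_{\rm{alg}}(X)$, which is \Cref{T:introgeomstab}. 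Input (i) is a well-known computation for quadrics and Grassmannians and is recalled in \Cref{SS:isomonodromicdeformation}; input (ii) is known for projective spaces, quadrics and Grassmannians (see \Cref{S:nMMPFano} and the references there); input (iii) is \Cref{T:introgeomstab}. Given these, \Cref{T:NCgammaforGrandQ} follows from the conditional results of \Cref{S:nMMPFano}, which say that for $X=Q$ or $\Gr(k,n)$, inputs (ii) and (iii) together imply all of \Cref{conj:noncommutativeGamma}. The remaining paragraphs sketch that conditional implication.

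For \Cref{Conj:Introconj1} the plan is as follows. Fix $\tau\in\mathscr{B}$ generic and a generic ray $\bf{R}_{>0}\cdot e^{\mathtt{i}\varphi}$; the Gamma II conjecture supplies a full exceptional collection $(E_1,\dots,E_N)$ of $\DCoh(X)$ --- a mutation, depending on the chamber of $\varphi$, of the Kapranov-type collection (for quadrics: $\langle\cO,\dots,\cO(n-2),S^{\pm}\rangle$ with $S^{\pm}$ the spinor bundle(s); for Grassmannians: Schur functors of the tautological bundle) --- such that the flat section of \eqref{E:qde} attached to $E_i$ has asymptotic class proportional to $\widehat{\Gamma}_X\ch(E_i)$ near $w=0$, with leading exponential $e^{-\lambda_i/w}$ for the associated eigenvalue $\lambda_i\in\sigma(\cE_\tau)$. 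Two consequences: the Sanda--Shamoto A-model mutation system \eqref{E:Amodelmutation} acquires the categorical lift $\cD_\lambda=\langle E_i:\lambda_i=\lambda\rangle$; and $\Im\log\cZ_{te^{\mathtt{i}\varphi}}^\tau(E_i)\sim -t^{-1}\Im(e^{-\mathtt{i}\varphi}\lambda_i)$ as $t\to 0$, so the growth rates separate along the ordering in the statement. To produce the path I would start from the stability condition on $\DCoh(X)$ glued following Collins--Polishchuk \cite{Collins_Polischuk_2010} from $(E_1,\dots,E_N)$ --- available after the relevant mutation because this is a collection of the type handled in \Cref{S:constructionofstab} --- and deform it along $w=te^{\mathtt{i}\varphi}$ using Bridgeland's theorem \cite{Br07}; the explicit asymptotics keep $\cZ_{te^{\mathtt{i}\varphi}}^\tau$ in the image of the local homeomorphism $\Stab(X)\to\Hom(\H^\bullet_{\rm{alg}}(X),\bf{C})$, so the lift extends to an interval $(0,a)$. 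Quasi-convergence as $t\to 0$ is then the criterion of \cite{HLJR} applied to the separated growth rates, the limit semiorthogonal decomposition is $\langle\cD_\lambda\rangle$, and the estimates \eqref{E:LSSasymptotic} for limit-semistable $E\in\cD_\lambda$ are the Gamma II growth rate in the $\lambda$-eigenspace. This parallels \cite{Vanjapaper} for $\bf{P}^n$, now fed by \Cref{T:introgeomstab}.

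For \Cref{Introconj:uniqueness} and \Cref{Introconj:isomonodromic} the plan is to use the canonical isomonodromic deformation over $\widetilde{\mathscr{U}}_N\times\bf{P}^1$. It furnishes $\Phi_w^u$, hence $\cZ_w^u$, holomorphic in $(u,w)$, and the $\tau$-family of \Cref{Conj:Introconj1} is its restriction along the holomorphic embedding $\mathscr{B}\hookrightarrow\widetilde{\mathscr{U}}_N$ of \Cref{SS:isomonodromicdeformation}. The lift $(u,w)\mapsto\sigma_w^u$ is continuous by Bridgeland deformation plus continuity of the gluing construction; when $(u,\varphi)$ crosses a wall where two phases $\Im(e^{-\mathtt{i}\varphi}\lambda_i)$ coincide, the exceptional collection aligned with the Stokes structure changes by a single mutation --- the braid-group wall-crossing for the Stokes matrices of a semisimple isomonodromic family --- so the induced semiorthogonal decompositions stay mutation equivalent; this gives \Cref{Introconj:uniqueness}. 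For \Cref{Introconj:isomonodromic}: by \Cref{T:introgeomstab} there is a geometric $\sigma_{\rm{geom}}$ with central charge $Z_{\rm{geom}}\in\Hom(\H^\bullet_{\rm{alg}}(X),\bf{C})$, and I would locate inside the isomonodromic family a point $(u,w)$ with $|w|=\rho$ at which $\cZ_w^u$ equals $Z_{\rm{geom}}$ modulo the $\GL^+(2,\bf{R})$-action on $\Stab(X)$, so that $\sigma_w^u$ is geometric there while the path from $|w|=\rho$ down to $w=0$ stays quasi-convergent --- this is the mechanism of \cite{NMMP}*{\S3} for $\bf{P}^1$ (where $\sigma_t$ becomes geometric as $t\to\infty$), promoted to higher dimension via the isomonodromic deformation. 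Parts (a) and (b) of \Cref{Introconj:isomonodromic} then follow as in the previous paragraph.

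The main obstacle I expect is \Cref{Introconj:isomonodromic}(c): arranging the isomonodromically-deformed path to begin in the geometric region. Unlike for $\bf{P}^1$, the relevant exceptional collections for Grassmannians and even-dimensional quadrics involve higher-rank bundles (Schur functors of the tautological bundle; spinor bundles), and it is not a priori clear that the geometric region produced by gluing in \Cref{S:constructionofstab} is connected to the glued region of the exceptional collection through the image of $(u,w)\mapsto\cZ_w^u$. The technical core is thus a sufficiently explicit description of which central charges $\cZ_w^u$ occur --- via the Stokes/$\widehat{\Gamma}$-basis normalization from input (ii) --- together with a matching of numerical data against \Cref{T:introgeomstab} modulo $\GL^+(2,\bf{R})$, plus a verification that quasi-convergence persists along the shortened path. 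A secondary subtlety is that $\cE_\tau=c_1(X)\star_\tau(-)$ may have repeated eigenvalues for $\tau\in\H^2(X)$ (e.g.\ for $\Gr(2,4)$), so that \eqref{E:Amodelmutation} at such $\tau$ is coarser than the full exceptional collection; this is exactly why one must deform $\tau$ off $\H^2(X)$ into the semisimple locus $\mathscr{B}$ --- equivalently, pass to the isomonodromic deformation over $\widetilde{\mathscr{U}}_N$ --- to realize the finest semiorthogonal decomposition, and why \Cref{Introconj:isomonodromic} (not only \Cref{Conj:Introconj1}) is the natural target.
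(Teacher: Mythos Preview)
Your overall architecture is right: parts \ref{conj2SOD} and \ref{conj2independence} reduce to Gamma II plus the gluing/quasi-convergence machinery exactly as you describe, and the paper proves them this way (via \Cref{P:GammaIIimpliesConj1}, \Cref{C:NMMPFanoexamples}, and the remark that \ref{conj2independence} is \cite{GGI16}*{Rem. 4.6.3}). The real content is \ref{conj2isomonodromic}(c), and here your proposal has a genuine gap.

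You propose to locate a point $(u,w)$ at which $\cZ_w^u$ matches, modulo $\GL_2^+(\bf{R})$, the central charge of a geometric stability condition from \Cref{T:introgeomstab}. This is not what the paper does, and it is unclear how to make it work: the geometric stability conditions of \Cref{S:constructionofstab} are constructed \emph{only} in the glued region of the Kapranov collection $\mathfrak{K}$ (or its line-bundle twists), whereas the asymptotically exponential collection $\mathfrak{E}$ supplied by Gamma II at a given $\tau$ is generally a nontrivial mutation of $\mathfrak{K}$. There is no reason the image of $\cZ_w^u$ near the starting $\tau$ should meet the particular open subset of $\Hom(\H^\bullet_{\rm{alg}}(X),\bf{C})$ lying under the geometric locus of the $\mathfrak{K}$-glued region.

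The paper's key move is the opposite one: rather than keeping $\mathfrak{E}$ and searching for a geometric point, it uses the braid group acting on $\widetilde{\mathscr{U}}_N$ (\Cref{P:mutationforestimate}, \Cref{C:isomonodromic_estimate}) to pass to a sheet $v\in\widetilde{\mathscr{U}}_N$ where the Kapranov collection $\mathfrak{K}$ \emph{itself} is asymptotically exponential, with freely prescribable exponents $v_i$ (e.g.\ with $\Im(v_i)-\Im(v_{i+1})>4\pi$). Then the entire path $\sigma_w^v$ lives in the $\mathfrak{K}$-glued region, where \Cref{C:geomstability} gives a concrete phase inequality characterizing the geometric sublocus. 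Two further technical ingredients are needed and are absent from your plan: a \emph{uniform} Watson-type asymptotic expansion for $\cZ_w^u$ over a compact $\Omega\subset\widetilde{\mathscr{U}}_N$ (proved in the Appendix via the Laplace-dual connection), and a fine-tuning lemma (\Cref{L:approximationlemma}) which shows that by adjusting $u\in\Omega$ slightly one can force the phases $\vartheta(K_i[\nu(i)])$ at some fixed $t_0$ to satisfy exactly the inequalities of \Cref{C:geomstability}, including $\vartheta(\cO_X)<\vartheta(\cO_x)<\vartheta(K_{N-1}[\nu(N-1)])$. This is what makes $\sigma_{t_0}^u$ geometric while keeping the same path quasi-convergent as $t\to 0$. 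Your honest identification of (c) as the obstacle is correct; what is missing is this mutation-to-$\mathfrak{K}$ step plus the uniform-estimate/fine-tuning mechanism.
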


We are not able to prove Conjectures \ref{intro:TB} and \ref{intro:TC} for the cubic hypersurfaces considered in the present work. For instance, the theory of isomonodromic deformations of the quantum connection in the non-semisimple situation be sufficiently developed to attack \Cref{Introconj:isomonodromic}. Nevertheless, in \Cref{SS:Epilogue} we make some speculations about what might be expected.

\subsubsection*{Related work}

Since the inception of this project, there have been several related developments. As mentioned above, the recent work of Katzarkov--Kontsevich--Pantev--Yu \cite{KKPY} has established the theory of atoms, which have proven to be fine enough birational invariants of varieties to resolve long-standing rationality questions in the birational geometry of hypersurfaces. Some of the ideas present in \cite{KKPY} have been expounded upon in lectures by its authors over the last years, which informed the formulation of the NMMP \cite{NMMP} and thus the present work. 

Atoms are birational invariants of varieties that are constructed at the cohomological level, from decompositions of certain non-Archimedean bundles with flat connection and fiber $\H^\bullet(X)$, called A-model F-bundles \cite{Fbundles}. The NMMP can be regarded as an attempt to lift these decompositions to the categorical level, i.e. to semiorthogonal decompositions of $\DCoh(X)$. We expect that applying a suitable additive invariant of dg-categories valued in vector spaces should allow one to recover the decomposition of $\H^\bullet(X)$ obtained from the A-model F-bundle. For example, applying Blanc's topological K-theory \cite{BlancTopK} functor to the decompositions obtained in the present work should recover A-model F-bundle decomposition of $\H^\bullet(X)$.

While the categorical decompositions predicted by \cite{NMMP} would allow the construction of finer \emph{categorical} invariants of $X$, the price is that the theory seems to depend on difficult constructions of stability conditions in higher dimensions. We have made some first steps in this direction in the present work. However, even in the relatively simple cases considered here, proving canonicity of the decompositions obtained from the quasi-convergent paths, i.e. the global version of \Cref{Introconj:uniqueness}, necessitates a better global understanding of $\Stab(X)$. 

In the present work, we treat only the case of smooth Fano varieties so that small quantum cohomology is convergent. On the other hand, the other works treat more general varieties, and therefore need to address convergence issues. In \cite{NMMP}, a polynomial truncation of the quantum differential equation is proposed, which circumvents these convergence issues. On the other hand, in \cite{KKPY} the authors employ techniques of non-Archimedean analysis to obtain convergence. 

Finally, we also mention the recent work of Elagin--Schneider--Shinder \cite{SurfaceAtomic} which constructs canonical semiorthogonal decompositions of $\DCoh(X)$ in the case where $X$ is a smooth projective surface. These decompositions are furthermore compatible with standard operations such as blow-ups and formation of projective bundles. It should be investigated whether one can reproduce a version of the results of \cite{SurfaceAtomic} using the techniques of the NMMP outlined above.

\section*{Acknowledgements}

It is our pleasure to thank Arend Bayer, Hannah Dell, Daniel Halpern-Leistner, Wahei Hara, James Hotchkiss, Emanuele Macr\`{i}, Fumihiko Sanda, and Yukinobu Toda for many useful conver\-sations about the material in this paper. We are especially grateful to Emanuele Macr\`{i} for his comments on an early version of the paper.

T.K. was supported by JSPS KAKENHI Grant Number 24KJ0713. A.R. was supported by NSF grant DMS-2503404. V.Z. was supported by ERC Synergy Grant 854361 HyperK.

A.R. also thanks Maria Teresa for her unwavering support during the preparation of this work.

\newpage

\section*{Notation and conventions}
\label{S:notationandconventions}

We gather here some notation used throughout the paper:

\begin{center}
\begin{tabular}{@{}ll@{}}
$X$ & smooth complex projective variety, usually Fano \\
$\DCoh(X)$ & bounded derived category of coherent sheaves on $X$ \\
$\H^\bullet(X)$ & singular cohomology of $X$ with complex coefficients $\H^\bullet(X,\bf{C})$\\
$B$ & locus of convergence of the quantum product in $\H^\bullet(X)$ containing $0$\\
$\mathscr{B}$ & $B \setminus \{\tau \in B: \text{the Euler operator has repeated eigenvalues}\}$ \\
$\Stab(X)$ & space of stability conditions on $\DCoh(X)$ -- see \Cref{SS:stabilityconditionsreview} \\
$\cZ_w^\tau(-)$ & quantum cohomology central charge at $\tau \in B\subset \H^\bullet(X)$\\
$\cD$ & $k$-linear triangulated category \\
$\mathcal{H}^i_{\mathcal{A}}(-)$ & $i^{\rm{th}}$ cohomology object functor with respect to a heart $\cA$\\
$\mathcal{H}^i(-)$ & $\cH^i_{\Coh(X)}(-)$ on $\DCoh(X)$\\
$\bf{R}_E(-)$ & right mutation at exceptional object $E$, $\Cone( - \rightarrow \RHom(-,E)^{\vee}\otimes E)[-1]$\\
$\bf{L}_{E}(-)$ & left mutation at exceptional object $E$, $\Cone(\RHom(E,-)\otimes E \rightarrow -)$ \\
$\H^i(-)$ & $i^{\rm{th}}$ sheaf cohomology functor on $X$ \\
$\ch^\beta(E)$ & $e^{-\beta H}\ch(E)$ for $\beta \in \H^\bullet(X)$ and $H$ the hyperplane class \\
$\Ch(-)$ & $(2\pi \mathtt{i})^{\deg/2} \ch(-)$\\
$\cI_{X/Y}$ & ideal sheaf of a closed subvariety $X$ of another variety $Y$\\ 
$\cI_{x,Y}$& $\cI_{x/Y}$, where $x$ is a closed point in $Y$\\
$\mathfrak{S}_n$ & symmetric group on $n$ elements\\
$\mathfrak{B}_n$ & braid group on $n$ strands\\
$\mathscr{S}(\varphi,\epsilon)$ & $\{w\in \bf{C}^*: \arg(w) \in (\varphi-\epsilon,\varphi+\epsilon)\}$, for $\varphi \in \bf{R}$ and $\epsilon>0$\\
$\mathscr{S}$ & an angular sector in $\bf{C}^*$, i.e. $\mathscr{S}(\varphi,\varepsilon)$ for some $\varphi,\epsilon$\\
$f(t)\sim g(t)$ & $\lim_{t\to 0} \log f(t) - \log g(t) = 0$ (or $t\to\infty$, depending on context)\\
$f(t)\approx g(t)$ & $\lim_{t\to 0}f(t)-g(t) = 0$ (or $t\to\infty$, depending on context)\\
$\GL_2^+(\bf{R})^\sim$ & universal cover of $\GL_2^+(\bf{R})$ -- see \cite{Br07}*{Lem. 8.2} for its action on $\Stab(X)$ \\ 
$\mathscr{U}_N$ & configuration space of $N$ unlabelled points in $\bf{C}$ 
\end{tabular}
\end{center}
\medskip

A homomorphism $A\to B$ of Abelian groups is called a \emph{rational surjection} if the induced map $A \otimes_\bf{Z} \bf{Q} \to B\otimes_{\bf{Z}} \bf{Q}$ is a surjection.

We say that $Z\subset \bf{C}$ is in \emph{general position} if for all $x\ne y\in Z$, one has $\Re(x) \ne \Re(y)$ and $\Im(x) \ne \Im(y)$.

\section{Construction of some geometric stability conditions}
\label{S:constructionofstab}
In this section we construct geometric stability conditions using the gluing construction of Collins--Polishchuk \cite{Collins_Polischuk_2010}, summarized as \Cref{thm:glued}. When there is a special resolution of the diagonal $\cO_\Delta$ sheaf on $X\times X$ by sums of exceptional sheaves as in \cites{Kapranov85,Kapranov1988}, \Cref{T:geomexistence} implies existence of glued geometric stability conditions. When the derived category admits a Kuznetsov-type decomposition, we can generalize this technique to construct geometric stability conditions in some cases, including generic cubic fourfolds -- see \Cref{thm_geomstab_cubic4_noplane}.

\subsection{Bridgeland stability conditions}
\label{SS:stabilityconditionsreview}
We briefly recall the definition of Bridgeland stability conditions \cites{Br07} and the gluing technique of \cite{Collins_Polischuk_2010}, which plays an essential role in this paper. Throughout, $\cD$ is a $k$-linear triangulated category. 

\begin{defn}
\label{D:prestability}
    A \emph{slicing} $\cP$ on $\cD$ is a collection of full additive subcategories $\{\cP(\phi)\}_{\phi \in \bf{R}}$ of $\cD$ such that \vspace{-2mm}
    \begin{enumerate}
        \item $\phi_1>\phi_2\Rightarrow \Hom_{\cD}(\cP(\phi_1),\cP(\phi_2)) = 0$ \vspace{-2mm}
        \item $\cP(\phi)[1] = \cP(\phi+1)$ for all $\phi \in \bf{R}$, and \vspace{-2mm}
        \item for every non-zero object $E$ of $\cD$ there exists a sequence of real numbers $\phi_1>\cdots>\phi_n$ and a sequence of morphisms $0 = E_0 \to E_1\to \cdots \to E_n = E$ such that $\Cone(E_{i-1}\to E_i) \in \cP(\phi_i)$ for each $i=1,\ldots, n$.\vspace{-2mm}
    \end{enumerate}
    A \emph{pre-stability condition} on $\cD$ is a pair $(Z,\cP)$ where $\cP$ is a slicing and $Z\in \Hom_{\bf{Z}}(\rm{K}_0(\cD),\bf{C})$ is called the \emph{central charge} such that for all $\phi \in \bf{R}$ and all non-zero $E\in \cP(\phi)$ we have $Z(E) \in \bf{R}_{>0}\cdot \exp(i\pi \phi)$. Such an object $E$ is called \emph{semistable} of phase $\phi$, and $\lvert Z(E)\rvert =:m(E)$ is its \emph{mass}.
\end{defn}

The sequence of maps in \Cref{D:prestability}(3) is called a \emph{Harder-Narsimhan filtration}. It is a standard fact that a prestability condition $\sigma$ on $\cD$ is equivalent to specifying a heart $\cA$ of a bounded t-structure on $\cD$ and a ``stability function'' $Z:\rm{K}_0(\cA) \to \bf{C}$ satisfying the Harder-Narasimhan property \cite{Br07}*{Prop. 5.3}. Thus, sometime we denote a (pre-)stability condition by $\sigma = (Z,\cA)$, where $Z$ is the central charge and $\cA = \cP_\sigma(0,1]$, i.e. the extension closure in $\cD$ of $\bigcup_{\phi \in (0,1]}\cP(\phi)$, is the associated heart. 

In recent years, it has become common practice to consider the following strengthening of Bridgeland's original notion, as proposed by Kontsevich-Soibelman \cite{KS:08}. By a slight abuse of notation, we write $\cP = \bigcup_{\phi \in \bf{R}} \cP(\phi)$. 

\begin{defn}
    We fix once and for all a finitely generated Abelian group $\Lambda$ of positive rank and a homomorphism $v:\rm{K}_0(\cD) \to \Lambda$ which is a surjection after tensoring with $\bf{Q}$.\footnote{In the sequel, we call such a $v$ a rational surjection.} Choose any norm $\lVert \:\cdot\:\rVert$ on $\Lambda_{\bf{R}}$. We say that a pre-stability condition $\sigma = (Z,\cP)$ is a \emph{stability condition} if it satisfies the \emph{support property} with respect to $v$: 
    \[
        \inf_{0\ne E \in \cP} \frac{\lvert Z(E)\rvert}{\lVert v(E)\rVert} > 0.
    \]
    The set of stability conditions satisfying the support property with respect to $v$ is denoted $\Stab_\Lambda(\cD)$. We usually omit $\Lambda$ from the notation, but it is considered implicit.
\end{defn}

Consider a pre-stability condition $\sigma = (Z,\cP)$ on $\cD$ and a non-zero object $E$. By \Cref{D:prestability}, $E$ has a Harder-Narasimhan filtration $E_0\to E_1\to \cdots \to E_n = E$ with Harder-Narasimhan factors $A_1,\ldots, A_n$ defined by $A_i = \Cone(E_{i-1}\to E_i) \in \cP(\phi_i)$. It can be shown that the $A_i$ are unique up to isomorphism. Consequently, we can define the maximal phase of $E$ as $\phi_\sigma^+(E):=\phi_1$, the minimal phase of $E$ as $\phi_\sigma^-(E) := \phi_n$, and the \emph{mass} of $E$ as $m_\sigma(E) = \sum_i \lvert Z(A_i)\rvert$.

It is a non-trivial fact that the space of pre-stability conditions can be given a topology induced by a generalized metric \cite{Br07}*{\S 6}. The following result, sometimes called Bridgeland's deformation theorem, is the main result about stability conditions. It was proven originally in \cite{Br07} subject to slightly different hypotheses. The following version is proven in \cite{Bayershort}*{Thm. 1.2}. 

\begin{thm}
    The space $\Stab_{\Lambda}(\cD)$ has a unique complex structure such that the map
    \[
        \Stab_{\Lambda}(\cD) \to \Hom_{\bf{Z}}(\Lambda, \bf{C}), \quad \sigma=(Z_{\sigma},\cP_{\sigma}) \mapsto Z_{\sigma}
    \]
    is a local biholomorphism.
\end{thm}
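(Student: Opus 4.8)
The plan is to show that the central charge map $\mathcal{Z}\colon \Stab_{\Lambda}(\cD)\to \Hom_{\bf{Z}}(\Lambda,\bf{C})$, $\sigma\mapsto Z_\sigma$, is a local homeomorphism onto an open subset, with respect to the topology on the source induced by the generalized metric recalled above (from \cite{Br07}*{\S 6}). Granting this, the complex structure is both forced and unique: the homeomorphisms $\mathcal{Z}|_V\colon V\xrightarrow{\sim}\mathcal{Z}(V)\subseteq \Hom_{\bf{Z}}(\Lambda,\bf{C})\cong \bf{C}^{\rk\Lambda}$, as $V$ ranges over open sets on which $\mathcal{Z}$ restricts to a homeomorphism onto an open image, form an atlas whose transition maps are restrictions of the identity of $\Hom_{\bf{Z}}(\Lambda,\bf{C})$ and hence biholomorphic, and any complex structure making $\mathcal{Z}$ a local biholomorphism must coincide with this one. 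So everything reduces to: (i) $\mathcal{Z}$ is continuous; (ii) $\mathcal{Z}$ is locally injective; (iii) the \emph{deformation/openness} statement, that every $Z$ close to $Z_{\sigma_0}$ is realized by some $\sigma$ close to $\sigma_0$; and (iv) $\mathcal{Z}^{-1}$ is continuous on the image.

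Items (i) and (ii) are soft. Continuity holds because the metric dominates the operator-norm distance of central charges. For local injectivity, if $(Z,\cP_0)$ and $(Z,\cP_1)$ have equal central charge and are within $\epsilon<\tfrac12$ in the metric, then on every object the maximal and minimal phases of the two slicings differ by less than $\epsilon$; since $Z$ determines the phase of any semistable object modulo $2$, this forces the phases, and then the HN factors, to agree, and an induction on HN length gives $\cP_0=\cP_1$.

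The heart of the matter is (iii), and this is exactly where the support property is used. The plan is first to record the quadratic-form reformulation: $(Z_0,\cP_0)$ satisfies the support property with respect to $v$ if and only if there is a quadratic form $Q_0$ on $\Lambda_{\bf{R}}$ that is negative definite on $\ker Z_0$ and satisfies $Q_0(v(E))\ge 0$ for all $\sigma_0$-semistable $E$. Fixing such a $Q_0$ and a small $\epsilon>0$, let $U\ni Z_0$ be the open set of central charges $Z$ for which $Z(E)/Z_0(E)$ lies in a fixed small sector about $1$ for every $\sigma_0$-semistable $E$ — an open condition precisely because, by the support property, the set $\{v(E): E\ \sigma_0\text{-semistable},\ |Z_0(E)|=1\}$ is bounded in $\Lambda_{\bf{R}}$. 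For $Z\in U$, build a slicing $\cP$ by working inside the quasi-abelian categories $\cP_0(I)$ with $I$ of length $<1$: on $\cP_0((\psi-\epsilon,\psi+\epsilon))$ the restriction of $Z$ is a stability function with values in an interval of length $<1$, so it admits HN filtrations, and one sets $\cP(\psi)$ to be its $Z$-semistable objects of phase $\psi$. One then checks that the $\cP(\psi)$ satisfy the slicing axioms, that every object of $\cD$ has a $\cP$-HN filtration obtained by splicing its finitely many $\sigma_0$-HN pieces with their internal $Z$-HN filtrations (finiteness, i.e. no accumulation of phases, being supplied again by the support property), that $(Z,\cP)$ lies within $O(\epsilon)$ of $\sigma_0$ in the metric, and that $Z\mapsto(Z,\cP)$ is continuous — which also gives (iv).

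The main obstacle is verifying that the deformed pair $(Z,\cP)$ still satisfies the support property, \emph{with a constant uniform in} $Z\in U$; without this $(Z,\cP)$ is only a pre-stability condition, and without uniformity one obtains merely a continuous bijection rather than a homeomorphism. This is where the quadratic form is indispensable: any $\cP$-semistable object $E$ is assembled, via the splicing above, from $\sigma_0$-semistable objects whose $Z_0$-phases all lie in a window of width $<1$, so $v(E)$ lies in a mild thickening of the convex cone they span, on which $Q_0\ge -\delta(\epsilon)$ with $\delta(\epsilon)\to 0$; together with negative definiteness of $Q_0$ on $\ker Z$ for $Z$ near $Z_0$, a short computation on the plane spanned by $v(E)$ and $\ker Z$ yields $|Z(E)|\ge C\,\lVert v(E)\rVert$ with $C>0$ independent of $Z$ on a possibly smaller neighbourhood. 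Collecting these verifications — quasi-abelianity of $\cP_0(I)$, existence and finiteness of HN filtrations in $\cD$, the uniform support estimate, and the metric bound — proves (iii) and (iv), hence the theorem; this reproduces the streamlined argument of \cite{Bayershort}*{Thm.~1.2}, which replaces the more laborious original deformation argument of \cite{Br07}*{\S 7} by the passage to $Q_0$.
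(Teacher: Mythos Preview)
The paper does not supply its own proof of this theorem; it merely records the statement and cites \cite{Bayershort}*{Thm.~1.2} (noting the original version in \cite{Br07}). Your proposal is a reasonable high-level sketch of precisely that Bayer argument, so it is consistent with the paper's treatment and there is nothing to compare.
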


In particular, deformations of a stability condition are controlled by the deformations of its central charge, which is an element of a finite dimensional complex vector space.

\begin{ex}
\label{E:latticeexample}
    We explain the choice of map $v:\rm{K}_0(\cD) \to \Lambda$ to be used in what follows in the case of $\cD = \DCoh(X)$. We regard $\DCoh(X)$ as a pre-triangulated dg-category with its canonical enhancement. For a $\bf{C}$-linear dg-category $\cD$, Blanc \cite{BlancTopK} constructs a topological K-theory spectrum $\bf{K}^{\rm{top}}(\cD)$ and a morphism $\bf{K}(\cD) \to \bf{K}^{\rm{top}}(\cD)$ of spectra, where $\bf{K}(\cD)$ is the algebraic K-theory spectrum introduced in \cite{Schlichting06}. When $\cD = \DCoh(X)$, taking $\pi_0$ of this map of spectra recovers the Chern character 
    \begin{equation}
    \label{E:chernchar}
        \ch:\rm{K}_0(\cD) \to \rm{K}_0^{\rm{top}}(X).
    \end{equation}
    Tensoring with $\bf{Q}$, we have an isomorphism $\rm{K}_0^{\rm{top}}(X)_{\bf{Q}} \cong \bigoplus_{i=0}^n \H^{2i}(X,\bf{Q})$, where $n = \dim X$, and we take our lattice $\Lambda$ to be the image of the map \eqref{E:chernchar}, which is identified with the algebraic cohomology $\H^{\bullet}_{\rm{alg}}(X)$ of $X$.

    In practice, our central charges will be defined to depend on a composite of $\ch$ with a $\bf{C}$-linear automorphism of $\H^{\bullet}(X,\bf{C})$. Examples of this are $\Ch(-) := \sum_j(2\pi \mathtt{i})^j\ch_j(-)$ or the Mukai vector $v(-):=\sqrt{\rm{td}(X)}\cdot \ch(-)$. It is important in our approach to use a lattice $\Lambda$ with a canonical embedding in $\H^{\bullet}(X,\bf{C})$ because this is where the quantum differential equation is defined.
\end{ex}

It is common practice in the literature on stability conditions to consider stability conditions on $\DCoh(X)$ that are \emph{numerical} in that their charges factor through the numerical Grothendieck group of $X$, denoted $\cN(X)$. There is a canonical map $\rm{K}_0(X) \twoheadrightarrow \cN(X)$; however, as remarked in \cite{BridgelandDb(intro)}*{p. 8} it is a difficult problem to construct a homomorphism $\cN(X) \to \H^{\bullet}(X,\bf{Q})$ compatible with $\ch:\rm{K}_0(X) \to \H^{\bullet}(X,\bf{Q})$. Since it is crucial for us that our stability conditions are \emph{topological}, in that they factor through topological K-theory of $\cD$, we explain the comparison in cases of interest to us.

\begin{ex}
    If $\DCoh(X)$ admits a full exceptional collection $\cE = \{E_1,\ldots, E_n\}$, then $\rm{K}_0(X) \cong \bigoplus_{i=1}^n \bf{Z}\cdot E_i$ and $\cN(X) = \rm{K}_0(X)$, since the kernel of the Euler pairing is trivial in this case. Further, $\ch$ induces an isomorphism $\rm{K}_0(X)_{\bf{Q}}\to \H^{\bullet}(X,\bf{Q})$, so that numerical stability conditions coincide with the topological ones. 
\end{ex}

\begin{ex}
    Next, we consider cubics.
    \begin{enumerate}
        \item Let $X$ be a cubic threefold. In this case, $\cN(X)$ is freely generated by the classes of $\cO_X,\cO_H,\cO_\ell, \cO_p$, where $H$ is a hyperplane section of $X$, $\ell$ is a line on $X$, and $p$ is a point -- see \cite{bernardara2012categorical}*{Prop. 2.7}. Consequently, there is an induced map $\ch:\cN(X)\to \H^{\bullet}(X,\bf{Q})$ which induces an isomorphism onto the lattice of algebraic classes. It follows that numerical and topological stability conditions in these cases are equivalent. Furthermore, \cite{bernardara2012categorical}*{Lem. 2.6} gives a decomposition $\cN(X) = \cN(\Ku(X)) \oplus \bf{Z}\cdot \cO_X \oplus \bf{Z}\cdot \cO_X(1)$, where $\cN(\Ku(X))$ is characterized as the left orthogonal complement to $\cO_X$ and $\cO_X(1)$ with respect to the Euler pairing; thus, numerical and topological stability conditions on $\Ku(X)$ are also equivalent.
        \item The case where $X$ is a cubic fourfold is more complicated, but has been treated in \cite{AddingtonThomas}. Indeed, on p. 1891 of \emph{loc. cit.} it is explained that $\cN(\Ku(X))$ can be identified with the image of the canonical map $\rm{K}_0^{\rm{top}}(\Ku(X)) \to \rm{K}_0(\Ku(X))$. It again follows that numerical and topological stability conditions coincide, both for $\Ku(X)$ and $\DCoh(X)$.
    \end{enumerate}
\end{ex}

\begin{rem}
    It does not seem easy to verify that $\cN(X)$ admits a direct sum decomposition compatible with any given semiorthogonal decomposition $\DCoh(X) = \langle \cD_1,\ldots, \cD_n\rangle$ when $X$ is more general. Indeed, it seems that in the relatively simple cases considered here, the reason that $\cN(X)$ splits is that it coincides with the image of the natural map $\rm{K}_0^{\rm{top}}(X) \to \rm{K}_0(X)$, and additivity of topological K-theory. In this sense, topological K-theory is a more suitable choice for studying the relationship between stability conditions and semiorthogonal decompositions.
\end{rem}

\subsubsection*{Gluing stability conditions}
Next, we recall the notion of gluing stability condition. Gluing for a semiorthogonal decomposition with two components was introduced in \cite{Collins_Polischuk_2010}. The more general case is discussed in \cite{HLJR}*{\S 3}.

\begin{lem}{\cite{Collins_Polischuk_2010}*{Lem. 2.1}}
Let $\cD = \langle \cD_1 ,\cD_2\rangle$ be a semiorthogonal decomposition
and let $\cA_i$ be the heart of a bounded t-structure on $\cD_i$ for $i=1,2$. If $\Hom^{\leq 0}(\cA_1,\cA_2) = 0$, then there exists a t-structure on $\cD$ with heart
\[
  \cA_1 \circ \cA_2 \coloneq \left\{E \in \cD \mid \pr_1(E) \in \cA_1,\pr_2(E) \in \cA_2 \right\},
\]
where $\pr_i$ is the projection functor $\cD \rightarrow \cD_i$.
\end{lem}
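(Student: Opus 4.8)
The plan is to realize the claimed heart as $\cD^{\le 0}\cap\cD^{\ge 0}$ for an explicit t-structure built by ``gluing'' the given t-structures on $\cD_1$ and $\cD_2$ through the projection functors, in the spirit of the recollement construction of Beilinson--Bernstein--Deligne. I would adopt the convention $\Hom(\cD_2,\cD_1)=0$, so that each $E\in\cD$ sits in a functorial triangle $i_2\pr_2(E)\to E\to i_1\pr_1(E)\xrightarrow{+1}$, where $i_j\colon\cD_j\hookrightarrow\cD$ is the inclusion, $\pr_1$ (resp.\ $\pr_2$) is its left (resp.\ right) adjoint, $\pr_j i_j=\id$, $\pr_1 i_2=0=\pr_2 i_1$, and the $\pr_j$ are exact. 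Writing $(\cD_j^{\le 0},\cD_j^{\ge 0})$ for the bounded t-structure on $\cD_j$ with heart $\cA_j$ and canonical truncations $\tau_j^{\le\bullet},\tau_j^{\ge\bullet}$, set
\[
  \cD^{\le 0}:=\{E:\pr_1(E)\in\cD_1^{\le 0},\ \pr_2(E)\in\cD_2^{\le 0}\},\qquad \cD^{\ge 0}:=\{E:\pr_1(E)\in\cD_1^{\ge 0},\ \pr_2(E)\in\cD_2^{\ge 0}\}.
\]
Exactness of the $\pr_j$ gives $\cD^{\le 0}[1]\subseteq\cD^{\le 0}$ and $\cD^{\ge 0}[-1]\subseteq\cD^{\ge 0}$, and boundedness is inherited from the $\cD_j$-t-structures. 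If this pair is a t-structure, its heart is visibly $\{E:\pr_1(E)\in\cA_1,\ \pr_2(E)\in\cA_2\}=\cA_1\circ\cA_2$, so it remains to verify the two nontrivial axioms.

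\textbf{Orthogonality.} Let $E\in\cD^{\le 0}$ and $F\in\cD^{\ge 1}:=\cD^{\ge 0}[-1]$. Applying $\Hom(-,F)$ to the SOD triangle of $E$, and then $\Hom(i_a\pr_a(E),-)$ to that of $F$, reduces $\Hom(E,F)=0$ to the vanishing of the four groups $\Hom(i_a\pr_a(E),i_b\pr_b(F))$ for $a,b\in\{1,2\}$. The case $(a,b)=(2,1)$ vanishes by semiorthogonality of $\langle\cD_1,\cD_2\rangle$; $(1,1)$ and $(2,2)$ vanish because $i_j$ is fully faithful and $\Hom(\cD_j^{\le 0},\cD_j^{\ge 1})=0$; and for $(1,2)$, writing $\pr_1(E)$ (resp.\ $\pr_2(F)$) as an iterated extension of shifts $\cA_1[n]$ with $n\ge 0$ (resp.\ $\cA_2[m]$ with $m\le -1$) reduces it to groups $\Hom(\cA_1,\cA_2[m-n])$ with $m-n\le -1$, which vanish by the hypothesis $\Hom^{\le 0}(\cA_1,\cA_2)=0$.

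\textbf{Truncation triangles (the crux).} Fix $E$, put $a:=\pr_1(E)$, $b:=\pr_2(E)$, let $\delta\in\Hom_\cD(a,b[1])$ be the class of the SOD triangle $i_2 b\to E\to i_1 a\xrightarrow{\delta}i_2 b[1]$, and write $a^{\le}\xrightarrow{\alpha}a\to a^{\ge}$, $b^{\le}\xrightarrow{\beta}b\to b^{\ge}$ for the canonical $\cD_j$-truncation triangles. The idea is to split $\delta$ along these truncations. The composite $a^{\le}\xrightarrow{\alpha}a\xrightarrow{\delta}b[1]\to b^{\ge}[1]$ lies in $\Hom(a^{\le},b^{\ge}[1])$, which reduces to groups $\Hom(\cA_1,\cA_2[\ell])$ with $\ell\le 0$ and hence vanishes --- this is exactly where the degree-zero part of $\Hom^{\le 0}(\cA_1,\cA_2)=0$ (rather than merely $\Hom^{<0}$) is used. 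So $\delta\circ\alpha$ factors as $a^{\le}\xrightarrow{\delta^{\le}}b^{\le}[1]\xrightarrow{\beta[1]}b[1]$, uniquely since $\Hom(a^{\le},b^{\ge})$ reduces to $\Hom(\cA_1,\cA_2[<0])=0$. Define $\tau^{\le 0}E:=\fib\bigl(i_1 a^{\le}\xrightarrow{\delta^{\le}}i_2 b^{\le}[1]\bigr)$; applying the exact functors $\pr_1,\pr_2$ to its defining triangle gives $\pr_1(\tau^{\le 0}E)\cong a^{\le}$ and $\pr_2(\tau^{\le 0}E)\cong b^{\le}$, so $\tau^{\le 0}E\in\cD^{\le 0}$. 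The relation $\beta[1]\circ\delta^{\le}=\delta\circ\alpha$ says that $(i_1\alpha,i_2\beta)$ extends to a morphism of triangles from the defining triangle of $\tau^{\le 0}E$ to that of $E$, so by the triangle axioms there is $\theta\colon\tau^{\le 0}E\to E$ completing it; a standard octahedron (the nine lemma) then places $\cofib(\theta)$ in a triangle $i_2 b^{\ge}\to\cofib(\theta)\to i_1 a^{\ge}\xrightarrow{+1}$, whence $\pr_1(\cofib\theta)\cong a^{\ge}$, $\pr_2(\cofib\theta)\cong b^{\ge}$, and $\cofib(\theta)\in\cD^{\ge 1}$. Thus $\tau^{\le 0}E\xrightarrow{\theta}E\to\cofib(\theta)\xrightarrow{+1}$ is the desired truncation triangle.

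With both axioms checked, $(\cD^{\le 0},\cD^{\ge 0})$ is a bounded t-structure on $\cD$; functoriality of the truncations is then automatic, and the heart is $\cA_1\circ\cA_2$ as observed above. The only genuinely delicate step is the construction of $\tau^{\le 0}$: the asymmetric hypothesis $\Hom^{\le 0}(\cA_1,\cA_2)=0$ is precisely what is needed to disentangle the $\cA_1$- and $\cA_2$-parts of an arbitrary object (i.e.\ to split its extension class $\delta$ along the truncations), which is the point where one must pay attention; everything else is a routine reduction using semiorthogonality and the t-structures on the $\cD_j$.
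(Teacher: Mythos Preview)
Your proof is correct and follows the standard recollement/gluing argument. Note, however, that the paper does not give its own proof of this lemma: it is simply quoted from \cite{Collins_Polischuk_2010}*{Lem.~2.1} as a known result. Your argument is essentially the one found there (and in BBD), so there is nothing to compare.
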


As mentioned in the previous section, we fix a rational surjection $v: \rm{K}_0(\cD) \to \Lambda$ to a finitely generated Abelian group of positive rank. In the presence of a semiorthogonal decomposition $\cD = \langle \cD_1,\ldots,\cD_n\rangle$, we further assume that there is a splitting $\Lambda = \bigoplus_{i=1}^n \Lambda_i$ such that $v$ restricts to a rational surjection $v_i:\rm{K}_0(\cD_i)\to \Lambda_i$ for each $i=1,\ldots, n$.

\begin{defn}{\cite{Collins_Polischuk_2010}}
    Consider a semiorthogonal decomposition $\cD = \langle \cD_1,\cD_2\rangle$. A stability condition $\sigma = (Z,\cA) \in \Stab(\cD)$ is \emph{glued} from $\sigma_1 = (Z_1,\cA_1) \in \Stab(\cD_1)$ and $\sigma_2 =(Z_2,\cA_2)  \in \Stab(\cD_2)$
    if: \vspace{-2mm}
    \begin{enumerate}
        \item $\Hom^{\leq 0}(\cA_1,\cA_2) = 0$, \vspace{-2mm}
        \item the heart $\cA = \cA_1\circ\cA_2$, and  \vspace{-2mm}
        \item $Z = Z_1\oplus Z_2$. \vspace{-2mm}
    \end{enumerate}
    We abbreviate this by $\sigma=\sigma_1*\sigma_2$.
\end{defn}

For $\theta \in [0,1]$, we let $\mathbf{H}_{\theta} = \left\{ r \cdot \exp(i \pi \phi) : r \in \bf{R}_{>0}, \phi \in [\theta, 1]\right\}$.  

\begin{thm}
\label{thm:glued}\label{prop:support}
    Suppose given a semiorthogonal decomposition $\cD = \langle \cD_1,\cD_2\rangle$ and $\sigma_i = (Z_i,\cP_i) \in \Stab(\cD_i)$ for $i=1,2$. 
    Assume $\Hom^{\leq 0}(\cP_1(0,1],\cP_2(0,1]) = 0$.
    If there exist $a\in (0,1)$ such that $\Hom^{\le 0}(\cP_1(a,a+1],\cP_2(a,a+1]) = 0$ and \vspace{-2mm}
    \begin{enumerate}
        \item $\theta \in (0,1]$ such that $Z_2(\cA_2)\subset \bf{H}_\theta$; or \vspace{-2mm}
        \item $\theta \in (0,1)$ such that $Z_1(\cA_1)\subset \bf{H}\setminus \bf{H}_\theta$\vspace{-2mm}
    \end{enumerate}
    then there exists $\sigma \in \Stab(\cD)$ glued from $\sigma_1$ and $\sigma_2$.
\end{thm}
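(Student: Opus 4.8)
The plan is to reduce the gluing theorem to the two-step verification that underlies the Collins--Polishchuk construction: first that $\cA_1\circ\cA_2$ is the heart of a bounded t-structure, and second that the natural central charge $Z=Z_1\oplus Z_2$ together with this heart satisfies the Harder--Narasimhan property and the support property. The first point is immediate from the lemma of \cite{Collins_Polischuk_2010} quoted above, since hypothesis $\Hom^{\le 0}(\cP_1(0,1],\cP_2(0,1])=0$ is precisely $\Hom^{\le 0}(\cA_1,\cA_2)=0$ for $\cA_i=\cP_i(0,1]$. So the real content is to produce a stability function with HN filtrations and to control masses from below.

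First I would set up the glued slicing directly rather than working with the heart: for $E\in\cD$ one has the projection triangle $\pr_1(E)\to E\to \pr_2(E)$, and HN filtrations of $\pr_1(E)$ in $\sigma_1$ and $\pr_2(E)$ in $\sigma_2$; splicing these (using semiorthogonality $\Hom(\cD_1,\cD_2)=0$ to kill the obstruction maps in the wrong degrees) gives a candidate HN filtration of $E$ in $\cD$. The hypothesis $\Hom^{\le 0}(\cP_1(a,a+1],\cP_2(a,a+1])=0$ is exactly what is needed to guarantee that the two pieces do not ``overlap in phase'' after the cut at $a$: it ensures every $\sigma_1$-semistable factor sitting in $\cP_1(a,a+1]$ has phase strictly above every $\sigma_2$-semistable factor in $\cP_2(a,a+1]$, so that reordering the spliced filtration by decreasing phase is consistent. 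One then checks the Hom-vanishing axiom of \Cref{D:prestability}(1) for the resulting slicing $\cP=\cP_1*\cP_2$ using the semiorthogonality together with the vanishing hypotheses; this is the routine ``book-keeping'' part.

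Next comes the geometric input in conditions (1)/(2). After establishing that we have a pre-stability condition, the central charge of a nonzero object is a sum $Z_1(\pr_1 E)+Z_2(\pr_2 E)$, and the danger is cancellation making $|Z(E)|$ small while $\|v(E)\|$ stays bounded away from $0$ — i.e. failure of the support property. Here the sector hypotheses do the work: in case (1), all classes of $\cA_2$ have central charge in the closed convex cone $\bf{H}_\theta$ with $\theta>0$, so $Z_2$-values live in a half-plane strictly inside the upper half-plane; combined with $Z_1(\cA_1)$ landing in $\bf{H}$, a convexity/angle argument shows $|Z_1(a_1)+Z_2(a_2)|\ge c(|Z_1(a_1)|+|Z_2(a_2)|)$ for a uniform $c>0$ (no cancellation across the decomposition), and then the support properties of $\sigma_1,\sigma_2$ with respect to $\Lambda_1,\Lambda_2$, plus equivalence of norms on $\Lambda_\bf{R}=\Lambda_{1,\bf{R}}\oplus\Lambda_{2,\bf{R}}$, give the support property for $\sigma$. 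Case (2) is symmetric, pushing $Z_1(\cA_1)$ into a complementary cone. I would also note that the same cone estimate re-derives the HN property cleanly, since it prevents masses from degenerating.

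The step I expect to be the main obstacle is the splicing/reordering of HN filtrations together with the verification that the spliced filtration is genuinely the HN filtration in $\cD$ — i.e. that each graded piece is $\sigma$-semistable of the asserted phase and that phases are strictly decreasing. The subtlety is that a $\sigma_1$-semistable object need not remain ``semistable'' once reinterpreted in $\cD$ unless one knows there are no destabilizing subobjects coming from $\cD_2$; this is where the two Hom-vanishing hypotheses (the unshifted one and the shifted-by-$a$ one) must be used in tandem, and where the precise choice of the cut $a$ matters. Once that is in place, conditions (1) and (2) are exactly the extra convexity needed to upgrade the pre-stability condition to a genuine stability condition, and the conclusion follows.
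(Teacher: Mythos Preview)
Your proposal has the roles of the hypotheses reversed, and this leads to a genuine gap. The sector conditions (1)/(2) are \emph{not} there to prevent cancellation in the support property estimate; they are the input to Collins--Polishchuk's Theorem~3.6 that guarantees the \emph{Harder--Narasimhan property} on the glued heart $\cA_1\circ\cA_2$. Your convexity claim is in fact false as stated: in case~(1), $Z_2(\cA_2)\subset\bf{H}_\theta$ while $Z_1(\cA_1)$ ranges over all of $\bf{H}$, so one can have $Z_1(a_1)$ of phase near $0$ and $Z_2(a_2)$ of phase near $1$, and these nearly cancel. No uniform bound $|Z_1(a_1)+Z_2(a_2)|\ge c(|Z_1(a_1)|+|Z_2(a_2)|)$ follows.

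Conversely, your splicing argument for HN does not go through with only the two Hom-vanishing hypotheses. The $\sigma$-HN filtration of $E$ need not arise by concatenating the $\sigma_i$-HN filtrations of $\mathrm{pr}_i(E)$: a maximal destabilizing subobject of $E$ in $\cA_1\circ\cA_2$ can be a genuine extension of an object of $\cA_2$ by one of $\cA_1$, and neither the unshifted nor the $a$-shifted vanishing rules this out. The sector hypotheses (1)/(2) are precisely what Collins--Polishchuk use to force the chain conditions needed for HN filtrations to exist in the glued heart. The paper's own proof simply cites this: Collins--Polishchuk Thm.~3.6 supplies the pre-stability condition with HN (consuming the reasonable-ness hypothesis and one of (1)/(2)), while the support property is established separately by \cite{karube2024noncommutative}*{Props.~3.11,~3.12}, which do not rely on a non-cancellation estimate of the type you propose.
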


\begin{proof}
    The result is a combination of \cite{Collins_Polischuk_2010}*{Thm. 3.6} with \cite{karube2024noncommutative}*{Props. 3.11, 3.12}. 
\end{proof}

\begin{cor}
\label{C:gluingconditions}
    In the notation of \Cref{thm:glued}, if $\Hom^{\le 0}(\cA_1,\cA_2) = 0$ and \vspace{-2mm}
    \begin{enumerate}
        \item $\cA_2$ is generated by finitely many simple objects; or\vspace{-2mm}
        \item $\cA_1$ is generated by finitely many simple objects of phase not equal to one,\vspace{-2mm}
    \end{enumerate}
    then there exists $\sigma \in \Stab(\cD)$ glued from $\sigma_1$ and $\sigma_2$.
\end{cor}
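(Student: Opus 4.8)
The plan is to derive the corollary directly from \Cref{thm:glued}. The hypothesis $\Hom^{\le 0}(\cA_1,\cA_2)=0$ is exactly the standing assumption $\Hom^{\le 0}(\cP_1(0,1],\cP_2(0,1])=0$ of that theorem, so all that remains is to produce a rotation parameter $a\in(0,1)$ with $\Hom^{\le 0}(\cP_1(a,a+1],\cP_2(a,a+1])=0$ together with the appropriate one of the two positivity conditions on $Z_1$ or $Z_2$.

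For the rotated $\Hom$-vanishing, the first step is the elementary observation that, for any $a\in(0,1)$, the heart $\cP_i(a,a+1]$ is the extension closure of the subcategories $\cP_i(\phi)$ with $a<\phi\le a+1$, each of which is contained in $\cA_i=\cP_i(0,1]$ (when $\phi\le 1$) or in $\cA_i[1]$ (when $\phi>1$). Using the long exact sequences in both variables, it follows that $\Hom^{\le 0}(\cP_1(a,a+1],\cP_2(a,a+1])$ vanishes as soon as the four groups $\Hom^{\le 0}(\cA_1[\epsilon_1],\cA_2[\epsilon_2])$, $\epsilon_1,\epsilon_2\in\{0,1\}$, vanish (with $\cA_i[0]:=\cA_i$). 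Three of them vanish for free: $\Hom^{\le 0}(\cA_1,\cA_2)=0$ is the hypothesis, $\Hom^{\le 0}(\cA_1[1],\cA_2[1])\cong\Hom^{\le 0}(\cA_1,\cA_2)=0$, and $\Hom^{\le 0}(\cA_1[1],\cA_2)\cong\Hom^{\le -1}(\cA_1,\cA_2)=0$. The remaining one, $\Hom^{\le 0}(\cA_1,\cA_2[1])\cong\Hom^{\le 1}(\cA_1,\cA_2)$, is not controlled by our hypotheses; so I would choose $a$ so that this group never contributes, i.e.\ so that $\cP_2(a,a+1]$ has no $\cA_2[1]$-component (equivalently $\cP_2(0,a]=0$) or $\cP_1(a,a+1]$ has no $\cA_1$-component (equivalently $\cP_1(a,1]=0$).

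This is precisely what the two hypotheses of the corollary supply, in a dual fashion. In case $(1)$, $\cA_2$ has finite length with finitely many simple objects, each of which is $\sigma_2$-stable of some phase in $(0,1]$; writing $\mu_2>0$ for the least of these finitely many phases, every object of $\cA_2$ has central charge with argument in $[\pi\mu_2,\pi]$, so $Z_2(\cA_2)\subseteq\bf{H}_{\mu_2}$ --- this is condition $(1)$ of \Cref{thm:glued} with $\theta=\mu_2$ --- while $\cP_2(\phi)=0$ for $0<\phi<\mu_2$, so any $a\in(0,\mu_2)$ gives $\cP_2(0,a]=0$. In case $(2)$, the finitely many simple generators of $\cA_1$ all have phase strictly less than $1$; writing $\nu_1<1$ for the greatest of these phases, $Z_1(\cA_1)$ lies in the cone of arguments in $(0,\pi\nu_1]$, so $Z_1(\cA_1)\subseteq\bf{H}\setminus\bf{H}_\theta$ for any $\theta\in(\nu_1,1)$ --- this is condition $(2)$ of \Cref{thm:glued} --- while $\cP_1(\phi)=0$ for $\nu_1<\phi\le 1$, so any $a\in(\nu_1,1)$ gives $\cP_1(a,1]=0$. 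In either case the hypotheses of \Cref{thm:glued} are verified, and it produces the desired $\sigma\in\Stab(\cD)$ glued from $\sigma_1$ and $\sigma_2$.

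I expect the only genuine subtlety to be the degree bookkeeping in the second step: one must notice that shifting the left-hand category up by one is harmless (it lands in $\Hom^{\le -1}$) whereas shifting the right-hand category up is not, and accordingly take the rotation $a$ near $0$ in case $(1)$ and near $1$ in case $(2)$. The substantial content --- passing from a glued heart to a glued \emph{stability} condition satisfying the support property --- is already packaged in \Cref{thm:glued} and is simply invoked.
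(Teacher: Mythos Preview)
Your proof is correct and follows essentially the same approach as the paper: both arguments reduce to \Cref{thm:glued} by choosing the rotation parameter $a$ so that in case~(1) one has $\cP_2(a,a+1]=\cA_2$ (equivalently $\cP_2(0,a]=0$), and in case~(2) one has $\cP_1(a,a+1]=\cA_1[1]$ (equivalently $\cP_1(a,1]=0$), which kills the only problematic $\Hom$-group. Your write-up is slightly more explicit in spelling out the four-case breakdown and in verifying the positivity conditions on $Z_1,Z_2$, but the underlying idea is identical.
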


\begin{proof}
    If (1) holds, then so does \Cref{thm:glued}(1). Thus, it suffices to show that there exists $a\in (0,1)$ such that $\Hom^{\le 0}(\cP_1(a,a+1],\cP_2(a,a+1]) = 0$. We can choose $a \in (0,1)$ such that $\cP_2(a,a+1] = \cA_2$. Then, $\cP_1(a,a+1] \subset \langle \cA_1,\cA_1[1]\rangle_{\rm{ext}}$ and thus $\Hom^{\le 0}(\cP_1(a,a+1],\cP_2(a,a+1]) = 0$.

    Dually, if (2) holds, the argument is the same except that now we note that we can choose $a\in (0,1)$ sufficiently close to $1$ such that $\cP_1(a,a+1] = \cA_1[1]$. The result now follows, since $\cP_2(a,a+1] \subset \langle \cA_2,\cA_2[1]\rangle_{\rm{ext}}$.
\end{proof}

\begin{rem}
    The hypotheses (1) and (2) of \Cref{thm:glued} are necessary in the case where $\Hom^{\le 0}(\mathcal{A}_1,\mathcal{A}_2) = 0$ but $\Hom^1(\mathcal{A}_1,\mathcal{A}_2) \neq 0$. Indeed, if $\Hom^{\leq 1}(\mathcal{A}_1,\mathcal{A}_2) = 0$, then every semistable object $E$ in $\cA$ is a sum of semistable objects from $\cA_1$ and $\cA_2$, hence gluing holds.
\end{rem}

\subsection{Geometric stability from full exceptional collections}
\label{SS:geomFEC}
In this sec\-tion, we give a pro\-cedure for producing geometric stability conditions on $\DCoh(X)$ for a smooth projective variety $X$, when it admits a full exceptional collection of sheaves $\cE = \{E_1,\ldots, E_n\}$ satisfying certain special properties. 

\begin{defn}
\label{D:gradedFECnew}
    A \emph{grading} of an exceptional collection $\cE = \{E_1,\ldots, E_n\}$ is a total preorder $\preceq$ on $\cE$ such that $i < j$ implies that $E_i\preceq E_j$ and $E_i \sim E_j$\footnote{That is, $E_i \preceq E_j$ and $E_j\preceq E_i$.} implies that $E_i$ and $E_j$ are orthogonal or equal. 
\end{defn}

Recall that two objects $E$ and $F$ in a triangulated category $\cD$ are orthogonal if $\Hom_{\cD}(E,F[i]) = \Hom_{\cD}(F,E[i]) = 0$ for all $i\in \bf{Z}$. The relation $\sim$ is an equivalence relation and we call $B(\cE) = \cE/{\sim}$ the set of \emph{blocks} of $(\cE,\preceq)$. The total preorder $\preceq$ induces a total order on $B(\cE)$.

\begin{ex}
\label{Ex:basicexamples}
    We collect several basic examples. \vspace{-2mm}
    \begin{enumerate}
        \item Every exceptional collection has the trivial grading given by defining $E_i \preceq E_j$ if and only if $i\le j$. In the sequel, we will consider the full exceptional collection $\langle \Omega^n(n),\ldots, \Omega^1(1),\cO\rangle$ on $\bf{P}^n$ with the trivial grading. \vspace{-2mm}
        \item In the case of the Grassmannian $\Gr(k,V)$, Kapranov \cite{Kapranov85} constructs full exceptional coll\-ections of vector bundles. One of the two dual exceptional collections described in \emph{loc. cit.} uses the sheaves $\Sigma^\alpha(S)$, where $S$ is the tautological subbundle over $\Gr(k,V)$, and $\Sigma^\alpha$ denotes the Schur functor indexed by $\alpha$, where $\alpha$ is a Young diagram with $\le k$ rows and $\le n-k$ columns. The grading of $\{\Sigma^\alpha(S)\}$ is given by putting $\Sigma^\alpha(S) \preceq \Sigma^\beta(S)$ if and only if $\lvert \alpha \rvert \ge \lvert \beta\rvert$. 
        
        Note that this contains $\bf{P}^n$ as a special case. Indeed, in that case $V = \bf{C}^{n+1}$, $k=1$, and $\Omega^k(k)$ corresponds to the Young diagram which is a single column with $k$ rows. 
    \end{enumerate}
\end{ex}

\begin{lem}
\label{L:heartposet}
    Let $\cD$ be a $k$-linear triangulated category. Let \vspace{-2mm}
    \begin{enumerate}
        \item $(\cE,\preceq)$ be a graded full exceptional collection contained in the heart of a bounded t-structure on $\cD$; and \vspace{-2mm}
        \item $\nu:B(\cE) \to (\bf{Z},\le)$ an injective anti-homomorphism of posets.\vspace{-2mm}
    \end{enumerate} 
    Then, there is a heart of a bounded t-structure $\cA := \langle E[\nu(E)]:E\in \cE\rangle_{\rm{ext}}$ on $\cD$.
\end{lem}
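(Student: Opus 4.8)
The plan is to build the heart $\cA$ by iteratively gluing the standard hearts on the "block subcategories" generated by the equivalence classes (blocks) of $(\cE,\preceq)$, shifted according to $\nu$. Write the blocks of $(\cE,\preceq)$ as $B_1 < B_2 < \cdots < B_m$ in the total order induced by $\preceq$, and for each block $B_j$ let $\langle B_j\rangle \subseteq \cD$ be the triangulated subcategory generated by the (pairwise orthogonal) exceptional objects in $B_j$. Since the objects within a block are mutually orthogonal and exceptional, $\langle B_j\rangle \simeq \DCoh(\pt)^{\oplus |B_j|}$, and its standard heart $\cB_j$ — the extension closure of the objects of $B_j$ themselves, they being contained in the heart of the given bounded t-structure by hypothesis (1) — is generated by the finitely many simple objects $E$ for $E \in B_j$. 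The grading axiom (that $i<j \Rightarrow E_i \preceq E_j$) guarantees that $\cD = \langle \langle B_1\rangle,\ldots,\langle B_m\rangle\rangle$ is a semiorthogonal decomposition: $\Hom^{\bullet}(\langle B_j\rangle, \langle B_k\rangle) = 0$ for $j > k$.

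**Next I would** incorporate the shift data. Because $\nu:B(\cE)\to(\ZZ,\le)$ is an injective \emph{anti}-homomorphism of posets, the shifted hearts $\cB_j[\nu(B_j)]$ on $\langle B_j\rangle$ satisfy a vanishing that is amenable to gluing: for $j < k$ (so $B_j \prec B_k$, hence $\nu(B_j) > \nu(B_k)$, in fact $\nu(B_j) \ge \nu(B_k)+1$) we want $\Hom^{\le 0}(\cB_j[\nu(B_j)], \cB_k[\nu(B_k)]) = 0$. Now $\Hom^{\le 0}(\cB_j[\nu(B_j)],\cB_k[\nu(B_k)]) = \Hom^{\le \nu(B_k)-\nu(B_j)}(\cB_j,\cB_k) \subseteq \Hom^{\le -1}(\cB_j,\cB_k)$, and the latter vanishes because $\cB_j,\cB_k$ both lie in the heart of a bounded t-structure on $\cD$, for which $\Hom^{<0}$ between heart objects is zero. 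So the hypotheses of the gluing lemma \cite{Collins_Polischuk_2010}*{Lem. 2.1} (the $t$-structure version, stated above) are met at each stage. I would then proceed by induction on $m$: having glued $\cB_1[\nu(B_1)] \circ \cdots \circ \cB_{m-1}[\nu(B_{m-1})]$ into a heart $\cA'$ on $\cD' := \langle \langle B_1\rangle,\ldots,\langle B_{m-1}\rangle\rangle$, glue it with $\cB_m[\nu(B_m)]$ along the two-step semiorthogonal decomposition $\cD = \langle \cD', \langle B_m\rangle\rangle$, checking $\Hom^{\le 0}(\cA', \cB_m[\nu(B_m)]) = 0$ by the same shift-bookkeeping (every object of $\cA'$ has HN factors in some $\cB_j[\nu(B_j)]$ with $j \le m-1$, and $\nu(B_j) - \nu(B_m) \ge 1$). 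The resulting glued heart is, by the definition of $\circ$ via the projection functors, exactly $\{E : \pr_j(E) \in \cB_j[\nu(B_j)] \text{ for all } j\}$, which one checks coincides with the extension closure $\langle E[\nu(E)] : E \in \cE\rangle_{\rm{ext}}$: the inclusion $\supseteq$ is clear since each $E[\nu(E)]$ has the right projections, and $\subseteq$ follows because any object with the prescribed projections is built from them by the semiorthogonal filtration.

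**The main obstacle** — really the only point requiring care rather than bookkeeping — is verifying that the extension closure $\langle E[\nu(E)]:E\in\cE\rangle_{\rm{ext}}$ genuinely is a \emph{bounded} t-structure heart and is literally equal to the iterated glued heart, as opposed to merely sitting inside it. The cleanest route is to not define $\cA$ independently at all but to \emph{take} the iterated gluing as the definition of the heart and then identify it with the extension closure a posteriori; boundedness is then automatic since gluing of bounded t-structures is bounded (each $\cB_j[\nu(B_j)]$ is bounded on $\langle B_j\rangle$ and finite semiorthogonal gluing preserves boundedness — cf. the discussion in \cite{HLJR}*{\S 3}). The identification with $\langle E[\nu(E)]\rangle_{\rm{ext}}$ then reduces to the observation that the glued heart is generated under extensions by the hearts $\cB_j[\nu(B_j)]$, each of which is generated under extensions (indeed as a finite-length abelian category) by its simple objects $\{E[\nu(E)] : E \in B_j\}$. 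One minor subtlety worth flagging: one should confirm that the $E[\nu(E)]$ for $E$ in a single block $B_j$ really are the simple objects of $\cB_j[\nu(B_j)]$ — this is where orthogonality within blocks is used, ensuring $\cB_j$ is a product of copies of the category of vector spaces rather than something with nontrivial extensions.
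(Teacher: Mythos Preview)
Your argument is correct, but it takes a different route from the paper's own proof. The paper does not glue: it simply verifies that $\{E_i[\nu_i]\}_{i=1}^n$ is an \emph{Ext-exceptional collection} in the sense of Macr\`{i}, meaning $\Ext^{\le 0}(E_i[\nu_i],E_j[\nu_j]) = 0$ for $i<j$, and then invokes \cite{Macristabilityoncurves}*{Lem.~3.14}, which says precisely that the extension closure of an Ext-exceptional full collection is the heart of a bounded t-structure. The verification is the same shift-bookkeeping you do (with the case $\nu_i=\nu_j$ handled by orthogonality within a block), but the conclusion is obtained in one shot rather than by induction.

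Your approach rebuilds this conclusion from the Collins--Polishchuk gluing primitive, which is already part of the paper's toolkit. This buys you a self-contained argument that does not leave the paper's internal references, and makes the identification of $\cA$ with the iterated glued heart explicit. The cost is length and some bookkeeping (the inductive step, the identification of the glued heart with the extension closure) that the citation to Macr\`{i} absorbs. Morally the two arguments are the same---Macr\`{i}'s lemma is itself proved by essentially this iterated gluing---so the difference is one of packaging rather than substance.
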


\begin{proof}
    Write $\nu([E_i]) = \nu_i$. It suffices to show that $\{E_i[\nu_i]\}_{i=1}^n$ forms an Ext-exceptional collection by \cite{Macristabilityoncurves}*{Lem. 3.14}. That is, $\Ext^{\le 0}(E_i[\nu_i],E_j[\nu_j]) = 0$ for all $i<j$. By \Cref{D:gradedFECnew}, $i<j$ implies that $\nu_i\ge \nu_j$. 
    Now, $\Ext^{\le 0}(E_i[\nu_i],E_j[\nu_j]) = \Ext^{\le \nu_j-\nu_i}(E_i,E_j) = 0$ if $\nu_i>\nu_j$ by hypothesis (1). If $\nu_i = \nu_j$, then $\bigoplus_{\ell\in \bZ}\Ext^\ell(E_i,E_j)=0$, unless $i=j$.
\end{proof}

\begin{defn}
\label{D:normfunction}
    Given a graded exceptional collection $(\cE,\preceq)$, its \emph{norm} is the unique surjection $\nu: \cE\to \{0,\ldots,k\}$ such that \vspace{-2mm}
    \begin{enumerate}
        \item $\nu$ descends to a bijection $B(\cE) \to \{0,\ldots, k\}$; and \vspace{-2mm}
        \item enumerating $B(\cE) = \{b_k\prec\cdots \prec b_0\}$, we have $\nu(b_i) = i$.
    \end{enumerate}
\end{defn}

\begin{setup}
\label{Setup:complexheart}
    We assume that $\cD = \DCoh(X)$ and that $\cE$ is a graded full exceptional collection of sheaves with norm function $\nu$. Denote by $\mathfrak{b}_i$ the extension closure of the objects of $b_i$ placed in cohomological degree $-i$.
\end{setup}

\begin{rem}
    We make several remarks about \Cref{Setup:complexheart}. First, in this notation the heart $\cA$ from \Cref{L:heartposet} is simply $\langle \mathfrak{b}_i:i=0,\ldots, k\rangle_{\rm{ext}}$. Second, note that $\mathfrak{b}_i$ is simply the closure of $b_i[i]$ under direct sums. Finally, note that the norm function $\nu$ determines $\preceq$ and vice versa.
\end{rem}

\begin{ex}
    When we consider $(\Omega^n(n),\ldots, \Omega^1(1),\cO)$ on $\bf{P}^n$, $b_i = \{\Omega^i(i)\}$, and $\mathfrak{b}_i = \{\Omega^i(i)[i]^{\oplus l}:l\ge 0\}$. Thus, the heart $\cA$ is $\langle \Omega^n(n)[n],\ldots, \Omega^1(1)[1],\cO\rangle_{\rm{ext}}$.
\end{ex}

\begin{lem}
\label{L:standardformheart}
    In the context of \Cref{Setup:complexheart}, the heart $\cA$ from \Cref{L:heartposet} is the strict closure of the full subcategory of $\DCoh(X)$ consisting of complexes $Y_\bullet = (Y_k\to \cdots \to Y_0) $ where $Y_i \in \mathfrak{b}_i$ for all $i=0,\ldots, k$.
\end{lem}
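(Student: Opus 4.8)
The plan is to prove the two inclusions of the stated equality separately, each by induction on the number of blocks. Write $\cC$ for the strict closure of the full subcategory of staircase complexes: the objects of $\DCoh(X)$ isomorphic to a bounded complex of coherent sheaves $(G_k\to\cdots\to G_1\to G_0)$ with $G_0$ in degree $0$ and $G_i$ in degree $-i$, each $G_i$ a finite direct sum of objects of the block $b_i$ (so $G_i[i]\in\mathfrak{b}_i$). I will use the identification $\cA=\langle\mathfrak{b}_0,\ldots,\mathfrak{b}_k\rangle_{\rm ext}$ from the remark following \Cref{Setup:complexheart}, as well as the fact that $\cA$ is extension-closed. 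To make the induction close up I would formulate both inclusions for the heart $\langle\mathfrak{b}_a,\ldots,\mathfrak{b}_b\rangle_{\rm ext}$ of the triangulated subcategory generated by any interval $b_a,\ldots,b_b$ of consecutive blocks; this is a bounded heart by the Ext-exceptional criterion, exactly as in the proof of \Cref{L:heartposet}, the only inputs being that the $E_i$ are sheaves and that distinct objects within a block are orthogonal.

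For $\cC\subseteq\cA$: let $Y_\bullet$ be a staircase complex. Its stupid truncation in cohomological degree $-k+1$ gives a short exact sequence of complexes $0\to\sigma^{\geq -k+1}Y_\bullet\to Y_\bullet\to G_k[k]\to 0$, hence a triangle $\sigma^{\geq -k+1}Y_\bullet\to Y_\bullet\to G_k[k]\to(\sigma^{\geq -k+1}Y_\bullet)[1]$ in $\DCoh(X)$. Here $G_k[k]\in\mathfrak{b}_k\subseteq\cA$, and $\sigma^{\geq -k+1}Y_\bullet=(G_{k-1}\to\cdots\to G_0)$ is a staircase complex for the shorter block system $b_{k-1}\prec\cdots\prec b_0$, hence lies in $\langle\mathfrak{b}_0,\ldots,\mathfrak{b}_{k-1}\rangle_{\rm ext}\subseteq\cA$ by induction. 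Since $\cA$ is extension-closed, $Y_\bullet\in\cA$. The base case $k=0$ is immediate.

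For $\cA\subseteq\cC$: the crucial point is that $(\mathfrak{b}_0,\cA_{\geq1})$ is a torsion pair on $\cA$, where $\cA_{\geq1}:=\langle\mathfrak{b}_1,\ldots,\mathfrak{b}_k\rangle_{\rm ext}$. Since the objects of $b_0$ are the last objects of $\cE$, the semiorthogonality $\RHom(E_j,E_l)=0$ for $j>l$ gives $\RHom_{\DCoh}(\mathfrak{b}_0,\mathfrak{b}_i)=0$ for $i\geq1$, hence $\Hom_\cA(\mathfrak{b}_0,\cA_{\geq1})=0$; together with $\Ext^1_{\Coh}(b_0,b_0)=0$ (exceptionality and orthogonality), which makes $\mathfrak{b}_0$ extension-closed in $\cA$, one extracts from any $A\in\cA$ a torsion sequence $0\to G_0\to A\to A'\to0$ with $G_0\in\mathfrak{b}_0$, $A'\in\cA_{\geq1}$. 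By induction (applied to the block system $b_k\prec\cdots\prec b_1$, of which $\cA_{\geq1}[-1]$ is the heart), $A'$ is represented by a staircase complex $(G_k\to\cdots\to G_1)$ concentrated in degrees $[-k,-1]$, so $\cH^{-1}(A')=\coker(G_2\to G_1)$; a truncation argument then identifies $\Ext^1_\cA(A',G_0)=\Hom_{\DCoh}(A',G_0[1])\cong\Hom_{\Coh}(\cH^{-1}(A'),G_0)$. Letting $\bar{d}\colon\coker(G_2\to G_1)\to G_0$ be the image of the extension class under this isomorphism and $d_1:=\bar{d}\circ(G_1\twoheadrightarrow\coker(G_2\to G_1))$, the staircase complex $\widetilde A:=(G_k\to\cdots\to G_1\xrightarrow{d_1}G_0)$ has stupid-truncation triangle $G_0\to\widetilde A\to(G_k\to\cdots\to G_1)\to G_0[1]$ with connecting morphism equal to the extension class of the torsion sequence; hence $\widetilde A\cong A$ and $A\in\cC$. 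The base case $k=0$ is again trivial.

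The main obstacle lies in the inclusion $\cA\subseteq\cC$. Two points need care. First, that $(\mathfrak{b}_0,\cA_{\geq1})$ is a genuine torsion pair rather than just satisfying the $\Hom$-vanishing: the production of the torsion subobject of $A$ should be carried out by inducting on the length of an iterated-extension presentation of $A$ and repeatedly sorting, using the vanishings $\RHom(\mathfrak{b}_0,\mathfrak{b}_i)=0$ ($i\geq1$) and $\Ext^1_{\Coh}(b_0,b_0)=0$. Second, identifying the connecting morphism of the stupid-truncation triangle of $\widetilde A$ with the extension class of the torsion sequence — the familiar principle that a stupid-truncation connecting map ``is'' the omitted differential, here threaded through the cohomological identification $\Hom_{\DCoh}(A',G_0[1])\cong\Hom_{\Coh}(\cH^{-1}(A'),G_0)$. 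The rest is routine, provided the induction is set up over arbitrary intervals of consecutive blocks so the sub-collection steps are legitimate.
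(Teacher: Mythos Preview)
Your proof is correct, and your inclusion $\cC\subseteq\cA$ via stupid truncation is exactly what the paper does. For the harder inclusion $\cA\subseteq\cC$, however, you take a genuinely different route. The paper does not induct at all: it simply shows that $\cC$ is extension-closed by invoking \cite{Kapranov1988}*{Lem.~1.6}, which says that for complexes whose terms lie in an exceptional collection arranged as here, every morphism $A\to B[1]$ in $\DCoh(X)$ is represented by an actual chain map; the cone is then computed termwise as $A_n\oplus B_n$, so it is again a staircase complex. Since each $\mathfrak{b}_i\subseteq\cC$, extension-closedness finishes the proof in one line.

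Your argument trades this citation for an explicit inductive reconstruction: peel off the degree-zero layer via the torsion pair coming from the semiorthogonal decomposition $\langle\cD_{\geq 1},\cD_0\rangle$, apply induction to the quotient $A'$, and rebuild $A$ as a cone. The delicate step you flag---that the connecting map of the stupid truncation of $\widetilde A$ agrees with the original extension class---does go through, because the target $G_0[1]$ is a sheaf in a single degree, so the standard $t$-structure truncation of $A'$ gives $\Hom_{\DCoh}(A',G_0[1])\cong\Hom_{\Coh}(\cH^{-1}(A'),G_0)$ on the nose, and the chain map $d_1$ visibly represents the correct class under this identification (so $A\cong\Cone(d_1)[-1]=\widetilde A$ up to a harmless sign). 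In effect you prove the special case of Kapranov's lemma you need, rather than citing it. The paper's approach is shorter; yours is self-contained and makes the recursive structure of objects in $\cA$ explicit.
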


\begin{proof}
    Let $\cF$ denote the full subcategory of $\DCoh(X)$ containing all complexes of the form $Y_\bullet$ as in the statement. Given $Y_\bullet \in \Ob(\cF)$, the stupid truncations $\sigma_{\ge j}$ as in \cite{stacks-project}*{\href{https://stacks.math.columbia.edu/tag/0118}{Tag 0118}} give morphisms
    \begin{equation*}
            \sigma_{\ge 0}(Y_\bullet)\to \sigma_{\ge 1}(Y_\bullet) \to  \cdots \to \sigma_{\ge k-1}(Y_\bullet) \to Y_\bullet
    \end{equation*}
    where $\Cone(\sigma_{\ge i-1}(Y_\bullet) \to \sigma_{\ge i}(Y_\bullet)) \in \mathfrak{b}_{i}$ for all $i=1,\ldots, k$. Thus, $\Ob(\cA) \supseteq \Ob(\cF)$. For the reverse inclusion, since $\Ob(\mathfrak{b}_i)\subseteq \Ob(\cF)$ for all $1\le i \le k$ it suffices to prove that $\cF$ is extension closed. For this, consider $A,B\in \Ob(\cF)$. By \cite{Kapranov1988}*{Lem. 1.6}, any morphism $f:A\to B[1]$ in $\DCoh(X)$ comes from a morphism of complexes. On the other hand, classes in $\Ext^1(A,B)$ correspond to morphisms $f:A\to B[1]$ by sending $f$ to the triangle $B\to \Cone(f)[-1]\to A$. However, the $n^{\rm{th}}$ entry of $\Cone(f)[-1]$ is $A_n\oplus B_n$ and so $\cF$ is extension closed.
\end{proof}

We remain in \Cref{Setup:complexheart}. Since $\cE = \{E_1,\ldots, E_n\}$ is a full exceptional collection, $\rm{K}_0(X) = \bigoplus_{i=1}^n \bf{Z}\cdot [E_i]$ and the cone of classes coming from $\cA$ is $\bigoplus_{i} \bf{N}\cdot (-1)^{\nu(i)}\cdot [E_i]$. 

\begin{defn}
\label{D:sharpgFEC}
    A graded exceptional collection $(\cE,\preceq)$ is called \emph{sharp} if it has a unique maximal element.
\end{defn}

In our setup, this means that $b_0 = \{E_n\}$. When $X = \Gr(k,V)$ as in \Cref{Ex:basicexamples}, $b_0 = \{\cO_X\}$. For the rest of the section, we suppose that $(\cE,\preceq)$ is a sharp graded full exceptional collection in $\DCoh(X)$ unless otherwise specified. 

Choose an object $F$ of $\cA$, which is isomorphic to a complex $Y_\bullet$ by \Cref{L:standardformheart}. The projection of $F$ onto the subgroup of $\rm{K}_0(\cA)$ generated by $[E_n]$ is $[\sigma_{\ge 0}(Y_\bullet)] = d_n \cdot [E_n]$. This value is independent of $Y_\bullet$ and we set $\pr_{E_n}(F) := d_n$.

For completeness, we record the following well-known lemma:

\begin{lem}
\label{L:stabconditionfinitelength}
    Let $\cD$ denote a $k$-linear triangulated category and let $\cA$ denote a heart of a bounded t-structure on $\cD$. Suppose that $\cA$ is finite length and has finitely many simple objects $S_1,\ldots, S_n$. Then, specifying a stability condition on $\cD$ with underlying heart $\cA$ is equivalent to specifying $Z(S_1),\ldots, Z(S_n) \in \bf{H}\cup \bf{R}_{<0}$. 
\end{lem}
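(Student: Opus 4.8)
The plan is to use the standard dictionary between prestability conditions and pairs (heart, stability function satisfying HN), recalled in \Cref{SS:stabilityconditionsreview} after \Cref{D:prestability}, and then check that the support property comes for free in the finite-length setting. First I would observe that since $\cA$ is the heart of a bounded t-structure on $\cD$, a prestability condition with underlying heart $\cA$ is the same data as a group homomorphism $Z\colon \rm{K}_0(\cA)\to\bf{C}$ (equivalently $Z\colon\rm{K}_0(\cD)\to\bf{C}$, since $\rm{K}_0(\cA)\xrightarrow{\sim}\rm{K}_0(\cD)$ for a bounded t-structure) that is a \emph{stability function}, i.e. $Z(E)\in\bf{H}\cup\bf{R}_{<0}$ for every nonzero $E\in\cA$, and such that the Harder--Narasimhan property holds. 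Because $\cA$ is finite length with simple objects $S_1,\dots,S_n$, the group $\rm{K}_0(\cA)$ is freely generated by the classes $[S_1],\dots,[S_n]$: indeed every object of $\cA$ has a finite Jordan--Hölder filtration with simple subquotients, so the $[S_i]$ generate, and they are independent because the length function and the multiplicity functions $E\mapsto [E:S_i]$ are additive on short exact sequences and detect each generator. Hence a homomorphism $Z$ is determined freely by the values $Z(S_1),\dots,Z(S_n)$, and there are no constraints among them other than that each nonzero object of $\cA$ lands in $\bf{H}\cup\bf{R}_{<0}$.

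Next I would show that if $Z(S_i)\in\bf{H}\cup\bf{R}_{<0}$ for all $i$, then automatically $Z(E)\in\bf{H}\cup\bf{R}_{<0}$ for every nonzero $E\in\cA$: writing a JH filtration of $E$, one has $Z(E)=\sum_i m_i Z(S_i)$ with $m_i\in\bf{Z}_{\ge 0}$, not all zero, and a nonnegative nontrivial combination of elements of the closed upper half plane minus the origin (with the real axis only allowed to be negative) again lies in $\bf{H}\cup\bf{R}_{<0}$. So any such tuple $(Z(S_i))$ defines a stability function on $\cA$. The Harder--Narasimhan property then follows from finite length: this is the well-known fact (e.g. \cite{Br07}*{Prop.~2.4} or \cite{Macristabilityoncurves}) that a stability function on a finite-length Abelian category automatically satisfies HN, since there is no infinite chain of sub- or quotient objects to obstruct the filtration. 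Thus every tuple $(Z(S_1),\dots,Z(S_n))\in(\bf{H}\cup\bf{R}_{<0})^n$ gives a prestability condition with heart $\cA$, and conversely any such prestability condition restricts to such a tuple; these assignments are mutually inverse.

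Finally I would upgrade ``prestability condition'' to ``stability condition'' by verifying the support property with respect to any rational surjection $v\colon\rm{K}_0(\cD)\to\Lambda$. Since $\cA$ is finite length, the semistable objects are built by extensions from the finitely many simples, and in fact every semistable object of a fixed phase is an iterated extension of simples of that phase; in particular the set of classes $\{v(E): E\in\cP\setminus 0\}$ is contained in the union over phases of the monoids generated by the (finitely many) classes $v(S_i)$. More concretely, for semistable $E$ one has $m(E)=|Z(E)|$ and $v(E)=\sum m_i v(S_i)$ with the $S_i$ all of the same phase as $E$, so $|Z(E)|=\sum m_i |Z(S_i)|$ while $\|v(E)\|\le \sum m_i \|v(S_i)\|$; hence $|Z(E)|/\|v(E)\|\ge \min_i |Z(S_i)|/\big(\max_i\|v(S_i)\|\cdot (\text{bounded factor})\big)>0$, a bound independent of $E$. (One must be slightly careful because different simples of the same phase can contribute, but since there are only finitely many simples the infimum is a minimum over a finite set of ``sectors'' and stays positive; alternatively invoke that for finite-length hearts the support property with respect to $\rm{K}_0(\cA)$ itself is automatic, and push forward along $v$.) This shows the prestability condition is in fact in $\Stab_\Lambda(\cD)$, completing the equivalence.

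The main obstacle is the bookkeeping in the last paragraph: making the support-property estimate uniform when several non-proportional simple classes share a phase. The clean way around it is to first establish the support property against the ``universal'' lattice $\rm{K}_0(\cA)=\bigoplus_i\bf{Z}[S_i]$ — where it is essentially immediate because $Z$ is injective on each phase-sector-cone after the finitely many simples are separated — and then observe that composing with the linear map induced by $v$ only changes the relevant constants by a bounded factor, so positivity of the infimum is preserved. Everything else is a direct application of \Cref{D:prestability}, the heart/stability-function correspondence, and standard finite-length arguments.
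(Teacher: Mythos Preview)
Your approach is essentially the paper's: reduce to the heart/stability-function dictionary, invoke finite length for HN, and then verify the support property by hand. The first two steps are fine and match the paper's citation of \cite{Br07}*{Lem.~2.4 and Prop.~5.3}.

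The support-property paragraph, however, has a genuine gap. The assertion that ``every semistable object of a fixed phase is an iterated extension of simples of that phase'' is false in general: a stable object can have Jordan--H\"older factors of strictly smaller and strictly larger phase. For instance, take simples $S_1,S_2$ with $Z(S_1)=1+\mathtt{i}$ and $Z(S_2)=-1+\mathtt{i}$, and a nonsplit extension $0\to S_1\to E\to S_2\to 0$; then $E$ is stable of phase $\tfrac12$ but has no simple JH factor of that phase. So the identity $|Z(E)|=\sum m_i|Z(S_i)|$ you use is not available, and the estimate built on it collapses.

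The fix is to drop the restriction to semistable objects entirely and argue directly from $[E]=\sum_i m_i[S_i]$ with $m_i\in\bf{Z}_{\ge 0}$, which is what the paper does. Since the finitely many values $Z(S_i)$ all lie in $\bf{H}\cup\bf{R}_{<0}$, there is a real-linear functional $\ell:\bf{C}\to\bf{R}$ (e.g.\ $\ell(z)=\Im(e^{-\mathtt{i}\epsilon}z)$ for $0<\epsilon\ll 1$) with $\ell(Z(S_i))>0$ for all $i$; then $|Z(E)|\ge C^{-1}\ell(Z(E))\ge C^{-1}\bigl(\min_i\ell(Z(S_i))\bigr)\sum_i m_i$, while $\lVert v(E)\rVert\le\bigl(\max_i\lVert v(S_i)\rVert\bigr)\sum_i m_i$, giving the uniform lower bound. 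Your hedged alternative (support property against $\rm{K}_0(\cA)$ and push forward along $v$) is correct in spirit and amounts to this same computation.
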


\begin{proof}
    That this defines a pre-stability condition on $\cD$ is immediate from \cite{Br07}*{Lem. 2.4} combined with Prop. 5.3 \emph{ibid}, using the finite length property. To check the support property, for any non-zero $E$ in $\cA$, write $[E] = \sum_{i=1}^n m_i\cdot [S_i]$ for $m_i \in \bf{Z}_{\ge 0}$. Then, for any norm $\lVert\:\cdot\:\rVert$ on $\rm{K}_0(\cD)_{\bf{R}}$
    \[
        \frac{\lvert Z(E)\rvert}{\lVert E\rVert} \ge \min \left\{\frac{\lvert Z(S_i)\rvert}{\lVert S_i\rVert}\right\}_{i=1}^n>0
    \]
    from which the result follows.
\end{proof}

Returning to \Cref{Setup:complexheart}, by \Cref{L:standardformheart}, $\cA$ is a finite length heart since it is generated under extensions by the simple objects $\{E_i[\nu_i]\}_{i=1}^n$. Consequently, by \Cref{L:stabconditionfinitelength} we can specify a stability condition $\sigma$ on $\DCoh(X)$ with underlying heart $\cA$ uniquely by $z_i := Z(E_i[\nu_i]) \in \bf{H}\cup \bf{R}_{<0}$ for each $i=1,\ldots, n$. 

Recall that given a central charge homomorphism $Z:\rm{K}_0(\cA)\to \mathbf{C}$ and $E\in \Ob(\cA)$, we let $\phi(E) = \tfrac{1}{\pi}\arg Z(E)$ where $\arg$ is the branch of the argument function which on $\bf{H}\cup \bf{R}_{<0}$ is valued in $(0,\pi]$. We begin with a technical definition:

\begin{defn}
    In \Cref{Setup:complexheart}, an object $F$ of $\cD$ is \textit{efficient} with respect to $(\cE,\nu)$ if 
    \[
        \Hom_{\cD}(E_i[\nu_i],F) = 0
    \]
    for all indices $i$ such that $\nu_i \ne 0$. 
\end{defn}

When we consider efficient objects below, we omit $(\cE,\preceq)$ when it is obvious from the context. The following lemma shows that efficient objects arise in practice.

\begin{lem}
\label{L:efficientsheaf}
    In \Cref{Setup:complexheart}, every object of $\Coh(X)$ is efficient.
\end{lem}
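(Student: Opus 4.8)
The plan is to observe that efficiency of a sheaf is nothing more than the vanishing of negative $\Ext$-groups between coherent sheaves. Fix $F \in \Coh(X)$, viewed as an object of $\cD = \DCoh(X)$ concentrated in cohomological degree $0$, and fix an index $i$ with $\nu_i \ne 0$. The first step is to unwind the definition of efficiency: using the shift autoequivalence of $\cD$,
\[
    \Hom_{\cD}(E_i[\nu_i], F) \;\cong\; \Hom_{\cD}(E_i, F[-\nu_i]) \;=\; \Ext^{-\nu_i}_{\cD}(E_i, F).
\]

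Next I would note that the norm $\nu$ takes values in $\{0,1,\dots,k\}$ by \Cref{D:normfunction}, so the hypothesis $\nu_i \ne 0$ forces $\nu_i \ge 1$, and hence the exponent $-\nu_i$ is strictly negative. Finally, I invoke the standing hypothesis of \Cref{Setup:complexheart} that $\cE$ is a full exceptional collection \emph{of sheaves}: thus $E_i$ and $F$ both lie in $\Coh(X)$, the heart of the standard bounded $t$-structure on $\DCoh(X)$. Since $E_i \in \DCoh(X)^{\le 0}$ while $F[-\nu_i] \in \DCoh(X)^{\ge \nu_i} \subseteq \DCoh(X)^{\ge 1}$, the $t$-structure axioms give $\Hom_{\cD}(E_i, F[-\nu_i]) = 0$; equivalently, $\Ext^{j}(\mathcal{F},\mathcal{G}) = 0$ for all $j < 0$ whenever $\mathcal{F},\mathcal{G} \in \Coh(X)$. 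Combining these three steps shows $\Hom_{\cD}(E_i[\nu_i], F) = 0$ for every $i$ with $\nu_i \ne 0$, i.e. $F$ is efficient.

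There is no genuine obstacle here: the argument is a formal consequence of the $t$-structure axioms together with the fact that the collection consists of sheaves rather than arbitrary complexes. The only points meriting care are the bookkeeping with the shift convention and the observation (already recorded in the remark following \Cref{Setup:complexheart}) that $E_i[\nu_i]$ sits in cohomological degree $-\nu_i \le -1$ precisely when $\nu_i \ne 0$, which is why efficiency imposes no condition coming from the block $b_0$ placed in degree $0$.
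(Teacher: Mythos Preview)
Your proof is correct and follows essentially the same approach as the paper: the paper's proof is the one-line observation $\Hom(E_i[\nu_i],F) = \Ext^{-\nu_i}_{\Coh(X)}(E_i,F) = 0$ for $\nu_i > 0$, which is exactly what you have written out in more detail.
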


\begin{proof}
    This is immediate since $\Hom(E_i[\nu_i],F) = \Ext^{-\nu_i}_{\Coh(X)}(E_i,F) = 0$ for all $\nu_i>0$.
\end{proof}

\begin{hyp}
\label{H:stability}
    In \Cref{Setup:complexheart}, suppose that $(\cE,\preceq)$ is sharp and let $\cA$ be the heart constructed by \Cref{L:heartposet} and let $0\neq F\in \Ob(\cA)$. 
    By \Cref{L:stabconditionfinitelength}, we can choose $Z \in \Hom_{\mathbf{Z}}(\rm{K}_0(\cA),\bf{C})$ taking $\cA$ to $\bf{H}\cup \bf{R}_{<0}$ such that 
    \[
        \phi(E_n) < \phi(F) < \phi^-(b_1) \le  \phi^+(b_1)\le \cdots\le  \phi^-(b_k)\le \phi^+(b_k).
    \]
    where $\phi^+(b_j) = \max \{\phi(E_i[\nu_i]):E_i\in b_j\}$ and $\phi^-(b_j)$ is defined analogously for all $1\le i \le k$.
\end{hyp}

\begin{prop}
\label{P:efficientstable}
    We use the notation and assumptions of \Cref{H:stability}. If $F$ is an efficient object of $\cA$ such that $\pr_{E_n}(F) = 1$, then $F$ is stable with respect to $\sigma = (Z,\cA)$. 
\end{prop}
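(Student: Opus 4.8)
\textbf{Proposal for the proof of Proposition \ref{P:efficientstable}.}
The plan is to show that any nonzero subobject $A \hookrightarrow F$ in the finite-length heart $\cA$ satisfies $\phi(A) < \phi(F)$, which (together with the dual statement for quotients, which will follow formally) gives stability. First I would use \Cref{L:standardformheart} to present $F$ as a complex $Y_\bullet = (Y_k \to \cdots \to Y_0)$ with $Y_i \in \mathfrak{b}_i$, and recall that because $(\cE,\preceq)$ is sharp we have $b_0 = \{E_n\}$, so $Y_0 = E_n^{\oplus \pr_{E_n}(F)} = E_n$ by hypothesis. The key structural input is the stupid-truncation triangle $E_n = \sigma_{\ge 0}(Y_\bullet) \to F \to \sigma_{\ge 1}(Y_\bullet) =: F'$, where $F' \in \langle \mathfrak{b}_1,\ldots,\mathfrak{b}_k\rangle_{\mathrm{ext}}$, i.e. $F'$ is built from blocks of strictly positive norm. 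By \Cref{H:stability}, every simple constituent $E_i[\nu_i]$ of $F'$ has $\phi(E_i[\nu_i]) \ge \phi^-(b_1) > \phi(F) > \phi(E_n)$, and $E_n$ itself has the strictly smallest phase among all the simples.

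Next I would analyze an arbitrary short exact sequence $0 \to A \to F \to B \to 0$ in $\cA$ with $A, B \ne 0$, and aim to bound $\phi(A)$ strictly below $\phi(F)$. The mass/phase of any object of $\cA$ is a weighted average (with positive real weights given by the masses $|z_i| \cdot m_i$) of the phases $\phi(E_i[\nu_i])$ of its simple factors; hence $\phi(A) \le \phi(F)$ would be automatic unless $A$ consists entirely of copies of $E_n$ (the unique minimal-phase simple), and even then we need strictness. The efficiency hypothesis enters precisely here: since $\Hom(E_i[\nu_i], F) = 0$ for all $i$ with $\nu_i \ne 0$, no subobject of $F$ in $\cA$ can contain a copy of any $E_i[\nu_i]$ with $\nu_i > 0$ as a \emph{sub}object in $\cA$ --- more precisely, I would argue that $\Hom(E_i[\nu_i], A) \hookrightarrow \Hom(E_i[\nu_i], F) = 0$, using that $A \hookrightarrow F$ is a monomorphism in the heart $\cA$ (so the induced map on $\Hom$ from an object of $\cA$ is injective). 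This forces every simple subobject of $A$ to be $E_n$, and then a filtration argument shows $A$ itself is a direct sum of copies of $E_n$ (since $E_n$ is exceptional, $\Ext^1_{\cA}(E_n, E_n) = \Hom_{\cD}(E_n, E_n[1]) = 0$), giving $\phi(A) = \phi(E_n) < \phi(F)$ by \Cref{H:stability}. Finally, for the quotient $B$: since $\pr_{E_n}$ is additive on $\rm{K}_0(\cA)$ and $\pr_{E_n}(A) = \operatorname{rk}$ of the $E_n$-part $= \dim \Hom(\text{-})$-type count equal to the number of copies, while $\pr_{E_n}(F) = 1$, we get $\pr_{E_n}(A) \le 1$; combined with $A \ne 0$ this pins down $A = E_n$ exactly, so $B = F' = \sigma_{\ge 1}(Y_\bullet)$, whose phase is a positive average of phases all $> \phi(F)$, giving $\phi(B) > \phi(F)$. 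Hence $\phi(A) < \phi(F) < \phi(B)$, so $F$ has no destabilizing subobject and is stable.

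The main obstacle I anticipate is the step identifying the possible subobjects $A$ of $F$ in the heart $\cA$: one must be careful that efficiency ($\Hom_{\cD}(E_i[\nu_i],F)=0$) correctly translates into a constraint on sub-\emph{objects} in $\cA$ rather than merely on maps in $\cD$, and that the passage ``every simple subobject is $E_n$'' $\Rightarrow$ ``$A$ is a sum of copies of $E_n$'' is legitimate in the finite-length abelian category $\cA$ --- this needs the vanishing $\Ext^1_\cA(E_n,E_n) = 0$ and an induction on length. A secondary subtlety is ensuring the phase inequalities in \Cref{H:stability} are genuinely strict where needed (in particular $\phi(E_n) < \phi(F)$ and $\phi(F) < \phi^-(b_1)$), so that the weighted-average comparison yields \emph{strict} stability and not merely semistability; this is exactly what the chosen configuration of $Z$ in \Cref{H:stability} is designed to provide, so it should go through cleanly once the structural claims about $A$ and $B$ are in place.
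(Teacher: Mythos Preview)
Your approach has a genuine gap at the step where you conclude that $A$ is a direct sum of copies of $E_n$. The condition $\Hom(E_i[\nu_i],A)=0$ for all $\nu_i>0$ only constrains the \emph{socle} of $A$; it does not prevent $A$ from having composition factors $E_j[\nu_j]$ with $\nu_j>0$ sitting above the socle. Concretely, any non-split extension $0\to E_n\to A\to E_j[\nu_j]\to 0$ in $\cA$ (and these exist whenever $\Ext^1_\cA(E_j[\nu_j],E_n)\ne 0$) satisfies $\Hom(E_i[\nu_i],A)=0$ for every $\nu_i>0$, yet $A$ is not a sum of copies of $E_n$. Your anticipated fix via $\Ext^1(E_n,E_n)=0$ and induction on length would only work once you already know that all composition factors of $A$ are $E_n$, which is precisely what has not been established. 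Consequently the conclusion ``$A=E_n$ exactly, so $B=F'$'' drastically overstates what is true: $F$ typically has many proper subobjects with $\pr_{E_n}=1$, e.g.\ subcomplexes of $Y_\bullet$ containing $Y_0$.

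The paper's argument avoids this by never trying to identify $A$. Instead it splits on the value $\pr_{E_n}(A)\in\{0,1\}$. If $\pr_{E_n}(A)=0$ then \emph{every} composition factor of $A$ lies in $\{E_i[\nu_i]:\nu_i>0\}$, so efficiency (applied by d\'evissage along a composition series, not just to simple subobjects) gives $\Hom(A,F)=0$, contradicting $0\ne A\hookrightarrow F$. Hence any nonzero proper subobject has $\pr_{E_n}(A)=1$, whence $\pr_{E_n}(B)=0$; then $Z(B)$ lies in the cone spanned by $\{Z(E_i[\nu_i]):\nu_i>0\}$, and the phase inequalities of \Cref{H:stability} give $\phi(B)\ge\phi^-(b_1)>\phi(F)$, so $\phi(A)<\phi(F)$ by the seesaw. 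The repair to your outline is thus to apply efficiency to the whole of $A$ via its composition series in the case $\pr_{E_n}(A)=0$, and then to argue on the quotient rather than the subobject.
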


\begin{proof}
    Consider a subobject $0 \to Y \to F$ in $\cA$. Since $\pr_{E_n}(-):\Ob(\cA) \to \bf{N}$ is additive on exact sequences, it follows that $\pr_{E_n}(Y) \in \{0,1\}$. If $\pr_{E_n}(Y) = 0$, then $Y$ is in $\langle E_i[\nu_i]:\nu_i \ne 0\rangle$ and $\Hom(Y,F) = 0$ since $F$ is efficient. 
    
    So, if $Y\ne 0$, it must be that $\pr_{E_n}(Y) = 1$ and $\pr_{E_n}(Q) = 0$, where $Q$ is the resulting quotient object. Thus, $\phi(F) < \phi(Q)$ since $Z(Q)$ lies in the cone in $\mathbf{H}\cup \bf{R}_{<0}$ generated by $\{Z(E_i):i \ne n\}$. Therefore, $\phi(Y)<\phi(F)$ and $F$ is stable.
\end{proof}

Recall that a stability condition on $\DCoh(X)$ is called \emph{geometric} if all skyscraper sheaves of points are stable of the same phase. 

\begin{cor}
\label{C:geomstability}
    Suppose that $\DCoh(X)$ admits a sharp graded full exceptional collection of sheaves $(\cE,\preceq)$, that $\cO_x \in \Ob(\cA)$ for all $x\in X$, where $\cA$ is as in \Cref{L:heartposet}, and that $\pr_{E_n}(\cO_x) = 1$. 
    Then, $\DCoh(X)$ admits a geometric stability condition with underlying heart $\cA$.
\end{cor}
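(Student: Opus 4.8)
The plan is to apply \Cref{P:efficientstable} directly to the skyscraper sheaves $\cO_x$. First I would observe that all hypotheses of \Cref{H:stability} are in place: we are in \Cref{Setup:complexheart} with $\cE$ a sharp graded full exceptional collection of sheaves with norm function $\nu$, and the heart $\cA = \langle \mathfrak{b}_i : i = 0,\ldots,k\rangle_{\rm{ext}}$ of \Cref{L:heartposet} is finite length with finitely many simple objects $\{E_i[\nu_i]\}_{i=1}^n$, so by \Cref{L:stabconditionfinitelength} we may choose a central charge $Z$ realizing the phase inequalities $\phi(E_n) < \phi(F) < \phi^-(b_1) \le \phi^+(b_1) \le \cdots \le \phi^-(b_k) \le \phi^+(b_k)$ simultaneously for $F = \cO_x$ (note the bound involving $F$ is a single open condition that can be imposed uniformly since all $\cO_x$ have the same class in $\rm{K}_0(X)$, hence the same $Z(\cO_x)$). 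This produces a genuine stability condition $\sigma = (Z,\cA) \in \Stab(\DCoh(X))$, the support property being automatic from \Cref{L:stabconditionfinitelength}.

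Next I would verify that each $\cO_x$ satisfies the two conditions needed to invoke \Cref{P:efficientstable}: that it is an efficient object of $\cA$, and that $\pr_{E_n}(\cO_x) = 1$. Efficiency is immediate from \Cref{L:efficientsheaf}, since $\cO_x \in \Coh(X)$ and every coherent sheaf is efficient with respect to $(\cE,\nu)$ (as $\Hom(E_i[\nu_i],\cO_x) = \Ext^{-\nu_i}_{\Coh(X)}(E_i,\cO_x) = 0$ for $\nu_i > 0$). The condition $\pr_{E_n}(\cO_x) = 1$ is precisely one of the standing hypotheses of the corollary, and $\cO_x \in \Ob(\cA)$ for all $x$ is the other. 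With these in hand, \Cref{P:efficientstable} yields that $\cO_x$ is $\sigma$-stable for every closed point $x \in X$.

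Finally I would check that all the $\cO_x$ have the \emph{same} phase. This follows because they all have the same class $[\cO_x] \in \rm{K}_0(X)$ (the Chern character of a point is the point class in $\H^{2n}(X)$, independent of $x$), hence $Z(\cO_x)$ is a fixed nonzero complex number, hence $\phi(\cO_x) = \tfrac{1}{\pi}\arg Z(\cO_x)$ is a fixed real number. Therefore $\sigma$ is geometric, with underlying heart $\cA$, completing the proof.

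I do not expect any serious obstacle here; the corollary is essentially an assembly of \Cref{L:efficientsheaf}, \Cref{L:stabconditionfinitelength}, and \Cref{P:efficientstable}, with the only mild point being to note that constancy of the class $[\cO_x]$ lets one choose the central charge once and for all and simultaneously delivers the ``same phase'' conclusion. The real content — constructing the heart $\cA$, proving finite length, and proving stability of efficient objects with $\pr_{E_n} = 1$ — has already been done in the preceding lemmas and proposition; this corollary is the clean packaging. (The work of checking that particular $X$ — quadrics, Grassmannians — actually admit such a sharp graded FEC of sheaves with $\cO_x \in \cA$ and $\pr_{E_n}(\cO_x) = 1$ is deferred to the later subsections and is where the genuine case-by-case effort lies.)
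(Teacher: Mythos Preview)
Your proposal is correct and matches the paper's own proof essentially line for line: invoke \Cref{L:efficientsheaf} for efficiency, use the hypotheses $\cO_x\in\cA$ and $\pr_{E_n}(\cO_x)=1$, and apply \Cref{P:efficientstable}. Your explicit remark that all $\cO_x$ share the same class in $\rm{K}_0(X)$ (hence the same $Z$-value and phase) makes precise what the paper leaves implicit in the phrase ``stable of the same phase $\tfrac{1}{\pi}\arg Z(\cO_x)$''.
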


\begin{proof}
    By \Cref{L:efficientsheaf}, $F = \cO_x$ is efficient for all $x\in X$ and lies in $\cA$ by hypothesis. Thus, by \Cref{P:efficientstable}, there is a central charge $Z$ such that all $\cO_x$ are stable of the same phase $\frac{1}{\pi}\arg Z(\cO_x) \in (0,1]$.
\end{proof}

\subsection{Geometric stability for some homogeneous varieties}
\label{SS:geomstabilityfromFEC}
We use the results of \Cref{SS:geomFEC} to produce new examples of geometric stability conditions in several cases. We also construct stability conditions which are ``close'' to being geometric.

\begin{defn}
\label{D:almostgeometric}
    Let $\cX$ denote a Deligne-Mumford stack with a nonempty maximal open dense substack $U$ isomorphic to a scheme. A stability condition $\sigma$ on $\DCoh(\cX)$ is \emph{almost geometric} if there is an open dense subset $U' \subseteq U$ and $\phi \in \mathbf{R}$ such that for all $x\in U'$, $\cO_x$ is $\sigma$-stable of phase $\phi$.
\end{defn}

The rest of the section is dedicated to proving the following result -- some of the terminology is introduced below.

\begin{thm}
\label{T:geomexistence}
    The following varieties admit geometric stability conditions: \vspace{-2mm}
    \begin{enumerate}
        \item finite products of Grassmannians; and \vspace{-2mm}
        \item smooth projective quadric hypersurfaces $Q\subset \bf{P}^n$. \vspace{-2mm}
    \end{enumerate}
    In addition, for all $n \ge 1$ \vspace{-2mm}
    \begin{enumerate}[resume]
        \item $\Hilb^{n}(\bf{P}^2)$, $\Hilb^n(\bf{P}^1\times \bf{P}^1)$; and \vspace{-2mm}
        \item the weighted projective stacks $\mathbf{P}(a_0,\ldots, a_n)$ for $\gcd(a_0,\ldots, a_n) = 1$ \vspace{-2mm}
    \end{enumerate}
    admit almost-geometric stability conditions. Furthermore, for any $X$ in the above list, any stability condition $\sigma \in \Stab(X)$ constructed here, and any $v\in \Lambda$, the stack $\cM_\sigma^{\rm{ss}}(v)$ of $\sigma$-semistable objects of class $v$ admits a proper good moduli space.
\end{thm}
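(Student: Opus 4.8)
The plan is to establish the four existence claims by exhibiting, in each case, a sharp graded full exceptional collection of sheaves on $X$ (or $\cX$) whose associated heart $\cA$ from \Cref{L:heartposet} contains the structure sheaves of (generic, in the stacky case) closed points with $\pr_{E_n}(\cO_x) = 1$; then \Cref{C:geomstability} delivers the geometric (resp. almost geometric) stability condition directly, and the moduli-space statement follows from a separate boundedness and openness argument. First, for a single Grassmannian $\Gr(k,V)$ I would take Kapranov's collection $\{\Sigma^\alpha(S)\}$ with the grading $\Sigma^\alpha(S)\preceq \Sigma^\beta(S)$ iff $|\alpha|\ge|\beta|$ recorded in \Cref{Ex:basicexamples}(2), which is sharp since its unique top block is $\{\cO_X\}$ ($\alpha=\varnothing$). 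The two points to check are: (i) $\cO_x\in\cA$ for all $x$, which amounts to resolving $\cO_x$ by a complex $Y_\bullet$ with $Y_i\in\mathfrak b_i$ — here I would use the Kapranov/Beilinson-type resolution of the diagonal $\cO_\Delta$ on $\Gr\times\Gr$ by sheaves of the form $\Sigma^\alpha(S)\boxtimes\Sigma^\alpha(S)^{\vee}$ (arranged in the correct cohomological degrees dictated by $\nu$), restrict to $\{x\}\times\Gr$, and read off the required complex; and (ii) $\pr_{E_n}(\cO_x)=1$, which holds because the coefficient of $[\cO_X]$ in the diagonal resolution is the rank-one term coming from $\alpha=\varnothing$. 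For a finite product $\Gr(k_1,V_1)\times\cdots\times\Gr(k_r,V_r)$, I would take the external-tensor-product collection $\{\Sigma^{\alpha_1}(S_1)\boxtimes\cdots\boxtimes\Sigma^{\alpha_r}(S_r)\}$ graded by the sum of the $|\alpha_j|$; sharpness persists (top block $=\{\cO\}$), the resolution of the diagonal on the product is the external product of the individual ones, and $\cO_{(x_1,\dots,x_r)}=\cO_{x_1}\boxtimes\cdots\boxtimes\cO_{x_r}$, so (i) and (ii) follow from the one-factor case by a Künneth/box-product argument.

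For the smooth quadric $Q\subset\bf P^n$, I would use Kapranov's full exceptional collection on $Q$ built from the spinor bundle(s) $\Sigma$ together with $\cO,\cO(1),\dots$; grading it so that the spinor part(s) sit in the top weights and $\cO_Q$ is the unique maximal block makes it sharp. Again the key input is the resolution of $\cO_\Delta$ on $Q\times Q$ from \cites{Kapranov85,Kapranov1988} — of Beilinson type with $\cO\boxtimes\cO$ occurring with multiplicity one — so restricting to a point gives $\cO_x$ the shape required by \Cref{L:standardformheart} with $\pr_{E_n}(\cO_x)=1$, and \Cref{C:geomstability} applies. For $\Hilb^n(\bf P^2)$ and $\Hilb^n(\bf P^1\times\bf P^1)$ and the weighted projective stacks, the strategy changes: these need not carry a diagonal-resolving collection of sheaves with the needed positivity, so instead I would invoke the induction/descent machinery of \cite{macri2009inducing} and its refinement in \cite{DellHengLicata}. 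Concretely, $\Hilb^n(\bf P^2)$ (resp.\ $\Hilb^n(\bf P^1\times\bf P^1)$) is a moduli space arising via the Bridgeland–King–Reid/Haiman equivalence as $\mathfrak S_n$-equivariant Hilbert scheme, so $\DCoh(\Hilb^n)\simeq\DCoh^{\mathfrak S_n}((\bf P^2)^n)$ (resp.\ for $\bf P^1\times\bf P^1$); one then takes the product geometric stability condition from parts (1)–(2), checks it is $\mathfrak S_n$-invariant (it is, being built symmetrically from the external collection), and applies the induction theorem to descend it to the quotient. The output is only almost geometric because the descent controls stability of $\cO_x$ only for $x$ in the open locus corresponding to $n$ distinct points. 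For $\bf P(a_0,\dots,a_n)$ with $\gcd=1$, I would similarly realize $\DCoh(\bf P(a_0,\dots,a_n))$ as an equivariant category — a $\mu$-quotient of (a suitable twist of) $\bf P^n$ or via a Geigle–Lenzing/graded-ring presentation — transport the Beilinson collection, and descend, again getting almost geometric stability because the diagonal point $\cO_x$ is only guaranteed stable on the scheme locus $U$.

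Finally, for the moduli statement: given $v\in\Lambda$ and $\sigma=(Z,\cA)$ one of the constructed stability conditions, I would first show $\cM^{\rm ss}_\sigma(v)$ is an algebraic stack of finite type. Since $\cA$ is a finite-length heart with finitely many simple objects $S_1,\dots,S_n$ (by \Cref{L:standardformheart} and the discussion after \Cref{L:stabconditionfinitelength}), any $\sigma$-semistable object of class $v$ lies in $\cA$ up to shift and has bounded multiplicities $m_i\le$ (the $i$-th coordinate of $v$ in the basis $[S_i]$), so the family of such objects is bounded; the Artin-stack and finiteness statements then follow from standard results on moduli of objects in a category with a bounded heart (e.g.\ via Abramovich–Polishchuk / the general formalism recalled in work on moduli of Bridgeland-semistable objects). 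Existence of a proper good moduli space I would obtain by verifying the valuative criteria of Alper–Halpern-Leistner–Heinloth: $\Theta$-reductivity and $S$-completeness for $\cM^{\rm ss}_\sigma(v)$ — for a stability condition whose heart is finite length and Noetherian these criteria are known to hold (this is precisely the setting handled in \cite{HLJR} and the AHH moduli formalism), and properness (universal closedness + separatedness) follows from the same valuative criteria together with boundedness. The main obstacle I anticipate is not the moduli-space part but step (i) in the exceptional-collection arguments — namely pinning down, for the quadric and for products of Grassmannians, that the relevant resolution of the diagonal, once each summand $\Sigma^\alpha\boxtimes\Sigma^\alpha{}^{\vee}$ is placed in the cohomological degree prescribed by the norm function $\nu$, actually restricts on $\{x\}\times X$ to a complex with terms in the subcategories $\mathfrak b_i$ and not merely to a complex that is quasi-isomorphic to one after further truncation; this is where the precise combinatorics of the grading (the ordering of blocks by $|\alpha|$) must be matched against the shape of Kapranov's resolution, and it is the step most likely to require care beyond the routine.
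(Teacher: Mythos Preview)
Your proposal is correct and follows essentially the same route as the paper: Kapranov's resolution of the diagonal (packaged there as a Koszul setup) restricted to a point for Grassmannians, products, and quadrics, then BKRH plus \cite{macri2009inducing,DellHengLicata} induction for $\Hilb^n$ and the finite-quotient description for weighted projective stacks. The only real difference is the moduli-space claim: rather than verifying the Alper--Halpern-Leistner--Heinloth criteria directly from the finite-length heart, the paper invokes the mass--Hom bound formalism of \cite{augmented} (which packages exactly that verification) and notes that mass--Hom bounds are preserved under induction, so the argument extends uniformly to the $\Hilb^n$ and weighted-projective cases where the stability condition is induced rather than glued.
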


\subsubsection*{Geometric stability conditions for products of Grassmannians}

\begin{setup} 
\label{Setup:Koszul}
    Consider a variety $X$ and a closed point $x\in X$ such that there is a vector bundle $E$ of rank $r$ on $X$ and a morphism of sheaves $f_x:E\to \cO_X$ with cokernel $\cO_x$. We obtain a Koszul resolution
    \[
        \cK_\bullet(f_x) := \left[0 \to \Lambda^{r} E \to \cdots \to \Lambda^2 E \to E \xrightarrow{f_x} \cO_X\right] \simeq  \cO_x
    \]
    of $\cO_x$. Suppose furthermore that\vspace{-2mm}
    \begin{enumerate}
        \item $\cO_X$ is exceptional; \vspace{-2mm} and 
        \item there is a graded exceptional collection $(\cE,\preceq)$ of vector bundles on $X$, with blocks $\{b_r\prec \cdots \prec b_0\}$ such that $\Lambda^i(E)[i] \in \mathfrak{b}_i$ in the notation of \Cref{Setup:complexheart}.\vspace{-2mm} 
    \end{enumerate}
    By definition, $\Lambda^0 E = \cO_X$ so that $b_0 = \{\cO_X\}$.
\end{setup}

\begin{lem} 
\label{L:FEC}
    In \Cref{Setup:Koszul}, if for each $x\in X$ there exists $f_{x} \in \Hom(E,\cO_X)$ such that $\coker f_{x} = \cO_{x}$, then $\cE$ is full in $\DCoh(X)$.
\end{lem}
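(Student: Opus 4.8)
The plan is to show that $\langle\cE\rangle$, the thick triangulated subcategory of $\DCoh(X)$ generated by $\cE$, contains the skyscraper $\cO_x$ of every closed point $x$, and then to deduce fullness from the fact that the skyscrapers form a spanning class. So fix a closed point $x\in X$; by hypothesis there is $f_x\in\Hom(E,\cO_X)$ with $\coker f_x=\cO_x$, and by \Cref{Setup:Koszul} the Koszul complex $\cK_\bullet(f_x)=[0\to\Lambda^r E\to\cdots\to\Lambda^2 E\to E\xrightarrow{f_x}\cO_X]$ resolves $\cO_x$, so $\cO_x\simeq\cK_\bullet(f_x)$ in $\DCoh(X)$. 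The terms of this complex are the bundles $\Lambda^i E$ for $0\le i\le r$, and assumption (2) of \Cref{Setup:Koszul} says precisely that $\Lambda^i E[i]\in\mathfrak{b}_i$ in the sense of \Cref{Setup:complexheart}, i.e.\ $\Lambda^i E$ is a finite direct sum of objects of the block $b_i\subseteq\cE$ (with $\Lambda^0 E=\cO_X\in b_0$); in particular each $\Lambda^i E$ lies in $\langle\cE\rangle$. Since a bounded complex always lies in the thick subcategory generated by its terms — for instance via the tower of stupid truncations $\sigma_{\ge j}$, exactly as in the proof of \Cref{L:standardformheart} — we conclude $\cO_x\in\langle\cE\rangle$ for all closed points $x$.

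Next, since $\cE$ is an exceptional collection, $\langle\cE\rangle$ is an admissible subcategory of $\DCoh(X)$, and for an admissible subcategory fullness amounts to checking that the right orthogonal $\langle\cE\rangle^\perp$ vanishes. But any $F\in\langle\cE\rangle^\perp$ satisfies $\RHom(\cO_x,F)=0$ for every closed point $x$ because $\cO_x\in\langle\cE\rangle$; as $\{\cO_x\}_{x\in X}$ is a spanning class for $\DCoh(X)$ (here we use that $X$ is smooth and projective), this forces $F=0$. Hence $\langle\cE\rangle^\perp=0$, so $\langle\cE\rangle=\DCoh(X)$, i.e.\ $\cE$ is full.

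There is no serious difficulty here once hypothesis (2) of \Cref{Setup:Koszul} is in force; the one point worth flagging is that merely knowing $\cO_x\in\langle\cE\rangle$ for all $x$ does not by itself yield fullness — the thick subcategory generated by the skyscrapers alone is typically proper, as one already sees on $\bf{P}^1$ by inspecting $\rm{K}_0$ — so it is essential to exploit exceptionality of $\cE$, which makes $\langle\cE\rangle$ admissible and lets the orthogonality/spanning-class argument close the proof.
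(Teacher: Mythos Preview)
Your proof is correct and follows essentially the same approach as the paper: show $\cO_x\in\langle\cE\rangle$ via the Koszul resolution, invoke admissibility of $\langle\cE\rangle$ from exceptionality, and conclude by the spanning-class property of skyscrapers. The only cosmetic difference is that the paper works with the left orthogonal ${}^\perp\langle\cE\rangle$ and checks $\RHom(F,\cO_x)=0$, whereas you use the right orthogonal $\langle\cE\rangle^\perp$ and $\RHom(\cO_x,F)=0$; both directions of the spanning-class condition hold for smooth projective $X$, so this makes no substantive difference.
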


\begin{proof}
    Being generated by an exceptional collection, $\cA = \langle \cE\rangle$ is admissible by \cite{Bondal1990}*{Thm. 3.2}. On the other hand, the quasi-isomorphism $\cK_\bullet(f_x) \simeq \cO_x$ for each $x\in X$ implies that $\cO_x \in \cA$.
    Consider the semiorthogonal decomposition $\DCoh(X) = \langle \cA,{}^\perp \cA\rangle$. Any $F\in {}^\perp \cA$ has $\RHom(F,\cO_x) = 0$ for all $x\in X$, but then since $\{\cO_x:x\in X\}$ is a spanning class for $\DCoh(X)$ and $\DCoh(X)$ has a Serre functor, ${}^\perp \cA = 0$ by \cite{bridg98_equivFM}*{Ex. 2.2}.
\end{proof}

\begin{ex}
\label{E:Kapranovcollection}
    The key example where \Cref{Setup:Koszul} holds is $\Gr(k,V)$, where $V$ is a finite dim\-ensional vector space and $1\le k \le \dim V - 1$. Consider the tautological subbundle $S \subset \Gr(k,V) \times V$ which has fiber over $P\in \Gr(k,V)$ the vector space $P$, and the perpendicular bundle $S^\perp$ which has as its fiber over $P$ the space of $\phi \in V^*$ such that $\phi|_P = 0$. 
    
    There is a natural evaluation map $S\boxtimes S^\perp \to \cO_{G\times G}$ given on the fiber over a closed point $(P,Q) \in G\times G$ by $(v,\phi)\mapsto \phi(v)$. The cokernel of this morphism is $\cO_{\Delta}$ and the resulting Koszul complex
    \[
        0 \to \Lambda^{k(n-k)}(S\boxtimes S^\perp) \to \cdots \to S\boxtimes S^\perp \to \cO_{G\times G}\to \cO_\Delta \to 0
    \]
    gives a resolution of $\cO_\Delta$. By \cite{Kapranov85}*{Lem. 0.4}, for each $1\le m \le k(n-k)$ we have
    \[
        \Lambda^m(S\boxtimes S^\perp) = \bigoplus_{\lvert \alpha \rvert = m} \Sigma^\alpha(S) \boxtimes \Sigma^{\alpha^*}(S^\perp)
    \]
    where the sum ranges over all Young diagrams of size $m$, with at most $k$ rows and $\dim V - k$ columns. Pulling back along $\Gr(k,V)\times \{Q\} \hookrightarrow \Gr(k,V)\times \Gr(k,V)$ for some closed point $Q\in \Gr(k,V)$ gives us a resolution 
    \[
        \cdots \to \bigoplus_{\lvert \alpha \rvert = m} \Sigma^\alpha(S) \otimes \Sigma^{\alpha^*}_Q(S^\perp) \to \cdots \to S \otimes Q^\perp \to \cO_{G} \to \cO_{\{Q\}} \to 0
    \]
    of $\cO_{\{Q\}}$. It is proven in \cite{Kapranov85} that the set of sheaves $\Sigma^\alpha(S)$ appearing in the entries of the above complex forms a strong full exceptional collection on $\DCoh(\Gr(k,V))$ which we denote by $\mathfrak{K}$ and refer to as the \emph{Kapranov collection}. Recall that there is a grading on $\mathfrak{K}$ with norm $\nu: \mathfrak{K} \to \{0,\ldots, k(n-k)\}$ given by $\nu(\Sigma^\alpha(S)) = \lvert \alpha \rvert$.
\end{ex}

\begin{rem}
    In \cite{Kapranov1988}*{Ex. 3.11}, similar resolutions of point sheaves on all type $A$ flag varieties $\rm{Fl}(i_1,\ldots, i_k;n)$ are constructed. However, there is no grading on the resulting strong full exceptional collections denoted $X(i_1,\ldots, i_k;n)$ compatible with the cohomological grading of the terms of the resolution as required in \Cref{Setup:Koszul} when $k>1$.
\end{rem}

\begin{hyp}
\label{H:koszulproduct}
    Suppose that $X$ (resp. $Y$) is a variety with a vector bundle $E$ (resp. $F$) of rank $r$ (resp. $s$) as in \Cref{Setup:Koszul} and \Cref{L:FEC}. In particular, $\DCoh(X)$ (resp. $\DCoh(Y)$) has a graded full exceptional collection of sheaves $(\cE,\preceq)$ (resp. $(\cF,\preceq')$) with norm function $\nu$ (resp. $\mu$).
\end{hyp}

In what follows, we abuse notation by writing $E$ and $F$ instead of $\pr_1^*(E)$ and $\pr_2^*(F)$ and regarding them as sheaves on $X\times Y$.

\begin{prop}
\label{P:productstabconditions}
    If \Cref{H:koszulproduct} hold, then there is a Koszul resolution 
    \begin{equation*}
        \cK_\bullet(f_x,f_y):=\left[0\to \Lambda^{r+s}(E\oplus F) \to \cdots \to \Lambda^2 (E\oplus F) \to E\oplus F \xrightarrow{(f_x,f_y)}\cO_{X\times Y}\right] \simeq \cO_{(x,y)}
    \end{equation*}
    for each closed point $(x,y)\in X\times Y$. Furthermore, \vspace{-2mm}
    \begin{enumerate}
        \item $\cE \boxtimes \cF := \{E'\boxtimes F':E'\in \cE,F'\in \cF\}$  is a full exceptional collection, with the grading defined by the norm $\nu\boxtimes \mu: \cE\boxtimes \cF \to \{0,\ldots, r+s\}$ given by $(\nu\boxtimes\mu)(E'\boxtimes F') = \nu(E') + \mu(F')$; and \vspace{-2mm}
        \item if $\cE$ and $\cF$ are strong, then so is $\cE\boxtimes \cF$.\vspace{-2mm}
    \end{enumerate}  
\end{prop}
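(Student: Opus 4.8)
The plan is to establish the Koszul resolution first and then deduce the two combinatorial statements about the collection $\cE \boxtimes \cF$. For the resolution: given closed points $x \in X$ and $y\in Y$, we have by \Cref{Setup:Koszul} morphisms $f_x: E \to \cO_X$ and $f_y: F\to \cO_Y$ with cokernels $\cO_x$ and $\cO_y$. Pulling back along the two projections $\pr_1, \pr_2: X\times Y \to X, Y$, we obtain $\pr_1^* f_x: \pr_1^* E \to \cO_{X\times Y}$ and $\pr_2^* f_y: \pr_2^* F \to \cO_{X\times Y}$ (recalling our abuse of notation writing $E, F$ for these pullbacks). Their sum is a map $(f_x, f_y): E\oplus F \to \cO_{X\times Y}$. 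The key observation is that the associated Koszul complex $\cK_\bullet(f_x,f_y)$ of this map is the total complex of the tensor product $\cK_\bullet(f_x)\boxtimes \cK_\bullet(f_y)$ via the standard isomorphism $\Lambda^\bullet(E\oplus F) \cong \Lambda^\bullet E \otimes \Lambda^\bullet F$ of exterior algebras. Since $\cK_\bullet(f_x) \simeq \cO_x$ on $X$ and $\cK_\bullet(f_y) \simeq \cO_y$ on $Y$, and these are bounded complexes of vector bundles (hence flat), the exterior (derived) tensor product computes $\cO_x \boxtimes^{\mathbf{L}} \cO_y = \cO_{(x,y)}$; thus $\cK_\bullet(f_x,f_y) \simeq \cO_{(x,y)}$. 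One should note that $E\oplus F$ has rank $r+s$, so the complex terminates at $\Lambda^{r+s}(E\oplus F)$ as stated.

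For part (1): fullness of $\cE\boxtimes \cF$ follows from \Cref{L:FEC} applied to $X\times Y$ with the bundle $E\oplus F$ — the resolution just constructed shows $\coker(f_x,f_y) = \cO_{(x,y)}$ for each closed point, so $\mathcal{E}\boxtimes\mathcal{F}$ is full provided we check the hypotheses of \Cref{Setup:Koszul}(2), namely that $\Lambda^i(E\oplus F)[i]$ lies in the extension closure $\mathfrak{b}_i$ of the $i$-th block of $(\cE\boxtimes\cF, \preceq)$ placed in degree $-i$. Using $\Lambda^i(E\oplus F) = \bigoplus_{p+q=i}\Lambda^p E \otimes \Lambda^q F$, and the hypotheses $\Lambda^p E[p]\in \mathfrak{b}_p^{(\cE)}$, $\Lambda^q F[q]\in \mathfrak{b}_q^{(\cF)}$, each summand $\Lambda^p E \boxtimes \Lambda^q F$, shifted by $[p+q]$, is a sum of objects $E'\boxtimes F'$ with $\nu(E') = p$, $\mu(F') = q$, hence $(\nu\boxtimes\mu)(E'\boxtimes F') = i$; so it lies in the $i$-th block. (That these $E'\boxtimes F'$ do form an exceptional collection under any admissible ordering refining $\nu\boxtimes\mu$, and that the relation $\sim$ behaves correctly — i.e. $E'\boxtimes F' \sim E''\boxtimes F''$ forces orthogonality or equality — follows from the Künneth formula $\Ext^\bullet_{X\times Y}(E'\boxtimes F', E''\boxtimes F'') = \Ext^\bullet_X(E',E'')\otimes \Ext^\bullet_Y(F',F'')$ together with the corresponding properties of $\cE$ and $\cF$; I would verify this is compatible with \Cref{D:gradedFECnew} and the norm conditions of \Cref{D:normfunction}.)

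For part (2): if $\cE$ and $\cF$ are strong, meaning $\Ext^k_X(E', E'') = 0$ for all $k\neq 0$ and all pairs in $\cE$ (and similarly for $\cF$), then by the Künneth formula $\Ext^k_{X\times Y}(E'\boxtimes F', E''\boxtimes F'') = \bigoplus_{i+j=k}\Ext^i_X(E',E'')\otimes\Ext^j_Y(F',F'')$ vanishes unless $i=j=0$, i.e. unless $k=0$; so $\cE\boxtimes\cF$ is strong. The main obstacle I anticipate is not any single step but the bookkeeping in part (1): one must check carefully that the total preorder induced by $\nu\boxtimes\mu$ actually defines a \emph{grading} in the sense of \Cref{D:gradedFECnew} — in particular that two box-product bundles with equal $\nu\boxtimes\mu$-value are orthogonal or equal, which requires that whenever $\nu(E')+\mu(F') = \nu(E'')+\mu(F'')$ but $(E',F')\neq(E'',F'')$, the tensor product of $\Ext$-groups vanishes; this uses that within a single block of $\cE$ (resp. $\cF$) distinct bundles are orthogonal, but one must handle the "mixed" case where $\nu(E') > \nu(E'')$ while $\mu(F') < \mu(F'')$, using that $\cE$ is an exceptional collection (so $\Ext^{\leq 0}$ or $\Ext^{\geq 0}$ vanishes in one direction) to conclude the Künneth product still vanishes in the relevant degrees.
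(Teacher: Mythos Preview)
Your proposal is correct and follows essentially the same strategy as the paper: Künneth for the exceptional/grading/strength claims, the decomposition $\Lambda^k(E\oplus F) = \bigoplus_{p+q=k}\Lambda^p E \boxtimes \Lambda^q F$, and \Cref{L:FEC} for fullness. The paper justifies the resolution more tersely by observing $\coker(f_x,f_y) = \cO_{(x,y)}$ directly, while you go via the tensor product of Koszul complexes; both are fine.

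One small over-complication in your final paragraph: in the ``mixed'' case $\nu(E') > \nu(E'')$ and $\mu(F') < \mu(F'')$, you worry about partial vanishing of $\Ext^{\le 0}$ or $\Ext^{\ge 0}$. This is unnecessary. Since $\nu(E') > \nu(E'')$, the object $E'$ precedes $E''$ in the exceptional collection, so $\Hom^\bullet(E'',E') = 0$ in \emph{all} degrees; hence by Künneth $\Hom^\bullet(E''\boxtimes F'', E'\boxtimes F') = 0$. Symmetrically, $\mu(F') < \mu(F'')$ gives $\Hom^\bullet(F',F'') = 0$, so $\Hom^\bullet(E'\boxtimes F', E''\boxtimes F'') = 0$. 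Thus full orthogonality holds and the grading condition of \Cref{D:gradedFECnew} is immediate; no degree-by-degree bookkeeping is needed. The paper phrases this as: there are no morphisms between $b_p\boxtimes b_q'$ and $b_k\boxtimes b_l'$ whenever $p+q \ge k+l$ (and $(p,q)\ne(k,l)$).
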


\begin{proof}
    The claim that $\cK_\bullet(f_x,f_y)$ defines a resolution of $\cO_{(x,y)}$ is immediate from the fact that $\coker(f_x,f_y) = \cO_{(x,y)}$. To see that $\cE\boxtimes \cF$ can be ordered so as to form an exceptional collection, note that there are no morphisms in $\DCoh(X\times Y)$ between $b_p \boxtimes b_q'$ and $b_k\boxtimes b_l'$ whenever $p+q \ge k+l$ by the K\"unneth formula \cite{stacks-project}*{\href{https://stacks.math.columbia.edu/tag/0BEC}{Tag 0BEC}}. 
    
    Consequently, $\cB = \langle \cE\boxtimes \cF\rangle$ is an admissible subcategory of $\DCoh(X\times Y)$ and the identity $\Lambda^k(E\oplus F) = \bigoplus_{p+q = k} \Lambda^p(E) \boxtimes \Lambda^q(E)$ implies that $\Lambda^k(E\oplus F)$ is a sum of sheaves in $\bigcup_{p+q = k} (b_p\boxtimes b_q')$. Thus, $\cO_{(x,y)} \in \cB$ for all $(x,y)$ and by \Cref{L:FEC} $\cE\boxtimes \cF$ is full.
    
     Claim (2) about strength of the exceptional collection is proven similarly using the K\"unneth formula.
\end{proof}

\begin{cor}
    For any $(d_1,\ldots, d_n) \in \bf{N}^n$, the variety $\prod_{i=1}^n \bf{P}^{d_i}$ admits geometric stability conditions.
\end{cor}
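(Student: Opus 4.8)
The plan is to recognise $\prod_{i=1}^n \bf{P}^{d_i}$ as precisely the kind of variety handled by the machinery of \Cref{SS:geomFEC}, after identifying each factor with a Grassmannian. First I would discard the trivial cases $d_i = 0$ (a $\bf{P}^0$ factor contributes nothing to the product), and so assume each $d_i \ge 1$ and write $\bf{P}^{d_i} = \Gr(1,\bf{C}^{d_i+1})$.

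For a single factor $\bf{P}^d = \Gr(1,\bf{C}^{d+1})$, the tautological subbundle $S = \cO(-1)$ has rank one, so the only surviving Schur functors in \Cref{E:Kapranovcollection} are $\Sigma^{(m)}(S) = \cO(-m)$; the Kapranov collection thus reduces to $\mathfrak{K} = \{\cO,\cO(-1),\dots,\cO(-d)\}$ with norm $\nu(\cO(-m)) = m$, blocks $b_m = \{\cO(-m)\}$, and top block $b_0 = \{\cO\}$. Taking $E = \cO(-1)^{\oplus d}$ one has $\Lambda^i E = \cO(-i)^{\oplus\binom{d}{i}}$, so the hypothesis $\Lambda^i(E)[i]\in\mathfrak{b}_i$ of \Cref{Setup:Koszul} holds; and for each closed point $x$ the evaluation map furnished by the Kapranov construction --- equivalently, $d$ linear forms cutting out $x$ --- gives $f_x\colon E\to\cO$ with $\coker f_x = \cO_x$, so \Cref{L:FEC} shows $\mathfrak{K}$ is full. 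Hence every $\bf{P}^{d_i}$ satisfies the hypotheses of \Cref{Setup:Koszul} and \Cref{L:FEC}, making \Cref{H:koszulproduct} available for the factors of the product.

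Next I would apply \Cref{P:productstabconditions} inductively over $\bf{P}^{d_1}\times\cdots\times\bf{P}^{d_n}$, using associativity of $\boxtimes$ and the identity $\Lambda^k(E_1\oplus\cdots\oplus E_n) = \bigoplus_{k_1+\cdots+k_n = k}\Lambda^{k_1}E_1\boxtimes\cdots\boxtimes\Lambda^{k_n}E_n$. This yields a graded full exceptional collection $\cE = \cE_1\boxtimes\cdots\boxtimes\cE_n$ on $X = \prod_i\bf{P}^{d_i}$ with norm $\nu_1+\cdots+\nu_n$, whose top block is $\{\cO_X\}$ --- so $\cE$ is sharp in the sense of \Cref{D:sharpgFEC} --- together with, for every closed point $x = (x_1,\dots,x_n)$, a Koszul resolution $\cK_\bullet(f_{x_1},\dots,f_{x_n})\simeq\cO_x$ by $\bigoplus_i E_i$. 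Since each $\Lambda^k(\bigoplus_i E_i)$ is a sum of sheaves lying in the blocks $b_{k_1}\boxtimes\cdots\boxtimes b_{k_n}$ with $\sum k_i = k$, placing this resolution in cohomological degrees $-\dim X,\dots,0$ exhibits $\cO_x$ in the standard form of \Cref{L:standardformheart}; thus $\cO_x\in\cA$, and its degree-zero term $\Lambda^0(\bigoplus_i E_i) = \cO_X$ appears with multiplicity one, so $\pr_{E_n}(\cO_x) = 1$. With sharpness of $\cE$, $\cO_x\in\cA$, and $\pr_{E_n}(\cO_x) = 1$ in hand, \Cref{C:geomstability} immediately produces a geometric stability condition on $\DCoh(X)$ with heart $\cA$.

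I do not expect a genuine obstacle: the statement is essentially a bookkeeping consequence of \Cref{P:productstabconditions} and \Cref{C:geomstability}. The only point demanding care is to confirm that the box product of the graded exceptional collections stays in the standard form required by \Cref{Setup:complexheart} and that the point sheaf on $X$ is resolved by a complex of the correct shape, with $\cO_X$ occurring exactly once in degree zero --- and both of these are exactly what \Cref{P:productstabconditions} provides, once \Cref{H:koszulproduct} has been checked for each factor, which is the content of the second paragraph.
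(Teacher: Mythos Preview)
Your proposal is correct and follows essentially the same route as the paper: verify that each $\bf{P}^{d_i}$ satisfies \Cref{H:koszulproduct}, then apply \Cref{P:productstabconditions} inductively and finish with \Cref{C:geomstability}. The paper's one-line proof leaves the invocation of \Cref{C:geomstability} implicit; you have spelled it out, which is fine.

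The only substantive difference is your choice of bundle. The paper takes $E = \Omega^1_{\bf{P}^d}(1)$, so that $\Lambda^i E = \Omega^i(i)$ and the resulting collection is the Beilinson collection $\{\Omega^d(d),\dots,\Omega^1(1),\cO\}$, consistent with \Cref{Ex:basicexamples}(1) and with the $\Omega_{\bf{P}^{d\times n}}(1)$ used just afterwards for $\Hilb^n$. You instead read $\bf{P}^d$ as $\Gr(1,\bf{C}^{d+1})$ and take the Kapranov bundle $E = S\otimes Q^\perp \cong \cO(-1)^{\oplus d}$, landing on the collection $\{\cO(-d),\dots,\cO(-1),\cO\}$. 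Both bundles satisfy \Cref{Setup:Koszul} (the Koszul complexes are the Euler and the linear-forms resolutions of $\cO_x$, respectively), both collections are sharp with $b_0 = \{\cO\}$, and in both cases the degree-zero term of the resolution is a single copy of $\cO_X$, so $\pr_{E_n}(\cO_x) = 1$. The two choices are mutation-equivalent and lead to the same conclusion by the same mechanism; neither buys anything the other does not.
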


\begin{proof}
    Apply \Cref{P:productstabconditions} inductively, using the fact that $\bf{P}^d$ satisfies \Cref{H:koszulproduct} and the hypothesis of \Cref{L:FEC} with $E = \Omega^1_{\bf{P}^d}(1)$.
\end{proof}

\subsubsection*{Almost geometric stability conditions for $\Hilb^n(\bf{P}^2)$, $\Hilb^n(\bf{P}^1\times \bf{P}^1)$, and $\mathbf{P}(a_0,\ldots, a_n)$}

We write $\bf{P}^{d\times n} = (\bf{P}^d)^n$. To obtain a Koszul resolution of the structure sheaf of a closed point $(x_1,\ldots, x_n) \in \bf{P}^{d\times n}$, one can use the bundle $\Omega_{\bf{P}^{d\times n}}(1):= \bigoplus_{i=1}^n \pr_i^*\Omega_{\bf{P}^d}(1)$ on $\bf{P}^{d\times n}$, and the morphism 
\[
    (f_1,\ldots,f_n) \in \Hom (\Omega_{\bf{P}^{d\times n}}(1),\cO_{\bf{P}^{d\times n}})\cong \prod_{i=1}^n \Hom(\Omega_{\bf{P}^d}(1),\cO_{\bf{P}^d})
\]
such that $\coker(f_i:\Omega_{\bf{P}^d}(1) \to \cO_{\bf{P}^d}) = \cO_{x_i}$. For any $p_\bullet = (p_1,\ldots, p_n) \in \bf{N}^n$, define 
\[
    \Omega(p_\bullet) = \Lambda^{p_1}\Omega_{\bf{P}^d}(1) \boxtimes \cdots \boxtimes \Lambda^{p_n}\Omega_{\bf{P}^d}(1).
\]
Repeated application of the identity $\Lambda^k(E\oplus F) = \bigoplus_{p+q = k} \Lambda^p(E) \otimes \Lambda^q(F)$ gives 
    \[
        \Lambda^k \Omega_{\bf{P}^{d\times n}}(1) = \bigoplus_{\Sigma_i \:p_i \:= \:k} \Omega(p_\bullet)
    \]
for all $1\le k \le nd$. Thus, applying \Cref{P:productstabconditions} we obtain a graded strong full exceptional collection $\{\Omega(p_\bullet): 0\le \sum p_i \le nd\}$ on $\bf{P}^{d\times n}$ with norm function $\nu(\Omega(p_\bullet)) =\sum p_i$. 

\begin{cor}
\label{C:geomstabproductpd}
    The heart $\cA = \langle\Omega(p_\bullet)[\sum p_i]: 0\le \sum p_i \le nd \rangle_{\rm{ext}}$ on $\DCoh(\bf{P}^{d\times n})$ supports a geometric stability condition $\sigma$ such that $\sum p_i < \sum q_i\Rightarrow \phi(\Omega(p_\bullet)[\sum p_i]) < \phi(\Omega(q_\bullet)[\sum q_i])$. Further\-more, if $\sum p_i = \sum q_i \Rightarrow Z_\sigma(\Omega(p_\bullet)[\sum p_i]) = Z_\sigma(\Omega(q_\bullet)[\sum q_i])$ then $\sigma$ is $\mathfrak{S}_n$-invariant. 
\end{cor}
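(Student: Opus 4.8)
The plan is to feed the sharp graded full exceptional collection $\cE = \{\Omega(p_\bullet)\}$ on $\bf{P}^{d\times n}$ into the machinery of \Cref{SS:geomFEC}, choosing the central charge explicitly enough that the two refinements in the statement come out for free. First I would assemble the hypotheses of \Cref{H:stability} and \Cref{P:efficientstable}. The collection is sharp in the sense of \Cref{D:sharpgFEC}: its top block is $b_0 = \{\Omega(0,\dots,0)\} = \{\cO_{\bf{P}^{d\times n}}\}$, a single object. For a closed point $x = (x_1,\dots,x_n)$, choose $f_i\colon\Omega_{\bf{P}^d}(1)\to\cO_{\bf{P}^d}$ with $\coker f_i = \cO_{x_i}$; as in the paragraph preceding the statement one obtains a Koszul resolution $\cK_\bullet\simeq\cO_x$ whose $k$-th term is $\Lambda^k\Omega_{\bf{P}^{d\times n}}(1) = \bigoplus_{\sum p_i = k}\Omega(p_\bullet)$, a sum of objects of $b_k$. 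Hence $\cK_\bullet$ has the shape described in \Cref{L:standardformheart}, so $\cO_x\in\cA$; its degree-$0$ term is $\cO_{\bf{P}^{d\times n}}$ with multiplicity one, so $\pr_{\cO_{\bf{P}^{d\times n}}}(\cO_x) = 1$; and $\cO_x$ is efficient by \Cref{L:efficientsheaf}. Taking classes in $\rm{K}_0$ of $\cK_\bullet$ yields the identity
\[
    [\cO_x] \;=\; \sum_{p_\bullet}(-1)^{\sum p_i}[\Omega(p_\bullet)] \;=\; \sum_{p_\bullet}[\Omega(p_\bullet)[\sum p_i]],
\]
which is independent of $x$.

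Next I would pin down the central charge. By \Cref{L:stabconditionfinitelength}, any assignment $Z(\Omega(p_\bullet)[\sum p_i])\in\bf{H}\cup\bf{R}_{<0}$ defines a stability condition $\sigma = (Z,\cA)$. Fix reals $0 < \psi_0 < \psi_1 < \cdots < \psi_{nd} < 1$ and put $Z(\Omega(p_\bullet)[\sum p_i]) = \rho_{\sum p_i}\,e^{\mathtt{i}\pi\psi_{\sum p_i}}$, with $\rho_1 = \cdots = \rho_{nd} = 1$ and $\rho_0 > 0$ to be chosen; this is block-constant and already forces the phase inequality, since $\sum p_i < \sum q_i$ gives $\phi(\Omega(p_\bullet)[\sum p_i]) = \psi_{\sum p_i} < \psi_{\sum q_i} = \phi(\Omega(q_\bullet)[\sum q_i])$. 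By the displayed identity, $Z(\cO_x) = \rho_0 e^{\mathtt{i}\pi\psi_0} + v$ with $v = \sum_{k=1}^{nd}\#\{p_\bullet : \sum p_i = k\}\,e^{\mathtt{i}\pi\psi_k}$ a fixed nonzero vector with $\arg v\in[\pi\psi_1,\pi\psi_{nd}]\subset(0,\pi)$. Since $\psi_0 < \psi_1$, the sum $\rho_0 e^{\mathtt{i}\pi\psi_0} + v$ has argument strictly between $\pi\psi_0$ and $\arg v$, and it tends to $\pi\psi_0$ as $\rho_0\to\infty$; so for $\rho_0$ large enough $\phi(\cO_x) = \tfrac1\pi\arg Z(\cO_x)\in(\psi_0,\psi_1)$. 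As $\phi(\cO_{\bf{P}^{d\times n}}) = \psi_0$ and $\phi^-(b_1) = \psi_1$, this is precisely the inequality of \Cref{H:stability} with $F = \cO_x$, and it holds simultaneously for all $x$ because $Z(\cO_x)$ does not depend on $x$. Hence \Cref{P:efficientstable} makes every $\cO_x$ $\sigma$-stable of the common phase $\tfrac1\pi\arg Z(\cO_x)$, so $\sigma$ is geometric; this proves the first assertion (and recovers \Cref{C:geomstability} in this case).

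For the last assertion, assume $Z_\sigma$ is block-constant (as above). Each $w\in\mathfrak{S}_n$ permutes the factors of $\bf{P}^{d\times n}$; let $\Phi_w$ be the induced autoequivalence of $\DCoh(\bf{P}^{d\times n})$, which preserves $\Lambda$. Since $\Omega(p_\bullet)$ is an external tensor product of bundles pulled back from the factors, $\Phi_w$ carries $\Omega(p_\bullet)$ to $\Omega(p'_\bullet)$ for a permutation $p'_\bullet$ of $p_\bullet$, hence permutes the $\Omega(p_\bullet)$ preserving $\nu(\Omega(p_\bullet)) = \sum p_i$. Therefore $\Phi_w$ permutes the simple objects $\Omega(p_\bullet)[\sum p_i]$ of $\cA$ within each block, giving $\Phi_w(\cA) = \cA$ together with an induced permutation of the classes $[\Omega(p_\bullet)[\sum p_i]]$ in $\rm{K}_0$ that preserves blocks; since $Z_\sigma$ is block-constant, $Z_\sigma\circ[\Phi_w] = Z_\sigma$. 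As a stability condition is determined by its heart together with its central charge (Bridgeland's theorem), $\Phi_w\cdot\sigma = \sigma$; $w$ being arbitrary, $\sigma$ is $\mathfrak{S}_n$-invariant.

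The only step I expect to require care is the second paragraph: arranging a single block-constant central charge that simultaneously realizes the strict inter-block phase ordering and the squeeze $\phi(\cO_x)\in(\psi_0,\psi_1)$ demanded by \Cref{H:stability}. This is made routine by the identity $[\cO_x] = \sum_{p_\bullet}(-1)^{\sum p_i}[\Omega(p_\bullet)]$, which turns $Z(\cO_x)$ into a fixed vector whose argument one pushes below $\pi\psi_1$ simply by letting the $b_0$-block radius dominate.
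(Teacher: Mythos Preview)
Your proof is correct and follows essentially the same route as the paper: both feed the sharp graded collection $\{\Omega(p_\bullet)\}$ into the machinery of \Cref{SS:geomFEC} (via \Cref{L:heartposet}, \Cref{H:stability}, \Cref{P:efficientstable}/\Cref{C:geomstability}) and then observe that $\mathfrak{S}_n$ permutes the simples within blocks, so a block-constant $Z$ is invariant. You are more explicit than the paper about the block-constant choice of $Z$ and the squeeze $\phi(\cO_x)\in(\psi_0,\psi_1)$ via the K-theory identity for $[\cO_x]$, but this only unpacks what \Cref{C:geomstability} already packages.
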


\begin{proof}
    One can see that $\cA$ is the heart of a bounded t-structure by applying \Cref{L:heartposet} with $\nu(\Omega(p_1,\ldots, p_n)) = \sum p_i$. On the other hand, \Cref{P:productstabconditions} combined with \Cref{C:geomstability} gives existence of the claimed stability condition $\sigma$. 

    Next, observe that $\tau \in \mathfrak{S}_n$ acts on $\{\Omega(p_\bullet)[\sum p_i]\}$ by $\Omega(p_\bullet)[\sum p_i]\mapsto \Omega(p_{\tau(1)},\ldots, p_{\tau(n)})[\sum p_i]$ so that $\mathfrak{S}_n$ preserves $\cA$. Also, if $\sum p_i = \sum q_i \Rightarrow Z_\sigma(\Omega(p_\bullet)[\sum p_i]) = Z_\sigma(\Omega(q_\bullet)[\sum q_i])$ then $Z$ is $\mathfrak{S}_n$-invariant and the claimed result follows. 
\end{proof}

Let $S$ be a smooth projective surface. The works of Haiman \cite{haiman2001hilbert} and Bridgeland-King-Reid \cite{BKR2001} construct an exact equivalence
\[ 
    \Phi\colon \cD^b(\Hilb^n(S)) \xlongrightarrow{\sim} \cD^b([S^{n}/{\mathfrak{S}_n}])
\]
which we call the BKRH-equivalence. There is a canonical morphism of stacks $f:[S^{ n}/{\mathfrak{S}_n}] \to \rm{B}\mathfrak{S}_n$ and so because $\Coh(\rm{B}\mathfrak{S}_n) = \rep(\mathfrak{S}_n)$, $\rm{K}_0([S^{n}/{\mathfrak{S}_n}])$ has a module structure 
\[
    \rm{K}_0(\rep(\mathfrak{S}_n)) \otimes_{\bf{Z}} \rm{K}_0([S^{n}/{\mathfrak{S}_n}]) \to \rm{K}_0([S^{ n}/{\mathfrak{S}_n}])
\]
given by $[V]\otimes [E] \mapsto [f^*(V)\otimes E].$ 

Next, since a sheaf on $[S^{n}/\mathfrak{S}_n]$ is the same as an $\mathfrak{S}_n$-equivariant sheaf on $S^{ n}$, there is a canonical exact functor $\Coh([S^{ n}/\mathfrak{S}_n]) \to \Coh(S^{ n})$ which forgets the equivariant structure. We denote by $\cF:\DCoh([S^{ n}/\mathfrak{S}_n]) \to \DCoh(S^{ n})$ the induced functor and by $\cI:\DCoh(S^{ n}) \to \DCoh([S^{ n}/\mathfrak{S}_n])$ its left adjoint. The functors above induce a morphism $\cF_*\circ \Phi_*:\rm{K}_0(\Hilb^n(S)) \to \rm{K}_0(S^n)$. \cite{DellHengLicata}*{Lem. 4.5(iv)} implies that $\cF_*\circ \cI_* \in \End(\rm{K}_0(S^n))$ equals $n!\cdot \id$ and thus $\cF_*\circ \Phi_*$ is a rational surjection.

\begin{prop}
\label{P:Hilbnstab}
    For $S = \bf{P}^2$ or $\bf{P}^1\times \bf{P}^1$, $\DCoh(\Hilb^n(S))$ admits stability conditions satisfying the sup\-port prop\-erty with respect to $\rm{Im}(\cF_*\circ \Phi_*: \rm{K}_0(\Hilb^n(S)) \to \rm{K}_0(S^{ n}))$. These stability conditions are $\rep(\mathfrak{S}_n)$-equivariant in that their central charges $Z$ satisfy
    \[
        Z(f^*(V)\otimes(-)) = \dim V \cdot Z(-) \text{ for all $V\in \rep(\mathfrak{S}_n)$}.
    \]
\end{prop}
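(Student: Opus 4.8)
The plan is to produce such a stability condition $\sigma$ by inducing a stability condition along the quotient presentation $S^n\to[S^n/\mathfrak{S}_n]$ and transporting the result to $\Hilb^n(S)$ along the BKRH equivalence $\Phi$. Under this transport the forgetful map on $\rm{K}_0$ becomes precisely $\cF_*\circ\Phi_*$, which is why the support property ends up being controlled by exactly this lattice.

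First I would construct a $\mathfrak{S}_n$-invariant stability condition $\sigma'=(Z',\cA)$ on $\DCoh(S^n)$. When $S=\bf{P}^2$ we have $S^n=\bf{P}^{2\times n}$, and when $S=\bf{P}^1\times\bf{P}^1$ we have $S^n=\bf{P}^{1\times 2n}$, so in either case \Cref{C:geomstabproductpd} supplies a geometric stability condition with heart $\cA=\langle\Omega(p_\bullet)[\sum p_i]\rangle_{\rm{ext}}$ and central charge $Z'$ depending only on $\sum p_i$. Such a $Z'$ is invariant under the full symmetric group permuting the $\bf{P}^d$-factors, hence under the subgroup realizing the permutation of the $n$ copies of $S$ --- for $S=\bf{P}^1\times\bf{P}^1$ this is the block-permutation embedding $\mathfrak{S}_n\hookrightarrow\mathfrak{S}_{2n}$. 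That subgroup also permutes the collection $\{\Omega(p_\bullet)\}$ and fixes $p_\bullet\mapsto\sum p_i$, so it preserves $\cA$; thus $\sigma'$ is $\mathfrak{S}_n$-invariant. Being a stability condition on a category with a full exceptional collection, $\sigma'$ satisfies the support property with respect to $\rm{K}_0(S^n)$ equipped with any norm (cf.\ \Cref{L:stabconditionfinitelength}).

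Next I would apply the inducing procedure of \cite{macri2009inducing}, in the refined form of \cite{DellHengLicata}, to the $\mathfrak{S}_n$-action. This produces a stability condition $\sigma'_G=(Z'\circ\cF_*,\ \cF^{-1}(\cA))$ on $\DCoh([S^n/\mathfrak{S}_n])$, identified with the $\mathfrak{S}_n$-equivariant derived category of $S^n$, where $\cF$ is the forgetful functor. The key point, established in \cite{DellHengLicata}, is that $\sigma'_G$ satisfies the support property with respect to $\cF_*\colon\rm{K}_0([S^n/\mathfrak{S}_n])\to\rm{K}_0(S^n)$: any $\sigma'_G$-semistable $E$ has $\cF(E)$ a direct sum of $\sigma'$-semistable objects of one common phase, so $\lvert Z'_G(E)\rvert$ equals $m_{\sigma'}(\cF(E))$, a sum of masses of $\sigma'$-semistable objects; the support property of $\sigma'$ bounds each such mass below by a fixed multiple of the norm of its class, and the triangle inequality then gives $\lvert Z'_G(E)\rvert\ge c\,\lVert\cF_*[E]\rVert$. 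Transporting $\sigma'_G$ along $\Phi$ yields a stability condition $\sigma$ on $\DCoh(\Hilb^n(S))$ with central charge $Z'\circ\cF_*\circ\Phi_*$ and the support property with respect to $\cF_*\circ\Phi_*$; by the paragraph preceding the statement, $\cF_*\circ\Phi_*$ is a rational surjection onto $\rm{Im}(\cF_*\circ\Phi_*)$, which is finitely generated of positive rank, so $\sigma$ is a stability condition in the sense of \Cref{SS:stabilityconditionsreview}.

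Finally, $\rep(\mathfrak{S}_n)$-equivariance is formal: for $V\in\rep(\mathfrak{S}_n)$ and $E\in\DCoh([S^n/\mathfrak{S}_n])$ one has $\cF(f^*(V)\otimes E)\cong\cF(E)^{\oplus\dim V}$, since $\cF$ is monoidal and $f^*(V)$ becomes $\cO_{S^n}^{\oplus\dim V}$ after forgetting the equivariant structure; applying $Z'\circ\cF_*$ gives $Z'_G(f^*(V)\otimes E)=\dim V\cdot Z'_G(E)$, and transport along $\Phi$ gives the claimed identity on $\DCoh(\Hilb^n(S))$. The one genuinely substantive step is the support-property analysis for the induced stability condition, i.e.\ the input cited from \cite{DellHengLicata}; the remainder --- including the slightly fussy check in the first step that the relevant copy of $\mathfrak{S}_n$ lies inside the symmetry group of the stability condition from \Cref{C:geomstabproductpd} --- is formal transport along equivalences and adjunctions.
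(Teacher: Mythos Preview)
Your proposal is correct and follows essentially the same approach as the paper: construct an $\mathfrak{S}_n$-invariant stability condition on $S^n$ via \Cref{C:geomstabproductpd} (using the subgroup $\mathfrak{S}_n\subset\mathfrak{S}_{2n}$ for $S=\bf{P}^1\times\bf{P}^1$), induce along $\cF$ using \cite{macri2009inducing} and \cite{DellHengLicata}, and transport via $\Phi$. The paper's proof is terser, simply citing \cite{DellHengLicata}*{Thm.~4.8} and its Appendix~A for the support property and $\rep(\mathfrak{S}_n)$-equivariance, whereas you have unpacked these steps.
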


\begin{proof}
    The stability conditions on $\DCoh(\Hilb^n(S))$ are constructed by first inducing stability cond\-itions along the functor $\cF: \DCoh([S^{n}/\mathfrak{S}_n]) \to \DCoh(S^n)$ (cf. \cites{DellHengLicata,macri2009inducing}) and then applying $\Phi: \DCoh(\Hilb^n(S)) \xlongrightarrow{\sim} \DCoh([S^{n}/\mathfrak{S}_n]).$ To induce along $\cF$, one must find an $\mathfrak{S}_n$-invariant stability condition on $\DCoh(S^n)$. For $\bf{P}^{2\times n}$, this follows directly from \Cref{C:geomstabproductpd}. For $(\bf{P}^1\times \bf{P}^1)^n$, this follows from \Cref{C:geomstabproductpd} applied to $\bf{P}^{1\times 2n}$ and the diagonal subgroup $\mathfrak{S}_n\subset \mathfrak{S}_{2n}$. The claims about $\rep(\mathfrak{S}_n)$-equivariance and the support property follow from \cite{DellHengLicata}*{Thm. 4.8} and Appendix A \emph{ibid}.
\end{proof}

We consider the following setup in the subsequent proposition.

\begin{setup}
\label{Setup:quotientstability}
    Let $X$ be a smooth projective variety with an action by a finite group $G$ and let $\sigma \in \Stab(X)$ denote a $G$-invariant geometric stability condition such that all point sheaves have phase $\phi \in\bf{R}$. Denote by $\pi:X\to [X/G]$ the quotient stack.
\end{setup}

The morphism $\pi$ induces an exact functor $\pi^*:\Coh([X/G])\to \Coh(X)$ which induces an exact functor $\pi^*:\DCoh([X/G]) \to \DCoh(X)$. By \cite{macri2009inducing}*{Prop. 2.17} and the surrounding discussion, there is an \emph{induced} $\eta\in \Stab([X/G])$ with central charge factoring through the induced map $\rm{K}_0([X/G]) \to \rm{K}_0(X)$ and $\cP_\eta(\phi) = \{E\in \DCoh([X/G]):\pi^*(E) \in \cP_\sigma(\phi)\}$ for all $\phi \in \bf{R}$.

\begin{lem}
\label{L:quotientstackstability}
    Given \Cref{Setup:quotientstability}, denote by $\eta$ the induced stability condition on $\Stab([X/G])$. The sheaves in $\Coh([X/G])$ corresponding to $G$-orbits are $\eta$-semistable and they are $\eta$-stable if and only if they correspond to an orbit with cardinality $\lvert G\rvert$.
\end{lem}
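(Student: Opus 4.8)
\emph{Setup and identification of the sheaf.} The plan is first to make precise the object in question. To a $G$-orbit $O = G\cdot x\subseteq X$ one attaches the sheaf $\cF_O := \pi_*\cO_x \in \Coh([X/G])$; this is well defined up to isomorphism (as $\pi\circ g\simeq \pi$ for $g\in G$, so $\pi_*\cO_{gx}\cong \pi_*\cO_x$), and for a \emph{free} orbit it is simply $\bigoplus_{x'\in O}\cO_{x'}$ equipped with its tautological equivariant structure. Let $H\subseteq G$ denote the stabilizer of $x$. Flat base change along the Cartesian square identifying $\pi$ with the $G$-torsor $X\times_{[X/G]}X\cong G\times X$ gives $\pi^*\pi_*\cO_x\cong \bigoplus_{g\in G}\cO_{g^{-1}x}$; grouping by the orbit, this is $\bigoplus_{x'\in O}\cO_{x'}^{\oplus|H|}$, and $H$ acts on the $|H|$ copies of $\cO_x$ through its (left) regular representation $\bf{C}[H]$.

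\emph{Semistability.} Since $\sigma$ is geometric, every $\cO_{x'}$ ($x'\in O$) is $\sigma$-stable of phase $\phi$, so $\pi^*\cF_O=\bigoplus_{x'}\cO_{x'}^{\oplus|H|}\in\cP_\sigma(\phi)$ as a direct sum of semistable objects of the same phase. By the defining property of the induced slicing, $\cP_\eta(\phi)=\{E:\pi^*E\in\cP_\sigma(\phi)\}$, we conclude $\cF_O\in\cP_\eta(\phi)$, i.e. $\cF_O$ is $\eta$-semistable of phase $\phi$.

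\emph{Stability.} I would use the standard fact that an object of $\cP_\eta(\phi)$ is $\eta$-stable iff it is simple in the abelian category $\cP_\eta(\phi)$, i.e. iff it has no proper nonzero subobject in $\cA_\eta$ that lies in $\cP_\eta(\phi)$. By descent along the $G$-torsor $\pi$, subobjects of $\cF_O$ in $\cA_\eta$ correspond to $G$-stable subobjects of $\pi^*\cF_O$ in $\cA_\sigma$, and this correspondence is compatible with the conditions ``lies in $\cP_\eta(\phi)$'' and ``lies in $\cP_\sigma(\phi)$'' (again by the definition of $\eta$, together with exactness, faithfulness and conservativity of $\pi^*$). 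Now $\pi^*\cF_O=\bigoplus_{x'\in O}\cO_{x'}^{\oplus|H|}$ is a finite direct sum of the pairwise non-isomorphic simple objects $\cO_{x'}$ of $\cP_\sigma(\phi)$ — this is exactly where geometricity of $\sigma$ (stability, not merely semistability, of the point sheaves) enters. Hence its subobjects lying in $\cP_\sigma(\phi)$ are precisely the ``sub-polystables'' $\bigoplus_{x'}(W_{x'}\otimes\cO_{x'})$ for subspaces $W_{x'}$ of the multiplicity spaces, and requiring $G$-stability forces $\{W_{x'}\}$ to be $G$-equivariant, hence determined by a single $H$-subrepresentation $W\subseteq\bf{C}[H]$. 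Therefore $\cF_O$ has a proper nonzero subobject in $\cP_\eta(\phi)$ iff the regular representation $\bf{C}[H]$ is reducible, i.e. iff $H\neq 1$, i.e. iff $|O|<|G|$; equivalently $\cF_O$ is $\eta$-stable iff $|O|=|G|$, which is the assertion.

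\emph{Main obstacle.} The conceptual content is short; the work is in the two ``transfer'' steps. First, one must match subobjects of $\cF_O$ in the (typically tilted) heart $\cA_\eta$ with $G$-stable subobjects of $\pi^*\cF_O$ in $\cA_\sigma$, and check this respects the slices — here the fact that $\eta$ is defined slice by slice, and that $\cF_O$ is $\eta$-stable iff it has no proper nonzero \emph{subobject in the heart lying in $\cP_\eta(\phi)$}, do the job. Second, one must control subobjects of the polystable object $\pi^*\cF_O$ inside the ambient heart $\cA_\sigma$ rather than inside $\cP_\sigma(\phi)$; the key observation is that a monomorphism in $\cA_\sigma$ between objects of $\cP_\sigma(\phi)$ is automatically a monomorphism in $\cP_\sigma(\phi)$, so one may pass to $\cP_\sigma(\phi)$, where $\pi^*\cF_O$ is genuinely semisimple and the $G$-equivariant subobjects are transparently classified by $H$-subrepresentations of $\bf{C}[H]$. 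Neither step requires the hearts to be of finite length.
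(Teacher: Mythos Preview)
Your proof is correct and follows essentially the same route as the paper for semistability and for the ``free orbit $\Rightarrow$ stable'' direction: pull back, identify $\pi^*\cF_O$ as a sum of stable point sheaves, and use that $G$-equivariant subobjects in $\cP_\sigma(\phi)$ must have $G$-invariant support.

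The one genuine difference is in the ``non-free orbit $\Rightarrow$ not stable'' direction. You treat both directions uniformly by classifying all $G$-equivariant subobjects of $\pi^*\cF_O$ via $H$-subrepresentations of the regular representation $\bf{C}[H]$, so non-stability for $H\neq 1$ follows from reducibility of $\bf{C}[H]$. The paper instead sidesteps the subobject analysis in the non-free case and simply computes the endomorphism algebra via the adjunction $\pi_*\dashv\pi^*$:
\[
\Hom(\pi_*\cO_y,\pi_*\cO_y)\cong \Hom(\cO_y,\pi^*\pi_*\cO_y)\cong \bf{C}^{|H|},
\]
which is not one-dimensional, so $\cF_O$ cannot be $\eta$-stable. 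This avoids the ``main obstacle'' you identify (matching subobjects across hearts and descending to $\cP_\sigma(\phi)$) entirely for that direction, at the cost of a less symmetric argument. Your approach has the virtue of actually exhibiting the destabilizing subobject and making the Jordan--H\"older structure of $\cF_O$ transparent; the paper's is shorter.
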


\begin{proof}
    This is a minor modification of part of the proof of \cite{Dellfreeabelian}*{Thm. 3.3}. Consider $\cO_p$, where $p$ is a closed point of $[X/G]$ corresponding to a $G$-orbit in $X$. By definition, $\pi^*\cO_p = \bigoplus_{g\in G} \cO_{g x}$ and thus $\cO_p$ is semistable of phase $\phi$. Suppose there exists a  short exact sequence
    \[
        0 \to E \to \cO_p\to F\to 0
    \]
    in $\cP_\eta(\phi)$. Since $\pi^*(E)$ is a subobject of $\pi^*(\cO_p)$ in $\cP_\sigma(\phi)$, we have $\pi^*(E) = \bigoplus_{p\in A} \cO_{px}$ where $A\subseteq G$ is a subset. But, since $\pi^*E$ is $G$-equivariant, $\rm{supp}(\pi^*E)$ is $G$-invariant. When $p$ has trivial automorphism group, this means that $A = \varnothing$ or $A = G$ and thus $\cO_p$ is stable.

    If the automorphism group of $p$ is non-trivial, choose $y$ in the orbit corresponding to $p$ with non-trivial isotropy subgroup $G_y$. In this case, $\pi_*\cO_y = \cO_p$ and the (bi)adjunction $\pi_*\dashv \pi^*\dashv \pi_*$ gives 
    \[
        \Hom(\pi_*(\cO_y),\pi_*(\cO_y)) \cong \Hom(\cO_y,\pi^*\pi_*(\cO_y)) \cong \bf{C}^{\lvert G_y\rvert}
    \]
    whence it follows that $\pi_*(\cO_y) = \cO_p$ is not $\eta$-stable.
\end{proof}

It is an immediate consequence of \Cref{L:quotientstackstability} that if the $G$-action is generically free, then the resulting stability conditions on $[X/G]$ are almost-geometric.

\begin{cor}
\label{C:weightedprojectivestab}
    The weighted projective stacks $\bf{P}(a_0,\ldots, a_n)$ for $\gcd(a_0,\ldots, a_n) = 1$ admit almost geometric stability conditions with respect to 
    \[
        \pi^*:\rm{K}_0(\bf{P}(a_0,\ldots, a_n)) \to \rm{K}_0(\bf{P}^n)
    \]
    such that all but the orbifold points are stable of the same phase.
\end{cor}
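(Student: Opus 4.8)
The plan is to realize $\bf{P}(a_0,\ldots,a_n)$ as the base of a finite flat $G$-cover from $\bf{P}^n$, transport along it the geometric stability condition of \Cref{C:geomstabproductpd}, and then read off almost-geometricity from \Cref{L:quotientstackstability}. Let $G = \mu_{a_0}\times\cdots\times\mu_{a_n}$ act on $\bf{P}^n$ coordinate-wise. The hypothesis $\gcd(a_0,\ldots,a_n)=1$ is exactly what forces $G\to\mathrm{PGL}_{n+1}$ to be injective, so the action is faithful, hence generically free with orbifold locus a proper closed subset; the morphism $\pi\colon\bf{P}^n\to\bf{P}(a_0,\ldots,a_n)$, $[x_0:\cdots:x_n]\mapsto[x_0^{a_0}:\cdots:x_n^{a_n}]$, is finite, flat (a finite surjection of smooth Deligne--Mumford stacks of equal dimension), $G$-invariant, factors through $[\bf{P}^n/G]$, and restricts over the scheme-like locus $U\subseteq\bf{P}(a_0,\ldots,a_n)$ to an étale $G$-torsor.

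First I would manufacture a $G$-invariant geometric stability condition on $\DCoh(\bf{P}^n)$. Applying \Cref{C:geomstabproductpd} to the single projective space $\bf{P}^n$ gives a geometric $\sigma=(Z,\cA)$ whose heart is $\cA=\langle\Omega^n(n)[n],\ldots,\Omega^1(1)[1],\cO\rangle_{\mathrm{ext}}$ and for which every skyscraper $\cO_x$ is stable of one fixed phase $\phi$. Since $G$ acts through the maximal torus of $\mathrm{PGL}_{n+1}$, each $g\in G$ fixes $\cO(1)$, hence acts trivially on $\H^\bullet(\bf{P}^n)$ and on the lattice $\Lambda=\mathrm{Im}(\ch)$, and satisfies $g^*\Omega^k(k)\cong\Omega^k(k)$; therefore $g^*$ preserves both $\cA$ and $Z$, so $g^*\cdot\sigma=\sigma$ for all $g\in G$.

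Next I would run the inducing construction of \Cref{Setup:quotientstability} and \cite{macri2009inducing}*{Prop. 2.17}, together with the refinement of \cite{DellHengLicata}, along the $G$-cover $\pi$. This should produce $\eta\in\Stab(\bf{P}(a_0,\ldots,a_n))$ with slicing $\cP_\eta(\varphi)=\{E:\pi^*E\in\cP_\sigma(\varphi)\}$, central charge factoring through $\pi^*\colon\rm{K}_0(\bf{P}(a_0,\ldots,a_n))\to\rm{K}_0(\bf{P}^n)$, and the support property with respect to the image lattice. For $p\in U$ the fibre $\pi^{-1}(p)$ is a free $G$-orbit, so $\pi^*\cO_p\cong\bigoplus_{g\in G}\cO_{gx}$ lies in $\cP_\sigma(\phi)$ and $\cO_p$ is $\eta$-semistable of phase $\phi$; the argument of \Cref{L:quotientstackstability} --- any subobject of $\cO_p$ pulls back to a $G$-invariant subobject of $\pi^*\cO_p$, and $\pi^*$ is conservative --- upgrades this to $\eta$-stability precisely when the orbit is free, i.e. for $p\in U$. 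As $U$ is open and dense, $\eta$ is almost geometric in the sense of \Cref{D:almostgeometric}, all $\cO_p$ with $p\in U$ being stable of phase $\phi$, and the good moduli space statement follows as in the remaining cases of \Cref{T:geomexistence}.

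The hard part is the inducing step: $\pi$ is genuinely ramified over part of the orbifold locus and is not a quotient-stack morphism in the literal sense of \Cref{Setup:quotientstability} --- indeed $\bf{P}(a_0,\ldots,a_n)$ is in general not a global quotient of a smooth projective variety by a finite group, as already $\bf{P}(1,2)$ is a teardrop orbifold. One must therefore either perform the inducing over the étale locus $U$ and check that the resulting stability function extends across the stacky points of $\bf{P}(a_0,\ldots,a_n)$ while preserving the support property, or establish a version of the inducing theorem valid for finite flat morphisms onto smooth Deligne--Mumford stacks. In either case the delicate point is controlling the object $\pi^*\cO_p$ --- possibly non-reduced, but always $\sigma$-semistable of phase $\phi$ --- at the stacky points, so that the stability analysis behind \Cref{L:quotientstackstability} continues to apply there.
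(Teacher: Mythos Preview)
Your overall strategy coincides with the paper's: take $G=\prod_i\mu_{a_i}$ acting diagonally on $\bf{P}^n$, find a $G$-invariant geometric stability condition, and induce. The paper differs from you only in how it handles the two steps. For $G$-invariance it does not check by hand that $G$ preserves the heart and central charge; it invokes \cite{PolishchukConstant}*{Cor.~3.5.2}, which says that the entire $\GL_{n+1}(\bf{C})$-action on $\Stab(\bf{P}^n)$ is trivial, so \emph{every} stability condition on $\bf{P}^n$ is automatically $G$-invariant. For the induction step the paper simply asserts that $\bf{P}(a_0,\ldots,a_n)$ is the quotient stack $[\bf{P}^n/G]$ and then applies \Cref{L:quotientstackstability} directly in the setting of \Cref{Setup:quotientstability}.

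Your final paragraph is therefore not a difficulty you have introduced --- it is a gap in the paper's own argument. The map $[x_i]\mapsto[x_i^{a_i}]$ is $G$-invariant but ramifies along each coordinate hyperplane $H_j$ with $a_j>1$, so it is not a $G$-torsor and $\bf{P}(\vec a)\not\cong[\bf{P}^n/G]$ in general. Your teardrop example already shows this: $[\bf{P}^1/\mu_2]$ with $\mu_2$ acting on the second coordinate has \emph{two} $\mu_2$-points (both $[1{:}0]$ and $[0{:}1]$ are fixed), while $\bf{P}(1,2)$ has only one; correspondingly $\rk K_0([\bf{P}^1/\mu_2])=4$ but $\rk K_0(\bf{P}(1,2))=3$, so even the derived categories differ. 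Thus the paper's proof, read literally, establishes the result for the stacks $[\bf{P}^n/G]$ rather than for the weighted projective stacks, and neither your proposal nor the paper closes the gap for $\bf{P}(\vec a)$ itself. A route that avoids the issue altogether is to bypass induction and work directly on $\bf{P}(\vec a)$ with the full exceptional collection $\cO,\cO(1),\ldots,\cO(\sum_i a_i-1)$ and a Koszul-type resolution of skyscrapers, in the spirit of \Cref{SS:geomFEC}.
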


\begin{proof}
    First, note that $\bf{P}(a_0,\ldots, a_n)$ can be obtained from $\bf{P}^n$ by quotienting by the action of the diagonally embedded $G:=\prod_{i=0}^n \bf{Z}/a_i\bf{Z} \hookrightarrow \GL_{n+1}(\bf{C})$. The action of $\GL_{n+1}(\bf{C})$ on $\bf{P}^n$ induces an action on $\Stab(\bf{P}^n)$, which by \cite{PolishchukConstant}*{Cor. 3.5.2} is trivial. The result now follows from \Cref{L:quotientstackstability}, noting that the fixed points of the $G$-action are exactly the coordinate axes $\bf{C}\cdot e_i \subset \bf{C}^{n+1}$ for $i=0,\ldots, n$. 
\end{proof}

\begin{cor}
    If $\sigma$ is one of the stability conditions in \Cref{P:Hilbnstab}, then for any $x\in \Hilb^n(S)$, $\cO_x$ is $\sigma$-stable if and only if the corresponding $\mathfrak{S}_n$-cluster contains $n!$ distinct points. In particular, $\sigma$ is almost-geometric.
\end{cor}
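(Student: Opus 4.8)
The plan is to carry the question across the BKRH-equivalence $\Phi\colon\DCoh(\Hilb^n(S))\xrightarrow{\ \sim\ }\DCoh([S^n/\mathfrak{S}_n])$ and reduce it to \Cref{L:quotientstackstability}. Write $\sigma_0$ for the $\mathfrak{S}_n$-invariant geometric stability condition on $\DCoh(S^n)$ produced in the proof of \Cref{P:Hilbnstab} (from \Cref{C:geomstabproductpd}, applied to $\bf{P}^{2\times n}$, resp.\ to $\bf{P}^{1\times 2n}$ with the diagonal $\mathfrak{S}_n\subset\mathfrak{S}_{2n}$), all of whose point sheaves have a common phase $\phi_0$, and let $\eta$ be the stability condition it induces on $\DCoh([S^n/\mathfrak{S}_n])$ along the forgetful functor $\cF=\pi^*$, as in \Cref{Setup:quotientstability}. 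Since $\sigma$ is by construction the pullback $\Phi^*\eta$, the sheaf $\cO_x$ is $\sigma$-stable if and only if $\Phi(\cO_x)$ is $\eta$-stable.

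First I would pin down $\Phi(\cO_x)$. Relying on Haiman's theorem that the isospectral Hilbert scheme $q\colon Z\to\Hilb^n(S)$ is finite and flat of degree $n!$ with $Z$ Cohen--Macaulay \cites{haiman2001hilbert,BKR2001}, the Fourier--Mukai transform $\Phi$ sends $\cO_x$ to a genuine coherent sheaf on $[S^n/\mathfrak{S}_n]$ whose pullback $\cG_x:=\pi^*\Phi(\cO_x)$ to $S^n$ is a $0$-dimensional, $\mathfrak{S}_n$-equivariant sheaf of length $n!$ supported on the $\mathfrak{S}_n$-cluster $Z_x=q^{-1}(x)$; the support of $\cG_x$ is a single $\mathfrak{S}_n$-orbit $O_x\subset S^n$, namely the orbit of any ordering of the length-$n$ cycle underlying $x$, and $Z_x$ contains $n!$ distinct points precisely when $O_x$ is free, i.e.\ when $x$ is a reduced subscheme. (That $\Phi(\cO_x)$ is a sheaf follows because $Z$ is Cohen--Macaulay and $\{x\}\times S^n$ meets it properly; that $\operatorname{length}\cG_x=n!$ for all $x$ follows because the Fourier--Mukai kernel of $\Phi$ is flat over $\Hilb^n(S)$, so $\operatorname{length}\cG_x$ is locally constant, equal to its value $n!$ on the reduced locus.) Since $\cG_x$ is $0$-dimensional it is an iterated extension of skyscraper sheaves, so $\cG_x\in\cP_{\sigma_0}(\phi_0)$ as $\sigma_0$ is geometric; hence $\Phi(\cO_x)\in\cP_\eta(\phi_0)$, every $\cO_x$ is $\sigma$-semistable of phase $\phi_0$, and $\cO_x$ is $\sigma$-stable if and only if $\cG_x$ has no proper nonzero $\mathfrak{S}_n$-equivariant subsheaf (any subsheaf of a $0$-dimensional sheaf again lies in $\cP_{\sigma_0}(\phi_0)$, hence gives a subobject of $\Phi(\cO_x)$ of phase $\phi_0$).

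It remains to run the dichotomy. If $x$ is reduced, then $O_x$ is a free orbit, $\Phi(\cO_x)=\cO_p$ for the corresponding point $p\in[S^n/\mathfrak{S}_n]$, and \Cref{L:quotientstackstability} (applied with $X=S^n$, $G=\mathfrak{S}_n$, $\sigma=\sigma_0$) shows $\cO_p$ is $\eta$-stable; hence $\cO_x$ is $\sigma$-stable. The reduced locus is open and dense in $\Hilb^n(S)$, so this already establishes that $\sigma$ is almost geometric in the sense of \Cref{D:almostgeometric}. Conversely, if $x$ is non-reduced, fix $b\in O_x$ and let $H$ be its stabilizer, so $|H|=n!/|O_x|\geq 2$; by equivariance and $\operatorname{length}\cG_x=n!$, the stalk $(\cG_x)_b$ is an $\cO_{S^n,b}$-module of length $|H|$ with a compatible $H$-action. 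If $\mathfrak{m}_b\cdot(\cG_x)_b\neq 0$ it is, by Nakayama, a proper nonzero $H$-stable submodule; if $\mathfrak{m}_b\cdot(\cG_x)_b=0$ then $(\cG_x)_b$ is an $H$-representation of dimension $|H|$, which cannot be irreducible when $|H|\geq 2$, hence has a proper nonzero subrepresentation. In either case $(\cG_x)_b$ has a proper nonzero $H$-stable $\cO_{S^n,b}$-submodule, which propagates along $O_x$ to a proper nonzero $\mathfrak{S}_n$-equivariant subsheaf of $\cG_x$; therefore $\Phi(\cO_x)$ is not $\eta$-stable and $\cO_x$ is not $\sigma$-stable. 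The main obstacle is the structural input of the second paragraph — that $\Phi(\cO_x)$ is a sheaf whose $S^n$-pullback is $0$-dimensional of length exactly $n!$ and supported on the cluster — which rests on Haiman's $n!$-theorem together with flatness of the Fourier--Mukai kernel over $\Hilb^n(S)$; granting that, the remainder is the elementary equivariant/representation-theoretic argument above combined with \Cref{L:quotientstackstability}.
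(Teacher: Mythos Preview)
Your proof is correct and follows the same strategy as the paper: transport $\cO_x$ across the BKRH equivalence to $\cO_{Z_x}$ and then invoke \Cref{L:quotientstackstability}. The paper's one-line proof simply asserts this, whereas you supply the missing analysis in the non-reduced case (where $\cO_{Z_x}$ is not literally of the form $\cO_p$ for a closed point $p\in[S^n/\mathfrak{S}_n]$, so \Cref{L:quotientstackstability} does not apply verbatim); your direct equivariant/Nakayama argument at a point of the orbit cleanly fills that gap.
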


\begin{proof}
    This follows from \Cref{L:quotientstackstability}, since the BKRH equivalence $\Phi$ sends $\cO_x$ to $\cO_{Z_x}$ where $\cO_{Z_x}$ is the associated $\mathfrak{S}_n$-cluster, regarded as a $\mathfrak{S}_n$-equivariant sheaf on $S^n$. 
\end{proof}

\subsubsection*{Geometric stability conditions for quadrics} Consider an $n$-dimensional vector space $V$ eq\-uipped with a non-degenerate symmetric bi\-linear form $q$. We let $Q = Q(V)\subseteq \bf{P}(V)$ denote the quadric of $q$-isotropic lines of $V$. Kapranov \cite{Kapranov1988} constructs a resolution of the diagonal on $Q\times Q$:
    \begin{equation}
    \label{E:quadricres}
        \left[\cdots\to \Psi_2\boxtimes \cO(-2)\to \Psi_1\boxtimes \cO(-1) \to \cO_{Q\times Q}\right]  \simeq \cO_\Delta
    \end{equation}
where the $\Psi_i$ are certain canonically defined locally free sheaves \cite{Kapranov1988}*{p. 497}. This resolution gives rise to a strong full exceptional collection $\cE = \{\Sigma_{(\pm) }(-n+2),\cO(-n+3),\ldots, \cO\}$ where $\Sigma_{(\pm)}$ is the unique spinor bundle for $n$ odd and the positive and negative spinor bundles for $n$ even. $\cE$ admits a natural grading:
\[
    \Sigma_+(-n+2)\sim \Sigma_-(-n+2) \prec \cO(-n+3)\prec \cdots \prec \cO.
\]

\begin{prop}
    $\DCoh(Q)$ admits geometric stability conditions, glued from $\cE$.
\end{prop}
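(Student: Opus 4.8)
The plan is to deduce the statement from \Cref{C:geomstability}, applied to $\cE$ with the grading displayed above. Thus one must check three things: that $(\cE,\preceq)$ is a \emph{sharp} graded full exceptional collection of sheaves; that $\cO_x\in\Ob(\cA)$ for every closed point $x\in Q$, where $\cA=\langle\Sigma_{(\pm)}(-n+2)[n-2],\cO(-n+3)[n-3],\ldots,\cO(-1)[1],\cO\rangle_{\mathrm{ext}}$ is the heart attached to $\cE$ by \Cref{L:heartposet} (its norm function has $\nu(\cO(-j))=j$ for $0\le j\le n-3$ and $\nu(\Sigma_{(\pm)}(-n+2))=n-2=\dim Q$); and that $\pr_{\cO}(\cO_x)=1$. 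The first point is routine: every object of $\cE$ is locally free, the displayed total preorder is a grading because (when $n$ is even) the two spinor bundles are mutually orthogonal, the top block $b_0=\{\cO\}$ is a singleton so $\cE$ is sharp with $E_n=\cO$, and fullness of $\cE$ is Kapranov's theorem \cite{Kapranov1988}, whose underlying input is the resolution \eqref{E:quadricres}.

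The heart of the argument is $\cO_x\in\Ob(\cA)$, which I would verify abstractly using only that $\cA$ is a bounded t-structure whose simple objects are the $E_i[\nu_i]$ (\Cref{L:heartposet}). An object $M$ lies in $\cA=\cD^{\le0}\cap\cD^{\ge0}$ if and only if $\Hom(M,E_i[\nu_i-k])=0$ and $\Hom(E_i[\nu_i+k],M)=0$ for all $i$ and all $k\ge1$; equivalently $\Ext^{\le\nu_i-1}(M,E_i)=0$ and $\Ext^{\le-\nu_i-1}(E_i,M)=0$. Take $M=\cO_x$. Since each $E_i$ is locally free, $E_i^{\vee}\otimes\cO_x$ is a skyscraper, so $\Ext^p(E_i,\cO_x)=0$ for $p\ne0$ and the second family of vanishings holds because $\nu_i\ge0$. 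For the first, Serre duality on the smooth $(n-2)$-fold $Q$, together with $\cO_x\otimes\omega_Q\cong\cO_x$ and local freeness of $E_i$, gives $\Ext^j(\cO_x,E_i)\cong\Ext^{(n-2)-j}(E_i,\cO_x)^{\vee}$, which vanishes unless $j=n-2$; since every block of $\cE$ has $\nu_i\le n-2$, this forces $\Ext^{\le\nu_i-1}(\cO_x,E_i)=0$. Hence $\cO_x\in\Ob(\cA)$ for every $x$.

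For the remaining hypothesis, \Cref{L:standardformheart} lets us represent $\cO_x$ by a complex $Y_\bullet$ with $Y_i\in\mathfrak{b}_i$, and $Y_0=\sigma_{\ge0}(Y_\bullet)$ is the semiorthogonal projection of $\cO_x$ onto $\langle\cO\rangle$, namely $\cO\otimes\RHom(\cO,\cO_x)$; since $\RHom(\cO_Q,\cO_x)$ is one-dimensional, concentrated in degree $0$, this is a single copy of $\cO$, so $\pr_{\cO}(\cO_x)=1$. \Cref{C:geomstability} now yields a geometric stability condition $\sigma$ on $\DCoh(Q)$ with underlying heart $\cA$; its central charge is prescribed on the simple objects $E_i[\nu_i]$ as in \Cref{L:stabconditionfinitelength}, hence factors through $\H^{\bullet}_{\mathrm{alg}}(Q)$ and automatically satisfies the support property. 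Because $\cA=\langle\mathfrak{b}_{n-2},\ldots,\mathfrak{b}_1,\mathfrak{b}_0\rangle_{\mathrm{ext}}$ is assembled block by block from $\cE$, this is the desired stability condition glued from $\cE$.

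The step that requires the most care is $\cO_x\in\Ob(\cA)$: the spinor block sits at the \emph{bottom} of the grading, with norm equal to $\dim Q$ (rather than something smaller, as one might naively expect from the $\bf{P}^n$ case), so the Ext estimate above is tight and relies precisely on $\Ext^{>\dim Q}(\cO_x,-)=0$; it would break for any grading that pushed an object past cohomological degree $\dim Q$, which is also the structural reason the spinor bundles are placed where they are. The other nontrivial external ingredient is fullness of $\cE$: for quadrics the diagonal resolution \eqref{E:quadricres} involves the spinor bundles in a less transparent way than the finite Koszul resolutions available for $\bf{P}^n$ and products of Grassmannians, but this is classical, and granting it the proof is a direct instantiation of the machinery of \Cref{SS:geomFEC}.
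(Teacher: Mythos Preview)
Your proof is correct and reaches the same conclusion via \Cref{C:geomstability}, but the key step---showing $\cO_x\in\Ob(\cA)$---is handled quite differently. The paper restricts Kapranov's resolution of the diagonal \eqref{E:quadricres} along $\{x\}\times Q\hookrightarrow Q\times Q$ and invokes \cite{Kapranov1988}*{Prop.~4.7} to obtain an explicit complex
\[
\bigl[\Sigma_{(\pm)}(-n+2)^{\oplus m_{(\pm)}}\to\cO(-n+3)^{\oplus r_{n-3}}\to\cdots\to\cO(-1)^{\oplus r_1}\to\cO_Q\bigr]\simeq\cO_x,
\]
which lies in $\cA$ by \Cref{L:standardformheart} and visibly has $\pr_{\cO}(\cO_x)=1$ from the rightmost term. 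Your argument bypasses this explicit resolution entirely: you characterize membership in $\cA$ by $\Ext$-vanishing against the simples and then use only that each $E_i$ is locally free together with Serre duality, exploiting the tight inequality $\nu_i\le\dim Q$. This is more general---the same reasoning would also handle the Grassmannian and product cases uniformly, without ever writing down a Koszul complex---whereas the paper's approach is more constructive, producing an explicit representative in $\cA$. One small remark: your identification of $Y_0$ with ``the semiorthogonal projection onto $\langle\cO\rangle$'' is not literally correct at the level of objects (the stupid-truncation triangle and the semiorthogonal triangle go in opposite directions), but the two agree in $\rm{K}_0$, which is all you need; equivalently, $\pr_{\cO}(\cO_x)=\chi(\cO,\cO_x)=1$ directly.
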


\begin{proof}
    Restricting \eqref{E:quadricres} along $\{x\}\times Q \hookrightarrow Q\times Q$ gives a resolution of the form
    \[
        \left[\Sigma_+(-n+2)^{\oplus m_+} \oplus \Sigma_-(-n+2)^{\oplus m_-}\to  \cO(-n+3)^{\oplus r_{n-3}} \to \cdots \to \cO(-1)^{\oplus r_{1}}\to \cO_Q\right] \simeq \cO_x
    \]
    when $n$ is even by \cite{Kapranov1988}*{Prop. 4.7}. In the case where $n$ is odd, the leftmost term is $\Sigma(-n+2)^{\oplus m}$. Here, $r_i$ denotes the rank of the bundle $\Psi_i$. In both cases, $(\cE,\preceq)$ is a sharp, graded, and strong full exceptional collection. The result now follows from \Cref{L:heartposet} and \Cref{C:geomstability}, taking $\nu$ to be the norm function of $\cE$.
\end{proof}

\subsubsection*{Existence of proper good moduli spaces} To conclude the proof of \Cref{T:geomexistence}, we only need to address the claim about moduli spaces of semistable objects. 

\begin{lem}
\label{L:goodmodulispaceFEC}
    Let $\cD$ be a $k$-linear pre-triangulated dg-category with a full exceptional col\-lection 
    \begin{equation}
    \label{E:FEC}
        \cD = \langle E_1,\ldots, E_n\rangle.
    \end{equation} 
    For any $v\in \rm{K}_0(\cD)$ and any $\sigma \in \Stab(\cD)$ on the same connected component of a stability condition glued from \eqref{E:FEC}, the moduli stack $\cM^{\rm{ss}}_\sigma(v)$ admits a proper good moduli space.
\end{lem}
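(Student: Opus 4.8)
The plan is to reduce the statement to a known existence criterion for good moduli spaces of semistable objects in a derived category, and then verify its hypotheses using the structure provided by the full exceptional collection. The relevant machinery is the theory of good moduli spaces for moduli of objects in dg-categories, as developed in work on the intrinsic construction of Bridgeland moduli stacks; the key input is that $\cM_\sigma^{\mathrm{ss}}(v)$ is an algebraic stack of finite type with affine diagonal that satisfies the \emph{existence} and \emph{S-completeness} (or, equivalently, the valuative) criteria. Since admitting a proper good moduli space is invariant along connected components of $\Stab(\cD)$ (the moduli stacks $\cM_\sigma^{\mathrm{ss}}(v)$ fit into a family over $\Stab(\cD)$ and these criteria are open/closed in an appropriate sense), it suffices to treat the case where $\sigma = \sigma_1 * \cdots * \sigma_n$ is glued from the pieces $\langle E_i\rangle \simeq \DCoh(\mathrm{pt})$, each carrying its unique (up to the $\widetilde{\GL_2^+(\bf R)}$-action) stability condition.

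First I would set up the algebraicity and finiteness: because $\cD$ has a full exceptional collection, $\rm K_0(\cD) \cong \bf Z^n$ is free of finite rank and $\cD$ is smooth and proper over $k$, so the moduli stack $\cM(\cD)$ of objects is a locally finite type algebraic stack with affine diagonal by the representability results of To\"en--Vaqui\'e; fixing the class $v$ and imposing $\sigma$-semistability cuts out $\cM_\sigma^{\mathrm{ss}}(v)$ as an open substack of finite type (boundedness of semistable objects of fixed class, which follows here because the heart $\cA = \cA_1 \circ \cdots \circ \cA_n$ is the extension closure of finitely many simple objects $E_i[\nu_i]$, so every object of class $v$ in $\cA$ is built from a bounded supply of pieces). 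Next I would invoke the existence theorem for good moduli spaces (the adaptation of the Alper--Halpern-Leistner--Heinloth criteria to Bridgeland moduli, as used in \cite{NMMP} and \cite{augmented}): a finite-type stack with affine diagonal admits a separated good moduli space provided it is $\Theta$-reductive and S-complete, and the good moduli space is proper if in addition every complete DVR-valued point of the "semistabilized" locus extends. In the glued situation these can be checked by hand: the HN/JH filtrations with respect to $\sigma$ are governed by the finite-length heart $\cA$, whose JH factors are among the $E_i[\nu_i]$, so filtered and S-completeness test diagrams reduce to linear algebra over the (finite-dimensional) $\Ext$-algebra of $\bigoplus_i E_i$, which is directed by semiorthogonality.

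Concretely, the key steps in order are: (1) record that $\cM_\sigma^{\mathrm{ss}}(v)$ is algebraic, finite type, with affine (in fact quasi-affine) diagonal, using fullness of $\cE$ and finiteness of $\cA$; (2) reduce to $\sigma$ glued from the exceptional objects by a deformation/connectedness argument, citing that $\sigma_1 * \cdots * \sigma_n \in \Stab(\cD)$ exists by iterated application of \Cref{thm:glued} and that the property in question is constant on connected components; (3) verify $\Theta$-reductivity and S-completeness for the glued $\sigma$ by analyzing filtrations valued in the finite-length heart $\cA$ — here one uses that any $\bf R$-filtered or S-complete test object has associated graded a direct sum of shifts of the $E_i$, and the semiorthogonality $\Hom^{\le 0}(E_i[\nu_i], E_j[\nu_j]) = 0$ for $i<j$ forces the filtration to be the canonical one; (4) conclude existence of a separated good moduli space, and upgrade to properness via the valuative criterion, again reduced to the finite-length setting where it is elementary. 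The main obstacle I anticipate is step (3): carefully matching the abstract $\Theta$- and S-completeness conditions against the combinatorics of filtrations in $\cA = \cA_1\circ\cdots\circ\cA_n$, and in particular checking that no pathological non-split families of semistable objects obstruct S-completeness — but the rigidity of exceptional objects (their $\Ext$-algebra is $k$ in degree $0$ and the collection is directed) should make every relevant family essentially a successive extension of the $E_i[\nu_i]$, reducing the check to the well-understood case of representations of a directed algebra, for which good moduli spaces of semistable representations are classical (GIT quotients of representation varieties). A secondary point requiring care is the precise form of the properness statement: one should verify that the class $v$ being fixed (rather than allowing $v$ to vary) together with finiteness of $\cA$ rules out escape to infinity, so that the good moduli space is not merely separated but proper.
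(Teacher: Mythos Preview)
Your general shape is right, but the paper's route is quite different and your step (2) contains a genuine gap. You assert that ``admitting a proper good moduli space is invariant along connected components of $\Stab(\cD)$'' and wave at the moduli stacks forming a family, but this is exactly the delicate point: as $\sigma$ varies the semistable locus jumps, so there is no single family over $\Stab(\cD)$ to which you can apply openness/closedness of the Alper--Halpern-Leistner--Heinloth criteria. Without an intermediate invariant that both (a) implies the existence of a proper good moduli space and (b) is manifestly preserved under deformation of $\sigma$, you cannot propagate the conclusion from the glued $\sigma$ to a general one on the same component.

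The paper resolves this by citing \cite{augmented} and using precisely such an intermediate invariant: the notion of a \emph{mass-Hom bound} (a bound of the form $\dim\Hom(E,F)\le C\cdot m_\sigma(E)\cdot m_\sigma(F)$ for semistable $E,F$). The argument is then three lines: glued stability conditions from a full exceptional collection have a mass-Hom bound (essentially because the heart is finite length with simples the $E_i[\nu_i]$); a mass-Hom bound propagates to the entire connected component of $\Stab(\cD)$ (this is \cite{augmented}*{Lem.~2.7}, and is where the real content lies); and any $\sigma$ with a mass-Hom bound has $\cM_\sigma^{\rm ss}(v)$ admitting a proper good moduli space (\cite{augmented}*{Thm.~2.31}, which is where the $\Theta$-reductivity/S-completeness machinery is actually invoked). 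Your steps (1), (3), (4) are morally the ingredients inside that last theorem, but the crucial point you are missing is the deformation-invariance, and the mass-Hom bound is the device engineered to carry it.
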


\begin{proof}
    This follows from the results of \cite{augmented}. Indeed, any $\sigma\in \Stab(\cD)$ constructed from gluing using \eqref{E:FEC} has a mass-Hom bound in the sense of Def. 2.6 \emph{ibid}. Lem. 2.7 of \emph{loc. cit.} then implies that any $\tau$ on the same connected of $\Stab(\cD)$ has a mass-Hom bound. Finally, \cite{augmented}*{Thm. 2.31} implies that for any $\sigma \in \Stab(\cD)$ with a mass-Hom bound and any $v\in \rm{K}_0(\cD)$, $\cM_\sigma^{\rm{ss}}(v)$ admits a proper good moduli space.
\end{proof}

\Cref{L:goodmodulispaceFEC} implies that all of the stability conditions constructed on products of Grass\-mannians and quadrics above admit proper good moduli spaces. The only remaining claim concerns good moduli spaces for the stability conditions on $\Hilb^n(\bf{P}^2), \Hilb^n(\bf{P}^1\times \bf{P}^1)$ and $\bf{P}(a_0,\ldots, a_n)$. This can be deduced from \cite{augmented}*{Thm 2.31} combined with the fact that Lem. 2.11 \emph{ibid.} which says that induction in the sense of \cite{macri2009inducing} preserves mass-Hom bounds. This is sufficient because \cite{DellHengLicata}*{Thm. 4.8} uses induction to construct the $\rep(\mathfrak{S}_n)$-equivariant stability conditions on $\Hilb^n(\bf{P}^2)$ and $\Hilb^n(\bf{P}^1\times \bf{P}^1)$, and analogously for $\bf{P}(a_0,\ldots, a_n)$ with the group $\prod_{i=0}^n \bf{Z}/a_i\bf{Z}$.

\subsection{Gluing stability on a Kuznetsov-type decomposition}\label{section_gluingpaths_general}

In this section we consider a smooth projective variety $X$ whose derived category $\DCoh(X)$ has a semiorthogonal decomposition of the form $\DCoh(X) = \langle \cT,\cN\rangle$ where $\cN = \langle E_0,\ldots, E_n\rangle$, for $\cE = \{E_0,\ldots, E_n\}$ a graded exceptional collection of sheaves. 
The main result is a criterion to glue stability conditions on such decomposition and get a geometric stability condition, see \Cref{thm:gluedgeometricKuz}.
We also give  a criterion to glue paths of stability conditions, see \Cref{prop_gluing_paths_Ku}.

Fix a norm function $\nu \colon \cE \rightarrow \bf{Z}_{\geq 0}$ such that $\nu(E_n) = 0$ -- see \Cref{D:normfunction}.

\begin{defn}\label{def:spiked}
    Let $\cN$ be a triangulated category and consider $\eta \in \Stab(\cN)$ with heart $\cC$ generated by simple objects $S_0,\dots,S_n$. Fix $u \in \rm{K}_0(\cN)$ and consider the cone 
    \[
        C = \left(\bigoplus_{i = 0}^{n-1} \bf{R}_{\geq 0} \cdot [S_i]\right)\backslash  \{0\} \subset \rm{K}_0(\cN)_{\bf{R}}.
    \]
    A stability condition $\eta = (W,\cC)$ is \emph{spiked} at $S_n$ with respect to $u$ if \vspace{-2mm}
    \begin{enumerate}
    \item $\Re W(S_i) > 0$ and $\Im W(S_i)>0$ for $i = 0,\dots n$, and \vspace{-2mm} 
    \item the following inequality  holds:
    \[
    \Re W(S_n) - \frac{\Re W(w)}{\Im W(w)} \Im W(u) -1 > 0
    \]
    for any $w \in C$. \vspace{-2mm}
    \end{enumerate}
\end{defn}
For a graded full exceptional collection $(\cE,\preceq)$ on $\cN$ with norm $\nu$, we will set $S_i:=E_i[\nu(E_i)]$.
\begin{lem}
\label{L:spikedexist}
    Choose a norm $\nu$ on $\cE$ and let $\cC = \langle E[\nu(E)]:E\in \cE\rangle_{\rm{ext}}$ denote the resulting heart on $\cN$ from \Cref{L:heartposet}. Then, for any $0\ne E \in \cC$, there exists a stability condition on $\cN$ with underlying heart $\cC$ spiked at $[E_n]$ with respect to $[E]\in \rm{K}_0(\cN)$.
\end{lem}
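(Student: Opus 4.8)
The plan is to reduce the statement to a finite system of linear inequalities on the values of a central charge at the simple objects, and then solve that system by hand. Since $\cE = \{E_0,\dots,E_n\}$ is a full exceptional collection, the heart $\cC = \langle E_i[\nu(E_i)] : 0\le i\le n\rangle_{\rm{ext}}$ is finite length with exactly the simple objects $S_i = E_i[\nu(E_i)]$, and $\rm{K}_0(\cN) = \bigoplus_{i=0}^n \bf{Z}\cdot[S_i]$. By \Cref{L:stabconditionfinitelength}, any assignment $W(S_i) \in \bf{H}\cup\bf{R}_{<0}$ extends uniquely to a stability condition $\eta = (W,\cC)$ on $\cN$ (the support property is automatic). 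So it suffices to choose $W(S_0),\dots,W(S_n)$ in the open first quadrant — which is contained in $\bf{H}$, and which forces \Cref{def:spiked}(1) — in such a way that \Cref{def:spiked}(2) holds for $u = [E]$.

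First I would write $[E] = \sum_{i=0}^n m_i[S_i]$ with $m_i \in \bf{Z}_{\geq 0}$ (non-negative because $E\in\cC$, and not all zero since $E\ne 0$). Next I would pin down every central-charge value except $\Re W(S_n)$: pick $W(S_0),\dots,W(S_{n-1})$ to be arbitrary points of the open first quadrant, say all equal to $1+\mathtt{i}$, and set $\Im W(S_n) = 1$. Let $R := \max_{0\le i\le n-1} \Re W(S_i)/\Im W(S_i)$, with the convention $R = 0$ when $n = 0$ (in which case the cone $C$ is empty and \Cref{def:spiked}(2) is vacuous). Fixing everything but $\Re W(S_n)$ first is the point: it makes $M := \Im W(u) = \sum_{i=0}^{n-1} m_i\Im W(S_i) + m_n$ a fixed positive constant, whereas $\Re W(S_n)$ enters the left-hand side of the inequality in \Cref{def:spiked}(2) linearly with positive coefficient, so it can be taken arbitrarily large.

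The one non-tautological ingredient is a mediant bound: for any $w = \sum_{i=0}^{n-1} c_i[S_i]\in C$ (so $c_i\geq 0$, not all zero), one has $\Re W(w)/\Im W(w) = (\sum c_i\Re W(S_i))/(\sum c_i\Im W(S_i)) \le R$, since $\sum c_i\Re W(S_i) = \sum c_i\Im W(S_i)\cdot\big(\Re W(S_i)/\Im W(S_i)\big) \le R\sum c_i\Im W(S_i)$ and $\sum c_i\Im W(S_i) > 0$. Hence, choosing $\Re W(S_n) > 1 + RM$, for every $w\in C$ we obtain
\[
    \Re W(S_n) - \frac{\Re W(w)}{\Im W(w)}\,\Im W(u) - 1 \;\geq\; \Re W(S_n) - RM - 1 \;>\; 0,
\]
which is \Cref{def:spiked}(2); \Cref{def:spiked}(1) holds by construction. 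Thus $\eta = (W,\cC)$ is a stability condition on $\cN$ with underlying heart $\cC$ spiked at $[E_n]$ with respect to $[E]$.

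I do not anticipate a serious obstacle here: the argument is a finite-dimensional linear-inequality manipulation. The only subtlety is bookkeeping — arranging the order of the choices so that $\Im W(u)$ is fixed before the free parameter $\Re W(S_n)$ is sent large (otherwise $m_n\ne 0$ creates a spurious circular dependence) — together with the elementary mediant estimate on $\Re W(w)/\Im W(w)$.
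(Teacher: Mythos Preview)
Your proof is correct and follows essentially the same approach as the paper: freely choose the central charges of the simples in the open first quadrant, bound the ratio $\Re W(w)/\Im W(w)$ on the cone $C$ by a weighted-average (mediant) argument, and then take $\Re W(S_n)$ large. Your bound $R = \max_i \Re W(S_i)/\Im W(S_i)$ is exactly the paper's $\cot(\theta_{\min})$, and your bookkeeping remark about fixing $\Im W(S_n)$ before sending $\Re W(S_n)\to\infty$ makes explicit what the paper leaves implicit.
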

\begin{proof}
    By \Cref{L:stabconditionfinitelength}, we can construct a stability condition with heart $\cC$ by freely choosing $Z(E[\nu(E)]) \in \bf{H}\cup \bf{R}_{<0}$ for each $E\in \cE$. In particular, we may choose $Z$ such that $Z(E[\nu(E)])$ lies in the first quadrant for all $E\in \cE$. Then, for all $w \in \bigoplus_{i=0}^{n-1} \bf{R}_{\ge 0}\cdot E_i[\nu_i] \setminus \{0\}$, we have 
    \[
        \cot(\theta_{\max}) \le \Re Z(w)/\Im Z(w) \le \cot(\theta_{\min})
    \]
    where $\theta_{\min} = \min_{i=0}^{n-1}\{\arg Z(E_i[\nu_i])\}$ and $\theta_{\max}$ is defined analogously. Thus, 
    \begin{align*}
        \Re Z(E_n) - \frac{\Re Z(w)}{\Im Z(w)}\Im Z(E) -1 > \Re Z(E_n) - \cot (\theta_{\max})\Im Z(E) -1
    \end{align*}
    If necessary, we can take $\Re Z(E_n)$ larger by \Cref{L:stabconditionfinitelength} such that the expression on the right is positive.
\end{proof}

\begin{thm}\label{thm:gluedgeometricKuz}
    Assume that the pair $(\cE,\preceq)$ on $\cN$ is sharp. For $F\in \cD^b(X)$, consider the distinguished triangle 
    \[
        N^F \rightarrow F \rightarrow T^F \rightarrow N^F[1]
    \] 
    where $N^F \in \cN$ and $T^F \in \cT$.
    Let $\tau = (V,\cB) \in \Stab(\cT)$ and $\eta = (W,\cC)\in \Stab(\cN)$, with heart defined as in \Cref{L:heartposet} with respect to the norm $\nu$.
    Assume:\vspace{-2mm}
    \begin{enumerate}
    \renewcommand{\labelenumi}{(\roman{enumi})}
        \item the stability condition $\eta$ is spiked at $E_n$ with respect to $[N^F] \in \rm{K}_0(\cN)$;\vspace{-2mm}
        \item $V(T^F)  = -1$; \vspace{-2mm}
        \item there exists a stability condition $\sigma = (Z,\cA) \in \Stab(X)$ glued from $\tau$ and $\eta$; and\vspace{-2mm}
        \item the object $F$ lies in $\cA$ and satisfies $\mathrm{pr}_{E_n}(F) = 1$.
    \end{enumerate}
    Consider $\phi^+(b_j)$ and $\phi^-(b_j)$ as in \Cref{H:stability}.
    Suppose $N^F$ is efficient in $\cC$, $T^F$ is stable of phase one in $\cB$, and the following holds: 
    \begin{align}
       \phi(E_n)  <\phi(N^F)<\phi^-(b_1) \leq\phi^+(b_1)  < \cdots<\phi^-(b_k)  \leq \phi^+(b_k) <1. \label{eq:hypoKuz}
    \end{align}
    Then $F$ is $\sigma$-stable or $ F = N^F \oplus T^F$.
\end{thm}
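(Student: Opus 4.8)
The plan is to follow the template of the proof of \Cref{P:efficientstable}. Since $F$ lies in the heart $\cA=\cB\circ\cC$, it suffices to test stability against subobjects $0\ne Y\subsetneq F$ in $\cA$; writing $Q=F/Y$, I will show that either $\phi_\sigma(Y)<\phi_\sigma(F)$ or $F\cong N^F\oplus T^F$. First I would record the formal consequences of the gluing $\sigma=\tau*\eta$ from hypothesis (iii): the projection functors $\pr_\cN=\pr_2$ and $\pr_\cT=\pr_1$ are adjoint to the inclusions, hence respectively left and right exact on $\cA$ (using $\Hom^\bullet(\cN,\cT)=0$ in all degrees), so that $0\to N^Y\to N^F\to N^Q$ and $T^Y\to T^F\to T^Q\to 0$ are exact and $[N^Y]+[N^Q]=[N^F]$, $[T^Y]+[T^Q]=[T^F]$ in $\rm{K}_0$; moreover $\cB$ and $\cC$ are Serre subcategories of $\cA$, the function $\pr_{E_n}$ (read off from the $\rm{K}_0(\cN)$-component) is additive and nonnegative on $\cA$, and $Z_\sigma=V\oplus W$, so $Z_\sigma(F)=W(N^F)+V(T^F)=W(N^F)-1$ by (ii). From part (1) of spikedness together with $\pr_{E_n}(N^F)=1$ one gets $\Im W(N^F)>0$, whence $\phi_\sigma(F)\in(0,1)$, $\phi_\sigma(N^F)<\phi_\sigma(F)$ (same imaginary part, smaller real part), and $\phi_\sigma(T^F)=1>\phi_\sigma(F)$.

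\textbf{Efficiency and the splitting case.} The key structural input is efficiency: since $N^F$ is efficient in $\cC$ we have $\Hom_\cN(S_i,N^F)=0$ for all $i\ne n$, so $E_n$ is the only simple subobject of $N^F$, and as $\pr_{E_n}(N^F)=1$ this gives $\mathrm{soc}(N^F)=E_n$; in particular every nonzero subobject of $N^F$ has $\pr_{E_n}\ge 1$. Since $\pr_{E_n}(Y)+\pr_{E_n}(Q)=1$, there are two cases. If $\pr_{E_n}(Y)=0$, then $N^Y=0$, so $Y\in\cB$; consequently $N^Q\cong N^F$ and $T^Q$ is a quotient of $T^F$, which by stability of $T^F$ in $\cB$ is $0$ or $T^F$ — and $T^Q\cong T^F$ would force $F\to Q$ to be an isomorphism (five lemma, using that a surjective endomorphism of the stable object $T^F$ is invertible), contradicting $Y\ne 0$. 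So $Q\cong N^F$, and the sequence $0\to Y\to F\to N^F\to 0$ splits because $\Ext^1(N^F,Y)\subseteq\Hom^\bullet(\cN,\cT)=0$; applying $\pr_\cT$ identifies $Y\cong T^F$, so $F\cong N^F\oplus T^F$ and we are done.

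\textbf{The stable case.} In the remaining case $\pr_{E_n}(Y)=1$, so $N^Y\ne 0$ and $N^Q\in\cC':=\langle S_0,\dots,S_{n-1}\rangle_{\rm{ext}}$, while again $T^Q\in\{0,T^F\}$; the subcase $N^Y\xrightarrow{\sim}N^F$ collapses (via $T^Y\hookrightarrow T^F$ and stability of $T^F$) to $Y=N^F$, which does not destabilize $F$. So assume $N^Y\subsetneq N^F$, whence $N^Q\ne 0$; it then suffices to prove $\phi_\sigma(Q)>\phi_\sigma(F)$, since the see-saw property gives $\phi_\sigma(Y)<\phi_\sigma(F)$ (the alternative $\phi_\sigma(Y)=1$ is incompatible with $\phi_\sigma(Q)>\phi_\sigma(F)$). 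Here part (2) of spikedness is used: with $m:=\sup_{w\in C}\Re W(w)/\Im W(w)=\cot\theta_{\min}$ and $\theta_{\min}=\min_{i\ne n}\arg W(S_i)$, condition (2) for $u=[N^F]$ gives $\Re W(S_n)-1>m\,\Im W(N^F)=m\,\Im Z_\sigma(F)$, and since $\Re W([N^F]-[S_n])\ge 0$ (spikedness part (1)) this yields $\Re Z_\sigma(F)/\Im Z_\sigma(F)>m$. On the other hand $Z_\sigma(Q)$ equals $W(N^Q)$ or $W(N^Q)-1$ according as $T^Q=0$ or $T^Q=T^F$, and in both cases $N^Q\in\cC'$ forces $\Re Z_\sigma(Q)/\Im Z_\sigma(Q)\le\Re W(N^Q)/\Im W(N^Q)\le m<\Re Z_\sigma(F)/\Im Z_\sigma(F)$; since $Z_\sigma(Q),Z_\sigma(F)\in\bf{H}$ and $\cot$ is strictly decreasing on $(0,\pi)$, this gives $\phi_\sigma(Q)>\phi_\sigma(F)$, completing the argument. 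The phase inequality \eqref{eq:hypoKuz} enters to guarantee that $N^F$ genuinely involves blocks above $b_0$ (so $C\ne\varnothing$ and part (2) of spikedness is non-vacuous) and that the block phases are ordered as required, which is what makes the slope comparisons above meaningful.

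\textbf{Main obstacle.} The parts I expect to be most delicate are, first, the gluing bookkeeping — exactness of $\pr_\cN$ and $\pr_\cT$ on $\cA$, the fact that $\cB,\cC$ are Serre in $\cA$, and compatibility of $\pr_{E_n}$ with short exact sequences — and, above all, extracting from the abstract spiked condition the clean slope bound $\Re Z_\sigma(F)/\Im Z_\sigma(F)>\cot\theta_{\min}$ and verifying that it beats the worst-case slope of every $Q$ built out of $\cC'$. The efficiency-based computation $\mathrm{soc}(N^F)=E_n$ is the other load-bearing step: it is precisely what forces the dichotomy $\pr_{E_n}(Y)\in\{0,1\}$ to match "$F$ splits as $N^F\oplus T^F$" versus "$Y$ cannot destabilize", with the stability of $T^F$ and the see-saw property doing the remaining work.
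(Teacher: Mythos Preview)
Your proof is correct and follows essentially the same approach as the paper: both reduce via the projections to $\cN$ and $\cT$, exploit efficiency of $N^F$ together with simplicity of $T^F$ in $\cA$ to control the possible shapes of $Y$ and $Q$, and then invoke the spiked condition for the decisive phase inequality. The only difference is organizational --- the paper verifies $\phi_\sigma(Y)<\phi_\sigma(F)$ in the main case by an explicit computation of a quantity $\Delta$ using spikedness at the particular $w=[N^F]-[N^Y]$, whereas you dually check $\phi_\sigma(Q)>\phi_\sigma(F)$ via the uniform slope bound $m=\cot\theta_{\min}$, which is equivalent by the see-saw property.
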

\begin{proof}

    Consider a short exact sequence $0\to Y\to F \to Q \to 0$ in $\cA$ and suppose that $F \ne N^F\oplus T^F$. Next, note that there is a fully faithful exact embedding $\cB\hookrightarrow \cA$ such that by \cite{karube2024noncommutative}*{Lem. 3.4}: if $E\in \cB$ and $E'\hookrightarrow E$ is any $\cA$-subobject then $E' \in \cB$. 
    Thus, $T^F$ is simple in $\cA$ since it is simple in $\cB$. Furthermore, the pullback $N^F \cap Y \in \cA$ is non-zero, since otherwise the composite $Y\to T^F$ would be an isomorphism and the induced map $T^F\to F$ would give a splitting $N^F\oplus T^F = F$.

    By \Cref{P:efficientstable}, $N^F$ is stable; also, $\pr_{E_n}(F) = \pr_{E_n}(N^F) = 1$ and so $\pr_{E_n}(Y)\le 1$. If $\pr_{E_n}(Y) = 0$, then $\pr_{E_n}(Y\cap N^F) = 0$ and the argument of \emph{loc. cit.} shows that $N^F\cap Y \hookrightarrow N^F$ is a destabilizing subobject.
    Also, simplicity of $T^F$ implies that $\coker(N^F\cap Y \to Y)$ is zero or $T^F$.

    Next, we will deduce stability of $F$ from conditions (i) and (ii). Note that $\arg Z(N^F\cap Y) < \arg Z(N^F) < \arg Z(F)$, where the first inequality is by stability of $N^F$ and the second inequality is from condition (ii). So, we may assume without loss of generality that $Y\cap N^F \ne Y$ and thus that $\coker(N^F\cap Y\to Y) = T^F$. Thus, we have relations $[Y] = [N^F\cap Y] + [T^F]$ and $[F] = [N^F] + [T^F]$ in $\rm{K}_0(X)$. Since $V(T^F) = -1$, stability of $F$ is equivalent to $\arg Z(Y) = \arg(-1+W(Y\cap N^F)) < \arg(-1+W(N^F)) = \arg Z(F)$ which in turn is equivalent to 
    \[
        \Delta:= \frac{1-\Re W(N^F)}{\Im W(N^F)} - \frac{1-\Re W(Y\cap N^F)}{\Im W(Y\cap N^F)} > 0.
    \]
    Let $v_c = x - y$ where $x = [N^F]$ and $y = [Y\cap N^F]$ and consider the basis $(E_0[w_0],\ldots, E_n[w_n])$ of $\rm{K}_0(\cN)$ where $\cC = \langle E_0[w_0],\ldots, E_n[w_n]\rangle_{\rm{ext}}$. In $\rm{K}_0(\cN)$ we have
    \begin{align*}
        x& = \sum N_i\cdot E_i[w_i]\\
        y& = \sum M_i \cdot E_i[w_i].
    \end{align*}
    Since $v_c$ is the class of $\coker(Y\cap N^F \rightarrow N^F) \in \cC$, we have $N_i\ge M_i\ge 0$ for all $i$ and it follows from efficiency and stability of $N^F$ that $N_n = M_n = 1$. One can now compute that 
    \begin{align*}
        \Delta & = \frac{\Im W(v_c)}{\Im W(x)\Im W(y)}\left(\Re W(y) - \frac{\Re W(v_c)}{\Im W(v_c)} \Im W(y) - 1\right)\\
        & > \frac{\Im W(v_c)}{\Im W(x)\Im W(y)}\left(\Re W(y) - \frac{\Re W(v_c)}{\Im W(v_c)} \Im W(x) - 1\right) \\
        & > \frac{\Im W(v_c)}{\Im W(x)\Im W(y)}\left(\Re W(E_n) - \frac{\Re W(v_c)}{\Im W(v_c)} \Im W(x) - 1\right) > 0. 
    \end{align*}
    The first inequality follows from \Cref{def:spiked}(1), which implies that $\Im W(x) > \Im W(y)$. The second inequality follows from $\Re W(y) > \Re W(E_n)$, which in turn is a consequence of $\Re W(E_i[w_i])> 0$ for any $i$. The final inequality, is a direct consequence of \Cref{def:spiked}(2).
\end{proof}

With an eye toward projective hypersurfaces, we study criteria that allow gluing of paths from semiorthogonal decompositions of the form
\begin{equation}
\label{E:mutatedKuzdecomp}
    \DCoh(X)=\langle \underbrace{E_1,\ldots,E_n}_{\cN^L},\cT,\underbrace{E_{n+1},\dots,E_{m}}_{\cN^R}\rangle 
\end{equation}
where $E_1,\ldots, E_m$ are exceptional objects, and $\cT$ is an admissible subcategory. Consider $\tau_0 = (Z_{\tau_0},\cP_{\tau_0}) \in \Stab(\cT)$ and a path
\begin{equation}\label{eq_pathGL}
    g_t=(M_t,f_t):[0,\infty)\to \GL_2^+(\bf{R})^\sim 
\end{equation}
where $g_0=\id$, $M_t\in \GL_2^+(\bf{R})$, and $f_t:\bf{R}\to \bf{R}$ is an increasing function satisfying $f_t(x+1)=f_t(x)+1$ such that $M_t\cdot \exp(i \pi \phi) = \exp(i \pi f_t(\phi))$ for all $\phi \in \bf{R}$.\footnote{Here, we are using the description of $\GL_2^+(\bf{R})^\sim$ given in \cite{Br07}*{Lem. 8.2}.} This gives a path $(Z_t,\cA_t) = \tau_t:= g_t\cdot \tau_0$ in $\Stab(\cT)$ where $\cA_t=\cP_{\tau_0}(f_t(0),f_t(1)]$. 

We next define a path $\eta_t: [0,\infty) \to \Stab(\cN^L)\times \Stab(\cN^R)$ valued in the product of the regions glued from $(E_1,\ldots, E_n)$ and $(E_{n+1},\ldots, E_m)$. For $d \in \bf{Z}_{\ge 1}$, a region $\cS^d \subset \bf{C}^d$ was defined in \Cref{L:extensionoflog}. We let $\cS^{n,m}:=\cS^n \times \cS^{m-n} \subset \bf{C}^m$ and consider a map $w_t:[0,\infty)\to \cS^{n,m}$. By \cite{Macristabilityoncurves}*{\S 3}, there is a unique path 
\begin{equation}\label{eq_def_stab_path_ex_coll}
    (\eta_t^L,\eta_t^R) : [0,\infty)\to \Stab(\cN^L) \times \Stab(\cN^R)
\end{equation}
such that writing $\eta_t^L = (W_t^L,\cB_t^L)$, one has  $W_t^L(E_i)=\exp(w_{t,i})$ and $E_i$ is $\eta_t^L$-stable of phase $\phi_t^i:=\Im(w_t^i)/\pi$ for each $i=1,\dots,n$. Also, by construction $\cB_t^L=\langle E_1[k_t^1], \dots, E_n[k_t^n]\rangle_{\rm{ext}}$ where $k_t^i:=1-\lceil\phi_t^i\rceil\in \bf{Z}$. An analogous description is available for $\eta_t^R$.

\begin{prop}\label{prop_gluing_paths_Ku}
    Consider \eqref{E:mutatedKuzdecomp} and paths $(\eta_t^L,\eta_t^R)$ as above, $\tau\in \Stab(\cT)$, and a path $g_t$ as in~\eqref{eq_pathGL}. 
    If \vspace{-2mm}
    \begin{enumerate}
        \item $\Hom^{\leq 0}(\cA_t,\cB^R_t)=0$ for any $t$ such that $\lceil \phi_t^i\rceil-\lceil\phi_0^i\rceil \leq0$ for some $i=n+1,\dots, m$,\vspace{-2mm}
        \item $-1<f_t(0)\leq 1$ for any $t\in \bf{R}_{\geq 0}$, and\vspace{-2mm}
        \item $\Hom^{\leq 1}(\cB_t^L,\cB^R_t)=\Hom^{\leq 1}(\cB_t^L,\cA_t)=0$ for any $t$,
    \end{enumerate}
    then $(\eta_t^L,\tau_t,\eta^R_t)$ glue for any $t\in \bf{R}_{\geq 0}$ to $\sigma_t :[0,\infty)\to \Stab(X).$ If, in addition, $\phi_t^n\to -\infty$ and $\phi^{n+1}_t \to \infty$ as $t\to \infty$, then $\sigma$ is quasi-convergent and induces
    \begin{equation}\label{eq_SOD_gluing}
        \DCoh(X) = \langle \cD_1,\dots,\cD_{d},\cT,\cD'_{1},\dots \cD'_{l}\rangle
    \end{equation}
    where $\langle \cD_1,\dots,\cD_{d}\rangle$ coarsens $\langle E_1,\ldots,E_n\rangle$ and $\langle \cD'_1,\dots,\cD'_{l}\rangle$ coarsens $\langle E_{n+1},\ldots,E_m\rangle$.
\end{prop}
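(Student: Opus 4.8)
I would split the statement into (a) existence of the glued path $\sigma\colon[0,\infty)\to\Stab(X)$, and (b) its quasi-convergence with the identification of the limit semiorthogonal decomposition. For (a) the three-component gluing is done in two stages: first glue $\tau_t$ with $\eta_t^R$ along $\langle\cT,\cN^R\rangle$, obtaining $\tau_t*\eta_t^R\in\Stab(\langle\cT,\cN^R\rangle)$ with heart $\cA_t\circ\cB_t^R$, then glue $\eta_t^L$ with $\tau_t*\eta_t^R$ along $\DCoh(X)=\langle\cN^L,\langle\cT,\cN^R\rangle\rangle$. For the first stage, the objects $E_i[k_t^i]$ ($i=n+1,\dots,m$) are the simple objects of the finite-length heart $\cB_t^R=\langle E_i[k_t^i]\rangle_{\rm ext}$, so by \Cref{C:gluingconditions}(1) the gluing exists once $\Hom^{\le 0}(\cA_t,\cB_t^R)=0$, and since $\cB_t^R$ is the extension closure of the $E_i[k_t^i]$ this reduces to $\Hom^{\le k_t^i}(\cA_t,E_i)=0$ for each $i$. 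The crux is to deduce this for \emph{all} $t$ from hypothesis (1). Fix $t$. If $\lceil\phi_t^i\rceil\le\lceil\phi_0^i\rceil$ for some $i\in\{n+1,\dots,m\}$, this is hypothesis (1) at $t$. Otherwise $k_t^i\le k_0^i-1$ for every such $i$, and by hypothesis (2) one has $f_t(0)\in(-1,1]$, hence
\[
    \cA_t=\cP_{\tau_0}(f_t(0),f_t(0)+1]\subseteq\cP_{\tau_0}(-1,2]=\langle\,\cA_0[1],\ \cA_0,\ \cA_0[-1]\,\rangle_{\rm ext},\qquad \cA_0:=\cP_{\tau_0}(0,1].
\]
Hypothesis (1) at $t=0$ (which lies in its range of validity, all ceilings agreeing) gives $\Hom^{\le k_0^i}(\cA_0,E_i)=0$; filtering an object of $\cA_t$ through its three $\tau_0$-pieces lying in $\cA_0[1]$, $\cA_0$, $\cA_0[-1]$ and chasing the long exact sequences for $\Hom^\bullet(-,E_i)$ yields $\Hom^{\le k_0^i-1}(\cA_t,E_i)=0$, and a fortiori $\Hom^{\le k_t^i}(\cA_t,E_i)=0$. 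So $\Hom^{\le 0}(\cA_t,\cB_t^R)=0$ for all $t$ and the first gluing exists.

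\textbf{Second gluing and continuity.} Every object $E$ of $\cA_t\circ\cB_t^R$ lies in a triangle $\pr_{\cN^R}(E)\to E\to\pr_{\cT}(E)$ with $\pr_{\cN^R}(E)\in\cB_t^R$ and $\pr_{\cT}(E)\in\cA_t$. Hypothesis (3) gives $\Hom^{\le 1}(\cB_t^L,\cA_t)=\Hom^{\le 1}(\cB_t^L,\cB_t^R)=0$, so the long exact sequence for $\Hom^\bullet(\cB_t^L,-)$ forces $\Hom^{\le 1}(\cB_t^L,\cA_t\circ\cB_t^R)=0$; by the Remark following \Cref{C:gluingconditions} the glued $\sigma_t:=\eta_t^L*(\tau_t*\eta_t^R)$ exists in $\Stab(X)$, every $\sigma_t$-semistable object being a direct sum, of a common phase, of semistable objects of the three pieces (whence also the support property, using the block splitting $\Lambda=\Lambda^L\oplus\Lambda^{\cT}\oplus\Lambda^R$). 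For continuity of $t\mapsto\sigma_t$: the central charge $Z_t=W_t^L\oplus Z_{\tau_t}\oplus W_t^R$ varies continuously, the three input slicings vary continuously, and the gluing construction is continuous on its domain (cf.\ \cite{karube2024noncommutative}); Bridgeland's deformation theorem then yields a continuous path $\sigma\colon[0,\infty)\to\Stab(X)$.

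\textbf{Quasi-convergence and the limit SOD.} Now suppose $\phi_t^n\to-\infty$ and $\phi_t^{n+1}\to\infty$. Because $\sigma_t$ is glued, its restriction to $\cN^L$ (resp.\ $\cN^R$) is $\eta_t^L$ (resp.\ $\eta_t^R$), a path of the exceptional-collection type of \cite{Macristabilityoncurves}*{\S 3}, \cite{Vanjapaper} with parameter $w_t$ in $\cS^n$ (resp.\ $\cS^{m-n}$); once the extreme phase $\phi_t^n\to-\infty$ (resp.\ $\phi_t^{n+1}\to\infty$) such a path is quasi-convergent and induces a semiorthogonal decomposition $\langle\cD_1,\dots,\cD_d\rangle$ coarsening $\langle E_1,\dots,E_n\rangle$ (resp.\ $\langle\cD_1',\dots,\cD_l'\rangle$ coarsening $\langle E_{n+1},\dots,E_m\rangle$), the blocks grouping the exceptional objects that keep bounded mutual phase distance. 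The restriction to $\cT$ is the $\GL_2^+(\bf{R})^\sim$-orbit $\tau_t=g_t\cdot\tau_0$, which fixes the Harder--Narasimhan filtration of every object and moves pairwise phase differences by less than $1$; it therefore contributes only a bounded, single-block piece in the limit, namely $\langle\cT\rangle$. Finally $\phi_t^n\to-\infty$ (resp.\ $\phi_t^{n+1}\to\infty$) forces all $\eta_t^L$-semistable (resp.\ $\eta_t^R$-semistable) objects to have $\sigma_t$-phase tending to $-\infty$ (resp.\ $+\infty$), so the three blocks separate in phase. Feeding the three already-identified limit decompositions into the \cite{HLJR} mechanism for assembling a quasi-convergent path from quasi-convergent pieces across a semiorthogonal decomposition, $\sigma$ is quasi-convergent and induces
\[
    \DCoh(X)=\langle\cD_1,\dots,\cD_d,\ \cT,\ \cD_1',\dots,\cD_l'\rangle .
\]

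\textbf{Main obstacle.} The delicate point in (a) is promoting hypothesis (1) to all $t$: one must keep $\cA_t$ inside the bounded band $\cP_{\tau_0}(-1,2]$ — exactly the role of hypothesis (2) — so that the fixed vanishing $\Hom^{\le k_0^i}(\cA_0,E_i)=0$ survives the passage to $\cA_t$ after the shift $k_t^i\le k_0^i-1$. In (b) the substantive input is the \cite{HLJR} concatenation of quasi-convergent paths once the blocks separate; the observation that a $\GL_2^+(\bf{R})^\sim$-orbit contributes only a single block is what keeps $\cT$ undecomposed in the limit, and in invoking it one should make sure the relevant boundary notion records the $\cT$-block only up to bounded deformation.
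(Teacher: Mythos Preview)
Your proposal is correct and follows essentially the same two-stage gluing strategy as the paper. The only notable difference is bookkeeping in the first step: to promote the vanishing $\Hom^{\le 0}(\cA_t,\cB_t^R)=0$ to all $t$, the paper picks for each $A\in\cA_t$ the specific integer $n$ with $A[n]\in\cA_0$ and checks $n+k_t^i-k_0^i\le 0$ directly, whereas you use the coarser containment $\cA_t\subseteq\langle\cA_0[1],\cA_0,\cA_0[-1]\rangle_{\rm ext}$ and lose one degree uniformly; both arguments land on the same inequality $k_t^i\le k_0^i-1$ and conclude. For the second gluing the paper simply cites \Cref{prop:support} via hypothesis (3), which is your use of the Remark after \Cref{C:gluingconditions}. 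The paper's proof actually omits the quasi-convergence argument entirely, so your sketch there (separation of the three blocks by the diverging extremal phases, $\cT$ contributing a single block since $g_t$ only reparametrizes phases within a bounded band) is additional detail rather than a point of comparison.
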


\begin{proof}
    We first glue $\tau_t*\eta^R_t$ by applying \Cref{prop:support} and \Cref{C:gluingconditions}: it suffices to verify that $\Hom^{\leq 0}(\cA_t,\cB^R_t)=0$ for any $t\in \bf{R}_{\geq 0}$. By (1), we may assume $\lceil \phi_t^i\rceil-\lceil\phi_0^i\rceil\geq 1$ for all $i=n+1,\dots, m$. Consider $A\in \cA_t=\cP(f_t(0),f_t(1)]$ and the unique $n\in \bf{Z}$ such that $A[n]\in \cA_0$. By (1) applied to $t = 0$, if $n+k_t^i-k_0^i\leq 0$ then
    \begin{equation*}
    \Hom^{\leq 0}(A,E_i[k_t^i]) =\Hom^{\leq n+k_t^i-k_0^i}(A[n],E_i[k_0^i])=0 
    \end{equation*}
    since indeed $A[n]\in \cA_0$ and $E_i[k_0^i]\in \cB^R_0$. It is also clear that
\[
n=
\begin{cases}
    -\lceil f_t(0)\rceil &\text{ if }  \phi(A)>\lceil f_t(0)\rceil\\
    -\lceil f_t(0)\rceil+1 &\text{ if }  \phi(A)\leq \lceil f_t(0)\rceil\\
\end{cases}
\]
where $\phi(A)$ is the $\tau_0$-phase of $A$. Hence the condition $n+k_t^i-k_0^i\leq 0$ translates into 
\[
\lceil \phi_t^i\rceil-\lceil\phi_0^i\rceil\geq 
\begin{cases}
    -\lceil f_t(0)\rceil &\text{ if }  \phi(A)>\lceil f_t(0)\rceil\\
    -\lceil f_t(0)\rceil +1&\text{ if }  \phi(A)\leq \lceil f_t(0)\rceil.
\end{cases}
\]
The left hand side of the inequality is always $\ge 1$, and the condition (2) ensures that the right hand side is always $\le 1$.
By the third condition and \cref{prop:support} we can glue $\eta_t^L$ and $\tau_t*\eta^R_t$.
\end{proof}

\begin{rem}\label{rem_gluing_pathKu}
    If we replace conditions (1), (2), (3) of \Cref{prop_gluing_paths_Ku} with the hypothesis: for all $\tau$-stable object $E\in \cT$, $\liminf_{t\to\infty} \phi(E) - \phi(E_n) = \infty = \liminf_{t\to\infty} \phi(E_{n+1})-\phi(E)$, then by a similar argument we obtain a quasi-convergent path $\sigma_t=\eta_t*\tau_t*\eta'_t$ for all $t\gg 1$ with induced semiorthogonal decomposition as in \eqref{eq_SOD_gluing}.
\end{rem}

\subsection{Geometric stability conditions on cubic threefolds}\label{section_stab_cuibc3}
In this section, we construct geom\-etric stability conditions on cubic threefolds using the gluing technique of \cite{Collins_Polischuk_2010}. 

\subsubsection*{Stability conditions on the Kuznetsov components on cubic threefolds}
We begin by summ\-arizing without proof some relevant facts about Kuznetsov components of cubic threefolds.

Let $Y\hookrightarrow\bf{P}^4$ denote a cubic threefold and $H$ the restriction of the hyperplane class to $Y$. The pair $(\cO_Y,\cO_Y(H))$ is exceptional and we set $\cN_Y := \langle \cO_Y ,\cO_Y(H)\rangle$. We call $\Ku(Y) := \cN_Y^{\perp}$ the \textit{Kuznetsov component} of $Y$ so that $\DCoh(Y) = \langle \Ku(Y),\cN_Y\rangle$. First, we consider the projection of a skyscraper sheaf $\cO_x$ for a closed point $x \in Y$ to $\Ku(Y)$, which can be written as $K^x = \bf{L}_{\cO_Y}\bf{L}_{\cO_Y(H)}(\cO_x)[-2]$. By the definition of a semiorthogonal decomposition, there is a following exact triangle:
    \[
        N^x \rightarrow \cO_x \rightarrow K^x[2] \rightarrow N^x[1],
    \]
    where $N^x \in \cN_Y$.  Next, we study $K^x$ and $N^x$.

\begin{prop}
    In the above notation, $K^x$ is a $\mu_H$-stable sheaf on $X$ and fits into an exact triangle
    \[
        K^x \rightarrow \cO_Y^{\oplus 4} \rightarrow I_x(H) \rightarrow K^x[1].
    \] 
    The object $N^x$ satisfies $\cH^i(N^x) = 0$ for $i\ne -1,0$ and fits into an exact triangle
    \[
         \cO_Y(H) \rightarrow N^x \rightarrow \cO_Y^{\oplus 4}[1] \rightarrow \cO_Y(H)[1]
    \]
    in $\DCoh(Y)$. Furthermore, $\ch(K^x) = (3,-H,-H^2/2,H^3/6)$ and $\ch(N^x) = (-3,H,H^2/2,H^3/6)$. 
\end{prop}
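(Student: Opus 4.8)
The plan is to unwind the two left mutations to identify $K^x$ and $N^x$ explicitly, then read off the cohomology sheaves and Chern characters from the resulting triangles, and finally to prove $\mu_H$-stability of $K^x$ by a direct slope argument.

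First I would compute $\mathbf{L}_{\cO_Y(H)}(\cO_x)$. Since $\cO_x(H)\cong \cO_x$ we have $\RHom(\cO_Y(H),\cO_x)=\RHom(\cO_Y,\cO_x)=\bf{C}$, and the ideal sheaf sequence $0\to I_x(H)\to \cO_Y(H)\to \cO_x\to 0$ gives $\mathbf{L}_{\cO_Y(H)}(\cO_x)=\Cone(\cO_Y(H)\to \cO_x)=I_x(H)[1]$. A short computation with the same sequence, using $\H^0(\cO_Y(H))=\bf{C}^5$ and $\H^{>0}(\cO_Y(H))=0$ (both from the Koszul sequence of $Y\subset \bf{P}^4$), shows $\H^0(I_x(H))=\bf{C}^4$ and $\H^{>0}(I_x(H))=0$, the four sections being the hyperplanes through $x$. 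Hence $\RHom(\cO_Y,I_x(H)[1])$ is $\bf{C}^4$ in degree $-1$, so $\mathbf{L}_{\cO_Y}(I_x(H)[1])=\Cone\big(\cO_Y^{\oplus 4}\xrightarrow{ev}I_x(H)\big)[1]$, where $ev$ evaluates these four sections; since the linear forms through $x$ surject onto the cotangent space of $Y$ at $x$, Nakayama's lemma makes $ev$ surjective, whence $\mathbf{L}_{\cO_Y}(I_x(H)[1])=\ker(ev)[2]$. Therefore $K^x=\ker(ev)$ is a torsion-free sheaf of rank $3$ fitting in $0\to K^x\to \cO_Y^{\oplus 4}\to I_x(H)\to 0$, the first asserted triangle. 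To get the triangle for $N^x$ I would apply the octahedral axiom to the composable maps $\cO_x\xrightarrow{f}I_x(H)[1]\xrightarrow{g}K^x[2]$: their fibres are $\cO_Y(H)$ and $\cO_Y^{\oplus 4}[1]$ by the two triangles above, and $g\circ f$ is the canonical morphism $\cO_x\to \mathrm{pr}_{\Ku(Y)}(\cO_x)$, so $N^x$ is its fibre and one obtains $\cO_Y(H)\to N^x\to \cO_Y^{\oplus 4}[1]\to \cO_Y(H)[1]$ (which also confirms $N^x\in \cN_Y$ and the stated formula for $K^x$).

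From the $N^x$-triangle the long exact sequence of cohomology sheaves, with $\cO_Y(H)$ in degree $0$ and $\cO_Y^{\oplus 4}$ in degree $-1$, gives $\cH^i(N^x)=0$ for $i\ne -1,0$ at once. The Chern characters are then pure bookkeeping from additivity in triangles: $\ch(N^x)=e^H-4$, and $\ch(K^x)=4-\ch(I_x(H))=4-(e^H-[\mathrm{pt}])$ with $[\mathrm{pt}]=\tfrac13H^3$ since $\deg Y=3$; substituting yields $(3,-H,-H^2/2,H^3/6)$ and $(-3,H,H^2/2,H^3/6)$.

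The step I expect to be the real obstacle is $\mu_H$-stability of $K^x$, where $\mu_H(K^x)=-\tfrac13$. I would argue by contradiction: suppose $F\subset K^x$ is a saturated subsheaf with $1\le \rank F\le 2$ and $\mu_H(F)\ge \mu_H(K^x)$. Using $\Pic(Y)=\bf{Z}H$ and $F\subset \cO_Y^{\oplus 4}$, the determinant of the saturation of $F$ inside $\cO_Y^{\oplus 4}$ maps nonzero to a trivial bundle, forcing $c_1(F)=aH$ with $a\le 0$; combined with the slope inequality this pins down $c_1(F)=0$. If $\rank F=1$ then $F=I_W$ with $W\subset Y$ of codimension $\ge 2$, and the surjection $\cO_Y^{\oplus 4}\to I_x(H)$ vanishes on $F$, hence on its saturation $\cO_Y\supset I_W$ (it would otherwise factor through the torsion sheaf $\cO_W$), so $\cO_Y\hookrightarrow K^x$; but $\H^0(K^x)=\ker\big(\H^0(\cO_Y^{\oplus 4})\xrightarrow{\sim}\H^0(I_x(H))\big)=0$, a contradiction. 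If $\rank F=2$ then $K^x/F$ is torsion-free of rank $1$ with $c_1=-H$, hence embeds in $\cO_Y(-H)$, giving a nonzero map $K^x\to \cO_Y(-H)$; applying $\RHom(-,\cO_Y(-H))$ to the defining sequence of $K^x$ and using $\H^\bullet(\cO_Y(-H))=0$ identifies $\Hom(K^x,\cO_Y(-H))$ with $\Ext^1(I_x,\cO_Y(-2H))$, which vanishes because $\H^1(\cO_Y(-2H))=0$ and $\Ext^2(\cO_x,\cO_Y(-2H))=0$ (as $\cO_x$ has projective dimension $3$) — again a contradiction. The delicate points are keeping the saturation/slope bookkeeping airtight and collecting the few cohomology vanishings on the cubic threefold; everything else is formal.
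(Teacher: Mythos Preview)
Your proof is correct and, for the two exact triangles and the Chern character computations, follows essentially the same route as the paper: compute the two left mutations explicitly and then apply the octahedral axiom to the composition $\cO_x\to I_x(H)[1]\to K^x[2]$.

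The genuine difference is in the $\mu_H$-stability of $K^x$. The paper simply invokes \cite{bayer2024desingularization}*{Cor.~5.2}, whereas you give a self-contained slope argument. Your argument works, but two of the justifications could be tightened. First, the bound $c_1(F)\le 0$ is obtained more cleanly by observing that $\cO_Y^{\oplus 4}$ is $\mu_H$-semistable of slope $0$ (being a direct sum of copies of a stable line bundle), so every subsheaf has slope $\le 0$; the determinant argument you sketch is correct in spirit but requires care when $F$ is only reflexive of rank $2$. Second, the vanishing $\Ext^2(\cO_x,\cO_Y(-2H))=0$ does not follow from ``$\cO_x$ has projective dimension $3$'' alone, which only gives $\Ext^{>3}=0$; the clean justification is Serre duality on $Y$ with $\omega_Y=\cO_Y(-2H)$, yielding $\Ext^2(\cO_x,\omega_Y)\cong \H^1(\cO_x)^\vee=0$, or equivalently the local statement that $\mathcal{E}xt^i(\cO_x,\cO_Y)=0$ for $i<3$ via the Koszul resolution. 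With these two remarks your stability argument is complete and has the advantage of being independent of the cited reference.
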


\begin{proof}
    For the first triangle, by \cite{bayer2024desingularization}*{Cor. 5.2} we only need to show that $\Cone(\mathcal{O}_Y^{\oplus 4} \to I_x(H))[-1] \cong K^x$. This directly follows from the relation $\bf{L}_{\cO_H(H)}(\mathcal{O}_x) = I_x(H)[1]$. The second exact triangle and the following claims are a consequence of the first triangle and the octahedral axiom.
\end{proof}

\begin{prop}{\cite{kuznetsov2004derived}*{Lem. 4.7}} For any line $l \subset Y$, the ideal sheaf $I_l$ is in $\Ku(Y)$.
\end{prop}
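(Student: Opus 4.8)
The plan is to verify directly the two orthogonality conditions defining $\Ku(Y) = \cN_Y^\perp$. Since $\cN_Y = \langle \cO_Y, \cO_Y(H)\rangle$, membership $I_l \in \Ku(Y)$ is equivalent to $\RHom(\cO_Y, I_l) = 0$ and $\RHom(\cO_Y(H), I_l) = 0$, i.e.\ to the two vanishings $\H^\bullet(Y, I_l) = 0$ and $\H^\bullet(Y, I_l(-H)) = 0$. Both should fall out of the structure sequence $0 \to I_l \to \cO_Y \to \cO_l \to 0$ of the line $l \subset Y$, together with the fact that $l \cong \bf{P}^1$, so that $\cO_l \cong \cO_{\bf{P}^1}$ and $\cO_l(H) \cong \cO_{\bf{P}^1}(1)$.

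First I would handle $\H^\bullet(Y, I_l)$. In the long exact cohomology sequence associated to $0 \to I_l \to \cO_Y \to \cO_l \to 0$, the restriction map $\H^0(Y, \cO_Y) = \bf{C} \to \H^0(l, \cO_l) = \bf{C}$ is an isomorphism, while $\H^{>0}(Y, \cO_Y) = 0$ (as $Y$ is Fano, or directly from the ideal sequence of $Y \subset \bf{P}^4$) and $\H^{>0}(\bf{P}^1, \cO) = 0$; hence $\H^\bullet(Y, I_l) = 0$. Next, twisting the same sequence by $\cO_Y(-H)$ gives $0 \to I_l(-H) \to \cO_Y(-H) \to \cO_{\bf{P}^1}(-1) \to 0$, whose right-hand term is acyclic, so it remains only to show $\H^\bullet(Y, \cO_Y(-H)) = 0$. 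For this I would twist the ideal sequence $0 \to \cO_{\bf{P}^4}(-3) \to \cO_{\bf{P}^4} \to \cO_Y \to 0$ of $Y \subset \bf{P}^4$ by $\cO_{\bf{P}^4}(-1)$, obtaining $0 \to \cO_{\bf{P}^4}(-4) \to \cO_{\bf{P}^4}(-1) \to \cO_Y(-H) \to 0$, and invoke the standard vanishing $\H^\bullet(\bf{P}^4, \cO(-1)) = \H^\bullet(\bf{P}^4, \cO(-4)) = 0$ of line-bundle cohomology on projective space. This yields $\H^\bullet(Y, \cO_Y(-H)) = 0$, hence $\H^\bullet(Y, I_l(-H)) = 0$, and combining the two vanishings gives $I_l \in \cN_Y^\perp = \Ku(Y)$.

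There is no real obstacle here: every cohomology input is classical, and the only place the hypothesis ``$l$ is a line'' enters is through the identifications $\cO_l \cong \cO_{\bf{P}^1}$ and $\cO_l(H) \cong \cO_{\bf{P}^1}(1)$. The mildest point to keep in mind is the twist by $-H$ and the attendant vanishing $\H^\bullet(Y,\cO_Y(-H))=0$, but this too is immediate. The argument simply reproduces \cite{kuznetsov2004derived}*{Lem. 4.7}.
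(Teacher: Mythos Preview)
Your proof is correct and is exactly the standard direct verification that the paper defers to \cite{kuznetsov2004derived}*{Lem.~4.7}; the paper itself gives no proof beyond the citation, so there is nothing further to compare.
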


The numerical Grothendieck group of $\rm{Ku}(Y)$ is the lattice generated by $[I_l] \text{ and } [\cS_{\Ku(Y)}(I_l)]$, where $\cS_{\Ku(Y)}$ is the Serre functor of $\Ku(Y)$. Consider the lattice  
\[
    \Lambda = \im (\rm{K}_0(\Ku(Y))\rightarrow \rm{K}_0^{\rm{top}}(Y)_{\bf{Q}}).
\]
As in \Cref{E:latticeexample}, $\Lambda$ is also generated by $\ch(I_l)$ and $\ch(\cS_{\Ku(Y)}(I_l))$ by \cite{bernardara2012categorical}. In what follows, stability conditions on $\Ku(Y)$ satisfy the support property with respect to $\Lambda$. Next, we review the const\-ruction of a family of stability conditions $(Z(\alpha,\beta),\cA(\alpha,\beta))\in \Stab(\Ku(Y))$ parametrized by $(\alpha,\beta)\in \bf{R}_{>0}\times\bf{R}$ following \cite{Bayer2017StabilityCO}. The central charge is 
\[
    Z(\alpha,\beta) := -H^2\ch^{\beta}_1 +\mathtt{i}\left(-\frac{1}{2}\alpha H^3 \ch_0^{\beta}+H\ch_2^{\beta}\right).
\]
while the heart $\mathcal{A}(\alpha,\beta)$ is defined by a sequence of two tilts. First, we recall the slope function 
\begin{equation}
\label{E:muH}
\mu_H(E) = 
\begin{cases}
    \frac{\ch_1(E)H^2}{\rk(E)} & \rk(E) \not =0,\\
    \infty &  \rk(E) =0,
\end{cases}
\end{equation}
for $E \in \Coh(Y)$; $E$ is called $\mu_H$-semistable if for any $0 \ne F \subsetneq E$, $\mu_H(F) \le \mu_H(E/F)$. It is $\mu_H$-stable if the inequality is strict. The slope function $\mu_H$ defines a torsion pair $(\cT^{\beta},\cF^{\beta})$ for any $\beta\in\bf{R}$ in $\Coh(Y)$ by:
\begin{align*}
    \cT^{\beta} &= \langle E \colon E \text{ is $\mu_H$-stable with }\mu_H(E) > \beta \rangle,\\
    \cF^{\beta} &= \langle F \colon F \text{ is $\mu_H$-stable with }\mu_H(F) \leq \beta \rangle
\end{align*}
It follows from the results of \cite{happel1996tilting} that there is a new heart $\Coh^\beta(Y) := \langle \cT^{\beta},\cF^{\beta}[1] \rangle$ called the \textit{tilting (or tilted) heart} with respect to $\mu_H = \beta$. Next, consider the slope function
\[
    \nu_{\alpha,\beta}(E) = \frac{-\frac{1}{2}\alpha^2 H^3 \mathrm{ch}_0^{\beta}(E)+H\mathrm{ch}_2^{\beta}(E)}{H^2\mathrm{ch}^{\beta}_1(E)}
\]
defined on $\mathrm{Coh}^{\beta}(Y)$. Using $\nu_{\alpha,\beta}$, we can define stability for objects of $\Coh^\beta(Y)$. An object $E\in \Coh^\beta(Y)$ is $\nu_{\alpha,\beta}$-semistable if for all $0\ne F \subsetneq E$ one has $\nu_{\alpha,\beta}(F)\le \nu_{\alpha,\beta}(E/F)$. It is $\nu_{\alpha,\beta}$-stable if the inequality is strict -- see {\cite{Bayer2011BridgelandSC}*{Defn. 3.2.3}. 

As before, there is a tilting heart $\cA_{\alpha,\beta}$ on $\DCoh(Y)$ defined with respect to $\nu_{\alpha,\beta} = 0$ and we let $\cA(\alpha,\beta) := \cA_{\alpha,\beta} \cap \Ku(Y)$.

\begin{thm}{\cite{Bayer2017StabilityCO}*{Thm. 6.8}}
For every pair $(\alpha,\beta)$ in 
\[
    V= \left\{ (\alpha,\beta) \in \bf{R}_{>0} \times \bf{R} \colon -\frac{1}{2} \leq \beta < 0,\alpha < -\beta \text{ or } -1 < \beta <  -\frac{1}{2}, \alpha \leq 1+ \beta \right\},
\]
the pair $\sigma_{\alpha,\beta} = (Z(\alpha,\beta),\cA(\alpha,\beta))$ defines a stability condition on $\Ku(Y)$.
\end{thm}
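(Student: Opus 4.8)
The plan is to follow the method of Bayer--Lahoz--Macr\`{i}--Stellari \cite{Bayer2017StabilityCO}: one first builds a \emph{weak} stability condition on all of $\DCoh(Y)$ by a two-step tilting procedure, and then restricts it to $\Ku(Y)$. The crucial observation is that $Z(\alpha,\beta)$ depends only on $\ch_{\leq 2}$, so the conjectural ``second'' Bogomolov--Gieseker inequality --- the one controlling $\ch_3$, which would be needed to promote the double-tilted heart to an honest stability condition on all of $\DCoh(Y)$ --- is \emph{not} required once one passes to the smaller category $\Ku(Y)$; only the classical Bogomolov--Gieseker inequality, valid on every smooth projective threefold, enters.

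First I would recall that $\mu_H$-stability on $\Coh(Y)$ satisfies the classical BG inequality, so that the first tilt $\Coh^{\beta}(Y)$ together with the slope $\nu_{\alpha,\beta}$ gives a weak stability condition on $\DCoh(Y)$; tilting once more at $\nu_{\alpha,\beta} = 0$ and rotating the central charge by $\pm\mathtt{i}$ produces $(Z_{\alpha,\beta},\cA_{\alpha,\beta})$, again a weak stability condition for all $\alpha > 0$, $\beta \in \bf{R}$ (standard; cf.~\cite{Bayer2011BridgelandSC}). Next one must place the exceptional objects $\cO_Y,\cO_Y(H)$ correctly: using $\ch^{\beta}(\cO_Y(kH)) = e^{(k-\beta)H}$ one computes the $\mu_H$- and $\nu_{\alpha,\beta}$-slopes of $\cO_Y$ and $\cO_Y(H)$ explicitly, and the region $V$ is cut out precisely so that, for $(\alpha,\beta) \in V$, both objects --- after suitable shifts $\cO_Y[k_0],\cO_Y(H)[k_1]$ --- lie in $\cA_{\alpha,\beta}$, are $\sigma_{\alpha,\beta}$-stable there, and have $Z_{\alpha,\beta}$-phases that are ``extremal'' in the sense required by the inducing criterion of \cite{Bayer2017StabilityCO}*{\S 5} (equivalently, the relevant $\Hom^{\leq 0}$ and $\Hom^{\geq 2}$ groups between $\cN_Y = \langle\cO_Y,\cO_Y(H)\rangle$ and $\cA_{\alpha,\beta}\cap\Ku(Y)$ vanish). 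Checking $\sigma_{\alpha,\beta}$-stability of these line bundles amounts to ruling out destabilizers by rank/degree bounds together with the classical BG inequality, and the phase estimates are a direct computation over $V$.

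Granting this, the inducing criterion of \cite{Bayer2017StabilityCO} applies to $\DCoh(Y) = \langle\Ku(Y),\cO_Y,\cO_Y(H)\rangle$ and $(Z_{\alpha,\beta},\cA_{\alpha,\beta})$: it yields that $\cA(\alpha,\beta) = \cA_{\alpha,\beta}\cap\Ku(Y)$ is the heart of a bounded t-structure on $\Ku(Y)$ and that $Z(\alpha,\beta)$ restricts to a stability function with the Harder--Narasimhan property. It then remains to establish the support property with respect to $\Lambda$. Since $\Lambda$ has rank $2$, generated by $\ch(I_l)$ and $\ch(\cS_{\Ku(Y)}(I_l))$, it suffices to produce one quadratic form $Q$ on $\Lambda_{\bf{R}}$ that is negative semidefinite on $\ker Z(\alpha,\beta)$ and non-negative on every $\sigma_{\alpha,\beta}$-semistable object of $\Ku(Y)$. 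I would take for $Q$ a multiple of the discriminant $\Delta_H(E) := (H^2\ch_1(E))^2 - 2(H^3\ch_0(E))(H\ch_2(E))$, possibly corrected by a small multiple of a $\ch_3$-term: non-negativity on semistable objects follows by breaking them into tilt-semistable pieces and applying the classical BG inequality, while negativity of $Q$ on $\ker Z(\alpha,\beta)$ is a direct check from the explicit form of the central charge.

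The hard part is the interface between the last two steps. A $\sigma_{\alpha,\beta}$-semistable object $E \in \Ku(Y)$ need not be semistable in $\DCoh(Y)$, and its Harder--Narasimhan factors there need not lie in $\Ku(Y)$; one must show that the extremal placement of $\cO_Y,\cO_Y(H)$ confines those factors to a narrow phase window, so that the discriminant inequality survives the passage from $\DCoh(Y)$ back to $\Ku(Y)$. Pinning down the exact region $V$, including its boundary behavior, is essentially dictated by making this argument and the positivity of $Z(\alpha,\beta)$ on $\cA(\alpha,\beta)$ hold simultaneously; the remaining ingredients --- stability of $\cO_Y,\cO_Y(H)$ in the relevant chamber and negativity of $Q$ on $\ker Z(\alpha,\beta)$ --- are comparatively routine.
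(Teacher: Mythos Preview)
The paper does not give a proof of this theorem; it is stated as a citation of \cite{Bayer2017StabilityCO}*{Thm.~6.8} and used as a black box, with no proof environment following the statement. So there is nothing to compare your proposal against in this paper.

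That said, your sketch is a reasonable outline of the Bayer--Lahoz--Macr\`{i}--Stellari argument. One small correction: on $\Ku(Y)$ the central charge $Z(\alpha,\beta)$ is already determined by $\ch_{\le 2}$, so the quadratic form needed for the support property should be built purely from $\ch_0,\ch_1,\ch_2$ (essentially the discriminant $\Delta_H$); there is no need, and no room on the rank-$2$ lattice $\Lambda$, for a $\ch_3$ correction term. Otherwise the structure you describe --- weak stability on $\DCoh(Y)$ via double tilting, placing $\cO_Y$ and $\cO_Y(H)$ in the heart with controlled phase so that the inducing criterion applies, and then checking support via the classical Bogomolov--Gieseker inequality --- matches the method of \cite{Bayer2017StabilityCO}.
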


\subsubsection*{Glued geometric stability conditions on cubic threefolds}
First, we construct a glued stability condition whose heart contains as simple objects $K^x[2],\cO_Y[1]$ and $\cO_Y(H)$. Let $\cB\subset \cN_Y$ denote the heart generated by $\cO_Y[1]$ and $\cO_Y(H)$.
The key observation is that if $\cA_{\alpha,\beta}$ contains $K^x[1],\cO_Y[1]$ and $\cO_Y(H)$, then $\Hom^{\leq 0}(\cA(\alpha,\beta)[1],\cB) = 0$.

\begin{lem}\label{lem:line_bundle}
    For any $(\alpha,\beta) \in V$, $\cO_Y(H)$ and $\cO_Y[1]$ lie in $\cA_{\alpha,\beta}$.
\end{lem}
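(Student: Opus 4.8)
The plan is to follow each of the two line bundles through the two steps that define $\cA_{\alpha,\beta}$: the tilt $\Coh^\beta(Y)=\langle\cT^\beta,\cF^\beta[1]\rangle$ of $\Coh(Y)$ at $\mu_H=\beta$, followed by the tilt of $\Coh^\beta(Y)$ at $\nu_{\alpha,\beta}=0$, whose torsion and torsion-free halves I will denote $\cT'_{\alpha,\beta}$ and $\cF'_{\alpha,\beta}$, so that $\cA_{\alpha,\beta}=\langle\cT'_{\alpha,\beta},\cF'_{\alpha,\beta}[1]\rangle$. The first step is immediate: $\cO_Y$ and $\cO_Y(H)$ are line bundles, hence $\mu_H$-stable, and since $\beta<0$ for $(\alpha,\beta)\in V$ we have $\mu_H(\cO_Y)=0>\beta$ and $\mu_H(\cO_Y(H))=H^3>\beta$, so both lie in $\cT^\beta\subset\Coh^\beta(Y)$.

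For the second step, the key point I would establish is that $\cO_Y$ and $\cO_Y(H)$ are $\nu_{\alpha,\beta}$-semistable objects of $\Coh^\beta(Y)$. This is the only non-formal input: a line bundle $L=\cO_Y(kH)$ satisfies $\Delta_H(L):=(H^2\ch_1(L))^2-2(H^3\rk(L))(H\ch_2(L))=0$, and objects of $\Coh^\beta(Y)$ with vanishing discriminant cannot be $\nu_{\alpha,\beta}$-destabilized, by the tilt Bogomolov--Gieseker inequality for cubic threefolds (the very ingredient underlying the family $\sigma_{\alpha,\beta}$ recalled above; see \cite{Bayer2011BridgelandSC,Bayer2017StabilityCO}): a putative destabilizing sub- or quotient object would be forced to have a numerically proportional, hence non-destabilizing, class. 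Granting semistability, $\cO_Y(H)$ lies in $\cT'_{\alpha,\beta}$ or in $\cF'_{\alpha,\beta}$ according to the sign of $\nu_{\alpha,\beta}(\cO_Y(H))$, and likewise for $\cO_Y$.

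It then remains to read off these two signs from $\ch^\beta(\cO_Y)=e^{-\beta H}$ and $\ch^\beta(\cO_Y(H))=e^{(1-\beta)H}$. A direct computation expresses $\nu_{\alpha,\beta}(\cO_Y)$ and $\nu_{\alpha,\beta}(\cO_Y(H))$ as rational functions of $(\alpha,\beta)$ that are monotone in $\alpha^2$, and the two families of inequalities cutting out $V$ (the case $-\tfrac12\le\beta<0$, $\alpha<-\beta$, and the case $-1<\beta<-\tfrac12$, $\alpha\le 1+\beta$) are precisely what places $\cO_Y(H)$ in $\cT'_{\alpha,\beta}$ and $\cO_Y$ in $\cF'_{\alpha,\beta}$; hence $\cO_Y(H)\in\cA_{\alpha,\beta}$ and $\cO_Y[1]\in\cA_{\alpha,\beta}$, as asserted. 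This last computation is elementary, so the main -- indeed essentially the only -- obstacle is the $\nu_{\alpha,\beta}$-semistability of the line bundles in $\Coh^\beta(Y)$, which is exactly where the Bogomolov--Gieseker inequality enters.
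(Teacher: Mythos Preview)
Your proposal is correct and follows essentially the same two-tilt-then-sign-check approach as the paper. The only difference is in how you justify $\nu_{\alpha,\beta}$-(semi)stability of the line bundles: you argue via $\Delta_H=0$ plus the tilt Bogomolov--Gieseker inequality, whereas the paper simply invokes \cite{Bayer2011BridgelandSC}*{Prop.~7.4.1} (using that $Y$ has Picard number one) to get $\nu_{\alpha,\beta}$-stability directly; after that the sign computation is the same.
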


\begin{proof}
    All line bundles $L$ are $\mu_H$-stable, as for any proper subsheaf $F\subsetneq L$ one has $\mu_H(F) < \mu_H(L/F)$ since $L/F$ is a torsion sheaf. Since cubic threefolds have Picard number one, it follows from \cite{Bayer2011BridgelandSC}*{Prop. 7.4.1} and the subsequent discussion that $\cO_Y$ and $\cO_Y(H)$ are $\nu_{\alpha,\beta}$-stable. The result now follows from $\mu_H(\cO_Y) = 0$, $\mu_H(\cO_Y(H)) = H^3$, $\nu_{\alpha,\beta}(\cO_Y) = (\alpha^2-\beta^2)/2\beta <0$, and $\nu_{\alpha,\beta}(\cO_Y(H)) = (\alpha^2 - (1-\beta)^2)/2(\beta-1) > 0$.
\end{proof}

\begin{lem}
    For any $x \in Y$, the sheaf $I_x(H)$ is stable with respect to $\mu_H$ and $\nu_{\alpha,\beta}$.
\end{lem}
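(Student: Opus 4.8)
The plan is to reduce both assertions to the stability of the line bundle $\cO_Y(H)$, which is \Cref{lem:line_bundle}, by exploiting the standard short exact sequence
\[
    0 \longrightarrow I_x(H) \longrightarrow \cO_Y(H) \longrightarrow \cO_x \longrightarrow 0
\]
together with the observation that $\ch^\beta_i(I_x(H)) = \ch^\beta_i(\cO_Y(H))$ for $i = 0,1,2$, since $\ch(I_x(H))$ and $\ch(\cO_Y(H))$ differ only by the point class in cohomological degree $3$, and $\nu_{\alpha,\beta}$ depends only on $\ch^\beta_0,\ch^\beta_1,\ch^\beta_2$.

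First I would treat $\mu_H$-stability. Since $I_x(H)$ embeds into the line bundle $\cO_Y(H)$ it is a torsion-free sheaf of rank one, so for any proper nonzero subsheaf $F\subsetneq I_x(H)$ one has $\rk(F) = 1$, whence $I_x(H)/F$ is a nonzero torsion sheaf and $\mu_H(I_x(H)/F) = \infty > \mu_H(F)$. This is the same argument used in the proof of \Cref{lem:line_bundle}. In particular $\mu_H(I_x(H)) = H^3 > 0 > \beta$ for all $(\alpha,\beta)\in V$, so $I_x(H)\in \cT^\beta\subset \Coh^\beta(Y)$.

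Next, for $\nu_{\alpha,\beta}$-stability: because $\cO_x$ (a torsion sheaf) and $\cO_Y(H)$ also lie in $\cT^\beta$, the displayed sequence is short exact in the tilted heart $\Coh^\beta(Y)$, and in particular the inclusion $I_x(H)\hookrightarrow \cO_Y(H)$ is a monomorphism there. Now take any proper nonzero subobject $A\subsetneq I_x(H)$ in $\Coh^\beta(Y)$. Since $I_x(H)\ne \cO_Y(H)$, the composite $A\hookrightarrow I_x(H)\hookrightarrow \cO_Y(H)$ exhibits $A$ as a proper nonzero subobject of $\cO_Y(H)$, so $\nu_{\alpha,\beta}$-stability of $\cO_Y(H)$ gives $\nu_{\alpha,\beta}(A) < \nu_{\alpha,\beta}(\cO_Y(H)) = \nu_{\alpha,\beta}(I_x(H))$, the equality being the coincidence of Chern characters noted above. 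As $H^2\ch^\beta_1(I_x(H)) = (1-\beta)H^3 > 0$, a see-saw argument (with the convention that objects with $H^2\ch^\beta_1 = 0$ have slope $+\infty$) then upgrades this to $\nu_{\alpha,\beta}(A) < \nu_{\alpha,\beta}(I_x(H)/A)$ for every such $A$, which is precisely $\nu_{\alpha,\beta}$-stability of $I_x(H)$.

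The only point requiring a little care is checking that the sequence $0\to I_x(H)\to\cO_Y(H)\to\cO_x\to 0$ remains exact after tilting, i.e.\ that all three terms lie in $\Coh^\beta(Y)$, and keeping track of the infinite-slope possibility in the final see-saw step; both are routine. Once \Cref{lem:line_bundle} is available, everything else is formal, so I do not expect a genuine obstacle here.
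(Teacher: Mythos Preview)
Your proof is correct and follows essentially the same strategy as the paper: reduce both stability statements to the known stability of $\cO_Y(H)$ via the sequence $0\to I_x(H)\to\cO_Y(H)\to\cO_x\to 0$ and the fact that $\cO_x$ has vanishing $\ch^\beta_i$ for $i\le 2$. One minor difference: for $\mu_H$-stability you invoke directly that rank-one torsion-free sheaves are $\mu_H$-stable (as in the proof of \Cref{lem:line_bundle}), whereas the paper instead runs the comparison with $\cO_Y(H)$ already at this step via a snake-type diagram showing $\mu_H(\cO_Y(H)/F) = \mu_H(I_x(H)/F)$; your version is a bit more economical here. For $\nu_{\alpha,\beta}$, the paper phrases the reduction via the quotient-slope formulation ($\nu(F)$ vs.\ $\nu(E/F)$) while you use the subobject-slope formulation ($\nu(A)$ vs.\ $\nu(E)$) and then see-saw back; these are equivalent and the content is the same.
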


\begin{proof}
    There is an exact sequence $0 \rightarrow I_x(H) \rightarrow \cO_Y(H) \rightarrow \cO_x \rightarrow 0$ in $\Coh(Y)$ and thus $\mu_H(\cO_Y(H)) = \mu_H(I_x(H))$. 
    Suppose given $0\ne F \subsetneq I_x(H)$ with $\mu_H(F) > \mu_H(I_x(H)/F)$.
    The diagram
    \[
        \begin{tikzcd}
	       & F & F \\
	       0 & I_x(H)& \cO_Y(H) &\cO_x & 0 \\
	       0 & I_x(H)/F & \cO_Y(H)/F & \cO_x & 0.
	       \arrow[equal, from=1-2, to=1-3]
	       \arrow[hook, from=1-2, to=2-2]
	       \arrow[hook, from=1-3, to=2-3]
	       \arrow[from=2-1, to=2-2]
	       \arrow[hook, from=2-2, to=2-3]
	       \arrow[from=2-2, to=3-2]
	       \arrow[from=2-3, to=2-4]
	       \arrow[from=2-3, to=3-3]
	       \arrow[from=2-4, to=2-5]
	       \arrow[equal, from=2-4, to=3-4]
	       \arrow[from=3-1, to=3-2]
	       \arrow[from=3-2, to=3-3]
	       \arrow[from=3-3, to=3-4]
	       \arrow[from=3-4, to=3-5]
        \end{tikzcd}
    \]
    implies that $\mu_H(\cO_Y(H)/F) = \mu_H(I_x(H)/F)$, which contradicts stability of $\cO_Y(H)$. Thus, $I_x(H)$ is $\mu_H$-stable in $\Coh(Y)$. Since $\ch_2^{\beta}(\cO_x) = 0$, the $\nu_{\alpha,\beta}$-stability of $I_x(H)$ follows similarly.
\end{proof}

By \cite{bayer2024desingularization}*{Lem. 8.4 and Prop. 8.10}, $K^x$ is $\sigma_{\alpha,\beta}$-stable for any $(\alpha,\beta)\in V$.
Set 
\[
V_+ =\{(\alpha,\beta) \in V \colon \beta \geq -1/3\}, \quad V_-=\{(\alpha,\beta) \in V \colon \beta < -1/3\}
\]
Since $\nu_{\alpha,\beta}(K^x) = {\frac{3\alpha^2 - 3\beta^2 - 2\beta + 1}{2(1+3\beta)}}$, $K^x$ lies in $\cA_{\alpha,\beta}$ if $(\alpha,\beta) \in V_+$.
To construct a geometric stability condition on $\DCoh(Y)$, we consider $z\cdot \sigma(\alpha,\beta)$ for $z\in \bf{C}$.

\begin{thm}\label{thm_glued_geom_stab_cubic3}
    Let $(\alpha,\beta)\in V_+$ and $z\in \bf{C}$ be given such that $K^x[2]$ is $z\cdot \sigma(\alpha,\beta)$-stable of phase one and take $\tau = (W,\cB)\in \Stab(\cN_Y)$ with $\cB = \langle \cO_Y(H),\cO_Y[1]\rangle_{\rm{ext}}$.
    Then, there exists a stability condition $\varrho := (z\cdot \sigma(\alpha,\beta))*\tau\in \Stab(X)$. Furthermore, we can choose $\tau$ such that 
    \[
        \phi(K^x[2])>\phi(\cO_Y[1]) > \phi(\cO_Y(H))
    \]
    and such that $\tau$ is spiked at $\cO_Y(H)$ with respect to $[N^x]$; in this case, $\cO_x$ is $\varrho$-stable for any $x \in Y$.
\end{thm}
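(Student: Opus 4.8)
The plan is to recognize the statement as an instance of the general gluing criterion \Cref{thm:gluedgeometricKuz}, applied to the semiorthogonal decomposition $\DCoh(Y)=\langle \Ku(Y),\cN_Y\rangle$ with $\cT=\Ku(Y)$ and $\cN=\cN_Y=\langle \cO_Y,\cO_Y(H)\rangle$. First I would fix the grading on $\cE=\{\cO_Y,\cO_Y(H)\}$ by $\cO_Y\prec\cO_Y(H)$, so that the norm is $\nu(\cO_Y)=1$, $\nu(\cO_Y(H))=0$; then $(\cE,\preceq)$ is sharp (unique maximal block $b_0=\{\cO_Y(H)\}$, and $b_1=\{\cO_Y\}$), the heart produced by \Cref{L:heartposet} is $\cB=\langle \cO_Y(H),\cO_Y[1]\rangle_{\rm{ext}}$ with simple objects $\cO_Y(H)=E_n$ and $\cO_Y[1]$, and the efficiency/spiked notions refer to this data. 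For $F=\cO_x$ the triangle in the statement has $N^F=N^x\in\cN_Y$ and $T^F=K^x[2]\in\Ku(Y)$; from $\ch(N^x)=(-3,H,H^2/2,H^3/6)$ one reads $[N^x]=[\cO_Y(H)]-4[\cO_Y]=[\cO_Y(H)]+4[\cO_Y[1]]$ in $\rm{K}_0(\cN_Y)$, so $N^x\in\cB$ and $\pr_{E_n}(\cO_x)=\pr_{\cO_Y(H)}(N^x)=1$. Since $\ch(K^x)$ and $\ch(N^x)$ are independent of $x$, every choice below is uniform in $x\in Y$.

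Next I would establish existence of the glued stability condition $\varrho=(z\cdot\sigma(\alpha,\beta))*\tau$. Write $\cA_1=\cP_{z\cdot\sigma(\alpha,\beta)}(0,1]$ for the heart of the $\Ku(Y)$-side condition. Because $(\alpha,\beta)\in V_+$, the computation of $\nu_{\alpha,\beta}(K^x)$ preceding the theorem gives $K^x\in\cA(\alpha,\beta)=\cA_{\alpha,\beta}\cap\Ku(Y)$, hence $K^x$ has $\sigma(\alpha,\beta)$-phase in $(0,1]$; consequently, the rotation making $K^x[2]$ have $z\cdot\sigma(\alpha,\beta)$-phase one places $\cA_1$ inside $\langle\cA_{\alpha,\beta}[1],\cA_{\alpha,\beta}[2]\rangle_{\rm{ext}}$. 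By \Cref{lem:line_bundle} both $\cO_Y(H)$ and $\cO_Y[1]$ lie in $\cA_{\alpha,\beta}$, so $\cB\subseteq\cA_{\alpha,\beta}$, and since $\cA_{\alpha,\beta}$ is the heart of a bounded t-structure we have $\Hom^{\le 0}(\cA_{\alpha,\beta}[j],\cA_{\alpha,\beta})=0$ for $j\ge 1$; therefore $\Hom^{\le 0}(\cA_1,\cB)=0$ (this is the key observation stated before the theorem). As $\cB$ is generated by the two simple objects $\cO_Y(H)$ and $\cO_Y[1]$, \Cref{C:gluingconditions}(1) produces the glued $\varrho\in\Stab(Y)$. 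Rescaling $z$ by a purely imaginary amount (which scales the central charge by a positive real and fixes all phases), we may in addition assume $Z_{z\cdot\sigma(\alpha,\beta)}(K^x[2])=-1$.

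Then I would choose $\tau$ and invoke \Cref{thm:gluedgeometricKuz} with the roles $\cT=\Ku(Y)$, $\cN=\cN_Y$, the $\cT$-side condition $z\cdot\sigma(\alpha,\beta)$, and the $\cN$-side condition $\tau$. By \Cref{L:spikedexist}, applied to $\cN_Y$, the collection $(\cO_Y,\cO_Y(H))$, and the class $[N^x]\in\rm{K}_0(\cN_Y)$, there is $\tau=(W,\cB)$ spiked at $\cO_Y(H)$ with respect to $[N^x]$; by \Cref{L:stabconditionfinitelength} and \Cref{H:stability} we are still free to adjust $W(\cO_Y(H))$ and $W(\cO_Y[1])$ inside the first quadrant, so I additionally arrange $\phi(\cO_Y(H))<\phi(\cO_Y[1])<1$ (taking $\Re W(\cO_Y(H))$ large, which both shrinks $\phi(\cO_Y(H))$ and preserves the spiked inequality). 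Efficiency of $N^x$ in $\cB$ plus $\pr_{E_n}(N^x)=1$ make $N^x$ $\tau$-stable by \Cref{P:efficientstable}, and since $[N^x]$ is a positive combination of $[\cO_Y(H)]$ and $[\cO_Y[1]]$, its phase satisfies $\phi(\cO_Y(H))<\phi(N^x)<\phi(\cO_Y[1])$, which is the chain \eqref{eq:hypoKuz} (with $b_1=\{\cO_Y\}$, so $\phi^-(b_1)=\phi^+(b_1)=\phi(\cO_Y[1])$). Efficiency itself, i.e. $\Hom(\cO_Y[1],N^x)=0$, follows from the triangle $\cO_Y(H)\to N^x\to\cO_Y^{\oplus 4}[1]\to\cO_Y(H)[1]$ together with $H^1(\cO_Y(H))=0$ and linear independence of the four sections defining $\cO_Y^{\oplus 4}\to\cO_Y(H)$. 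Finally $\cO_x$ lies in the glued heart $\cA_1\circ\cB$ since $\pr_{\Ku(Y)}(\cO_x)=K^x[2]\in\cA_1$ and $\pr_{\cN_Y}(\cO_x)=N^x\in\cB$, and $T^F=K^x[2]$ is stable of phase one on $\Ku(Y)$ by hypothesis. Thus conditions (i)--(iv) and the remaining hypotheses of \Cref{thm:gluedgeometricKuz} hold, so $\cO_x$ is $\varrho$-stable or $\cO_x\cong N^x\oplus K^x[2]$; the latter is impossible because $\cO_x$ is the structure sheaf of a reduced point, hence indecomposable, while $N^x$ and $K^x[2]$ are both nonzero ($\rk N^x=-3$, $\rk K^x=3$). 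Hence $\cO_x$ is $\varrho$-stable for every $x\in Y$, and as $[\cO_x]$ is the same class for all $x$ these are all stable of one common phase, so $\varrho$ is geometric; the stated phase inequalities $\phi(K^x[2])>\phi(\cO_Y[1])>\phi(\cO_Y(H))$ hold by construction (the first since the left side is $1$ and the right side was chosen $<1$).

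The main obstacle is the second paragraph: verifying that the heart $\cA_1$ of $z\cdot\sigma(\alpha,\beta)$ sits inside $\langle\cA_{\alpha,\beta}[1],\cA_{\alpha,\beta}[2]\rangle_{\rm{ext}}$, which is precisely where the hypothesis $(\alpha,\beta)\in V_+$ (ensuring $K^x\in\cA_{\alpha,\beta}$, so that $K^x[2]$ of phase one does not force the heart too far from $\cA_{\alpha,\beta}$) and \Cref{lem:line_bundle} are needed, and which simultaneously must be compatible with the normalization $Z_{z\cdot\sigma(\alpha,\beta)}(K^x[2])=-1$. Once this is in place, the argument is an assembly of \Cref{L:spikedexist}, \Cref{C:gluingconditions}, and \Cref{thm:gluedgeometricKuz}, with the only other non-bookkeeping points being the efficiency computation for $N^x$ and ruling out the splitting $\cO_x\cong N^x\oplus K^x[2]$.
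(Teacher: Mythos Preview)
Your proposal is correct and follows the same overall route as the paper: glue via \Cref{C:gluingconditions} after showing $\cA_z\subset\langle\cA_{\alpha,\beta}[1],\cA_{\alpha,\beta}[2]\rangle_{\rm{ext}}$ and $\cB\subset\cA_{\alpha,\beta}$, then feed the resulting $\varrho$ into \Cref{thm:gluedgeometricKuz} with $F=\cO_x$. Two small points deserve tightening. First, the inference ``$[N^x]=[\cO_Y(H)]+4[\cO_Y[1]]$, so $N^x\in\cB$'' is not valid as stated: a class computation does not place an object in a heart. The correct (and immediate) argument is that the triangle $\cO_Y(H)\to N^x\to\cO_Y^{\oplus 4}[1]$ exhibits $N^x$ as an extension of objects of $\cB$, hence $N^x\in\cB$. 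Second, your efficiency argument hinges on identifying the connecting map $\cO_Y^{\oplus 4}\to\cO_Y(H)$ and checking it is injective on $H^0$; this is true but you do not verify it. The paper's argument is cleaner and avoids this: once $N^x,K^x[2]$ lie in the glued heart $\cU$, the triangle $N^x\to\cO_x\to K^x[2]$ is a short exact sequence there, whence $\Hom_{\cU}(\cO_Y[1],N^x)\hookrightarrow\Hom_{\cU}(\cO_Y[1],\cO_x)=\Hom_{\DCoh(Y)}(\cO_Y[1],\cO_x)=0$. (Also, under the paper's convention the imaginary part of $z$ rotates phases and the real part scales, so your normalization of $Z(K^x[2])=-1$ should adjust $\Re z$, not $\Im z$; the paper itself just asserts this normalization without comment.) Conversely, you are more careful than the paper in explicitly ruling out the split case $\cO_x\cong N^x\oplus K^x[2]$, which \Cref{thm:gluedgeometricKuz} leaves as an alternative.
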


\begin{proof}
    Let $I = \Im(z)/\pi$ and let $\cA_z$ be the heart underlying $z \cdot \sigma(\alpha,\beta)$, $\cP$ the slicing of $\sigma(\alpha,\beta)$, and $\cP_z$ the slicing of $z\cdot\sigma(\alpha,\beta)$. Since $K^x \in \cA(\alpha,\beta)$ and $K^x[2] \in \cP_z(1) = \cP(1+I)$, we have that $2 < 1+I \leq 3$.
    Therefore, $\cA_z = \cP(I,1+I] \subset \cP(1,3] = \langle \cA(\alpha,\beta)[1],\cA(\alpha,\beta)[2]\rangle_{\rm{ext}}$. To glue $z\cdot \sigma(\alpha,\beta)$ and $\tau$, it suffices to show $\Hom^{\leq 0}(\cA_z,\cB) = 0$ by \Cref{C:gluingconditions}. Since $\cA_z \subseteq \langle \cA(\alpha,\beta)[1],\cA(\alpha,\beta)[2]\rangle_{\rm{ext}}$, 
    it is enough to show that $\Hom^{\leq 0}(\cA(\alpha,\beta)[1],\cB) = 0$. By \Cref{lem:line_bundle}, $\cB = \cA_{\alpha,\beta}\cap\cN_Y$ and since $\Hom^{\le 0}(\cA_{\alpha,\beta}[1],\cA_{\alpha,\beta}) = 0$ the claim follows. So, there exists $\varrho$ as in the statement.

    Next, we will apply \Cref{thm:gluedgeometricKuz} to show that $\varrho$ can be taken to be geometric: we verify the conditions now, taking $F = \cO_x$. Let $x\in Y$ be given and consider the triangle $N^x\to \cO_x\to K^x[2]\to N^x[1]$ coming from $\DCoh(Y) = \langle \Ku(Y),\cN_Y\rangle$. Note that here the norm function on $\cN_Y = \langle\cO_Y,\cO_Y(H)\rangle$ sends $\cO_Y\mapsto 1$ and $\cO_Y(H)\mapsto 0$.
    
    By \Cref{L:spikedexist}, we can choose $\tau$ to be spiked with respect to $\cO_Y(H)$ without affecting the gluing conditions above. By construction, the central charge of $z\cdot \sigma(\alpha,\beta)$ applied to $K^x[2]$ gives $-1$. Next, the triangle $N^x\to \cO_x\to K^x[2]\to N^x[1]$ gives a short exact sequence in the heart $\cU$ underlying $\varrho$ and in particular $N^x$ is a subobject of $\cO_x$. Thus, we have 
    \[
        0\to \Hom_{\cU}(\cO_Y[1],N^x) \to \Hom_{\cU}(\cO_Y[1],\cO_x) = \Hom_{\DCoh(Y)}(\cO_Y[1],\cO_x) = 0,
    \]
    and it follows that $N^x$ is efficient with respect to $\{\cO_Y,\cO_Y(H)\}$. Finally, we can deform $\tau$ in the locus of $\Stab(\cN_Y)$ where the underlying heart is $\mathcal{B}$ until the phases satisfy $\phi(\mathcal{O}_Y(H)) < \phi(N^x) < \phi(\mathcal{O}_Y[1]) < 1$. Applying \Cref{thm:gluedgeometricKuz} then yields the result.
\end{proof}

\begin{rem}\label{rem:glu_cubic3}
Let us observe that the gluing in \Cref{thm_glued_geom_stab_cubic3} still holds if $\tau\in \Stab(\cN_Y)$ is chosen with heart $\cB=\langle\ko_Y(H)[-n],\ko_Y[-m]\rangle_{\mathrm{ext}}$ with $m\geq n\geq0$.
\end{rem}

\subsection{Geometric stability conditions on cubic fourfolds}\label{section_stab_cuibc4}
In this section we construct geometric stability conditions on cubic fourfolds which does not contain a plane.
We start by studying the projection of a skyscraper sheaf of a point in the Kuznetsov component. 
We continue by recalling briefly stability conditions on the Kuznetsov component of a cubic fourfold and finally, by gluing, we obtain  some geometric stability conditions, see \Cref{thm_geomstab_cubic4_noplane}.
\begin{notn}\label{notation:cohomology}
    Let $\mathcal{A}$ be the heart of a bounded t-structure on $\mathcal{D}$. For $a,b\in \bf{Z}\cup \{\pm \infty\}$ with $a < b$, we define a full subcategory $\mathcal{A}[a, b]$ of $\mathcal{D}$ by 
    \[
        \mathcal{A}{[a,b]} := \langle \mathcal{A}[n] :n\in \bf{Z} \text{ and }  a \leq n \leq b \rangle_{\rm{ext}}.
    \]
When $\mathcal{A}$ is the category of coherent sheaves, we simply write $[a, b]$ for $\mathcal{A}{[a,b]}$.

Specifically, for an object $E \in \mathcal{D}(X)$, the condition $E \in [a, b]$ is equivalent to $\mathcal{H}^i(E) = 0$ for $i < -b$ and $i > -a$.
\end{notn}
\subsubsection*{Projection of skyscraper sheaves}
Let $X\subset \bf{P}^5$ be a cubic fourfold.
We denote by $H$ both the hyperplane class in $\bf{P}^5$ and its pullback to $X$. The line bundles $(\cO_X,\cO_X(H),\cO_X(2H))$ define an exceptional triple; we put $\cN_X := \langle \cO_X,\cO_X(H),\cO_X(2H) \rangle$ and $\Ku(X) = \cN_X^\perp$. Thus, we have $\DCoh(X) = \langle \Ku(X),\cN_X\rangle$ and for each $x\in X$ an exact triangle 
\[
    N^x \rightarrow \cO_x \rightarrow K^x[3] \rightarrow N^x[1]
\]
where $N^x\in \cN_X$ and $K^x \in \mathrm{Ku}(X)$. 
We will compute $N^x$ and $K^x$ explicitly.
We note that by definition $K^x = \bf{L}_{\cO_X}\bf{L}_{\cO_X(H)}\bf{L}_{\cO_X(2H)}(\cO_x)[-3]$.

\begin{lem}\label{lem:surjh^0}
    Let $X$ be a cubic fourfold and $x\in X$. The evaluation map 
    \[
        \ev^0_X : \H^0(\cO_X(pH)) \otimes \H^0(I_{x,X}(qH)) \rightarrow \H^0(I_{x,X}((p+q)H)) 
    \]
    is surjective for all $p,q\in \bf{N}$.
\end{lem}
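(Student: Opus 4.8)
The plan is to reduce the statement to the analogous multiplication map on $\bf{P}^5$, where it becomes elementary, and then transfer the factorization back to $X$.

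First I would record two short exact sequences on $\bf{P}^5$. Since $x\in X$, the ideal sheaf $I_{X/\bf{P}^5}=\cO_{\bf{P}^5}(-3H)$ of the cubic $X$ is contained in $I_{x,\bf{P}^5}$, so one has
\[
0\to \cO_{\bf{P}^5}(-3H)\to I_{x,\bf{P}^5}\to I_{x,X}\to 0,\qquad 0\to \cO_{\bf{P}^5}(-3H)\to \cO_{\bf{P}^5}\to \cO_X\to 0 .
\]
Twisting by $mH$ and using $\H^1(\bf{P}^5,\cO_{\bf{P}^5}(kH))=0$ for every $k\in\bf{Z}$, the restriction maps $\H^0(\bf{P}^5,\cO_{\bf{P}^5}(mH))\to \H^0(X,\cO_X(mH))$ and $\H^0(\bf{P}^5,I_{x,\bf{P}^5}(mH))\to \H^0(X,I_{x,X}(mH))$ are surjective for all $m$, and are manifestly compatible with multiplication of sections (a section of $I_{x,\bf{P}^5}(qH)$ restricts to a section of $\cO_X(qH)$ vanishing at $x$, hence to a section of $I_{x,X}(qH)$). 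Consequently, given $s\in \H^0(X,I_{x,X}((p+q)H))$, I would lift it to $\tilde s\in \H^0(\bf{P}^5,I_{x,\bf{P}^5}((p+q)H))$; any factorization $\tilde s=\sum_i f_i g_i$ with $f_i\in \H^0(\bf{P}^5,\cO_{\bf{P}^5}(pH))$ and $g_i\in \H^0(\bf{P}^5,I_{x,\bf{P}^5}(qH))$ then restricts to $s=\sum_i (f_i|_X)(g_i|_X)$, which is the desired decomposition. So it remains to prove surjectivity of
\[
\H^0(\bf{P}^5,\cO_{\bf{P}^5}(pH))\otimes \H^0(\bf{P}^5,I_{x,\bf{P}^5}(qH))\to \H^0(\bf{P}^5,I_{x,\bf{P}^5}((p+q)H)).
\]

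For this, choose homogeneous coordinates $y_0,\dots,y_5$ with $x=[1:0:\cdots:0]$, so that the saturated ideal of $x$ is $(y_1,\dots,y_5)$ and, for $d\ge 1$, $\H^0(\bf{P}^5,I_{x,\bf{P}^5}(dH))$ is the span of the degree-$d$ monomials divisible by some $y_i$ with $i\ge 1$. Any such monomial $M$ of degree $p+q$ is divisible by some $y_i$ ($i\ge 1$), and since $q\ge1$ we may write $M=fg$ with $f$ a degree-$p$ monomial and $g$ a degree-$q$ monomial still divisible by $y_i$; this gives the claim. (Equivalently: $I_{x,\bf{P}^5}$ is $1$-regular, so $\bigoplus_{d\ge1}\H^0(\bf{P}^5,I_{x,\bf{P}^5}(dH))$ is generated in degree $1$ over the homogeneous coordinate ring, which together with surjectivity of $\H^0(\cO(aH))\otimes\H^0(\cO(bH))\to\H^0(\cO((a+b)H))$ on $\bf{P}^5$ yields the statement; the case $p=0$ is trivial and one reads the claim with $q\ge1$.)

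I do not expect a serious obstacle here: the only points that require care are the identification of the kernel of $I_{x,\bf{P}^5}\twoheadrightarrow I_{x,X}$ with $\cO_{\bf{P}^5}(-3H)$ and the compatibility of the two restriction maps with the multiplication pairing, which together guarantee that a factorization on $\bf{P}^5$ descends to $X$; everything else is the standard cohomology of line bundles on projective space together with the monomial bookkeeping above.
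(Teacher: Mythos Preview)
Your proposal is correct and follows essentially the same approach as the paper: both reduce to the analogous multiplication map on $\bf{P}^5$ via the surjectivity of the restriction maps (obtained from $\H^1(\bf{P}^5,\cO_{\bf{P}^5}(k))=0$ and the short exact sequences you wrote down), and then verify the $\bf{P}^5$ case by an explicit monomial argument. The only differences are cosmetic (order of presentation, choice of coordinates for $x$), and your aside about Castelnuovo--Mumford regularity is a nice alternative packaging of the same elementary fact.
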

\begin{proof}
    First, we show the surjectivity of $\ev_{\bf{P}^5}^0$, which is defined analogously. Up to changing coordinates, we can assume $x\in \bf{P}^5$ is $x  = [0:\dots :1]$. The vector space $\H^0(\cO_{\bf{P}^5}(pH))$ is generated by homogeneous polynomials of degree $p$, and $\H^0(I_{x,\bf{P}^5}(qH)) \subseteq \H^0(\cO_{\bf{P}^5}(qH))$ consists of the degree $q$ polynomials vanishing at $x$. Thus, the monomials of degree $p+q$ in $x_0,\ldots, x_5$ excluding $x_5^{p+q}$ form a basis for $\H^0(I_{x,\bf{P}^5}((p+q)H))$. Since $\ev_{\bf{P}^5}^0$ is given by $v\otimes w  \mapsto vw$, it follows that $\ev_{\bf{P}^5}^0$ is surjective. Next, we show the desired statement.
    For any $k \in \bf{Z}$, there is the  following commutative diagram:
    \[
        \begin{tikzcd}
	& \cO_{\bf{P}^5}((k-3)H) & \cO_{\bf{P}^5}((k-3)H) \\
	0 & I_{x,\bf{P}^5}(kH) & \cO_{\bf{P}^5}(kH) & \cO_x & 0 \\
	0 & I_{x,X}(kH) & \cO_X(kH) & \cO_x & 0
	\arrow[from=1-2, to=1-3]
	\arrow[from=1-2, to=2-2,"\times f", hook]
	\arrow[from=1-3, to=2-3,"\times f", hook]
	\arrow[from=2-1, to=2-2]
	\arrow[from=2-2, to=2-3]
	\arrow[from=2-2, to=3-2, two heads]
	\arrow[from=2-3, to=2-4]
	\arrow[from=2-3, to=3-3, two heads]
	\arrow[from=2-4, to=2-5]
	\arrow[equal, from=2-4, to=3-4]
	\arrow[from=3-1, to=3-2]
	\arrow[from=3-2, to=3-3]
	\arrow[from=3-3, to=3-4]
	\arrow[from=3-4, to=3-5]
        \end{tikzcd}
    \]
    with exact columns: here $f \in \H^0(\cO_{\bf{P}^5}(3H))$. Since $\H^1(\cO_{\bf{P}^5}((k-3)H)) = 0$ for any $k$, there are surjections $\H^0(I_{x,\bf{P}^5}(kH))\twoheadrightarrow \H^0(I_{x,X}(kH))$ and $\H^0(\cO_{\bf{P}^5}(kH))\twoheadrightarrow \H^0(\cO_X(kH))$.
    Thus, the com\-mutative diagram:
    \[
    \begin{tikzcd}
	\H^0(\cO_{\bf{P}^5}(pH)) \otimes \H^0(I_{x,\bf{P}^5}(qH)) & \H^0(I_{x,\bf{P}^5}((p+q)H))  \\
	\H^0(\cO_X(pH)) \otimes \H^0(I_{x,X}(qH)) & \H^0(I_{x,X}((p+q)H)).
	\arrow["\ev_{\bf{P}^5}^0",two heads, from=1-1, to=1-2]
	\arrow[two heads, from=1-1, to=2-1]
	\arrow[two heads, from=1-2, to=2-2]
	\arrow["\ev_X^0",from=2-1, to=2-2]
\end{tikzcd} 
    \]  
    implies that $\ev_X^0$ is surjective.
\end{proof}

\begin{prop}\label{prop:exactseq}
    There exist exact triangles:
    \begin{align*}
        I_x(2H) \rightarrow \cO_X(2H) \xrightarrow{\ev_1} \cO_x \to I_x(2H)[1] 
        \\
        F_x(H) \rightarrow \cO_X(H)^{\oplus 5} \xrightarrow{\ev_2} I_x(2H)  \to F_x(H)[1] 
        \\
        K^x\rightarrow \cO_X ^{\oplus 10} \xrightarrow{\ev_3} F_x(H) \to K^x[1] 
    \end{align*}
    where $\ev_2$ is obtained by applying $-\otimes \cO_X(H)$ to the evaluation map $\H^0(I_x(H))\otimes \cO_X \rightarrow I_x(H)$, and $\Cone(\ev_2) =: F_x(H)[1]$. The map $\ev_3$ is similarly obtained from $\H^0(F_x(H)) \otimes \cO_X \rightarrow F_x(H)$. As a consequence of these exact triangles, we obtain
    \[
        \bf{L}_{\cO_X(2H)}(\cO_x) = I_x(2H)[1], \;\; \bf{L}_{\cO_X(H)}(I_x(2H)) = F_x(H)[1], \;\;\bf{L}_{\cO_X(H)}(F_x(H)) = K^x[1].
    \]
\end{prop}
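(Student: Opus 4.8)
The plan is to obtain the three triangles by computing the three successive left mutations $\mathbf{L}_{\cO_X(2H)}$, $\mathbf{L}_{\cO_X(H)}$, $\mathbf{L}_{\cO_X}$ applied to $\cO_x$, using the defining formula $\mathbf{L}_E(G) = \Cone(\RHom(E,G)\otimes E \xrightarrow{\mathrm{ev}} G)$. The point is that each relevant $\RHom$-complex turns out to be concentrated in degree $0$, so that the abstract evaluation map $\mathrm{ev}$ is literally the evaluation-on-global-sections map, and the cones can be read off from honest short exact sequences of sheaves.

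First I would handle the first triangle: since $\cO_x$ is a skyscraper, $\RHom(\cO_X(2H),\cO_x)=\bf{C}$ in degree $0$ and the evaluation map $\cO_X(2H)\to\cO_x$ is the canonical surjection with kernel $I_x(2H)$, so the standard sequence $0\to I_x(2H)\to\cO_X(2H)\xrightarrow{\ev_1}\cO_x\to 0$ rotated is the first triangle and $\mathbf{L}_{\cO_X(2H)}(\cO_x)=I_x(2H)[1]$. For the second triangle, identifying (by twisting) $\RHom(\cO_X(H),I_x(2H))=\RHom(\cO_X,I_{x,X}(H))$, I would compute $\H^\bullet(\cO_X(H))=\bf{C}^6$ in degree $0$ from $0\to\cO_{\bf{P}^5}(-2H)\to\cO_{\bf{P}^5}(H)\to\cO_X(H)\to 0$ together with Bott vanishing $\H^\bullet(\cO_{\bf{P}^5}(-2H))=0$, and then from $0\to I_{x,X}(H)\to\cO_X(H)\to\cO_x\to0$ deduce $\H^0(I_{x,X}(H))=\bf{C}^5$ and $\H^{\geq1}(I_{x,X}(H))=0$. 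Hence $\mathbf{L}_{\cO_X(H)}(I_x(2H))=\Cone\big(\cO_X(H)^{\oplus5}\xrightarrow{\ev_2}I_x(2H)\big)$ with $\ev_2$ the evaluation map, which is exactly the $\cO_X(H)$-twist of $\H^0(I_x(H))\otimes\cO_X\to I_x(H)$. Because $I_{x,X}$ is globally generated by the restrictions to $X$ of the five linear forms through $x$ (as $I_{x,X}=I_{x,\bf{P}^5}\cdot\cO_X$), the map $\ev_2$ is surjective, so setting $F_x(H):=\ker(\ev_2)$ gives $\Cone(\ev_2)=F_x(H)[1]$, i.e. the second triangle and $\mathbf{L}_{\cO_X(H)}(I_x(2H))=F_x(H)[1]$.

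For the third triangle I would compute $\RHom(\cO_X,F_x(H))$ from the exact sequence $0\to F_x(H)\to\cO_X(H)^{\oplus5}\xrightarrow{\ev_2}I_x(2H)\to0$. Using $\H^\bullet(\cO_X(2H))=\bf{C}^{21}$ in degree $0$ (via $0\to\cO_{\bf{P}^5}(-H)\to\cO_{\bf{P}^5}(2H)\to\cO_X(2H)\to0$) one gets $\H^0(I_{x,X}(2H))=\bf{C}^{20}$, $\H^{\geq1}(I_{x,X}(2H))=0$; then the long exact sequence, combined with surjectivity of $\H^0(\ev_2)\colon\H^0(\cO_X(H))^{\oplus5}\to\H^0(I_{x,X}(2H))$ — which is precisely the multiplication map $\H^0(\cO_X(H))\otimes\H^0(I_{x,X}(H))\to\H^0(I_{x,X}(2H))$, surjective by \Cref{lem:surjh^0} — yields $\H^0(F_x(H))=\bf{C}^{10}$ and $\H^{\geq1}(F_x(H))=0$. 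Thus $\mathbf{L}_{\cO_X}(F_x(H))=\Cone\big(\cO_X^{\oplus10}\xrightarrow{\ev_3}F_x(H)\big)$ with $\ev_3$ the evaluation map, which is the third triangle. Finally, tracking shifts through the iterated mutation, $\mathbf{L}_{\cO_X}\mathbf{L}_{\cO_X(H)}\mathbf{L}_{\cO_X(2H)}(\cO_x)=\mathbf{L}_{\cO_X}\mathbf{L}_{\cO_X(H)}(I_x(2H)[1])=\mathbf{L}_{\cO_X}(F_x(H)[2])=\Cone(\ev_3)[2]$, which by definition of $K^x$ equals $K^x[3]$; hence $\Cone(\ev_3)=K^x[1]$, i.e. $\mathbf{L}_{\cO_X}(F_x(H))=K^x[1]$, which is the last displayed identity (together with the other two already obtained).

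The only genuine content beyond bookkeeping is the collection of cohomology vanishing statements — needed so that every $\RHom$ in the mutation formula sits in degree $0$ and so that $\ev_2$ and $\H^0(\ev_2)$ are surjective — and the surjectivity of $\H^0(\ev_2)$ is exactly \Cref{lem:surjh^0}, already proven. So the main (and essentially only) obstacle is organizing these standard computations on $X\subset\bf{P}^5$ correctly and carefully tracking the three successive mutations back to the defining shift $[-3]$ of $K^x$; there is no deeper difficulty.
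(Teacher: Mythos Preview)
Your proposal is correct and follows essentially the same route as the paper: compute the three successive left mutations by verifying that each $\RHom$ is concentrated in degree $0$, identify the evaluation maps, and track the shifts back to the defining $[-3]$ of $K^x$. The only difference is that for the second triangle the paper cites \cite{ouchi2017lagrangian}*{Lemmas 4.2 \& 4.4} for the existence of the sequence $F_x\to\H^0(I_x(H))\otimes\cO_X\to I_x(H)$ with $F_x\in\langle\cO_X\rangle^\perp$, whereas you give the direct argument (computing $\H^{\bullet}(I_{x,X}(H))$ and observing that $I_{x,X}(H)$ is globally generated); your version is slightly more self-contained but otherwise identical in substance.
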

\begin{proof}
    The first triangle is obtained by tensoring the ideal sheaf exact sequence $0\to I_x \to \cO_X \to \cO_x\to 0$ by $\cO_X(2H)$. Since $\RHom(\cO_X(2H),\cO_x) = \bf{C}[0]$, it follows that $\bf{L}_{\cO_X(2H)}(\cO_x) = I_x(2H)[1]$. Next, by \cite{ouchi2017lagrangian}*{Lemmas 4.2 \& 4.4} there is an exact triangle 
    \[
        F_x \rightarrow \H^0(X,I_x(H))\otimes\cO_X \xrightarrow{\ev_2} I_x(H)  \to F_x[1]
    \] 
    with $F_x \in \langle \cO_X\rangle^{\perp}$. Tensoring by $\cO_X(H)$ gives the second exact triangle. Next, we compute $\RHom(\cO_X,F_x(H))$. Applying $\RHom(\cO_X,-)$ to second exact triangle yields an exact sequence:
    \[
        0 \rightarrow \Hom^0(\cO_X,F_x(H)) \rightarrow  \Hom^0(\cO_X,\cO_X(H))\otimes \H^0(X,I_x(H)) \xrightarrow{\ev_2^0}  \Hom^0(\cO_X,I_x(2H)).
    \]
    One can compute that $\H^0(X,\cO_X(H)) = \bf{C}^6$ and $\H^0(X,I_{x}(2H)) = \bf{C}^{20}$.
    By \Cref{lem:surjh^0} the morphism $\ev_2^0$ is surjective. 
    Moreover $\H^0(I_x(H)) = \bf{C}^5$, thus, by the exact sequence, we get \[
    \dim \H^0(F_x(H)) = 5 \dim \H^0(\cO_X(H))- \dim\H^0(I_{x}(2H)) = 10.
    \]
    By the definition of left mutation functors, we have $\bf{L}_{\cO_X(H)}(I_x(2H))  = F_x(H)[1].$
    Thus, by definition, $K^x = \bf{L}_{\cO_X(H)}(F_x(H))[-1]$, and we obtain the third exact triangle.

\end{proof}

\begin{cor}\label{cor_Nx}
    There exists an object $M$ of $\DCoh(X)$ and distinguished triangles:
    \[
        \begin{tikzcd}
            \cO_X(2H) \arrow[r]& M\arrow[d] \arrow[r]& N^x\arrow[d]\\
            &\cO_X(H)^{\oplus 5}[1]\arrow[ul,dashed,"+1"]&\cO_X^{\oplus 10}[2].\arrow[ul,dashed,"+1"]
        \end{tikzcd}
    \]
\end{cor}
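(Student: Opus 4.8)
The plan is to exhibit the defining triangle $N^x \to \cO_x \to K^x[3] \to N^x[1]$ as an iterated cone of the three triangles produced in \Cref{prop:exactseq}, and then to read off the octahedron by applying the octahedral axiom twice. The starting observation is that, since $\DCoh(X) = \langle \Ku(X),\cO_X,\cO_X(H),\cO_X(2H)\rangle$, the projection onto $\Ku(X)$ is computed by the iterated left mutation $\bf{L}_{\cO_X}\bf{L}_{\cO_X(H)}\bf{L}_{\cO_X(2H)}$, so the structure morphism $\cO_x \to K^x[3]$ factors canonically as
\[
    \cO_x \xrightarrow{u_1} I_x(2H)[1] \xrightarrow{u_2} F_x(H)[2] \xrightarrow{u_3} K^x[3],
\]
where each $u_i$ is the canonical map of $\cO_x$ (resp. $I_x(2H)[1]$, resp. $F_x(H)[2]$) into its successive left mutation. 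By \Cref{prop:exactseq}, $u_1$, $u_2$, $u_3$ are, up to the shifts $[1]$ and $[2]$, the connecting maps of the three triangles listed there, and rotating those triangles identifies the fibers $\fib(u_1) = \cO_X(2H)$, $\fib(u_2) = \cO_X(H)^{\oplus 5}[1]$, and $\fib(u_3) = \cO_X^{\oplus 10}[2]$.

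With this in hand, set $M := \fib(u_2 \circ u_1)$. Applying the octahedral axiom to the composable pair $\cO_x \xrightarrow{u_1} I_x(2H)[1] \xrightarrow{u_2} F_x(H)[2]$ produces a triangle $\fib(u_1) \to M \to \fib(u_2) \to \fib(u_1)[1]$, i.e.
\[
    \cO_X(2H) \to M \to \cO_X(H)^{\oplus 5}[1] \to \cO_X(2H)[1],
\]
which is the upper triangle of the asserted diagram. Next, I would apply the octahedral axiom to $\cO_x \xrightarrow{u_2 \circ u_1} F_x(H)[2] \xrightarrow{u_3} K^x[3]$ and use that $\fib(u_3 \circ u_2 \circ u_1) = N^x$; this yields a triangle $M \to N^x \to \fib(u_3) \to M[1]$, i.e.
\[
    M \to N^x \to \cO_X^{\oplus 10}[2] \to M[1],
\]
the lower triangle of the diagram. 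Assembling the two octahedra gives the claimed picture.

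The argument is formal once the factorization in the first paragraph is justified; that justification — namely that $u_3 \circ u_2 \circ u_1$ is genuinely the projection morphism $\cO_x \to K^x[3]$ attached to the semiorthogonal decomposition $\langle \Ku(X),\cN_X\rangle$, and not just some map with the correct source and target — is the only step I would treat with care. It holds because the canonical map into a left mutation $\bf{L}_E$ is the unit of the associated localization and such units compose to the unit of the iterated mutation; concretely, the morphisms $\ev_1$, $\ev_2$, $\ev_3$ of \Cref{prop:exactseq} were set up precisely as the evaluation maps computing $\bf{L}_{\cO_X(2H)}(\cO_x)$, $\bf{L}_{\cO_X(H)}(I_x(2H))$, and $\bf{L}_{\cO_X}(F_x(H))$, so their composite is the canonical map. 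Everything else is bookkeeping of shifts.
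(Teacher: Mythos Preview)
Your proof is correct and is in fact more streamlined than the paper's. Both arguments ultimately hinge on the octahedral axiom applied to the iterated left mutations of $\cO_x$, but the paper takes a detour: it first introduces an auxiliary object $M' := \fib(N^x \to I_x(2H)[1])$, produces a $3 \times 3$ diagram from the octahedron on $N^x \to \cO_x \to I_x(2H)[1]$, and then invokes the semiorthogonality $\Hom(\cO_X(2H),K^x[3]) = 0$ to split $M' \cong K^x[2] \oplus \cO_X(2H)$; only after this does it build the second $3 \times 3$ diagram (using a further factoring of $N^x \to F_x(H)[2]$ through $\cO_X^{\oplus 10}[2]$, again justified by semiorthogonality) and read off $M$ and the two triangles. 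Your route bypasses $M'$ and both semiorthogonality arguments entirely: since you identify $\fib(u_1)$, $\fib(u_2)$, $\fib(u_3)$ directly from \Cref{prop:exactseq}, two straight applications of the octahedral axiom to the chain $\cO_x \to I_x(2H)[1] \to F_x(H)[2] \to K^x[3]$ suffice. The one point you flag---that the composite $u_3 u_2 u_1$ really is the canonical projection map $\cO_x \to K^x[3]$ attached to $\langle \Ku(X),\cN_X\rangle$---is handled implicitly in the paper by building the diagrams starting from the defining triangle of $N^x$; your justification via composition of units of the successive localizations is equally valid and standard.
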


\begin{proof}
Let $M'[1] = \Cone(N^x \to I_x(2H)[1])$, where $N^x\to I_x(2H)[1]$ is the composite $N^x\to \cO_x\to I_x(2H)[1]$. Next, we observe that there is a commutative diagram with exact rows and columns:
\[
\begin{tikzcd}
	   K^x[2] & K^x[2] \\
	   M' & N^x & I_x(2H)[1]  \\
	   \cO_X(2H) & \cO_x & I_x(2H)[1] 
	\arrow[equal,from=1-1, to=1-2]
	\arrow[from=1-1, to=2-1,dashed,"a",swap]
	\arrow[from=1-2, to=2-2]
	\arrow[from=2-1, to=2-2]
	\arrow[from=2-1, to=3-1,dashed,"c",swap]
	\arrow[from=2-2, to=2-3]
	\arrow[from=2-2, to=3-2]
	\arrow[equal, from=2-3, to=3-3]
	\arrow[from=3-1, to=3-2]
	\arrow[from=3-2, to=3-3]
\end{tikzcd}
\]
The bottom horizontal triangle comes from \Cref{prop:exactseq}, while the middle horizontal triangle is the definition of $M'$. The middle column is the definition of $K^x$ and $N^x$. The bottom right square commutes by definition of $N^x\to I_x(2H)[1]$ and this gives the map $c$ and the bottom left commutative square. Finally, the octahedral axiom applied to the morphisms $N^x\to \cO_x$, $\cO_x\to I_x(2H)[1]$ and their composite $N^x\to I_x(2H)[1]$ gives the left column.

By semiorthogonality, $\Hom(\cO_X(2H),K^x[3]) = 0$ so $M' = K^x[2] \oplus \cO_X(2H)$.
We define $f: N^x \to F_x(H)[2]$ as the composite $N^x \to I_x(2H)[1] \to F_x(H)[2].$ Since $\Hom(N^x,K^x[3]) = 0$, $f$ factors through $\mathcal{O}_X^{\oplus 10}[2]$. Let $M[1] := \Cone(N^x \to \cO_X^{\oplus 10}[2])$.
Then, \Cref{prop:exactseq} gives the following commutative diagram with exact rows and columns:
\[
\begin{tikzcd}
	\cO_X(2H) & M' & K^x[2]  \\
	   M & N^x & \cO_X^{10}[2] \\
	\cO_X^{\oplus 5}(H)[1] & I_x(2H)[1] & F_x(H)[2].
	\arrow[from=1-1, to=1-2]
	\arrow[from=1-1, to=2-1]
	\arrow[from=1-2, to=1-3]
	\arrow[from=1-2, to=2-2]
	\arrow[from=1-3, to=2-3]
	\arrow[from=2-1, to=2-2]
	\arrow[from=2-1, to=3-1]
	\arrow[from=2-2, to=2-3]
	\arrow[from=2-2, to=3-2]
	\arrow[from=2-3, to=3-3]
	\arrow[from=3-1, to=3-2]
	\arrow[from=3-2, to=3-3]
    \arrow[from=2-2, to=3-3,"f"]
\end{tikzcd}
\]
The triangles $M\to N^x\to \cO_X^{\oplus 10}[2]$ and $\cO_X(2H)\to M \to \cO_X(H)^{\oplus 5}[1]$ imply the result.
\end{proof}

\subsubsection*{Review: stability conditions on the Kuznetsov components of cubic fourfolds} 
We define $\H_{\rm{alg}}(\Ku(X),\bf{Z})$ to be the image of the natural map $v:\rm{K}_0(\Ku(X))\rightarrow \rm{K}^{\rm{top}}_0(X)$ -- see \cite{Bayer2017StabilityCO}*{Prop/Defn. 9.5}. 
We recall the construction of a heart of a bounded t-structure on $\Ku(X)$. Fix a line ${\ell} \subset X$ and let $\sigma:\widetilde{X}\rightarrow X$ be the blow up of $X$ along ${\ell}$ with exceptional divisor $D$. We let $\widetilde{\bf{P}}^5 = \Bl_\ell(\bf{P}^5)$.

\[
\begin{tikzcd}
	D & {\widetilde{X}} & {\widetilde{\bf{P}}^5} \\
	{\ell} & X && {\bf{P}^3}
	\arrow[from=1-1, to=1-2]
	\arrow[from=1-1, to=2-1]
	\arrow[from=1-2, to=1-3,"\alpha"]
	\arrow[from=1-2, to=2-2,"\sigma"]
	\arrow[from=1-3, to=2-4,"q"]
        \arrow[from=1-2, to= 2-4,"\pi",swap]
	\arrow[from=2-1, to=2-2]
\end{tikzcd}
\]
We denote by $H$ (resp. $h$) the class of the hyperplane in $\bf{P}^5$ (resp. $\bf{P}^3$).
The variety $\widetilde{\bf{P}}^5$ is isomorphic to the rank 2 projective bundle $\bf{P}(\cO_{\bf{P}^3}^2 \oplus \cO_{\bf{P}^3}(-h))\to \bf{P}^3$. Consider the even part $\cB_0$ and the odd part $\cB_1$ of the Clifford algebra of $\pi$.

We define $\cB_0$-bimodules:
\[
\cB_{2j} := \cB_0\otimes \cO(jh), \ \cB_{2j+1} := \cB_1\otimes \cO(jh)
\]
for $j \in \bf{Z}$. We consider functors $\Phi : \cD^b(\bf{P}^3,\cB_0) \rightarrow \cD^b(\widetilde{X})$ and $\Psi : \cD^b(\widetilde{X}) \rightarrow  \cD^b(\bf{P}^3,\cB_0)$ defined by
\begin{align*}
    \Phi(-)&=\pi^*(-) \otimes \cE^{\prime} \\
    \Psi(-)&=\pi_*(- \otimes \cO_{\widetilde{X}}(h)\otimes \cE[1]).
\end{align*}
where $\cE'$ and $\cE$ are rank two vector bundle on $\bf{P}^3$ characterized by the following exact triangles:
\begin{align*}
    0\rightarrow q^*\cB_0(-2H) \rightarrow q^*\cB_1(-H) \rightarrow \alpha_*\cE^{\prime} \rightarrow 0\\
     0\rightarrow q^*\cB_{-1}(-2H) \rightarrow q^*\cB_0(-H) \rightarrow \alpha_*\cE \rightarrow 0.  
\end{align*}
By \cites{auel2014fibrations,kuznetsov2008derived}, $\Phi\colon \cD^b(\bf{P}^3,\cB_0) \rightarrow \cD^b(\widetilde{X}) $ is fully faithful  and $\Psi \dashv \Phi$. We define the projection functor $\pr_L:=  \bf{L}_{\cO_X}\bf{L}_{\cO_X(H)}\bf{L}_{\cO_X(2H)} \colon \cD^b(X) \rightarrow \Ku(X)$ and consider the commutative diagram of functors:
\[  
\begin{tikzcd}
	\cD^b(X) & \cD^b(\widetilde{X}) & \cD^b(\bP^3,\cB_0) \\
	\Ku(X)
	\arrow["{\sigma^*}", from=1-1, to=1-2]
	\arrow[""{name=0, anchor=center, inner sep=0}, "{\mathrm{pr}_L}"', shift left, curve={height=12pt}, from=1-1, to=2-1]
        \arrow[""{name=R, anchor=center, inner sep=0}, "{\mathrm{pr}_R}", shift right, curve={height=-12pt}, from=1-1, to=2-1]
	\arrow[""{name=1, anchor=center, inner sep=0}, "\Psi", from=1-2, to=1-3]
        \arrow[""{name=2, anchor=center, inner sep=0}, "\Phi", shift right, curve={height=-10pt}, from=1-3, to=1-2]
	\arrow[""{name=3, anchor=center, inner sep=0},"\iota"', from=2-1, to=1-1]
	\arrow["\Xi"', curve={height=15pt}, from=2-1, to=1-3]
	\arrow["\dashv"{anchor=center}, draw=none, from=0, to=3]
        \arrow["\dashv"{anchor=center}, draw=none, from=3, to=R]
	\arrow["\dashv"{anchor=center, rotate=-90}, draw=none, from=2, to=1]
\end{tikzcd}
\]
where $\iota \dashv \pr_R$. 
\begin{thm}{\cite{Bayer2017StabilityCO}*{Prop. 7.7.}}\label{thm:ffofXi}
The functor $\Xi$ is fully faithful.
Moreover,
\[
\cD^b(\bP^3,\cB_0) = \langle \Xi(\Ku(X))  ,\Psi(\cO_{\tilde{X}}(h-H)),\Psi(\cO_{\tilde{X}}(H)),\Psi(\cO_{\tilde{X}}(2h-H))\rangle
\]
\end{thm}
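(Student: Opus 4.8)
\emph{Proof sketch.} The plan is to play off two semiorthogonal decompositions of $\cD^b(\widetilde{X})$ against each other. Projecting $X$ away from $\ell$ realizes $\pi\colon \widetilde{X}\to \bf{P}^3$ as a conic bundle: the fibre over $y\in \bf{P}^3$ is the conic residual to $\ell$ inside $X\cap\langle \ell,y\rangle\subset \langle \ell,y\rangle\cong \bf{P}^2$. Since $X$ contains no plane this conic bundle has fibres of corank $\le 1$, and Kuznetsov's theorem for quadric fibrations \cite{kuznetsov2008derived} (the case of relative dimension one) gives
\[
  \cD^b(\widetilde{X}) = \big\langle\ \Phi\big(\cD^b(\bf{P}^3,\cB_0)\big),\ \pi^*\cD^b(\bf{P}^3)\otimes \cL\ \big\rangle
\]
for a suitable line bundle twist $\cL$ normalized consistently with $\cE,\cE'$, where $\Phi$ is fully faithful with left adjoint $\Psi$ as already recalled. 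Since $\Psi$ restricts to a quasi-inverse of $\Phi$ on the admissible subcategory $\Phi(\cD^b(\bf{P}^3,\cB_0))$, it suffices to prove the sharper statement that, after a sequence of mutations inside $\cD^b(\widetilde X)$, one has
\[
  \Phi\big(\cD^b(\bf{P}^3,\cB_0)\big) = \big\langle\ \sigma^*\iota(\Ku(X)),\ E_1,\ E_2,\ E_3\ \big\rangle
\]
for three explicit exceptional objects $E_1,E_2,E_3$; applying $\Psi$ then yields at once that $\Xi=\Psi\circ\sigma^*\circ\iota$ is fully faithful (it is the restriction of the equivalence $\Psi|_{\Phi(\cD^b(\bf{P}^3,\cB_0))}$ to $\sigma^*\iota(\Ku(X))$, precomposed with the fully faithful $\sigma^*\circ\iota$) together with the asserted decomposition, with $\Psi(E_i)$ the three listed objects.

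To establish the second displayed identity I would compare the conic-bundle decomposition with the blow-up decomposition. By Orlov's blow-up formula \cite{Orlovmonoidal} for the codimension-$3$ centre $\ell\cong\bf{P}^1$, with $D=\bf{P}(N_{\ell/X})$ a $\bf{P}^2$-bundle $p\colon D\to\ell$,
\[
  \cD^b(\widetilde X) = \big\langle\ \sigma^*\cD^b(X),\ \Upsilon_1\big(\cD^b(\ell)\big),\ \Upsilon_2\big(\cD^b(\ell)\big)\ \big\rangle,\qquad \Upsilon_j(-)=i_{D*}\big(p^*(-)\otimes\cO_D(jD)\big)
\]
(up to the usual sign convention for the powers of $\cO_D(D)$). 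Inserting the Kuznetsov decomposition $\cD^b(X)=\langle \Ku(X),\cO_X,\cO_X(H),\cO_X(2H)\rangle$ and Beilinson's $\cD^b(\ell)=\langle \cO_\ell,\cO_\ell(1)\rangle$ presents $\cD^b(\widetilde X)$ as $\sigma^*\iota(\Ku(X))$ followed by seven explicit exceptional objects; independently, inserting $\cD^b(\bf{P}^3)=\langle \cO,\cO(h),\cO(2h),\cO(3h)\rangle$ into the conic-bundle decomposition presents $\cD^b(\widetilde X)$ as $\Phi(\cD^b(\bf{P}^3,\cB_0))$ followed by four explicit line bundles on $\widetilde X$. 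The heart of the argument is then a bookkeeping computation in the square relating $\sigma$, $\alpha\colon \widetilde X\to\widetilde{\bf{P}}^5$, $q\colon \widetilde{\bf{P}}^5\to\bf{P}^3$ and $\pi=q\circ\alpha$: using the linear equivalences among $H$, $h$ and $D$ on $\widetilde X$, the presentation $\widetilde{\bf{P}}^5=\bf{P}(\cO_{\bf{P}^3}^2\oplus\cO_{\bf{P}^3}(-h))$, and the defining triangles for $\cE$ and $\cE'$, one checks, via explicit $\Hom$-vanishings computed by pushing forward along $\pi$ and $\sigma$ and the projection formula, that the objects $\sigma^*\cO_X(iH)$ and the four $D$-supported objects can be mutated into the four pulled-back line bundles of the conic-bundle side, in such a way that exactly three of them survive next to $\sigma^*\iota(\Ku(X))$ inside $\Phi(\cD^b(\bf{P}^3,\cB_0))$, namely $\cO_{\widetilde X}(h-H)$, $\cO_{\widetilde X}(H)$ and $\cO_{\widetilde X}(2h-H)$ up to order. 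Note that $\cO_{\widetilde X}(H)=\sigma^*\cO_X(H)$ already occurs in the blow-up list, a useful consistency check, and that the rank count $\rk\cD^b(\bf{P}^3,\cB_0)=\rk\Ku(X)+3$ is forced, matching the three extra objects.

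I expect the main obstacle to be precisely this last step: one must produce enough explicit cohomology vanishings on $\widetilde X$ to legitimize every mutation and, more delicately, to certify that the resulting collection is the specific one in the statement rather than merely some mutation of it — which requires keeping the normalizations of $\cL$, $\cE$, $\cE'$, $\Phi$ and $\Psi$ rigidly consistent throughout, since the three exceptional objects are only pinned down modulo these conventions. An alternative, more computational route would bypass the mutation bookkeeping and show $\Xi^{R}\Xi\simeq\id$ directly by Fourier--Mukai kernel manipulations, but identifying the complementary three objects would still require the same geometric input. Granting the second displayed identity, the conclusion is formal: $\sigma^*\circ\iota$ is fully faithful, its image lies in $\Phi(\cD^b(\bf{P}^3,\cB_0))$, and $\Psi$ is an equivalence there, so $\Xi$ is fully faithful; and transporting the decomposition of $\Phi(\cD^b(\bf{P}^3,\cB_0))$ through the equivalence $\Psi$ gives $\cD^b(\bf{P}^3,\cB_0)=\langle \Xi(\Ku(X)),\Psi(\cO_{\widetilde X}(h-H)),\Psi(\cO_{\widetilde X}(H)),\Psi(\cO_{\widetilde X}(2h-H))\rangle$, as required.
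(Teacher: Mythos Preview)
The paper does not supply its own proof of this theorem: it is quoted as \cite{Bayer2017StabilityCO}*{Prop.~7.7} and used as a black box, with no argument given in the text. So there is nothing in the paper to compare your sketch against.

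That said, your sketch is the correct outline and matches the strategy of the cited reference: one juxtaposes Kuznetsov's quadric-fibration decomposition for the conic bundle $\pi\colon\widetilde X\to\bf{P}^3$ with Orlov's blow-up decomposition for $\sigma\colon\widetilde X\to X$, then mutates the exceptional objects on the two sides into alignment. The mutation bookkeeping you flag as the main obstacle is indeed the substantive content, and is carried out in detail in \cite{Bayer2017StabilityCO}*{\S7}; your caveat about keeping the normalizations of $\cE,\cE',\Phi,\Psi$ consistent is well placed, since the precise form of the three exceptional objects depends on those choices.
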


The forgetful functor $\mathrm{Forg}\colon \cD^b(\bP^3,\cB_0) \rightarrow \cD^b(\bP^3)$ forgets the $\cB_0$-module structure; the $\beta$-twisted Chern characters of $E \in \cD^b(\bP^3,\cB_0)$ are defined by:
\[
    \ch_{\cB_0}^\beta(E)  = e^{-\beta H}\cup \ch(\mathrm{Forg}(E)) \cup \left(1 - \tfrac{11}{32}H^2 \right)
\]
where $\beta \in \bf{R}$. 

We write $\Coh^{\beta}(\bP^3,\cB_0)$ for the heart obtained by tilting $\Coh(\bP^3,\cB_0)$ with respect to $\mu_H = \beta$, defined analogously to \eqref{E:muH}. Since $\Forg$ is faithful, an object $E \in \Coh(\bP^3,\cB_0)$  is $\mu$-(semi)stable if $\mathrm{Forg}(E)$ is $\mu$-(semi)stable. The heart  $\Coh^{\beta}(\bP^3,\cB_0)$ underlies a weak stability condition on $\cD^b(\bP^3,\cB_0)$ with central charge 
\[
    Z_{\alpha,\beta}(E) =   \ch^{\beta}_{\cB_0,1}(E)\cdot \mathtt{i} + \frac{1}{2}\alpha^2 \ch^{\beta}_{\cB_0,0}(E) - \ch^{\beta}_{\cB_0,2}(E).
\]
Let 
\[
    \nu_{\alpha,\beta} = \frac{\ch^{\beta}_{\cB_0,2}(E) - \frac{1}{2}\alpha^2 \ch^{\beta}_{\cB_0,0}(E)}{\ch^{\beta}_{\cB_0,1}(E)}.
\]
We fix $0<\alpha < \frac{1}{4}$ and $\beta = -1$ and denote by $\cA_{\alpha,-1}$ the tilting heart for $\nu_{\alpha,\beta} = 0$. 
As discussed in \cite{Bayer2017StabilityCO}*{Thm. 1.2, \S9} and \cite{Li2018TwistedCO}*{Thm. 3.8}, we have a weak stability condition $\sigma_{\alpha,-1}=(-\mathtt{i}\cdot Z_{\alpha,-1},\cA_{\alpha,-1})$ on $\cD^b(\bP^3,\cB_0)$.

Before proceeding we start with some computations that we will need in the next section.
\begin{lem}\label{lem:computemutation}
    Let $X$ be a cubic fourfold. We denote by $v: \bf{P}^5\to \bf{P}^{20}$ the degree 2 Veronese embedding and by $w \colon\bf{P}^5 \rightarrow \bf{P}^{55}$ the degree 3 Veronese embedding. Then:  \vspace{-2mm}
   \begin{enumerate}
       \item $\iota\pr_L (\cO_X(3H)) \in \Coh(X)[2,3]$ and has cohomology objects $\cH^{-3}\iota\pr_L(\cO_X(3H))  \cong \Omega_{\bf{P}^5}^3(3)$ and $\cH^{-2}\iota\pr_L(\cO_X(3H)) \cong \cO_X.$ \vspace{-2mm}
       \item  $\bf{L}_{\cO_X(2H)}(\cO_X(4H)) =(v^*(\Omega^1_{\bf{P}^{20}})\otimes \cO_X(4H)) |_X[1]$.  \vspace{-2mm}
       \item  $\bf{L}_{\cO_X(2H)}(\cO_X(5H)) =( w^*(\Omega^1_{\bf{P}^{55}})\otimes \cO_X(5H)) |_X[1]$.  \vspace{-2mm}
   \end{enumerate} 
\end{lem}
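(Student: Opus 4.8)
The plan is to obtain (2) and (3) directly from the definition of a left mutation, and (1) by composing three left mutations. Throughout I will compute every cohomology group on $\bf{P}^5$ via Bott's formula after tensoring the ideal sheaf sequence $0\to\cO_{\bf{P}^5}(-3H)\to\cO_{\bf{P}^5}\to\cO_X\to 0$ of the cubic fourfold $X$ by the relevant locally free sheaf, and I will use repeatedly the exact sequences $0\to\Omega^p_{\bf{P}^5}(j)\to\cO_{\bf{P}^5}(j-1)^{\oplus\binom{6}{p}}\to\Omega^{p-1}_{\bf{P}^5}(j)\to 0$ extracted from the Euler/Koszul resolution on $\bf{P}^5$.

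For (2): from $\cO_X(4H)=\cO_X(2H)\otimes\cO_X(2H)$ we get $\RHom(\cO_X(2H),\cO_X(4H))=\mathbf{R}\Gamma(X,\cO_X(2H))$, which by the ideal sheaf sequence twisted by $\cO_{\bf{P}^5}(2H)$ together with $\mathbf{R}\Gamma(\bf{P}^5,\cO_{\bf{P}^5}(-H))=0$ equals $\mathbf{R}\Gamma(\bf{P}^5,\cO_{\bf{P}^5}(2H))=\bf{C}^{21}[0]$. Hence $\bf{L}_{\cO_X(2H)}(\cO_X(4H))=\Cone\big(\cO_X(2H)^{\oplus 21}\xrightarrow{\mathrm{ev}}\cO_X(4H)\big)$. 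Since $\cO_X(2H)$ is the restriction of $v^*\cO_{\bf{P}^{20}}(1)$ and $\H^0(X,\cO_X(2H))$ is canonically $\H^0(\bf{P}^{20},\cO_{\bf{P}^{20}}(1))$, pulling the twisted Euler sequence of $\bf{P}^{20}$ back along $v$, restricting to $X$ and twisting by $\cO_X(2H)$ exhibits $\mathrm{ev}$ as a surjection with kernel $(v^*\Omega^1_{\bf{P}^{20}}\otimes\cO_X(4H))|_X$, which is the claim. Part (3) is the same argument with the degree $3$ Veronese and $\RHom(\cO_X(2H),\cO_X(5H))=\mathbf{R}\Gamma(X,\cO_X(3H))$, with one extra point to watch: the restriction $\H^0(\bf{P}^5,\cO_{\bf{P}^5}(3H))\to\H^0(X,\cO_X(3H))$ has one-dimensional kernel spanned by the equation of $X$, so $\H^0(X,\cO_X(3H))$ is $55$-dimensional and the $3$-uple image of $X$ spans $\bf{P}^{54}=\bf{P}\big(\H^0(X,\cO_X(3H))^{\vee}\big)$; feeding the Euler sequence of this $\bf{P}^{54}$ into the argument gives the asserted identity (running it through the Euler sequence of the ambient $\bf{P}^{55}$ instead produces a spurious direct summand $\cO_X(2H)[1]$ accounting for the equation).

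For (1), write $\iota\pr_L(\cO_X(3H))=\bf{L}_{\cO_X}\bf{L}_{\cO_X(H)}\bf{L}_{\cO_X(2H)}(\cO_X(3H))$ and evaluate the three mutations in order. The first is handled as in (2): $\RHom(\cO_X(2H),\cO_X(3H))=\mathbf{R}\Gamma(X,\cO_X(H))=\bf{C}^6[0]$, and the twist by $\cO_{\bf{P}^5}(3H)$ of the Euler sequence, restricted to $X$, gives $\bf{L}_{\cO_X(2H)}(\cO_X(3H))=\Omega^1_{\bf{P}^5}(3H)|_X[1]$. For the second, $\RHom(\cO_X(H),\Omega^1_{\bf{P}^5}(3H)|_X[1])=\mathbf{R}\Gamma(X,\Omega^1_{\bf{P}^5}(2H)|_X)[1]$; tensoring the ideal sheaf sequence by $\Omega^1_{\bf{P}^5}(2H)$ and using $\mathbf{R}\Gamma(\bf{P}^5,\Omega^1_{\bf{P}^5}(-H))=0$ (Bott) reduces this to $\mathbf{R}\Gamma(\bf{P}^5,\Omega^1_{\bf{P}^5}(2H))=\bf{C}^{15}[0]$, and restricting the sequence $0\to\Omega^2_{\bf{P}^5}(3H)\to\cO_{\bf{P}^5}(H)^{\oplus 15}\to\Omega^1_{\bf{P}^5}(3H)\to 0$ to $X$ identifies $\bf{L}_{\cO_X(H)}(\Omega^1_{\bf{P}^5}(3H)|_X[1])=\Omega^2_{\bf{P}^5}(3H)|_X[2]$.

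The third mutation $\bf{L}_{\cO_X}(\Omega^2_{\bf{P}^5}(3H)|_X[2])$ is where the computation departs from the one on $\bf{P}^5$. Tensoring the ideal sheaf sequence by $\Omega^2_{\bf{P}^5}(3H)$ and using Bott on $\bf{P}^5$ gives $\H^0(X,\Omega^2_{\bf{P}^5}(3H)|_X)=\bf{C}^{20}$ but also $\H^1(X,\Omega^2_{\bf{P}^5}(3H)|_X)=\bf{C}$, the latter inherited from $\H^2(\bf{P}^5,\Omega^2_{\bf{P}^5})=\bf{C}$, and all higher cohomology vanishes. Hence $\RHom(\cO_X,\Omega^2_{\bf{P}^5}(3H)|_X[2])\otimes\cO_X\simeq\cO_X^{\oplus 20}[2]\oplus\cO_X[1]$, and I will take the cone of the evaluation map in two octahedral steps: coning off $\cO_X^{\oplus 20}[2]$ gives, via the restriction to $X$ of $0\to\Omega^3_{\bf{P}^5}(3H)\to\cO_{\bf{P}^5}^{\oplus 20}\to\Omega^2_{\bf{P}^5}(3H)\to 0$, the object $\Omega^3_{\bf{P}^5}(3H)|_X[3]$; coning off the residual map $\cO_X[1]\to\Omega^3_{\bf{P}^5}(3H)|_X[3]$ then yields, by the long exact cohomology sequence of a cone and regardless of whether that map vanishes, an object whose only nonzero cohomology sheaves are $\cH^{-3}=\Omega^3_{\bf{P}^5}(3H)|_X$ and $\cH^{-2}=\cO_X$. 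This is exactly the stated description ($\Omega^3_{\bf{P}^5}(3H)|_X$ being the lemma's $\Omega^3_{\bf{P}^5}(3)$), and in particular $\iota\pr_L(\cO_X(3H))\in\Coh(X)[2,3]$. The remaining work is essentially bookkeeping --- tracking shifts through the iterated mutations and pinning down, via the ideal sheaf sequence and Bott's formula, precisely when cohomology on $X$ agrees with that on $\bf{P}^5$ and when (only at the last step) it acquires the extra $\H^1$-class responsible for the $\cO_X$ summand; the genuinely delicate points are the dimension count in (3) and producing this extra summand correctly in (1), not any single computation.
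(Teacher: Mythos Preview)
Your approach is essentially the same as the paper's: for (1) you compute the three left mutations in order using the restricted Euler/Koszul sequences on $\bf{P}^5$ and the ideal sheaf sequence of $X$, arriving at $\Omega^2_{\bf{P}^5}(3H)|_X[2]$ after two steps and then handling the extra $\H^1$-class at the third step by an octahedral argument; for (2) you pull back the Euler sequence of $\bf{P}^{20}$ via the quadratic Veronese. The paper does exactly this (its diagram with exact rows and columns is your octahedral step), so there is nothing to add on (1) and (2).

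Your remark on (3) is sharper than the paper's treatment and worth flagging. The paper simply writes ``Part (3) follows from a similar argument,'' but as you observe, $\H^0(X,\cO_X(3H))$ has dimension $55$, not $56$, because the cubic equation of $X$ lies in the kernel of the restriction map. Hence the evaluation map defining $\bf{L}_{\cO_X(2H)}(\cO_X(5H))$ has source $\cO_X(2H)^{\oplus 55}$, whereas restricting the pulled-back Euler sequence of $\bf{P}^{55}$ gives a surjection from $\cO_X(2H)^{\oplus 56}$. The two kernels sit in a short exact sequence $0\to\cO_X(2H)\to (w^*\Omega^1_{\bf{P}^{55}}\otimes\cO(5H))|_X\to\ker(\mathrm{ev})\to 0$, so the object in the lemma differs from the mutation by this extra copy of $\cO_X(2H)$. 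Your proposed fix (work with the Euler sequence of the hyperplane $\bf{P}^{54}\subset\bf{P}^{55}$ spanned by the image of $X$) is the correct way to produce the mutation on the nose. One small slip: you write that the $\bf{P}^{54}$ argument ``gives the asserted identity,'' but the asserted identity is literally stated with $\bf{P}^{55}$; what you have shown is that the statement as written is off by this $\cO_X(2H)$ and that the $\bf{P}^{54}$ version is the true mutation. For the downstream applications in the paper (which only use that the mutation is a sheaf in a single degree and certain cohomology computations) this discrepancy is harmless, but you are right to point it out.
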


\begin{proof}
Using the Euler exact sequence, one computes that $\bf{L}_{\cO_X(2H)}(\cO_X(3H)) = \Omega_{\bP^5}(3H)|_X[1]$. The vanishing $\H^{\bullet}(\bP^5,\Omega_{\bP^5}^{1}(-H)) = 0$ implies that $\H^{\bullet}(X,\Omega_{\bP^5}^{1}(2H)|_X)\cong \H^{\bullet}(\bP^5,\Omega_{\bP^5}^{1}(2H)) \cong \bC^{15}[0].$
Taking the $p^{\rm{th}}$ exterior power of the Euler sequence on $\bf{P}^5$, with $1\le p \le 6$, yields
\begin{align}\label{exact:pthEuler}
   0 \rightarrow \Omega_{\bP^5}^p(pH) \rightarrow \cO_{\bP^5}^{\binom{6}{p}} \rightarrow \Omega_{\bP^5}^{p-1}(pH) \rightarrow 0. 
\end{align}
Taking $p = 2$ and applying $-\otimes \cO_X(H)$ yields
\[
    \cO^{15}_X(H) \rightarrow  \Omega_{\bP^5}(3H)|_X \rightarrow \Omega_{\bP^5}^2(3H)|_X[1].
\]
Thus, $\bf{L}_{\cO_X(H)}(\bf{L}_{\cO_X(2H)}(\cO_X(3H))) = \Omega_{\bP^5}^2(3H)|_X[2]$. 
Tensoring the exact sequence $0 \to \mathcal{O}_{\bP^5}(-3H) \to \mathcal{O}_{\bP^5} \to \mathcal{O}_X \to 0$ with $\Omega_{\bP^5}^2(3H)$ yields the following exact sequence:
\[
0 \to \Omega_{\bP^5}^2 \to \Omega_{\bP^5}^2(3H) \to \Omega_{\bP^5}^2(3H)|_X \to 0.
\]
 
Next, since it follows from the above exact sequence that 
\[
\H^{\bullet}(X, \Omega_{\bP^5}^2(3H)|_X) = \H^0(\Omega_{\bP^5}^2(3H))\oplus \H^2(\Omega_{\bP^5}^2)[-1],
\]
we have an exact triangle:
\[
    \cO_X^{\oplus 20}\oplus \cO_X[-1] \rightarrow \Omega_{\bP^5}^2(3H)|_X \rightarrow \bf{L}_{\cO_X}(\Omega_{\bP^5}^2(3H)|_X ) \to \cO_X^{\oplus 20}[1] \oplus \cO_X.
\]
Taking \eqref{exact:pthEuler} with $p =3$ gives an exact triangle:
\[
\cO^{\oplus 20}_X \rightarrow  \Omega_{\bP^5}^2(3H)|_X \rightarrow \Omega_{\bP^5}^3(3H)|_X[1] \to \cO_X^{\oplus 20} [1].
\]
Consider the following commutative diagram, with exact rows and columns:
\[
    \begin{tikzcd}
	\cO_X^{\oplus 20} & \cO_X^{\oplus 20} \\
	\cO_X^{\oplus 20}\oplus \cO_X[-1] &\Omega_{\bP^5}^2(3H)|_X   & \bf{L}_{\cO_X}(\Omega_{\bP^5}^2(3H)|_X ) & \cO_X^{\oplus 20}[1]\oplus \cO_X \\
	\cO_X[-1]  & \Omega_{\bP^5}^3(3H)|_X [1] &\bf{L}_{\cO_X}(\Omega_{\bP^5}^2(3H)|_X )&\cO_X
	\arrow[from=1-1, to=1-2]
	\arrow[from=1-1, to=2-1]
	\arrow[from=1-2, to=2-2]
	\arrow[from=2-1, to=2-2]
	\arrow[from=2-1, to=3-1]
	\arrow[from=2-2, to=2-3]
	\arrow[from=2-2, to=3-2]
	\arrow[from=2-3, to=2-4]
	\arrow[from=2-3, to=3-3]
	\arrow[from=2-4, to=3-4]
	\arrow[from=3-1, to=3-2]
	\arrow[from=3-2, to=3-3]
	\arrow[from=3-3, to=3-4]
    \end{tikzcd}
\]
Taking cohomology objects $\cH^i$ to the bottom row shifted by $[2]$ gives (1). Now, let $v \colon \bP^5 \rightarrow \bP^{20}$ be the Veronese map of degree two. Tensoring the Euler sequence of $\bf{P}^{20}$ by $\cO_{\bf{P}^{20}}(2)$ and pulling back along $v$.
We note that $v^*(\cO_{\bP^{20}}(1)) = \cO_{\bP^{5}}(2H)$ gives

\[
    0 \rightarrow v^*\Omega_{\bP^{20}}(4H) \rightarrow \cO_{\bP^5}(2H)^{\oplus 21} \rightarrow \cO_{\bP^5}(4H) \rightarrow 0.
\]
Applying $\H^i$ gives (2). Part (3) follows from a similar argument.
\end{proof}

The properties of the objects that appeared in the previous lemma are summarized below. They are required for the mutation calculation in \Cref{prop:homvanishing_1-2}. 

\begin{lem}\label{lem:numerical_computation}
    Let $X$ be a  cubic fourfold, and let $v$ and $w$ be as in \Cref{lem:computemutation}. \vspace{-2mm}
    \begin{enumerate}
        \item For all $i\ne 0$, $\H^i(v^*(\Omega_{\bP^{20}}^1)\otimes \cO_X(3H))= 0$ and $h^0(v^*(\Omega_{\bP^{20}}^1)\otimes \cO_X(3H)) = 71$. \vspace{-2mm}
        \item $\H^i(v^*(\Omega_{\bP^{20}}^1)\otimes \cO_X(H)) = \H^{i-1}(\cO_X(H))$ \vspace{-2mm}
        \item For all $i\ne 0$,  $\H^i(w^*(\Omega_{\bP^{55}}^1)\otimes \cO_X(4H)) = 0$ and $h^0(w^*(\Omega_{\bP^{55}}^1)\otimes \cO_X(4H)) = 126$. \vspace{-2mm}
        \item  $\H^i(w^*(\Omega_{\bP^{55}}^1)\otimes \cO_X(2H)) = \H^{i-1}(\cO_X(H))$. \vspace{-2mm}
    \end{enumerate}
\end{lem}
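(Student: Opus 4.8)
The plan is to reduce all four cohomology computations to a single short exact sequence on $X$, obtained by restricting a twisted pull-back of the Euler sequence along a Veronese embedding, and then to extract everything from the associated long exact sequence together with Bott vanishing on $\bP^5$. \textbf{Step 1 (the basic sequence).} Let $g\colon\bP^5\to\bP^N$ denote the degree-$d$ Veronese embedding, so $g=v$, $N=20$ when $d=2$ and $g=w$, $N=55$ when $d=3$. Since $g^*\cO_{\bP^N}(1)=\cO_{\bP^5}(dH)$, pulling back the Euler sequence of $\bP^N$ twisted by $\cO_{\bP^N}(1)$ gives on $\bP^5$
\[
    0\to g^*\Omega^1_{\bP^N}(dH)\to\cO_{\bP^5}^{\oplus(N+1)}\to\cO_{\bP^5}(dH)\to 0,
\]
whose right-hand arrow, after identifying $\H^0(\cO_{\bP^N}(1))\cong\H^0(\cO_{\bP^5}(dH))$, is multiplication by a monomial basis of the degree-$d$ forms. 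Twisting by $\cO_{\bP^5}(mH)$ and restricting to the Cartier divisor $X$ (which preserves exactness, as all three terms are locally free) yields
\[
    0\to g^*\Omega^1_{\bP^N}\otimes\cO_X((d+m)H)\to\cO_X(mH)^{\oplus(N+1)}\to\cO_X((d+m)H)\to 0.
\]
Taking $(d,m)=(2,1),(2,-1),(3,1),(3,-1)$ produces precisely the bundles of items (1)--(4) as the left-hand term.

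\textbf{Step 2 (line bundle cohomology on $X$).} I would compute $\H^\bullet(X,\cO_X(kH))$ from the ideal sheaf sequence $0\to\cO_{\bP^5}((k-3)H)\to\cO_{\bP^5}(kH)\to\cO_X(kH)\to 0$ and Bott vanishing on $\bP^5$: for $k\ge 1$ one gets $\H^{>0}(X,\cO_X(kH))=0$, and for $k=-1$ one gets $\H^\bullet(X,\cO_X(-H))=0$. \textbf{Step 3 (long exact sequences).} For items (2) and (4) the outer term $\cO_X(mH)=\cO_X(-H)$ is acyclic, so the connecting maps give isomorphisms $\H^i\bigl(g^*\Omega^1_{\bP^N}\otimes\cO_X((d+m)H)\bigr)\cong\H^{i-1}(X,\cO_X((d+m)H))$ for all $i$, which is the asserted identity. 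For items (1) and (3), both outer terms have vanishing higher cohomology, so the bundle $\cG:=g^*\Omega^1_{\bP^N}\otimes\cO_X((d+m)H)$ has no cohomology above degree $1$, and $\H^0,\H^1$ sit in
\[
    0\to\H^0(\cG)\to\H^0(\cO_X(mH))^{\oplus(N+1)}\to\H^0(\cO_X((d+m)H))\to\H^1(\cG)\to 0.
\]

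\textbf{Step 4 (surjectivity).} For (1) and (3) it remains to show that the evaluation map $\H^0(\cO_X(mH))^{\oplus(N+1)}\to\H^0(\cO_X((d+m)H))$ is surjective, which then forces $\H^1(\cG)=0$ and computes $h^0(\cG)$ as a difference of dimensions. This is the only non-formal point: on $\bP^5$ the corresponding map is multiplication by the $N+1$ degree-$d$ monomials, so its image on global sections spans all degree-$(d+m)$ forms; one then descends along the surjections $\H^0(\cO_{\bP^5}((d+m)H))\twoheadrightarrow\H^0(\cO_X((d+m)H))$ and $\H^0(\cO_{\bP^5}(mH))\twoheadrightarrow\H^0(\cO_X(mH))$.

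\textbf{Main obstacle.} The place where genuine care is needed is the degree-$3$ Veronese $w$: although $w$ is itself an embedding of $\bP^5$, the image $w(X)$ is \emph{linearly degenerate} in $\bP^{55}$ — it lies on the hyperplane dual to the defining cubic $f$, so the $56$ pulled-back coordinates satisfy one relation on $X$ and $\dim\H^0(X,\cO_X(3H))=55$ rather than $56$. One must therefore track how the relation $f|_X=0$ enters the restricted sequence of Step 1 and, if necessary, split off the resulting $\cO_X(mH)$-summand, so that the cohomology groups in (3) and (4) come out in exactly the form needed for the mutation computation in \Cref{prop:homvanishing_1-2}. For $v$ no such correction is needed, since $v(X)$ is nondegenerate in $\bP^{20}$: no quadric hypersurface contains an irreducible cubic hypersurface. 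Everything else is Bott vanishing and dimension bookkeeping.
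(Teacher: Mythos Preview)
The paper states this lemma without proof, so there is nothing to compare against; your pulled-back Euler sequence is the natural approach and is exactly what the paper itself uses in the proof of \Cref{lem:computemutation}. For (1) and (2) your argument is complete and correct: the surjectivity in Step~4 holds, and the long exact sequence gives $h^0=21\cdot 6-55=71$ and the claimed shift isomorphism.

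The trouble is with (3) and (4), but not for the reason you suggest. Your ``main obstacle'' is a red herring: the restricted sequence
\[
0\to w^*\Omega^1_{\bP^{55}}|_X\bigl((3+m)H\bigr)\to\cO_X(mH)^{\oplus 56}\to\cO_X\bigl((3+m)H\bigr)\to 0
\]
is exact on $X$ regardless of the linear degeneracy of $w(X)$, and the multiplication map on $\H^0$ is still surjective (the $55$ monomials not proportional to $f$ already span). Running your own computation honestly for $(d,m)=(3,1)$ gives $h^0=56\cdot 6-120=216$, not $126$; for $(d,m)=(3,-1)$ it gives $\H^i\cong\H^{i-1}(\cO_X(2H))$, not $\H^{i-1}(\cO_X(H))$. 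An Euler-characteristic check confirms these: $\chi=56\cdot\chi(\cO_X(H))-\chi(\cO_X(4H))=216$ and $\chi=-\chi(\cO_X(2H))=-21\neq -6$. So the stated values in (3) and (4) appear to be typos in the paper. Splitting off the $\cO_X(mH)$-summand coming from $f$ does not repair this: for (3) it changes $216$ to $210$, still not $126$, and for (4) the split summand $\cO_X(-H)$ is acyclic so nothing changes.

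Fortunately, the only use of (3) and (4) in \Cref{prop:homvanishing_1-2} is that the cohomology is concentrated in a single degree; the precise dimensions are irrelevant there. So your method proves everything actually needed, and the discrepancy you sensed lies in the statement, not in your argument.
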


Some previous works \cites{Li2018TwistedCO,ouchi2017lagrangian} consider other mutation-equivalent decompositions of $\DCoh(X)$ such as $\DCoh(X) = \langle \cO_X(-H), \cT_X, \cO_X,\cO_X(H) \rangle$, where 
\[
\cT_X = {}^{\perp}\langle \cO_X(-H) \rangle \cap \langle \cO_X,\cO_X(H) \rangle^{\perp}.
\]
We define the projection functor $\Pi:= \bf{R}_{\cO_{X}(-H)}\bf{L}_{\cO_X}\bf{L}_{\cO_X(H)}\colon \cD^b(X) \rightarrow \cT_X.$
\begin{lem}\label{lem_T_Ku_noplane}
    The functor $\phi(-) := \bf{R}_{\cO_X}(-)\otimes \cO_X(-H) \colon \cD^b(X) \rightarrow \cD^b(X)$
    restricts to an equiv\-alence $\Ku(X) \rightarrow \cT_X$. Moreover, there is an isomorphism of functors $\phi \circ \pr_L \simeq \Pi \circ  (-\otimes \cO_X(-H))$.
\end{lem}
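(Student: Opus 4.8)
\emph{Strategy.} The plan is to deduce both assertions by a purely formal manipulation of semiorthogonal decompositions, starting from the standard Kuznetsov decomposition $\cD^b(X)=\langle\Ku(X),\cO_X,\cO_X(H),\cO_X(2H)\rangle$. I will use only two elementary facts about mutations at an exceptional object $E$. The first is \emph{twist-equivariance}: for a line bundle $L$ one has $\bf{L}_{E\otimes L}(-\otimes L)\simeq\bf{L}_E(-)\otimes L$ and $\bf{R}_{E\otimes L}(-\otimes L)\simeq\bf{R}_E(-)\otimes L$, both immediate from the defining cone triangles since $-\otimes L$ is exact and $\RHom(F\otimes L,E\otimes L)\cong\RHom(F,E)$. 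The second is the identity $\bf{R}_E\circ\bf{L}_E\simeq\bf{R}_E$, obtained by applying the exact functor $\bf{R}_E$ to the functorial triangle $\RHom(E,-)\otimes E\to\id\to\bf{L}_E(-)$ and using $\bf{R}_E(E)=0$.

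\emph{Step 1 (the equivalence).} First I would twist the standard decomposition by the autoequivalence $-\otimes\cO_X(-H)$ to obtain $\cD^b(X)=\langle\Ku(X)(-H),\cO_X(-H),\cO_X,\cO_X(H)\rangle$. Each $F\in\Ku(X)$ satisfies $\RHom(\cO_X,F)=0$, so $\Ku(X)(-H)$ lies in the right orthogonal of $\cO_X(-H)$; hence $\bf{R}_{\cO_X(-H)}$ restricts to a fully faithful functor on $\Ku(X)(-H)$. Rotating $\cO_X(-H)$ past $\Ku(X)(-H)$ via the rule $\langle\cA,E\rangle=\langle E,\bf{R}_E\cA\rangle$ (applied inside the admissible subcategory spanned by the first two factors) yields
\[
\cD^b(X)=\langle\,\cO_X(-H),\ \bf{R}_{\cO_X(-H)}\bigl(\Ku(X)(-H)\bigr),\ \cO_X,\ \cO_X(H)\,\rangle .
\]
The second factor of any semiorthogonal decomposition of the form $\langle\cO_X(-H),?,\cO_X,\cO_X(H)\rangle$ is forced to equal ${}^{\perp}\langle\cO_X(-H)\rangle\cap\langle\cO_X,\cO_X(H)\rangle^{\perp}$, which is precisely the definition of $\cT_X$; hence $\bf{R}_{\cO_X(-H)}(\Ku(X)(-H))=\cT_X$, and $\bf{R}_{\cO_X(-H)}\colon\Ku(X)(-H)\to\cT_X$ is an equivalence. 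Precomposing with the twist $-\otimes\cO_X(-H)\colon\Ku(X)\xrightarrow{\,\sim\,}\Ku(X)(-H)$ and applying twist-equivariance identifies this composite with $F\mapsto\bf{R}_{\cO_X}(F)\otimes\cO_X(-H)=\phi(F)$, which gives the first claim.

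\emph{Step 2 (the functor identity).} I would compute $\phi\circ\pr_L$ directly. Applying twist-equivariance to $\pr_L=\bf{L}_{\cO_X}\bf{L}_{\cO_X(H)}\bf{L}_{\cO_X(2H)}$ one factor at a time gives $\pr_L(E)\otimes\cO_X(-H)\simeq\bf{L}_{\cO_X(-H)}\bf{L}_{\cO_X}\bf{L}_{\cO_X(H)}(E\otimes\cO_X(-H))$, so that
\[
\phi(\pr_L(E))=\bf{R}_{\cO_X}(\pr_L(E))\otimes\cO_X(-H)\simeq\bf{R}_{\cO_X(-H)}\bf{L}_{\cO_X(-H)}\bf{L}_{\cO_X}\bf{L}_{\cO_X(H)}(E\otimes\cO_X(-H)).
\]
Now $\bf{R}_{\cO_X(-H)}\circ\bf{L}_{\cO_X(-H)}\simeq\bf{R}_{\cO_X(-H)}$ removes the outermost mutation, leaving $\bf{R}_{\cO_X(-H)}\bf{L}_{\cO_X}\bf{L}_{\cO_X(H)}(E\otimes\cO_X(-H))=\Pi(E\otimes\cO_X(-H))$, i.e.\ $\bigl(\Pi\circ(-\otimes\cO_X(-H))\bigr)(E)$. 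Naturality of all the isomorphisms used upgrades this to an isomorphism of functors.

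\emph{Main difficulty.} There is no substantive obstacle here: the argument is entirely formal. The one place demanding care is keeping the mutation and orthogonality conventions consistent—left versus right mutation, and on which side of $E$ a mutated object lands—while applying twist-equivariance repeatedly; a slip there is the likeliest source of error. As a cross-check one could instead verify Step 1 by hand: that $\phi$ sends $\Ku(X)$ into $\cT_X$ (which uses the vanishings $\H^{\bullet}(X,\cO_X(-H))=\H^{\bullet}(X,\cO_X(-2H))=0$), and that it is fully faithful and essentially surjective; this route is more concrete but longer.
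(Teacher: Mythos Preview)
Your proof is correct and uses exactly the same two ingredients as the paper: twist-equivariance of mutations and the identity $\bf{R}_E\circ\bf{L}_E\simeq\bf{R}_E$. The only cosmetic difference in Step~2 is the order of application---the paper first collapses $\bf{R}_{\cO_X}\bf{L}_{\cO_X}$ to $\bf{R}_{\cO_X}$ and then twists, whereas you twist first and then collapse $\bf{R}_{\cO_X(-H)}\bf{L}_{\cO_X(-H)}$; your Step~1 also spells out the SOD-mutation argument for the equivalence, which the paper leaves implicit.
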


\begin{proof}
    Since $\bf{R}_{\cO_X}(\cO_X) = 0$,
    we obtain $\bf{R}_{\cO_X} \circ \bf{L}_{\cO_X} \simeq \bf{R}_{\cO_X}$.
    We note that $\bf{L}_E(-) \otimes \cL \simeq  \bf{L}_{E\otimes \cL}(-\otimes \cL )$ and $\bf{R}_E(-) \otimes \cL \simeq \bf{R}_{E\otimes \cL}(-\otimes \cL )$ for any $E$ and line bundle $\cL$. Thus:
    \begin{align*}
        \bf{R}_{\cO_X} \bf{L}_{\cO_X}\bf{L}_{\cO_X(H)}\bf{L}_{\cO_X(2H)}(-)\otimes \cO_X(-H) & \cong 
        \bf{R}_{\cO_X}\bf{L}_{\cO_X(H)}\bf{L}_{\cO_X(2H)}(-)\otimes \cO_X(-H)\\
        & \cong 
        \bf{R}_{\cO_X(-H)}\bf{L}_{\cO_X}\bf{L}_{\cO_X(H)}(-\otimes \cO_X(-H)).
    \end{align*}
\end{proof}

\subsubsection*{Geometric stability conditions on cubic fourfolds not containing a plane}
In this section, we will denote by $X$ a smooth cubic fourfold that does not contain any plane. 
Following \cite{lehn2017twisted}, twisted cubics on $X$ can be divided into two types: arithmetically Cohen-Macaulay (aCM) or non-Cohen-Macaulay (non-CM).
The non-CM curves are plane curves with an embedded point at a singular point.
We will denote by $F^{\prime}_C =\Pi(\cI_{C / X}(2H))$, where $C$ is a twisted cubic on $X$.
By \cite{Li2018TwistedCO}*{Prop. 5.7},  we have an isomorphism $\Pi(\cI_{C / Y}(2H)) \cong \Pi(\cI_{C / S}(2H))$ where $S\subset X$ is a cubic surface containing $C$.
Similarly, we define $F_C = \bf{L}_{\cO_X}(\cI_{C / S}(2H)) \in \Coh(X)$,
if $C$ is an aCM curve then $F_C$ lies in $\Ku(X)$ and $F_C = F^{\prime}_C$. 
On the other hand if $C$ is non-CM then there exists an exact triangle:
\[
F_C^{\prime} \rightarrow F_C \rightarrow \cO_X(-H)[1] \oplus \cO_X(-H)[2] \to F_C'[1].
\]
We set $E_C = \Xi(F_C)$ and $E_C^{\prime} = \Xi(F_C^{\prime})$.
Since $\Psi(\cO_X(-H)) = \cB_{-1}$, we have a distinguished triangle
\begin{align}\label{exact:E'_C}
    E_C^{\prime} \rightarrow E_C \rightarrow \cB_{-1}[1] \oplus \cB_{-1}[2].
\end{align}
Moreover $\ch_{\cB_0\leq 2}^{-1}(E_C) = \ch_{\cB_0\leq 2}^{-1}(E_C^{\prime}) = (0,6H,0).$

\begin{lem}\cite{Li2018TwistedCO}*{Prop. 3.5}\label{lem:phase of E'_C}
    For any twisted curve $C$ the objects $E_C^{\prime}[1]$ and $\cB_{-1}[2]$ lie in $\cA_{\alpha,-1}$ for $0<\alpha<1/4$.
    Moreover, $E_C^{\prime}[1]$ is $\sigma_{\alpha,-1}$-stable of phase 1.
\end{lem}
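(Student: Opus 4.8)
This is \cite{Li2018TwistedCO}*{Prop. 3.5}; I sketch the argument I would give. Write $W:=-\mathtt{i}Z_{\alpha,-1}$ for the central charge of $\sigma_{\alpha,-1}$, so objects of $\cA_{\alpha,-1}$ satisfy $W(-)\in\bf{H}\cup\bf{R}_{<0}$. The starting point is the Chern character $\ch^{-1}_{\cB_0\le 2}(E_C')=(0,6H,0)$ already recorded above: it gives $Z_{\alpha,-1}(E_C')=6\mathtt{i}$, hence $W(E_C'[1])=\mathtt{i}\,Z_{\alpha,-1}(E_C')=-6\in\bf{R}_{<0}$. Therefore, as soon as one knows $E_C'[1]\in\cA_{\alpha,-1}$ it is automatically $\sigma_{\alpha,-1}$-semistable of phase $1$: in any short exact sequence $0\to A\to E_C'[1]\to B\to 0$ in $\cA_{\alpha,-1}$ one has $W(A),W(B)\in\bf{H}\cup\bf{R}_{<0}$ with $W(A)+W(B)=-6$, which forces $\Im W(A)=\Im W(B)=0$ and then, using the support property to exclude a vanishing central charge, $W(A),W(B)\in\bf{R}_{<0}$, i.e. both of phase $1$. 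So the real content is (i) heart membership of $E_C'[1]$ and of $\cB_{-1}[2]$, and (ii) the promotion of phase-$1$ semistability to stability.

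For (i), recall $\cA_{\alpha,-1}$ is built from $\Coh(\bf{P}^3,\cB_0)$ by two tilts: at $\mu_H=-1$, giving $\Coh^{-1}(\bf{P}^3,\cB_0)$, and then at $\nu_{\alpha,-1}=0$. I would handle $\cB_{-1}[2]$ first: $\cB_{-1}=\cB_1\otimes\cO(-h)$ is an explicit locally free $\cB_0$-module, its $\beta$-twisted Chern character is computable, and from its $\mu_H$-behaviour one sees that $\cB_{-1}[1]\in\Coh^{-1}(\bf{P}^3,\cB_0)$ lies in the $\nu_{\alpha,-1}\le 0$ part, so that $\cB_{-1}[2]\in\cA_{\alpha,-1}$; the hypothesis $0<\alpha<1/4$ is precisely what keeps the $\tfrac{1}{2}\alpha^2\ch^{-1}_{\cB_0,0}$-term small enough for this. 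For $E_C'$ one works from the geometric model $F_C'=\Pi(\cI_{C/X}(2H))=\Pi(\cI_{C/S}(2H))$ with $S\subset X$ a cubic surface through $C$: using mutation triangles of the type in \Cref{prop:exactseq} and the defining triangles of $\Phi$, $\Psi$ and $\Xi$, one identifies the $\cB_0$-module cohomology sheaves of $E_C'$, checks the relevant ones are $\mu_H$-semistable of the expected slope — this descends from stability of the rank-one sheaf $\cI_{C/S}(2H)$ and from boundedness of the family of twisted cubics — and verifies the $\nu_{\alpha,-1}$-sign conditions, again using $0<\alpha<1/4$, to land $E_C'[1]$ in $\cA_{\alpha,-1}$. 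The non-CM case is then folded in via the triangle \eqref{exact:E'_C} together with the $\cB_{-1}[j]$ computations.

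For (ii), resume the notation of the first paragraph and suppose $0\to A\to E_C'[1]\to B\to 0$ is genuinely destabilizing, i.e. $A,B\ne 0$ of phase $1$. Then $\Re Z_{\alpha,-1}=0$ on $A$ and on $B$, so $\tfrac{1}{2}\alpha^2\ch^{-1}_{\cB_0,0}=\ch^{-1}_{\cB_0,2}$ there, while $\ch^{-1}_{\cB_0,1}(A)=-aH$ and $\ch^{-1}_{\cB_0,1}(B)=-bH$ with $a,b>0$ and $a+b=6$. A Bogomolov-type inequality for $\mu_H$-semistable $\cB_0$-sheaves, together with these constraints and $0<\alpha<1/4$, pins $\ch^{-1}_{\cB_0}(A)$ down to a short explicit list of classes, each the class of a shift of a $\mu_H$-semistable torsion $\cB_0$-sheaf. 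The plan is then to exclude each of these as an actual sub- or quotient object of $E_C'[1]$ by a Hom computation: since $E_C'=\Xi(F_C')$ lies in $\Xi(\Ku(X))$, which by \Cref{thm:ffofXi} is the leftmost factor of a semiorthogonal decomposition of $\cD^b(\bf{P}^3,\cB_0)$, the $\Ext$-groups between $E_C'$ and the candidate stable pieces can be computed by projecting those pieces into $\Xi(\Ku(X))$ and invoking semiorthogonality, and one checks they are too small to support the maps a destabilizing sequence would need.

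The main obstacle is (ii). Heart membership is bookkeeping with the mutation triangles and the slope/$\nu$-inequalities, and phase $1$ comes for free from the Chern character; but excluding the strictly-semistable locus genuinely uses the geometry of twisted cubics on $X$ rather than only the numerical class $(0,6H,0)$. This is the point at which \cite{Li2018TwistedCO} brings in finer input — irreducibility and connectedness of the relevant family of curves, the explicit structure of $F_C'$, and ultimately the existence of a nonempty locus where stability is verified directly and then propagated by openness.
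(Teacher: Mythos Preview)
Your sketch is broadly reasonable, but it diverges from the paper's proof in two places worth flagging.

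First, the paper does not attempt a from-scratch analysis of $E_C'$. For heart membership it simply cites \cite{Li2018TwistedCO}*{Prop.~3.3} and \cite{Bayer2017StabilityCO} to place $E_C$ and $\cB_{-1}[1]$ in $\Coh^{-1}(\bf{P}^3,\cB_0)$ and $\cB_{-1}[2]$ in $\cA_{\alpha,-1}$, and then reads off $E_C'\in\Coh^{-1}(\bf{P}^3,\cB_0)$ from the triangle \eqref{exact:E'_C}. This is cleaner than your proposal to unwind the mutation triangles for $F_C'$ directly: you work with $E_C$ (which is easier, being a sheaf-theoretic object attached to $C\subset S$ before the right mutation by $\cO_X(-H)$) and deduce $E_C'$ formally. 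Your suggestion to ``fold in the non-CM case via \eqref{exact:E'_C}'' is in fact what the paper does uniformly for all $C$. For stability the paper again just cites \cite{Li2018TwistedCO}*{Prop.~3.5} and records the phase computation $Z_{\alpha,-1}(E_C')=6\mathtt{i}$, hence phase $\tfrac12$ for $(Z_{\alpha,-1},\Coh^{-1})$, hence phase $0$ for $\sigma_{\alpha,-1}$, hence phase $1$ for $E_C'[1]$.

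Second, a small but genuine gap: you invoke ``the support property to exclude a vanishing central charge'' to pass from $W(A)+W(B)=-6$ with $\Im W(A)=\Im W(B)=0$ to $W(A),W(B)\in\bf{R}_{<0}$. But $\sigma_{\alpha,-1}$ is only a \emph{weak} stability condition on $\cD^b(\bf{P}^3,\cB_0)$ (the paper says this explicitly), so objects with $W=0$ are allowed in $\cA_{\alpha,-1}$ and there is no support property to appeal to at this level. The fix is that $E_C'\in\Xi(\cT_X)$, and the restriction of $\sigma_{\alpha,-1}$ to this admissible subcategory \emph{is} a genuine stability condition (this is the content of \cite{Bayer2017StabilityCO}*{Thm.~1.2}); so your argument goes through once you work inside $\Xi(\cT_X)$ rather than in all of $\cA_{\alpha,-1}$. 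Your part~(ii) strategy of Bogomolov-type bounds plus $\Hom$-vanishing is plausible and not far from what \cite{Li2018TwistedCO} does, but as you yourself note at the end, the actual argument there leans more on the geometry of the family of twisted cubics than on purely numerical wall-crossing.
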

\begin{proof}
    By, \cite{Li2018TwistedCO}*{Prop 3.3} and \cite{Bayer2017StabilityCO}*{Proof of Theorem 1.2}, the objects $E_C$ and $ \cB_{-1}[1]$ lies in 
    $\Coh^{-1}(\bP^3,\cB_0)$, and $ \cB_{-1}[2]$ lies in $\cA_{\alpha,-1}$.
    It follows from the exact sequence (\ref{exact:E'_C}) that  $E_C^{\prime}$ is an object in $\Coh^{-1}(\bP^3,\cB_0)$.

    The stability of $E_C^{\prime}$ follows from \cite{Li2018TwistedCO}*{Prop. 3.5}.
    Since $Z_{\alpha,-1}(E_C^{\prime}) = 6 \mathtt{i}$, $E_C^{\prime}$ is of phase 
    $\frac{1}{2}$ 
    with respect to  $(Z_{\alpha,-1},\Coh^{-1}(\bP^3,\cB_0))$.
    As discussed in \cite{Li2018TwistedCO}*{Thm. 3.8} we have a weak stability condition $\sigma_{\alpha,-1}=(-\mathtt{i}\cdot Z_{\alpha,-1},\cA_{\alpha,-1})$ on $\cD^b(\bP^3,\cB_0)$.
    Therefore, $E_C^{\prime}$ is of phase $0$ with respect to $\sigma_{\alpha,-1}$.
\end{proof}

Consider now the functor
$\Xi^{\prime} := \Xi\circ\phi \colon \Ku(X) \rightarrow \cD^b(\bP^3,\cB_0)$.

\begin{prop}\label{prop:stableK^x}
Let $X$ be a cubic fourfold which does not contain a plane.
For any $x\in X$, 
there is a non-CM twisted cubic curve $C$ with an embedded point $x\in C$
such that $F_C^{\prime}[1]$ and $\Pi(\cO_x)$  are  identified. 
In particular $\Xi^{\prime}(K^x)\in \cD^b(\bP^3,\cB_0)$ is $\sigma_{\alpha,-1}$-stable of phase 1 in $\cA_{\alpha,-1}$.
\end{prop}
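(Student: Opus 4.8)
The plan is as follows. Given $x\in X$, one first exhibits a non-CM twisted cubic $C$ through $x$ whose embedded point is $x$; one then identifies $\Pi(\cO_x)$ with $F_C'[1]$ by a short diagram chase; and finally one transports this identification through the equivalence of \Cref{lem_T_Ku_noplane} to recognise $\Xi'(K^x)$ as a shift of $E_C'=\Xi(F_C')$, at which point \Cref{lem:phase of E'_C} yields the stability and membership in $\cA_{\alpha,-1}$.

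\emph{Construction of $C$.} Fix $x\in X$ and let $f$ be the defining cubic form of $X$. Since $X$ is smooth, its embedded projective tangent space $\bb{T}_xX\subset\bf{P}^5$ is a $\bf{P}^4$. Choose a general $2$-plane $P$ with $x\in P\subset\bb{T}_xX$; then the restriction of $df_x$ to $P$ vanishes, so $C_0:=P\cap X$ is a plane cubic curve -- it is a curve, and not all of $P$, precisely because $X$ contains no plane -- which is nodal at $x$. By the classification of twisted cubics on a smooth cubic fourfold without a plane \cite{lehn2017twisted}, there is a non-CM twisted cubic $C$ with $C_{\mathrm{red}}=C_0$ whose length-one embedded point is $x$; choosing in addition a general $\bf{P}^3\supset P$, the intersection $S:=X\cap\bf{P}^3$ is a smooth cubic surface containing $C$, whence $F_C'=\Pi(\cI_{C/X}(2H))=\Pi(\cI_{C/S}(2H))$ by \cite{Li2018TwistedCO}*{Prop. 5.7}.

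\emph{Identification of $\Pi(\cO_x)$.} Since $C$ has Hilbert polynomial $3t+1$ and $C_0$, being a plane cubic, has $\chi(\cO_{C_0})=0$, the canonical surjection $\cO_C\twoheadrightarrow\cO_{C_0}$ has kernel of length one supported at $x$, so $0\to\cO_x\to\cO_C\to\cO_{C_0}\to0$; twisting by $2H$ gives $0\to\cI_{C/X}(2H)\to\cI_{C_0/X}(2H)\to\cO_x\to0$. Writing $P=V(\ell_1,\ell_2,\ell_3)$ with $\ell_i\in\H^0(\cO_{\bf{P}^5}(H))$, the sections $\ell_1|_X,\ell_2|_X,\ell_3|_X$ cut out $C_0$ in the expected codimension $3$ (again because $X$ contains no plane), hence form a regular sequence on the Cohen--Macaulay fourfold $X$, and the Koszul complex gives $0\to\cO_X(-3H)\to\cO_X(-2H)^{\oplus3}\to\cO_X(-H)^{\oplus3}\to\cI_{C_0/X}\to0$. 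Twisting by $2H$ presents $\cI_{C_0/X}(2H)$ as the complex $[\cO_X(-H)\to\cO_X^{\oplus3}\to\cO_X(H)^{\oplus3}]$, which lies in the triangulated subcategory generated by $\cO_X(-H),\cO_X,\cO_X(H)$. As $\Pi$ is the projection onto $\cT_X$ for $\DCoh(X)=\langle\cO_X(-H),\cT_X,\cO_X,\cO_X(H)\rangle$, we get $\Pi(\cI_{C_0/X}(2H))=0$, and applying the exact functor $\Pi$ to the triangle $\cI_{C/X}(2H)\to\cI_{C_0/X}(2H)\to\cO_x\to\cI_{C/X}(2H)[1]$ gives $\Pi(\cO_x)\cong\Pi(\cI_{C/X}(2H))[1]=F_C'[1]$.

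\emph{Conclusion.} By \Cref{lem_T_Ku_noplane}, $\phi\circ\pr_L\simeq\Pi\circ(-\otimes\cO_X(-H))$; since $\cO_x\otimes\cO_X(-H)\cong\cO_x$, this yields $\phi(\pr_L(\cO_x))\cong\Pi(\cO_x)\cong F_C'[1]$. Combining this with $\pr_L(\cO_x)=K^x[3]$, the definition $\Xi'=\Xi\circ\phi$, and $E_C'=\Xi(F_C')$, and keeping track of the cohomological shifts through the chain of mutation functors, one identifies $\Xi'(K^x)$ with $E_C'[1]$. By \Cref{lem:phase of E'_C} the object $E_C'[1]$ lies in $\cA_{\alpha,-1}$ and is $\sigma_{\alpha,-1}$-stable of phase one, which is the assertion. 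The two steps I expect to require genuine care are the construction of a non-CM twisted cubic realising the prescribed $x$ as its embedded point -- the place where the no-plane hypothesis is essential, invoking the geometry of \cite{lehn2017twisted} -- and the shift bookkeeping in the last step; once $\Pi(\cI_{C_0/X}(2H))=0$ is established, the diagram chase producing $\Pi(\cO_x)\cong F_C'[1]$ is routine.
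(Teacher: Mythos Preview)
Your proof is correct and follows the same overall route as the paper: identify $\Pi(\cO_x)$ with a shift of $F_C'$ for a suitable non-CM twisted cubic $C$, then transport through $\phi$ and $\Xi$ and appeal to \Cref{lem:phase of E'_C}. The paper's own proof simply cites \cite{Addington2014OnTS}*{\S1} and \cite{Li2018TwistedCO}*{Prop.~5.7} for the first step, whereas you supply an explicit argument: the tangent-plane construction of $C$, together with the Koszul resolution showing $\cI_{C_0/X}(2H)\in\langle\cO_X(-H),\cO_X,\cO_X(H)\rangle\subset\ker\Pi$, is a clean self-contained substitute for those citations and makes the role of the no-plane hypothesis transparent (it is exactly what forces $P\cap X$ to be a curve and the linear forms to be a regular sequence on $X$). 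Two minor remarks: you only need $C_0$ singular at $x$, not nodal, and the aside about a smooth cubic surface $S$ is not actually used since you work directly with $\cI_{C/X}(2H)$. You are right to flag the final shift bookkeeping as the delicate step---the paper asserts $E_C'=\Xi'(K^x)[-1]$ just as tersely---but the substance, namely that $\Xi'(K^x)$ is a shift of $E_C'$ and hence $\sigma_{\alpha,-1}$-stable via \Cref{lem:phase of E'_C}, is what both arguments establish.
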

\begin{proof}
    The first claim is proved in \cite{Addington2014OnTS}*{\S 1}, \cite{Li2018TwistedCO}*{Prop. 5.7}.
    Since $E^{\prime}_C = \Xi^{\prime}(K^x)[-1]$ for any $x\in X$ and a suitable curve $C$, we deduce from \Cref{prop:stableK^x} and \Cref{lem:phase of E'_C} that $\Xi^{\prime}(K^x)\in \cD^b(\bP^3,\cB_0)$ is $\sigma_{\alpha,-1}$-stable of phase 1 in $\cA_{\alpha,-1}$.
\end{proof}

We will compute $\Xi^{\prime}(\pr_R(\cO_X(kH)))$ for $k = 0,1,2$.
Recall that by Serre duality in the form of \cite{B-KSerre}, $\pr_R(-) \simeq \pr_L(- \otimes \cO_X(3H))[-2]$ hence $\Xi^{\prime}\circ\pr_R(\cO_X(kH))  = \Psi \circ\sigma^*\circ\phi\circ\pr_L(\cO((k+3)H))[-2].$ 
By \Cref{lem_T_Ku_noplane} we have $\Xi^{\prime}\circ\pr_R(\cO_X(kH))= \Psi \circ\sigma^*\circ\Pi(\cO((k+2)H))[-2]$.

\begin{prop}\label{prop:homvanishing_1-2}
    Let $X$ be a  cubic fourfold, and let $\cL$ be either $\cO_X(4H)$ or $\cO_X(5H)$. Then $\cH^i(\phi \circ \pr_L(\cL)) =0$ for any $i\ne -3,-2,-1$. 
\end{prop}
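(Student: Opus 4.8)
The plan is to compute $\phi\circ\pr_L(\cL)$ by running the left mutations that define $\pr_L$ and then applying $\phi = \bf{R}_{\cO_X}(-)\otimes\cO_X(-H)$, bounding the cohomological amplitude at each stage. Since $\bf{R}_{\cO_X}(\cO_X)=0$ forces $\bf{R}_{\cO_X}\circ\bf{L}_{\cO_X}\simeq\bf{R}_{\cO_X}$ (apply $\bf{R}_{\cO_X}$ to the triangle $\RHom(\cO_X,F)\otimes\cO_X\to F\to\bf{L}_{\cO_X}(F)$), and since tensoring by a line bundle does not move the cohomology sheaves of a complex, it is enough to show that $\bf{R}_{\cO_X}\bigl(\bf{L}_{\cO_X(H)}\bf{L}_{\cO_X(2H)}(\cL)\bigr)$ has no cohomology sheaves outside degrees $-3,-2,-1$, for $\cL\in\{\cO_X(4H),\cO_X(5H)\}$.

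First I would compute $P := \bf{L}_{\cO_X(H)}\bf{L}_{\cO_X(2H)}(\cL)$. By \Cref{lem:computemutation}(2) and (3), the inner mutation is $\bf{L}_{\cO_X(2H)}(\cL) = G[1]$ for a locally free sheaf $G$, equal to $G_4 := (v^*\Omega^1_{\bf{P}^{20}}\otimes\cO_X(4H))|_X$ when $\cL=\cO_X(4H)$ and to $G_5 := (w^*\Omega^1_{\bf{P}^{55}}\otimes\cO_X(5H))|_X$ when $\cL=\cO_X(5H)$. The complex $\RHom(\cO_X(H),G[1])$ has $\cH^j = \H^{j+1}(X,G(-H))$, and $G(-H)$ equals $v^*\Omega^1_{\bf{P}^{20}}\otimes\cO_X(3H)$ resp. $w^*\Omega^1_{\bf{P}^{55}}\otimes\cO_X(4H)$, whose cohomology is concentrated in degree $0$ by \Cref{lem:numerical_computation}(1) resp. (3). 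Hence $\RHom(\cO_X(H),G[1])\cong \bf{C}^N[1]$ with $N=71$ or $126$, and $P = \Cone\bigl(\cO_X(H)^{\oplus N}\xrightarrow{\ev} G\bigr)[1]$ is a two-term complex with $\cH^{-2}(P)=\ker(\ev)$, $\cH^{-1}(P)=\coker(\ev)$, and no cohomology in other degrees.

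The remaining and most delicate point is to bound the amplitude of $\bf{R}_{\cO_X}(P)$, via the defining triangle $\bf{R}_{\cO_X}(P)\to P\to \RHom(P,\cO_X)^\vee\otimes\cO_X$. Applying $\RHom(-,\cO_X)$ to $\cO_X(H)^{\oplus N}[1]\to G[1]\to P\to\cO_X(H)^{\oplus N}[2]$, the two outer terms contribute copies of $\RHom(\cO_X(H),\cO_X) = \H^\bullet(X,\cO_X(-H))$, which vanishes on a cubic fourfold; therefore $\RHom(P,\cO_X)\cong\RHom(G,\cO_X)[-1] = \H^\bullet(X,G^\vee)[-1]$. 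By Serre duality, using $\omega_X = \cO_X(-3H)$, one has $\H^i(X,G^\vee)\cong\H^{4-i}(X,G(-3H))^\vee$, where $G(-3H)$ is $v^*\Omega^1_{\bf{P}^{20}}\otimes\cO_X(H)$ resp. $w^*\Omega^1_{\bf{P}^{55}}\otimes\cO_X(2H)$; by \Cref{lem:numerical_computation}(2) resp. (4) this cohomology is $\H^{\bullet-1}(X,\cO_X(H))$, hence concentrated in degree $1$ and equal to $\bf{C}^6$ there. It follows that $\RHom(P,\cO_X)\cong\bf{C}^6[-4]$, so $\RHom(P,\cO_X)^\vee\otimes\cO_X\cong\cO_X^{\oplus 6}[4]$; feeding this into the triangle and taking the long exact sequence of cohomology sheaves gives $\cH^{-3}(\bf{R}_{\cO_X}(P)) = \cO_X^{\oplus 6}$, $\cH^{-2}(\bf{R}_{\cO_X}(P)) = \cH^{-2}(P)$, $\cH^{-1}(\bf{R}_{\cO_X}(P)) = \cH^{-1}(P)$, and $\cH^i(\bf{R}_{\cO_X}(P)) = 0$ for all other $i$. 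Twisting by $\cO_X(-H)$ then yields $\cH^i(\phi\circ\pr_L(\cL)) = 0$ for $i\ne -3,-2,-1$.

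The main obstacle is exactly this third step: a priori neither the left mutations nor the derived dual $\RHom(-,\cO_X)$ confine cohomology to a narrow band, so the content of the argument is to assemble enough vanishing — from \Cref{lem:numerical_computation}, from $\H^\bullet(X,\cO_X(-H)) = 0$, and from Serre duality on the Fano fourfold $X$ — to collapse everything into the three consecutive degrees $-3,-2,-1$.
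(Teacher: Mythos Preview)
Your argument is correct and essentially the same as the paper's. The paper rewrites $\phi\circ\pr_L$ as $\Pi\circ(-\otimes\cO_X(-H))$ via \Cref{lem_T_Ku_noplane} before running the mutations, whereas you apply the identity $\bf{R}_{\cO_X}\circ\bf{L}_{\cO_X}\simeq\bf{R}_{\cO_X}$ directly and postpone the twist by $\cO_X(-H)$ to the end; this is purely a difference in bookkeeping, and the actual cohomology computations (via \Cref{lem:computemutation}, \Cref{lem:numerical_computation}, the vanishing $\H^\bullet(\cO_X(-H))=0$, and Serre duality) are identical in both accounts.
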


\begin{proof}
   We consider only the case $\cL = \cO_X(4H)$, since the other case is similar.
   Recall that by \Cref{lem_T_Ku_noplane} $\phi\circ \pr_L(\ko_X(4))=\Pi(\ko_X(3))=\bf{R}_{\ko_X(-H)}\bf{L}_{\ko_X}\bf{L}_{\ko_X(H)}(\ko_X(3))$.
   It follows from \Cref{lem:computemutation} that 
   \[
   \bf{L}_{\cO_X(2H)}(\cO_X(4H)) \cong ( v^*(\Omega_{\bP^{20}})(4H))|_X[1],
   \]
   moreover 
   \[
   \bf{L}_{\cO_X(2H)}(\ko_X(4))=\bf{L}_{\ko_X(H)}(\ko_X(3))\otimes \ko_X(H)
   \]
   hence $\bf{L}_{\ko_X(H)}(\ko_X(3))=v^*(\Omega_{\bP^{20}})|_X\otimes \cO_X(3H)[1]$. 
   There is an exact triangle
   \[
   v^*(\Omega_{\bP^{20}})|_X\otimes \cO_X(3H)\rightarrow 
   \bf{L}_{\cO_X}( (v^*(\Omega_{\bP^{20}})|_X\otimes \cO_X(3H))) \rightarrow \H^{\bullet}(v^*(\Omega_{\bP^{20}})|_X\otimes \cO_X(3H)) \otimes \cO_X[1].
   \]
   By \Cref{lem:numerical_computation}, we obtain $\H^{\bullet}( (v^*(\Omega_{\bP^{20}})|_X\otimes \cO_X(3H))) \otimes \cO_X \cong \cO_X^{\oplus 71}$.
   Thus, we have 
   \[
    \bf{L}_{\cO_X}(\Omega_{\bP^{20}} |_X\otimes \cO_X(3H)) \in [0,1].
   \]
Since $\RHom(\cO_X,\cO_X(-H)) = 0$, it follows from the above exact triangle and \Cref{lem:numerical_computation}(2) that 
   \begin{align*}
       \Hom^{\bullet}(\bf{L}_{\cO_X}( (v^*(\Omega_{\bP^{20}})|_X\otimes \cO_X(3H)) ),\cO_X(-H))
       & \cong \Hom^\bullet(v^*(\Omega_{\bP^{20}})|_X\otimes \cO_X(3H),\cO_X(-H))\\
       & \cong \Hom^{4-\bullet}(\cO_X,v^*(\Omega_{\bP^{20}})|_X\otimes \cO_X(H))^{\vee}\\
       & \cong \H^{4-\bullet}(v^*(\Omega_{\bP^{20}})|_X\otimes \cO_X(H))^{\vee}\\
       & \cong \H^{3-\bullet}(\cO_X(H))^{\vee}.
   \end{align*}
   By definition, there exists an exact triangle:
   \begin{align}\label{exact:O(4)}
          \cO_X(-H)^{6}[2] \rightarrow \bf{R}_{\cO_X(-H)}\bf{L}_{\cO_X}((v^*(\Omega_{\bP^{20}})|_X\otimes \cO_X(3H))) \rightarrow \bf{L}_{\cO_X}(v^*(\Omega_{\bP^{20}})|_X\otimes \cO_X(3H)).
   \end{align}
   Therefore, since $\bf{R}_{\cO_X(-H)}\bf{L}_{\cO_X}(\Omega_{\bP^{20}} |_X\otimes \cO_X(3H)) \in [0,2]$, the statement holds.
\end{proof}

\begin{prop}\label{prop:123}
    Let $\cL$ be either $\cO_X(4H)$ or $\cO_X(5H)$. Then, $\sigma^* \circ \phi\circ \pr_L(\cL) \in [1,3]$. 
    In particular $\Xi^{\prime}\circ\pr_R(\ko_X(H)),\Xi^{\prime}\circ\pr_R(\ko_X(2H)) \in [-\infty,2]$.
\end{prop}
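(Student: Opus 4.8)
The plan is to combine the cohomological bound from Proposition \ref{prop:homvanishing_1-2} with the identity $\sigma^*\circ\phi\circ\pr_L(\mathcal{L}) \simeq \sigma^*\circ\Pi(\mathcal{L}\otimes\cO_X(-H))$ from \Cref{lem_T_Ku_noplane} — noting that $\phi\circ\pr_L(\mathcal L)=\Pi(\mathcal L\otimes\cO_X(-H))$ already lives on $X$, so $\sigma^*$ is the blow-up pullback $\sigma\colon\widetilde X\to X$. First I would record that Proposition \ref{prop:homvanishing_1-2} gives $\phi\circ\pr_L(\mathcal L)\in\mathcal A{[1,3]}$ in the notation of \Cref{notation:cohomology} — that is, $\cH^i(\phi\circ\pr_L(\mathcal L))=0$ for $i\ne -3,-2,-1$ — for $\mathcal L=\cO_X(4H)$ or $\cO_X(5H)$. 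Since $\sigma$ is a blow-up along a smooth center, $\sigma^*$ is exact for coherent sheaves (it is even the underived pullback, as $\sigma$ is flat in the sense that $L\sigma^*=\sigma^*$ for sheaves on a smooth variety blown up along a smooth subvariety — more precisely one uses that $\sigma$ has finite Tor-dimension and $\sigma^*$ restricted to perfect complexes preserves the standard $t$-structure up to the degree bounds already present), so $\sigma^*$ sends $\mathcal A{[1,3]}$ on $X$ into $\mathcal A{[1,3]}$ on $\widetilde X$; hence $\sigma^*\circ\phi\circ\pr_L(\mathcal L)\in[1,3]$. This proves the first assertion.

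For the ``in particular'' clause I would use the description recalled just before the proposition: by Serre duality in the form of \cite{B-KSerre}, $\pr_R(-)\simeq\pr_L(-\otimes\cO_X(3H))[-2]$, so that
\[
\Xi'\circ\pr_R(\cO_X(kH)) \;=\; \Psi\circ\sigma^*\circ\phi\circ\pr_L(\cO_X((k+3)H))[-2]
\]
for $k=0,1,2$, which via \Cref{lem_T_Ku_noplane} equals $\Psi\circ\sigma^*\circ\Pi(\cO_X((k+2)H))[-2]$. Taking $k=1,2$ means $\mathcal L=\cO_X(4H)$ or $\cO_X(5H)$, and by the first part $\sigma^*\circ\phi\circ\pr_L(\mathcal L)\in[1,3]$, i.e. lies in $\Coh(\widetilde X){[1,3]}$. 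Now $\Psi(-)=\pi_*(-\otimes\cO_{\widetilde X}(h)\otimes\mathcal E[1])$; twisting by a line bundle and by the vector bundle $\mathcal E$ is exact, and $\pi_*$ is a pushforward along a projective morphism of relative dimension $\le 2$ (the composite $\widetilde X\to\widetilde{\bf P}^5\to\bf P^3$), so $R\pi_*$ has cohomological amplitude $[0,2]$; combined with the shift $[1]$ inside $\Psi$ this shows $\Psi$ sends $\Coh(\widetilde X){[1,3]}$ into $\cD^b(\bf P^3,\cB_0){[-2,\infty]}$, hence after the global shift $[-2]$ into $[-\infty,2]$. Therefore $\Xi'\circ\pr_R(\cO_X(H)),\Xi'\circ\pr_R(\cO_X(2H))\in[-\infty,2]$, as claimed.

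The step I expect to require the most care is controlling the cohomological amplitude of $\Psi$ — specifically, checking that $R\pi_*$ contributes at most $+2$ in homological degree and that the internal shift $[1]$ is the only other contribution, so that the upper bound $3$ on $\sigma^*\phi\pr_L(\mathcal L)$ becomes exactly $2$ after applying $\Psi[-2]$. One must be slightly careful that the relevant morphism for $\Psi$ is $\pi\colon\widetilde X\to\bf P^3$ (of relative dimension $2$) and that the twist by $\mathcal E$ does not shift degrees; the fact that we only need an upper bound (the symbol $[-\infty,2]$ ignores how negative the complex is) makes this robust. The exactness of $\sigma^*$ on sheaves is standard but should be invoked explicitly, e.g. via the fact that on a smooth variety every coherent sheaf has finite Tor-dimension and $\sigma$ is an isomorphism away from a divisor, so no negative cohomology is introduced.
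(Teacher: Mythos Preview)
Your overall strategy for the second assertion is fine and matches the paper: one shows $\sigma^*\circ\phi\circ\pr_L(\cL)\in[1,3]$, then bounds the cohomological amplitude of $\Psi$ and applies the shift $[-2]$. Two minor corrections there: the map $\pi\colon\widetilde X\to\bf P^3$ is a \emph{conic} fibration (this is precisely why the Clifford algebra $\cB_0$ appears), so its relative dimension is $1$, not $2$; and your bookkeeping with the paper's $[a,b]$ convention drifts a bit. The paper phrases it as $\Psi([a,b])\subset[-\infty,b+1]$, which is cleaner and yields $\Psi([1,3])\subset[-\infty,4]$, hence $[-\infty,2]$ after $[-2]$.

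The real gap is in your first step. A blow-up $\sigma\colon\widetilde X\to X$ along a positive-dimensional smooth center is \emph{not} flat, and $L\sigma^*$ does \emph{not} preserve $\Coh(X)$: for a sheaf supported on the center one genuinely has higher $L_i\sigma^*$. So the sentence ``$\sigma^*$ is exact for coherent sheaves \ldots\ $\sigma$ is flat'' is false, and ``finite Tor-dimension'' alone would only give $\sigma^*([1,3])\subset[1,3+d]$ for some $d>0$, which is not what you need. The paper circumvents this by using full faithfulness of $\sigma^*$ to commute it with the mutation functors:
\[
\sigma^*\bf R_{\cO_X(-H)}\bf L_{\cO_X}\bigl(\bf L_{\cO_X(2H)}(\cL)\otimes\cO_X(-H)\bigr)\;\cong\;
\bf R_{\cO_{\widetilde X}(-H)}\bf L_{\cO_{\widetilde X}}\bigl(\bf L_{\cO_{\widetilde X}(2H)}(\sigma^*\cL)\otimes\cO_{\widetilde X}(-H)\bigr),
\]
and then reruns the computation of \Cref{prop:homvanishing_1-2} verbatim on $\widetilde X$ (all the inputs are line bundles and pullbacks of vector bundles, so nothing changes). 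Your approach can be repaired along similar lines: the object $\phi\circ\pr_L(\cL)$ is, by the exact triangles in the proof of \Cref{prop:homvanishing_1-2}, an iterated extension of \emph{shifted locally free sheaves} ($\cO_X(-H)^6[3]$, $\cO_X^{71}[2]$, and $v^*\Omega_{\bf P^{20}}|_X\otimes\cO_X(3H)[1]$); since $L\sigma^*$ agrees with $\sigma^*$ on locally free sheaves and is triangulated, $\sigma^*$ of this iterated extension stays in $[1,3]$. But this is exactly the content of the paper's argument, and you should invoke it rather than exactness of $\sigma^*$.
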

\begin{proof}
    Since $\sigma^*$ is fully faithful, we have 
    \[
    \sigma^*\bf{R}_{\cO_X(-H)}\bf{L}_{\cO_X}(\bf{L}_{\cO_X(2H)}(\cL)\otimes\cO_X(-H)) \cong \bf{R}_{\cO_{\widetilde{X}}(-H)}\bf{L}_{\cO_{\widetilde{X}}} (\bf{L}_{\cO_{\widetilde{X}}(2H)}(\sigma^*(\cL))\otimes\cO_{\widetilde{X}}(-H)).
    \]
    For each case, the statement follows from the same argument in \Cref{prop:homvanishing_1-2}.
    By definition the functor $\Psi(-)$ satisfies $\Psi([a,b]) \subset [-\infty,b+1],$ it follows that $\Xi^{\prime}\circ\pr_R(\cO_X(kH))=\Psi\circ \sigma^*\circ \phi\circ \pr_L(\ko((k+3)H))[-2] \in [-\infty,2]$ for $k=1,2$.
\end{proof}
We are now ready to prove the vanishing that we need to glue stability conditions.
Let us recall that in \cite{Bayer2017StabilityCO}*{Thm. 1.2, \S9} the authors construct   on $\Ku(X)$ the  stability condition  $\tau^{\alpha} = (Z^\alpha,\cA^{\alpha})\in \Stab(\Ku(X))$ where
\begin{equation}\label{eq_def_satb_cubic4}
    \begin{split}
        \cA^{\alpha} :={\Xi'}^{-1}( \cA_{\alpha, -1})\cap\Ku(X)\\
        Z^\alpha:=-\mathtt{i}Z_{\alpha,-1}.
    \end{split}
\end{equation}
Let us observe that by \Cref{prop:stableK^x} the object $K^x[3]$ lies in $\cA^{\alpha}[3]$. Hence we will glue a stability condition with heart $\cA(\alpha)[3]$ on $\Ku(X)$ with a stability condition with heart $\langle\ko_X[2], \ko_X(1)[1],\ko_X(2)\rangle_{\mathrm{ext}}$.

\begin{cor}\label{cor_vanishing2_cubic4fold_noplane}
    We have that $\Xi^{\prime}\circ\pr_R(\cO_X(kH))\in\cA_{\alpha,-1}[-\infty,2]$ for $k=1,2$ in particular  the following vanishing holds
\[
\Hom^{\leq 0} (\cA_{\alpha,-1}[3],\Xi^{\prime}\circ\pr_R(\cO_X(2H)))= 0.
\]
\end{cor}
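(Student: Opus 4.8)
The plan is to deduce the vanishing $\Hom^{\leq 0}(\cA_{\alpha,-1}[3],\Xi'\circ\pr_R(\cO_X(2H)))=0$ from the location of $\Xi'\circ\pr_R(\cO_X(2H))$ in the cohomological filtration relative to the heart $\cA_{\alpha,-1}$. By \Cref{prop:123} we already know $\Xi'\circ\pr_R(\cO_X(kH))\in [-\infty,2]$ for $k=1,2$, i.e. the object has coherent cohomology sheaves concentrated in degrees $\geq -2$. The first step is to upgrade this from a statement about $\Coh(\bP^3,\cB_0)$-cohomology to a statement about $\cA_{\alpha,-1}$-cohomology: I would show that $\Xi'\circ\pr_R(\cO_X(kH))\in \cA_{\alpha,-1}[-\infty,2]$, that is, it lies in the extension closure $\langle \cA_{\alpha,-1}[n]: n\leq 2\rangle_{\rm{ext}}$. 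Since $\cA_{\alpha,-1}$ is obtained from $\Coh(\bP^3,\cB_0)$ by two tilts, each tilt moves the bounds of the cohomological support by at most one; tracking this carefully (using that $\ch^{-1}_{\cB_0,\leq 2}(E_C)=(0,6H,0)$-type computations are not needed here, only the qualitative shift) gives the inclusion $[-\infty,2]\subseteq \cA_{\alpha,-1}[-\infty,2]$ up to the usual off-by-one, which is exactly the content of the first assertion of the corollary.

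Granting $\Xi'\circ\pr_R(\cO_X(2H))\in \cA_{\alpha,-1}[-\infty,2]$, the vanishing is then formal: write $G:=\Xi'\circ\pr_R(\cO_X(2H))$ and take its $\cA_{\alpha,-1}$-cohomology objects $\cH^i_{\cA_{\alpha,-1}}(G)$, which vanish for $i>2$. For any $A\in\cA_{\alpha,-1}$ and any $j\leq 0$, we compute $\Hom(A[3],G)=\Hom(A,G[-3])$; using the standard spectral sequence / dévissage with respect to the bounded t-structure with heart $\cA_{\alpha,-1}$, $\Hom^{\leq 0}(A, G[-3])$ is built out of groups $\Hom^{\leq 0}_{\cA_{\alpha,-1}}(A,\cH^i_{\cA_{\alpha,-1}}(G)[-3-i])=\Ext^{\leq -3-i}_{\cA_{\alpha,-1}}(A,\cH^i_{\cA_{\alpha,-1}}(G))$, and since $i\leq 2$ we have $-3-i\geq -5$, which is not yet obviously zero — so instead I would argue directly: $\Hom^{k}(A,\cH^i(G))=0$ for $k<0$ since $\cA_{\alpha,-1}$ is a heart, so the only possibly-nonzero contributions to $\Hom^{\leq 0}(A[3],G)$ come from $\Hom^{3+i}(A,\cH^i(G))$ with $3+i\leq 0$, i.e. $i\leq -3$; but $G\in\cA_{\alpha,-1}[-\infty,2]$ only bounds $i$ from above, so this requires no lower bound and the relevant terms are $\Hom^{3+i}(A,\cH^i(G))$ with $i\leq-3$, which vanish because $A$ is in the heart and $3+i\leq 0$. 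Hence $\Hom^{\leq 0}(\cA_{\alpha,-1}[3],G)=0$.

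The main obstacle is the first step — propagating the cohomological bound through the two tiltings defining $\cA_{\alpha,-1}$ with the correct constant. One has to be careful that tilting $\Coh(\bP^3,\cB_0)$ at $\mu_H=\beta$ and then at $\nu_{\alpha,\beta}=0$ can in principle push a sheaf sitting in cohomological degree $-2$ into $\cA_{\alpha,-1}[n]$ for $n$ as large as $0$ but not larger, so that $[-\infty,2]\subset\cA_{\alpha,-1}[-\infty,2]$; verifying this amounts to the elementary but slightly fiddly observation that a tilt $\langle\cT,\cF[1]\rangle$ of a heart $\cH$ satisfies $\cH[a,b]\subseteq(\text{tilted heart})[a-1,b]$ and $(\text{tilted heart})[a,b]\subseteq \cH[a,b+1]$, applied twice. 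Everything after that is bookkeeping with the bounded t-structure with heart $\cA_{\alpha,-1}$ on $\cD^b(\bP^3,\cB_0)$ and the fact that $\Hom^{<0}$ out of and into a heart vanishes.
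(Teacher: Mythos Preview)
Your overall strategy matches the paper's proof exactly: invoke \Cref{prop:123} to get $\Xi'\circ\pr_R(\cO_X(kH))\in[-\infty,2]$, then use that $\cA_{\alpha,-1}$ is obtained from $\Coh(\bf{P}^3,\cB_0)$ by two tilts to conclude $[-\infty,2]\subset\cA_{\alpha,-1}[-\infty,2]$ (your third paragraph handles this correctly, and there is no off-by-one issue once $a=-\infty$). The paper's argument is literally this: $\Coh(\bf{P}^3,\cB_0)\subset\cA_{\alpha,-1}[-2,0]$ by two tilts, hence $[-\infty,2]\subset\cA_{\alpha,-1}[-\infty,2]$.

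However, your second paragraph contains a genuine error in the vanishing step. You write that the $\cA_{\alpha,-1}$-cohomology of $G$ ``vanish for $i>2$'' and that $G\in\cA_{\alpha,-1}[-\infty,2]$ ``only bounds $i$ from above''. This is backwards: by the paper's \Cref{notation:cohomology}, $G\in\cA_{\alpha,-1}[-\infty,2]$ means $\cH^i_{\cA_{\alpha,-1}}(G)=0$ for $i<-2$, i.e.\ the cohomological degree is bounded \emph{below} by $-2$. With the wrong orientation, your d\'evissage computation ends up claiming the relevant terms vanish ``because $A$ is in the heart and $3+i\le 0$''; but $\Hom^0(A,\cH^{-3}(G))$ is not killed by the heart axiom alone, and your argument never invokes $\cH^{-3}(G)=0$.

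The correct deduction is a one-line application of the t-structure axiom, which is why the paper does not spell it out: writing $\cD^{\le n}$ and $\cD^{\ge n}$ for the aisles of the t-structure with heart $\cA_{\alpha,-1}$, one has $A[3]\in\cD^{\le -3}$ for $A\in\cA_{\alpha,-1}$, while $G\in\cA_{\alpha,-1}[-\infty,2]=\cD^{\ge -2}$, and $G[k]\in\cD^{\ge -2-k}\subset\cD^{\ge -2}$ for $k\le 0$. Then $\Hom^k(A[3],G)=\Hom(A[3],G[k])\subset\Hom(\cD^{\le -3},\cD^{\ge -2})=0$ for all $k\le 0$.
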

\begin{proof}
    Since $\cA_{\alpha,-1}$ is obtained by tilting twice, $\Coh(\bP^3,\cB_0)$ is contained in 
\[
    \cA_{\alpha,-1}[-2,0] = \langle\cA_{\alpha,-1},\cA_{\alpha,-1}[-1],\cA_{\alpha,-1}[-2] \rangle_{\rm{ext}}.
\]
By \Cref{prop:123} and the fact that $[-\infty,2] \subset \cA_{\alpha,-1}[-\infty,2]$ we get that  $\Xi^{\prime}\circ\pr_R(\cO_X(kH))$ lies in $\cA_{\alpha,-1}[-\infty,2]$ for $k = 1,2$.
\end{proof}

\begin{lem}\label{lem:vanisshing-1}
    The following vanishing holds 
    \[
\Hom^{\leq 0} (\cA_{\alpha,-1}\cap \Xi'(\Ku(X))[3],\Xi^{\prime}\circ\pr_R(\cO_X(H))[1])= 0.
\]
\end{lem}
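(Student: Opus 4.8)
Write $\cA^\alpha$ for the heart from \eqref{eq_def_satb_cubic4}, so that $\cA_{\alpha,-1}\cap\Xi'(\Ku(X)) = \Xi'(\cA^\alpha)$. Using $\Hom^{j}(A[3],B[1]) = \Hom^{j-2}(A,B)$, the assertion is equivalent to $\Hom^{\le -2}\big(\Xi'(\cA^\alpha),\,\Xi'\circ\pr_R(\cO_X(H))\big) = 0$. Since $\pr_R(\cO_X(H))\in\Ku(X)$, the functor $\Xi'$ restricts to a fully faithful functor on $\Ku(X)$ (see \Cref{thm:ffofXi} and the surrounding discussion), and $\iota\dashv\pr_R$; these three facts reduce the claim first to $\Hom^{\le -2}_{\Ku(X)}(\cA^\alpha,\pr_R(\cO_X(H))) = 0$ and then, via the adjunction, to
\[
\Hom^{\le -2}_{\DCoh(X)}(\cA^\alpha,\cO_X(H)) = 0 .
\]

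I would finish this by a cohomological‑amplitude argument. Every $E\in\cA^\alpha$ has $\cH^i(E) = 0$ for $i\ge 2$: for $E = K^x$ this is \Cref{prop:exactseq}, which presents $K^x$ as the complex $[\cO_X^{\oplus 10}\xrightarrow{\ev_3}F_x(H)]$ placed in degrees $0$ and $1$, and for general $E$ it is a matter of propagating amplitude bounds through the equivalences and mutation functors that cut $\cA^\alpha$ out of the double tilt $\cA_{\alpha,-1}$. Granting this, Serre duality on $X$ (with $\omega_X\cong\cO_X(-3H)$, $\dim X = 4$) gives, for $E\in\cA^\alpha$ and $m\le -2$,
\[
\Hom^{m}_{\DCoh(X)}(E,\cO_X(H))\;\cong\;\Hom^{4-m}_{\DCoh(X)}\!\big(\cO_X(H),E(-3H)\big)^\vee\;\cong\;\H^{4-m}\!\big(X,E(-4H)\big)^\vee ,
\]
and since $\cH^i(E(-4H)) = \cH^i(E)(-4H)$ vanishes for $i\ge 2$, the spectral sequence $\H^p\!\big(X,\cH^q(E(-4H))\big)\Rightarrow\H^{p+q}(X,E(-4H))$ together with $\H^{>4}(X,-)=0$ forces $\H^k(X,E(-4H)) = 0$ for $k\ge 6$; as $4-m\ge 6$, we obtain $\Hom^m_{\DCoh(X)}(E,\cO_X(H)) = 0$, which proves the lemma.

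The step that needs care is the amplitude estimate for $\cA^\alpha$: in contrast with the cubic threefold case, the heart on $\Ku(X)$ is defined through the auxiliary category $\cD^b(\bP^3,\cB_0)$, so one must either trace amplitudes through $\sigma_*$, $\Phi$, $\Psi$ and the mutation functors (keeping in mind that $\sigma$ is a blow‑up along a line, so $R\sigma_*$ has amplitude $[0,2]$) or invoke the structural results of \cite{Bayer2017StabilityCO}. An alternative that avoids $X$ entirely: by \Cref{cor_vanishing2_cubic4fold_noplane} one has $\Xi'\circ\pr_R(\cO_X(H))\in\cA_{\alpha,-1}[-\infty,2]$, so filtering by $\cA_{\alpha,-1}$‑cohomology the vanishing reduces to $\Hom^{0}\!\big(\Xi'(\cA^\alpha),\,\cH^{-2}_{\cA_{\alpha,-1}}(\Xi'\circ\pr_R(\cO_X(H)))\big) = 0$; one then identifies this top cohomology object from the distinguished triangles in the proofs of \Cref{prop:homvanishing_1-2} and \Cref{prop:123} and the relation $\Psi(\cO_X(-H)) = \cB_{-1}$, and kills the $\Hom$ using the semiorthogonal decomposition of \Cref{thm:ffofXi} together with $\Hom^\bullet(\cT_X,\cO_X(-H)) = 0$ (which holds since $\cT_X\subset{}^\perp\langle\cO_X(-H)\rangle$).
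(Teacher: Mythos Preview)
Your initial reduction is correct and clean: via full faithfulness of $\Xi'$ on $\Ku(X)$ and the adjunction $\iota\dashv\pr_R$, the statement is indeed equivalent to $\Hom^{\le -2}_{\DCoh(X)}(\cA^\alpha,\cO_X(H))=0$.

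The genuine gap is the amplitude claim $\cA^\alpha\subset D^{\le 1}_{\Coh(X)}$. You cite $K^x$ as evidence, but one example does not bound the whole heart. The heart $\cA^\alpha$ is defined by pulling back the double tilt $\cA_{\alpha,-1}\subset\Coh(\bP^3,\cB_0)[0,2]$ along $\Xi'=\Psi\circ\sigma^*\circ\phi$, and none of these functors gives you direct control on $\Coh(X)$-amplitude in the required direction: $\phi$ involves a right mutation (which \emph{raises} amplitude), $\sigma^*$ is harmless, but to go backwards through $\Psi$ you would need bounds on the left adjoint $\Phi$, and the paper only records that $\Psi([a,b])\subset[-\infty,b+1]$, which is the wrong way around for your purpose. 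So ``propagating amplitude bounds through the equivalences'' is not a matter of bookkeeping here; it would be new content, and it is not clear the bound even holds.

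The paper bypasses this entirely by working on the target rather than the source. It computes $\phi\circ\pr_R(\cO_X(H))$ explicitly and fits it into a triangle
\[
\cO_X(-H)^{\oplus 6}[1]\longrightarrow \phi\circ\pr_R(\cO_X(H))\longrightarrow \bf{L}_{\cO_X}\big(v^*(\Omega_{\bP^{20}})|_X\otimes\cO_X(3H)\big)[-1].
\]
For the first piece one uses $\phi(\cA^\alpha)\subset\cT_X\subset{}^\perp\langle\cO_X(-H)\rangle$, so \emph{all} Homs vanish. For the second piece one pushes forward by $\Xi$ and uses the amplitude bound $\Psi([a,b])\subset[-\infty,b+1]$ to get $\Xi(\bf{L}_{\cO_X}(\ldots)[-1])\in[-\infty,0]\subset\cA_{\alpha,-1}[-\infty,0]$, whence $\Hom^{\le 0}(\cA_{\alpha,-1}[3],-)$ vanishes. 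The crucial point is that amplitude is controlled in $\cA_{\alpha,-1}$ on $\cD^b(\bP^3,\cB_0)$, where the heart was constructed, not in $\Coh(X)$.

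Your ``alternative'' paragraph is heading toward exactly this argument: Corollary~\ref{cor_vanishing2_cubic4fold_noplane} alone only gives $\Xi'\circ\pr_R(\cO_X(H))\in\cA_{\alpha,-1}[-\infty,2]$, which is one degree short, and the missing piece is precisely identifying and killing the degree-$2$ part via the $\cO_X(-H)^{\oplus 6}$ term and semiorthogonality of $\cT_X$. That identification is not automatic from the cited results; it requires the explicit triangle above, which comes from the mutation calculation in \Cref{lem:computemutation} and \Cref{prop:homvanishing_1-2}. So your alternative is the right outline, but as written it defers the actual work.
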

\begin{proof}
    First, we compute $\phi\circ \pr_R(\cO_X(H))$.
    By \Cref{lem_T_Ku_noplane} and \Cref{lem:computemutation}(2),
    \begin{align*}
        \phi\circ \pr_R(\cO_X(H)) &\cong \phi \circ \pr_L(\cO_X(4H))[-2]\\
        &\cong  \bf{R}_{\cO_X(-H)}\bf{L}_{\cO_X}(\bf{L}_{\cO_X(2H)}(\cO_X(4H))\otimes \cO_X(-H))[-2]\\
        &\cong  \bf{R}_{\cO_X(-H)}\bf{L}_{\cO_X}((v^*(\Omega_{\bP^{20}})|_X\otimes \cO_X(3H)))[-1].
    \end{align*}
     By the exact sequence \eqref{exact:O(4)},
     there is an exact triangle
     \[
          \cO_X(-H)^{6}[1] \rightarrow \phi\pr_R(\cO_X(H)) \rightarrow \bf{L}_{\cO_X}(v^*(\Omega_{\bP^{20}})|_X\otimes \cO_X(3H))[-1].
    \]
    Since $\Xi \colon T_X \rightarrow \cD^b(\bP^3,\cB_0)$ and $\phi$ are fully faithful,
    it is enough to show that
    \[
    \Hom^{\leq 0 }(\phi(\cA^{\alpha})[3],\cO_X(-H)^{6}[2]) = \Hom^{\leq 0 }(\phi(\cA^{\alpha})[3],\bf{L}_{\cO_X}(v^*(\Omega_{\bP^{20}})|_X\otimes \cO_X(3H))[2]) = 0,
    \]
    where $\cA^{\alpha} = {\Xi'}^{-1}( \cA_{\alpha, -1})\cap\Ku(X)$ is a heart on $\Ku(X)$.
    Since $\phi(\cA^{\alpha})$ is contained in $\cT_X$, 
    the hom group $\Hom^{\leq 0 }(\phi(\cA^{\alpha}),\cO_X(-H)^{6}[1])$ vanishes.
    We note that $\phi(\cA^{\alpha}) = \Xi^{-1}( \cA_{\alpha, -1}) \cap \cT_X$.
    Next, we have 
    \begin{align*}
         &\Hom^{\leq 0 }(\phi(\cA^{\alpha}),\bf{L}_{\cO_X}(v^*(\Omega_{\bP^{20}})|_X\otimes \cO_X(3H))[-1])\cong\\
         \cong  &\Hom^{\leq 0 }(\Xi^{-1}( \cA_{\alpha, -1}),\bf{L}_{\cO_X}(v^*(\Omega_{\bP^{20}})|_X\otimes \cO_X(3H))[-1])\\
          \cong &\Hom^{\leq 0 }(\cA_{\alpha, -1}, \Xi (\bf{L}_{\cO_X}(v^*(\Omega_{\bP^{20}})|_X\otimes \cO_X(3H))[-1]))
    \end{align*}
    Since $\Xi = \Psi\circ \sigma^*$ and $\Psi([a,b]) \subset [-\infty,b+1]$ for any integers $a,b$, and by \Cref{lem:numerical_computation} and \eqref{exact:O(4)}, the object $\Xi (\bf{L}_{\cO_X}(v^*(\Omega_{\bP^{20}})|_X\otimes \cO_X(3H))[-1]) $ lies in $[-\infty,0]$.
    We also observe that $[-\infty,0]$ is contained in $\cA_{\alpha,-1}[-\infty,0]$ hence
    $\Hom^{\leq 0 }(\cA_{\alpha, -1}, \Xi (\bf{L}_{\cO_X}(v^*(\Omega_{\bP^{20}})|_X\otimes \cO_X(3H))[-1]))=0$.
\end{proof}

\begin{prop}\label{prop:homvanishing_0}
     Let $X$ be a  cubic fourfold. Then, $\phi \circ \pr_R(\cO_X[2]) \in [2,3]$ and 
      furthermore $\Hom^{\leq 0}(\cA_{\alpha,-1}[3],\Xi^{\prime}\circ\pr_R(\cO_X[2])) = 0$.
\end{prop}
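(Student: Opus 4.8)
The plan is to treat the two assertions separately. \emph{For the first}, I would rewrite $\phi\circ\pr_R(\cO_X[2])$ as an iterated mutation of a line bundle. By Serre duality in the form $\pr_R(-)\simeq\pr_L(-\otimes\cO_X(3H))[-2]$ (as used before \Cref{prop:homvanishing_1-2}) one has $\pr_R(\cO_X[2]) = \pr_L(\cO_X(3H))$, and \Cref{lem_T_Ku_noplane} then gives $\phi\circ\pr_R(\cO_X[2]) = \phi\circ\pr_L(\cO_X(3H)) = \Pi(\cO_X(2H)) = \bf{R}_{\cO_X(-H)}\bf{L}_{\cO_X}\bf{L}_{\cO_X(H)}(\cO_X(2H))$. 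I would then run through the three mutations as in \Cref{lem:computemutation}: the Euler sequence of $\bf{P}^5$ twisted by $\cO(2H)$ and restricted to $X$ gives $\bf{L}_{\cO_X(H)}(\cO_X(2H)) = \Omega^1_{\bf{P}^5}(2H)|_X[1]$; the exterior-square Euler sequence \eqref{exact:pthEuler} (with $p=2$) together with $\H^\bullet(X,\Omega^1_{\bf{P}^5}(2H)|_X)\cong\bf{C}^{15}[0]$ gives $\bf{L}_{\cO_X}(\Omega^1_{\bf{P}^5}(2H)|_X[1]) = \Omega^2_{\bf{P}^5}(2H)|_X[2]$; and, computing $\RHom(\Omega^2_{\bf{P}^5}(2H)|_X,\cO_X(-H)) = \H^\bullet(X,\Omega^2_{\bf{P}^5}|_X)^\vee[-4] = \bf{C}[-2]$ via Serre duality on $X$ and the Bott vanishing $\H^\bullet(\bf{P}^5,\Omega^2_{\bf{P}^5}(-3H)) = 0$, the object $\bf{R}_{\cO_X(-H)}(\Omega^2_{\bf{P}^5}(2H)|_X[2])$ fits into a triangle $\cO_X(-H)[3]\to\Pi(\cO_X(2H))\to\Omega^2_{\bf{P}^5}(2H)|_X[2]\to\cO_X(-H)[4]$. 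Hence $\phi\circ\pr_R(\cO_X[2]) = \Pi(\cO_X(2H))\in[2,3]$, with $\mathcal{H}^{-3} = \cO_X(-H)$ and $\mathcal{H}^{-2} = \Omega^2_{\bf{P}^5}(2H)|_X$; this is the first claim. (Alternatively, one reads this off from \Cref{lem:computemutation}(1): since $\bf{R}_{\cO_X}(\cO_X) = 0$ the functor $\phi$ kills the $\cO_X$-summand of $\pr_L(\cO_X(3H))$, leaving $\bf{R}_{\cO_X}(\Omega^3_{\bf{P}^5}(3H)|_X)[3]\otimes\cO_X(-H)$, and a short Serre-duality computation shows $\bf{R}_{\cO_X}(\Omega^3_{\bf{P}^5}(3H)|_X)\in[-1,0]$.)

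\emph{For the second assertion}, the plan is to show, exactly as in \Cref{cor_vanishing2_cubic4fold_noplane}, that $\Xi'\circ\pr_R(\cO_X[2]) = \Psi\circ\sigma^*(\Pi(\cO_X(2H)))$ lies in $\cA_{\alpha,-1}[-\infty,2]$; the vanishing $\Hom^{\leq 0}(\cA_{\alpha,-1}[3],-) = 0$ then follows formally from the support of the $\cA_{\alpha,-1}$-cohomology. Since $\sigma^*$ is fully faithful and sends $\cO_X(jH)$ to $\cO_{\widetilde{X}}(jH)$, it commutes with the mutations above (compare the argument in \Cref{prop:123}), and the cohomology computations transport to $\widetilde{X}$ via $\mathbf{R}\sigma_*\sigma^* = \id$; thus $\sigma^*\Pi(\cO_X(2H))\in[2,3]$ with $\mathcal{H}^{-3} = \cO_{\widetilde{X}}(-H)$ and $\mathcal{H}^{-2} = \sigma^*\Omega^2_{\bf{P}^5}(2H)|_X$. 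Applying $\Psi$ to the triangle $\cO_{\widetilde{X}}(-H)[3]\to\sigma^*\Pi(\cO_X(2H))\to\sigma^*\Omega^2_{\bf{P}^5}(2H)|_X[2]\to\cO_{\widetilde{X}}(-H)[4]$: the first term maps to $\cB_{-1}[3]$ (as $\Psi(\cO_{\widetilde{X}}(-H)) = \cB_{-1}$), and since $\cB_{-1}[2]\in\cA_{\alpha,-1}$ by \Cref{lem:phase of E'_C} we get $\cB_{-1}[3]\in\cA_{\alpha,-1}[1]\subseteq\cA_{\alpha,-1}[-\infty,2]$; for the second term I would resolve $\Omega^2_{\bf{P}^5}(2H)|_X$ through the two Euler sequences above in terms of $\cO_X,\cO_X(H),\cO_X(2H)$, pull back along $\sigma$, and use that $\Psi(\cO_{\widetilde{X}}(jH))$ is a genuine $\cB_0$-module for $j = 0,1,2$ (cf. \Cref{thm:ffofXi} and the surrounding discussion, together with $\Psi(\cO_{\widetilde{X}}(-H)) = \cB_{-1}$) to conclude $\Psi(\sigma^*\Omega^2_{\bf{P}^5}(2H)|_X)\in\Coh(\bf{P}^3,\cB_0)$, so that $\Psi(\sigma^*\Omega^2_{\bf{P}^5}(2H)|_X)[2]\in\Coh(\bf{P}^3,\cB_0)[2]\subseteq\cA_{\alpha,-1}[0,2]\subseteq\cA_{\alpha,-1}[-\infty,2]$. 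Combining the two graded pieces gives $\Xi'\circ\pr_R(\cO_X[2])\in\cA_{\alpha,-1}[-\infty,2]$, which finishes the proof.

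I expect the main obstacle to be the last step, i.e. controlling $\Psi(\sigma^*\Omega^2_{\bf{P}^5}(2H)|_X)$ relative to the twice-tilted heart $\cA_{\alpha,-1}$. The crude estimate $\Psi([a,b])\subseteq[a,b+1]$ used elsewhere (e.g. in \Cref{prop:123} and \Cref{lem:vanisshing-1}) would only place $\Xi'\circ\pr_R(\cO_X[2])$ in $[2,4]$, which is too weak for the $\Hom^{\leq 0}$-vanishing; one genuinely needs that the pushforward along the conic bundle $\pi\colon\widetilde{X}\to\bf{P}^3$ does not shift the object leftwards, i.e. that $\mathbf{R}^0\pi_*\big(\sigma^*\Omega^2_{\bf{P}^5}(2H)|_X\otimes\cO_{\widetilde{X}}(h)\otimes\cE\big) = 0$. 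Via the line-bundle resolution this reduces to the vanishing $\mathbf{R}^0\pi_*\big(\cO_{\widetilde{X}}(jH)\otimes\cO_{\widetilde{X}}(h)\otimes\cE\big) = 0$ for $j = 0,1,2$ (equivalently, that $\Psi(\cO_{\widetilde{X}}(jH))$ is a sheaf), which is the real technical input and should be extracted from the explicit description of $\cE$ and $\pi$ underlying the construction of $\Xi$, along the lines of the computations in \cite{Bayer2017StabilityCO}.
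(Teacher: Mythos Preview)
Your argument for the first assertion is correct and essentially identical to the paper's: both compute $\phi\circ\pr_R(\cO_X[2])=\Pi(\cO_X(2H))$ via the Euler sequences and land on the triangle $\cO_X(-H)[3]\to\Pi(\cO_X(2H))\to\Omega^2_{\bf{P}^5}(2H)|_X[2]$.

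For the second assertion your strategy diverges, and you have correctly identified the obstacle but propose a harder fix than necessary. Two points:

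\textbf{(i) The $\cO_X(-H)$ term.} Your argument via $\cB_{-1}[2]\in\cA_{\alpha,-1}$ works, but the paper's is simpler: since $\phi(\cA^\alpha)\subset\cT_X$ and $\cT_X\subset{}^\perp\langle\cO_X(-H)\rangle$ by definition, one has $\Hom^\bullet(\phi(\cA^\alpha)[3],\cO_X(-H)[3])=0$ in \emph{all} degrees by semiorthogonality. No appeal to tilting is needed.

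\textbf{(ii) The $\Omega^2$ term.} You aim to show $\Xi(\Omega^2_{\bf{P}^5}(2H)|_X)$ is a genuine sheaf, which would require knowing that $\Psi(\cO_{\widetilde{X}}(jH))$ is a sheaf for $j=1,2$; as you note, this is a nontrivial claim about $\bf{R}^0\pi_*$ not supplied by the paper. But you only need $\Xi(\Omega^2_{\bf{P}^5}(2H)|_X)\in[-\infty,0]$, and for this the single vanishing $\Xi(\cO_X)=0$ (from \Cref{thm:ffofXi}) already suffices. Applying $\Xi$ to the Euler sequence $0\to\Omega^2_{\bf{P}^5}(2H)|_X\to\cO_X^{15}\to\Omega^1_{\bf{P}^5}(2H)|_X\to 0$ gives $\Xi(\Omega^2_{\bf{P}^5}(2H)|_X)\cong\Xi(\Omega^1_{\bf{P}^5}(2H)|_X[-1])$; since $\Omega^1_{\bf{P}^5}(2H)|_X[-1]\in[-1,-1]$, the crude estimate $\Xi([a,b])\subset[-\infty,b+1]$ already lands this in $[-\infty,0]$, hence its shift by $[2]$ lies in $\cA_{\alpha,-1}[-\infty,2]$ and the vanishing follows. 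In other words, you do not need to control $\Psi(\cO_{\widetilde{X}}(H))$ or $\Psi(\cO_{\widetilde{X}}(2H))$ at all: the single step of the Euler filtration plus $\Xi(\cO_X)=0$ buys exactly the missing shift by one.
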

\begin{proof}
Recall that by Serre duality $\pr_R(\ko_X[2])=\pr_L(\ko_X(3))$.
By \Cref{lem_T_Ku_noplane} we have that 
\[
\phi\circ \pr_L(\ko_X(3H))=\bf{R}_{\ko_X(-H)}\bf{L}_{\ko_X}\bf{L}_{\ko_X(2H)}(\ko_X(2H)).
\]
    We saw in the proof of \Cref{lem:computemutation} that 
    $\bf{L}_{\cO_X(H)}\bf{L}_{\cO_X(2H)}(\cO_X(3H)) = \Omega_{\bP^5}^2(3H)|_X[2]$
    moreover 
    \[
    \bf{L}_{\cO_X(H)}\bf{L}_{\cO_X(2H)}(\cO_X(3H))=(\bf{L}_{\ko_X}\bf{L}_{\ko_X(H)}\ko_X(2H))\otimes\ko_X(H)
    \]
    hence 
    \[
    \bf{L}_{\ko_X}\bf{L}_{\ko_X(2H)}(\ko_X(2H))=\Omega_{\bP^5}^2(2H)|_X[2].
    \]
    To compute $\bf{R}_{\cO_X(-H)}(\Omega_{\bP^5}^2(2H)|_X)$ we observe that
    \begin{align*}
        \Hom^\bullet(\Omega_{\bP^5}^2(2H)|_X,\cO_X(-H)) &\cong \Hom^{4-\bullet}(\cO_X,\Omega_{\bP^5}^2|_X)^{\vee}\\ 
        &\cong \H^{4-\bullet}(\Omega_{\bP^5}^2|_X)^{\vee}\\
        & \cong
        \begin{cases}
            \bC &\text{ if } \bullet = 2\\
            0&\text{ otherwise.}
        \end{cases}
    \end{align*}
    Thus, the object $\bf{R}_{\cO_X(-H)}(\Omega_{\bP^5}^2(2H))$ sits in the following distinguished triangle
    \[
    \cO(-H)[3] \rightarrow \bf{R}_{\cO_X(-H)}(\Omega_{\bP^5}^2(2H))[2] \rightarrow \Omega_{\bP^5}^2(2H)[2].
    \]
    This shows that $\phi \circ \pr_L(\cO_X(3H))=\bf{R}_{\cO_X(-H)}(\Omega_{\bP^5}^2(2H))[2]\in [2,3]$.

    For the second statement we recall that $\Xi$ is a fully faithful functor,
    so it is enough to show that 
    \[
    \Hom^{\leq 0}(\phi(\cA^{\alpha})[3],\cO(-H)[3]) = \Hom^{\leq 0}(\phi(\cA^{\alpha})[3],\Omega_{\bP^5}^2(2H)[2]) =0.
    \]
    Since $\phi(\cA^{\alpha}) \subset \cT_X$, we have 
    \[
     \Hom^{\leq 0}(\phi(\cA^{\alpha}),\cO(-H)[3]) =0.
    \]
    A straight computation shows that 
    \[
    \Hom^{\leq 0}(\phi(\cA^{\alpha}),\Omega_{\bP^5}^2(2H)[2]) \cong \Hom^{\leq 0}(\cA_{\alpha,-1},\Xi(\Omega_{\bP^5}^2(2H)[2])) 
    \]
    Next, by the $p^{\textrm{th}}$ exterior power of the  Euler sequence,
    there is an exact triangle
    \[
    \Omega_{\bP^5}^2(2H)|_X \rightarrow \cO_{X}^{15} \rightarrow \Omega_{\bP^5}(2H)|_X. 
    \]
    Since $\Xi(\cO_{X})= 0$, see \cite{Bayer2017StabilityCO}*{Prop 7.7}, we get
    $\Xi(\Omega_{\bP^5}^2(2H)|_X ) \cong \Xi(\Omega_{\bP^5}(2H)|_X[-1] )$.
    Moreover $\Xi([a,b]) \subset [-\infty,b+1]$, hence
     $\Xi(\Omega_{\bP^5}^2(2H)|_X )$ lies in $[-\infty,0]$.
    Therefore, the vanishing $\Hom^{\leq 0}(\cA^{\alpha}[3],\Xi^{\prime}\circ\pr_R(\cO_X)[2]) = 0$ holds.
\end{proof}

\begin{thm}\label{thm_geomstab_cubic4_noplane}
    Let $X$ be a cubic fourfold not containing a plane.
    Let $\tau_\alpha = (-\frac{1}{6}Z^\alpha,\cA^{\alpha}[3])$ be a stability condition on $\Ku(X)$, see \eqref{eq_def_satb_cubic4}, with $0<\alpha <\frac{1}{4}$.
    Consider a stability condition $\nu$ on $\cN_X = \langle \cO_X,\cO_X(H),\cO_X(2H)\rangle$ with heart $\langle \cO_X[2],\cO_X(H)[1],\cO_X(2H)\rangle_{\mathrm{ext}}$.
    Then for any $w\in \CC$ with $\Im(w)\geq 0$ the stability conditions $\tau_{\alpha,w}:=w\tau_\alpha$ and $\nu$ glue to a stability condition $\sigma_{\alpha,w}=(Z_{\alpha,w},\cP_{\alpha,w})\in \Stab(X)$.
Moreover 
\begin{enumerate}
    \item the object $K^x[3]$ is stable of phase $1-(\Im w)/\pi$ for any $x\in X$,
    \item if $\phi_{\sigma}(\cO_X(2H))<\phi_{\sigma}(N^x)< \phi_{\sigma}(\cO_X(H)[1]) < \phi_{\sigma}(\cO_X[2]) < 1$ and $\nu$ is spiked at $\ko_X(2)$ with respect to $N^x$ then $\sigma_{\alpha,0}$ is geometric.
\end{enumerate}
\end{thm}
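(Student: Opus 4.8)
The plan is to establish the three assertions of the theorem in order --- existence of the glued stability condition $\sigma_{\alpha,w}$ for all $w$ with $\Im(w)\geq 0$, then (1), then (2) --- working throughout with the semiorthogonal decomposition $\DCoh(X)=\langle\Ku(X),\cN_X\rangle$ and the triangle $N^x\to\cO_x\to K^x[3]$. For the gluing, write $\cC=\langle\cO_X[2],\cO_X(H)[1],\cO_X(2H)\rangle_{\rm{ext}}$ for the heart of $\nu$; since $\cC$ is generated by finitely many simple exceptional objects, by \Cref{C:gluingconditions}(1) it suffices to prove $\Hom^{\leq 0}(\cA_{\tau_{\alpha,w}},\cC)=0$ in $\DCoh(X)$, where $\cA_{\tau_{\alpha,w}}$ is the heart of $\tau_{\alpha,w}=w\tau_\alpha$. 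As $\Im(w)\geq 0$, the $\CC$-action shifts the slicing of $\tau_\alpha$ upward, so $\cA_{\tau_{\alpha,w}}=\cP_{\tau_\alpha}(\Im(w)/\pi,\,1+\Im(w)/\pi]$ lies in the extension closure of finitely many $\cA^\alpha[3+m]$ with $m\in\ZZ_{\geq 0}$; since vanishing of $\Hom^{\leq 0}(-,\cC)$ is closed under extensions and under positive shifts of the source, it is enough to check $\Hom^{\leq 0}_{\DCoh(X)}(\cA^\alpha[3],\cO_X(kH)[2-k])=0$ for $k=0,1,2$. Using the adjunction $\iota\dashv\pr_R$, full faithfulness of $\Xi'$ on $\Ku(X)$, the identity $\Xi'(\cA^\alpha)=\cA_{\alpha,-1}\cap\Xi'(\Ku(X))$, and that $\pr_R,\Xi'$ commute with shifts, these become $\Hom^{\leq 0}\big(\cA_{\alpha,-1}\cap\Xi'(\Ku(X))[3],\,\Xi'\circ\pr_R(\cO_X(kH)[2-k])\big)=0$, which follow from \Cref{prop:homvanishing_0} (for $k=0$), \Cref{lem:vanisshing-1} (for $k=1$), and \Cref{cor_vanishing2_cubic4fold_noplane} (for $k=2$). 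This produces $\sigma_{\alpha,w}=\tau_{\alpha,w}*\nu\in\Stab(X)$ for every such $w$, and in particular $\sigma_{\alpha,0}=\tau_\alpha*\nu$.

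For (1): combining $\Xi'(K^x)=E_C'[1]$ from \Cref{prop:stableK^x}, the value $Z_{\alpha,-1}(E_C')=6\mathtt{i}$, and $Z^\alpha=-\mathtt{i}Z_{\alpha,-1}\circ[\Xi']$ gives $(-\tfrac{1}{6}Z^\alpha)(K^x[3])=-1$, so $K^x[3]\in\cA^\alpha[3]$ has $\tau_\alpha$-phase $1$; moreover it is $\tau_\alpha$-stable because $\Xi'(K^x)=E_C'[1]$ is $\sigma_{\alpha,-1}$-stable (\Cref{prop:stableK^x}) and $\Xi'$ is fully faithful. The $\CC$-action then makes $K^x[3]$ a $\tau_{\alpha,w}$-stable object of phase $1-(\Im w)/\pi$, and since semistable (resp. stable) objects of a factor of a semiorthogonal decomposition remain semistable (resp. stable) for a glued stability condition, $K^x[3]$ is $\sigma_{\alpha,w}$-stable of phase $1-(\Im w)/\pi$.

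For (2): assume the phase chain $\phi_\sigma(\cO_X(2H))<\phi_\sigma(N^x)<\phi_\sigma(\cO_X(H)[1])<\phi_\sigma(\cO_X[2])<1$ and that $\nu$ is spiked at $\cO_X(2H)$ with respect to $[N^x]$ --- this is well-posed since $[N^x]=[\cO_X(2H)]-5[\cO_X(H)]+10[\cO_X]$ is independent of $x$ by \Cref{cor_Nx}. Apply \Cref{thm:gluedgeometricKuz} with $\cT=\Ku(X)$, $\cN=\cN_X$ (norm $\nu(\cO_X(2H))=0$, $\nu(\cO_X(H))=1$, $\nu(\cO_X)=2$, which is sharp), $\tau=\tau_\alpha$, $\eta=\nu$, and $F=\cO_x$, so that $T^F=K^x[3]$ and $N^F=N^x$. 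Its hypotheses hold: (i) is the spiked assumption; (ii) is $(-\tfrac{1}{6}Z^\alpha)(K^x[3])=-1$ from the previous paragraph; (iii) is the gluing above with $w=0$; for (iv), $K^x[3]\in\cA^\alpha[3]$ and, by the triangles of \Cref{cor_Nx}, $N^x\in\langle\cO_X(2H),\cO_X(H)[1],\cO_X[2]\rangle_{\rm{ext}}=\cC$, so $\cO_x\in\cA$ since its projections are $K^x[3]$ and $N^x$; and since $[N^x]=[\cO_X(2H)]+5[\cO_X(H)[1]]+10[\cO_X[2]]$ in $\rm{K}_0(\cN_X)$ while $[K^x[3]]$ has no $\cN_X$-component, $\pr_{\cO_X(2H)}(\cO_x)=1$. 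Moreover $K^x[3]$ is stable of phase one in $\cB=\cA^\alpha[3]$; $N^x$ is efficient because the short exact sequence $0\to N^x\to\cO_x\to K^x[3]\to 0$ in $\cA$ gives monomorphisms $\Hom(\cO_X[2],N^x)\hookrightarrow\Hom(\cO_X[2],\cO_x)=\Ext^{-2}_X(\cO_X,\cO_x)=0$ and $\Hom(\cO_X(H)[1],N^x)\hookrightarrow\Hom(\cO_X(H)[1],\cO_x)=\Ext^{-1}_X(\cO_X(H),\cO_x)=0$; and \eqref{eq:hypoKuz} is exactly the assumed phase chain for this data. Hence $\cO_x$ is $\sigma_{\alpha,0}$-stable or $\cO_x\cong N^x\oplus K^x[3]$, and the latter is impossible since $\cO_x$ is indecomposable (with $\End_{\DCoh(X)}(\cO_x)=\CC$) while $N^x$ and $K^x[3]$ are nonzero. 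Finally $Z_{\sigma_{\alpha,0}}(\cO_x)=Z_\nu(N^x)-1$ is independent of $x$, so all the $\cO_x$ are stable of a common phase, i.e. $\sigma_{\alpha,0}$ is geometric.

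The main obstacle lies entirely in the gluing step: translating the three vanishing statements --- which live on $\cD^b(\bP^3,\cB_0)$, are expressed via $\Xi'$ and $\pr_R$, and rest on Serre duality in the form $\pr_R\simeq\pr_L(-\otimes\cO_X(3H))[-2]$ together with the mutation computations of \Cref{lem:computemutation} and \Cref{lem:numerical_computation} --- into the single statement $\Hom^{\leq 0}(\cA^\alpha[3],\cC)=0$ inside $\DCoh(X)$. The remaining steps are routine once this is in place, and the only genuinely geometric input, stability of the objects $K^x[3]$, is already available from \Cref{prop:stableK^x}.
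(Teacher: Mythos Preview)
Your proof is correct and follows essentially the same approach as the paper's: you reduce the gluing to the three $\Hom^{\leq 0}$-vanishings via the adjunction $\iota\dashv\pr_R$ and full faithfulness of $\Xi'$, invoke the same trio of results (\Cref{prop:homvanishing_0}, \Cref{lem:vanisshing-1}, \Cref{cor_vanishing2_cubic4fold_noplane}) for $k=0,1,2$, and then apply \Cref{thm:gluedgeometricKuz} with the efficiency of $N^x$ obtained from the injection $N^x\hookrightarrow\cO_x$. Your write-up is in fact somewhat more explicit than the paper's in a few places --- you spell out the extension-closure argument for passing from $w=0$ to general $\Im(w)\geq 0$, you verify $\pr_{\cO_X(2H)}(\cO_x)=1$ via \Cref{cor_Nx}, and you note that $Z(\cO_x)$ is independent of $x$ so that all skyscrapers share a common phase.
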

Let us observe that if $\Re(Z_{\alpha,w}(\ko(i)[2-i]))>0$ for $i=0,1,2$ and $\Re(Z_{\alpha,w}(\ko_X(2)))\gg 1$ then $\nu$ is spiked at $\ko_X(2)$ with respect to $N^x$.
\begin{proof}
To prove gluing we apply \Cref{C:gluingconditions}, hence we only need to show that 
\[
\Hom_X^{\leq 0}(\iota(w\cP_{\alpha}(0,1]),\ko_X(i)[2-i])=0 \text{ for } i=0,1,2.
\]
where $w\cP_{\alpha}$ is the slicing of $\tau_{\alpha,w}$.
Let us observe that for any $E \in \cD^b(X)$
\begin{align*}
    \Hom_X(\iota(\cA^{\alpha}[3]),E) 
    &\cong \Hom_{\Ku(X)}(\cA^{\alpha}[3],\pr_R(E))\\
    & \cong \Hom_{\cD^b(\bP^3,\cB_0)}(\cA_{\alpha,-1}[3],\Xi^{\prime}\circ\pr_R(E)).\\
\end{align*}
Recall that by \Cref{prop:homvanishing_1-2}, \Cref{lem:vanisshing-1} and \Cref{prop:homvanishing_0},
\begin{align*}
    \Hom^{\leq 0} (\cA_{\alpha,-1}[3],\Xi^{\prime}\circ\pr_R(\cO_X(2H)))&=  \Hom^{\leq 0}(\cA_{\alpha,-1}[3],\Xi^{\prime}\circ\pr_R(\cO_X(H)[1]))\\
    &=\Hom^{\leq 0}(\cA_{\alpha,-1}[3],\Xi^{\prime}\circ\pr_R(\cO_X[2])) = 0.
\end{align*}
We conclude that $\Hom_X^{\leq 0}(\cP_{\alpha}(0,1][-\infty,0],E)$ for $E=\ko_X(i)[2-i]),i=0,1,2$.
Recall that by definition $w\cP_{\alpha}(0,1]=\cP_{\alpha}(\Im w/\pi,1+\Im w/\pi]$ hence we get 
\[
\Hom^{\leq -(\lceil 1+\Im w\rceil-1)}(\cP^\alpha(1-\lceil1+\Im w\rceil,1],E)=0
\]
as a consequence we get the vanishing of $\Hom^{\leq 0}(\cP^\alpha(0,\lceil 1+\Im w\rceil],E)=0$ and finally
\[
\Hom^{\leq 0}(\cP^\alpha(\Im w, 1+\Im w],E)=0 \text{ for }E=\ko_X(i)[2-i]),i=0,1,2.
\]

Recall that by \Cref{lem:phase of E'_C,prop:stableK^x} the object $K^x\in \cA^{\alpha}=\Xi'^{-1}(\cA_{\alpha,-1}))$ is stable with $Z^{\alpha}(K^x)=-6$, hence $K^x[3]$ is $\tau_{\alpha}$-stable of phase $1$ and the first claim follows.

To prove the second claim we use \Cref{thm:gluedgeometricKuz}.
Moreover the object $N^x \in \langle \cO_X,\cO_X(H),\cO_X(2H)\rangle$ is efficient indeed 
a non zero morphism $\cO(H)[1] \to N^x$ would induce a non zero morphism $\cO(H)[1] \to \cO_x$, due to the injectivity $N^x \hookrightarrow \cO_x$.
Moreover the assumptions in the second item and the fact that $\ko_x\neq K^x[3]\oplus N^x$ finishes the proof.
\end{proof}

\begin{rem}\label{rem:glu_cubic4}
Let us observe that the gluing in \Cref{thm_geomstab_cubic4_noplane} still holds if $\nu\in \Stab(\cN_X)$ is chosen with heart $\cB=\langle\ko_X[-n_0+2],\ko_X(H)[-n_1+1],\ko(2H)[-n_2]\rangle_{\mathrm{ext}}$ with $n_2\geq n_1\geq n_0\geq0$.
\end{rem}

\begin{rem}
    In \cite{ouchi2017lagrangian}, the construction of stability conditions on $\Ku(X)$ for a generic cubic fourfold containing a plane $X$ is established, along with the stability of the projection of skyscraper sheaves $K^x$ for some stability condition $\tau \in \Stab(\Ku(X))$.
    The gluability of $\tau$ and $\eta \in \Stab(\cN_X)$ follows from an argument similar to the case of a cubic fourfold containing no plane.
    However, to construct geometric stability conditions, one needs to rotate using the $\bf{C}$-action to ensure that $K^x$ becomes simple.
    Although the rotated condition $w \cdot (\tau * \eta)$ is not necessarily obtained directly by gluing, we can deduce the result using the properties of the glued stability condition $\tau * \eta$ via an argument parallel to \Cref{thm:gluedgeometricKuz}. 
    We omit the details here to avoid significant overlap with the proof of \Cref{thm:gluedgeometricKuz}.
\end{rem}

\begin{rem}
    Quite recently, stability conditions on the Kuznetsov component of cubic fivefolds were constructed in \cite{Liustabilityfivefolds}. The general theory developed, for instance, in \cite{HLJR}*{Thm. 3.9} allows the construction of glued stability conditions on cubic fivefolds. However, it seems more difficult to construct geometric stability conditions by gluing in this case. Indeed, consider the Kuznetsov-type decomposition 
    \[
        \DCoh(X) = \langle \Ku(X), \mathcal{O}_X, \mathcal{O}_X(H), \mathcal{O}_X(2H), \mathcal{O}_X(3H) \rangle
    \] 
    of a cubic fivefold $X$. To apply \Cref{thm:gluedgeometricKuz}, for each $x\in X$ the projection $\pr_{L}(\mathcal{O}_x)$ should be pure with respect to the standard t-structure for each $L \in \{ \mathcal{O}_X, \mathcal{O}_X(H), \mathcal{O}_X(2H), \mathcal{O}_X(3H) \}.$ However, a direct computation shows that $\pr_{\mathcal{O}_X}(\mathcal{O}_x)$ is a two-term complex.
\end{rem}

\section{Noncommutative minimal model program for Fano Varieties}
\label{S:nMMPFano}
We begin this section by explaining how quasi-convergent paths in the space of stability conditions $\Stab(\cD)$ of a $k$-linear triangulated category $\cD$ give rise to semiorthogonal decompositions of $\cD$. Sub\-sequently, we review quantum cohomology of Fano varieties and prove results concerning isomonodromic deformations of the quantum connection and the asymptotics of associated central charges. Finally, we explain the NMMP for Fano varieties and we formulate some precise conjectures in the cases of both semisimple and non-semisimple quantum cohomology.

\subsection{Quasi-convergent paths}
We recall the notion of quasi-convergent path proposed in \cite{NMMP} and developed systematically in \cite{HLJR}. Our convention differs slightly here as we take limits as $t\to 0$, rather than to $\infty$ as in \emph{loc. cit.} Throughout, $\cD$ is a $k$-linear triangulated category.

Consider a path $\sigma_t:(0,a]\to \Stab(\cD)$ for $a\in \bf{R}_{>0}$. For any non-zero object $E$ of $\cD$ we let $\phi^+_t(E) := \phi_{\sigma_t}^+(E)$ and $\phi_t^-(E) := \phi_{\sigma_t}^-(E)$, regarded as functions of $t$. A non-zero object $E$ is called \emph{limit semistable} with respect to the path $\sigma_t$ if 
\[
    \lim_{t\to 0} \phi_t^+(E) - \phi_t^-(E) = 0.
\]
For technical purposes, it is useful to introduce the following average phase function
\[
    \phi_\sigma(E) := \frac{1}{m_\sigma(E)}\sum_i \phi_\sigma(F_i)\cdot m_\sigma(F_i)
\]
where $F_1,\ldots, F_n$ are the $\sigma$-HN factors of $E$. When $E$ is $\sigma$-semistable, the average phase recovers the usual phase so there is no risk of confusion. Finally, we let $\ell_\sigma(E) := \log m_\sigma(E) + i\pi \phi_\sigma(E)$ and write $\ell_t = \ell_{\sigma_t}$. The function $\ell_t(E)$ is an approximation of $\logZ_t(E)$ for $E$ limit semistable, with the benefit of being defined for all $t\in (0,a]$.

\begin{defn}
\label{D:quasiconvergent}
    A path $\sigma_t:(0,a]\to \Stab(\cD)$ is called \emph{quasi-convergent} if \vspace{-2mm}
    \begin{enumerate} 
        \item for all non-zero objects $E$ of $\cD$, there is a filtration $0=E_n\to \cdots \to E_1 \to E_0 = E$ such that $G_i:=\Cone(E_i \to E_{i-1})$ is limit semistable and 
        \[
            \liminf_{t\to 0} \phi_t(G_i) - \phi_t(G_{i-1}) >0
        \]
        for all $1\le i \le n$; and \vspace{-2mm}
        \item for any pair of limit semistable objects $E$ and $F$ the following limit exists:
        \[
            \lim_{t\to 0} \frac{\ell_t(F) - \ell_t(E)}{1+\lvert \ell_t(F) - \ell_t(E)\rvert}. 
        \]
    \end{enumerate}
\end{defn}
The purpose of quasi-convergent paths is to define filtrations of $\cD$. In certain cases, these filtrations are admissible and give semiorthogonal decompositions of $\cD$, as we now explain. We fix henceforth a quasi-convergent path $\sigma_t:(0,a]\to \Stab(\cD)$. We can define a relation on the class $\cP_{\sigma_t}$ of $\sigma_t$-limit-semistable objects, which is well-defined by \cite{HLJR}*{Lem. 2.15}.

\begin{defn}\cite{HLJR}*{Def. 2.16}
    Given $E,F\in \cP_{\sigma_t}$, we write $F \prec^i E$ if 
    \[\lim_{t\to0} \phi_t(E) - \phi_t(F) = \infty
    \]
    and $E\preceq^i F$ otherwise. We write $E \sim^i F$ if both $E\preceq^i F$ and $F\preceq^i E$. 
\end{defn}

It is proven in \emph{loc. cit.} that $\preceq^i$ is a total preorder on $\cP_{\sigma_t} = \cP$ and consequently that $\preceq^i$ induces a total order on $\cP/{\sim^i}$. Given $E \in \cP$, let $\cD^E$ denote the full subcategory of $\cD$ consisting of objects $A$ whose limit HN factors $H$ all satisfy $H\sim^i E$. 

In \cite{HLJR}*{Prop. 2.20}, it is proven that for any $E\in \cP$ the category $\cD^E$ is triangulated. Furthermore, $\cD^E$ depends only on the class of $E$ in $\cP/{\sim^i}$.

The definitions so far do not involve our choice of $v:\rm{K}_0(\cD)\to \Lambda$. In \cite{HLJR}, there is a further notion of \emph{numerical} quasi-convergent path, which involves $v$. We won't recall the definition here; however, it is important that when $\cD = \DCoh(X)$ and we take $v:\rm{K}_0(X)\to \H_{\rm{alg}}^\bullet(X)$ as in \Cref{E:latticeexample}, quasi-convergent paths are automatically numerical by \cite{HLJR}*{Ex. 2.42}. In \emph{loc. cit.} there is also a notion of \emph{support property} for numerical quasi-convergent paths, generalizing the one for stability conditions regarded as constant paths. 

For the present work, the key fact about quasi-convergent paths is the following special case of the results of \cite{HLJR}.

\begin{thm}
\label{T:qconvmainthm}
    Let $\cD$ denote a $k$-linear triangulated category and let $\sigma_t$ be a numerical quasi-convergent path satisfying the support property. There is a finite collection $\{E_1\prec^i\cdots \prec^i E_n\}\subseteq \cP_{\sigma_t}$ such that 
    \[
        \cD = \langle \cD^{E_1},\ldots, \cD^{E_n}\rangle.
    \]
    Furthermore, each $\cD^{E_j}$ admits a stability condition satisfying the support property with respect to $\rm{K}_0(\cD^{E_j})\twoheadrightarrow v(\cD^{E_j}) \subset \Lambda$.
\end{thm}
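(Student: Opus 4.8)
\textbf{Proof plan for \Cref{T:qconvmainthm}.} This is essentially a repackaging of the main results of \cite{HLJR}, so the plan is to cite the relevant statements rather than reprove them, being careful to match hypotheses. The starting point is the total preorder $\preceq^i$ on the class $\cP_{\sigma_t}$ of limit semistable objects, which by \cite{HLJR}*{Lem. 2.15, Def. 2.16} is well-defined, and the induced total order on $\cP_{\sigma_t}/{\sim^i}$. For each equivalence class one has the triangulated subcategory $\cD^E$ by \cite{HLJR}*{Prop. 2.20}. The first substantive step is to show that only finitely many of the $\cD^E$ are nonzero and that they assemble into a semiorthogonal decomposition $\cD = \langle \cD^{E_1},\ldots,\cD^{E_n}\rangle$ with $E_1\prec^i\cdots\prec^i E_n$. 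This is where the \emph{numerical} hypothesis and the support property enter: the support property for the quasi-convergent path forces the masses and phases of limit semistable objects to be controlled by the finitely-generated lattice $\Lambda$, which both bounds the number of $\sim^i$-classes that can carry a nonzero object and guarantees admissibility of the resulting filtration. I would extract this from \cite{HLJR} (the structural theorem assembling the $\cD^E$ into an SOD; in \emph{loc. cit.} this is stated for $t\to\infty$, so one applies it after the reparametrization $t\mapsto 1/t$ noted at the start of \Cref{SS:... }).

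The second step is the ``furthermore'' clause: each factor $\cD^{E_j}$ carries a stability condition satisfying the support property with respect to the restricted lattice map $\rm{K}_0(\cD^{E_j})\twoheadrightarrow v(\cD^{E_j})\subset\Lambda$. The idea here is that on a single $\sim^i$-class the limit phases do not spread, so after renormalizing by a suitable element of $\widetilde{\GL_2^+(\bf{R})}$ (rotating and rescaling so that the common limit phase sits at, say, $1$ and the masses are bounded below) the limiting data $\ell_t$ restricted to $\cD^{E_j}$ converges and defines a genuine central charge on $\rm{K}_0(\cD^{E_j})$; the heart is obtained as the extension closure of the limit semistable objects in the class. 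One then checks the Harder--Narasimhan and support properties for this limiting stability condition, the latter following from the support property of the path together with the fact that $v(\cD^{E_j})$ spans a sublattice of $\Lambda$ of positive rank. Again this is exactly \cite{HLJR}*{Thm. 2.3.x} (the theorem producing stability conditions on the pieces), so the work is bookkeeping: verifying that our $v$ makes $\sigma_t$ numerical, which is \cite{HLJR}*{Ex. 2.42} for $\cD = \DCoh(X)$, and that ``numerical quasi-convergent $+$ support property'' is precisely the hypothesis under which their theorem applies.

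The main obstacle is not conceptual but one of \emph{faithful transcription}: the conventions in \cite{HLJR} take limits as $t\to\infty$ and phrase everything in terms of augmented stability conditions / the boundary of $\Astab$, whereas here limits are as $t\to 0$; one must make sure the reparametrization is applied consistently and that the ordering convention ($E_1\prec^i\cdots\prec^i E_n$ matching $\cD = \langle\cD^{E_1},\ldots\rangle$ with $\cD^{E_1}$ the ``most positive'' or ``most negative'' piece, depending on the Hom-vanishing direction) is the one actually proved there. A secondary subtlety is confirming that the support property for numerical quasi-convergent paths in the sense of \cite{HLJR} genuinely implies the finiteness needed for the filtration to terminate after finitely many steps; this should follow from their general theory but is worth isolating as the key input. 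Since the paper is content to cite \cite{HLJR} for all of this, the proof in the text will be a one-line pointer: ``This is a special case of \cite{HLJR}*{...}, using \cite{HLJR}*{Ex. 2.42} to see that $\sigma_t$ is numerical.''
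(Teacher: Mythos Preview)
Your proposal is correct and matches the paper's approach exactly: the theorem is stated without proof as a special case of the results of \cite{HLJR}, with the surrounding text and the subsequent remark pointing to \cite{HLJR}*{Thm.~2.30} for the explicit construction of the stability conditions on the factors. Your identification of the relevant inputs (Lem.~2.15, Def.~2.16, Prop.~2.20, Ex.~2.42, and the reparametrization $t\mapsto 1/t$) is accurate, and the paper likewise relies on Ex.~2.42 to justify that paths are automatically numerical in the cases of interest.
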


\begin{rem}
    The categories $\cD^{E_j}$ in \Cref{T:qconvmainthm} depend only on the class of $E_j$ in $\cP/{\sim^i}$. Thus, the more intrinsic indexing set is given by $\cP/{\sim^i}$. If one relaxes the hypothesis that $\sigma_t$ satisfy the support property for paths, one can only guarantee that each $\cD^{E_i}$ admits a pre-stability condition. Finally, we note that there is an explicit construction of the (pre-)stability conditions on the categories $\cD^{E_j}$ using $\sigma_t$, which is given in \cite{HLJR}*{Thm. 2.30}.
\end{rem}

\subsection{Quantum cohomology}
\label{SS:quantumcohomology}

In this section, we review some salient points from the theory of quantum cohomology \cites{GGI16, KontsevichManin, manin_frob_man}. We consider a smooth Fano variety $X$ and its co\-homology with complex coefficients $\H^\bullet(X) = \H^{\bullet}(X,\CC)$ with a homogeneous basis 
$\{\phi_i\}$. We write an element $\tau\in \H^\bullet(X)$ as $\tau=\sum \tau^{i}\phi_i$ where $\tau^{i_2}, \dots, \tau^{i_2+b_2-1}$ are the coordinates of $\H^2(X)$ and $b_i$, for $i=0,\dots, 2\dim (X)$, are the Betti numbers of $X$. The \emph{genus zero Gromov--Witten potential} is a formal power series 
\begin{equation*}
    \mathcal{F}^X(\tau)\in \CC\llbracket \tau^0, \dots,\tau^{i_1+b_1-1}, e^{\tau^{i_2}}, \dots, e^{\tau^{i_2+b_2-1}}, \tau^{i_3}\dots, \tau^{i_{2\dim(X)}+b_{2\dim(X)}-1}\rrbracket
\end{equation*}
defined by the genus $0$ Gromov--Witten invariants of $X$. Since $X$ is Fano, $\mathcal{F}^X_0(\tau)$ is a finite sum for $\tau\in \H^2(X)$ and thus convergent on $\H^2(X)$. We denote by $B \subset \H^\bullet(X)$ a non-empty submanifold containing $0$, not necessarily open in $\H^\bullet(X)$, where $\cF_0^X(\tau)$ converges.

The \textit{quantum product} at $\tau\in B$ is defined as follows: for basis vectors $\phi_i,\phi_j,\phi_k$ we set
\begin{equation*}
    (\phi_i\star_{\tau}\phi_j ,\phi_k)_X=\partial_i\partial_j\partial_k \mathcal{F}^X(\tau)
\end{equation*}
where $(\cdot,\cdot)_X$ is the Poincaré pairing on $\H^\bullet(X)$, defined by $\alpha \otimes \beta \mapsto \int_X \alpha \wedge \beta$. The product $\star_{\tau}$ defines a family of Frobenius algebra structures on $\H^\bullet(X)$ parametrized by $\tau \in B$, with pairing the Poincar\'{e} pairing. When we restrict to $\tau \in \H^2(X)$, we call $\star_{\tau}$ the \emph{small quantum product}. If $\tau$ is not confined to $\H^2(X)$, we call the resulting structure the \emph{big} quantum product. 

We consider the trivial vector bundle $\cH_X\to B\times\bf{P}^1$ with fiber $\H^\bullet(X)$ and fix an affine coordinate $w$ on $\bf{C}\subset \bf{P}^1$. We define the \emph{quantum connection} $\nabla$ on $\cH_X$, regarded as a $\bf{C}$-linear map $\cT_{B\times \bf{P}^1}\to \End(\cH_X)$, by the formulas:
\begin{equation}\label{eq_big_quant_conn}
    \begin{split}
        \nabla_{\alpha}&=\partial_{\alpha}+\frac{1}{w}\alpha\star_{\tau}(-)\\
        \nabla_{w\partial_w}&=w\partial_w-\frac{1}{w}\cE_\tau \star_\tau(-) + \mu.
    \end{split}
\end{equation}
Here, $\alpha$ refers to the section of $\cT_{B\times \bP^1}$ corresponding to $\alpha \in \cT_{B,\tau} = \H^\bullet(X)$, $\mu$ is the diagonal grading operator defined by 
\begin{equation*}
    \mu|_{\H^{p}(X)}=\frac{p-\dim(X)}{2}\cdot\mathrm{id}_{\H^{p}(X)},
\end{equation*}
and $\cE$ is the \emph{Euler vector field}
\begin{equation}
\label{E:eulerfield}
    \mathcal{E}:=\mathrm{c}_1(X)+\sum\left(1-\frac{\deg(\phi_i)}{2}\right)\tau^i\phi_i.
\end{equation}
Note that when $\tau \in \H^2(X)$, we have $\cE_\tau = c_1(X)$. The quantum connection $\nabla$ is meromorphic and flat, with a regular singularity at $w = \infty$ and an irregular singularity at $w=0$. We call the associated differential equation
\begin{equation}\label{eq_general_qucoho}
    0 = w \frac{\partial}{\partial w} - \frac{1}{w}\cE_{\tau}\star_\tau(-) + \mu
\end{equation}
on sections of $\cH_X|_{\{\tau\}\times \bf{P}^1} \to \bf{P}^1$ the \emph{quantum differential equation} at $\tau \in B$.

In general, convergence of big quantum cohomology is a difficult question. In fact, little seems to be known even for Fano hypersurfaces in $\bf{P}^n$. Nevertheless, there are some cases where one can construct an open neighborhood of $\H^\bullet(X)$ for which the quantum product converges. For instance, for many homogeneous spaces of the form $G/P$ for $G$ a semisimple algebraic group and $P$ a parabolic subgroup, the subspace $B\subset \H^\bullet(X)$ where the potential converges can be taken to be nonempty and open. 

Many examples of this type, including the smooth quadrics $Q\subset \bf{P}^n$ and Grassmannian varieties $\Gr(k,V)$ studied in later sections, are generically semisimple. This means that a dense subset $B^\rm{ss}$ of $B$ consists of \emph{semisimple points}, i.e. points for which $(\H^\bullet(X),\star_\tau)$ is semisimple as a finite dimensional $\bf{C}$-algebra. In this case, the Euler element $\cE_\tau$ can be decomposed as 
\[
    \cE_\tau = \sum_{i=1}^N u_i\cdot e_i,
\]
where $e_1,\ldots, e_N$ are mutually orthogonal idempotents. In particular, $(u_1,\ldots, u_N) \in \bf{C}^N$ are uniquely determined up to $\mathfrak{S}_N$-action, being the eigenvalues of the operator $\cE_\tau \in \End(\H^\bullet(X))$. In the more general framework of Frobenius manifolds, Dubrovin \cite{Dubrovin1998} proves that the eigenvalues of the Euler field give coordinates in an open neighborhood of a semisimple point, called \emph{canonical coordinates} and written usually as $(u_1,\ldots, u_N)$. The canonical coordinates give an identification between an open neighborhood $\mathscr{B}$ of $\tau \in B^{\rm{ss}}$ and a subset of $\mathscr{U}_N$, the unordered configuration space of $N$ points in $\bf{C}$.

\subsection{Isomonodromic deformation of the quantum connection}
\label{SS:isomonodromicdeformation}

We discuss isomonodromic deformations of the quan\-tum connection in the semisimple case. The analysis is along the lines of \cite{GGI16}. The reader is also encouraged to consult \cite{BridgelandTL} for related results in the context of stability conditions. 

Consider a flat meromorphic connection $\nabla$ on a trivial bundle $\cV \to B\times \bf{P}^1$ with fiber $V$, with a logarithmic singularity at $B\times \{\infty\}$ and an order $2$ pole along $B\times \{0\}$. 
\begin{defn} 
\label{D:isomonodromic_deformation}
    An \emph{isomonodromic deformation} of $\nabla$ is given by a locally closed embedding $B\hookrightarrow M$ into a complex manifold and a flat meromorphic connection $\widetilde{\nabla}$ on the trivial $V$-bundle $\widetilde{\cV}\to M\times \bf{P}^1$ with a logarithmic singularity along $M\times \{\infty\}$, an order $2$ pole along $M\times \{0\}$, and such that $\widetilde{\nabla}^b = \nabla^b$ for all $b\in B$, regarded as connections on $\widetilde{\cV}|_{\{b\}\times \bf{P}^1}\to \bf{P}^1$.
\end{defn} 

The reader is encouraged to consult \cite{BridgelandStokesMulti}*{\S3} and \cite{SabbahIsomonodromic} for more on isomonodromic deformations.

We assume that the quantum product $\star_\tau$ is convergent in a connected open neighbor\-hood $B$ of $0 \in \H^{\bullet}(X)$. This is true, for example, when $X$ is a smooth quadric $Q\subset \bf{P}^n$ or a Grassmannian $\Gr(k,V)$. Suppose that $\tau \in B$ is a semisimple point. Then, the eigenvalues $u = (u_1,\ldots, u_N) \in \bf{C}^N$ of the Euler operator $\cE_\tau \in \End(\H^{\bullet}(X))$ form local canonical coordinates around $\tau$ and we may deform $\tau$ so that $\cE_\tau$ has no repeated eigenvalues. Locally, the set of such $u$ defines an open subset $\mathscr{B} \subset \mathscr{U}_N$, the configuration space of $N$ distinct unordered points in $\bf{C}$. Denote by $u = u(\tau) \in \mathscr{U}_N$ the element corresponding to $\tau$. Henceforth, we choose a lift of $u$ along the universal covering map $\widetilde{\mathscr{U}}_N\to \mathscr{U}_N$, which we also denote $u$ by abuse of notation. Up to shrinking, we may assume that $\mathscr{B}$ is evenly-covered and lift it uniquely to an open neighborhood of $u \in \widetilde{\mathscr{U}}_N$, also denoted $\mathscr{B}$. The following result is attributed to Dubrovin \cite{dubrovin1998painleve}.

We denote by $\widetilde{\cH}_X$ the trivial bundle with fiber $\H^\bullet(X)$ over $\widetilde{\mathscr{U}}_N\times \bf{P}^1$.

\begin{prop}
\label{P:Dubrovin1998}
    \cite{GGI16}*{Prop. 2.7.2} There is a unique flat meromorphic connection $\widetilde{\nabla}$ on $\widetilde{\cH}_X$ defined uniquely by 
    \begin{equation}
    \label{E:isomonodromicsystem}
    \begin{split}
        \widetilde{\nabla}_{\partial_{u_i}} & = \frac{\partial}{\partial u_i} + \frac{1}{w}C_i  \\
        \widetilde{\nabla}_{w\partial_w} & =w\frac{\partial}{\partial w} -\frac{1}{w}U + V
    \end{split}
    \end{equation}
    where $C_i$, $U$, and $V$ are $\End(\H^{\bullet}(X))$-valued holomorphic functions on $\widetilde{\mathscr{U}}_N$. The connection $\widetilde{\nabla}$ restricts to the big quantum connection over $\mathscr{B} \subset \widetilde{\mathscr{U}}_N$ and the eigenvalues of $U$ are the coordinates on the base.
\end{prop}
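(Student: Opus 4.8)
The plan is to follow Dubrovin's analysis of semisimple Frobenius manifolds, in the form recorded in \cite{GGI16}*{\S 2.7}: one uses canonical coordinates to rewrite the big quantum connection near a semisimple point, recognizes the resulting family of connections on $\bf{P}^1$ as an isomonodromic family, and then appeals to the existence and uniqueness theory for isomonodromic deformations of meromorphic connections to propagate it over the simply connected base $\widetilde{\mathscr{U}}_N$.

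First I would set up canonical coordinates. Because $\star_\tau$ converges on $B$ and is semisimple on a dense open $B^{\rm ss}$, near a semisimple point $\tau_0$ one has an idempotent decomposition $\H^\bullet(X)=\bigoplus_{i=1}^N\CC\,e_i(\tau)$ with $e_ie_j=\delta_{ij}e_i$, and by \cite{Dubrovin1998} the eigenvalues $u_i$ of $\cE_\tau\star_\tau(-)$ form local coordinates with $\partial_{u_i}=e_i$ and $\cE_\tau=\sum_i u_i e_i$; this is exactly what is used in \Cref{SS:quantumcohomology} to define $\mathscr{B}\subset\mathscr{U}_N$. Substituting into \eqref{eq_big_quant_conn}, and writing everything in the \emph{constant} frame of the fiber $\H^\bullet(X)$ (the fixed cohomology basis) with $u$ as base coordinates, the quantum connection takes on $\mathscr{B}$ exactly the shape \eqref{E:isomonodromicsystem}, with $C_i=e_i\star_\tau(-)$, $U=\cE_\tau\star_\tau(-)=\sum_i u_i C_i$ and $V=\mu$. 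In particular $U$ has eigenvalues $u_1,\dots,u_N$, which are the base coordinates, and $V$ is the constant grading operator. Thus the prescribed form holds tautologically over $\mathscr{B}$; the content of the proposition is the extension to $\widetilde{\mathscr{U}}_N$ and the uniqueness.

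For uniqueness, I would note that flatness of $\nabla$, which holds as part of the structure of quantum cohomology, is in these coordinates equivalent to the closed first-order system in $u$: the $C_i$ commute and $\partial_{u_i}C_j$ is symmetric in $i,j$, $\partial_{u_i}V=0$, and $\sum_j u_j\,\partial_{u_i}C_j=[C_i,V]$; equivalently, the Darboux--Egoroff equations for the rotation coefficients of the Poincar\'e metric, together with the homogeneity imposed by $\mu$. This system expresses precisely that the family $u\mapsto\widetilde{\nabla}|_{\{u\}\times\bf{P}^1}$ is isomonodromic: each member has a logarithmic singularity at $\infty$ and an irregular singularity at $0$ of Poincar\'e rank one with regular-semisimple leading coefficient $U$, and the Stokes data at $0$ together with the monodromy at $\infty$ are locally constant in $u$. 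Since the system is completely integrable, any connection of the form \eqref{E:isomonodromicsystem} that restricts to $\nabla$ over $\mathscr{B}$ must solve it, and is therefore determined by its restriction to a single point of $\mathscr{B}$; this yields uniqueness.

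For existence over all of $\widetilde{\mathscr{U}}_N$, I would invoke the theory of universal isomonodromic deformations of good meromorphic connections on $\bf{P}^1$ (Malgrange; see \cite{SabbahIsomonodromic} and \cite{BridgelandStokesMulti}*{\S 3}): $\nabla|_{\{\tau_0\}\times\bf{P}^1}$ is a good meromorphic connection whose leading term $U$ at $0$ stays regular semisimple over all of $\mathscr{U}_N$ — which is exactly why one removes the coalescence locus and works over $\mathscr{U}_N$ — so it admits a universal isomonodromic deformation over the simply connected base $\widetilde{\mathscr{U}}_N$; restricting it to $\mathscr{B}$ and using the uniqueness above identifies it with $\nabla$, and the comparison forces the deformed bundle to be trivial, so that the data really are $\End(\H^\bullet(X))$-valued functions. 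I expect the main obstacle to be precisely the control of this analytic continuation: a priori the coefficients $C_i$ could acquire poles along Malgrange's $\Theta$-divisor (Painlev\'e-type singularities), and verifying that the continuation of the quantum data remains holomorphic on the full universal cover — equivalently, that the extension of $\widetilde{\cH}_X$ stays trivial — is the delicate point, established in \cite{dubrovin1998painleve} and \cite{GGI16}*{Prop.~2.7.2}, using that the normalized canonical frame trivializes the picture away from the coalescence locus already excised in forming $\mathscr{U}_N$.
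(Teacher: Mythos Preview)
The paper does not give its own proof of this proposition: it is stated with a citation to \cite{GGI16}*{Prop.~2.7.2} and attributed to Dubrovin \cite{dubrovin1998painleve}, with no proof environment following. Your sketch is a faithful outline of the argument in those references---rewriting the quantum connection in canonical coordinates over $\mathscr{B}$, recognizing flatness as the Darboux--Egoroff/isomonodromy system, and appealing to the Painlev\'e property to extend holomorphically over $\widetilde{\mathscr{U}}_N$---so there is nothing to compare against in the paper itself.
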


We will later see that the braid group action on semiorthogonal decompositions is mirrored by the braid group action by deck transformations of $\widetilde{\mathscr{U}}_N \to \mathscr{U}_N$: changing sheets corresponds to braiding of the eigenvalues of $U$. We next explain how $\widetilde{\nabla}$ determines a family of Frobenius algebra structures $(\H^{\bullet}(X),\eta_u,\star_u)$ parametrized by $u\in \widetilde{\mathscr{U}}_N$.

The form $\eta_u$ is the Poincar\'{e} pairing $\eta:\H^{\bullet}(X)^{\otimes 2}\to \bf{C}$ for all $u$. As mentioned above, $C_1,\ldots, C_N$, $U$, and $V$ are holomorphic functions $\widetilde{\mathscr{U}}_N \to \End(\H^\bullet(X))$, regarded as multi-valued functions of $(u_1,\ldots, u_N) \in \mathscr{U}_N$. The holomorphicity of these operators combined with the fact that \eqref{E:isomonodromicsystem} extends the quantum connection from $\mathscr{B}$ imposes conditions on the operators $C_1,\ldots, C_N, U,V$. 

\begin{lem}
\label{L:selfadjointU}
    The operator $U$ is self-adjoint with respect to $\eta$ for all $u$. That is, $U:\widetilde{\mathscr{U}}_N\to \End(\H^\bullet(X))$ is valued in the subspace of $\eta$-self-adjoint operators $\Sym(\H^\bullet(X),\eta)$.
\end{lem}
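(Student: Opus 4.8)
The plan is to verify self-adjointness of $U$ on the dense open locus $\mathscr{B}\subset\widetilde{\mathscr{U}}_N$ where $\widetilde{\nabla}$ restricts to the quantum connection, and then propagate the identity to all of $\widetilde{\mathscr{U}}_N$ by analytic continuation. On $\mathscr{B}$, the operator $U$ is identified with $\cE_\tau\star_\tau(-)$ under the canonical coordinates of \Cref{P:Dubrovin1998}, so the statement reduces to the claim that multiplication by the Euler field is self-adjoint for the Poincar\'e pairing. This in turn follows from the Frobenius property of the quantum product: for all $\alpha,\beta,\gamma\in\H^\bullet(X)$ one has $(\alpha\star_\tau\beta,\gamma)_X=(\alpha,\beta\star_\tau\gamma)_X$, since by definition both sides equal $\partial_a\partial_b\partial_c\mathcal{F}^X(\tau)$ for the appropriate indices (here using that $\star_\tau$ is commutative). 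Taking $\beta=\cE_\tau$ gives $(\cE_\tau\star_\tau\alpha,\gamma)_X=(\alpha,\cE_\tau\star_\tau\gamma)_X$, which is exactly $\eta$-self-adjointness of $U|_{\mathscr{B}}$.

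The second step is the analytic continuation. The subspace $\Sym(\H^\bullet(X),\eta)\subset\End(\H^\bullet(X))$ is a closed complex-linear subspace, cut out by the linear equations $\eta(U\alpha,\gamma)=\eta(\alpha,U\gamma)$ for $\alpha,\gamma$ ranging over a basis. Since $U:\widetilde{\mathscr{U}}_N\to\End(\H^\bullet(X))$ is holomorphic by \Cref{P:Dubrovin1998}, the function $u\mapsto \eta(U_u\alpha,\gamma)-\eta(\alpha,U_u\gamma)$ is holomorphic on the connected manifold $\widetilde{\mathscr{U}}_N$ and vanishes on the open subset $\mathscr{B}$; by the identity theorem it vanishes identically. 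Hence $U_u\in\Sym(\H^\bullet(X),\eta)$ for all $u\in\widetilde{\mathscr{U}}_N$.

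There is essentially no hard obstacle here; the only point requiring a little care is making sure the identification of $U$ with the Euler-operator multiplication on $\mathscr{B}$ is set up correctly — this is where one must invoke that the quantum-connection pole part $-\tfrac1w\cE_\tau\star_\tau(-)$ matches the $-\tfrac1w U$ term in \eqref{E:isomonodromicsystem} under the coordinate identification $\mathscr{B}\hookrightarrow\widetilde{\mathscr{U}}_N$ of \Cref{P:Dubrovin1998}, and that the fiberwise pairing $\eta_u$ is the (constant) Poincar\'e pairing. Both are recalled in the paragraph preceding the lemma. One subtlety worth flagging: connectedness of $\widetilde{\mathscr{U}}_N$ is needed for the identity theorem argument, and this holds since $\mathscr{U}_N$ is connected and a universal cover of a connected space is connected.
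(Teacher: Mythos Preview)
Your argument is correct and follows the same two-step plan as the paper: verify self-adjointness of $U$ on the open locus $\mathscr{B}$ where $U=\cE_\tau\star_\tau(-)$, then extend to all of $\widetilde{\mathscr{U}}_N$ by the identity principle. The only cosmetic difference is in the first step: you invoke the Frobenius identity $(\alpha\star_\tau\beta,\gamma)_X=(\alpha,\beta\star_\tau\gamma)_X$ directly, whereas the paper instead passes to the idempotent frame $\psi_1,\ldots,\psi_N$ diagonalizing $\cE_\tau$ and checks $\eta(\cE\sum\alpha_i\psi_i,\sum\beta_j\psi_j)=\sum_i\alpha_iu_i\beta_i=\eta(\sum\alpha_i\psi_i,\cE\sum\beta_j\psi_j)$ by hand. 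Your route is marginally cleaner; the paper's has the incidental benefit of introducing the frame $\psi_i$ that is used immediately afterwards to extend the Frobenius structure over $\widetilde{\mathscr{U}}_N$.
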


\begin{proof}
    $U$ restricts to the Euler operator $\cE$ over $\mathscr{B}$. There, as noted in \cite{GGI16}*{\S 2.4} we can take a holomorphic frame $\psi_1,\ldots, \psi_N$ of $\H^\bullet(X)$ which is idempotent pointwise for the quantum product and which diagonalizes the Euler field $\cE$. Since the eigenvalues of $\cE$ are distinct over this locus, we have $\eta(\psi_i,\psi_j) = \delta_{ij}$. In particular, since for any $\alpha,\beta \in \H^\bullet(X)$ we have 
    \begin{align*}
        \eta\left(\cE \sum \alpha_i\psi_i,\sum\beta_j\psi_j\right) & = \sum_{i,j}\alpha_iu_i\beta_j \cdot \eta(\psi_i,\psi_j)\\
        & = \sum_{i} \alpha_iu_i\beta_i = \eta\left(\sum \alpha_i\psi_i,\cE\sum \beta_j\psi_j\right)
    \end{align*}
    we see that $\cE$ is self-adjoint. Thus, $U:\mathscr{B}\to \End(\H^\bullet(X))$ is valued in $\Sym(\H^\bullet(X),\eta)$, a proper linear subspace, so the identity principle applied to $U:\widetilde{\mathscr{U}}_N \to \End(\H^\bullet(X))/\Sym(\H^\bullet(X),\eta)$ implies that $U$ maps all of $\widetilde{\mathscr{U}}_N$ to $\Sym(\H^\bullet(X),\eta)$.
\end{proof}

We now extend the frame $\psi_1,\ldots, \psi_N$ over $\mathscr{B}$ to all of $\widetilde{\mathscr{U}}_N$. First, let $v_1,\ldots, v_N:\widetilde{\mathscr{U}}_N \times \bf{C}^N \to \H^\bullet(X)$ denote a holomorphic eigenframe of $U$, where $v_i$ is pointwise the eigenvector with eigenvalue $u_i$. Since $U$ is valued in $\Sym(\H^\bullet(X),\eta)$, we have $\eta(v_i,v_j) = 0$ for $i\ne j$, and by non-degeneracy $\eta(v_i,v_i)\ne 0$ for all $i$. Thus, there is a unique holomorphic extension of $\psi_1,\ldots, \psi_N$ from $B$ to $\widetilde{\mathscr{U}}_N$ such that $\psi_i = v_i/\eta(v_i,v_i)^{1/2}$. 

\begin{prop}
    There is a unique $\cO_{\widetilde{\mathscr{U}}_N}$-linear product $\star: \H^\bullet(X) \otimes \H^\bullet(X) \to \H^\bullet(X)$, which agrees with the quantum product when restricted to $\mathscr{B}$ and such that $\psi_i \star_u \psi_j = \delta_{ij}$. Furthermore, \vspace{-2mm}
    \begin{enumerate}
        \item $\star$ is compatible with $\eta$ such that $(\H^\bullet(X),\star_u,\eta)$ is a semisimple Frobenius algebra for all $u\in \widetilde{\mathscr{U}}_N$ with unity $1\in \H^\bullet(X)$; \vspace{-2mm}
        \item and the operator $C_i$ is the projector onto $\psi_i$ for all $i=1,\ldots, N$.\footnote{That is, $C_i(\sum a_j\psi_j) = a_i\psi_i$.} \vspace{-2mm}
    \end{enumerate}
\end{prop}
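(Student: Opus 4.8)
The plan is to construct $\star_u$ by transporting the pointwise Frobenius algebra structure from the idempotent frame, and then to use the identity principle (unique analytic continuation) to check each required property, exactly as in the proof of \Cref{L:selfadjointU}. First I would \emph{define} $\star_u$ on $\H^\bullet(X)$ by declaring $\psi_i\star_u\psi_j = \delta_{ij}\psi_i$ in the holomorphic frame $\psi_1,\dots,\psi_N$ constructed above; since the $\psi_i$ form a holomorphic frame on all of $\widetilde{\mathscr{U}}_N$, this prescription defines an $\cO_{\widetilde{\mathscr{U}}_N}$-bilinear, commutative, associative product with unity $1 = \sum_i\psi_i$ (the last because $1$ is the unit for the quantum product on $\mathscr{B}$, hence $\eta(1,\psi_i)=\eta(\psi_i,\psi_i)^{1/2}$ there, and by analytic continuation $1=\sum_i\eta(1,\psi_i)\psi_i/\eta(\psi_i,\psi_i)^{1/2}$... more simply, one checks $\sum_i\psi_i$ acts as identity by construction). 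Uniqueness is immediate: any $\cO_{\widetilde{\mathscr{U}}_N}$-linear product restricting to the quantum product on the open set $\mathscr{B}$ and satisfying $\psi_i\star_u\psi_j=\delta_{ij}$ is forced on a frame, hence everywhere, and agreement with the quantum product on $\mathscr{B}$ holds because there the $\psi_i$ \emph{are} the normalized idempotents (this is how they were defined).

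Next I would verify compatibility with $\eta$, i.e.\ $\eta(\alpha\star_u\beta,\gamma) = \eta(\alpha,\beta\star_u\gamma)$ for all $u$. On $\mathscr{B}$ this is the Frobenius property of the quantum product; both sides are holomorphic functions of $u$ (the product $\star_u$ and $\eta$ are holomorphic, the $\psi_i$ are holomorphic), so by the identity principle the identity extends from the open subset $\mathscr{B}\subset\widetilde{\mathscr{U}}_N$ to all of $\widetilde{\mathscr{U}}_N$. Equivalently one computes directly in the $\psi$-frame: using $\eta(\psi_i,\psi_j)=\delta_{ij}$ (which is the definition $\psi_i = v_i/\eta(v_i,v_i)^{1/2}$ together with $\eta$-orthogonality of the eigenvectors of the $\eta$-self-adjoint operator $U$ from \Cref{L:selfadjointU}), both $\eta(\psi_i\star_u\psi_j,\psi_k)$ and $\eta(\psi_i,\psi_j\star_u\psi_k)$ equal $\delta_{ij}\delta_{jk}$. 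Semisimplicity for every $u$ is then automatic: $(\H^\bullet(X),\star_u)\cong \bf{C}^N$ via the orthogonal idempotents $\psi_1,\dots,\psi_N$. This gives claim (1).

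For claim (2), that $C_i$ is the projector onto $\psi_i$, I would again argue by analytic continuation from $\mathscr{B}$. Over $\mathscr{B}$ the connection $\widetilde{\nabla}$ restricts to the quantum connection, and in the idempotent frame one has $\widetilde{\nabla}_{\partial_{u_i}} = \partial_{u_i} + \tfrac1w\, e_i\star_\tau(-)$ where $e_i$ is the $i$-th idempotent (this is the standard canonical form of Dubrovin's isomonodromic system, cf.\ \cite{GGI16}*{\S2.4}); but $e_i\star_\tau(-)$ is precisely multiplication by the normalized idempotent up to the chosen normalization, i.e.\ the projector $\sum_j a_j\psi_j\mapsto a_i\psi_i$. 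Thus $C_i$ agrees with this projector on $\mathscr{B}$. Since both $C_i$ and the projector $P_i:=(\sum_j a_j\psi_j\mapsto a_i\psi_i)$ are holomorphic $\End(\H^\bullet(X))$-valued functions on $\widetilde{\mathscr{U}}_N$ — $C_i$ by \Cref{P:Dubrovin1998}, and $P_i$ because the $\psi_j$ and their dual frame are holomorphic — the identity principle forces $C_i = P_i$ on all of $\widetilde{\mathscr{U}}_N$.

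The only genuine subtlety — and the step I would expect to require the most care — is making sure all the objects in play ($\star_u$, the frame $\psi_i$, the dual frame, the projectors $P_i$, and $C_i$) are \emph{single-valued holomorphic} on $\widetilde{\mathscr{U}}_N$ (not merely on $\mathscr{B}$ or merely multi-valued on $\mathscr{U}_N$), so that the identity principle legitimately applies; this rests on the construction of $\psi_i$ given just before the proposition (normalizing the holomorphic eigenframe $v_i$ of $U$ by $\eta(v_i,v_i)^{1/2}$, using that $\eta(v_i,v_i)$ is nowhere zero), and on the connectedness of $\widetilde{\mathscr{U}}_N$. Once that bookkeeping is in place, every claimed identity is an identity of holomorphic functions that holds on the nonempty open set $\mathscr{B}$, hence everywhere, and the proof is complete.
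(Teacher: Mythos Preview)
Your proposal is correct and follows essentially the same approach as the paper: define $\star_u$ on the holomorphic idempotent frame, verify Frobenius compatibility by direct computation in that frame (the paper does only this, whereas you also offer the identity-principle alternative), deduce unity $1$ by analytic continuation from $\mathscr{B}$, and identify $C_i$ with $\psi_i\star_u(-)$ over $\mathscr{B}$ and extend by holomorphicity. Your treatment of the unity is a bit roundabout—the paper simply notes that $1\star_\tau(-)=\id$ on $\mathscr{B}$ and applies the identity principle to the holomorphic endomorphism $1\star_u(-)$, which is cleaner than attempting to compute $1=\sum_i\psi_i$ directly—but the substance is the same.
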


\begin{proof}
    The product is defined by the relation $\psi_i \star_u \psi_j = \delta_{ij}$ for all $i,j$. Since $\psi_1,\ldots, \psi_N$ is a global holomorphic frame for $\H^\bullet(X)$, it follows that this is a holomorphic product extending the one over $\mathscr{B}$. By the identity principle, the relation $\psi_i \star_u \psi_j = \delta_{ij}$ over $\mathscr{B}$ implies that this is the unique such extension possible. It is clear that this is a semisimple and associative product for each fixed $u$. For compatibility, note that 
    \begin{align*}
        \eta(\alpha \star \beta,\gamma) & = \eta\left(\sum \alpha_i\psi_i\star \sum\beta_j\psi_j,\sum \gamma_k\psi_k\right) \\ 
        & = \sum_i (\alpha_i\beta_i)\gamma_i = \sum_i \alpha_i(\beta_i\gamma_i) \\
        & = \eta(\alpha,\beta\star\gamma)
    \end{align*}
    for any $\alpha,\beta,\gamma \in \H^\bullet(X)$. As is well known, $1$ is the identity element for the quantum product; i.e. $1\mapsto 1\star_\tau(-) \in \End(\H^\bullet(X))$ is constantly equal to the identity operator. Thus, the identity principle implies this holds for all $u \in \widetilde{\mathscr{U}}_N$. Finally, per the definition of the quantum connection \eqref{eq_big_quant_conn} and the fact that $\partial_{u_i}$ corresponds to $\psi_i$ over $\mathscr{B}$, we see that $C_i = \psi_i\star_u(-)$ as endomorphisms of $\H^\bullet(X)$.
\end{proof}

We consider the global frame of $\widetilde{\cH}_X|_{\widetilde{\mathscr{U}}_N}\to \widetilde{\mathscr{U}}_N$ defined by $u\mapsto \Psi_u:=(\psi_1(u),\ldots, \psi_N(u))$. The following proposition is a version of \cite{GGI16}*{Prop. 2.5.1}. We recall that a phase $\varphi \in \bf{R}$ is \emph{admissible} for $\sigma(\cE_u)$ if $e^{-\mathtt{i}\varphi}\lvert \sigma(\cE_u)\rvert$ is in general position -- see the notation and conventions section.

\begin{prop}
\label{P:isomonodromicasymptoticsolution}
    Consider a point $u\in \widetilde{\mathscr{U}}_N$ and let $\varphi \in \bf{R}$ be an admissible phase for $\sigma(\cE_u)$. There exists an open $U \ni u$ and an analytic fundamental solution $Y_u(w) = (y_1(u,w),\ldots, y_N(u,w))$  of the system \eqref{E:isomonodromicsystem} for $(u,w) \in U\times \bf{C}^*$ such that 
    \[
        Y_u(w) \cdot e^{\cE/w} \to \Psi_u \quad \text{as $w\to 0$ in the sector } \lvert \arg(w) - \varphi \rvert < \frac{\pi}{2}+ \epsilon.
    \]
    The solution $Y_u(w)$ is unique satisfying this asymptotic condition.
\end{prop}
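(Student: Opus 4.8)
\textbf{Proof plan for \Cref{P:isomonodromicasymptoticsolution}.}
The plan is to reduce the statement to the classical theory of asymptotic (Hukuhara--Turrittin) solutions near an irregular singularity, applied fibrewise in $u$, and then to promote the fibrewise solutions to a holomorphic family using the isomonodromy equations $\widetilde{\nabla}_{\partial_{u_i}} = \partial_{u_i} + \tfrac1w C_i$. First I would fix the point $u\in\widetilde{\mathscr U}_N$ and consider only the $w$-equation $\widetilde{\nabla}_{w\partial_w} = w\partial_w - \tfrac1w U + V$. Since $\varphi$ is admissible for $\sigma(\cE_u)$ and $U$ has pairwise distinct eigenvalues $u_1,\dots,u_N$ (the base being $\widetilde{\mathscr U}_N$), the leading term $-U/w$ is a regular semisimple irregular singularity of Poincar\'e rank one. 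The standard theory (as in \cite{GGI16}*{Prop.~2.5.1} in exactly this setting, or \cite{Dubrovin1998}, or any reference on meromorphic ODEs) then produces, on any sector of opening $<\pi$ bisected by a ray along which $e^{-\mathtt i\varphi}(u_i-u_j)$ has nonzero real part for all $i\neq j$ --- which is precisely the admissibility hypothesis, and which persists on a sector of opening $\tfrac{\pi}{2}+\epsilon$ after possibly shrinking $\epsilon$ --- a \emph{unique} fundamental matrix $Y_u(w)$ with the prescribed asymptotics $Y_u(w)\cdot e^{\cE/w}\to \Psi_u$ as $w\to 0$ in that sector, where $\Psi_u = (\psi_1(u),\dots,\psi_N(u))$ is the normalised idempotent frame constructed above. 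Uniqueness is the usual argument: two such solutions differ by right multiplication by a matrix $M(w)$ commuting with $e^{\cE/w}$, hence diagonal in the $\psi_i$ basis, with each diagonal entry $\to 1$ and of the form $w^{m}(1+o(1))$; flatness forces $m=0$ and then $M\equiv \id$.

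Next I would address holomorphic dependence on $u$. The key point is that, by \Cref{P:Dubrovin1998}, the operators $C_i,U,V$ are holomorphic on $\widetilde{\mathscr U}_N$, and the full system \eqref{E:isomonodromicsystem} is flat. Flatness means the $w$-equation and the $u_i$-equations are compatible, so a solution of the $w$-equation at one value of $u$ propagates to a solution of the entire system, and the propagation is holomorphic in $u$ because the $C_i$ are. Concretely: shrink to an open polydisc $U\ni u$ on which admissibility of $\varphi$ for $\sigma(\cE_{u'})$ persists for all $u'\in U$ (this is an open condition since the eigenvalues $u'_1,\dots,u'_N$ vary continuously). For each fixed $u'\in U$ the $w$-equation has its unique asymptotic solution $Y_{u'}(w)$; I would show the assignment $u'\mapsto Y_{u'}(w)$ is holomorphic by noting that $Y_{u'}(w)$ can be characterised as the unique solution of a fixed-point/integral equation (Borel--Laplace, or iterated integrals along the ray) whose kernel depends holomorphically on the coefficients $U(u'),V(u')$, hence holomorphically on $u'$; uniform convergence of the iteration on the sector gives joint analyticity in $(u',w)$. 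Then one checks that $Y_{u'}(w)$ automatically satisfies the $\widetilde{\nabla}_{\partial_{u_i}}$-equations: the quantity $\widetilde{\nabla}_{\partial_{u_i}} Y_{u'}$ is again a solution of the $w$-equation (by flatness), and a comparison of its $w\to 0$ asymptotics with those of $Y_{u'}$ --- using $\partial_{u_i}(\cE/w) = C_i$ in the exponent, and $\partial_{u_i}\Psi_{u'}$ controlled by the explicit normalisation $\psi_j = v_j/\eta(v_j,v_j)^{1/2}$ --- shows the two differ by a solution with trivial leading term, hence vanish, giving $\widetilde{\nabla}_{\partial_{u_i}}Y_{u'} = 0$.

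The main obstacle I anticipate is not the existence/uniqueness of the fibrewise asymptotic solution --- that is entirely classical once admissibility is invoked --- but rather making the holomorphic-in-$u$ statement fully rigorous while keeping the \emph{same} sector $\lvert\arg w - \varphi\rvert < \tfrac\pi2+\epsilon$ valid uniformly over $U$. One has to be careful that the Stokes rays, which are the rays where $\Re\big(e^{-\mathtt i\arg w}(u_i-u_j)\big)=0$, move as $u$ varies; admissibility of $\varphi$ guarantees $\varphi$ itself is never a Stokes direction on a neighbourhood, but the opening $\tfrac\pi2+\epsilon$ must be shrunk so that no Stokes ray enters the sector for any $u'\in U$. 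This is a routine continuity/compactness argument but it is the one place where the ``open $U$'' in the statement genuinely does some work, and it should be spelled out. Everything else --- the Frobenius-algebra compatibilities, self-adjointness of $U$ from \Cref{L:selfadjointU}, the identification $C_i = \psi_i\star_u(-)$ --- has already been established above and feeds directly into the asymptotic normalisation. I would also remark that this proposition is the isomonodromic refinement of \cite{GGI16}*{Prop.~2.5.1}, and cite that result for the single-variable statement, doing only the extension in the $u$-direction in detail.
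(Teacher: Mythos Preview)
Your proposal is correct and follows the same approach as the paper, which simply states ``The proof is identical to \cite{GGI16}*{Prop.~2.5.1}.'' Your outline is considerably more detailed than the paper's one-line citation: you spell out the fibrewise Hukuhara--Turrittin existence and uniqueness, the promotion to a holomorphic family via flatness of the full isomonodromic system, and the continuity argument needed to keep the sector uniform over the open set $U$---all of which is implicit in the citation but not written out in either the paper or its reference.
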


\begin{proof}
    The proof is identical to \cite{GGI16}*{Prop. 2.5.1}.  
\end{proof}

\subsubsection*{Quantum cohomology central charges}

In \cite{iritani_integral}*{p. 14}, Iritani describes a family of linear maps $\cZ_w^\tau :\H^\bullet(X) \to \bf{C}$, depending on a parameter $w\in \bf{C}^*$ and a class $\tau \in \H^\bullet(X)$, possibly restricted to lie in $\H^2(X)$. Iritani suggests that $\cZ_w$ should lift to a family of stability conditions $\sigma_w$ along the canonical map $\Stab(X) \to \Hom(\H^\bullet(X),\bf{C})$ and refers to $\cZ_w$ as the \emph{quantum cohomology central charge}. We explain here how this can be extended along the isomonodromic deformation of quantum cohomology described in \Cref{SS:isomonodromicdeformation}.

First of all, by \cite{GGI16}*{Prop. 2.3.1} there is a canonical holomorphic function $S:\bf{P}^1\setminus \{0\} \to \End(\H^\bullet(X))$ satisfying several important properties, the most crucial of which for us is that for all $\alpha \in \H^\bullet(X)$:
    \[
        \nabla|_{\tau = 0} (S(w)w^{-\mu}w^\rho \alpha) = 0,
    \]
    where $\mu$ is the grading operator and $\rho = c_1(X)\cup(-) \in \End(\H^\bullet(X))$. Consequently, the quantum differential equation $\nabla_{w\partial_w} = 0$ at $\tau = 0 $ admits a \emph{canonical fund\-amental solution}
    \[
        \Phi_w(-) := S(w)w^{-\mu}w^\rho(-),
    \] 
    defined \emph{a priori} on a sector $\mathscr{S}$ containing $\bf{R}_{>0}$. In particular, $\Phi_w$ allows us to canonically identify cohomology classes $\alpha \in \H^\bullet(X)$ with flat sections of $\nabla|_{\tau = 0}$ over $\mathscr{S}$ by $\alpha \mapsto \Phi_w(\alpha)$ for $w\in \mathscr{S}$.
    
    Next, suppose that there is an open neighborhood $B$ of $0$ in $\H^\bullet(X)$ on which the quantum product converges. By \cite{GGI16}*{Rem. 2.3.2}, there is a holomorphic map 
    \[
        S(\tau,w): B \times (\bf{P}^1\setminus \{0\}) \longrightarrow \End(\H^\bullet(X))
    \]
    such that for all $\tau \in B$ and $\alpha \in \H^\bullet(X)$, $S(\tau,w)w^{-\mu}w^\rho\alpha$ is a $\nabla$-flat section. The definitions of $w^{-\mu} = \exp(-\mu \log w)$ and $w^\rho = \exp(\mu \log w)$ involve $\log w =: t$, so that we obtain a holomorphic map 
    \[
        \Phi_{t}^\tau:B \times \widetilde{\bf{C}^*} \longrightarrow \End(\H^\bullet(X))
    \]
    which we call the \emph{canonical fundamental solution} over $B$. 
    
    In the semisimple case, by \Cref{P:Dubrovin1998} and the discussion in \Cref{SS:isomonodromicdeformation}, the big quantum connection admits a unique extension to a flat connection $\widetilde{\nabla}$ on $\widetilde{\cH}_X \to \widetilde{\mathscr{U}}_N\times \bf{P}^1$, restricting to the big quantum connection on $\mathscr{B}$. Since $\widetilde{\mathscr{U}}_N \times \widetilde{\bf{C}^*}$ is simply connected, $\Phi_t^\tau$ can be uniquely extended to a holomorphic map $\Phi_t^\tau:\widetilde{\mathscr{U}}_N \times \widetilde{\mathbf{C}^*} \to \End(\H^\bullet(X))$ with the property that $\nabla \Phi^\tau_{e^t}(\alpha) = 0$ for all $\alpha \in \H^\bullet(X)$.

    \begin{defn}
    \label{defn_perturbed_can_sol}
        Given $u \in \widetilde{\mathscr{U}}_N$, we define the \emph{(deformed) quantum cohomology central charge} at $u$ to be 
        \[
            \cZ_{e^t}^u(-) = (2\pi e^t)^{\dim X/2} \int_X \Phi_t^u(-),
        \]
        regarded as a holomorphic map $\widetilde{\bf{C}^*}\to \Hom(\H^\bullet(X),\bf{C})$. 
    \end{defn}

    We will usually write $\cZ_w^u$ instead, by abuse of notation, however in most use-cases later on a phase $\varphi \in \bf{R}$ will be specified, making the expression $\cZ_w^u$ unambiguous.

    \subsubsection*{Gamma II Conjecture} Here, we explain part of the statement of the Gamma II conjecture of \cite{GGI16}*{Conj. 4.6.1} which will be used in the sequel. See also the restatement in \cite{Galkin_Iritani_Gamma}*{Conj. 4.9}, which is closer to the version we use. Let $X$ be a Fano variety, and let $\tau \in \H^\bullet(X)$ be given such that $\star_\tau$ is convergent. When $\star_\tau$ converges on a neighborhood $B$ of $0$ which contains semisimple points, we further take $u\in \widetilde{\mathscr{U}}_N$ where $N = \dim \H^\bullet(X)$.

    Choose a phase $\varphi \in \bf{R}$ that is admissible for $u$ and write $u = (u_1,\ldots, u_N)$, ordered such that $\Im(-e^{-\mathtt{i}\varphi}u_1)\ge \cdots \ge \Im(-e^{\mathtt{i}\varphi}u_N)$. \Cref{P:isomonodromicasymptoticsolution} identifies unique sections $y_1(u,w),\ldots, y_N(u,w)$ of the trivial $\H^\bullet(X)$-bundle such that 
    \[
        y_i(u,w)e^{u_i/w} \sim \Psi_u(e_i)
    \]
    as $w\to 0$ in a sector $\mathscr{S}$ containing $\bf{R}_{>0}\cdot e^{\mathtt{i}\varphi}$. More precisely, 
    \begin{equation}
    \label{E:asymptoticallyexponential}
        \lVert e^{u_i/w}\cdot y_i(\tau,w)\rVert = O(w^{-m})\text{ as $w\to 0$ in $\mathscr{S}$},
    \end{equation}
    for some $m \ge 0$ and $\lVert \:\cdot\:\rVert$ a fixed norm on $\H^{\bullet}(X)$. There is a unique cohomology class $A_i(u,\varphi) \in \H^\bullet(X)$ such that $y_i(u,w)$ parallel translated to $\tau = 0$ and $\arg = 0$ can be written as $\Phi_w^0(A_i(u,\varphi))$. 
    \begin{defn}
        We call the collection of cohomology classes $(A_1(u,\varphi),\ldots, A_N(u,\varphi))$ described above the \emph{asymp\-totically exponential basis} of $\H^\bullet(X)$ of the pair $(u,\varphi)$.
    \end{defn}

    Recall that if $\delta_i$ are the Chern roots of the tangent bundle $\cT_X$ of $X$, the \emph{Gamma class} of $X$ is
    \begin{equation*}
        \begin{split}
          \widehat{\Gamma}_X := \prod_{i} (1+\delta_i)
            &=\exp\bigg( -C_{\rm{eu}}\cdot c_1(X)+\sum_{k\geq 2}(-1)^k\cdot (k-1)!\cdot \zeta(k)\cdot \ch_k(\cT_X) \bigg).
        \end{split}
    \end{equation*}
    where $C_{\rm{eu}}$ is the Euler-Mascheroni constant and $\zeta$ is the Riemann zeta function. 
    
    If $X$ has semisimple quantum cohomology near $0$, so that the isomonodromic deformation is defined as in \Cref{SS:isomonodromicdeformation}, we say that $X$ satisfies the Gamma II conjecture at $(u,\varphi)$ if there exists a full exceptional collection $(E_1,\ldots, E_N)$ of $\DCoh(X)$ such that 
    \begin{equation}
    \label{E:asymptoticexponentialcollection}
        A_i(u,\varphi) = \widehat{\Gamma}_X\Ch(E_i) \text{ for all }i=1,\ldots, N,
    \end{equation}
    where $\Ch(-):=\sum_j (2\pi \mathtt{i})^j\ch_j(-)$. We call $(E_1,\ldots, E_N)$ an asymptotically exponential full ex\-ceptional collection at $(u,\varphi)$ and we call $u_i$ the \emph{exponent} of $E_i$.
    
    \begin{rem}
        Note that since $y_i(u,w)$ and $\Phi_w^0(A_i(u,\varphi))$ are flat sections, defined everywhere on $\widetilde{\mathscr{U}_N} \times \widetilde{\mathbf{C}^*}$ and agreeing along the locus $\tau = 0$ and $\arg = 0$, it follows that they agree everywhere. This allows us to estimate $\cZ_w^\tau(E_i)$ when $w\to 0$ along a ray in $\mathscr{S}$, and $E_i$ is asymptotically exponential.
    \end{rem}

    It is explained in \cite{GGI16}*{Rem. 4.6.3} that the veracity of Gamma II at $(u,\varphi)$ implies that it holds for all other choices of $(u,\varphi)$, up to mutating the exceptional collection. For later use, we will explain the behavior when $u$ varies in $\widetilde{\mathscr{U}}_N$, starting from a semisimple point $\tau \in \mathscr{B}\subset \widetilde{\mathscr{U}}_N$ at which Gamma II holds.\footnote{Recall that $\mathscr{B}$ is a locus in $B$ where the Euler operator has no multiple eigenvalues.} We suppose throughout that the isomonodromic deformation of quantum cohomology to $\widetilde{\mathscr{U}}_N$ is defined as in \Cref{SS:isomonodromicdeformation}.

\begin{setup}
\label{S:mutationsetup}
    Let $X$ be a Fano variety for which Gamma II conjecture holds at a semisimple point $\tau \in B$. Using canonical coordinates, we may assume that $\tau \in \mathscr{B}$, with eigenvalues $(u_1,\ldots, u_N) \in \mathscr{U}_N$. Furthermore, we choose an admissible phase $\varphi \in \bf{R}$ and order $(u_1,\ldots, u_N)$ such that $\Im(-e^{-\mathtt{i}\varphi}u_1)<\cdots< \Im(-e^{-\mathtt{i}\varphi}u_N)$. Let $(A_1,\ldots, A_N)$ denote the associated asymptotic basis. By \cite{GGI16}*{Rem. 4.6.3}, Gamma II holds at this $\tau\in \mathscr{B}$ and we let $\mathfrak{E} = (E_1,\ldots, E_N)$ denote a corresponding asymptotically exponential full exceptional collection. 
\end{setup}

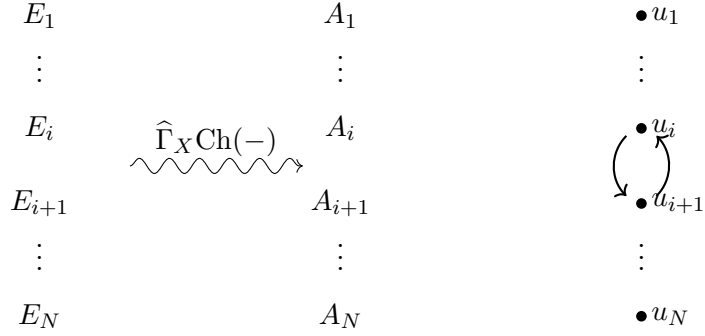
\begin{figure}
    \centering
    \begin{tikzpicture}
  \node at (-4,-2)  {$E_N$};
  \node at (-4,-0.5) {$E_{i+1}$};
  \node at (-4,0.5) {$E_{i}$};
  \node at (-4,2) {$E_1$};

  \node at (-4,-1.1) {$\vdots$};
  \node at (-4,1.4) {$\vdots$};

    \node at (0,-2) {$A_N$};
    \node at (0,-.5) {$A_{i+1}$};
    \node at (0,.5) {$A_{i}$};
    \node at (0,2) {$A_1$};

  \node at (0,-1.1) {$\vdots$};
  \node at (0,1.4) {$\vdots$};
  \draw[->,decorate,decoration={snake,amplitude=1mm,segment length=4mm}]
    (-2.8,0) -- node[above]{$\widehat{\Gamma}_X\Ch(-)$} (-.5,0);

    \fill (4,-2)   circle (2pt) node[right] {$u_N$};
  \fill (4,-0.5) circle (2pt) node[right] {$u_{i+1}$};
  \fill (4,0.5)  circle (2pt) node[right] {$u_{i}$};
  \fill (4,2)    circle (2pt) node[right] {$u_1$};

  \draw[->, bend right=45,thick] (4.2,-0.4) to (4.2,0.4);
  \draw[->, bend right=45,thick]  (3.8,0.4)  to (3.8,-0.4);

  \node at (4,-1.1) {$\vdots$};
  \node at (4,1.4) {$\vdots$};
\end{tikzpicture}
    \caption{The points $u_1,\ldots, u_N$ represent a configuration of points in $\bf{C}$. $(A_1,\ldots, A_N)$ is an asymptotically exponential basis of $\H^{\bullet}(X)$ such that $\cZ_w^u(A_i)\sim (2\pi w)^{\dim X/2} e^{-u_i/w}\int_X \Psi_u(e_i)$ and $(E_1,\ldots, E_N)$ is its lift to a full exceptional collection of $\DCoh(X)$. For any $i=1,\ldots, N$, we have $A_i = \widehat{\Gamma}_X\Ch(E_i)$.}
    \label{fig:mutation}
\end{figure}

\begin{prop}
\label{P:mutationforestimate}
    In \Cref{S:mutationsetup}, for any full exceptional collection $\mathfrak{E}' = (E_1',\ldots, E_N')$ mutation equivalent to $\mathfrak{E}$, there exist $u'\in \widetilde{\mathscr{U}}_N$ lying over $(u_1,\ldots, u_N) \in \mathscr{U}_N$ and $\varphi \in \bf{R}$ such that $\mathfrak{E}'$ is asymptotically exponential at $(u',\varphi)$ and the exponent of $E_i'$ is $u_i$ for all $i=1,\ldots, N$.
\end{prop}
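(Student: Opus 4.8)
\textbf{Proof proposal for \Cref{P:mutationforestimate}.}

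The plan is to reduce the general mutation-equivalence statement to the case of a single elementary mutation (left or right mutation at an adjacent pair), and to match each such categorical operation with a deck transformation of $\widetilde{\mathscr{U}}_N \to \mathscr{U}_N$ together with a change of admissible phase. First I would recall the mechanism already alluded to in the excerpt: the braid group $\mathfrak{B}_N$ acts on full exceptional collections by mutations, and it also acts on $\widetilde{\mathscr{U}}_N$ by deck transformations, permuting the sheets lying over a fixed configuration $(u_1,\dots,u_N)\in\mathscr{U}_N$; the content of the proposition is that these two actions are intertwined by the assignment $\mathfrak{E}\mapsto (u,\varphi)$ coming from \Cref{P:isomonodromicasymptoticsolution} and the Gamma II conjecture. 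Since any $\mathfrak{E}'$ mutation equivalent to $\mathfrak{E}$ is obtained by a finite word in elementary mutations, and since the deck transformation group is generated by the elementary braids $\beta_1,\dots,\beta_{N-1}$, it suffices to treat one elementary mutation and then compose.

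The key geometric input is the standard description of how the asymptotically exponential basis changes under a small loop in $\mathscr{U}_N$ that swaps two eigenvalues $u_i, u_{i+1}$. Concretely: fix the admissible phase $\varphi$ as in \Cref{S:mutationsetup} and consider moving within the fiber over $(u_1,\dots,u_N)$ by a path that braids $u_i$ past $u_{i+1}$; by the theory of Stokes structures for the irregular connection at $w=0$ (as in \cite{GGI16}*{\S 2, \S 4}), the unique flat sections $y_i(u,w)$ normalized by the asymptotic condition \eqref{E:asymptoticallyexponential} undergo a Stokes transformation, and correspondingly the classes $A_i(u,\varphi)$ transform by the matrix of an elementary mutation. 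On the categorical side this is exactly a left or right mutation $\mathbf{L}_{E_i}$ or $\mathbf{R}_{E_{i+1}}$ of the pair $(E_i,E_{i+1})$. Thus after applying the appropriate deck transformation $\beta\in\mathfrak{B}_N$ to $u$, obtaining $u'\in\widetilde{\mathscr{U}}_N$ over the same base point, the asymptotic basis at $(u',\varphi)$ is $(\widehat{\Gamma}_X\Ch(E_1'),\dots,\widehat{\Gamma}_X\Ch(E_N'))$ for the mutated collection $\mathfrak{E}'$, and — crucially — because a deck transformation does not change the underlying configuration in $\mathscr{U}_N$, the exponent of each $E_i'$ (i.e.\ the eigenvalue of $U$ attached to the $i$-th flat section) remains $u_i$. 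Here one may need to adjust $\varphi$ slightly to keep it admissible through the braiding, which is harmless since admissibility is an open dense condition and the asymptotic basis is locally constant in $\varphi$ away from Stokes rays.

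The order of steps I would carry out: (1) set up notation, recalling that Gamma II at $\tau\in\mathscr{B}$ gives the collection $\mathfrak{E}$ and the asymptotic basis $(A_1,\dots,A_N)$ with $A_i=\widehat{\Gamma}_X\Ch(E_i)$ and exponent $u_i$; (2) reduce to the case $\mathfrak{E}' = (\dots, \mathbf{L}_{E_i}E_{i+1}, E_i, \dots)$ (and its right-mutation variant) by induction on the length of the mutation word, noting that $\mathfrak{B}_N$ acts transitively on the relevant mutation orbit compatibly on both sides; (3) identify the elementary deck transformation $\beta_i$ of $\widetilde{\mathscr{U}}_N$ realizing the transposition of $u_i$ and $u_{i+1}$, and invoke the Stokes/monodromy computation (cf.\ \cite{GGI16}*{Rem.\ 4.6.3} and the surrounding discussion) to show that parallel transport along $\beta_i$ carries $(A_1,\dots,A_N)$ to the asymptotic basis of $\beta_i\cdot u$ with phase $\varphi$, and that this basis equals $(\widehat{\Gamma}_X\Ch(E_j'))_j$; (4) observe the exponents are preserved because $\beta_i$ fixes the image configuration in $\mathscr{U}_N$; (5) compose over a mutation word and adjust $\varphi$ to remain admissible, taking $u' := \beta\cdot u$. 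The main obstacle I expect is step (3): making precise the identification of the Stokes-factor transformation of the normalized flat sections $y_i$ with the matrix of a categorical mutation under $\widehat{\Gamma}_X\Ch(-)$. This requires carefully tracking the ordering conventions (the re-ordering $\Im(-e^{-\mathtt{i}\varphi}u_i)$ when two eigenvalues cross a Stokes ray) and citing the correct form of the Dubrovin–Guzzetti Stokes matrix computation; once the bookkeeping is fixed, the rest is formal. An alternative to (3), if one wants to avoid Stokes matrices, is to argue more abstractly: \cite{GGI16}*{Rem.\ 4.6.3} already asserts that Gamma II at one $(u,\varphi)$ implies it at all others up to mutation, so one can instead run the logic in reverse — take $\mathfrak{E}'$, find \emph{some} $(u',\varphi')$ at which it is asymptotically exponential, and then show $u'$ must lie over $(u_1,\dots,u_N)$ and have the claimed exponents by comparing with the spectrum of $U$, which is a deck-transformation invariant.
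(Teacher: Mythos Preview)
Your proposal is correct and follows essentially the same approach as the paper's proof sketch: reduce to a single elementary mutation, realize it by a braid in $\widetilde{\mathscr{U}}_N$ swapping two adjacent eigenvalues, and verify that the asymptotic basis undergoes the corresponding mutation, with the key compatibility $\widehat{\Gamma}_X\Ch(\mathbf{L}_{E_{i+1}}(E_i)) = \mathbf{L}_{A_{i+1}}(A_i)$ being the crux. The paper is slightly more concrete in two places---it first deforms within $\widetilde{\mathscr{U}}_N$ to the convenient configuration $u_k = (N-k)\cdot\mathtt{i}$, and it writes down the explicit mutation formula $\mathbf{L}_{A_{i+1}}(A_i) = A_{i+1} - [A_i,A_{i+1})A_i$ using the non-symmetric pairing $[\alpha,\beta) = (2\pi)^{-\dim X}\int_X (e^{\pi\mathtt{i}\rho}e^{\pi\mathtt{i}\mu}\alpha)\cup\beta$---but these are precisely the ingredients you flagged as ``the main obstacle'' in your step~(3), so your outline anticipates them.
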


\begin{proof}[Proof Sketch]
    For simplicity, assume that $\varphi = 0$ is admissible. Then, $(u_1,\ldots, u_N)$ is ordered such that $\Im(-u_1)<\cdots<\Im(-u_N)$. Up to deforming in $\widetilde{\mathscr{U}}_N$, we may assume that $u_k = (N-k)\cdot  \mathtt{i}$ for $k=1,\ldots, N$. Consider $u'\in \widetilde{\mathscr{U}}_N$ obtained by exchanging the positions of $u_i$ and $u_{i+1}$ by counterclockwise rotation -- see \Cref{fig:mutation}. The asymptotic basis $(A_1,\ldots, A_N)$ undergoes a left mutation giving a new basis of $\H^{\bullet}(X)$:
    \[
      (A_1',\ldots, A_N') := (A_1,\ldots, A_{i-1},A_{i+1},\bf{L}_{A_{i+1}}(A_i),A_{i+2},\ldots, A_N)
    \]
    such that $A_i'$ has exponent $u_i$ for each $i$. The mutation is defined using the data of the vector space $\H^\bullet(X)$ equipped with the non-symmetric pairing 
    \[
        [\alpha,\beta) := \frac{1}{(2\pi)^{\dim X}} \int_X(e^{\pi \mathtt{i}\rho}e^{\pi \mathtt{i}\mu}\alpha)\cup \beta
    \]
    so that in particular, $\bf{L}_{A_{i+1}}(A_i) :=  A_{i+1} - [A_i,A_{i+1})A_i$. See \cite{GGI16}*{\S 4.3} for thorough discussion. One can check that $\widehat{\Gamma}_X\Ch(\bf{L}_{E_{i+1}}(E_i)) = \bf{L}_{A_{i+1}}(A_i)$, so the corresponding full exceptional collection of $\DCoh(X)$ is obtained from $(E_1,\ldots, E_N)$ by the corresponding left mutation. In the case where we deform by clockwise rotation, the asymptotic basis and full exceptional collection undergo right mutation instead.
\end{proof}

\subsection{Asymptotics of the quantum cohomology central charge}
\label{SS:asymptotics}
We next derive asymptotic estimates of the quantum cohomology central charge (\Cref{defn_perturbed_can_sol}) of a smooth Fano variety $X$ for those $\tau \in B$ for which it is defined. Of particular interest to us are semisimple examples on the one hand and cubic threefolds and fourfolds on the other.

Choose a norm $\lVert \:\cdot\:\rVert$ on $\H^\bullet(X)$. Given a $\tau$-admissible phase $\varphi \in \bf{R}$, the $A$-model mutation system of \cite{SandaShamoto}*{\S 3} gives rise to a decomposition $\H^{\bullet}(X) = \bigoplus_{\lambda \in \lvert \sigma(\cE_\tau)\rvert} \cA_\lambda$, where
\begin{equation}
\label{E:Amodelmutationsystem}
    \cA_\lambda := \left\{v \in \H^{\bullet}(X): \exists m\in \bf{Z}_{\ge 0} \text{ such that }\lVert e^{\lambda/w}\Phi_w^0(v)\rVert 
    \leq O(|w|^{-m})
    \right\}
\end{equation}
for $w\to 0$ in the sector $\mathscr{S}(\varphi,\tfrac{\pi}{2} + \epsilon)$ with $0<\epsilon \ll 1$, see \cite{SandaShamoto}*{Lem. 3.6}. 

If $X$ satisfies property $\cO$ of \cite{GGI16}*{Def. 3.1.1}, then Prop. 3.2.1 of \emph{loc. cit.} identifies a fundamental matrix solution of the quantum differential equation on $\mathscr{S}_{\le 1}(0,\varrho) := \mathscr{S}(0,\varrho)\cap \{w\in \bf{C}^*:\lvert w\rvert \le 1\}$. We recall the salient points here. We fix $\tau = 0$ so that $\cE_0 = c_1(X)$. We have:
\begin{equation}
\label{E:coarseJordandecomp}
    \H^\bullet(X) = \bigoplus_{\lambda \in \lvert \sigma(\cE_0)\rvert} E(\lambda),
\end{equation}
where $E(\lambda)$ is the sum of the generalized eigenspaces of $\cE_0\star(-)$ corresponding to the eigenvalue $\lambda$. We enumerate $\lvert \sigma(\cE_0)\rvert$ as $\{\lambda_1,\ldots,\lambda_k\}$ such that $\lambda_1$ is the largest real eigenvalue, denoted $T$ in \cite{SandaShamoto}. Let $N_i = \dim E(\lambda_i)$ for $i=1,\ldots, k$. By property $\cO$, $N_1 = 1$. Next, we define a block-scalar matrix $U:= \diag(\lambda_1,\lambda_2 I_{N_2},\ldots, \lambda_k I_{N_k})$ and choose a linear isomorphism $\Psi:\bf{C}^N \to \H^{\bullet}(X)$ such that 
\[
    \Psi^{-1}(\cE_0 \star_0(-))\Psi = 
    \begin{bmatrix}
        B_1&&&\\
        &B_2&&\\
        &&\ddots& \\
        &&&B_k
    \end{bmatrix}
\]
where $B_i$ is an $N_i\times N_i$ matrix that is a sum of Jordan blocks with eigenvalue $\lambda_i$ for each $1\le i \le k$, corresponding to the Jordan decomposition of $\cE_0\star(-)|_{E(\lambda_i)}$. By \cite{GGI16}*{Prop. 3.2.1} there is a fundamental matrix solution of $\nabla_{w\partial_w} = 0$ of the form 
\[
    P(w)e^{-U/w}
    \begin{bmatrix}
        F_1(w)&&&\\
        &F_2(w)&&\\
        &&\ddots&\\
        &&&F_k(w)
    \end{bmatrix}
\]
over $\mathscr{S}_{\le 1}(0,\varrho)$ for some small $\varrho>0$. Furthermore, there is an asymptotic expansion: 
\[
    P(w) \sim \Psi + P_1w + P_2w^2+\cdots \text{ as }w\to 0 \text{ in } \mathscr{S}_{\le 1}(0,\varrho),
\] 
for some constant $N\times N$ matrices $P_i$. Also, $F_i(w)$ is a holomorphic $\GL(\bf{C}^{N_i})$-valued function such that $\max \{\lVert F_i(w)\rVert, \lVert F_i(w)^{-1}\rVert)\}\le C\exp(\delta \lvert w\rvert^{-p})$ on $\mathscr{S}$ for some $C,\delta>0$ and $0<p<1$. Finally, $F_1(w) = 1$.

\begin{rem}
\label{R:smallersector}
    We may choose an admissible phase $\varphi \in \bf{R}$ for $\sigma(\cE_0)$ such that $0<\varphi \ll 1$. Furthermore, we can choose $\epsilon>0$ small enough that $\mathscr{S}(\varphi,\epsilon)$ is in the first quadrant of $\bf{C}$, $\varphi +\epsilon < \varrho$, and such that for all $w\in \mathscr{S}(\varphi,\epsilon)$ one can enumerate $\lvert\sigma(\cE_0)\rvert = \{\lambda_1,\ldots, \lambda_k\}$ such that 
    \[
        \Re(\lambda_1/w)>\cdots>\Re(\lambda_k/w)
    \]
    where $\lambda_1 = T$ is the largest real eigenvalue of $\cE_0\star_0(-)$.
\end{rem}

\begin{prop}
\label{P:asymptoticsofsection}
    Consider $\mathscr{S}_{\le 1}(\varphi,\epsilon)$ where $\varphi$ and $\epsilon$ are as in \Cref{R:smallersector} and suppose that $X$ satisfies property $\cO$. For each $\lambda \in \lvert \sigma(\cE_\tau)\rvert$ and any non-zero $\alpha \in \cA_\lambda$, there is a non-zero $\Psi(\alpha) \in E(\lambda)$ and $m\in \bf{Z}_{\ge 0}$ such that 
    \[
        e^{\lambda/w}\cdot w^m\cdot \Phi_w^0(\alpha) \sim \Psi(\alpha)
    \]
    as $w\to 0$ in $\mathscr{S}_{\le 1}(\varphi,\epsilon)$.
\end{prop}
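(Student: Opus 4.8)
\textbf{Proof proposal for \Cref{P:asymptoticsofsection}.}

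The plan is to unravel the definition of $\cA_\lambda$ via the $A$-model mutation system and match it against the GGI asymptotic solution recalled just above the statement. First I would recall that, by \eqref{E:Amodelmutationsystem}, $\alpha \in \cA_\lambda$ means precisely that $\lVert e^{\lambda/w}\Phi_w^0(\alpha)\rVert \le O(\lvert w\rvert^{-m_0})$ for some $m_0$ as $w \to 0$ in $\mathscr{S}(\varphi,\tfrac{\pi}{2}+\epsilon)$; in particular this holds in the smaller sector $\mathscr{S}_{\le 1}(\varphi,\epsilon)$ of \Cref{R:smallersector}. Using the GGI fundamental matrix solution $P(w)e^{-U/w}\mathrm{diag}(F_1(w),\ldots,F_k(w))$ on $\mathscr{S}_{\le 1}(0,\varrho)$, the flat section $\Phi_w^0(\alpha)$ can be written as a $\bf{C}$-linear combination of the columns of this fundamental solution: writing $\alpha$ in terms of the basis dual to the columns, we get $\Phi_w^0(\alpha) = \sum_i P(w)\,e^{-\lambda_i/w}\,F_i(w) c_i$ for appropriate constant vectors $c_i \in \bf{C}^{N_i}$. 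The decomposition \eqref{E:coarseJordandecomp} of $\H^\bullet(X)$ into generalized eigenspaces, and the corresponding block structure, is what makes this bookkeeping clean.

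The key step is then to show that the hypothesis $\alpha \in \cA_\lambda$ forces all components $c_i$ with $\Re(\lambda_i/w) > \Re(\lambda/w)$ to vanish. Suppose some $c_j \ne 0$ with $\lambda_j$ strictly dominating $\lambda$ in the sense that $\Re(\lambda_j/w) > \Re(\lambda/w)$ throughout $\mathscr{S}_{\le 1}(\varphi,\epsilon)$ (the ordering of \Cref{R:smallersector} makes this a matter of index comparison). Then in $e^{\lambda/w}\Phi_w^0(\alpha)$ the $j$-th summand carries a factor $e^{(\lambda-\lambda_j)/w}$ which grows like $\exp(c\lvert w\rvert^{-1})$ for some $c>0$ along rays, and since $P(w) \sim \Psi$ is bounded and invertible while $F_j(w)$ has at most $\exp(\delta\lvert w\rvert^{-p})$ growth with $p<1$, the $j$-th term dominates and blows up faster than any power of $\lvert w\rvert^{-1}$ — contradicting the moderate-growth bound. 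Hence $\Phi_w^0(\alpha)$ only involves blocks $i$ with $\Re(\lambda_i/w) \le \Re(\lambda/w)$; combined with the reverse growth estimate (any $\alpha$ expressible using only non-dominant blocks does satisfy the $\cA_\lambda$-bound, essentially \cite{SandaShamoto}*{Lem. 3.6}) one pins down that in fact only the block $i$ with $\lambda_i = \lambda$ contributes, because a strictly \emph{subdominant} block would make $e^{\lambda/w}\Phi_w^0(\alpha)$ decay exponentially, and one checks this is incompatible with $\alpha$ lying in $\cA_\lambda$ as opposed to $\{0\}$ (using that $\H^\bullet(X) = \bigoplus \cA_\mu$ is a \emph{direct sum} and the $\Phi_w^0$-images of $\cA_\mu$ for distinct $\mu$ are linearly independent). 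After this reduction, $e^{\lambda/w}\Phi_w^0(\alpha) = P(w)F_{i_0}(w)c_{i_0}$ with $\lambda_{i_0} = \lambda$, and $F_{i_0}(w)c_{i_0}$ is a vector-valued holomorphic function whose entries, by the structure of solutions near an irregular singular point with a single exponential scale removed (i.e. the formal/Jordan part), have leading behavior $w^{-m}(\text{const vector}) + \cdots$ for a unique integer $m \ge 0$ determined by the largest Jordan block of $B_{i_0}$ that $c_{i_0}$ meets. Multiplying by $w^m$ and using $P(w) \to \Psi$ gives $e^{\lambda/w}w^m\Phi_w^0(\alpha) \sim \Psi(c_{i_0}^{\mathrm{lead}}) =: \Psi(\alpha) \in E(\lambda)$, nonzero since $c_{i_0}\ne 0$.

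The main obstacle I anticipate is the last part: extracting the precise power $w^m$ and the nonzero limit vector from the block $F_{i_0}(w)$. The function $F_{i_0}$ is only controlled through the crude estimate $\max\{\lVert F_{i_0}\rVert, \lVert F_{i_0}^{-1}\rVert\} \le C\exp(\delta\lvert w\rvert^{-p})$, which is far too weak to read off polynomial asymptotics directly. One needs instead the finer statement that, after removing the exponential factor $e^{-U/w}$, the remaining connection on $E(\lambda)$ has at most a \emph{regular} singularity (this is the content of the Hukuhara–Turrittin–type normal form underlying \cite{GGI16}*{Prop. 3.2.1}, and is implicit in the Jordan-block description of $B_{i_0}$), so that $F_{i_0}(w) = R_{i_0}(w)\,w^{-B_{i_0}^{\mathrm{nilp}}}$ up to a holomorphic invertible $R_{i_0}$ with $R_{i_0}(0)$ invertible; then $F_{i_0}(w)c_{i_0}$ is manifestly a finite sum of terms $w^{-j}\log^{?}(w)\cdot(\text{vector})$, and property $\cO$-type considerations (or simply that $\cE_0\star$ restricted to the relevant eigenspace has a well-defined Jordan form) ensure no $\log$ terms survive in the dominant part after the rescaling, leaving a clean power $w^{-m}$. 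I would cite \cite{GGI16}*{\S 3.2} and, if needed, \cite{SandaShamoto}*{Lem. 3.5, 3.6} for these structural facts rather than redoing the asymptotic analysis, and spend the bulk of the written proof on the contradiction argument isolating the single block.
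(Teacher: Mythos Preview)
Your strategy is essentially the paper's: express $\Phi_w^0(\alpha)$ via the GGI fundamental solution as $P(w)\sum_i e^{-\lambda_i/w}F_i(w)v_i$, use the moderate-growth bound to kill the components that make $e^{\lambda/w}\Phi_w^0(\alpha)$ blow up, then read off the asymptotic from what remains. (Minor sign slip: the blocks that blow up after multiplying by $e^{\lambda/w}$ are those with $\Re(\lambda_j/w)<\Re(\lambda/w)$, i.e.\ $j>p$ in the paper's ordering, not the ones you call ``dominating''; the argument itself is the paper's.)

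The two substantive divergences are as follows. First, you try to show the \emph{remaining} (subdominant) components $v_i$ with $i<p$ also vanish, arguing that otherwise $e^{\lambda/w}\Phi_w^0(\alpha)$ would decay exponentially and that this contradicts $\alpha\in\cA_\lambda\setminus\{0\}$. That is a gap: exponential decay is perfectly compatible with the moderate-growth bound defining $\cA_\lambda$, and your direct-sum remark at best yields $v_p\ne 0$, not $v_i=0$ for $i<p$. The paper does not attempt this and does not need it. It keeps the subdominant terms and observes that on the small sector $\mathscr{S}_{\le 1}(\varphi,\epsilon)$ each contributes $P(w)e^{(\lambda_p-\lambda_i)/w}F_i(w)v_i$ with $\Re((\lambda_p-\lambda_i)/w)<0$, which decays exponentially (beating the sub-exponential bound $\exp(\delta|w|^{-p})$, $p<1$, on $F_i$). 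Hence $e^{\lambda_p/w}\Phi_w^0(\alpha)\sim P(w)F_p(w)v_p$ \emph{asymptotically}; no exact equality is claimed or needed.

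Second --- and this is where the paper is markedly slicker --- you flag the extraction of the power $w^m$ from $F_p(w)v_p$ as the main obstacle and reach for a Hukuhara--Turrittin normal form, worrying about logarithms. The paper instead feeds the moderate-growth hypothesis back in one more time: since $e^{\lambda_p/w}\Phi_w^0(\alpha)$ is $O(|w|^{-m_0})$ by the very definition of $\cA_{\lambda_p}$, and the subdominant part decays, the difference $P(w)F_p(w)v_p$ is also $O(|w|^{-m_0})$; since $P(w)\to\Psi$ is bounded and invertible, so is $F_p(w)v_p$. Thus the coordinate functions of $F_p(w)v_p$ have at worst poles at $w=0$; take $m$ to be the highest order and set $\Psi(\alpha):=\lim_{w\to 0}\Psi\cdot w^m F_p(w)v_p$, which is nonzero (the limit exists by choice of $m$) and lies in $E(\lambda_p)$ by the block structure. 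The crude bound $\lVert F_p^{\pm 1}\rVert\le C\exp(\delta|w|^{-p})$ is never invoked for this step --- the $\cA_\lambda$-bound you already have does all the work, and no separate structural input on $F_p$ is required.
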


    The following argument is similar to that of \cite{GGI16}*{Prop. 3.3.1}. 
        
\begin{proof}
    For any $\alpha \in \H^{\bullet}(X)$, $\Phi_w^0(\alpha)$ is a flat section of $\cH_X$ and so by the above discussion we have 
    \[
        \Phi_w(\alpha) = P(w)e^{-U/w}(F_1(w)v_1+ \cdots + F_k(w)v_k),
    \]
    for $w\in \mathscr{S}_{\le 1}(\varphi,\epsilon)$ and where $v \in \H^{\bullet}(X)$ is decomposed as $v = \sum v_i$ according to \eqref{E:coarseJordandecomp}. Suppose that $\alpha \in \cA_{\lambda_p}$ -- we first show that $v_{p+1} = \cdots = v_k = 0$. Since $\alpha \in \cA_{\lambda_p}$, $\lVert e^{\lambda_p/w}\cdot \Phi_w(\alpha)\rVert$ is of moderate growth as $w\to 0$. Thus, $\lVert e^{(\lambda_p - \lambda_i)/w} \cdot F_i(w)v_i\rVert$ is of moderate growth for each $1\le i \le k$. However, if $i>p$ then
    \[
        \lVert v_i\rVert \le \left\lVert e^{(\lambda_i-\lambda_p)/w}F_i(w)^{-1}\right\rVert \cdot \left\lVert e^{(\lambda_p-\lambda_i)/w} F_i(w) v_i\right \rVert \le e^{-\varepsilon/w}\left \lVert e^{(\lambda_p-\lambda_i)/w} F_i(w) v_i\right\rVert
    \]
    for some $\varepsilon>0$. So, $\lVert v_i\rVert = 0$ for all $i>p$. Thus, we can write 
    \begin{align*}
        e^{\lambda_p/w}\cdot \Phi_w(\alpha) & = P(w)\sum_{i=1}^p e^{(\lambda_p-\lambda_i)/w}F_i(w)v_i \\
        & = P(w)F_p(w)v_p + \sum_{i=1}^{p-1}P(w)e^{(\lambda_p - \lambda_i)/w}F_i(w)v_i.
    \end{align*}
    By \Cref{R:smallersector}, $\Re(\tfrac{1}{w}(\lambda_p - \lambda_i)) < 0$ for all $w \in \mathscr{S}_{\le 1}(\varphi,\epsilon)$ and $i<p$ and thus $e^{\lambda_p/w}\cdot \Phi_w(\alpha) \sim \Psi \cdot  F_p(w) \cdot v_p$ as $w\to 0$ in $w \in \mathscr{S}_{\le 1}(\varphi,\epsilon)$. The coefficient functions of $F_p(w) v_p$ may have singularities as $w\to 0$, but the assumption that $e^{\lambda_p/w}\cdot \Phi_w(\alpha)$ is moderate growth implies they are at worst poles. Let $m\ge0$ be the highest order of such a pole. Then 
    \[
        \Psi(\alpha):=\lim_{w\to 0}\Psi \cdot w^m\cdot F_p(w) v_p \text{ for }w\in \mathscr{S}_{\le 1}(\varphi,\epsilon)
    \]
    exists, is non-zero, and lies in $E(\lambda_p)$.
\end{proof}

\Cref{P:asymptoticsofsection} gives estimates on the quantum cohomology central charge $\cZ_w^\tau(\alpha)$ for $\alpha \in \cA_\lambda$, when $\lambda \in \sigma(\cE_\tau)$. First, we need an elementary lemma. 

\begin{lem}
\label{L:Frobeniusdecomp}
    Let $A$ be a Frobenius $\bf{C}$-algebra with pairing $\langle\:\cdot,\cdot\:\rangle:A^{\otimes 2}\to \bf{C}$.\footnote{Here, $\bf{C}$ can be replaced by any algebraically closed field of characteristic zero.} Suppose $E\in A$ is central and denote the distinct eigenvalues of $E\cdot(-)\in \GL(A)$ by $\lambda_1,\dots, \lambda_n$. For each $1\le i \le n$, let $A_i$ denote the sum of the generalized eigenspaces with eigenvalue $\lambda_i$. Then
    \begin{equation*}
        A=\prod_{i=0}^n A_i
    \end{equation*}
    is an internal product decomposition of $\bf{C}$-algebras such that for each $1\le i \le n$, \vspace{-2mm}
    \begin{enumerate} 
        \item $A_i$ is a Frobenius algebra with pairing inherited from $A$; and \vspace{-2mm}
        \item $\ker(\langle 1,-\rangle:A\to \bf{C})$ does not contain $A_i$. \vspace{-2mm}
    \end{enumerate}
\end{lem}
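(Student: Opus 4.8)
The plan is to exploit the fact that $E\cdot(-) \in \GL(A)$ is an operator on a finite-dimensional $\bf{C}$-algebra which, crucially, commutes with multiplication by every element of $A$ (since $E$ is central). First I would observe that the generalized eigenspace decomposition $A = \prod_i A_i$ as a vector space is standard linear algebra. The key point is that each $A_i$ is in fact a two-sided ideal: if $a \in A_i$ and $b\in A$, then $(E - \lambda_i)^k(ab) = ((E-\lambda_i)^k a)b = 0$ for $k\gg 0$, using that $E\cdot(-)$ commutes with $b\cdot(-)$. Hence $A_i \cdot A \subseteq A_i$, and by a similar argument $A_i\cdot A_j \subseteq A_i \cap A_j = 0$ for $i\ne j$. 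Writing $1 = \sum e_i$ with $e_i \in A_i$, one checks $e_i$ is the identity of $A_i$ (from $a = 1\cdot a = \sum e_i a$ and $e_i a\in A_i$) and that the $e_i$ are orthogonal central idempotents; this gives the internal product decomposition of $\bf{C}$-algebras.

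Next I would verify claim (1). Since $A$ is Frobenius, the pairing $\langle\:\cdot,\cdot\:\rangle$ is non-degenerate and associative: $\langle ab,c\rangle = \langle a,bc\rangle$. I claim $A_i \perp A_j$ for $i\ne j$: for $a\in A_i$, $b\in A_j$, write $b = e_j b$, so $\langle a,b\rangle = \langle a, e_j b\rangle = \langle a e_j, b\rangle = \langle 0,b\rangle = 0$ since $ae_j \in A_i\cdot A_j = 0$. Therefore the pairing restricts to a pairing on each $A_i$, and non-degeneracy of $\langle\:\cdot,\cdot\:\rangle$ on $A$ together with the orthogonal decomposition forces non-degeneracy on each $A_i$. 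Associativity is inherited, so $A_i$ is a Frobenius algebra with unit $e_i$.

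Finally, claim (2): I must show $A_i \not\subseteq \ker(\langle 1,-\rangle)$, i.e. there is some $a\in A_i$ with $\langle 1,a\rangle \ne 0$. By non-degeneracy of $\langle\:\cdot,\cdot\:\rangle$ restricted to $A_i$, the element $e_i \in A_i$ is not in the radical of that pairing, so there exists $a\in A_i$ with $\langle e_i,a\rangle \ne 0$. But $\langle e_i, a\rangle = \langle 1\cdot e_i, a\rangle = \langle 1, e_i a\rangle = \langle 1,a\rangle$ since $e_i a = a$ for $a \in A_i$. Hence $\langle 1,a\rangle \ne 0$, as desired. I don't anticipate a serious obstacle here; the only mild subtlety is being careful that ``generalized eigenspace'' (rather than eigenspace) is what makes $A_i$ an ideal, which is exactly where centrality of $E$ is used, and that the indexing in the statement (running from $0$ to $n$ while the eigenvalues are labelled $1$ to $n$) is a harmless typo to be read as $\prod_{i=1}^n A_i$.
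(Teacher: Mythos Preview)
Your proof is correct and follows essentially the same approach as the paper's: both use centrality of $E$ to show the generalized eigenspaces are ideals with $A_iA_j=0$ for $i\ne j$, deduce orthogonality of the $A_i$ under the pairing via associativity, and verify (2) by identifying $\langle 1,-\rangle|_{A_i}$ with $\langle e_i,-\rangle$. The only cosmetic difference is the order of steps---you build the idempotents before proving orthogonality of the pairing, whereas the paper does the reverse---and your observation about the indexing typo is apt.
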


\begin{proof}
    By definition, we have a decomposition $A = \bigoplus_{i=1}^n A_i$ of vector spaces. Next, let $a_i \in A_i$ and $a_j\in A_j$ be given. We can write $a_i = \sum v$, where the $v$ are generalized eigenvectors of eigenvalue $\lambda_i$. Similarly, we can write $a_j = \sum w$. Then, since $E$ is central, $vw$ is a generalized eigenvector for both $\lambda_i$ and $\lambda_j$; thus, $a_ia_j = \sum vw=0$. Thus, it follows that $\langle a_i,a_j\rangle = \langle 1,a_ia_j\rangle = 0$ for $i\ne j$ and that $A = \bigoplus_{i=1}^n A_i$ is orthogonal with respect to the pairing on $A$. Thus, $\langle\:\cdot,\cdot\:\rangle$ restricts to a non-degenerate pairing on each of the $A_i$. Using the direct sum decomposition, we can write $1 = \sum_i e_i$ where each $e_i$ is an idempotent. This gives an internal product decomposition $A = \prod_{i=1}^n A_i$ into Frobenius subalgebras, whence (1) follows.

    For (2), first note that $\langle 1,-\rangle :A_i \to \bf{C}$ equals the map $\langle e_i,-\rangle$. Next, suppose $\langle e_i,-\rangle = 0$ when restricted to $A_i$. Choose $x_i\in A_i$. For any $b = \sum b_ie_i\in A$, we have $\langle b,x_i\rangle = \langle e_i,b_ix_i\rangle = 0$. Thus, $x_i \in \ker\langle\:\cdot,\cdot\:\rangle$, contradicting non-degeneracy.
\end{proof}

\begin{ex}
\label{Ex:smallquantumcohomology}
    Let $X$ be a Fano variety and consider $A = \H^{\bullet}(X,\bf{C})$, equipped with the small quantum product $\star_\tau$ for $\tau \in \H^2(X)$. The pair $(\H^{\bullet}(X),\star_\tau)$ admits the structure of a Frobenius algebra with pairing 
    \[
        \langle \alpha,\beta\rangle := \int_X \alpha\star_\tau \beta.
    \]
    In addition, $(\H^{\bullet}(X),\star_\tau)$ admits the structure of a super-commutative algebra with $\H^{\bullet}(X) = \H^{\rm{even}}(X)\oplus \H^{\rm{odd}}(X)$, using the usual cohomological grading. Consequently, all even classes are central. By the definition of the Euler field $\cE_\tau$ in \eqref{E:eulerfield}, it follows that $\cE_\tau \in \H^{\rm{even}}(X)$ when $\tau \in \H^2(X)$, and thus we can apply \Cref{L:Frobeniusdecomp} to $\cE_\tau \in \GL(\H^{\bullet}(X))$ to obtain a decomposition $\H^{\bullet}(X) = \prod_{\lambda \in \lvert \sigma(\cE_\tau)\rvert} E(\lambda)$ into Frobenius subalgebras.
\end{ex}

We will use the notation of \Cref{Ex:smallquantumcohomology} in what follows.

\begin{cor}
\label{C:Zasymptoticestimate}
Suppose $X$ is a Fano variety satisfying property $\cO$. Let $\tau \in \H^2(X)$ be given and consider $0\neq \alpha \in \cA_\lambda$. There exist $m \in \bf{Z}_{\ge 0}$ and $0\ne \Psi(\alpha) \in E(\lambda)$ such that
    \[
        \cZ_w^\tau(\alpha) \sim \frac{(2\pi w)^{\dim X/2}}{w^m}\cdot e^{-\lambda/w} \cdot \int_X \Psi(\alpha).
    \]
    as $w\to 0$ in $\mathscr{S}_{\le 1}(\varphi,\epsilon)$. Furthermore, if $\dim E(\lambda) = 1$ then $\int_X \Psi(\alpha) \ne 0$.
\end{cor}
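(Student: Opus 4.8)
The plan is to feed \Cref{P:asymptoticsofsection} into \Cref{defn_perturbed_can_sol} and then track the effect of the Poincaré pairing $\int_X(-)$. First I would recall that for $0\neq \alpha\in\cA_\lambda$, \Cref{P:asymptoticsofsection} produces $m\in\bf{Z}_{\ge 0}$ and a non-zero $\Psi(\alpha)\in E(\lambda)$ with
\[
    e^{\lambda/w}\cdot w^m\cdot \Phi_w^0(\alpha)\sim \Psi(\alpha)
\]
as $w\to 0$ in $\mathscr{S}_{\le 1}(\varphi,\epsilon)$. Since in \Cref{defn_perturbed_can_sol} we have $\cZ_w^\tau(\alpha) = (2\pi w)^{\dim X/2}\int_X\Phi_w^0(\alpha)$ (here $\tau\in\H^2(X)$ and I would note that $\Phi_w^\tau$ and $\Phi_w^0$ agree because the relevant flat sections are determined over $\tau=0,\arg=0$), applying the continuous linear functional $\int_X(-)$ to the asymptotic relation and multiplying by $(2\pi w)^{\dim X/2}w^{-m}e^{-\lambda/w}$ gives
\[
    \cZ_w^\tau(\alpha)\sim \frac{(2\pi w)^{\dim X/2}}{w^m}\,e^{-\lambda/w}\int_X\Psi(\alpha),
\]
which is the displayed estimate. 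The only subtlety in this step is to make sure that the notion "$\sim$" used in \Cref{P:asymptoticsofsection} (convergence after the normalization) passes through $\int_X$; this is immediate because $\int_X$ is a fixed linear map on the finite-dimensional space $\H^\bullet(X)$, hence continuous for any norm, and $f(w)\sim g(w)$ in the sense used there means $f(w)-g(w)\to 0$ coefficientwise.

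For the last sentence I would use the hypothesis $\dim E(\lambda)=1$ together with \Cref{L:Frobeniusdecomp} and \Cref{Ex:smallquantumcohomology}. Namely, \Cref{Ex:smallquantumcohomology} shows that for $\tau\in\H^2(X)$ the Euler operator $\cE_\tau$ is central (it is even), so \Cref{L:Frobeniusdecomp} applies and gives an internal product decomposition $\H^\bullet(X)=\prod_{\mu\in|\sigma(\cE_\tau)|}E(\mu)$ into Frobenius subalgebras, with each $E(\mu)$ a Frobenius algebra under the restriction of $\langle\alpha,\beta\rangle=\int_X\alpha\star_\tau\beta$, and moreover $\ker(\langle 1,-\rangle)$ does not contain $E(\mu)$. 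Now when $\dim E(\lambda)=1$, a one-dimensional Frobenius algebra on which $\langle 1,-\rangle$ is not identically zero must have $\langle 1,x\rangle\neq 0$ for every non-zero $x$; in particular $\int_X x = \langle 1,x\rangle\neq 0$ for $0\neq x\in E(\lambda)$, using that $\langle 1,-\rangle = \int_X(1\star_\tau(-)) = \int_X(-)$ since $1$ is the unit. Since $\Psi(\alpha)\in E(\lambda)$ is non-zero by \Cref{P:asymptoticsofsection}, this gives $\int_X\Psi(\alpha)\neq 0$.

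I expect the main (and really the only) obstacle to be bookkeeping rather than conceptual: one must be careful that the asymptotic solution $\Phi_w^\tau$ appearing in the definition of $\cZ_w^\tau$ for $\tau\in\H^2(X)$ is genuinely the one normalized at $\tau=0$ so that \Cref{P:asymptoticsofsection} (which is stated for $\Phi_w^0$) applies verbatim; this should already be built into the conventions of \Cref{SS:asymptotics}, where $\cA_\lambda$ is defined via $\Phi_w^0$ and the $A$-model mutation system. A secondary point is to confirm that the sector $\mathscr{S}_{\le 1}(\varphi,\epsilon)$ and the branch of $w^{\dim X/2}$ and $w^m$ used are consistent between \Cref{R:smallersector}, \Cref{P:asymptoticsofsection}, and \Cref{defn_perturbed_can_sol}, but since these prefactors are multiplied in without affecting the limit, no further estimates are needed. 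Thus the corollary follows directly by combining \Cref{P:asymptoticsofsection}, \Cref{L:Frobeniusdecomp}, and \Cref{Ex:smallquantumcohomology}.
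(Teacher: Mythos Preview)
Your proposal is correct and follows essentially the same approach as the paper: the paper's proof simply says the estimate is immediate from \Cref{P:asymptoticsofsection} and \Cref{defn_perturbed_can_sol}, and that the nonvanishing in the one-dimensional case follows from \Cref{L:Frobeniusdecomp} via $\int_X\Psi(\alpha)=\langle 1,\Psi(\alpha)\rangle$. You have spelled out the same two steps in more detail, including the continuity of $\int_X$ and the identification $\langle 1,-\rangle=\int_X(-)$, which the paper leaves implicit.
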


\begin{proof}
    The estimate for $\cZ_w^\tau(\alpha)$ is immediate from \Cref{P:asymptoticsofsection} and the definition of the quantum cohomology central charge in \Cref{defn_perturbed_can_sol}. Next, if $\dim E(\lambda) = 1$, then by \Cref{L:Frobeniusdecomp} one has $\int_X \Psi(\alpha) = \langle 1,\Psi(\alpha)\rangle \ne 0$.
\end{proof}

By \Cref{C:Zasymptoticestimate}, for any $\alpha \in \cA_\lambda$ such that $\int_X \Psi(\alpha)\ne 0$, we have
\begin{equation}
\label{E:asymptoticestimatelogZ}
    \log\cZ_w^\tau(\alpha) = C_\alpha - \frac{\lambda}{w} + \left(\frac{\dim X}{2} - m\right)\log(w) + \epsilon_\alpha(w),
\end{equation}
where $\epsilon_\alpha(w) \to 0$ as $w\to 0$ in $\mathscr{S}(\varphi,\epsilon)$. Here, $C_\alpha = \log (\int_X\Psi(\alpha)) + \tfrac{\dim X}{2}\log(2\pi).$

\subsubsection*{Estimates for deformed quantum cohomology charges}

Next, we apply \Cref{L:Frobeniusdecomp} to obtain est\-imates for the paths of central charges obtained using solutions of isomonodromic deform\-ations of the quantum connection as in \Cref{P:isomonodromicasymptoticsolution}. 

\begin{cor}
\label{C:isomonodromic_estimate}
    In \Cref{S:mutationsetup}, for any full exceptional collection $\mathfrak{E}' = (E_1',\ldots, E_N')$ mutation equivalent to $\mathfrak{E}$, there exists $v \in \widetilde{\mathscr{U}}_N$ lying over $(u_1,\ldots, u_N) \in \mathscr{U}_N$ such that 
    \[
        \cZ_w^v(E_i') \sim (2\pi w)^{\dim X/2} \cdot  e^{-u_i/w} \cdot \int_X \Psi_v(e_i)
    \]
    as $w\to 0$ in a sector $\mathscr{S}$ containing $\bf{R}_{>0}e^{\mathtt{i}\varphi}$. Furthermore, $\int_X \Psi_v(e_i) \ne 0$.
\end{cor}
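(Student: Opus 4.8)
\textbf{Proof Proposal for \Cref{C:isomonodromic_estimate}.}

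The plan is to combine \Cref{P:mutationforestimate} with \Cref{P:isomonodromicasymptoticsolution} and the non-vanishing statement of \Cref{L:Frobeniusdecomp}(2). First I would invoke \Cref{P:mutationforestimate}: since $\mathfrak{E}' = (E_1',\ldots,E_N')$ is mutation equivalent to $\mathfrak{E}$, there exist $v \in \widetilde{\mathscr{U}}_N$ lying over $(u_1,\ldots,u_N) \in \mathscr{U}_N$ and a phase $\varphi \in \bf{R}$ such that $\mathfrak{E}'$ is asymptotically exponential at $(v,\varphi)$, with the exponent of $E_i'$ equal to $u_i$ for each $i$. By definition of "asymptotically exponential at $(v,\varphi)$", the flat section $\Phi_w^0(A_i(v,\varphi)) = \Phi_w^0(\widehat{\Gamma}_X\Ch(E_i'))$ agrees (after parallel transport, using that both sides are flat and agree along $\tau=0$, $\arg=0$, as noted in the remark following \eqref{E:asymptoticexponentialcollection}) with the section $y_i(v,w)$ of \Cref{P:isomonodromicasymptoticsolution}. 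That proposition then gives the asymptotic $y_i(v,w)\cdot e^{u_i/w} \to \Psi_v(e_i)$ as $w\to 0$ in the sector $\lvert \arg(w)-\varphi\rvert < \tfrac{\pi}{2}+\epsilon$; in particular $\mathscr{S}$ can be taken to contain $\bf{R}_{>0}e^{\mathtt{i}\varphi}$.

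Next I would unwind the definition of the deformed quantum cohomology central charge. By \Cref{defn_perturbed_can_sol}, $\cZ_w^v(E_i') = (2\pi w)^{\dim X/2}\int_X \Phi_w^v(\widehat{\Gamma}_X\Ch(E_i'))$ (taking the composite of $\ch$ with the appropriate automorphism of $\H^\bullet(X)$ built from $\Ch$ and $\widehat{\Gamma}_X$, as in \Cref{E:latticeexample}), and $\Phi_w^v(\widehat{\Gamma}_X\Ch(E_i'))$ is precisely the flat section $y_i(v,w)$ identified above. Applying $\int_X(-)$, which is continuous, to the asymptotic $y_i(v,w) \sim e^{-u_i/w}\Psi_v(e_i)$ yields
\[
    \cZ_w^v(E_i') = (2\pi w)^{\dim X/2}\int_X y_i(v,w) \sim (2\pi w)^{\dim X/2}\cdot e^{-u_i/w}\cdot \int_X \Psi_v(e_i)
\]
as $w\to 0$ in $\mathscr{S}$, where $\sim$ is in the sense of the notation section (i.e. $\log$ of the ratio tends to $0$); here one uses that the correction terms in \eqref{E:asymptoticallyexponential} are only polynomial in $1/w$, hence negligible after taking logarithms against the exponential factor.

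It remains to prove $\int_X \Psi_v(e_i) \ne 0$. For this I would apply \Cref{L:Frobeniusdecomp} to the semisimple Frobenius algebra $(\H^\bullet(X),\star_v,\eta)$ constructed in \Cref{SS:isomonodromicdeformation}: the Euler operator $U = \cE_v\star_v(-)$ is central (multiplication operator in a commutative algebra) with simple spectrum $\{u_1,\ldots,u_N\}$ over $\widetilde{\mathscr{U}}_N$, so each eigenspace $A_i = \bf{C}\cdot\psi_i$ is one-dimensional, and \Cref{L:Frobeniusdecomp}(2) says $\ker(\langle 1,-\rangle)$ does not contain $A_i$; since $\int_X(-) = \langle 1,(-)\rangle_\eta$ restricted to $A_i$ and $\Psi_v(e_i)$ spans $A_i$ (it is $\psi_i$ up to scalar — indeed $\Psi_u = (\psi_1,\ldots,\psi_N)$ by construction, and $e_i$ is the $i$-th idempotent), we conclude $\int_X\Psi_v(e_i) = \eta(1,\psi_i)\ne 0$. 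I expect the only genuinely delicate point to be bookkeeping: making sure the phase $\varphi$ produced by \Cref{P:mutationforestimate} is consistent with the sector in \Cref{P:isomonodromicasymptoticsolution} and that the parallel-transport identification of $y_i(v,w)$ with $\Phi_w^v(\widehat{\Gamma}_X\Ch(E_i'))$ is legitimate on all of $\mathscr{S}$ — but both are handled by the flatness/identity-principle argument already invoked in the remark after \eqref{E:asymptoticexponentialcollection} and in the proof of \Cref{L:selfadjointU}, so this is essentially routine once assembled.
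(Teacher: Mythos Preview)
Your proposal is correct and follows essentially the same approach as the paper: invoke \Cref{P:mutationforestimate} to obtain $v$, use \Cref{P:isomonodromicasymptoticsolution} to identify $\Phi_w^v(\widehat{\Gamma}_X\Ch(E_i'))$ with $y_i(v,w)$ and extract the asymptotic, then apply \Cref{L:Frobeniusdecomp}(2) to the semisimple Frobenius algebra $(\H^\bullet(X),\star_v,\eta)$ to get $\int_X\Psi_v(e_i)\ne 0$. The paper's proof is more terse but structurally identical.
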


The reader should compare with \cite{GGI16}*{\S 4.7}.

\begin{proof}
    By \Cref{P:mutationforestimate}, there exists $v\in \widetilde{\mathscr{U}}_N$ such that $\{\widehat{\Gamma}_X\Ch(E_i')\}_{i=1}^N$ is an asymptotically exponential basis for $\H^{\bullet}(X)$ with respect to $\widetilde{\nabla}^v_{w\partial_w} = 0$ and such that $E_i'$ has exponent $u_i$. Taking $i^{\rm{th}}$ components of \Cref{P:isomonodromicasymptoticsolution}, we have $y_i(v,w) = \Phi_w^v(\widehat{\Gamma}_X\Ch(E_i'))\sim e^{-u_i/w}\Psi_v(e_i)$  as $w\to 0$ in $\mathscr{S}$ and the claimed asymptotic estimate follows from the definition of $\cZ_w^v(-)$. 

    Finally, by definition $\Psi_v(e_i)$ is an eigenvector for $\cE_v\star_v(-)$, and thus by \Cref{L:Frobeniusdecomp} we have a decomposition $\H^\bullet(X) = \prod_{i=1}^N \Psi_v(e_i)$ of Frobenius algebras. Thus, $\int_X \Psi_v(e_i) = \langle 1,\Psi_v(e_i)\rangle \ne 0$. 
\end{proof}

In \Cref{S:mutationsetup}, \Cref{C:isomonodromic_estimate} gives us estimates:
\begin{equation}
\label{E:logZestimatesemisimple}
    \log \cZ_w^v(E_i') = C_i - \frac{u_i}{w} + \frac{\dim X}{2} \log(2\pi w) + \epsilon_i(w)
\end{equation}
where $\epsilon_i(w) \to 0$ as $w\to 0$ in $\mathscr{S}$. Here, $C_i = \log(\int_X\Psi_v(e_i)).$ These estimates will be crucial in what follows to construct quasi-convergent paths in $\Stab(X)$.

\subsubsection*{Estimates for cubics}
\label{section_QH_cubics}

Next, we recall some key results from the work of Sanda--Shamoto \cite{SandaShamoto}, which we use to compute the asymp\-totics of the quantum cohomology central charges $\cZ_w^0$ for cubic hypersurfaces in dim\-ensions 3 and 4. First, let $Y\subset \bf{P}^4$ denote a smooth cubic threefold. Recall the standard Kuznetsov decomposition
\begin{equation}
\label{E:KuznetsovCubicthreefold}
    \DCoh(Y)=\langle \Ku(Y), \cO, \cO(1)\rangle.
\end{equation}

By \cite{SandaShamoto}, we can compute the spectrum of $c_1(Y)\star_0(-) \in \End(\H^\bullet(Y))$ as follows: there is an orthogonal decomposition $\H^\bullet(Y) = \H^\bullet_{\rm{amb}}(Y) \oplus \H^\bullet_{\rm{amb}}(Y)^\perp$ with respect to the Poincar\'{e} pairing, such that $\H_{\rm{amb}}^\bullet(Y)$ is closed under $\star_0$. Here, $\rm{H}_{\rm{amb}}^\bullet(Y)$ is the subspace of ambient classes, i.e. those pulled back from $\H^\bullet(\bf{P}^4)$ along the inclusion $i:Y\hookrightarrow \bf{P}^4$. By \cite{SandaShamoto}*{Lem. 7.3}, $c_1(Y)\star_0(-)$ acts by zero on $\H_{\rm{amb}}^\bullet(Y)^\perp$. On the other hand, by \cite{SandaShamoto}*{Lem. 7.5}, 
    \[
        (\H_{\rm{amb}}^\bullet(Y),\star_0) \cong \bf{C}[h]/(h^2(h^2-27))
    \]
where $h$ is the hyperplane class. Since $c_1(Y) = 2h$, it follows that the eigenvalues of $c_1(Y)\star_0(-)$ are $c_1 = T$, $c_2 = 0$, and $c_3 = -T$, where $T = 2\sqrt{6}>0$. Note that $\pm T$ are simple eigenvalues, while $T$ appears with multiplicity $2+ \dim \H_{\rm{amb}}^\bullet(Y)^\perp$.

\begin{prop}
    There exists a mutation $\DCoh(Y) = \langle \cC_1,\cC_2,\cC_3\rangle$ of the Kuznetsov decomposition \eqref{E:KuznetsovCubicthreefold} such that for all objects $E$ of $\cC_i$ with $\ch(E) \ne 0$, we have
    \[
        \norm{e^{c_i/w}\Phi_w^0(\widehat{\Gamma}_Y \Ch(E))}
        \leq O(|w|^{-m})    
    \]
    as $w\to 0$ in $\mathscr{S}(\varphi,\tfrac{\pi}{2}+\epsilon)$ for some $m\geq 0$ and $\varphi$ as in \Cref{R:smallersector}. Furthermore, $\cC_1 = \langle \cO(1)\rangle$, $\cC_3 = \langle \cO(2)\rangle$, and $\cC_2\simeq \Ku(Y)$ by the associated mutation functor. 
\end{prop}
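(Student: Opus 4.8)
The plan is to exhibit the mutated decomposition explicitly and then match it with the spectral decomposition of $c_1(Y)\star_0$ via \cite{SandaShamoto}. First I would write the Kuznetsov decomposition in the form $\DCoh(Y) = \langle \Ku(Y)(H),\cO_Y(H),\cO_Y(2H)\rangle$ (i.e.\ the one associated to the exceptional pair $(\cO_Y(H),\cO_Y(2H))$; this is the twist of \eqref{E:KuznetsovCubicthreefold} by $\cO_Y(H)$, with $\Ku(Y)(H)=\langle\cO_Y(H),\cO_Y(2H)\rangle^{\perp}$), and then right-mutate $\Ku(Y)(H)$ through $\cO_Y(H)$ inside the admissible subcategory $\langle\Ku(Y)(H),\cO_Y(H)\rangle=\langle\cO_Y(2H)\rangle^{\perp}$. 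This produces
\[
    \DCoh(Y) = \langle\cO_Y(H),\ \cC_2,\ \cO_Y(2H)\rangle,\qquad \cC_2 := \bf{R}_{\cO_Y(H)}(\Ku(Y)(H)),
\]
which is a mutation of \eqref{E:KuznetsovCubicthreefold}. The only non-formal point is semiorthogonality: $\Hom^{\bullet}(\cC_2,\cO_Y(H))=0$ is immediate from $\bf{R}_{\cO_Y(H)}(-)\in{}^{\perp}\langle\cO_Y(H)\rangle$, while $\Hom^{\bullet}(\cO_Y(2H),\cC_2)=0$ follows from $\cC_2\subseteq\langle\cO_Y(2H)\rangle^{\perp}$. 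Moreover the right-mutation functor restricts to an equivalence $\Ku(Y)(H)\xrightarrow{\sim}\cC_2$, so $\cC_2\simeq\Ku(Y)$ via the composite of $(-)\otimes\cO_Y(H)$ with $\bf{R}_{\cO_Y(H)}$; I then set $\cC_1=\langle\cO_Y(H)\rangle$ and $\cC_3=\langle\cO_Y(2H)\rangle$.

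Next, since $Y$ is a smooth Fano hypersurface, \cite{SandaShamoto} shows that the $A$-model mutation system \eqref{E:Amodelmutation} admits a categorical lift arising from a mutation $\DCoh(Y)=\langle\cD_1,\cD_2,\cD_3\rangle$ of the Kuznetsov decomposition, in the sense that any $E\in\cD_i$ with $\ch(E)\neq 0$ satisfies $\widehat\Gamma_Y\Ch(E)\in\cA_{\lambda_i}$, where $\lambda_1,\lambda_2,\lambda_3$ are the eigenvalues of $c_1(Y)\star_0$ ordered as in \Cref{R:smallersector}. By the computation recalled above (\cite{SandaShamoto}*{Lem.~7.3, 7.5}) the spectrum is $\{T,0,-T\}$ with $T>0$ and $\pm T$ simple; taking $\varphi$ small and positive as in \Cref{R:smallersector}, so that $\Re(T/w)>0>\Re(-T/w)$ for $w\in\mathscr{S}(\varphi,\epsilon)$, the ordering there is $\lambda_1=T=c_1$, $\lambda_2=0=c_2$, $\lambda_3=-T=c_3$. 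Combining the membership $\widehat\Gamma_Y\Ch(E)\in\cA_{c_i}$ with the characterization of $\cA_{\lambda}$ by the moderate‑growth bound on $e^{\lambda/w}\Phi_w^0(-)$ (\cite{SandaShamoto}*{Lem.~3.6}, cf.\ \eqref{E:Amodelmutationsystem}) yields exactly the asserted estimate for objects of $\cD_i$. Finally, since $c_1$ and $c_3$ are simple eigenvalues, $\cA_{c_1}$ and $\cA_{c_3}$ are one-dimensional (by \Cref{P:asymptoticsofsection}), so the extreme pieces $\cD_1$ and $\cD_3$ of this mutated exceptional collection are each generated by a single exceptional object.

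It remains to identify $\cD_1=\langle\cO_Y(H)\rangle$ and $\cD_3=\langle\cO_Y(2H)\rangle$ (which then forces $\cD_2=\cC_2$ by uniqueness of the middle factor of an SOD with prescribed outer factors, completing the proof). Here I would argue that $\widehat\Gamma_Y$ is a polynomial in the hyperplane class $h$ — from additivity of $\ch$ in the normal-bundle sequence $0\to T_Y\to i^*T_{\bf{P}^4}\to\cO_Y(3H)\to 0$ together with the definition of $\widehat\Gamma_Y$ — and $\Ch(\cO_Y(kH))=e^{2\pi\mathtt{i}kh}$, so $\widehat\Gamma_Y\Ch(\cO_Y(kH))$ lies in the $\star_0$‑stable subalgebra $\H^{\bullet}_{\rm{amb}}(Y)\cong\bf{C}[h]/(h^2(h^2-27))$; these classes are nonzero since $\widehat\Gamma_Y$ is a unit for the cup product. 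A direct computation in this $4$-dimensional algebra then shows that $\widehat\Gamma_Y\Ch(\cO_Y(H))$ spans the one-dimensional eigenspace $\cA_T$ of $c_1(Y)\star_0=2h\star_0$ and $\widehat\Gamma_Y\Ch(\cO_Y(2H))$ spans $\cA_{-T}$; hence the exceptional generators of $\cD_1,\cD_3$ must be these two line bundles. (Alternatively, this eigenvalue assignment can be read off from \cite{SandaShamoto}'s construction of the lift in the hypersurface case, or deduced from the Gamma conjecture for $\bf{P}^4$ via quantum Lefschetz.) The main obstacle is precisely this matching step — pinning down that the two extreme pieces of the Sanda–Shamoto lift are $\cO_Y(H)$ and $\cO_Y(2H)$ with exponents $T$ and $-T$ respectively; the construction of the mutation and the moderate-growth estimates are formal once \cite{SandaShamoto}, \Cref{P:asymptoticsofsection}, and \Cref{R:smallersector} are in hand.
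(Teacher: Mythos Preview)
Your overall plan---construct the mutated decomposition explicitly and then match it with the $A$-model mutation system of Sanda--Shamoto---is sound, and the paper's own proof is simply a citation to \cite{SandaShamoto}*{Thm.~7.9}, so in the end you land in the same place via your parenthetical alternative. But the ``direct computation'' you propose for the identification step has a genuine gap.

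You write that a computation in $\H^{\bullet}_{\rm{amb}}(Y)\cong\bf{C}[h]/(h^2(h^2-27))$ shows $\widehat\Gamma_Y\Ch(\cO_Y(H))$ spans ``the one-dimensional eigenspace $\cA_T$ of $c_1(Y)\star_0$.'' This conflates two different subspaces: $\cA_T$ is defined by the moderate-growth condition on $e^{T/w}\Phi_w^0(-)$ (see \eqref{E:Amodelmutationsystem}), whereas the eigenspace of $c_1(Y)\star_0$ is $E(T)$ in the notation of \eqref{E:coarseJordandecomp}. They have the same dimension (via the map $\Psi$ of \Cref{P:asymptoticsofsection}) but are not equal. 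Worse, $\widehat\Gamma_Y\Ch(\cO_Y(H))$ is not even in $E(T)$: it has $\H^0$-component equal to $1$, while any eigenvector of $2h\star_0$ for a nonzero eigenvalue must have vanishing $\H^0$-component (since $2h\star_0 1 = 2h$). So no algebraic computation in the quantum ring alone will place $\widehat\Gamma_Y\Ch(\cO_Y(H))$ in $\cA_T$; membership in $\cA_T$ is an analytic statement about asymptotics of QDE solutions, and that is precisely the content of \cite{SandaShamoto}*{Thm.~7.9} (proved there via quantum Lefschetz from the Gamma conjecture on $\bf{P}^4$---your second alternative). A minor additional point: tensoring by $\cO_Y(H)$ is not itself a mutation, so to justify that your $\langle\cO(H),\cC_2,\cO(2H)\rangle$ is a \emph{mutation} of \eqref{E:KuznetsovCubicthreefold} you should note that the twisted decomposition is reached from the original by two Serre-functor rotations (each a composite of right mutations).
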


\begin{proof}
    See the proof of \cite{SandaShamoto}*{Thm. 7.9}.
\end{proof}

\begin{cor}\label{cor_qcohZ_cubic3}
    The restriction of $\cZ_w^0$ to $\rm{K}_0(\cC_i)$ is non-trivial for all $i=1,2,3$. That is, there is an object $E\in \cC_i$ such that in the notation of \Cref{P:asymptoticsofsection}, $\int_Y\Psi(\widehat{\Gamma}_Y\Ch(E))\neq 0$. For such $E$, there exists an $m\ge 0$ such that 
    \begin{equation}
    \label{E:asymptoticestimatelogZcubic3}
    \log\cZ_w^0(E) = C_E- \frac{c_i}{w} + \left(\frac{3}{2} - m\right)\log(w) + \epsilon_E(w)
\end{equation}
such that $\epsilon_E(w)\to 0$ as $w\to 0$ in $\mathscr{S}(\varphi,\epsilon)$ with $\varphi$ and $\epsilon$ as in \Cref{R:smallersector}. Here, $C_E = \log \int_Y\Psi(\widehat{\Gamma}_Y\Ch(E)) + \tfrac{3}{2}\log(2\pi).$
\end{cor}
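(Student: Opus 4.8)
The plan is to deduce \Cref{cor_qcohZ_cubic3} directly from the preceding proposition and from \Cref{C:Zasymptoticestimate} (equivalently, from the asymptotic expansion \eqref{E:asymptoticestimatelogZ}). The mutation $\DCoh(Y) = \langle \cC_1,\cC_2,\cC_3\rangle$ supplied by the proposition is adapted to the eigenvalue decomposition $\H^\bullet(Y) = \bigoplus_{i=1}^3 E(c_i)$ of $\cE_0\star_0(-) = 2h\star_0(-)$, in the sense that $\widehat{\Gamma}_Y\Ch(\rm{K}_0(\cC_i)) \subseteq \cA_{c_i}$, where $\cA_{c_i}$ is the summand of the $A$-model mutation system \eqref{E:Amodelmutationsystem}. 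The first thing I would do is record the comparison between the $A$-model filtration $\{\cA_{c_i}\}$ used in \Cref{P:asymptoticsofsection} and the coarse Jordan decomposition $\{E(c_i)\}$ of \eqref{E:coarseJordandecomp}: by \cite{SandaShamoto}*{Lem. 3.5} one has $\cA_{c_i} = E(c_i)$ for the small quantum product at $\tau = 0$, since $Y$ satisfies property $\cO$ and $\varphi$ is chosen as in \Cref{R:smallersector}. This is what lets us apply \Cref{C:Zasymptoticestimate} with $\lambda = c_i$.

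The core of the argument is non-triviality of $\cZ_w^0$ on $\rm{K}_0(\cC_i)$. For $i=1,3$ this is immediate: $\cC_1 = \langle \cO(1)\rangle$ and $\cC_3 = \langle \cO(2)\rangle$, so $\rm{K}_0(\cC_i) \cong \bf{Z}$, the eigenvalues $c_1 = T$ and $c_3 = -T$ are simple (multiplicity one), and hence $\dim E(c_i) = 1$. Taking $E$ to be the relevant line bundle, $\alpha := \widehat{\Gamma}_Y\Ch(E)$ is a nonzero element of $\cA_{c_i} = E(c_i)$, and the last sentence of \Cref{C:Zasymptoticestimate} gives $\int_Y \Psi(\alpha) \neq 0$; in particular $\cZ_w^0|_{\rm{K}_0(\cC_i)}$ is non-trivial. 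For $i=2$, $E(0) = \cA_0$ has dimension $2 + \dim \H^\bullet_{\rm{amb}}(Y)^\perp > 1$, so the one-dimensionality shortcut is unavailable and one must argue differently. Here I would invoke \Cref{L:Frobeniusdecomp} applied to $A = \H^\bullet(Y)$ with the small quantum product and $E = \cE_0$: it gives an internal product decomposition $\H^\bullet(Y) = \prod_i E(c_i)$ into Frobenius subalgebras, and by part (2) of that lemma $\ker(\int_Y 1\star_0(-))$ does not contain $E(0)$. Since the mutation functor identifies $\rm{K}_0(\cC_2)$ with a full-rank sublattice of $E(0)$ (indeed $\widehat{\Gamma}_Y\Ch$ induces an isomorphism $\rm{K}_0(\Ku(Y))_{\bf Q} \cong \cA_0$, this being the algebraic part of $E(0)$ after twisting by the Gamma class — compare \Cref{E:latticeexample}), there is some $E \in \cC_2$ with $\Psi(\widehat{\Gamma}_Y\Ch(E)) \notin \ker\langle 1,-\rangle$, i.e. $\int_Y \Psi(\widehat{\Gamma}_Y\Ch(E)) \neq 0$. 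The one subtlety to check is that $\Psi(\widehat{\Gamma}_Y\Ch(E)) \in E(0)$ is the same vector whose pairing against $1$ computes the nonvanishing integral; this follows because $\Psi$ restricted to $\cA_0 = E(0)$ is (after the truncation of \Cref{P:asymptoticsofsection}) compatible with the Frobenius structure, so $\langle 1, \Psi(\alpha)\rangle = \int_Y \Psi(\alpha)$, and moderate growth forces the limit to land in $E(0)$ rather than a smaller subspace.

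Once non-triviality is in hand, the asymptotic expansion \eqref{E:asymptoticestimatelogZcubic3} is simply \eqref{E:asymptoticestimatelogZ} specialized to $\dim Y = 3$, $\lambda = c_i$, and $\alpha = \widehat{\Gamma}_Y\Ch(E)$: \Cref{C:Zasymptoticestimate} gives
\[
    \cZ_w^0(E) \sim \frac{(2\pi w)^{3/2}}{w^m}\, e^{-c_i/w} \int_Y \Psi(\widehat{\Gamma}_Y\Ch(E))
\]
as $w \to 0$ in $\mathscr{S}_{\le 1}(\varphi,\epsilon)$ with $\varphi,\epsilon$ as in \Cref{R:smallersector}, and taking logarithms and collecting the constant term into $C_E = \log\int_Y\Psi(\widehat{\Gamma}_Y\Ch(E)) + \tfrac32\log(2\pi)$ yields \eqref{E:asymptoticestimatelogZcubic3} with the error $\epsilon_E(w) \to 0$.

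I expect the main obstacle to be the $i=2$ (Kuznetsov component) case, specifically making precise that the lattice $\rm{K}_0(\cC_2)$, pushed through $\widehat{\Gamma}_Y\Ch$ and then through the truncated limit map $\Psi(\cdot)$ of \Cref{P:asymptoticsofsection}, genuinely meets the complement of $\ker\langle 1,-\rangle$ inside $E(0)$. The cleanest route is probably to avoid tracking $\Psi$ through the mutation altogether: instead observe that if $\cZ_w^0$ vanished identically on $\rm{K}_0(\cC_2)$, then by the estimates for $\cC_1$ and $\cC_3$ the only surviving asymptotics of $\cZ_w^0$ would come from the eigenvalues $\pm T$, forcing $\cZ_w^0$ to annihilate a rank-$b_\bullet(Y)-2$ sublattice; but $\cZ_w^0$ is, up to the fixed invertible operator $S(w)w^{-\mu}w^\rho$ and the nonzero scalar $(2\pi w)^{3/2}$, the functional $\alpha \mapsto \int_Y \alpha$ precomposed with an isomorphism, hence has one-dimensional kernel on $\H^\bullet(Y)$ — and on the algebraic lattice it is even injective modulo torsion when restricted appropriately. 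Reconciling these forces non-triviality on $\cC_2$. Either way, everything past this point is bookkeeping with \eqref{E:asymptoticestimatelogZ}.
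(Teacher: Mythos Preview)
Your overall strategy---note Property~$\cO$ for $Y$ (via \cite{SandaShamoto}*{Cor.~7.7}) and then apply \Cref{C:Zasymptoticestimate} and \eqref{E:asymptoticestimatelogZ}---is exactly the paper's proof, which consists of two sentences to that effect. Your handling of $i=1,3$ and the derivation of the logarithmic expansion \eqref{E:asymptoticestimatelogZcubic3} are correct and match the paper.

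Your additional arguments for $i=2$, however, contain errors. The identification $\cA_{c_i}=E(c_i)$ is neither correct nor needed: these are two genuinely different direct-sum decompositions of $\H^\bullet(Y)$ (same dimensions, different subspaces), linked only through the nonlinear assignment $\alpha\mapsto\Psi(\alpha)$ of \Cref{P:asymptoticsofsection}. More seriously, the claimed isomorphism $\rm{K}_0(\Ku(Y))_{\bf Q}\cong\cA_0$ fails by a dimension count: $\cN(\Ku(Y))$ has rank $2$, whereas $\dim\cA_0=\dim E(0)=2+\dim\H^3(Y)=12$. So $\widehat{\Gamma}_Y\Ch(\rm{K}_0(\cC_2))_{\bf C}$ is only a $2$-plane inside the $12$-dimensional $\cA_0$, and neither the \Cref{L:Frobeniusdecomp}(2) argument nor your kernel-count alternative excludes this $2$-plane (or its nonlinear image under $\Psi$) from lying in the codimension-$1$ hyperplane $\ker\int_Y\subset E(0)$. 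Your alternative route has the same issue: vanishing of $\cZ_w^0$ on the rank-$2$ lattice $\rm{K}_0(\cC_2)$ is not incompatible with $\cZ_w^0$ having one-dimensional kernel in $\H^\bullet(Y)$. The paper's own proof does not work through this subtlety either; it simply cites \Cref{C:Zasymptoticestimate} and moves on.
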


\begin{proof}
    First, note that $Y$ satisfies Property $\cO$ by \cite{SandaShamoto}*{Cor. 7.7}. Therefore, the discussion in \Cref{Ex:smallquantumcohomology} applies. The claim is now an immediate consequence of \Cref{C:Zasymptoticestimate}, see \eqref{E:asymptoticestimatelogZ}.
\end{proof}

Next, we consider a smooth cubic fourfold $X\subset \bf{P}^5$ with its Kuznetsov decomposition
\begin{equation}
    \label{E:KuznetsovdecompoCubicfourfold}
    \DCoh(X) = \langle \Ku(X),\cO,\cO(1),\cO(2)\rangle.
\end{equation}

As in the threefold case, we have $\H^\bullet(X) = \H^\bullet_{\rm{amb}}(X) \oplus \H^\bullet_{\rm{amb}}(X)^\perp$, where $c_1(X)\star_0(-)$ acts by zero on $\H^\bullet_{\rm{amb}}(X)^\perp$ and restricts to an endomorphism of $\H^\bullet_{\rm{amb}}(X)$. Furthermore, writing $h$ for the hyperplane class, we have 
\[
    (\H^\bullet_{\rm{amb}}(X),\star_0) \cong \bf{C}[h]/(h^2(h^3-27)).
\]
Thus, the eigenvalues of $c_1(X)\star_0(-)|_{\H^\bullet_{\rm{amb}}(X)} = 3h\cdot(-)$ are $0$ and $9, 9e^{2\pi \mathtt{i}/3},9e^{4\pi \mathtt{i}/3}$, where the latter three eigenvalues are simple. We index them as $c_1 = 9e^{4\pi \mathtt{i}/3}$, $c_2 = 9$, $c_3 = 0$, and $c_4 = 9e^{2\pi \mathtt{i}/3}$.

\begin{prop}
    There exists a mutation $\DCoh(X) = \langle \cC_1,\cC_2,\cC_3,\cC_4\rangle$ of the Kuznetsov decomposition \eqref{E:KuznetsovdecompoCubicfourfold} such that for all objects $E$ of $\cC_i$ with $\ch(E)\ne 0$, we have 
    \[
        \norm{e^{c_i/w}\Phi_w^0(\widehat{\Gamma}_X \Ch(E))}
        \leq O(|w|^{-m})    
    \]
    for $w\to 0$ in $\mathscr{S}(\varphi,\epsilon)$ with $\varphi$ and $\epsilon$ as in \Cref{R:smallersector} and some $m\geq 0$. Furthermore, $\cC_1 = \langle \cO(2)\rangle$, $\cC_2 = \langle \bf{R}_{\cO(2)}\cO\rangle$, $\cC_4 = \langle \cO(1)\rangle$, and $\cC_3\simeq \Ku(X)$ by the mutation functor.
\end{prop}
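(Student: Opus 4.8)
### Proof proposal

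\textbf{Overall strategy.} The statement to be proven is the cubic-fourfold analogue of the preceding proposition for cubic threefolds, and its proof should follow the same blueprint: it is essentially a translation of the results of Sanda--Shamoto \cite{SandaShamoto} on the $A$-model mutation system into the language of \eqref{E:Amodelmutationsystem}, together with an explicit bookkeeping of the eigenvalue ordering of $c_1(X)\star_0(-)$. The plan is to start from the Kuznetsov decomposition \eqref{E:KuznetsovdecompoCubicfourfold}, pass to the mutation identified by Sanda--Shamoto, and verify that the moderate-growth condition of \Cref{P:asymptoticsofsection} (equivalently, membership in $\cA_{c_i}$ of the $A$-model mutation system) holds for the Gamma-twisted Chern characters of objects in each factor.

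\textbf{Key steps.} First I would recall from the discussion immediately preceding the statement that $(\H^\bullet_{\rm{amb}}(X),\star_0)\cong \bf{C}[h]/(h^2(h^3-27))$, that $c_1(X)\star_0(-)$ annihilates $\H^\bullet_{\rm{amb}}(X)^\perp$, and hence that $\lvert\sigma(\cE_0)\rvert = \{0, 9, 9e^{2\pi\mathtt{i}/3}, 9e^{4\pi\mathtt{i}/3}\}$, with the three nonzero eigenvalues simple; this fixes $N_1=N_2=N_4=1$ while $c_3 = 0$ has multiplicity $2+\dim\H^\bullet_{\rm{amb}}(X)^\perp = \dim\Ku(X)$ in the sense of the coarse Jordan decomposition \eqref{E:coarseJordandecomp}, i.e. $E(0)$ is exactly the image of $\Ku(X)$ under the twisted Chern character. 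Second, following \cite{SandaShamoto}*{proof of Thm. 7.9} (or its four-dimensional variant), I would invoke the $A$-model mutation system: the decomposition \eqref{E:Amodelmutationsystem} $\H^\bullet(X) = \bigoplus_\lambda \cA_\lambda$ admits a categorical lift obtained from \eqref{E:KuznetsovdecompoCubicfourfold} by a specific sequence of mutations, which one computes explicitly. The mutation sends $(\cO,\cO(1),\cO(2))$ to $(\bf{R}_{\cO(2)}\cO, \cO(1), \cO(2))$ after reordering, and $\Ku(X)$ to itself under the induced mutation functor; I would then check that the Gamma-twisted Chern characters of these four blocks land in $\cA_{c_1}, \cA_{c_3}, \cA_{c_4}, \cA_{c_2}$ respectively, by matching against the eigenvalue assignment $c_1 = 9e^{4\pi\mathtt{i}/3}, c_2 = 9, c_3 = 0, c_4 = 9e^{2\pi\mathtt{i}/3}$. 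Concretely, $\widehat{\Gamma}_X\Ch(\cO(j)) = \widehat{\Gamma}_X e^{2\pi\mathtt{i} j h}$, and one uses that this class lies in the span of the eigenvectors of $c_1(X)\star_0(-)$ on $\H^\bullet_{\rm{amb}}(X)$ corresponding to $9e^{2\pi\mathtt{i} j/3}$ — this is a direct computation in $\bf{C}[h]/(h^2(h^3-27))$ using that $\widehat\Gamma_X\Ch$ is a ring isomorphism onto $(\H^\bullet_{\rm{amb}}(X),\star_0)$ in the relevant degrees, plus the fact that property $\cO$ and moderate growth are detected by the leading exponential term $e^{-c_i/w}$ in the fundamental solution $P(w)e^{-U/w}\mathrm{diag}(F_i(w))$ from \cite{GGI16}*{Prop. 3.2.1}. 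Finally, the bound $\norm{e^{c_i/w}\Phi_w^0(\widehat\Gamma_X\Ch(E))}\le O(\lvert w\rvert^{-m})$ for $E\in\cC_i$ follows exactly as in \Cref{P:asymptoticsofsection}: project $\widehat\Gamma_X\Ch(E)$ onto $E(c_i)$, note all other components are killed since $\widehat\Gamma_X\Ch(E)\in\cA_{c_i}$, and read off moderate growth from the $F_i(w)$ estimate.

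\textbf{Main obstacle.} The genuinely substantive point — and the one that cannot be shortcut — is the explicit identification of the mutation of \eqref{E:KuznetsovdecompoCubicfourfold} realizing the $A$-model mutation system with eigenvalues ordered as $(c_1, c_2, c_3, c_4) = (9e^{4\pi\mathtt{i}/3}, 9, 0, 9e^{2\pi\mathtt{i}/3})$, i.e. checking that after the mutation the factor $\langle\cO(2)\rangle$ acquires exponent $9e^{4\pi\mathtt{i}/3}$, $\langle\bf{R}_{\cO(2)}\cO\rangle$ acquires exponent $9$, $\Ku(X)$ sits at $0$, and $\langle\cO(1)\rangle$ at $9e^{2\pi\mathtt{i}/3}$. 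This requires carefully tracking how left/right mutation of the exceptional collection interacts with the exponents under the relevant braiding in $\widetilde{\mathscr{U}}_N$, exactly in the spirit of \Cref{P:mutationforestimate} and \cite{GGI16}*{Rem. 4.6.3}. Everything else — the eigenvalue computation, the moderate-growth estimates, and the reduction to \Cref{P:asymptoticsofsection} — is routine once this combinatorial/monodromic matching is in place, and the cleanest route is to cite \cite{SandaShamoto} for the four-dimensional cubic and supply only the explicit description of the four factors.
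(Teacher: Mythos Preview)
Your approach is correct and matches the paper's: the proof is a direct citation of \cite{SandaShamoto}*{Thm.~7.9}, and you correctly identify that the substantive content---the explicit mutation and the eigenvalue matching---is done there.

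That said, two technical slips in your elaboration are worth flagging. First, you conflate the subspaces $\cA_\lambda$ (defined via moderate growth of $e^{\lambda/w}\Phi_w^0(-)$, as in \eqref{E:Amodelmutationsystem}) with the generalized eigenspaces $E(\lambda)$ of $c_1(X)\star_0(-)$; these are genuinely different decompositions of $\H^\bullet(X)$, related only through the asymptotic analysis of \Cref{P:asymptoticsofsection}. In particular, $\widehat\Gamma_X\Ch(\cO(j))$ is not an element of $E(9e^{2\pi\mathtt{i} j/3})$; rather, it lies in the corresponding $\cA_\lambda$, and the link to $E(\lambda)$ is that the leading asymptotic of the flat section lands there. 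Second, $\widehat\Gamma_X\Ch$ is not a ring homomorphism into $(\H^\bullet_{\rm{amb}}(X),\star_0)$: the Gamma class intertwines the Euler pairing with the Poincar\'e-type pairing $[\cdot,\cdot)$, not the cup product with the quantum product. The assignment of exponents to the $\cO(j)$ in \cite{SandaShamoto} goes through the explicit mirror computation and the Stokes structure, not through any such ring map. Neither slip affects your bottom line, since you correctly defer to \cite{SandaShamoto} for the actual argument.
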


    \begin{proof}
    See the proof of \cite{SandaShamoto}*{Thm. 7.9}.
\end{proof}

\begin{cor}\label{cor_qcohZ_cubic4}
    The restriction of $\cZ_w^0$ to $\rm{K}_0(\cC_i)$ is non-trivial for all $i=1,2,3,4$. That is, there is an object $E$ of $\cC_i$ such that in the notation of \Cref{P:asymptoticsofsection}, $\int_X\Psi(\widehat{\Gamma}_X\Ch(E))\neq 0$. For such $E$, there exists an $m\ge 0$ such that
    \begin{equation}
\label{E:asymptoticestimatelogZcubic4}
    \log\cZ_w^0(E) = C_E- \frac{c_i}{w} + \left(2 - m\right)\log(w) + \epsilon_E(w)
\end{equation}
such that $\epsilon_E(w)\to 0$ as $w\to 0$ in $\mathscr{S}(\varphi,\epsilon)$ with $\varphi$ and $\epsilon$ as in \Cref{R:smallersector}. Here, $C_E = \log \int_X\Psi(\widehat{\Gamma}_X\Ch(E)) + 2\log(2\pi).$
\end{cor}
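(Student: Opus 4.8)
The plan is to mimic the proof of \Cref{cor_qcohZ_cubic3}, the only genuinely new issue being that a smooth cubic fourfold $X$ does \emph{not} satisfy property $\cO$: by the eigenvalue computation recorded above, the three eigenvalues $c_1 = 9e^{4\pi\mathtt{i}/3}$, $c_2 = 9$ and $c_4 = 9e^{2\pi\mathtt{i}/3}$ of $c_1(X)\star_0(-)$ all have modulus $9$, so neither \Cref{P:asymptoticsofsection} nor \Cref{C:Zasymptoticestimate} can be quoted on the nose. I would therefore first establish a thin-sector replacement for \Cref{P:asymptoticsofsection}. Fix a $\sigma(\cE_0)$-admissible phase $\varphi$ and $\epsilon > 0$ as in \Cref{R:smallersector}, so that for every $w \in \mathscr{S}(\varphi,\epsilon)$ one may order $\lvert\sigma(\cE_0)\rvert = \{c_1,\dots,c_4\}$ with $\Re(c_1/w) > \cdots > \Re(c_4/w)$; this is possible precisely because the $c_i$ are pairwise distinct and $\varphi$ is generic. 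On such a sector the proof of \cite{GGI16}*{Prop. 3.2.1} goes through unchanged: since the pole of $\nabla|_{\tau = 0}$ at $w = 0$ has Poincar\'{e} rank one and the eigenvalues $c_i$ are unramified, the classical formal and sectorial normal form produces a fundamental solution of the shape $P(w)\,e^{-U/w}\,\diag(F_1(w),\dots,F_k(w))$ with $P(w) \sim \Psi + P_1 w + P_2 w^2 + \cdots$ as $w \to 0$ in $\mathscr{S}(\varphi,\epsilon)$ and each $F_j$ of subexponential growth --- exactly the data entering the proof of \Cref{P:asymptoticsofsection}. Running that argument verbatim on $\mathscr{S}(\varphi,\epsilon)$ yields: for every $\lambda \in \lvert\sigma(\cE_0)\rvert$ and $0 \ne \alpha \in \cA_\lambda$ there are $m \in \bf{Z}_{\ge 0}$ and $0 \ne \Psi(\alpha) \in E(\lambda)$ with $e^{\lambda/w} w^m \Phi_w^0(\alpha) \to \Psi(\alpha)$, hence $\cZ_w^0(\alpha) \sim (2\pi w)^{2} w^{-m} e^{-\lambda/w} \int_X \Psi(\alpha)$.

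Granting this, \eqref{E:asymptoticestimatelogZcubic4} becomes purely formal for any $E \in \cC_i$ with $\ch(E) \ne 0$ satisfying $\int_X \Psi(\widehat{\Gamma}_X\Ch(E)) \ne 0$: by the previous proposition $\widehat{\Gamma}_X\Ch(E) \in \cA_{c_i}$, so taking logarithms of the displayed estimate with $\lambda = c_i$ gives \eqref{E:asymptoticestimatelogZcubic4} together with the stated value of $C_E$. It remains to produce, for each $i$, one such object, i.e. to prove non-triviality of $\cZ_w^0|_{\rm{K}_0(\cC_i)}$. For this I would apply \Cref{L:Frobeniusdecomp} to $A = (\H^\bullet(X),\star_0)$ with central element $c_1(X)$ (central because $\tau = 0 \in \H^2(X)$, cf. \Cref{Ex:smallquantumcohomology}), obtaining $\H^\bullet(X) = \prod_i E(c_i)$ as Frobenius algebras with $\int_X(-) = \langle 1,-\rangle$ not identically zero on any factor $E(c_i)$. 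For the simple eigenvalues $c_1,c_2,c_4$ one has $\dim E(c_i) = 1$, so $\int_X$ is nonzero there and $\Psi(\alpha) \ne 0$ forces $\int_X \Psi(\alpha) \ne 0$ for \emph{every} $0 \ne \alpha \in \cA_{c_i}$; since the previous proposition identifies $\cC_i$ with $\langle\cO(2)\rangle$, $\langle\bf{R}_{\cO(2)}\cO\rangle$, $\langle\cO(1)\rangle$ respectively and (by a rank count using $\widehat{\Gamma}_X\Ch(\cC_i) \subseteq \cA_{c_i}$ and $\rm{K}_0(X) = \bigoplus_i \rm{K}_0(\cC_i)$) $\widehat{\Gamma}_X\Ch(\cC_i)$ spans $\cA_{c_i}$, any $E \in \cC_i$ with $\ch(E) \ne 0$ works. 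For $i = 3$, where $c_3 = 0$ and $\cC_3 \simeq \Ku(X)$, I would argue as in the proof of \cite{SandaShamoto}*{Thm. 7.9}, using $\cC_3 \simeq \Ku(X)$ and \Cref{L:Frobeniusdecomp}(2) to see that $\cZ_w^0$ does not vanish asymptotically on $\rm{K}_0(\cC_3)$, hence that some $E \in \cC_3$ has $\int_X \Psi(\widehat{\Gamma}_X\Ch(E)) \ne 0$.

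The main obstacle is the first step. Because property $\cO$ really fails for cubic fourfolds, \cite{GGI16}*{Prop. 3.2.1} and hence \Cref{P:asymptoticsofsection} cannot be invoked directly, and one has to reprove the block normal form and the asymptotic analysis after restricting to a thin sector about an admissible phase. The only delicate point there is the bookkeeping of which exponentials $e^{-c_i/w}$ dominate on $\mathscr{S}(\varphi,\epsilon)$, which is precisely what admissibility of $\varphi$ (in the sense of \Cref{R:smallersector}, adapted to the non-property-$\cO$ situation) controls; the rest is routine irregular-singularity theory. Everything downstream --- the logarithmic estimate and the Frobenius-algebra non-triviality argument --- then parallels the cubic threefold case verbatim.
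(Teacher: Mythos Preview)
Your overall strategy is sound, but the central premise --- that a smooth cubic fourfold fails property $\cO$ --- is a misreading of \cite{GGI16}*{Def.~3.1.1}. Property $\cO$ does \emph{not} require a unique eigenvalue of maximal modulus; it requires (i) that the nonnegative real number $T = \max\{|u| : u \in \sigma(c_1(X)\star_0)\}$ is itself a simple eigenvalue, and (ii) that every eigenvalue of modulus $T$ has the form $T\zeta$ for some $r$-th root of unity $\zeta$, where $r$ is the Fano index. For a cubic fourfold $T = 9$ is simple, the Fano index is $r = 3$, and the eigenvalues of modulus $9$ are exactly $9,\,9e^{2\pi\mathtt{i}/3},\,9e^{4\pi\mathtt{i}/3}$, i.e.\ $9$ times the cube roots of unity. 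So property $\cO$ holds --- and in fact \cite{SandaShamoto}*{Cor.~7.7}, invoked in the threefold case, establishes it for \emph{all} Fano complete intersections, hence covers the fourfold as well. The paper's one-line proof is therefore literal: \Cref{P:asymptoticsofsection} and \Cref{C:Zasymptoticestimate} apply directly, and the argument of \Cref{cor_qcohZ_cubic3} goes through verbatim.

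Your thin-sector workaround is not wrong --- the block normal form at an unramified irregular singularity does not actually need property $\cO$, and your ordering argument on $\mathscr{S}(\varphi,\epsilon)$ is correct --- but it reproves machinery that is already available. The rest of your argument (the Frobenius-algebra nontriviality via \Cref{L:Frobeniusdecomp}, the rank count for the simple eigenvalues, and the treatment of $\cC_3 \simeq \Ku(X)$) matches the paper's approach.
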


\begin{proof}
    The proof is the same as in the threefold case.
\end{proof}

\subsection{Quasi-convergent paths from semisimple quantum cohomology}

Using the results of \Cref{SS:asymptotics}, we give a criterion for obtaining quasi-convergent paths from quantum co\-homology in the semisimple case which amplifies \cite{Vanjapaper}*{Thm. 5.11}. 

We consider a full exceptional collection $\cE = (E_1,\ldots, E_N)$ in a $k$-linear Hom-finite\footnote{That is, for any objects $E$ and $F$ in $\cD$, $\Hom_{\cD}(E,F)$ is a finite dimensional $k$-vector space.} triangulated category $\cD$. By \cite{Collins_Polischuk_2010}, there is a region $\cG_{\cE} \subset \Stab(\cD)$ glued from $\langle E_1,\ldots, E_N\rangle$ and holomorphic maps $\logZ_{\cE}: \cG_{\cE} \to \bf{C}^{d}$ given by $\sigma \mapsto (\logZ_\sigma(E_1),\ldots, \logZ_\sigma(E_N))$; indeed, $\logZ_{\cE}$ is defined because $E_i$ is $\sigma$-stable for all $i=1,\ldots, N$ and $\sigma \in \cG_{\cE}$ by \cite{Collins_Polischuk_2010}*{Prop. 2.2}. Since $\cD$ is Hom-finite, there exists $m(\cE) \in \bf{N}$ such that $\Hom^{\le -m(\cE)}(E_i,E_j) = 0$ for all $1\le i,j\le N$.

\begin{lem}
\label{L:gluingregion}
    The map $\logZ_{\cE}$ restricts to a biholomorphism $\Omega_{\cE} \to \cS_{\cE}$, where $\cS_{\cE}$ is the open subset of $\bf{C}^N$ such that $\frac{1}{\pi}(\Im(z_i) - \Im(z_{i-1})) > m(\cE)$ for all $i=2,\ldots, N$ if $E_{i-1}$ and $E_{i}$ are not orthogonal and $\Omega_{\cE}$ is the connected component of $\logZ_{\cE}^{-1}(\cS_\cE)$ contained in $\cG_\cE$.
\end{lem}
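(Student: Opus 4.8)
\textbf{Proof proposal for \Cref{L:gluingregion}.}

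The plan is to exploit the explicit description of the glued region $\cG_\cE$ and the stability of each exceptional object on it. First I would recall from \cite{Collins_Polischuk_2010}*{Prop. 2.2} (and the discussion of gluing for the full exceptional collection $\langle E_1,\dots,E_N\rangle$) that for $\sigma \in \cG_\cE$ each $E_i$ is $\sigma$-stable, so $\logZ_\cE$ is well-defined and holomorphic on $\cG_\cE$; since the central charge map is a local biholomorphism onto $\Hom(\rm{K}_0(\cD),\bf{C}) \cong \bf{C}^N$ (using that $\{[E_i]\}$ is a basis of $\rm{K}_0(\cD)$) and $\logZ_\cE$ is just $\log$ composed coordinatewise with this, $\logZ_\cE$ is itself a local biholomorphism on the open set where all $Z_\sigma(E_i)\ne 0$, which includes $\cG_\cE$. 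So the only real content is to identify the image and to show the restriction to one connected component is injective with the stated image $\cS_\cE$.

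The key step is to show that for $\sigma \in \Omega_\cE$ the phases $\phi_\sigma(E_i)$ are forced to satisfy the separation inequality $\phi_\sigma(E_i) - \phi_\sigma(E_{i-1}) > m(\cE)$ whenever $E_{i-1},E_i$ are non-orthogonal, and conversely that any tuple $(z_1,\dots,z_N)\in \cS_\cE$ is realized. For the forward direction I would argue as follows: the heart underlying $\sigma$ (after suitably shifting) is generated under extensions by the $E_i[k_i]$ for appropriate $k_i\in\bf{Z}$ determined by the phases, exactly as in the setup of \Cref{L:heartposet} and \cite{Macristabilityoncurves}*{\S 3}; for this collection of shifted objects to form an Ext-exceptional collection we need $\Hom^{\le 0}(E_i[k_i],E_j[k_j]) = 0$ for $i<j$, i.e. $\Hom^{\le k_j - k_i}(E_i,E_j) = 0$, which since $\Hom^{\le -m(\cE)}(E_i,E_j) = 0$ is automatic once $k_j - k_i \le -m(\cE)$ — but the nontrivial Ext's between $E_i$ and $E_{i+1}$ (when they are not orthogonal) live in a range that, together with the stability of both and the gluing compatibility constraints of \cite{Collins_Polischuk_2010}*{Thm. 3.6}, forces precisely $\tfrac1\pi(\Im z_i - \Im z_{i-1}) > m(\cE)$. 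Conversely, given any $(z_i) \in \cS_\cE$, one builds $\sigma$ directly: choose the $k_i$ from the imaginary parts, form the Ext-exceptional collection $\{E_i[k_i]\}$, take $\cA$ its extension closure (a finite-length heart with simple objects the $E_i[k_i]$ by \Cref{L:heartposet}), and invoke \Cref{L:stabconditionfinitelength} to get a stability condition with $Z(E_i) = e^{z_i}$; the separation inequalities guarantee exactly the gluing hypotheses needed to place $\sigma$ in $\cG_\cE$ (cf. \Cref{prop:support} and \Cref{C:gluingconditions}), hence in $\logZ_\cE^{-1}(\cS_\cE)\cap \cG_\cE$. This shows $\logZ_\cE$ maps $\cG_\cE$ onto a set containing $\cS_\cE$ and, restricted to the component $\Omega_\cE$, is a bijection onto $\cS_\cE$.

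To finish, since $\cS_\cE$ is open and connected (it is cut out of $\bf{C}^N$ by finitely many open real-linear inequalities on the imaginary parts, hence convex, hence connected) and $\logZ_\cE|_{\Omega_\cE}$ is a local biholomorphism that is bijective onto $\cS_\cE$, its inverse is holomorphic, so it is a biholomorphism; one should also check $\Omega_\cE$ is genuinely a single connected component, which follows because $\logZ_\cE$ is a covering-type map onto the simply connected (convex) $\cS_\cE$ so each component maps isomorphically. The main obstacle I anticipate is the precise bookkeeping in the forward direction: translating ``$\sigma$ lies in the glued region $\cG_\cE$'' into the sharp numerical inequality with the constant $m(\cE)$, i.e. verifying that the gluing conditions of \cite{Collins_Polischuk_2010} are equivalent to (not merely implied by) $\tfrac1\pi(\Im z_i - \Im z_{i-1}) > m(\cE)$ for consecutive non-orthogonal pairs — this requires carefully tracking which $\Hom^{\le 0}$ groups between shifted objects must vanish and matching the cohomological amplitude bound $m(\cE)$ against the phase gaps, and handling the orthogonal pairs (where no constraint is imposed) separately.
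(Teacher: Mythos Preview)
Your overall approach is correct and matches the paper's (very terse) proof: surjectivity via the gluing construction (\Cref{C:gluingconditions}), holomorphy/local biholomorphism from Bridgeland's deformation theorem composed with $\log$, and then the global biholomorphism from the fact that $\cS_\cE$ is convex hence simply connected.

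However, you have misread the statement and are manufacturing a difficulty that is not there. The set $\Omega_\cE$ is \emph{defined} in the lemma as a connected component of $\logZ_\cE^{-1}(\cS_\cE)$ lying inside $\cG_\cE$. Thus for $\sigma \in \Omega_\cE$ the phase separation $\phi_\sigma(E_i)-\phi_\sigma(E_{i-1})>m(\cE)$ holds \emph{by definition}; there is no ``forward direction'' to establish. Consequently the ``main obstacle'' you anticipate---showing that the Collins--Polishchuk gluing conditions are \emph{equivalent to} (not merely implied by) the phase inequality---does not exist: only the implication $\cS_\cE \Rightarrow$ gluing is needed, which is exactly the surjectivity argument you already sketch. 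The paper's proof accordingly addresses only surjectivity (via \Cref{C:gluingconditions}) and holomorphy (via local coordinates, citing \cite{HLJR}*{Thm.~3.9}), plus the remark that orthogonal consecutive pairs impose no constraint since $\Stab(\langle E_{i-1},E_i\rangle)\cong\bf{C}^2$ in that case. You should simply delete the forward-direction discussion and the final paragraph's worry.
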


\begin{proof}
    First, note that in the case where $E_i$ and $E_{i-1}$ are orthogonal, $\Stab(\langle E_{i-1},E_i\rangle) \cong \bf{C}^2$ via the map $\sigma \mapsto (\logZ_\sigma(E_{i-1}),\logZ_\sigma(E_i))$. Then, surjectivity can be verified using \Cref{C:gluingconditions} to construct $\Omega_{\cE}$ by gluing from $\cE$. That $\logZ_{\cE}$ is holomorphic can be verified in local coordinates -- see \cite{HLJR}*{Thm. 3.9}, for example.
\end{proof}

\begin{rem}
    In the case where $\cE$ is strong, $m(\cE)$ can be taken to be $1$, and in fact $\cS_{\cE}$ can be enlarged by instead imposing the condition that there exists an $\epsilon>0$ such that $\left\lceil\Im\left(\frac{z_{i-1}}{\pi}\right) +\epsilon\right\rceil < \frac{\Im(z_{i})}{\pi} + \epsilon$ for all $i$ such that $E_{i-1}$ and $E_i$ are not orthogonal --  see \cite{Vanjapaper}*{Thm. 3.4}.
\end{rem}

\begin{hyp}
\label{H:Fanoasymptotics}
    Let $X$ be a Fano variety such that $\DCoh(X)$ admits a full ex\-ceptional collection $\cE = (E_1,\ldots, E_N)$ such that for some $\tau \in B$:
    \begin{equation}
    \label{E:Fanoasymptotics}
        \log \cZ^{\tau}_w(E_j) = C_j - \frac{u_j}{w} +  \frac{\dim X}{2}\log(2\pi w) + \epsilon_j(w)
    \end{equation} 
    such that $\epsilon_j(w)\to 0$ as $w\to 0$ in a sector $\mathscr{S}\subset \bf{C}^*$, for some $C_j \in \bf{C}$. Choose a $\tau$-admissible $\varphi \in \bf{R}$ such that $\bf{R}_{>0}\cdot e^{\mathtt{i}\varphi} \subset \mathscr{S}$. We index $\sigma(\cE_\tau) = (u_1,\ldots,u_N)$ such that $i<j$ implies that $\Im(-e^{-\mathtt{i}\varphi}u_i) \le \Im(-e^{-\mathtt{i}\varphi}u_j)$ and write $E_i \approx E_j$ if $u_i = u_j$.
\end{hyp}

The asymptotic estimate in \Cref{H:Fanoasymptotics} occurs, for example, in the context of \eqref{E:logZestimatesemisimple}.

\begin{lem}
\label{L:extensionoflog}
    Assume \Cref{H:Fanoasymptotics}. Then there exists a path in $\Stab(X)$ of the form $\sigma_{t,\varphi}^\tau = (\cZ_{te^{\mathtt{i}\varphi}}^{\tau},\cP_t)$ which is quasi-convergent as $t\to 0$.
\end{lem}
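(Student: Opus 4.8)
The strategy is to build the path $\sigma_{t,\varphi}^\tau$ by transporting, via $\GL_2^+(\bf{R})^\sim$, a path in the glued region $\cG_\cE$ associated to the full exceptional collection $\cE = (E_1,\ldots,E_N)$ so that its logarithmic central charges track the asymptotic expansion \eqref{E:Fanoasymptotics}, and then to verify quasi-convergence directly from the two conditions in \Cref{D:quasiconvergent}. Concretely, I would first rescale: the putative central charge is $\cZ_{te^{\mathtt{i}\varphi}}^\tau$, and by \eqref{E:Fanoasymptotics} one has $\log\cZ_{te^{\mathtt{i}\varphi}}^\tau(E_j) = C_j - e^{-\mathtt{i}\varphi}u_j/t + \tfrac{\dim X}{2}\log(2\pi t e^{\mathtt{i}\varphi}) + \epsilon_j(te^{\mathtt{i}\varphi})$. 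After dividing out the common factor $(2\pi t e^{\mathtt{i}\varphi})^{\dim X/2}$ (which amounts to acting by a $\bf{C}$-scaling, harmless for stability since it is the $\bf{C}$-action on $\Stab(X)$), the dominant behavior of $\log$ of the $j$-th coordinate is $-e^{-\mathtt{i}\varphi}u_j/t$, whose imaginary part is $-\tfrac{1}{t}\Im(e^{-\mathtt{i}\varphi}u_j) = \tfrac{1}{t}\Im(-e^{-\mathtt{i}\varphi}u_j)$. By the indexing in \Cref{H:Fanoasymptotics}, $i<j$ with $u_i\ne u_j$ forces $\Im(-e^{-\mathtt{i}\varphi}u_i) < \Im(-e^{-\mathtt{i}\varphi}u_j)$, so the gap $\tfrac{1}{\pi t}(\Im\log\cZ_t(E_j) - \Im\log\cZ_t(E_i)) \to +\infty$ as $t\to 0$; in particular, for $t$ small enough this gap exceeds $m(\cE)$, so the tuple $(\log\cZ_t(E_1),\ldots,\log\cZ_t(E_N))$ lies in the region $\cS_\cE$ of \Cref{L:gluingregion}.

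\textbf{Construction of the path.} Using the biholomorphism $\logZ_\cE\colon \Omega_\cE \xrightarrow{\sim} \cS_\cE$ of \Cref{L:gluingregion}, I would set $\sigma_{t,\varphi}^\tau := \logZ_\cE^{-1}\bigl((\log\cZ_{te^{\mathtt{i}\varphi}}^\tau(E_1),\ldots,\log\cZ_{te^{\mathtt{i}\varphi}}^\tau(E_N))\bigr)$ for $t \in (0,a]$ with $a$ small enough that the tuple lies in $\cS_\cE$ for all such $t$ (this uses that $\Im(-e^{-\mathtt{i}\varphi}u_j)$ is weakly increasing in $j$ and that the $\epsilon_j$ are bounded near the origin; when $u_i = u_j$ the orthogonality hypothesis in the definition of a grading ensures there is no constraint between the corresponding coordinates, matching the $\approx$-blocks). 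This is a path of stability conditions with central charge exactly $\cZ_{te^{\mathtt{i}\varphi}}^\tau$, as required, and with heart built from $\cE$ by the gluing construction; each $E_j$ is $\sigma_{t,\varphi}^\tau$-semistable with $\phi_t(E_j) = \tfrac{1}{\pi}\Im\log\cZ_t(E_j)$.

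\textbf{Verification of quasi-convergence.} For condition (2) of \Cref{D:quasiconvergent}: any limit-semistable object is (up to the equivalence $\sim^i$) built from the $E_j$, and for the generators themselves $\ell_t(E_j) = \log\cZ_t(E_j)$ has the explicit expansion above; thus $\ell_t(E_j) - \ell_t(E_i) = (C_j - C_i) - e^{-\mathtt{i}\varphi}(u_j - u_i)/t + \epsilon_j - \epsilon_i$, and the normalized difference $\tfrac{\ell_t(F)-\ell_t(E)}{1+|\ell_t(F)-\ell_t(E)|}$ converges: it tends to the unit vector in the direction $-e^{-\mathtt{i}\varphi}(u_j-u_i)$ when $u_i\ne u_j$, and to $\tfrac{(C_j-C_i)}{1+|C_j-C_i|}$ when $u_i = u_j$ (using $\epsilon_j-\epsilon_i \to 0$). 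Extending this from the $E_j$ to arbitrary limit-semistable objects, and organizing the $E_j$ into the blocks $E_i \approx E_j$ to produce the filtration with $\liminf_{t\to 0}(\phi_t(G_i) - \phi_t(G_{i-1})) = \liminf \tfrac{1}{\pi t}\Im(-e^{-\mathtt{i}\varphi}(u_{j}-u_{i})) = +\infty > 0$ for condition (1), is the main technical point; it is essentially the argument of \cite{Vanjapaper}*{Thm. 5.11} adapted to the present normalization, and I expect the bookkeeping for arbitrary objects of $\cD^b(X)$ (as opposed to the generators, where everything is explicit) to be the chief obstacle — one must check that every object's HN factors, after $t\to 0$, fall into finitely many $\sim^i$-classes indexed by $\lvert\sigma(\cE_\tau)\rvert$, which follows because the HN filtration with respect to $\sigma_{t,\varphi}^\tau$ refines the fixed filtration coming from the blocks of $\cE$, and on each block the phases differ by $O(1)$ while across blocks they diverge. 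Numericality and the support property for the path are automatic by \cite{HLJR}*{Ex. 2.42} together with \Cref{L:gluingregion}, since the path stays in a single gluing region.
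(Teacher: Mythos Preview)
Your approach is essentially the paper's: lift the tuple $(\log\cZ_t(E_1),\ldots,\log\cZ_t(E_N))$ into the region $\cS_\cE$ via the biholomorphism of \Cref{L:gluingregion}, and then verify quasi-convergence by showing that the semiorthogonal filtration coming from the blocks $\{E_j : u_j = \text{const}\}$ serves as the limit HN filtration. The paper does exactly this, with the same asymptotic computation of $\Im\log\cZ_t(E_j)$ and the same two-step check of \Cref{D:quasiconvergent}.

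There is one genuine confusion in your write-up. You justify the absence of a constraint between $z_i$ and $z_{i-1}$ when $u_i = u_{i-1}$ by appealing to ``the orthogonality hypothesis in the definition of a grading.'' But \Cref{H:Fanoasymptotics} contains no grading and no orthogonality hypothesis; you are importing \Cref{D:gradedFECnew} from an unrelated part of the paper. The paper's own proof simply asserts that $z(t)$ enters $\cS_\cE$, and the needed orthogonality of $E_i,E_j$ when $u_i=u_j$ is supplied externally in the application (\Cref{P:GammaIIimpliesConj1}, via \cite{CDGcoalescent}*{Thm.~4.5(6)}). So both arguments are tacitly using this, but you have misidentified its source; strictly speaking the lemma as stated needs either distinct $u_j$ or this orthogonality as an extra input. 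Two smaller points: the $\GL_2^+(\bf{R})^\sim$-transport and the $\bf{C}$-rescaling you mention in your plan are never used in your construction and can be dropped; and the paper is more careful than you are about choosing the branches $n_j$ of $\log\cZ_t(E_j)$ consistently within each block (i.e.\ $n_i=n_j$ when $u_i=u_j$), which matters for the within-block phase comparisons in condition~(1).
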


\begin{proof}
    We write $\cZ_t = \cZ_{te^{\mathtt{i}\varphi}}^{\tau}$. Let $\psi$ be small enough that for all $t\in (0,\psi)$, we have $\max_{j=1}^N \{\lvert \epsilon_j(t)\rvert \} < \delta/4$. Next, choose branch cuts such that 
    \begin{equation}
    \label{E:Fanoasymptotic2}
        \log \cZ_t(E_j) \approx C_j + \frac{\dim X}{2}\left(\log(2\pi t) + \mathtt{i}\varphi\right) - \frac{u_j(\tau)}{e^{\mathtt{i}\varphi}\cdot t} + 2\pi \mathtt{i} n_j 
    \end{equation}
    for $n_j \in \bf{Z}$ and such that $n_i = n_j$ if $u_i = u_j$. Letting $z_j(t) = \log \cZ_t(E_j)$, we see that $z(t) \in \bf{C}^N$ enters $\cS_{\cE}$ for $t$ sufficiently close to zero. So, $z(t)$ determines a unique path of stability conditions $\sigma_t$ in $\Omega_{\cE}$. 

    To see that $\sigma_t$ is quasi-convergent, we check the conditions of \Cref{D:quasiconvergent}. Let $1\le i_1<\cdots< i_k\le n$ denote the indices where the value of $u_i$ changes, i.e. $u_{i_{*-1}} \ne u_{i_{*-1}+1} = \cdots = u_{i_*} \ne u_{i_*+1}$. Note that $\DCoh(X) = \langle \cD_1,\ldots, \cD_k\rangle$, where $\cD_a = \langle E_j: i_{a-1}<j\le i_a\rangle$ and that our hypotheses on $\Omega_{\cE}$ imply that all of the objects in $\cE$ are limit semistable. Then, consider any non-zero object $F$ of $\DCoh(X)$ and let $F_k\to F_{k-1}\to \cdots \to F_1 \to F_0 = F$ be its canonical filtration with respect to $\DCoh(X) = \langle \cD_1,\ldots, \cD_k\rangle$ so that $G_a = \Cone(F_a\to F_{a-1}) \in \cD_a$ for $a=1,\ldots, k$. 
    
    We claim that this is the limit Harder-Narasimhan filtration of $X$. If the non-zero $G_a$ are limit semistable, then we are done by the definition of the order $(u_1,\ldots, u_N)$. To see that each non-zero $G_a$ is limit semistable, note that all non-zero objects of $\cD_a$ are limit semistable, being iterated extensions of exceptional objects $E_j$ that obey the estimates of \eqref{E:Fanoasymptotic2}. So, $\lim_{t\to\infty} \phi_t^+(Y) - \phi_t^-(Y) = 0$ and \Cref{D:quasiconvergent}(1) follows.

    The argument of the previous paragraph shows that the limit semistable objects of $\sigma_t$ are $\bigcup_{a=1}^k (\Ob(\cD_a)\setminus \{0\})$. It is then an exercise using \eqref{E:Fanoasymptotic2} to verify \Cref{D:quasiconvergent}(2). 
\end{proof}

\begin{rem}
    In the setting of \Cref{L:extensionoflog}, the quasi-convergent path $\sigma_t$ can also be shown to converge to an admissible boundary point of the space $\Astab(X)$ of augmented stability conditions of $\DCoh(X)$ \cite{augmented}, with the underlying multi-scale line $\Sigma$ having two levels. Besides the root, there are terminal components in bijection with the categories $\cD_1,\ldots, \cD_k$.
\end{rem}

\subsection{NMMP Conjectures for Fano Varieties}
\label{SS:conjectures}
In this section, we elaborate on the NMMP of Halpern-Leistner \cite{NMMP} in the special case of $\DCoh(X)$ for $X$ a smooth Fano variety. In particular, we formulate conjectures which relate analytic properties of the quantum connection and semi\-orthogonal decompositions of $\DCoh(X)$. 

\Cref{conj:NMMPFano} below is an interpretation of \cite{NMMP}*{Proposal III} in the case where $X$ is Fano. The main simplification in this case is that one does not need to consider truncations of the quantum differential equation as in \emph{loc. cit.} At the end of this section, we explain the exact relation between our interpretation of \cite{NMMP}*{Proposal III} and the original statement.

As before, we denote by $\sigma(\cE_\tau)$ the spectrum of $\cE_\tau\star_\tau(-)\in \End(\H^\bullet(X))$, i.e. the multi-set of eigenvalues counted with multiplicity. We write $\lvert \sigma(\cE_\tau)\rvert$ for the underlying set. 

Recall that $\cZ_w^{\tau}$ denotes the quantum cohomology central charge at $\tau \in \H^\bullet(X)$, depending on $e^t = w \in \bf{C}^*$. In the next statement, we fix a norm $\lVert \:\cdot\:\rVert$ on $\H^\bullet(X)$.

\begin{conj}[Halpern-Leistner]
\label{conj:NMMPFano}
    For any Fano variety $X$, there exist $\tau \in \H^2(X)$ and a sector $\mathscr{S} \subset \bf{C}^*$ such that: for any $\tau$-admissible phase $\varphi$ with $\bf{R}_{>0}e^{\mathtt{i}\varphi}\subset \mathscr{S}$, there is a quasi-convergent path $\sigma_{t,\varphi}^\tau = (\cZ_{te^{\mathtt{i}\varphi}}^{\tau},\cP_{t,\varphi})$ in $\Stab(X)$, defined as $t\to 0$, satisfying the following spanning condition: for all $r = \Re(-\lambda e^{-\mathtt{i}\varphi})$ where $\lambda \in \lvert \sigma(\cE_\tau)\rvert$, 
    \[
        F^r\H_{\rm{alg}}^\bullet(X) := \left\{\alpha \in \H_{\rm{alg}}^\bullet(X):\log \lvert \cZ_{te^{\mathtt{i}\varphi}}^\tau(\alpha) \rvert  \le rt^{-1}+o(t^{-1})\text{ as } t\to 0\right\}
    \]
    is spanned by Chern characters of limit semistable objects for $\sigma_{t,\varphi}^\tau$.\footnote{Here, by $f\in o(t^{-1})$ we mean that $\lim_{t\to 0} \frac{\lvert f(t)\rvert}{t^{-1}} = 0$.}
\end{conj}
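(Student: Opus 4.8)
The statement labeled as \Cref{conj:NMMPFano} is itself a conjecture, so it is not proved in the paper; rather, the paper verifies it in cases. I will therefore describe how one would prove it in the semisimple cases (quadrics and Grassmannians) and in the cubic threefold/fourfold cases, since those are the instances actually established.

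\medskip

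\textbf{Plan of the argument (semisimple case).} First I would restrict to $\tau \in \H^2(X)$ a semisimple point, so that the Euler operator $\cE_\tau \star_\tau(-)$ has eigenvalues $u_1,\dots,u_N$ which are the canonical coordinates. By \Cref{C:isomonodromic_estimate} (in the undeformed limit, or rather by \Cref{C:Zasymptoticestimate} and \eqref{E:asymptoticestimatelogZ} applied at the semisimple $\tau$), assuming the Gamma II conjecture holds at $(u,\varphi)$ — which for quadrics and Grassmannians is known by the results cited in \Cref{S:examples} — there is an asymptotically exponential full exceptional collection $\mathfrak{E} = (E_1,\dots,E_N)$ with
\[
    \log \cZ_w^\tau(E_j) = C_j - \frac{u_j}{w} + \frac{\dim X}{2}\log(2\pi w) + \epsilon_j(w),
\]
with $\epsilon_j(w)\to 0$ in a sector $\mathscr{S}$ containing $\bf{R}_{>0}e^{\mathtt{i}\varphi}$. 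This is precisely \Cref{H:Fanoasymptotics}. Then \Cref{L:extensionoflog} produces the quasi-convergent path $\sigma_{t,\varphi}^\tau = (\cZ_{te^{\mathtt{i}\varphi}}^\tau,\cP_t)$ as $t\to 0$, with limit semistable objects exactly the nonzero objects of the blocks $\cD_a = \langle E_j : u_j = \text{const}\rangle$, giving the SOD $\DCoh(X) = \langle \cD_1,\dots,\cD_k\rangle$. It remains only to verify the \emph{spanning condition}: for $r = \Re(-\lambda e^{-\mathtt{i}\varphi})$, the subspace $F^r\H^\bullet_{\rm{alg}}(X)$ is spanned by Chern characters of limit semistable objects. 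From \eqref{E:asymptoticestimatelogZ}/\eqref{E:logZestimatesemisimple}, for $E_j$ in the block with eigenvalue $u_j$, $\log|\cZ_{te^{\mathtt{i}\varphi}}^\tau(E_j)| = -\Re(u_j e^{-\mathtt{i}\varphi})t^{-1} + O(\log t)$, so $\ch(E_j) \in F^r$ iff $\Re(-u_j e^{-\mathtt{i}\varphi}) \le r$. Since $\{\ch(E_j)\}$ is a basis of $\H^\bullet_{\rm{alg}}(X)$ adapted to the $\lambda$-grading (this uses $\Psi(\alpha)\ne 0$, i.e. non-degeneracy of $\int_X$ on each one-dimensional summand, or more generally a careful choice inside higher-dimensional blocks), the span of those $\ch(E_j)$ with $\Re(-u_j e^{-\mathtt{i}\varphi})\le r$ is exactly $F^r$. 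The subtlety is the reverse inclusion: I must check no cancellation makes $\log|\cZ^\tau_t(\alpha)|$ smaller than expected for a generic $\alpha$ in a given graded piece; this follows because the leading exponential terms $e^{-u_j/w}$ for distinct $u_j$ cannot cancel, and within a fixed eigenvalue the polynomial-in-$t^{-1}$ corrections do not change $r$.

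\medskip

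\textbf{Plan (cubic threefolds and fourfolds).} Here the quantum cohomology is not semisimple, so instead of Gamma II one invokes the Sanda--Shamoto analysis recalled in \Cref{section_QH_cubics}. For $Y$ a cubic threefold, \Cref{cor_qcohZ_cubic3} gives a mutation $\DCoh(Y) = \langle \cC_1,\cC_2,\cC_3\rangle$ (with $\cC_1 = \langle\cO(1)\rangle$, $\cC_2\simeq\Ku(Y)$, $\cC_3=\langle\cO(2)\rangle$) and eigenvalues $c_1 = T$, $c_2 = 0$, $c_3 = -T$, together with the estimate \eqref{E:asymptoticestimatelogZcubic3}. One then needs a version of \Cref{L:extensionoflog} adapted to this situation — rather than a full exceptional collection, one has an SOD with a non-exceptional middle factor, so the path is produced by the gluing of paths technology of \Cref{prop_gluing_paths_Ku}/\Cref{rem_gluing_pathKu} together with the geometric stability conditions on $\Ku(Y)$ from \Cref{thm_glued_geom_stab_cubic3}. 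Concretely: take a path $\tau_t = g_t\cdot\tau_0$ in $\Stab(\Ku(Y))$ with central charge matching the $c_2=0$ asymptotics, and paths in the exceptional-object regions matching the $\pm T$ asymptotics, and glue by \Cref{prop_gluing_paths_Ku} (checking the $\Hom^{\le 0}$ vanishing conditions, which hold for $t$ small since the relevant phases separate). Quasi-convergence then follows as in \Cref{rem_gluing_pathKu}. The spanning condition is verified using \eqref{E:asymptoticestimatelogZcubic3}: since $\pm T$ are simple eigenvalues and $0$ has the remaining multiplicity, $F^r\H^\bullet_{\rm{alg}}(Y)$ for $r\in\{\Re(-Te^{-\mathtt{i}\varphi}), 0, \Re(Te^{-\mathtt{i}\varphi})\}$ is the span of the Chern characters of objects in the corresponding sub-collection of $\{\cC_i\}$. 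The cubic fourfold case is identical, using \Cref{cor_qcohZ_cubic4}, the four eigenvalues $c_1,\dots,c_4$, and the geometric stability conditions of \Cref{thm_geomstab_cubic4_noplane} on $\Ku(X)$ for $X$ not containing a plane.

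\medskip

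\textbf{Main obstacle.} The hardest ingredient is supplying, in each case, a lift of the quantum cohomology central charge to an actual path in $\Stab(X)$ with the right starting region — i.e. Step 2 of the NMMP strategy. In the semisimple case this is bootstrapped from the existence of $\cG_{\cE}$ glued from a full exceptional collection (\Cref{L:gluingregion}), so the exceptional collection structure does the work; but matching the glued path to the \emph{transcendental} asymptotics \eqref{E:Fanoasymptotics} requires Gamma II, which is a deep input. In the cubic cases the obstacle is instead that the middle factor $\Ku(Y)$ or $\Ku(X)$ is not exceptional, so one needs the genuinely geometric stability conditions constructed in \Cref{S:constructionofstab} plus the gluing-of-paths result \Cref{prop_gluing_paths_Ku}; verifying its hypotheses (conditions (1)--(3) on the $\Hom^{\le 0}$ and $\Hom^{\le 1}$ vanishings along the path) for the explicit paths coming from \eqref{E:asymptoticestimatelogZcubic3}/\eqref{E:asymptoticestimatelogZcubic4} is where the real technical labor lies. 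Finally, I expect the verification of the precise spanning condition — as opposed to merely producing the SOD — to require a short but careful argument ruling out anomalous cancellation in $\log|\cZ_w^\tau(\alpha)|$, which is handled by the linear-independence of the exponentials $e^{-u_j/w}$ across distinct eigenvalues.
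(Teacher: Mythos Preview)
Your proposal is essentially correct and follows the paper's approach: Gamma~II plus the asymptotics of \Cref{H:Fanoasymptotics} feed into \Cref{L:extensionoflog} (packaged as \Cref{P:GammaIIimpliesConj1}), and then the spanning condition is checked via \Cref{L:NMMPFanofromConj2(1)}; for cubics one glues a path on $\Ku$ with exceptional pieces via \Cref{rem_gluing_pathKu}, exactly as in \Cref{T:conj2.1threefold} and \Cref{T:conj2.1fourfold}.

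Two points deserve correction. First, in the semisimple case your phrase ``restrict to $\tau \in \H^2(X)$ a semisimple point'' glosses over coalescence: semisimplicity of $(\H^\bullet(X),\star_\tau)$ does not force the eigenvalues of $\cE_\tau$ to be simple, and \Cref{L:NMMPFanofromConj2(1)} --- which is where the spanning condition is actually nailed down --- explicitly requires multiplicity one so that each $\H_{\rm{alg}}(\cD_j)$ has rank one. For most Grassmannians this forces $\tau$ out of $\H^2(X)$ into $\H^\bullet(X)$, as the paper notes after \Cref{C:originalNMMPholds}; your parenthetical about ``a careful choice inside higher-dimensional blocks'' is not how the paper proceeds. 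Second, in the cubic case you mislabel the input: \Cref{thm_glued_geom_stab_cubic3} produces geometric stability conditions on $\DCoh(Y)$, not on $\Ku(Y)$; the stability conditions on $\Ku(Y)$ used in the gluing are the Bayer--Lahoz--Macr\`i--Stellari conditions $\sigma_{\alpha,\beta}$ reviewed in \Cref{section_stab_cuibc3}. Moreover, for \Cref{T:conj2.1threefold} (which is what verifies \Cref{conj:NMMPFano} at $\tau=0$) no geometricity is needed at all --- one invokes \Cref{rem_gluing_pathKu} directly, using only that the phases of the exceptional pieces separate from those in $\cT$ by \eqref{eq_cubic3_imrepart}. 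The geometric stability conditions enter only in the separate result \Cref{thm_pqccc_geo_cubic3}, which concerns starting the path in the geometric region (toward \Cref{conj:2}\ref{conj2isomonodromic}) and uses a modified charge $\cZ^A_t$ rather than $\cZ_t$ itself.
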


In \Cref{conj:NMMPFano}, it is not essential that $\tau \in \H^2(X)$. Indeed, one can state the conjecture for any $\tau \in \H^\bullet(X)$, so long as the quantum product is defined at $\tau$ and one has a canonical fundamental solution $\Phi_w^\tau$ of the quantum differential equation.

\begin{conj}\label{conj:noncommutativeGamma}\label{conj:2}
    For any smooth Fano variety $X$ there exist $\tau \in \H^\bullet(X)$, a sector $\mathscr{S}\subset \bf{C}^*$, $\delta>0$, and a holomorphic map \vspace{-2mm}
    \[
        \mathscr{S}\cap \{w:\lvert w\rvert <\delta\}\to \Stab(X), \;\; w\mapsto \sigma^{\tau}_w=(\cZ_{w}^\tau,\cP_{w})
    \]
    such that for any $\tau$-admissible phase $\varphi \in \bf{R}$ for which $\bf{R}_{>0}\cdot e^{\mathtt{i}\varphi}\subset \mathscr{S}$, the path $\sigma^{\tau}_{t,\varphi} := \sigma^{\tau}_{te^{\mathtt{i}\varphi}}$ is quasi-convergent, as $t\to 0$. Furthermore, \vspace{-2mm}
    \begin{enumerate}[label=(\Alph*)]
        \item the semiorthogonal decomposition induced by $\sigma_{t,\varphi}^\tau$ as $t\to 0$ is 
        \begin{equation}
        \label{E:Conj2SOD}
            \DCoh(X) = \langle \cD_\lambda:\lambda \in \lvert \sigma(\cE_\tau)\rvert\rangle
        \end{equation}
        where $\lambda<\mu$  if $\Im(-e^{-\mathtt{i}\varphi} \mu) > \Im(-e^{-\mathtt{i}\varphi} \lambda)$. Also, for any limit semistable object $E\in \cD_\lambda$ we have \vspace{-2mm}
        \begin{equation}
        \label{E:LSSasymptotic}
            \cZ^{\tau}_{te^{\mathtt{i}\varphi}}(E)\sim  C_E\cdot (2\pi t)^{\dim X/2}\cdot  \exp(-e^{-\mathtt{i} \varphi} \cdot \lambda\cdot t^{-1}) \text{ as }t\to 0
        \end{equation}
    for some constant $C_E\in \CC^*$.\vspace{-2mm} \label{conj2SOD}
    \item The dependence of $\sigma_{t,\varphi}^\tau$ on $(\tau,\varphi)$ is continuous, and deformations of $\tau \in B$ and $\varphi \in \bf{R}$ result in mutation equivalent semiorthogonal decompositions as long as $\varphi$ is $\tau$-admissible. \label{conj2independence}\vspace{-2mm}
    \item There exist an isomonodromic deformation $(\nabla^{u})_{u\in U}$ where $U$ is an open connected domain of the deformation space, equipped with a map $\mathscr{B}\hookrightarrow U$ where $\mathscr{B}$ is a submanifold of $\H^\bullet(X)$ containing $\tau$, a sector $\mathscr{S} \subset \bf{C}^*$, constants $\epsilon,\rho>0$, and a holomorphic map 
    \[
        U\times (\mathscr{S} \cap \{w:\lvert w\rvert <\rho + \epsilon\}) \to \Stab(X),\;\; (u,w)\mapsto \sigma^u_w=(\cZ^u_w,\cP_w^u),
    \]
    such that: \label{conj2isomonodromic} \vspace{-2mm}
    \begin{enumerate}
        \item $\sigma_w^u$ is quasi-convergent as $w\to 0$ along any ray-segment in $\mathscr{S} \cap \{w:\lvert w\rvert <\rho + \epsilon\}$;\vspace{-2mm}
        \item the semiorthogonal decompositions obtained from $\sigma_w^u$ and a choice of ray-segment in $\mathscr{S} \cap \{w:\lvert w\rvert<\rho +\epsilon\}$ are mutation equivalent, and\vspace{-2mm}
        \item $\sigma_w^u$ is geometric for all $\rho-\epsilon < \lvert w\rvert < \rho+\epsilon$.\vspace{-2mm}
    \end{enumerate}
    \end{enumerate}
\end{conj}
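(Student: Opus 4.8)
The statement to prove is \Cref{conj:noncommutativeGamma}, which is a conjecture rather than a theorem — but the excerpt ends there, and the realistic reading is that the paper proceeds to verify the conjecture in the specific cases listed in the introduction (quadrics, Grassmannians, cubic threefolds and fourfolds). Here is how I would organize a proof plan for those verifications.

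\medskip

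\textbf{Plan of attack.} The overall strategy has two independent streams that are eventually glued together: (i) the \emph{analytic} side, extracting from the quantum differential equation the precise asymptotic shape $\log\cZ_{te^{\mathtt{i}\varphi}}^\tau(E_j) = C_j - e^{-\mathtt{i}\varphi}u_j t^{-1} + \tfrac{\dim X}{2}\log(2\pi t e^{\mathtt{i}\varphi}) + \epsilon_j(t)$ for a full exceptional collection (or a Kuznetsov-type block decomposition) adapted to the spectrum of $\cE_\tau\star_\tau$; and (ii) the \emph{categorical} side, producing a gluing region $\Omega_{\cE}\subset\Stab(X)$ inside which any prescribed logarithmic central charge in the admissible range $\cS_{\cE}$ is realized by an honest stability condition. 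Stream (i) is supplied for the semisimple case by \Cref{C:isomonodromic_estimate} / \eqref{E:logZestimatesemisimple} (via the isomonodromic deformation of \Cref{SS:isomonodromicdeformation} and the Gamma II conjecture, which is known for quadrics and Grassmannians), and for cubics by \Cref{cor_qcohZ_cubic3} and \Cref{cor_qcohZ_cubic4} building on \cite{SandaShamoto}. Stream (ii) is supplied by \Cref{L:gluingregion} together with the geometric-gluing theorems of \Cref{S:constructionofstab}: \Cref{T:geomexistence} for quadrics and Grassmannians, and \Cref{thm_glued_geom_stab_cubic3}, \Cref{thm_geomstab_cubic4_noplane} for cubics.

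\medskip

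\textbf{Step-by-step.} First I would fix, for each $X$ in the list, the full exceptional collection (Kapranov's collection for Grassmannians; the spinor-plus-line-bundle collection for quadrics; the mutated Kuznetsov blocks $\langle\cC_1,\dots,\cC_k\rangle$ for cubics) whose $\Ch$-classes, after applying $\widehat{\Gamma}_X$, are the asymptotically exponential basis attached to $\sigma(\cE_\tau)$ at a chosen admissible $\varphi$. Second, I would invoke \Cref{H:Fanoasymptotics} and \Cref{L:extensionoflog}: the asymptotic estimate places the path $z(t)=(\log\cZ_t(E_j))_j$ inside $\cS_{\cE}$ for $t$ small, hence lifts it to a path $\sigma_{t,\varphi}^\tau$ in $\Omega_{\cE}$, and \Cref{L:extensionoflog} already shows this path is quasi-convergent with the limit semi\-stable objects being exactly the iterated extensions within each block $\cD_a=\langle E_j: u_j = \text{const}\rangle$ — this gives part \ref{conj2SOD}, including the decomposition \eqref{E:Conj2SOD} ordered by $\Im(-e^{-\mathtt{i}\varphi}\lambda)$ and the growth estimate \eqref{E:LSSasymptotic}. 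For the cubic cases the blocks are $\langle\cO(1)\rangle,\Ku(Y),\langle\cO(2)\rangle$ (threefold) and the four-block version (fourfold), which requires gluing a genuine stability condition on $\Ku$ (from \cite{Bayer2017StabilityCO}) to exceptional blocks — exactly what \Cref{prop_gluing_paths_Ku} / \Cref{rem_gluing_pathKu} are designed for; I would check its hypotheses (1)--(3) using the phase estimates from \eqref{E:asymptoticestimatelogZcubic3}, \eqref{E:asymptoticestimatelogZcubic4} and the $\Hom$-vanishing lemmas \Cref{cor_vanishing2_cubic4fold_noplane}, \Cref{lem:vanisshing-1}, \Cref{prop:homvanishing_0}. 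Third, for part \ref{conj2independence}, I would vary $\varphi$ across walls (admissible phases) and $\tau$ within $\mathscr{B}$; crossing a wall swaps two adjacent eigenvalues $u_i,u_{i+1}$, and \Cref{P:mutationforestimate} / \Cref{C:isomonodromic_estimate} identify the new asymptotic basis as a single mutation of the old one, hence the induced semiorthogonal decomposition changes by the corresponding (left or right) mutation; continuity of the lifted path follows from continuity of $\cZ_w^u$ in $u$ together with the local biholomorphism $\Stab\to\Hom(\Lambda,\bf C)$. Fourth, for part \ref{conj2isomonodromic} (only claimed for quadrics and Grassmannians), I would run the same construction along the isomonodromic family over $U=\widetilde{\mathscr{U}}_N$, and then — this is the new ingredient relative to the earlier literature — deform $w$ \emph{away} from the origin, toward a radius $\rho$ where the glued region $\Omega_{\cE}$ overlaps the geometric region produced in \Cref{T:geomexistence}; here one uses that the glued region from a \emph{strong} full exceptional collection of sheaves on these homogeneous spaces contains the geometric chamber (the structure sheaves of points are the efficient objects of \Cref{P:efficientstable}), so for $\rho-\epsilon<|w|<\rho+\epsilon$ the stability condition $\sigma_w^u$ can be taken geometric.

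\medskip

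\textbf{Main obstacle.} The hardest point is part \ref{conj2isomonodromic}(c): matching the analytic position of the path (dictated rigidly by the isomonodromic solution and the $\widehat\Gamma$-classes) with the geometric chamber, i.e. proving that as $|w|$ increases the path actually \emph{enters} the geometric region rather than merely staying in the glued region. This requires (a) a sufficiently explicit description of where the geometric stability conditions sit in the $\logZ_{\cE}$-coordinates on $\Omega_{\cE}$ — for which \Cref{H:stability} and \Cref{C:geomstability} give inequalities on the phases $\phi(\Omega(p_\bullet))$, resp. $\phi(\Sigma^\alpha(S))$ — and (b) control of the quantum central charge on the \emph{whole} segment $(0,\rho+\epsilon)$, not just near $0$; for $|w|$ not small one no longer has the simple exponential asymptotics and must instead use the regularity of $S(\tau,w)$ away from $w=0$ together with the explicit eigenvalue data of $\cE_\tau\star_\tau$ on $\Gr(k,n)$ and $Q$. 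I expect that for $\Gr(k,n)$ and $Q$ this can be done by a direct, if lengthy, computation (using semisimplicity and the known Frobenius structure), whereas for the cubics the analogous statement is left open precisely because the isomonodromic theory in the non-semisimple case is not developed — consistent with the introduction's caveat that \ref{conj2isomonodromic} is proved only for quadrics and Grassmannians.
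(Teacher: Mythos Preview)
Your plan for parts \ref{conj2SOD} and \ref{conj2independence} is essentially the paper's: \Cref{P:GammaIIimpliesConj1} reduces \ref{conj2SOD} to Gamma II in the semisimple case via \Cref{L:extensionoflog}, and for cubics the gluing of \Cref{prop_gluing_paths_Ku}/\Cref{rem_gluing_pathKu} with the Sanda--Shamoto estimates is exactly what is done in \Cref{T:conj2.1threefold} and \Cref{T:conj2.1fourfold}. Your account of \ref{conj2independence} via \Cref{P:mutationforestimate} also matches.

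For part \ref{conj2isomonodromic}, however, your proposed mechanism is not the one the paper uses, and your version has a real gap. You propose to \emph{increase} $|w|$ away from $0$ until the path enters the geometric region, relying on ``regularity of $S(\tau,w)$ away from $w=0$'' and a lengthy direct computation. The paper does the opposite: it keeps $t_0$ \emph{small}, where the asymptotic expansion \eqref{E:estimatelogZdeformed} is still valid, and instead varies the isomonodromic parameter $u\in\widetilde{\mathscr{U}}_N$. Two ingredients make this work. First, one must mutate along the braid group action on $\widetilde{\mathscr{U}}_N$ so that the Kapranov collection $\mathfrak K$ itself becomes asymptotically exponential (the Gamma II collection at a generic semisimple $\tau$ is only a mutation of $\mathfrak K$, so you cannot simply ``fix the Kapranov collection'' as in your Step 1 and expect the asymptotics to line up). Second, one needs the error term in the asymptotic estimate to be \emph{uniform} in $u$ over a compact domain; this is supplied by a parametric Watson's Lemma (\Cref{P:Watson}, \Cref{C:estimatevectorvalued}). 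With these in hand, \Cref{L:approximationlemma} is the key device: since $\Im\log\cZ_{t_0}^u(K_i)$ depends on $\Im(u_i)/t_0$ plus a uniformly small error, one can adjust the $u_i$ within a small ball to force the phase inequalities of \Cref{H:stability} at $t_0$, landing $\sigma_{t_0}^u$ in the geometric chamber of \Cref{C:geomstability}. Your approach of pushing $|w|$ large would leave the regime where any of the available estimates apply, and you offer no substitute.
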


\begin{rem}
    \Cref{conj:2}\ref{conj2independence} predicts that  semiorthogonal decompositions coming from ``cont\-inuous deformations'' of paths should be related by mutation. In general, it might be expected that any pair of semi\-orthogonal decompositions coming from \Cref{conj:2}\ref{conj2SOD} should be related by a sequence of mut\-ations and autoequivalences of $\DCoh(X)$. However, this type of claim is out of reach at present, since it necessitates an extensive global knowledge of $\Stab(X)$, which is available at present only in several examples -- see, e.g. \cite{HKK}.
\end{rem}

For Fano threefolds, where semiorthogonal decompositions have been extensively studied, see e.g. \cite{KuznetsovFanothreefolds}, we expect that the paths in \Cref{conj:2} will recover these decompositions up to mutation. We can formulate our expectations more precisely for Fano complete intersections. Let $X$ denote a Fano smooth complete intersection in $\bf{P}^n$ of degree $d \le n$. The \emph{Kuznetsov decomposition} of $X$ is the semiorthogonal decomposition: 
\begin{equation}
\label{E:kuznetsovdecomp}
    \DCoh(X) = \langle \Ku(X), \langle\cO_X,\ldots, \cO_X(n-d)\rangle\rangle
\end{equation}
where $\Ku(X) := \langle \cO_X,\ldots, \cO_X(n-d)\rangle^\perp$ is the \emph{Kuznetsov (or residual)} component.

\begin{conj}\label{conj:compl_inter}
    For a Fano complete intersection $X\subset \bf{P}^n$, there exist $\tau\in \H^{\bullet}(X)$ and a sector $\mathscr{S}$ such that \Cref{conj:2}\ref{conj2SOD} holds and the induced semiorthogonal decomposition \eqref{E:Conj2SOD} is a refinement and mutation of the Kuznetsov decomposition \eqref{E:kuznetsovdecomp}.
\end{conj}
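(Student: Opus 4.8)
\textbf{Proof proposal for \Cref{conj:compl_inter}.}

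The plan is to reduce the statement to the two families where the machinery of the present paper already suffices---cubic threefolds and cubic fourfolds not containing a plane---and, more broadly, to the Fano complete intersections where the categorical lift of the $A$-model mutation system of \cite{SandaShamoto} is available. For such $X$, the strategy is to run the construction of \Cref{S:examples} (as invoked in \Cref{T:conjecture2cases}): one starts from the standard Kuznetsov decomposition \eqref{E:kuznetsovdecomp}, applies the mutations identified by Sanda--Shamoto so that the pieces are indexed by $\lvert \sigma(\cE_0)\rvert$, and then verifies via the asymptotic estimates of \Cref{SS:asymptotics}---concretely \Cref{cor_qcohZ_cubic3} and \Cref{cor_qcohZ_cubic4}, i.e. equations \eqref{E:asymptoticestimatelogZcubic3} and \eqref{E:asymptoticestimatelogZcubic4}---that the central charge $\cZ_w^0$ restricted to each mutated piece obeys the exponential asymptotics \eqref{E:Fanoasymptotics} required by \Cref{H:Fanoasymptotics}. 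First I would fix $\tau = 0 \in \H^2(X)$ and an admissible phase $\varphi$ as in \Cref{R:smallersector}; then I would invoke the gluing results \Cref{thm_glued_geom_stab_cubic3} and \Cref{thm_geomstab_cubic4_noplane} to lift $\cZ_w^0$ to an actual path $\sigma_{t,\varphi}^0$ in $\Stab(X)$ whose underlying heart is glued along the Kuznetsov-type decomposition.

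The heart of the argument is then \Cref{L:extensionoflog}: once \Cref{H:Fanoasymptotics} is verified, that lemma produces a quasi-convergent path $\sigma_{t,\varphi}^0 = (\cZ_{te^{\mathtt{i}\varphi}}^0,\cP_t)$, and \Cref{T:qconvmainthm} extracts from it a semiorthogonal decomposition $\DCoh(X) = \langle \cD_\lambda : \lambda \in \lvert \sigma(\cE_0)\rvert\rangle$. I would then identify each $\cD_\lambda$ with the corresponding mutated piece $\cC_i$ of the Kuznetsov decomposition: the key point is that the limit-semistable objects of $\sigma_t$ are precisely the non-zero objects of the $\cC_i$ (this is exactly how the proof of \Cref{L:extensionoflog} proceeds, using that iterated extensions of exceptional objects satisfying \eqref{E:Fanoasymptotic2} are limit semistable, together with the non-vanishing of $\int_X \Psi(\widehat\Gamma_X\Ch(E))$ on each $\cC_i$ guaranteed by \Cref{cor_qcohZ_cubic3}, \Cref{cor_qcohZ_cubic4}). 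Since the Sanda--Shamoto mutations take $\langle \cO_X,\ldots,\cO_X(n-d)\rangle$ to the exceptional pieces $\cC_i$ with one-dimensional $K_0$ and take $\Ku(X)$ (up to the mutation functor) to the remaining piece, the resulting decomposition \eqref{E:Conj2SOD} is by construction a mutation of a refinement of \eqref{E:kuznetsovdecomp}: the exceptional line bundles get permuted/mutated among themselves and $\Ku(X)$ is carried to a single factor (which may itself be further refined when $\cE_0$ has additional simple eigenvalues on $\H^\bullet_{\mathrm{amb}}$, as happens for the cubic fourfold).

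The main obstacle is the same one flagged throughout the introduction and in the discussion after \Cref{T:introconj2fullcases}: for a \emph{general} Fano complete intersection $X \subset \bf{P}^n$ of degree $d \le n$, two ingredients are missing. First, one needs a categorical lift of the $A$-model mutation system \eqref{E:Amodelmutation}---Sanda--Shamoto establish this for Fano complete intersections, so in fact this ingredient \emph{is} available here, but the precise compatibility of their mutation with the Kuznetsov decomposition, and the non-degeneracy input $\int_X\Psi(\widehat\Gamma_X\Ch(E))\neq 0$ on each piece, must be checked case by case as in \Cref{cor_qcohZ_cubic3} and \Cref{cor_qcohZ_cubic4}. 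Second---and this is the genuinely hard part---one needs the \emph{existence} of the lift to $\Stab(X)$, i.e. Step 2 of the NMMP, which requires geometric or glued stability conditions on $\DCoh(X)$; this is known for $X$ a cubic threefold or cubic fourfold without a plane by \Cref{intro:TC}, and (via the Gamma II conjecture) for quadrics and Grassmannians, but is open in general. Thus I would expect to prove \Cref{conj:compl_inter} unconditionally only for the complete intersections already covered by \Cref{T:conjecture2cases}---quadrics and the two cubic cases---and, beyond those, conditionally on the existence of a lift satisfying \Cref{H:Fanoasymptotics}, with the reduction of \emph{canonicity} of the refinement to \Cref{conj:2}\ref{conj2independence} being the remaining conjectural input.
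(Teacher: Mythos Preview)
The statement in question is a \emph{conjecture}, not a theorem: the paper does not prove \Cref{conj:compl_inter} in general and offers no proof to compare against. What the paper does do is verify it for cubic threefolds and cubic fourfolds (see the remark immediately following \Cref{conj:compl_inter} and the results \Cref{T:conj2.1threefold}, \Cref{T:conj2.1fourfold} in \Cref{S:examples}); quadrics are handled separately via Gamma~II. Your proposal correctly recognises this scope and, for those special cases, follows exactly the paper's route: take $\tau=0$, use the Sanda--Shamoto mutation of the Kuznetsov decomposition indexed by $\lvert\sigma(\cE_0)\rvert$, feed the asymptotics of \Cref{cor_qcohZ_cubic3}/\Cref{cor_qcohZ_cubic4} into the gluing construction of \Cref{prop_gluing_paths_Ku} (via \Cref{rem_gluing_pathKu}), and read off the induced decomposition.

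One small correction: for the cubic cases the paper does not literally invoke \Cref{L:extensionoflog}, which is tailored to full exceptional collections satisfying \Cref{H:Fanoasymptotics}. The Kuznetsov component is not generated by exceptionals, so instead the path is built by gluing a $\GL_2^+(\bf{R})^\sim$-orbit on $\Stab(\Ku(X))$ with paths on the exceptional pieces, exactly as in \Cref{prop_gluing_paths_Ku} and \Cref{rem_gluing_pathKu}. Your identification of the two genuine obstacles for the general case---the non-vanishing $\int_X\Psi(\widehat\Gamma_X\Ch(E))\neq 0$ on each piece, and the existence of stability conditions on $\Ku(X)$---matches the paper's own assessment in \Cref{SS:Epilogue}.
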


\begin{rem}
    We make several more comments on these conjectures:\vspace{-2mm}
    \begin{enumerate}
        \item \Cref{conj:2}\ref{conj2SOD} is motivated by the example of $\bf{P}^n$ which was proven in \cite{Vanjapaper}*{\S 5.2}; see also \cite{NMMP}*{\S3.1} for very strong results in the case of $\bf{P}^1$. In \Cref{S:examples} we verify \Cref{conj:noncommutativeGamma} for Grassmannians and quadrics. For cubic threefolds and fourfolds we verify \Cref{conj:2}\ref{conj2SOD} and \Cref{conj:compl_inter}. \vspace{-2mm}
        \item In contrast to \cite{NMMP}, \Cref{conj:noncommutativeGamma}\ref{conj2SOD} and \ref{conj2isomonodromic} rely on the flexibility to deform $\tau \in \H^2(X)$ into an element in a larger parameter space. In the case of a Fano complete intersection, if $\tau$ is constrained to lie in $\H^2(X)$ it may only be possible to lift $\cZ_w^\tau$ to a quasi-convergent path giving rise to a single representative of the mutation class of the decomposition $\DCoh(X) = \langle \Ku(X),\cO_X,\ldots, \cO_X(n-d)\rangle$. In the present work, this phenomenon is observed for cubic threefolds and fourfolds, see \Cref{SS:cubicthreefolds} and \Cref{SS:Cubicfourfolds}.\vspace{-2mm}
        \item Allowing deformations of $\tau$ to $\H^\bullet(X)$ allows greater flexibility in lifting charges. A possible interpretation of this is that restricting to $\tau \in \H^2(X)$, or even $0\in \H^2(X)$, may determine special representatives in the mutation class of the canonical semiorthogonal decomposition predicted by \cite{NMMP}.\vspace{-2mm}
        \item \Cref{conj:2}\ref{conj2isomonodromic} predicts that one can use (a deformation of) the quantum differential equation to flow from geometric regions of $\Stab(X)$ to regions glued from semiorthogonal decomp\-ositions and vice versa. This prediction is based on the example of $\bf{P}^1$ -- cf. \cite{NMMP}*{Rem. 13}. One might also hope that certain semiorthogonal decompositions are ``better'' than others, in that their corresponding regions constructed by gluing contain geometric stability conditions. This property may not be preserved by mutation.\vspace{-2mm}
    \end{enumerate}
\end{rem}

\subsubsection*{Relationships between the conjectures}
Next, we will examine the relationships between certain parts of the conjectures above. The relationship between the conjectures is summarized as follows:
\[
    \begin{tikzcd}[arrows=Rightarrow]
        \text{Gamma II}\arrow[rr] && \text{\Cref{conj:2}\ref{conj2SOD}}\arrow[rr,"\tau \in \H^2(X)\text{ simple}"] && \text{NMMP \Cref{conj:NMMPFano}}.
    \end{tikzcd}
\] 
This will allow us to deduce NMMP \Cref{conj:NMMPFano} in new cases: Grassmannians, smooth and projective quadrics, and toric Fano varieties. 

\begin{lem}
\label{L:NMMPFanofromConj2(1)}
    If $\tau \in \H^\bullet(X)$ of \Cref{conj:2}\ref{conj2SOD} can be chosen to lie in $\H^2(X)$, the elements of $\sigma(\cE_\tau)$ occur with multiplicity one, and \eqref{E:Conj2SOD} underlies a full exceptional collection, then NMMP \Cref{conj:NMMPFano} holds.
\end{lem}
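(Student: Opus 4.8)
The plan is to show that under the stated hypotheses, the quasi-convergent path $\sigma_{t,\varphi}^\tau$ from \Cref{conj:2}\ref{conj2SOD} automatically satisfies the spanning condition of \Cref{conj:NMMPFano}. Since $\tau \in \H^2(X)$ by hypothesis, the canonical fundamental solution $\Phi_w^\tau$ and the quantum cohomology central charge $\cZ_w^\tau$ are defined, and $\cE_\tau = c_1(X)$. The key point is that, by \Cref{L:Frobeniusdecomp} applied as in \Cref{Ex:smallquantumcohomology}, we get an internal product decomposition $\H^\bullet(X) = \prod_{\lambda \in \lvert \sigma(\cE_\tau)\rvert} E(\lambda)$ into Frobenius subalgebras, where each $E(\lambda)$ is the generalized eigenspace of $c_1(X)\star_\tau(-)$. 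By the multiplicity-one hypothesis, each $E(\lambda)$ is one-dimensional. Combining the asymptotic estimate \eqref{E:LSSasymptotic} for limit semistable objects with \Cref{C:Zasymptoticestimate}, the limit semistable objects in the factor $\cD_\lambda$ have $\cZ_{te^{\mathtt{i}\varphi}}^\tau$ decaying (or growing) exponentially at rate governed by $\Re(-\lambda e^{-\mathtt{i}\varphi})$; explicitly $\log\lvert\cZ_{te^{\mathtt{i}\varphi}}^\tau(E)\rvert \sim \Re(-e^{-\mathtt{i}\varphi}\lambda)\,t^{-1}$.

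First I would fix notation: for $\lambda \in \lvert\sigma(\cE_\tau)\rvert$ set $r(\lambda) := \Re(-e^{-\mathtt{i}\varphi}\lambda)$, and for a threshold $r$ consider $F^r\H_{\rm{alg}}^\bullet(X)$ as in the conjecture. The claim to prove is that $F^r\H_{\rm{alg}}^\bullet(X)$ is spanned by Chern characters of $\sigma_{t,\varphi}^\tau$-limit semistable objects, for each $r$ of the form $r(\lambda)$. Since \eqref{E:Conj2SOD} underlies a full exceptional collection $(E_1,\ldots,E_N)$, the classes $\ch(E_1),\ldots,\ch(E_N)$ form a $\bf{Q}$-basis of $\H_{\rm{alg}}^\bullet(X)_{\bf{Q}}$, and each $E_j$ is limit semistable (being exceptional, it lies in a single factor $\cD_{\lambda(j)}$). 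The estimate \eqref{E:LSSasymptotic} gives $\log\lvert \cZ_{te^{\mathtt{i}\varphi}}^\tau(E_j)\rvert = r(\lambda(j))\,t^{-1} + \tfrac{\dim X}{2}\log(2\pi t) + O(1)$, so $\ch(E_j) \in F^r\H_{\rm{alg}}^\bullet(X)$ if and only if $r(\lambda(j)) \le r$. Hence I would argue:
\begin{itemize}
\item \emph{Containment $\supseteq$ from the exceptional basis.} The span of $\{\ch(E_j) : r(\lambda(j)) \le r\}$ is contained in $F^r\H_{\rm{alg}}^\bullet(X)$: indeed, any $\alpha$ in this span is a rational combination of such $\ch(E_j)$, and $\lvert\cZ_{te^{\mathtt{i}\varphi}}^\tau(\alpha)\rvert$ is bounded by the maximum of the $\lvert\cZ_{te^{\mathtt{i}\varphi}}^\tau(E_j)\rvert$ up to constants, so its logarithm is $\le r\,t^{-1} + o(t^{-1})$.
\item \emph{Reverse containment via linear independence and distinctness of rates.} Because the eigenvalues $\lambda$ are distinct (multiplicity one), the rates $r(\lambda)$ are distinct for a generic admissible $\varphi$ — and in any case the $E(\lambda)$ decomposition is orthogonal for the Poincaré pairing, which via $\widehat\Gamma_X\Ch(-)$ translates into a direct-sum decomposition $\H_{\rm{alg}}^\bullet(X)_{\bf{Q}} = \bigoplus_\lambda \langle \ch(E_j) : \lambda(j) = \lambda\rangle$ compatible with the filtration by $r$. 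So if $\alpha \in F^r\H_{\rm{alg}}^\bullet(X)$, write $\alpha = \sum_\lambda \alpha_\lambda$ in this decomposition; the component $\alpha_\lambda$ satisfies $\cZ_{te^{\mathtt{i}\varphi}}^\tau(\alpha_\lambda) \sim c\cdot(2\pi t)^{\dim X/2}e^{-e^{-\mathtt{i}\varphi}\lambda t^{-1}}$ for some $c$ (using that $E(\lambda)$ is one-dimensional so the leading term cannot cancel unless $\alpha_\lambda = 0$), forcing $r(\lambda) \le r$ whenever $\alpha_\lambda \ne 0$. Thus $\alpha$ lies in the span of $\{\ch(E_j) : r(\lambda(j)) \le r\}$, which consists of limit semistable classes.
\end{itemize}

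The main obstacle I anticipate is the careful handling of the non-cancellation of leading exponential terms: one must ensure that when projecting $\alpha$ to a component $\alpha_\lambda$, the quantity $\cZ_{te^{\mathtt{i}\varphi}}^\tau(\alpha_\lambda)$ genuinely has leading order $e^{-e^{-\mathtt{i}\varphi}\lambda t^{-1}}$ and does not vanish to higher order. This is exactly where the multiplicity-one hypothesis ($\dim E(\lambda) = 1$) is essential — it is the hypothesis that makes \Cref{C:Zasymptoticestimate} give $\int_X\Psi(\alpha)\ne 0$, ruling out accidental cancellation. A secondary technical point is that the decomposition of $\H_{\rm{alg}}^\bullet(X)_{\bf{Q}}$ induced by the $E(\lambda)$'s must be shown compatible with the exceptional collection grouping coming from \eqref{E:Conj2SOD}; this follows because the limit semistable classes in $\cD_\lambda$ are precisely those with $\cZ$-rate $r(\lambda)$, and the full exceptional collection refines \eqref{E:Conj2SOD}, so each $\ch(E_j)$ is homogeneous for the eigenvalue decomposition. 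I would also remark that the hypothesis that \eqref{E:Conj2SOD} underlies a full exceptional collection is what guarantees $\rm{K}_0(X) \cong \bigoplus \bf{Z}\cdot[E_j]$ with $\ch$ a rational isomorphism onto $\H_{\rm{alg}}^\bullet(X)$, so that spanning over $\bf{Q}$ suffices.
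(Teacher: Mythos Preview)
Your proposal reaches the right conclusion but takes a longer route than the paper. The paper's proof is considerably more direct: since \eqref{E:Conj2SOD} underlies a full exceptional collection and the eigenvalues are simple, each factor $\cD_\lambda = \langle E_\lambda\rangle$ is generated by a single exceptional (hence limit semistable) object, so the splitting $\rm{K}_0^{\rm{top}}(X) = \bigoplus_\lambda \rm{K}_0^{\rm{top}}(\cD_\lambda)$ already exhibits $\H_{\rm{alg}}^\bullet(X)$ as a direct sum of rank-one pieces. The estimate \eqref{E:LSSasymptotic} (with $C_E\ne 0$) then immediately pins down $F^{r_j}\H_{\rm{alg}}^\bullet(X) = \bigoplus_{i\le j} \H_{\rm{alg}}(\cD_i)$: the rank-one property rules out cancellation, since any nonzero class in $\H_{\rm{alg}}(\cD_\lambda)$ is a nonzero scalar multiple of $\ch(E_\lambda)$. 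No Frobenius-algebra input is invoked.

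Your detour through \Cref{L:Frobeniusdecomp} and \Cref{C:Zasymptoticestimate} is unnecessary and introduces two issues. First, \Cref{C:Zasymptoticestimate} requires property $\cO$, which is not among the hypotheses of the lemma; the estimate you need is already supplied by \eqref{E:LSSasymptotic} from \Cref{conj:2}\ref{conj2SOD} itself. Second, the assertion that the Poincar\'e-orthogonal eigenspace decomposition $\H^\bullet(X,\bf{C}) = \bigoplus_\lambda E(\lambda)$ ``translates via $\widehat\Gamma_X\Ch$'' into the decomposition $\H_{\rm{alg}}^\bullet(X)_{\bf{Q}} = \bigoplus_\lambda \langle\ch(E_j):\lambda(j)=\lambda\rangle$ is not justified: the eigenspace $E(\lambda)$ and the Stokes-type subspace $\cA_\lambda$ of \eqref{E:Amodelmutationsystem} are different (related only asymptotically via the map $\Psi$ of \Cref{P:asymptoticsofsection}), and $\widehat\Gamma_X\Ch(E_j)$ lands in $\cA_{\lambda(j)}$, not $E(\lambda(j))$. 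Fortunately this step is not load-bearing --- the direct-sum decomposition you actually use comes straight from the full exceptional collection and the multiplicity-one hypothesis. Once you strip out the Frobenius-algebra machinery, your argument collapses to the paper's.
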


\begin{proof}
    We only have to check the spanning condition. For this, choose an enumeration of $ \sigma(\cE_\tau) = \{u_1,\ldots, u_N\}$ such that $\Re(-u_1e^{-\mathtt{i}\varphi}) < \cdots < \Re(-u_Ne^{-\mathtt{i}\varphi})$. Write $\cD_i$ for the semiorthogonal factor of \eqref{E:Conj2SOD} corresponding to $u_i$. We get $\rm{K}_0^{\rm{top}}(X) = \bigoplus_{i=1}^N \rm{K}_0^{\rm{top}}(\cD_i)$ and, defining $\H_{\rm{alg}}(\cD_i) := \im(\rm{K}_0(\cD_i) \to \rm{K}_0^{\rm{top}}(\cD_i))$, we can find a generator of $\rm{H}_{\rm{alg}}(\cD_i)$ consisting of an exceptional limit semistable object $E_i$. The estimate of \Cref{conj:2}\ref{conj2SOD} implies 
    \[
        \log \cZ_{te^{\mathtt{i}\varphi}}^\tau(E_i) \approx \log C_E + \frac{\dim X}{2}\log(2\pi t) - \frac{u_i}{te^{\mathtt{i}\varphi}}
    \]
    as $t\to 0$. Letting $r_i = \Re(-u_ie^{-\mathtt{i}\varphi})$ for $i=1,\ldots, k$, we obtain a split filtration 
    \[
       F^j\H_{\rm{alg}}(X):= F^{r_j}\H_{\rm{alg}}(X) = \bigoplus_{i=1}^j \H_{\rm{alg}}(\cD_j)
    \]
    so that the spanning condition of NMMP \Cref{conj:NMMPFano} holds. Note that here, we use the fact that each $\H_{\rm{alg}}(\cD_j)$ is rank one so that any non-zero element $\alpha$ of $\rm{H}_{\rm{alg}}(\cD_j)$ has $\lvert \cZ_t(\alpha)\rvert = r_jt + o(t)$ as $t\to 0$.
\end{proof}

The next proposition is an extension of \cite{Vanjapaper}*{Thm. 5.11} to the case where the eigenvalues of $\cE_\tau$ have multiplicity. This generalization is crucial, since by \cite{CottiCoalescence} most Grassmannians exhibit a \emph{coalescence} phenomenon wherein $\cE_\tau \in \End(\H^\bullet(\Gr(k,V)))$ has repeated eigenvalues for all $\tau \in \H^2(\Gr(k,V))$.

\begin{prop}
\label{P:GammaIIimpliesConj1}
    If $X$ is a Fano variety for which the Gamma Conjecture II holds at some $\tau \in \H^\bullet(X)$, then \Cref{conj:2}\ref{conj2SOD} holds. 
\end{prop}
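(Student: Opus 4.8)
The strategy is to combine the asymptotic estimates coming from the Gamma II conjecture, packaged in \Cref{C:isomonodromic_estimate} and \eqref{E:logZestimatesemisimple}, with the lifting mechanism of \Cref{L:extensionoflog}, and then to check that the resulting semiorthogonal decomposition is the one indexed by $\lvert\sigma(\cE_\tau)\rvert$. Concretely, suppose Gamma II holds at $\tau\in\H^\bullet(X)$. If $\star_\tau$ is not semisimple we may deform $\tau$ within $B$ to a semisimple point without losing Gamma II (by \cite{GGI16}*{Rem.~4.6.3}); using canonical coordinates we place ourselves in the setup of \Cref{S:mutationsetup}, with an asymptotically exponential full exceptional collection $\mathfrak{E}=(E_1,\ldots,E_N)$ and eigenvalues $(u_1,\ldots,u_N)$. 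Choosing a $\tau$-admissible phase $\varphi$ and applying \Cref{C:isomonodromic_estimate} to the collection $\mathfrak{E}$ itself (or to any mutation-equivalent $\mathfrak{E}'$), we obtain a point $v\in\widetilde{\mathscr{U}}_N$ over $(u_1,\ldots,u_N)$ and the estimate \eqref{E:logZestimatesemisimple}:
\[
    \log\cZ_w^v(E_j) = C_j - \frac{u_j}{w} + \frac{\dim X}{2}\log(2\pi w) + \epsilon_j(w),\qquad \epsilon_j(w)\to 0
\]
as $w\to 0$ in a sector $\mathscr{S}\ni\bf{R}_{>0}e^{\mathtt{i}\varphi}$, with $C_j=\log\int_X\Psi_v(e_j)\ne 0$. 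This is exactly \eqref{E:Fanoasymptotics} of \Cref{H:Fanoasymptotics}, so \Cref{L:extensionoflog} produces a quasi-convergent path $\sigma_{t,\varphi}^\tau=(\cZ_{te^{\mathtt{i}\varphi}}^\tau,\cP_t)$ (here $\tau$ plays the role of the deformed parameter $v$; one should be slightly careful with notation, but the central charge is $\cZ_w^v$, which is the deformed quantum cohomology central charge of \Cref{defn_perturbed_can_sol}).

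Next I would extract from the proof of \Cref{L:extensionoflog} the precise description of the induced semiorthogonal decomposition: it is $\DCoh(X)=\langle\cD_1,\ldots,\cD_k\rangle$ where the $\cD_a$ group together the consecutive exceptional objects $E_j$ sharing a common eigenvalue $u_j$, ordered by $\Im(-e^{-\mathtt{i}\varphi}u_j)$. Since the distinct values among $(u_1,\ldots,u_N)$ are exactly $\lvert\sigma(\cE_\tau)\rvert$, relabelling $\cD_a=\cD_\lambda$ gives \eqref{E:Conj2SOD} with the stated ordering $\lambda<\mu\iff\Im(-e^{-\mathtt{i}\varphi}\mu)>\Im(-e^{-\mathtt{i}\varphi}\lambda)$. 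For the asymptotic \eqref{E:LSSasymptotic}: by \Cref{L:extensionoflog} the limit semistable objects are precisely the non-zero objects of the $\cD_a$, each an iterated extension of exceptional objects $E_j$ with the \emph{same} exponent $\lambda$; the estimate \eqref{E:logZestimatesemisimple} for each such $E_j$, together with additivity of $\cZ$ on triangles and the fact that the dominant term $e^{-\lambda/w}$ is common to all factors, yields $\cZ_{te^{\mathtt{i}\varphi}}^\tau(E)\sim C_E(2\pi t)^{\dim X/2}\exp(-e^{-\mathtt{i}\varphi}\lambda t^{-1})$ for some $C_E\in\CC^*$ — one checks $C_E\ne 0$ because, for a limit semistable $E$, the leading coefficients of its HN-type factors all lie on the same ray and hence cannot cancel (this uses that each $E_j$ contributes $C_j\ne 0$ and the phases align in the limit).

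The main obstacle, and the step requiring the most care, is the bookkeeping around \emph{which} exceptional collection to feed into \Cref{C:isomonodromic_estimate} and ensuring the eigenvalue-grouping in \Cref{L:extensionoflog} genuinely matches the order predicted in \Cref{conj:2}\ref{conj2SOD}. A priori Gamma II only gives an asymptotically exponential collection at \emph{some} admissible phase; one must invoke \Cref{P:mutationforestimate} to move between admissible phases and corresponding mutations so that, for the chosen $\varphi$, the indices $j$ with equal exponent are consecutive and correctly ordered — this is the content of the rotation-of-eigenvalues/mutation dictionary of \Cref{fig:mutation} and \cite{GGI16}*{Rem.~4.6.3}. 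Once this alignment is in place, verifying that the grouped factors $\cD_\lambda$ are triangulated and give a semiorthogonal decomposition is automatic from the exceptional collection structure, and the quasi-convergence and the estimate \eqref{E:LSSasymptotic} follow as above. I expect no essential difficulty in the analytic estimates themselves, since these are already established in \Cref{SS:asymptotics}; the work is entirely in correctly threading the combinatorics of admissible phases, orderings, and mutations through the hypotheses of \Cref{H:Fanoasymptotics} and \Cref{L:extensionoflog}.
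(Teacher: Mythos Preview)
Your overall strategy is right—verify \Cref{H:Fanoasymptotics} and invoke \Cref{L:extensionoflog}—but you are missing the one nontrivial technical input that the paper's proof hinges on. The issue is the coalescent case: when two eigenvalues $u_i=u_j$ of $\cE_\tau\star_\tau(-)$ coincide, the path $z(t)=(\log\cZ_t(E_1),\ldots,\log\cZ_t(E_N))$ does \emph{not} enter the gluing region $\cS_{\cE}$ of \Cref{L:gluingregion}, because $\Im(z_i(t))-\Im(z_j(t))$ stays bounded rather than going to $+\infty$. The gluing condition only relaxes when $E_i$ and $E_j$ are \emph{orthogonal}, and this orthogonality is precisely what the paper imports from \cite{CDGcoalescent}*{Thm.~4.5(6)}: if Gamma II holds and $u_i=u_j$, then $E_i\perp E_j$ in $\DCoh(X)$. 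This is the crux of why \Cref{P:GammaIIimpliesConj1} is a genuine extension of \cite{Vanjapaper}*{Thm.~5.11} to the multiplicity case, as the paper emphasizes just before stating the proposition. You never invoke this result, so your application of \Cref{L:extensionoflog} has a gap.

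Your proposed workaround—``deform $\tau$ to a semisimple point''—does not close this gap. Semisimplicity of the algebra $(\H^\bullet(X),\star_\tau)$ is \emph{not} the same as the Euler operator having distinct eigenvalues; indeed the coalescence phenomenon of \cite{CottiCoalescence} is exactly that for many Grassmannians $\cE_\tau$ has repeated eigenvalues at every $\tau\in\H^2(X)$ even though $\star_\tau$ is semisimple. Your appeal to \Cref{C:isomonodromic_estimate} presupposes \Cref{S:mutationsetup}, which explicitly requires $\tau\in\mathscr{B}$ where the eigenvalues are distinct. If you meant to deform into $\mathscr{B}$ (which is possible once big quantum cohomology converges in an open set, see the discussion before \Cref{P:Dubrovin1998}), then your later discussion of ``consecutive $E_j$ sharing a common eigenvalue'' becomes vacuous and the argument simplifies considerably—but you should say so. As written, the proposal oscillates between the distinct-eigenvalue and coalescent pictures without committing to either, and in the coalescent picture the missing orthogonality is fatal. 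Relatedly, your argument that $C_E\ne 0$ because ``phases align and cannot cancel'' is not justified: once orthogonality is in hand, the limit semistable objects in $\cD_\lambda$ are direct sums of shifts of the $E_j$, and the paper simply exponentiates \eqref{E:logZestimate} and sums.
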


\begin{proof}
    Gamma II holds for $X$ in the form of \cite{Galkin_Iritani_Gamma}*{Conj. 4.9}. So, given a $\tau$-admissible phase $\varphi\in \bf{R}$, order $\sigma(\cE_\tau)$ as $(u_1,\ldots,u_N)$ such that $i<j$ implies that $\Im(-u_ie^{-\mathtt{i}\varphi}) < \Im(-u_je^{-\mathtt{i}\varphi})$. Then, there exist a small angular sector $\mathscr{S}\subset \bf{C}^*$ containing $\mathbf{R}_{>0}\cdot e^{\mathtt{i}\varphi}$ and a full exceptional collection $(E_1,\ldots, E_n)$ in $\DCoh(X)$ such that 
    \[
        \log \cZ^{\tau}_w(E_j) \approx \frac{\dim X}{2}\log(2\pi w) - \frac{u_j}{w}.
    \]
    as $w\to 0$ in $\mathscr{S}$ by \cite{GGI16}*{Prop. 2.5.1}. Note that is a unique sequence of indices $0 = j_0 < j_1<\cdots<j_k = n$ such that $j_{a-1}<p\le j_a$ if and only if $u_{p} = u_{j_a}$ for all $a=1,\ldots, k$. Letting $\cZ_t = \cZ_{te^{\mathtt{i}\varphi}}^\tau$, we have:
    \begin{equation}
    \label{E:logZestimate}
        \log \cZ_t(E_j) \approx \frac{\dim X}{2}\left(\log(2\pi t) + i \varphi\right) - \frac{u_j}{te^{\mathtt{i}\varphi}}
    \end{equation}
    for $t\in \bf{R}_{>0}$. Thus, we are in the context of \Cref{H:Fanoasymptotics}, so that to construct $\sigma_{t,\varphi}^\tau$ it suffices to verify the hypothesis of \Cref{L:extensionoflog}; however, this follows from \cite{CDGcoalescent}*{Thm. 4.5(6)}, which shows that if $u_i = u_j$ then $E_i$ and $E_j$ are orthogonal in $\DCoh(X)$. So, if $\cD_a$ is the category generated by $E_j$ for all $i_{a-1}<j\le i_a$ with $a=1,\ldots, k$, then $\Stab(\cD_a) \cong \bf{C}^{i_a-i_{a-1}}$ with coordinates $\logZ(E_j)$ for $i_{a-1}<j\le i_a$. 

    It remains only to characterize the induced semiorthogonal decomposition as in \eqref{E:Conj2SOD}. For this, observe that in the proof of \Cref{L:extensionoflog} the semiorthogonal decomposition arising from $\sigma_{t,\varphi}^\tau$ is $\DCoh(X) = \langle \cD_1,\ldots, \cD_k\rangle$ where $\cD_a = \langle E_p:j_{a-1}<p\le j_a\rangle$. Up to shift, the limit semistable objects in $\cD_a$ are sums objects in $\{E_p:j_{a-1}<p\le j_a\}$. Exponentiating the estimate \eqref{E:logZestimate} and taking sums gives the conclusion.
\end{proof}

\begin{cor}
\label{C:NMMPFanoexamples}
    The NMMP \Cref{conj:NMMPFano} holds for Grassmannians, smooth quadrics, and smooth toric Fano varieties.
\end{cor}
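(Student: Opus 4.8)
\textbf{Proof plan for \Cref{C:NMMPFanoexamples}.}
The strategy is to verify, for each of the three families of varieties, that the Gamma Conjecture~II holds at a suitable point $\tau \in \H^\bullet(X)$, apply \Cref{P:GammaIIimpliesConj1} to deduce \Cref{conj:2}\ref{conj2SOD}, and then invoke \Cref{L:NMMPFanofromConj2(1)} to pass from \Cref{conj:2}\ref{conj2SOD} to the NMMP \Cref{conj:NMMPFano}. The latter step requires checking that one may take $\tau\in \H^2(X)$, that $\sigma(\cE_\tau)$ has no repeated eigenvalues \emph{or}, more carefully, that the hypotheses of \Cref{L:NMMPFanofromConj2(1)} can be met after a harmless adjustment; note that \Cref{L:NMMPFanofromConj2(1)} as stated demands multiplicity-one eigenvalues, so for Grassmannians exhibiting coalescence one must instead argue directly. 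The cleanest uniform route is: for each $X$, record the known validity of Gamma~II, extract the asymptotically exponential full exceptional collection $(E_1,\dots,E_N)$, and then observe that the spanning condition of \Cref{conj:NMMPFano} can be checked block-by-block exactly as in the proof of \Cref{L:NMMPFanofromConj2(1)}, using that within each eigenvalue-block the objects $E_p$ are orthogonal (by \cite{CDGcoalescent}*{Thm. 4.5(6)}) so that $\H_{\rm{alg}}(\cD_a)$ splits into rank-one pieces and the filtration $F^r\H_{\rm{alg}}^\bullet(X)$ is a direct sum of the $\H_{\rm{alg}}(\cD_a)$ with $\Re(-u_a e^{-\mathtt{i}\varphi})\le r$.

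First I would treat smooth projective quadrics $Q\subset \bf{P}^n$: Gamma~II is known here (it is established in \cite{GGI16} for quadrics, and property~$\cO$ and semisimplicity of small quantum cohomology hold), so \Cref{P:GammaIIimpliesConj1} gives \Cref{conj:2}\ref{conj2SOD} with $\tau\in\H^2(Q)$ a semisimple point; since $\H^\bullet(Q)$ admits a full exceptional collection whose classes are linearly independent and the eigenvalues of $c_1(Q)\star_0(-)$ are generically simple, \Cref{L:NMMPFanofromConj2(1)} applies directly after perturbing $\tau$ within $\H^2(Q)$ to a simple point. Second, for Grassmannians $\Gr(k,V)$, Gamma~II is known by the combined work cited in \cite{GGI16} (building on the quantum Satake / quantum Pieri structure), so \Cref{P:GammaIIimpliesConj1} again yields \Cref{conj:2}\ref{conj2SOD}; here $\cE_\tau$ may have repeated eigenvalues for every $\tau\in\H^2(\Gr(k,V))$ (the coalescence phenomenon of \cite{CottiCoalescence}), so instead of citing \Cref{L:NMMPFanofromConj2(1)} verbatim I would re-run its proof: order $\sigma(\cE_\tau)=\{u_1<\cdots<u_N\}$ by $\Re(-u_ie^{-\mathtt{i}\varphi})$, group into blocks of equal value, note $\rm{K}_0^{\rm{top}}(X)=\bigoplus_a\rm{K}_0^{\rm{top}}(\cD_a)$, and pick within each block a spanning set of \emph{exceptional} (hence limit-semistable) objects with the stated $\log\cZ$-asymptotics, so that $F^r\H_{\rm{alg}}^\bullet(X)$ is exactly the span of the blocks with $r_a\le r$. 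Third, for smooth toric Fano varieties, Gamma~II was proved by Galkin–Golyshev–Iritani (for the cases covered in \cite{GGI16}) and extended in subsequent work; with this input the same two-step deduction applies, again re-running the block argument if needed since toric Fanos need not have semisimple or simple-spectrum quantum cohomology.

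The main obstacle is not any single hard computation but rather assembling the correct black-box inputs and making sure the passage from \Cref{conj:2}\ref{conj2SOD} to \Cref{conj:NMMPFano} is valid in the presence of coalescing eigenvalues, where \Cref{L:NMMPFanofromConj2(1)} does not literally apply. The resolution is that coalescence forces orthogonality of the relevant exceptional objects (by \cite{CDGcoalescent}*{Thm. 4.5(6)}, already used in the proof of \Cref{P:GammaIIimpliesConj1}), so each block category $\cD_a$ is a product of $\DCoh(\pt)$'s and $\H_{\rm{alg}}(\cD_a)$ is a direct sum of rank-one lattices; consequently the spanning filtration $F^r\H_{\rm{alg}}^\bullet(X)$ is visibly split and generated by Chern characters of the exceptional limit-semistable objects, which is precisely the spanning condition. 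A secondary point to be careful about is that \Cref{conj:NMMPFano} is stated for $\tau\in\H^2(X)$, whereas Gamma~II and hence \Cref{conj:2}\ref{conj2SOD} may a priori be invoked at a point $\tau$ in the larger convergence locus $B$; for quadrics, Grassmannians, and toric Fanos small quantum cohomology converges on all of $\H^2(X)$ and Gamma~II is available there, so this is not an issue, but I would state it explicitly. With these two points addressed, the corollary follows formally from \Cref{P:GammaIIimpliesConj1}, \Cref{L:NMMPFanofromConj2(1)}, and the block argument above.
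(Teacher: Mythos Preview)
Your approach matches the paper's one-sentence proof: cite Gamma~II for each family, then apply \Cref{P:GammaIIimpliesConj1} followed by \Cref{L:NMMPFanofromConj2(1)}. Two minor corrections: the Gamma~II references the paper actually uses are \cite{CDGhelix,GGI16} for Grassmannians, \cite{Quadrics} (Hu--Ke, not \cite{GGI16}) for quadrics, and \cite{FangZhoutoric} (Fang--Zhou) for smooth toric Fanos; and your elaborate block argument for coalescing eigenvalues is not carried out in the paper, which simply invokes \Cref{L:NMMPFanofromConj2(1)} directly, implicitly relying on the remark immediately following \Cref{conj:NMMPFano} that $\tau$ may be taken in $\H^\bullet(X)$ rather than $\H^2(X)$, where points with simple spectrum exist for all three families.
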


\begin{proof}
    This follows from \Cref{L:NMMPFanofromConj2(1)}, \Cref{P:GammaIIimpliesConj1}, and the works \cites{CDGhelix, GGI16}, \cite{Quadrics}, and \cite{FangZhoutoric}, which prove Gamma II in the respective cases.
\end{proof}

\subsubsection*{Relation to original conjecture}

Before moving on, we make several observations about the relation between \Cref{conj:NMMPFano} and \cite{NMMP}*{Proposal III}. We already mentioned above that because $X$ is Fano in the present work, we don't need to truncate the quantum differential equation as proposed in \emph{loc. cit.} The other main difference between \Cref{conj:NMMPFano} and NMMP Proposal III is that we have phrased the spanning condition differently.

First, there is freedom in \cite{NMMP} to choose the fundamental solution used to define the quantum cohomology central charge. In our framework, the integrand defining the quantum cohomology central charge is always $\Upsilon^\tau_{te^{\mathtt{i}\varphi}}(-) := \Phi^\tau_{te^{\mathtt{i}\varphi}}(\widehat{\Gamma}_X\Ch(-))$ --  see \Cref{defn_perturbed_can_sol} and compare with \cite{NMMP}*{Rem. 12}. 

In \cite{NMMP}, the spanning condition is as in \Cref{conj:NMMPFano} except that $\log \lvert \cZ_{te^{\mathtt{i}\varphi}}^\tau(\alpha)\rvert$ is replaced by $\log \lVert \Upsilon_{te^{\mathtt{i}\varphi}}^\tau(\alpha)\rVert$, where for $\alpha \in \H^\bullet(X)$ we let $\Ch(\alpha) = \sum_{d} (2\pi \mathtt{i})^{d/2}\alpha_d$. If $\int_X \Upsilon_{te^{\mathtt{i}\varphi}}^\tau (\alpha) \ne 0$, then 
\[
    \log \cZ_{te^{\mathtt{i}\varphi}}^\tau(\alpha)  = \frac{\dim X}{2}\log(2\pi te^{\mathtt{i}\varphi}) + \log \int_X \Upsilon_{te^{\mathtt{i}\varphi}}^\tau(\alpha).
\]

\begin{lem}
\label{L:spanningconditions}
    Suppose given a quasi-convergent path $\sigma_{t,\varphi}^\tau = (\cZ_{te^{\mathtt{i}\varphi}}^\tau,\cP_t)$ such that 
    \begin{equation}
    \label{E:liminfratioint}
        \inf_{0\ne E\in \cP_{\sigma_{t,\varphi}^\tau}}\left\{
        \liminf_{t\to 0} \frac{\lvert \int_X\Upsilon^\tau_{te^{\mathtt{i}\varphi}}(E) \rvert}{\lVert \Upsilon^\tau_{te^{\mathtt{i}\varphi}}(E) \rVert } \right\} = C > 0.
    \end{equation}
    Then, the spanning condition of \cite{NMMP}*{Proposal III} for $\sigma_{t,\varphi}^\tau$ is equivalent to the one in \Cref{conj:NMMPFano}.
\end{lem}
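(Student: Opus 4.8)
\textbf{Proof plan for \Cref{L:spanningconditions}.} The statement is essentially a comparison of two filtrations on $\H_{\rm{alg}}^\bullet(X)$: the one defined via $\log|\cZ_{te^{\mathtt{i}\varphi}}^\tau(\alpha)|$ and the one defined via $\log\lVert\Upsilon_{te^{\mathtt{i}\varphi}}^\tau(\alpha)\rVert$. The plan is to show these two filtrations agree under hypothesis \eqref{E:liminfratioint}, from which the equivalence of spanning conditions is immediate. First I would record the identity displayed just before the lemma, namely
\[
    \log \cZ_{te^{\mathtt{i}\varphi}}^\tau(\alpha) = \frac{\dim X}{2}\log(2\pi t e^{\mathtt{i}\varphi}) + \log\int_X \Upsilon_{te^{\mathtt{i}\varphi}}^\tau(\alpha),
\]
valid whenever $\int_X\Upsilon_{te^{\mathtt{i}\varphi}}^\tau(\alpha)\ne 0$. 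Taking absolute values and then logarithms gives
\[
    \log\bigl|\cZ_{te^{\mathtt{i}\varphi}}^\tau(\alpha)\bigr| = \frac{\dim X}{2}\log(2\pi t) + \log\Bigl|\int_X\Upsilon_{te^{\mathtt{i}\varphi}}^\tau(\alpha)\Bigr|.
\]
The term $\tfrac{\dim X}{2}\log(2\pi t)$ is $o(t^{-1})$ as $t\to 0$, so it does not affect membership in $F^r\H_{\rm{alg}}^\bullet(X)$. Hence the filtration defined by $\log|\cZ_{te^{\mathtt{i}\varphi}}^\tau(\alpha)|\le rt^{-1}+o(t^{-1})$ coincides with the one defined by $\log|\int_X\Upsilon_{te^{\mathtt{i}\varphi}}^\tau(\alpha)|\le rt^{-1}+o(t^{-1})$.

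Next I would use \eqref{E:liminfratioint} to replace $|\int_X\Upsilon_{te^{\mathtt{i}\varphi}}^\tau(E)|$ by $\lVert\Upsilon_{te^{\mathtt{i}\varphi}}^\tau(E)\rVert$ for limit semistable objects $E$. The hypothesis says that there is $C>0$ with $|\int_X\Upsilon_{te^{\mathtt{i}\varphi}}^\tau(E)|\ge (C/2)\lVert\Upsilon_{te^{\mathtt{i}\varphi}}^\tau(E)\rVert$ for all $t$ sufficiently small (depending on $E$), while the reverse inequality $|\int_X\Upsilon_{te^{\mathtt{i}\varphi}}^\tau(E)|\le C'\lVert\Upsilon_{te^{\mathtt{i}\varphi}}^\tau(E)\rVert$ holds trivially with $C'$ the operator norm of the functional $\int_X(-)$ with respect to the chosen norm. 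Taking logarithms, the difference $\log|\int_X\Upsilon_{te^{\mathtt{i}\varphi}}^\tau(E)| - \log\lVert\Upsilon_{te^{\mathtt{i}\varphi}}^\tau(E)\rVert$ is bounded (between $\log(C/2)$ and $\log C'$) as $t\to 0$, hence in particular $o(t^{-1})$. Therefore, for limit semistable $E$ with $\int_X\Upsilon_{te^{\mathtt{i}\varphi}}^\tau(E)\ne 0$, membership of $\ch(E)$ in $F^r\H_{\rm{alg}}^\bullet(X)$ defined by either $|\cZ|$ or $\lVert\Upsilon\rVert$ is the same condition.

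Finally I would deal with the edge case where $\int_X\Upsilon_{te^{\mathtt{i}\varphi}}^\tau(E)$ vanishes: for such $E$ the central charge $\cZ_{te^{\mathtt{i}\varphi}}^\tau(E)$ is zero, which is incompatible with $E$ being a nonzero semistable object of the stability condition $\sigma_{t,\varphi}^\tau$ (a nonzero semistable object has nonzero central charge); so for $E$ in $\cP_{\sigma_{t,\varphi}^\tau}$ the integral is automatically nonzero for all $t$ in the relevant range, and no edge case actually arises. Assembling these observations: the two versions of the spanning condition assert that the same filtration $F^r\H_{\rm{alg}}^\bullet(X)$ — which, by the comparison above, is well-defined independently of whether one uses $|\cZ|$ or $\lVert\Upsilon\rVert$ — is spanned by Chern characters of limit semistable objects of $\sigma_{t,\varphi}^\tau$. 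This gives the claimed equivalence. I do not expect a serious obstacle here; the only point requiring a little care is keeping the quantifiers straight — the bound in \eqref{E:liminfratioint} is a uniform infimum over $E$, so the implied constants in the comparison can be chosen uniformly, which is what lets one conclude the two filtrations literally coincide rather than merely coincide object-by-object.
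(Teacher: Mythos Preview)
Your proposal is correct and follows essentially the same approach as the paper: bound the ratio $\lvert\int_X\Upsilon(E)\rvert/\lVert\Upsilon(E)\rVert$ from above (via the operator norm of $\int_X$, which the paper phrases as continuity of this ratio on the unit sphere giving a maximum $M$) and from below (via the hypothesis $C>0$), then conclude that $\log\lvert\cZ_{te^{\mathtt{i}\varphi}}(E)\rvert - \log\lVert\Upsilon_{te^{\mathtt{i}\varphi}}^\tau(E)\rVert \in o(t^{-1})$ for limit semistable $E$. The paper's proof is in fact terser than your plan and does not spell out the edge case or the identity you record.
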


\begin{proof}
    First, define $I(\alpha) = \lvert \int_X \alpha \rvert\cdot \lVert \alpha\rVert^{-1}$ for non-zero $\alpha \in \H^\bullet(X)$. If $\alpha = \widehat{\Gamma}_X\Ch(E)$ for $0\ne E\in \cP_{\sigma_{t,\varphi}^\tau}$, write $I(E) = I(\alpha)$. First of all, since $I(\alpha)$ is a continuous function on the unit sphere $\{\alpha \in \H^\bullet(X):\lVert \alpha \rVert = 1\}$, there exists $M > 0$ such that $I(\alpha) \le M$ for all non-zero $\alpha$. Using the constants $C$ and $M$, one can prove that for any limit semistable $E$ one has 
    \[
        \log \lvert \cZ_{te^{\mathtt{i}\varphi}}(E)\rvert - \log \lVert \Upsilon_{te^{\mathtt{i}\varphi}}^\tau(E)\rVert \in o(t^{-1})
    \]
    so that the two spanning conditions are equivalent.
\end{proof}

\begin{prop}
\label{P:semisimpleimpliesliminf}
    Let $\tau \in \H^\bullet(X)$ be given at which Gamma II holds and suppose that $\sigma_{t,\varphi}^\tau = (\cZ^\tau_{te^{\mathtt{i}\varphi}},\cP_t)$ is as in \Cref{P:GammaIIimpliesConj1} as $t\to 0$ such that $\cE_\tau\star_\tau(-)$ is semisimple with distinct eigenvalues. Then, the hypothesis of \Cref{L:spanningconditions} holds.
\end{prop}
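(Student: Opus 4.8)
\textbf{Proof plan for \Cref{P:semisimpleimpliesliminf}.}
The strategy is to leverage the semisimplicity hypothesis: when $\cE_\tau\star_\tau(-)$ has distinct eigenvalues, \Cref{P:GammaIIimpliesConj1} gives a full exceptional collection $(E_1,\ldots,E_N)$ with $N = \dim\H^\bullet(X)$, each $\cD_a$ of the induced decomposition being one-dimensional, i.e.\ generated by a single exceptional object $E_j$, and all the $E_j$ are limit semistable. Since the underlying heart $\cP_t(0,1]$ of $\sigma_{t,\varphi}^\tau$ is finite length with simple objects the shifts $E_j[\nu_j]$, every limit semistable object $E$ is, up to shift, an iterated extension of objects among a \emph{single} block $\{E_p : j_{a-1} < p \le j_a\}$ (as established in the proof of \Cref{L:extensionoflog}/\Cref{P:GammaIIimpliesConj1}); here that block is a single $E_j$, so every limit semistable object is, up to shift and direct sum, a power of one exceptional $E_j$. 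First I would reduce the claimed infimum bound to the finitely many classes: it suffices to bound $\liminf_{t\to 0} I(E)$ below by a positive constant for $E$ running over the $N$ exceptional objects $E_j$, because $I(\alpha) := |\int_X\alpha|\cdot\|\alpha\|^{-1}$ depends only on the ray through $\alpha=\widehat{\Gamma}_X\Ch(E)$ and is invariant under shifts (shifts multiply $\Ch$ by $\pm 1$) and under taking $\oplus^{k}$ (which scales $\alpha$). So the infimum over all nonzero limit semistable $E$ equals $\min_{j=1}^{N}\liminf_{t\to 0} I(E_j)$, a minimum of finitely many quantities.

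Next I would compute each $\liminf_{t\to 0} I(E_j)$ using the asymptotics. By \Cref{P:isomonodromicasymptoticsolution} and the Gamma II identification $A_j(u,\varphi)=\widehat{\Gamma}_X\Ch(E_j)$, the flat section $\Phi_w^\tau(\widehat{\Gamma}_X\Ch(E_j))$ satisfies $e^{u_j/w}\Phi_w^\tau(\widehat{\Gamma}_X\Ch(E_j)) \to \Psi_\tau(e_j)$ as $w\to 0$ in the relevant sector; here I use that in the semisimple case the asymptotic estimate \eqref{E:asymptoticestimatelogZ}/\Cref{C:isomonodromic_estimate} holds with no pole correction ($m=0$), since $\dim E(u_j)=1$. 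Writing $\Upsilon_{te^{\mathtt{i}\varphi}}^\tau(E_j) = \Phi_{te^{\mathtt{i}\varphi}}^\tau(\widehat{\Gamma}_X\Ch(E_j))$, we get
\[
    e^{u_j/(te^{\mathtt{i}\varphi})}\,\Upsilon_{te^{\mathtt{i}\varphi}}^\tau(E_j) \longrightarrow \Psi_\tau(e_j)\quad\text{in } \H^\bullet(X)\text{ as }t\to 0.
\]
Since the scalar $e^{u_j/(te^{\mathtt{i}\varphi})}$ cancels in the ratio $I$, we obtain $\lim_{t\to 0} I(E_j) = |\int_X\Psi_\tau(e_j)|\cdot\|\Psi_\tau(e_j)\|^{-1}$. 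Now $\Psi_\tau(e_j)$ is nonzero (it is the normalized idempotent $\psi_j(\tau)$, an eigenvector of $\cE_\tau\star_\tau(-)$), and by \Cref{L:Frobeniusdecomp}(2) — or directly the computation $\int_X\Psi_\tau(e_j) = \langle 1,\psi_j\rangle \ne 0$ as in \Cref{C:isomonodromic_estimate} — the numerator is strictly positive. Hence each $\liminf_{t\to 0} I(E_j) > 0$, and taking the minimum over $j$ gives the constant $C>0$ required by \eqref{E:liminfratioint}.

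The only genuinely delicate point is making sure the limit semistable objects are exhausted, up to the operations that preserve $I$, by the finite set $\{E_j\}$ — i.e.\ that no ``exotic'' limit semistable object with a different Chern character ray appears. This is handled by the finite-length structure of the heart together with the orthogonality $E_i\perp E_j$ whenever $u_i=u_j$ invoked in \Cref{P:GammaIIimpliesConj1} (via \cite{CDGcoalescent}*{Thm.\ 4.5(6)}): the category of limit semistable objects of a given limit phase is the extension closure of one $E_j[\nu_j]$, and extensions of an exceptional object by its shifts contribute Chern characters that are integer multiples of $\Ch(E_j)$. I expect this bookkeeping, rather than any analytic estimate, to be the main thing to write carefully; the analytic input is entirely contained in the already-established \Cref{P:isomonodromicasymptoticsolution} and \Cref{C:isomonodromic_estimate}. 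One caveat to address in passing: strictly, a limit semistable object could have HN-type with several factors of the \emph{same} limit phase but I should confirm that $\phi_\sigma$ and the $\sim^i$-relation force these into one block $\cD_a$, which is exactly \cite{HLJR}*{Prop.\ 2.20} applied in our setting — so no extra work beyond citing it.
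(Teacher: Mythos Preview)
Your proposal is correct and follows essentially the same approach as the paper: identify the limit semistable objects as shifts and direct sums of the $E_j$, use the asymptotic $e^{u_j/w}\Upsilon^\tau_w(E_j)\to\Psi_\tau(e_j)$ to compute $\lim_{t\to 0}I(E_j)=\lvert\int_X\Psi_\tau(e_j)\rvert/\lVert\Psi_\tau(e_j)\rVert>0$, and take the minimum over~$j$. The paper's proof is terser---it simply asserts that the limit semistable objects are $\{E_i^{\oplus s}[t]\}$ and proceeds directly to the limit computation---so your extra bookkeeping on the invariance of $I$ under shift and direct sum, and on why no exotic limit semistable classes arise (which, since the eigenvalues are distinct, is immediate once each $\cD_a$ is generated by a single $E_j$), is more explicit than necessary but certainly not wrong.
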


\begin{proof}
    The set of limit semistable objects is $\{E_i^{\oplus s}[t]:i=1,\ldots, N, s \in \bf{Z}_{\ge 1}, t\in \bf{Z}\}$, where $(E_1,\ldots, E_N)$ is the asymptotically exponential exceptional collection given by Gamma II at $\tau$ with phase $\varphi$. Thus, by the discussion in \Cref{SS:asymptotics} and in particular \Cref{P:asymptoticsofsection} we have that $\Upsilon_{te^{\mathtt{i}\varphi}}^\tau(E_i) \sim \exp(-u_ie^{-\mathtt{i}\varphi}t^{-1}) \cdot \int_X \Psi_\tau(e_i)$, where $\int_X\Psi_\tau(e_i)\ne 0$. It follows that 
    \[
        \lim_{t\to0} \frac{\lvert \int_X\Upsilon_{te^{\mathtt{i}\varphi}}^\tau(E_i)\rvert}{\lVert\Upsilon_{te^{\mathtt{i}\varphi}}^\tau(E_i)\rVert} = C \cdot \frac{\lvert\int_X \Psi_\tau(e_i)\rvert}{\lVert \Psi_\tau(e_i)\rVert} > 0
    \]
    and the result now follows from the description of limit semistable objects.
\end{proof}

\begin{cor}
\label{C:originalNMMPholds}
    The original NMMP \cite{NMMP}*{Proposal III} holds for Grassmannians and quadrics.
\end{cor}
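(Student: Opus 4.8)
\textbf{Proof proposal for \Cref{C:originalNMMPholds}.}
The plan is to deduce \cite{NMMP}*{Proposal III} from \Cref{conj:NMMPFano} (established for Grassmannians and quadrics in \Cref{C:NMMPFanoexamples}) together with the comparison of the two spanning conditions provided by \Cref{L:spanningconditions} and \Cref{P:semisimpleimpliesliminf}. First I would recall that Grassmannians $\Gr(k,V)$ and smooth quadrics $Q\subset \bf{P}^n$ satisfy the Gamma~II conjecture: for Grassmannians this is \cites{CDGhelix, GGI16}, and for quadrics it is \cite{Quadrics}. Hence \Cref{P:GammaIIimpliesConj1} applies and produces, for a suitable $\tau\in \H^\bullet(X)$ and a $\tau$-admissible phase $\varphi$, the quasi-convergent path $\sigma_{t,\varphi}^\tau = (\cZ_{te^{\mathtt{i}\varphi}}^\tau,\cP_t)$ whose limit semistable objects are precisely the shifts of the exceptional objects $E_1,\dots,E_N$ of the asymptotically exponential collection associated to $(\tau,\varphi)$.

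The key point is that the $\tau$ produced by Gamma~II can, in these two families, be chosen so that $\cE_\tau\star_\tau(-)$ is semisimple with distinct eigenvalues. For smooth quadrics the small quantum cohomology at a generic $\tau\in \H^2(Q)$ is already semisimple with distinct spectrum, so one simply takes such a $\tau$. For Grassmannians the coalescence phenomenon of \cite{CottiCoalescence} forces repeated eigenvalues on $\H^2$, so I would instead deform $\tau$ into the larger space $\H^\bullet(\Gr(k,V))$, using that the big quantum product converges on a neighbourhood of $0$ for these homogeneous spaces and that a generic such $\tau$ lies in $\mathscr{B}$, i.e.\ has $\cE_\tau$ with simple spectrum; Gamma~II at $0$ propagates to a neighbourhood by \cite{GGI16}*{Rem.~4.6.3}. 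With $\tau$ so chosen, \Cref{P:semisimpleimpliesliminf} gives exactly the hypothesis \eqref{E:liminfratioint} of \Cref{L:spanningconditions}, since for each $E_i$ one has $\Upsilon_{te^{\mathtt{i}\varphi}}^\tau(E_i)\sim e^{-u_i e^{-\mathtt{i}\varphi}t^{-1}}\int_X\Psi_\tau(e_i)$ with $\int_X\Psi_\tau(e_i)\ne 0$ by \Cref{L:Frobeniusdecomp}, and the limit semistable objects are just the $E_i^{\oplus s}[m]$.

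Finally, \Cref{L:spanningconditions} shows that under \eqref{E:liminfratioint} the spanning condition of \cite{NMMP}*{Proposal III}---phrased in terms of $\log\lVert \Upsilon_{te^{\mathtt{i}\varphi}}^\tau(\alpha)\rVert$---is equivalent to the one in \Cref{conj:NMMPFano}, phrased in terms of $\log\lvert\cZ_{te^{\mathtt{i}\varphi}}^\tau(\alpha)\rvert$; and the latter was verified for Grassmannians and quadrics in \Cref{C:NMMPFanoexamples} via \Cref{L:NMMPFanofromConj2(1)} and \Cref{P:GammaIIimpliesConj1}, using that the eigenvalues of $\cE_\tau$ are simple so that the induced decomposition \eqref{E:Conj2SOD} is a full exceptional collection with each $\H_{\rm{alg}}(\cD_j)$ of rank one. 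Stitching these together yields \cite{NMMP}*{Proposal III} in both cases.

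The main obstacle I anticipate is not any single deep input but the bookkeeping around the choice of $\tau$: one must simultaneously arrange convergence of the (big) quantum product, validity of Gamma~II, admissibility of the phase $\varphi$, and semisimplicity of $\cE_\tau$ with \emph{distinct} eigenvalues---for Grassmannians this genuinely requires leaving $\H^2$ and invoking the isomonodromic/deformation machinery of \Cref{SS:isomonodromicdeformation} together with the coalescence analysis of \cites{CottiCoalescence, CDGcoalescent}. Once a single good $\tau$ is pinned down, the remaining steps are direct applications of the lemmas assembled above.
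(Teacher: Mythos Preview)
Your proposal is correct and mirrors the paper's proof: combine \Cref{C:NMMPFanoexamples} with \Cref{P:semisimpleimpliesliminf} (hence \Cref{L:spanningconditions}), after choosing $\tau\in B\subset\H^\bullet(X)$ where $\cE_\tau\star_\tau(-)$ has simple spectrum. One small inaccuracy: even-dimensional quadrics also exhibit coalescence on $\H^2$ (e.g.\ $Q_4\cong\Gr(2,4)$ in \S\ref{S:examples}), so the same deformation into $\H^\bullet(X)$ you describe for Grassmannians is needed there as well---but this is exactly what the paper does, and your argument already covers it.
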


\begin{proof}
    This is a consequence of \Cref{C:NMMPFanoexamples} and \Cref{P:semisimpleimpliesliminf}, noting that there exist $\tau \in B \subset \H^\bullet(X)$ in both of these cases such that $\cE_\tau \star_\tau(-) \in \End(\H^\bullet(X))$ satisfies the requisite hypotheses.
\end{proof}

\begin{rem}
    Note, however, that $\tau$ must be allowed to lie in $\H^\bullet(X)$ rather than just $\H^2(X)$ for most Grassmannians by \cite{CottiCoalescence}. However, $\tau = 0$ works for $\bf{P}^n$.
\end{rem}

The main case where the spanning condition was applied in \cite{NMMP} was the one where the quantum cohomology of $X$ is semisimple, to deduce the existence direction of Dubrovin's conjecture. Thus, at least for such considerations our rephrasing in \Cref{conj:NMMPFano} is adequate. In the present work, however, we consider the non-semisimple examples of cubic hypersurfaces, where \Cref{L:spanningconditions} can no longer be applied. It seems that in these cases, the more convenient spanning condition to use is the one involving $\log \lvert \cZ_w^\tau\rvert$.

\section{Verification of \Cref{conj:noncommutativeGamma} in some examples}
\label{S:examples}

In this section, we use \Cref{S:constructionofstab} to verify \Cref{conj:noncommutativeGamma} for Grassmannians and quadric hyper\-surfaces. We also make progress toward \Cref{conj:noncommutativeGamma} for cubic threefolds and fourfolds and explain how the full statement may follow from the existence of suitable deformations of the quantum connection in these cases.

\subsection{Grassmannians and quadrics}

\begin{figure}
\begin{center}
\begin{tikzpicture}[scale=1.5]


  \draw (-1.5,0) -- (1.5,0);
  \draw (0,-1.5) -- (0,1.5);

  \foreach \x/\y/\name/\dir/\dx/\dy in {
      1/1/{\sqrt{2}\mathtt{i}}/{above}/0/2pt,
      -1/1/{-\sqrt{2}}/{left}/-2pt/0,
      -1/-1/{-\sqrt{2}\mathtt{i}}/{below}/0/-2pt,
      1/-1/{\sqrt{2}}/{right}/2pt/0
  } {
    \pgfmathsetmacro{\xr}{\x*cos(45) - \y*sin(45)}
    \pgfmathsetmacro{\yr}{\x*sin(45) + \y*cos(45)}
    \fill[black] (\xr,\yr) circle (1pt)
      node[\dir, xshift=\dx, yshift=\dy] {\small $\name$};
  }

  \fill[black] (0,0) circle (2pt)
    node[above right, xshift=2pt, yshift=2pt]{\small $0$};


  \begin{scope}[xshift=6cm]

    \draw (-1.5,0) -- (1.5,0);
    \draw (0,-1.5) -- (0,1.5);

    \foreach \x/\y/\name in {
        1/1/{\lambda_0}, 
        -1/1/{\lambda_1}, 
        -1/-1/{\lambda_3}, 
        1/-1/{\lambda_2}
    } {
      \pgfmathsetmacro{\xr}{\x*cos(15) - \y*sin(15)}
      \pgfmathsetmacro{\yr}{\x*sin(15) + \y*cos(15)}
      \fill[black] (\xr,\yr) circle (1pt)
        node[above right, xshift=2pt, yshift=2pt] {\small $\name$};
    }

    \fill[black] (0,0) circle (2pt)
      node[above right, xshift=2pt, yshift=2pt]{\small $0$};

  \end{scope}

  \draw[->] (2,0) -- (4,0)
    node[midway, above=4pt, font=\small]{$w \mapsto e^{-\mathtt{i}\theta} w$};

\end{tikzpicture}
\end{center}
\caption{On the left side is the spectrum of $\tfrac{1}{4}\cE_\tau$. The value $0$ appears with multiplicity two. On the right side is the spectrum of $\cE_\tau/e^{\mathtt{i}\theta}$. After applying a small rotation, the eigenvalues are in general position so that $\Im(\lambda_3)<\Im(\lambda_2)<0<\Im(\lambda_1)<\Im(\lambda_0)$.}
\label{fig:spectrum}
\end{figure}
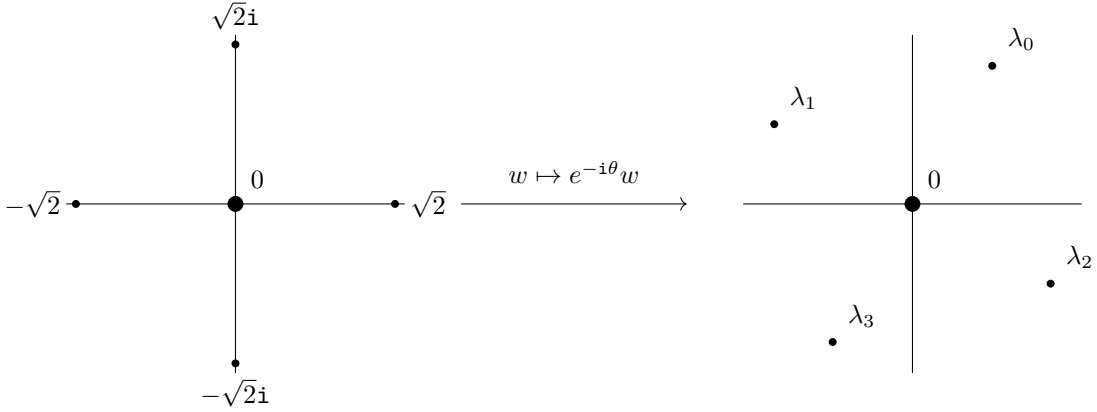

    By \Cref{C:NMMPFanoexamples}, we know that \Cref{conj:NMMPFano} holds for Grass\-mannians $\Gr(k,V)$ and smooth quadrics. We first consider \Cref{conj:noncommutativeGamma} for $\Gr(2,4)$, which is itself a quadric hypersurface in $\bf{P}^5$.
    
    For $X = \Gr(2,4)$, the spectrum of $\cE_\tau$ at $\tau = 0$ is
    \[
        \sigma(c_1(X)\star_0(-)) = \left\{ 4e^{\pi i/4}\left(\zeta_4^{i_1}+\zeta_4^{i_2}\right):0\le i_1<i_2\le 3\right\},
    \]
    taken with multiplicity, where $\zeta_4 = \mathtt{i}$ -- see \cite{GGI16}*{Rem. 6.2.9}. See \Cref{fig:spectrum} for a visualization. This example is \emph{coalescent} in that for all $\tau \in \H^2(X)$, $\cE_\tau$ has repeated eigenvalues -- see \cite{CottiCoalescence}. By \cites{CDGhelix,GGI16}, the Gamma II conjecture for $\Gr(2,4)$ has a solution at $\tau = 0$ given by the twisted Kapranov collection $\mathfrak{K}\otimes \cL = \{E_\mu:=\Sigma^\mu S \otimes \cL\}_{\mu}$ where $\mu$ ranges over the Young diagrams corresponding to $\{(0), (1,0), (2,0), (1,1), (2,1), (2,2)\}$ and $\cL = \det\Lambda^2(S^\vee)$.
    The exceptional coll\-ection $\mathfrak{K}\otimes \cL$ has a grading such that $(2,2)\prec (2,1)\prec (2,0)\sim (1,1)\prec (1,0) \prec (0)$ and norm $\nu(a,b) = a+b$. Note that $E_{(1,1)}$ and $E_{(2,0)}$ are mutually orthogonal. 
    
    By \Cref{T:geomexistence}, the glued region associated to the Kapranov collection $\mathfrak{K} = \{\Sigma^\mu S\}_\mu$ contains geom\-etric stability conditions. Since $\cO_x\otimes \cL \cong \cO_x$, the glued region associated to $\mathfrak{K}\otimes \cL$ also contains geometric stability conditions. Let $C = 4e^{\pi \mathtt{i}/4}$. The quantum cohomology central charge satisfies:
    \begin{equation}
    \label{E:estimatesGr24}
    \begin{split}
        \cZ_w(E_{(2,2)}) & \sim e^{-(1+\mathtt{i})C/w}\\
        \cZ_w(E_{(2,1)}) &\sim e^{(1-\mathtt{i})C/w}\\
        \cZ_w(E_{(1,1)}) \sim \cZ_w(E_{(2,0)})& \sim 1 \\
        \cZ_w(E_{(1,0)}) & \sim e^{(\mathtt{i}-1)C/w}\\
        \cZ_w(E_{(0)}) & = e^{(1+\mathtt{i})C/w}
    \end{split}
    \end{equation}
    as $w\to 0$ in a sector $\mathscr{S}\subset \bf{C}^*$ containing $\bf{R}_{\ge 0}$. The special feature of $\Gr(2,4)$ is that the asymptotically exponential exceptional collection is obtained from the standard Kapranov collection by twisting by $\mathcal{L}$. We first prove a version of \Cref{conj:noncommutativeGamma} in this special case.

\begin{prop}
\label{P:Gr24}
    The conclusion of \Cref{conj:noncommutativeGamma}\ref{conj2SOD} holds for $X = \Gr(2,4)$ at $\tau = 0$ with modified quantum cohom\-ology central charge
    \[
        \cZ^D_w(-) := 4\pi^2 w^2 \int_{X} \Phi_w\cdot D\cdot \hat{\Gamma}\Ch(-)
    \]
    where $D \in \GL(\cH)$ is diagonal with respect to the asymptotically exponential basis and $w\in \bf{R}_{>0}\cdot e^{\mathtt{i}\theta}$ for $\theta$ a small positive number. The resulting semiorthogonal decomposition is 
    \[
        \DCoh(X) = \langle E_{(2,2)},E_{(2,1)},\langle E_{(2,0)},E_{(1,1)}\rangle, E_{(1,0)},E_{(0)}\rangle.
    \]
\end{prop}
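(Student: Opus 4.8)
The plan is to combine the asymptotic estimates \eqref{E:estimatesGr24}, which are already available, with the existence of glued geometric stability conditions from \Cref{T:geomexistence} in order to run the criterion of \Cref{L:extensionoflog}. Since the asymptotically exponential exceptional collection here is $\mathfrak{K}\otimes\cL$, and $\cO_x\otimes\cL\cong\cO_x$, the glued region associated to $\mathfrak{K}\otimes\cL$ is just a twist of the one for $\mathfrak{K}$; hence it still contains geometric stability conditions and, more importantly, the gluing combinatorics are unchanged. The role of the auxiliary operator $D$, diagonal with respect to the asymptotically exponential basis, is purely cosmetic: it rescales each $\cZ_w(E_\mu)$ by a nonzero constant and so does not affect any of the limits $\lim_{t\to 0}\phi_t^\pm$, but it gives us the freedom to place the starting point of the path inside the glued region $\Omega_{\mathfrak K\otimes\cL}$ of \Cref{L:gluingregion}. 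I will spell this out below.

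First I would fix a small admissible phase $\theta>0$ so that, rotating by $w\mapsto e^{-\mathtt{i}\theta}w$, the six eigenvalues $\lambda_\bullet\in\sigma(\cE_0)/e^{\mathtt{i}\theta}$ are in general position with $\Im(\lambda_3)<\Im(\lambda_2)<0<\Im(\lambda_1)<\Im(\lambda_0)$, exactly as in \Cref{fig:spectrum}; here $\lambda_0$ corresponds to $E_{(0)}$, $\lambda_1$ to $E_{(1,0)}$, the double eigenvalue $0$ to the orthogonal pair $\{E_{(2,0)},E_{(1,1)}\}$, $\lambda_2$ to $E_{(2,1)}$, and $\lambda_3$ to $E_{(2,2)}$. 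The estimates \eqref{E:estimatesGr24} then read, after taking logarithms and setting $w = te^{\mathtt{i}\theta}$,
\[
    \log\cZ^D_{te^{\mathtt{i}\theta}}(E_\mu) = C_\mu + 2\log(2\pi t) + 2\mathtt{i}\theta - \frac{u_\mu}{te^{\mathtt{i}\theta}} + \epsilon_\mu(t),
\]
with $\epsilon_\mu(t)\to 0$, where the $C_\mu\in\bf{C}$ absorb the eigenvalue of $D$ and $u_{(2,0)} = u_{(1,1)} = 0$; the constants $C_{(2,0)},C_{(1,1)}$ can be chosen freely (and independently) by choosing the corresponding diagonal entries of $D$. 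This is precisely the shape required by \Cref{H:Fanoasymptotics}, with the ordering of $\sigma(\cE_0)$ induced by $\theta$ giving $E_{(2,2)}, E_{(2,1)}, E_{(2,0)}\approx E_{(1,1)}, E_{(1,0)}, E_{(0)}$ — note $E_{(2,0)}$ and $E_{(1,1)}$ are orthogonal, so the block hypothesis needed in \Cref{L:extensionoflog} for coalescing eigenvalues holds. Applying \Cref{L:extensionoflog} produces a quasi-convergent path $\sigma^D_{t,\theta} = (\cZ^D_{te^{\mathtt{i}\theta}},\cP_t)$ as $t\to 0$, and the proof of that lemma identifies the induced semiorthogonal decomposition as $\langle\cD_1,\ldots,\cD_5\rangle$ with $\cD_a$ generated by the exceptional objects whose exponent $u_\mu$ equals the $a$-th value in the $\theta$-ordering; this is exactly $\langle E_{(2,2)},E_{(2,1)},\langle E_{(2,0)},E_{(1,1)}\rangle,E_{(1,0)},E_{(0)}\rangle$, and the asymptotic \eqref{E:LSSasymptotic} for limit semistable objects is read off from the displayed estimate. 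The only remaining point is that the path indeed lies in a region of $\Stab(X)$ we control: by \Cref{L:gluingregion} the path enters the biholomorphic image $\Omega_{\mathfrak K\otimes\cL}$ of $\cS_{\mathfrak K\otimes\cL}$ once $t$ is small, because the imaginary parts $\Im\log\cZ^D_t(E_\mu)$ spread apart at rate $t^{-1}$ between non-orthogonal consecutive members, dominating the fixed constant $m(\mathfrak K\otimes\cL)$ (here $=1$ since $\mathfrak K$ is strong).

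The main obstacle — really the only subtlety beyond bookkeeping — is handling the coalescent (double) eigenvalue $0$ at $\tau = 0$. Ordinarily \Cref{conj:noncommutativeGamma}\ref{conj2SOD} would be extracted from Gamma II via \Cref{P:GammaIIimpliesConj1}, whose proof relies on \cite{CDGcoalescent}*{Thm. 4.5(6)} to guarantee that the exceptional objects with equal exponent are mutually orthogonal, so that $\Stab(\cD_a)\cong\bf{C}^{\dim\cD_a}$ and the gluing at that level is trivial. For $\Gr(2,4)$ this is exactly the orthogonality $\Hom^\bullet(E_{(2,0)},E_{(1,1)}) = \Hom^\bullet(E_{(1,1)},E_{(2,0)}) = 0$, which holds, so \Cref{P:GammaIIimpliesConj1} applies directly and the displayed semiorthogonal decomposition, with the inner block $\langle E_{(2,0)},E_{(1,1)}\rangle$, is precisely its output; the modification by $D$ and the choice of $\theta$ are just what is needed to make the statement literally fit the conclusion of \Cref{conj:noncommutativeGamma}\ref{conj2SOD}. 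I would therefore present the proof as: cite \cites{CDGhelix,GGI16} for Gamma II at $\tau=0$ with the collection $\mathfrak K\otimes\cL$, invoke \Cref{P:GammaIIimpliesConj1} (whose proof already routes through \Cref{L:extensionoflog} and \Cref{H:Fanoasymptotics}) to get the quasi-convergent path and the decomposition, and finally note that twisting by $\cL$ does not disturb geometricity of the glued region from \Cref{T:geomexistence} since point sheaves are $\cL$-invariant, so that the path may be taken to start in a region whose structure is understood. The one genuine check is that the branch cuts / integers $n_\mu$ in the proof of \Cref{L:extensionoflog} can be chosen with $n_{(2,0)} = n_{(1,1)}$ — which is automatic since their exponents coincide — so that the inner block really is glued as $\langle E_{(2,0)},E_{(1,1)}\rangle$ rather than as a pair of separated factors.
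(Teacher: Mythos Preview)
Your argument is correct for the proposition as literally stated, and the core mechanism---asymptotic estimates \eqref{E:estimatesGr24} fed into \Cref{H:Fanoasymptotics}/\Cref{L:extensionoflog}, with the coalescent eigenvalue $0$ handled by the orthogonality of $E_{(2,0)}$ and $E_{(1,1)}$---is the same as the paper's. Invoking \Cref{P:GammaIIimpliesConj1} directly is a legitimate shortcut.

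Where your write-up diverges from the paper is in what it does with $D$, and this is the real content of the proposition (hence the remark immediately following it about starting in the geometric region without isomonodromic deformation). You describe $D$ as ``purely cosmetic,'' used only to place the path inside $\Omega_{\mathfrak K\otimes\cL}$. In the paper $D$ does more: after arranging the phase chain \eqref{E:phaseinequality} and the width bound $\phi(E_{(2,2)}[4])-\phi(E_{(0)})<1$ at some $t_0$, one further modifies the diagonal entry of $D$ at $E_{(0)}$ by a large positive real scalar so that additionally $\phi(E_{(0)})<\phi(\cO_x)<\phi(E_{(1,0)}[1])$. A rotation by $\mathtt{i}\xi$ then puts the heart equal to $\langle E_{(2,2)}[4],\ldots,E_{(0)}\rangle_{\rm ext}$ with the full inequality of \Cref{H:stability}, and \Cref{P:efficientstable} gives stability of every $\cO_x$; thus $\sigma_{t_0}$ is geometric. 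Your final sentence (``the glued region contains geometric stability conditions, so the path may be taken to start in a region whose structure is understood'') notes the existence of geometric points in $\Omega_{\mathfrak K\otimes\cL}$ but does not show the particular path $\sigma_t^D$ passes through one---that is exactly the step the paper's choice of $D$ supplies. If you only want the conclusion of \Cref{conj:noncommutativeGamma}\ref{conj2SOD}, your proof suffices and $D=\id$ would do; if you want the proposition to carry the weight the surrounding text gives it, you need this extra argument.
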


In particular, no isomonodromic deformation is needed to begin the path of stability conditions in the geometric region in this case, at the cost of slightly modifying the fundamental solution.

\begin{proof}
    Let $w = te^{\mathtt{i}\theta}$ for $\theta$ a small positive number and $t\in \bf{R}_{>0}$. Then, $\sigma(\cE_\tau/e^{\mathtt{i}\theta}) = e^{-\mathtt{i}\theta}\sigma(\cE_\tau)$ is in general position as in \Cref{fig:spectrum}. The argument of \cite{Vanjapaper}*{Thm. 5.11} combined with \Cref{L:extensionoflog} implies that after applying $D \in \GL(\cH)$ which is diagonal with respect to $\{\widehat{\Gamma}_{X} \Ch(E_\mu)\}_{\mu}$, there exist $t_0$ and $\epsilon>0$ such that \vspace{-2mm}
     \begin{enumerate}
         \item $\forall\:t\in (t_0-\epsilon,t_0+\epsilon)$ there is a lift $\sigma_t = (\cZ_t^D,\cP_t)$ such that all of $\{E_\mu\}$ are $\sigma_t$-stable with
        \begin{equation}
        \label{E:phaseinequality}
            \phi(E_{(0)}) < \phi(E_{(1,0)}[1]) < \mu_- < \mu_+ < \phi(E_{(2,1)}[3]) < \phi(E_{(2,2)}[4])
        \end{equation}
        where $\mu_- = \min\{\phi(E_{(2,0)}[2]),\phi(E_{(1,1)}[2])\}$ and $\mu_+ = \max \{\phi(E_{(2,0)}[2]),\phi(E_{(1,1)}[2])\}$; and \vspace{-2mm}
        \item $\phi(E_{(2,2)}[4]) - \phi(E_{(0)}) < 1$. \vspace{-2mm}
     \end{enumerate}
    Thus for some $\xi \in \bf{R}$, there is $\mathtt{i}\xi \cdot \sigma_t =: \tau_t = (e^{\mathtt{i}\xi}\cdot \cZ_t^D,\cP'_t)$ for all $t\in (t_0-\epsilon,t_0+\epsilon)$ such that
    \[
        \cP_t'(0,1] = \langle E_{(2,2)}[4],E_{(2,1)}[3],E_{(2,0)}[2],E_{(1,1)}[2], E_{(1,0)}[1],E_{(0)}\rangle_{\rm{ext}}.
    \]
    Up to modifying $D$ to rescale $\widehat{\Gamma}_X \Ch(E_{(0)})$ by a large positive real number, we may assume that $\phi(E_{(0)}) < \phi(\cO_x) < \phi(E_{(1,0)}[1])$ for all $x\in X$ in addition to \eqref{E:phaseinequality}. So, $\tau_t$ lies in the geometric region of $\Stab(X)$ for all $t$ sufficiently near $t_0$ by \Cref{P:efficientstable}. Thus, $\sigma_t$ is also geometric for all $t\in (t_0-\epsilon,t_0+\epsilon)$ close to $t_0$. 
    
    One can verify that $\sigma_t$ extends to $(0,t_0+\epsilon)$ in the glued region associated to $\mathfrak{K}\otimes \cL$ using the description of $\cS^6$ in \Cref{L:gluingregion}. Finally, since $\sigma_t$ lies in this glued region as $t\to 0$, the objects $E_\mu$ are all limit semistable. It is an exercise to verify that $\sigma_t$ is quasi-convergent as $t\to 0$ with limit semistable objects $\{E_\mu\}$ up to sums and shifts; the estimates in \eqref{E:estimatesGr24} allow one to verify that the induced semiorthogonal decomposition from \cite{HLJR}*{Thm. 2.37} is $\DCoh(X) = \langle E_{(2,2)},E_{(2,1)},\langle E_{(2,0)},E_{(1,1)}\rangle, E_{(1,0)},E_{(0)}\rangle$. 
\end{proof}

In the next theorem, $X$ is either a Grassmannian $\Gr(k,V)$ or a smooth quadric hypersurface $Q \subset \bf{P}^n$. In both cases, Kapranov \cite{Kapranov1988} has constructed full exceptional collections for $\DCoh(X)$, which we used in \Cref{SS:geomstabilityfromFEC} to construct geometric stability conditions. These collections are called \emph{Kapranov collections} in the respective cases, and denoted $\mathfrak{K}$. The Kapranov collection for $\Gr(k,V)$ is given in \Cref{Ex:basicexamples} and that of a quadric is given in the paragraph below \eqref{E:quadricres}.

Given functions $f,g:\bf{R}_{>0} \to \bf{R}$ we will write $f<g$ if $\liminf_{t\to 0} g(t)-f(t) > 0$, $f\approx g$ if $\lim_{t\to 0} g(t)-f(t) = 0$, and $f\lesssim g$ if $f<g$ or $f\approx g$. 

\begin{thm}
\label{T:NCgammaforGrandQ}
    \Cref{conj:noncommutativeGamma} holds for $X = \Gr(k,V)$ or a smooth quadric hypersurface $Q\subset \bf{P}^n$. 
\end{thm}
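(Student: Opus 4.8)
The plan is to treat the two cases uniformly by exhibiting, in each case, an asymptotically exponential full exceptional collection whose associated glued region contains geometric stability conditions, and then running the machinery of \Cref{L:extensionoflog} to produce the quasi-convergent path. First I would invoke the Gamma II conjecture, which is known for Grassmannians by \cites{CDGhelix,GGI16} and for smooth quadrics by \cite{Quadrics}; combined with \Cref{P:GammaIIimpliesConj1} this immediately yields \Cref{conj:noncommutativeGamma}\ref{conj2SOD}, i.e. existence of a quasi-convergent path $\sigma_{t,\varphi}^\tau$ (for suitable $\tau \in \H^\bullet(X)$, allowing $\tau\notin \H^2(X)$ in the coalescent Grassmannian cases per \cite{CottiCoalescence}) with semiorthogonal decomposition indexed by $\lvert\sigma(\cE_\tau)\rvert$ and the asymptotic estimate \eqref{E:LSSasymptotic}. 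Part \ref{conj2independence} then follows from \Cref{P:mutationforestimate} and \Cref{C:isomonodromic_estimate}: deforming $(\tau,\varphi)$ within the admissible locus corresponds to deforming $u$ in $\widetilde{\mathscr{U}}_N$ and braiding the eigenvalues of $U$, which as explained in \Cref{S:mutationsetup} induces mutations of the asymptotically exponential collection and hence of the resulting semiorthogonal decomposition.

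The substantive new content is part \ref{conj2isomonodromic}, the claim that after an isomonodromic deformation the path can be made to begin in the geometric region. Here I would argue as in the proof of \Cref{P:Gr24}, but using the isomonodromic deformation of \Cref{P:Dubrovin1998} rather than a diagonal rescaling. The key inputs are: (i) \Cref{T:geomexistence}, which gives geometric stability conditions $\sigma_{\mathrm{geo}}$ in the glued region $\cG_{\mathfrak{K}}$ associated to the Kapranov collection $\mathfrak{K}$ (for $\Gr(k,V)$ the sharp graded collection of \Cref{E:Kapranovcollection}; for quadrics the collection $\{\Sigma_{(\pm)}(-n+2),\cO(-n+3),\ldots,\cO\}$ of \eqref{E:quadricres}), and (ii) \Cref{C:isomonodromic_estimate}, which provides the estimate $\log\cZ_w^v(E_i') = C_i - u_i/w + \tfrac{\dim X}{2}\log(2\pi w) + \epsilon_i(w)$ for any full exceptional collection $\mathfrak{E}'$ mutation-equivalent to the asymptotically exponential one, at a suitable $v\in\widetilde{\mathscr{U}}_N$. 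One chooses $\mathfrak{E}'$ to be (a twist of) the Kapranov collection itself — this is legitimate since, as noted for $\Gr(2,4)$, the asymptotically exponential collection of \cites{CDGhelix,GGI16} is mutation equivalent to (indeed, for the relevant $\tau$, a twist of) $\mathfrak{K}$, and twisting by a line bundle does not change point sheaves or the geometric region. Then \Cref{L:gluingregion} (using the enlarged region $\cS_{\mathfrak{K}}$ available since $\mathfrak{K}$ is strong) shows that the path $z(w) = (\log\cZ_w^v(E_i'))_i$ lies in $\cS_{\mathfrak{K}}$ for all $w$ in a half-open ray-segment $\mathscr{S}\cap\{\lvert w\rvert < \rho+\epsilon\}$, giving a well-defined path $\sigma_w^v$ in $\Omega_{\mathfrak{K}}\subset \cG_{\mathfrak{K}}$.

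To pin down the geometric claim \ref{conj2isomonodromic}(c), I would track the phases of the exceptional objects along the path as $\lvert w\rvert$ increases toward a value $\rho$, as was done around \eqref{E:phaseinequality}: at $\lvert w\rvert=\rho$ the phases of the shifted objects $E_i'[\nu_i]$ satisfy the chain of inequalities of \Cref{H:stability} with a gap large enough that, after rotating by some $e^{\mathtt{i}\xi}$, one can insert $\phi(\cO_x)$ between the phase of the unique maximal block (the structure sheaf $\cO_X$, resp. $\cO_Q$, up to twist) and the next block; then \Cref{C:geomstability} applied to the sharp graded Kapranov collection shows $\cO_x$ is stable of constant phase, so $\sigma_w^v$ is geometric for $\rho-\epsilon < \lvert w\rvert < \rho+\epsilon$. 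Quasi-convergence as $w\to 0$ along any ray-segment, part (a), follows from \Cref{L:extensionoflog} exactly as in \Cref{P:GammaIIimpliesConj1}, using that eigenvalue coincidences $u_i=u_j$ force orthogonality $E_i'\perp E_j'$ by \cite{CDGcoalescent}; part (b) is again \Cref{P:mutationforestimate}. I expect the main obstacle to be ensuring that a \emph{single} ray-segment can be chosen along which the path is both quasi-convergent at one end and passes through the geometric region at radius $\rho$ — that is, verifying that the geometric region and the glued region $\Omega_{\mathfrak{K}}$ overlap along the image of one ray, which requires the phase-gap estimates of \Cref{H:stability} and the explicit form of the restricted Koszul resolutions (\Cref{E:Kapranovcollection}, \cite{Kapranov1988}*{Prop. 4.7}) to be compatible with the order on $\lvert\sigma(\cE_\tau)\rvert$ imposed by the chosen admissible phase; arranging the admissible phase $\varphi$ and the isomonodromic parameter $v$ simultaneously is the delicate point.
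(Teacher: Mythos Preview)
Your treatment of parts \ref{conj2SOD} and \ref{conj2independence} matches the paper: Gamma~II plus \Cref{P:GammaIIimpliesConj1} gives (A), and the mutation behavior under variation of $(\tau,\varphi)$ via \cite{GGI16}*{Rem.~4.6.3} gives (B). Your overall strategy for \ref{conj2isomonodromic} is also right---isomonodromically deform so that the Kapranov collection $\mathfrak{K}$ itself becomes asymptotically exponential, then locate the path in the glued region $\Omega_{\mathfrak{K}}$, where \Cref{T:geomexistence} provides geometric stability conditions.

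There is, however, a genuine gap in your mechanism for part (c). You propose to ``track phases as $\lvert w\rvert$ increases toward $\rho$'' and then invoke \Cref{H:stability} and \Cref{C:geomstability} at that $\rho$. But the estimate from \Cref{C:isomonodromic_estimate} is at a \emph{fixed} $v\in\widetilde{\mathscr{U}}_N$, with error $\epsilon_i(w)\to 0$ only as $w\to 0$; it gives no control at any specific finite $\rho$, and no reason why the phases of the $K_i[\nu_i]$ should land in a width-one window satisfying the inequalities of \Cref{H:stability} there. The analogy with \Cref{P:Gr24} breaks down precisely because there the diagonal matrix $D$ lets you adjust the constants $C_i$ directly, whereas varying $v$ isomonodromically moves both the constants and the dominant exponents $u_i$ simultaneously. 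You correctly flag this as ``the delicate point,'' but you do not resolve it.

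The paper resolves it with two ingredients you are missing. First, it obtains estimates \emph{uniform} over a compact domain $\Omega\subset\widetilde{\mathscr{U}}_N$: rather than citing \Cref{C:isomonodromic_estimate}, it constructs the flat sections $y_i(u,w)$ explicitly as Laplace transforms of sections of the dual connection $\widehat{\nabla}$ (following \cite{GGI16}*{\S2.5}), and then applies a parametric Watson's lemma (\Cref{P:Watson}, \Cref{C:estimatevectorvalued} in the appendix) to get $\lVert Y_u(w)e^{U/w}-\Psi_u\rVert\to 0$ uniformly in $u\in\Omega$. Second, with uniform control in hand, it applies an elementary approximation lemma (\Cref{L:approximationlemma}): given target values $y_i$ for the imaginary parts of $\log\cZ_t(K_i)$, one can choose $u'\in\Omega$ and $t_0$ (arbitrarily small) so that $\Im(C_i(u'))+\Im(u_i')t_0^{-1}$ hits each $y_i$ to within $\delta$. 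This is what forces the phases into the configuration (a)--(b) needed for \Cref{C:geomstability} at the chosen $t_0$. Without the uniform estimate you cannot decouple the choice of $u$ from the choice of $t_0$, and without the approximation lemma you have no mechanism to hit the phase targets.
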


\begin{proof}
    In these cases, \Cref{conj:noncommutativeGamma}\ref{conj2SOD} follows from \Cref{C:NMMPFanoexamples} and \ref{conj2independence} is a consequence of Gamma II; see \cite{GGI16}*{Rem. 4.6.3}. So, we consider \ref{conj2isomonodromic}. The key point is that the results of \Cref{S:constructionofstab} only allow us to construct geometric stability conditions in glued regions coming from $\mathfrak{K}$ or its twists by $\Pic(X)$. After perturbing $\tau = 0 \in \H^2(X)$ to $\eta\in \H^{\bullet}(X)$ such that $\cE_\eta$ has distinct eigenvalues $u_1,\ldots, u_N$, we are in \Cref{S:mutationsetup}, where the asymptotically exponential collection $\mathfrak{E}$ is obtained as a mutation of $\mathfrak{K}$. Thus, by \Cref{P:isomonodromicasymptoticsolution} we can isomonodromically deform to a new connection $\nabla^{v}$ on the bundle $\H^{\bullet}(X)\times \bf{P}^1\to \bf{P}^1$ indexed by $v\in \widetilde{\mathscr{U}}_N$ such that \vspace{-2mm}
    \begin{enumerate}
        \item $v = \{v_i\}$ is ordered such that $\Im(v_i) - \Im(v_{i+1}) > 4\pi$ for all $i$; and \vspace{-2mm}
        \item $\mathfrak{K} = (K_1,\ldots, K_N)$ is asymptotically exponential and the exponent of $K_i$ is $v_i$ for each $i=1,\ldots, N$. \vspace{-2mm}
    \end{enumerate}
    It follows that $\varphi = 0$ is an admissible phase. Recall from \Cref{Ex:basicexamples} that $\mathfrak{K}$ is indexed by Young diagrams with at most $n-k$ rows and $k$ columns and is graded such that the norm $\nu$ counts the number of cells in a given Young diagram. Also, $K_N = \cO_X$.

    By \cite{GGI16}*{\S 2.5}, solutions to $\nabla^{v}_{w\partial_w} = 0$ are constructed using the Laplace dual connection $\widehat{\nabla}^{v}$, which is flat and has logarithmic singularities along the simple normal crossings divisor $D := Z(\prod_{j=1}^N(\lambda - v_j))$. For each $i=1,\ldots, N$ one constructs a local $\widehat{\nabla}$-flat section of the trivial $\H^{\bullet}(X)$-bundle over $\widetilde{\mathscr{U}}_N\times \bf{P}^1$, called $\hat{y}_i(u,\lambda)$, such that $\hat{y}_i(v,v_i) = \Psi_v(e_i)$. \emph{A priori}, it is defined on a small open neighborhood of $(v,\lambda)$ but by flatness of $\widehat{\nabla}^v$ we can uniquely extend $\hat{y}_i(u,\lambda)$ to any simply connected neighborhood of $(v,\lambda)$ in $\widetilde{\mathscr{U}}_N \times \bf{C}_\lambda$, away from $D$. We define such a simply connected neighborhood as a product $\Omega \times T_i$, where: \vspace{-2mm}
    \begin{enumerate}
        \item $\Omega \subset \widetilde{\mathscr{U}}_N$ is the connected component of the preimage of the evenly covered set $\{u\in \mathscr{U}_N: \max_{i}\lvert u_i - v_i\rvert < 2\pi\}$ containing $v$;  and \vspace{-2mm}
        \item $T_i$ is a thin tube-domain around the ray $v_i+ \bf{R}_{\ge 0}$. \vspace{-2mm}
    \end{enumerate}
    Then, the corresponding local flat section of $\nabla$ is 
    \[
        y_i(u,w) := \frac{1}{w}\int_{v_i+\bf{R}_{\ge 0}} \hat{y}_i(u,\lambda)e^{-\lambda/w}\:d\lambda.
    \]
    For any $u\in \Omega$, $\widehat{\nabla}^u$ has a regular singularity at $\infty$ so the growth of $\hat{y}_i(u,\lambda)$ is polynomial as $\lambda \to \infty$. Next, define a map $Y_u(w): \bf{C}^N \to \H^\bullet(X)$ by $Y_u(w)(e_i) = y_i(u,w)$ for each $i$. We can apply \Cref{P:Watson} and \Cref{C:estimatevectorvalued} to obtain an asymptotic estimate 
    \begin{equation}
    \label{E:asymptoticestimateforproof}
        Y_u(z)e^{U/w} \sim \Psi_u\left(\id + \sum_{k\ge 0}R_k(u)w^{k+1}\right)
    \end{equation}
    where $U = \diag(u_1,\ldots, u_N)$ and $R_k(u)$ depends analytically on $u$ such that $\lVert Y_u(w)e^{U/w} - \Psi_u\rVert \to 0$ as $\lvert w\rvert \to 0$ independently of $u\in \Omega$.

    By definition \Cref{defn_perturbed_can_sol}, the quantum cohomology central charge at $u \in \widetilde{\mathscr{U}}_N$ is 
    \[
        \cZ_w^{u}(-) = (2\pi w)^{\dim X/2}\int_X \Phi^u_w\left(\widehat{\Gamma}_X\Ch(-)\right).
    \]
    Applying the definition of asymptotically exponential collection \eqref{E:asymptoticexponentialcollection} and evaluating at $E_i$ we have $\cZ_w^u(E_i) = (2\pi w)^{\dim X/2} \int_X y_i(u,w)$. Thus, \eqref{E:asymptoticestimateforproof} gives
    \begin{equation}
    \label{E:estimatelogZdeformed}
        \log \cZ_w^u(E_i) = \frac{1}{2} \log \int_X \Psi_u(e_i) + \frac{\dim X}{2} \log(2\pi w) - \frac{u_i}{w} + \Delta(w),
    \end{equation}
    where $\Delta(w)$ is an error term which tends to zero as $\lvert w\rvert \to 0$ uniformly for $u\in \Omega$. Since $\varphi = 0$ is an admissible phase, we consider the path $\cZ_t^u(E_i)$ for $w(t) = t \in \bf{R}_{>0}$. Let a small $\delta > 0$ be given and consider $t_0>0$ small enough that $\Delta(t_0) < \delta$ for all $0<t\le t_0$. 
    
    Given an object $E$ of $\DCoh(X)$ such that $\cZ_{t_0}^u(E) \ne 0$, let $\vartheta(E) := \cZ_{t_0}^u(E)/\lvert \cZ_{t_0}^u(E)\rvert \in S^1$. Applying \Cref{L:approximationlemma} to the imaginary part of \eqref{E:estimatelogZdeformed} for $w = t$ for each $i$, we can choose $u \in \Omega$ such that:\vspace{-2mm}
    \begin{enumerate}[label=(\alph*)]
        \item $\vartheta(K_i[\nu(i)]) \in e^{\mathtt{i}(0,\pi)}$ for all $i=1,\ldots, N$ and $\vartheta(K_N[\nu(N)])< \cdots < \vartheta(K_1[\nu(1)]),$ where we order by comparing arguments in $(0,\pi)$; and \vspace{-2mm}
        \item  $\vartheta(K_N[\nu(N)]) = \vartheta(\cO_X) = \exp(\mathtt{i}\pi/2)$ and $\vartheta(K_i[\nu(i)])$ is close enough to $1$ for all $i=2,\ldots,N$ that $\vartheta(\cO_X)<\vartheta(\cO_x) <\vartheta(K_{N-1}[\nu(N-1)])<\cdots$ \vspace{-2mm}
    \end{enumerate}  
    Then, by \Cref{L:gluingregion} there is a lift of $\cZ_{t_0}^u$ to a geometric stability condition $\sigma_{t_0}^u\in \Stab(X)$ which has underlying heart $\cA = \langle K_i[\nu(i)]:i=1,\ldots,N\rangle_{\rm{ext}}$. The estimates of \eqref{E:estimatelogZdeformed} now imply that $\sigma_{t_0}^u$ extends to a path $(0,t_0+\epsilon)\to \Stab(X)$ such that $\sigma_t^u$ is geometric for all $t\in (t_0-\epsilon,t_0+\epsilon)$, by \Cref{L:gluingregion}. 
    
    By openness of the geometric region in $\Stab(X)$, we can find a thin angular sector $\mathscr{S}$ around $\bf{R}_{>0}$ such that lifts $\sigma_w^u$ of $\cZ_w^u$ to the glued region of $\mathfrak{K}$ exist for all $w\in \mathscr{S} \cap \{w:\lvert w\rvert< 1+\epsilon\}$ and such that $\sigma_w^u$ is geometric for $w\in \mathscr{S} \cap \{w:t_0-\epsilon<\lvert w\rvert < t_0+\epsilon\}$, up to shrinking $\epsilon$. The result for quadrics is proven in the same fashion, using the fact that the asymptotic\-ally exponential full exceptional collection of $\DCoh(Q)$ is constructed from the Kapranov collection by mutation in \cite{Quadrics}*{\S 6}.
\end{proof}

\begin{lem}
\label{L:approximationlemma}
    Let $\Omega \subset \bf{C}^m$ be an open domain and consider continuous functions 
    \[
        f_i(u_1,\ldots, u_m):\Omega \to \bf{R}\text{ for all }i=1,\ldots, m
    \]
    and some $u^\circ \in \Omega$. For any $(y_1,\ldots, y_m)\in \bf{R}^m$ and $\delta>0$ there exist $u'\in \Omega$ and $t\in (0,1)$ such that 
    \[
        \left\lvert f_i(u') + \Im(u_i')\cdot t^{-1} - y_i\right\rvert < \delta 
    \]
    for all $i=1,\ldots, m$. Furthermore, up to choosing a new $u'$, $t$ can be taken arbitrarily close to $0$. 
\end{lem}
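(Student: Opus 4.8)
\textbf{Proof proposal for \Cref{L:approximationlemma}.} The mechanism behind the statement is that the factor $t^{-1}$ multiplying the imaginary parts in $f_i(u') + \Im(u_i')\,t^{-1}$ can be made arbitrarily large by shrinking $t$. Hence a perturbation of the imaginary coordinates of $u^\circ$ that is small once $t$ is small — small enough to keep $u'$ inside the open set $\Omega$ and to move each continuous $f_i$ by only $o(1)$ — already causes an $O(1)$ displacement of $\Im(u_i')\,t^{-1}$, and this is precisely the freedom needed to drive all $m$ quantities $f_i(u') + \Im(u_i')\,t^{-1}$ within $\delta$ of the prescribed targets $y_i$ simultaneously. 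So the plan is essentially a quantitative continuity argument amplified by the blow-up of $t^{-1}$.

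Concretely, here is how I would carry this out. First, use openness of $\Omega$ to fix $r>0$ such that the closed sup-norm polydisc of radius $r$ centered at $u^\circ$ is contained in $\Omega$; by continuity and compactness the constant $M := 1 + \max_i\sup|f_i|$, the supremum taken over that polydisc, is finite. Restricting to points $u'$ of the form $u^\circ + \mathtt{i}\rho$ with $\rho\in\mathbf{R}^m$ and $\max_i|\rho_i|\le r$ (so $\Re(u_i')$ is fixed and only the relevant coordinates move), the imaginary part $\Im(u_i') = \Im(u_i^\circ)+\rho_i$ sweeps an interval of length $2r$, hence $\Im(u_i')\,t^{-1}$ sweeps an interval of length $2r/t$; for $t$ small this interval dwarfs $|y_i| + M \ge |y_i - f_i(u')|$, so for each fixed value of the remaining coordinates the scalar equation $f_i(u') + \Im(u_i')\,t^{-1} = y_i$ is solvable in $\rho_i$. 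The genuine content is then to solve the $m$ equations \emph{simultaneously}, which I would do by recasting them as a fixed-point problem $\rho = \Phi(\rho)$ on a small ball and invoking Brouwer's theorem: $\Phi$ is continuous, and for $t$ small the diagonal term $\rho_i\,t^{-1}$ dominates the coupling that comes from the dependence of $f_i$ on all coordinates of $u'$, so $\Phi$ carries the ball into itself.

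Finally, since this construction goes through verbatim for every sufficiently small $t$, the parameter $t$ may be taken arbitrarily close to $0$ — replacing $u'$ by the solution produced for the smaller value of $t$ — which yields the final clause of the statement. I do not anticipate a serious obstacle here: the only step that is not purely formal is the simultaneous solvability of the system, and that is exactly what the fixed-point argument absorbs, using the $t^{-1}$-amplification to overwhelm the bounded, hence tame, variation of the functions $f_i$.
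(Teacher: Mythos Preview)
Your proposal works in outline but takes an unnecessarily heavy route, and it contains a gap precisely in the step you flag as ``not purely formal''. The paper's argument avoids Brouwer entirely by observing that the system \emph{decouples} once you only ask for a $\delta$-approximate solution: freeze each $f_i$ at the value $f_i(u^\circ)$, so the $i$-th equation becomes $\Im(u_i')\,t^{-1} = y_i - f_i(u^\circ)$, which is solved exactly and independently by setting $\Im(u_i') = t\bigl(y_i - f_i(u^\circ)\bigr)$ while keeping $\Re(u_i') = \Re(u_i^\circ)$. The only remaining error is $|f_i(u') - f_i(u^\circ)|$, controlled by continuity once $u'$ is close enough to $u^\circ$. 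The coupling among the $f_i$'s is thus absorbed into the $\delta$-tolerance, and no fixed-point theorem is needed; your Brouwer argument would yield an \emph{exact} solution of the coupled system, but the lemma only asks for an approximate one.

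Now the gap. You assert that since $\Im(u_i')\,t^{-1}$ sweeps an interval of length $2r/t$ which ``dwarfs $|y_i| + M$'', the scalar equation is solvable in $\rho_i$. But that interval is centred at $\Im(u_i^\circ)/t$, so if $|\Im(u_i^\circ)| > r$ it runs off to $\pm\infty$ and misses every bounded target --- large length does not guarantee it contains $y_i - f_i(u')$. (Take $m=1$, $\Omega$ the disc of radius $\tfrac12$ about $\mathtt{i}$, $f_1 \equiv 0$, $y_1 = 0$: then $\Im(u')/t > \tfrac{1}{2t}$ for every $u'\in\Omega$ and $t\in(0,1)$.) The paper's proof has the same defect: it needs $|u' - u^\circ| < \eta$, yet its own construction gives $\Im(u_i' - u_i^\circ) = t\Delta_i \approx -\Im(u_i^\circ)$ for small $t$. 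Both arguments become correct once one assumes $\Im(u_i^\circ) = 0$ for all $i$ (or that $\Omega$ contains such a point), a hypothesis you should make explicit.
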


\begin{proof}
    Choose $\eta>0$ small enough that $\lvert u' - u^\circ\rvert < \eta$ implies that $\lvert f_i(u')-f_i(u^\circ)\rvert < \delta$ for all $i=1,\ldots, m$. Next, let $\Delta_i :=y_i - f_i(u^\circ)- \Im(u_i^\circ)\cdot t^{-1}$. Now, choosing any $t$ sufficiently close to $0$, we can choose $u_i'$ such that $\Im(u_i'-u_i^\circ)\cdot t^{-1} = \Delta_i$. Then
    \begin{align*}
        \lvert y_i - f_i(u') - \Im(u_i')t^{-1}\rvert & \le \lvert y_i - f_i(u^\circ) - \Im(u_i')t^{-1} \rvert + \lvert f_i(u^\circ) - f_i(u')\rvert\\
        &\le 0 + \delta = \delta.
    \end{align*}
    Since the estimate for $y_i$ depends only on modifying $u_i$, we can arrange this for all $i=1,\ldots, m$.
\end{proof}

\subsection{Cubic threefolds}
\label{SS:cubicthreefolds}
Consider a smooth cubic threefold $Y\subset \bf{P}^4$. Recall from \Cref{section_QH_cubics} that $c_1(X)\star_{0}(-) \in \End(\H^\bullet(Y))$ has eigenvalues $-T,0,T$, where $T=2 \sqrt{6} > 0$. By \Cref{cor_qcohZ_cubic3}, we have a semiorthogonal decomposition:
\[
    \DCoh(Y)=\langle \ko(1), \cT,\ko(2)\rangle
\]
such that the quantum cohomology central charge $\cZ_t$ along $w=e^{\mathtt{i}\varphi}t$ for $\varphi \in \bf{R}$ as in \Cref{R:smallersector} and $t\in \bf{R}_{>0}$ has the following asymptotics as $t\to 0$ 
\begin{equation}\label{eq_asymp_cubic3}
    \begin{split}
        \log\cZ_t(\ko(1)) &= C_1 +Te^{-\mathtt{i}\varphi}/t + \left(\frac{3}{2} - m\right)\log(e^{\mathtt{i}\varphi}t) + \epsilon_{\ko(1)}(t)\\
        \log\cZ_t(\ko(2)) &= C_2 -Te^{-\mathtt{i}\varphi}/t + \left(\frac{3}{2} - m\right)\log(e^{\mathtt{i}\varphi}t) + \epsilon_{\ko(2)}(t)\\
        \log\cZ_t(V_i) &= C'_i + \left(\frac{3}{2} - m\right)\log(e^{\mathtt{i}\varphi}t) + \epsilon_i(t)\\
    \end{split}
\end{equation}
for some objects $V_1,V_2\in \Ku(Y)$ such that $(\ch(V_1),\ch(V_2))$ forms a basis of $\rm{K}_0^{\rm{top}}(\Ku(Y))_{\bf{Q}}$ and $\int_Y\Psi(\widehat{\Gamma}_Y\Ch(V_i))\neq 0$, see \Cref{P:asymptoticsofsection} for the notation.
Observe also that
\begin{equation}\label{eq_cubic3_imrepart}
    \begin{split}
        \Re(Te^{-\mathtt{i}\varphi})>0 \text{ and } \Im(Te^{-\mathtt{i}\varphi})<0.
    \end{split}
\end{equation}

\Cref{conj:2}\ref{conj2SOD} holds for cubic threefolds:

\begin{thm}
\label{T:conj2.1threefold}
    There is a quasi-convergent path $\sigma_t$ in $\Stab(Y)$ for $t\in (0,t_0]$ with quantum cohomology central charge satisfying the spanning condition of \Cref{conj:NMMPFano} such that\vspace{-2mm}
    \begin{enumerate}
        \item the induced semiorthogonal decomposition is $\langle \cO(1), \cT,\cO(2)\rangle$\vspace{-2mm}
        \item each factor corresponds to an eigenvalue of $c_1(Y)\star_{0}(-)$, i.e. the asymptotics \eqref{eq_asymp_cubic3} hold.\vspace{-2mm}
    \end{enumerate}
\end{thm}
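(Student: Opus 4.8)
The plan is to exhibit $\sigma_t$ as a \emph{glued} path along the decomposition $\DCoh(Y)=\langle\cO(1),\cT,\cO(2)\rangle$ of \Cref{cor_qcohZ_cubic3}, and to extract both conclusions from \Cref{prop_gluing_paths_Ku}. Recall from \Cref{cor_qcohZ_cubic3} and the computation of $\sigma(c_1(Y)\star_0(-))$ in \Cref{section_QH_cubics} that $\cT\simeq\Ku(Y)$ via the relevant mutation functor and that \eqref{eq_asymp_cubic3} holds along $w=e^{\mathtt{i}\varphi}t$ for $\varphi$ as in \Cref{R:smallersector}. Combining \eqref{eq_asymp_cubic3} with \eqref{eq_cubic3_imrepart} gives $\phi_t(\cO(1))\to-\infty$, $m_t(\cO(1))\to\infty$, $\phi_t(\cO(2))\to+\infty$, $m_t(\cO(2))\to0$ as $t\to0$, whereas on $\rm{K}_0(\cT)$ the charge $\cZ_t$ carries no $e^{-\mathtt{i}\varphi}/t$-term: it equals $(e^{\mathtt{i}\varphi}t)^{3/2-m}$ times a charge converging to a fixed $Z_0\in\Hom(\rm{K}_0(\cT),\CC)$ with $Z_0(V_i)=(2\pi)^{3/2}\int_Y\Psi(\widehat{\Gamma}_Y\Ch(V_i))\neq0$ (\Cref{P:asymptoticsofsection}).

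First I would pin down the middle term. Fix a small $t_0>0$ and choose $\tau_0\in\Stab(\cT)\cong\Stab(\Ku(Y))$ with central charge $\cZ_{t_0}|_{\rm{K}_0(\cT)}$; existence of stability conditions on $\Ku(Y)$ is the $\sigma_{\alpha,\beta}$ construction of \cite{Bayer2017StabilityCO}, and one checks that $\cZ_{t_0}|_{\rm{K}_0(\cT)}$ is non-degenerate and lies in the $\GL_2^+(\bf{R})^{\sim}$-orbit realized by $\Stab(\Ku(Y))$, using the explicit shape of $\cZ_t|_{\rm{K}_0(\cT)}$ above. For $0<t\le t_0$ with $t_0$ small all these charges are non-degenerate with the same orientation and angularly close to $Z_0$, so the element $g_t=(M_t,f_t)\in\GL_2^+(\bf{R})^{\sim}$ carrying $\cZ_{t_0}|_{\rm{K}_0(\cT)}$ to $\cZ_t|_{\rm{K}_0(\cT)}$ is well defined, $g_{t_0}=\id$, the rotational part stays small so that $f_t(0)\in(-1,1]$ (this is \Cref{prop_gluing_paths_Ku}(2)), and $\tau_t:=g_t\cdot\tau_0$ has central charge exactly $\cZ_t|_{\rm{K}_0(\cT)}$. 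Taking $w_t^1=\log\cZ_t(\cO(1))$ and $w_t^2=\log\cZ_t(\cO(2))$ with branches chosen as in \Cref{L:extensionoflog} produces $(\eta_t^L,\eta_t^R)$ as in \eqref{eq_def_stab_path_ex_coll}, with hearts $\cB_t^L=\langle\cO(1)[k_t^L]\rangle_{\rm{ext}}$, $\cB_t^R=\langle\cO(2)[k_t^R]\rangle_{\rm{ext}}$ and $k_t^L\to+\infty$, $k_t^R\to-\infty$. The Hom-vanishing conditions $\Hom^{\le0}(\cA_t,\cB_t^R)=0$ and $\Hom^{\le1}(\cB_t^L,\cB_t^R)=\Hom^{\le1}(\cB_t^L,\cA_t)=0$ then hold for $t$ small, each being $\Hom^{\le c_t}$ of objects of bounded cohomological amplitude with $c_t\to-\infty$; hence (after reparametrizing $(0,t_0]$ so that $t\to0$ is the limit in the statement) the hypotheses of \Cref{prop_gluing_paths_Ku}, equivalently \Cref{rem_gluing_pathKu} in the $t\to0$ convention, are met and $(\eta_t^L,\tau_t,\eta_t^R)$ glue to $\sigma_t\colon(0,t_0]\to\Stab(Y)$ with central charge $\cZ_t$.

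Now, since $\phi_t(\cO(1))\to-\infty$, $\phi_t(\cO(2))\to+\infty$ and the $\tau_t$-phases of objects of $\cT$ stay bounded (because $\tau_t$ converges projectively), the final assertion of \Cref{prop_gluing_paths_Ku} shows $\sigma_t$ is quasi-convergent as $t\to0$ and induces $\DCoh(Y)=\langle\cD_1,\cT,\cD_1'\rangle$ where $\langle\cD_1\rangle$ coarsens $\langle\cO(1)\rangle$ and $\langle\cD_1'\rangle$ coarsens $\langle\cO(2)\rangle$; as each has a single exceptional generator this is exactly $\langle\cO(1),\cT,\cO(2)\rangle$, proving (1), while (2) is \eqref{eq_asymp_cubic3}. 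The support property for the path $\sigma_t$ holds because the glued pieces satisfy it (\cite{Bayer2017StabilityCO} for $\Ku(Y)$, trivially for $\langle\cO(i)\rangle$), gluing preserves it, and it propagates along the connected path by the mass-Hom bound arguments of \cite{augmented}, so \Cref{T:qconvmainthm} applies.

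Finally, the eigenvalues $-T,0,T$ of $c_1(Y)\star_0(-)$ are carried by $\cO(1),\cT,\cO(2)$ respectively, so $r=\Re(-\lambda e^{-\mathtt{i}\varphi})$ ranges over $\rho,0,-\rho$ with $\rho:=\Re(Te^{-\mathtt{i}\varphi})>0$; reading the growth rate $\lim_{t\to0}t\log|\cZ_t(-)|$ off \eqref{eq_asymp_cubic3} (namely $\rho$ on $\ch(\cO(1))$, $0$ on $\ch(\cT)$, $-\rho$ on $\ch(\cO(2))$) gives $F^{-\rho}\H^\bullet_{\rm{alg}}(Y)=\ch\langle\cO(2)\rangle$, $F^{0}\H^\bullet_{\rm{alg}}(Y)=\ch({}^{\perp}\langle\cO(1)\rangle)$, and $F^{\rho}\H^\bullet_{\rm{alg}}(Y)=\H^\bullet_{\rm{alg}}(Y)$, each spanned by Chern characters of $\sigma_t$-limit semistable objects (the exceptional objects $\cO(1),\cO(2)$ together with a basis of $\rm{K}_0(\cT)$ represented by $\sigma_t$-limit semistable objects of $\cT$); this is the spanning condition of \Cref{conj:NMMPFano}. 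The main obstacle is the step of realizing $\cZ_t|_{\rm{K}_0(\cT)}$ as an honest central charge on $\Ku(Y)$---that is, matching it to the known $\GL_2^+(\bf{R})^{\sim}$-orbit in $\Stab(\Ku(Y))$, and not merely controlling its asymptotics---since everything downstream is the bookkeeping packaged in \Cref{prop_gluing_paths_Ku}; the remaining care goes into the branch choices making $w_t$ land in the gluing region and into checking limit semistability of the $\cT$-part.
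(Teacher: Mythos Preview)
Your proposal is correct and follows essentially the same approach as the paper: the paper's proof is a terse two-sentence reference to the asymptotics \eqref{eq_asymp_cubic3}, the gluing machinery of \Cref{section_gluingpaths_general}, and specifically \Cref{rem_gluing_pathKu}, with the observation that \eqref{eq_cubic3_imrepart} is what makes the phases diverge in the required directions. You have unpacked exactly this route---gluing $(\eta_t^L,\tau_t,\eta_t^R)$ along $\langle\cO(1),\cT,\cO(2)\rangle$ with $\tau_t$ the $\GL_2^+(\bf{R})^{\sim}$-orbit of a fixed stability condition on $\Ku(Y)$---and correctly flagged the one non-formal step (matching $\cZ_t|_{\rm{K}_0(\cT)}$ to the known orbit in $\Stab(\Ku(Y))$), which the paper likewise leaves implicit here but treats more carefully in the parallel argument for \Cref{thm_pqccc_geo_cubic3}.
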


\begin{proof}
    The relevant estimates of $\cZ_t$ are in \eqref{eq_asymp_cubic3}.
    To construct the path, see \Cref{section_gluingpaths_general} and in particular \Cref{rem_gluing_pathKu}; note that \eqref{eq_cubic3_imrepart} is crucial.
\end{proof}

See \Cref{SS:Epilogue} for a conjectural explanation of how \Cref{conj:2}\ref{conj2isomonodromic} might be deduced in this case. Next, we prove that if we allow a more general class of central charges, corresponding to arbitrary fundamental solutions of the quantum differential equation, we can obtain part of \Cref{conj:2}\ref{conj2isomonodromic}:

\begin{thm}\label{thm_pqccc_geo_cubic3}
    There exists a quasi-convergent path $\sigma_t = (\cZ^A_t,\cP_t):(0,t_0] \to \Stab(Y)$, where $\cZ^A_t:=\cZ_t \circ A$ for $A\in \GL(\H^\bullet(Y))$ satisfies the spanning condition of \Cref{conj:NMMPFano}, such that \vspace{-2mm}
    \begin{enumerate}
        \item the induced semiorthogonal decomposition is $\langle\Ku(Y),\cO,\cO(1)\rangle$; and\vspace{-2mm}
        \item $\sigma_{t_0}$ is geometric.\vspace{-2mm}
    \end{enumerate}
\end{thm}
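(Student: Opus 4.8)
The idea is to run the gluing-of-paths machinery of \Cref{prop_gluing_paths_Ku} (equivalently \Cref{rem_gluing_pathKu}), now applied to a mutated Kuznetsov-type decomposition of the form \eqref{E:mutatedKuzdecomp}, together with a judicious choice of $A\in \GL(\H^\bullet(Y))$ so that the resulting path both starts in a geometric region (at $t=t_0$) and is quasi-convergent with the expected limit as $t\to 0$. The target decomposition is $\DCoh(Y)=\langle \Ku(Y),\cO_Y,\cO_Y(1)\rangle$, which is a mutation of the one produced in \Cref{T:conj2.1threefold}, namely $\langle \cO_Y(1),\cT,\cO_Y(2)\rangle$; the mutation functor $\cT\to \Ku(Y)$ was already identified in \Cref{cor_qcohZ_cubic3}. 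First I would record the asymptotics \eqref{eq_asymp_cubic3} and transport them along this mutation: the two ``spiked'' ends $\cO_Y(1)$ and $\cO_Y(2)$ (with exponents $\pm T$, and with the crucial sign information \eqref{eq_cubic3_imrepart} $\Re(Te^{-\mathtt i\varphi})>0$, $\Im(Te^{-\mathtt i\varphi})<0$) become, after right/left mutation, the exceptional objects $\cO_Y,\cO_Y(1)$ flanking $\Ku(Y)$; the middle-eigenvalue block $\cT\simeq\Ku(Y)$ contributes objects $V_1,V_2$ spanning $\rm{K}_0^{\rm{top}}(\Ku(Y))_\bf{Q}$ with $\int_Y\Psi(\widehat\Gamma_Y\Ch(V_i))\neq 0$, i.e. bounded $\log\cZ_t$ as in the third line of \eqref{eq_asymp_cubic3}.

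\textbf{Key steps.} (1) Choose $A\in\GL(\H^\bullet(Y))$ adapted to the asymptotically exponential basis so that $\cZ^A_t:=\cZ_t\circ A$ keeps the exponential estimates \eqref{eq_asymp_cubic3} intact (only the constants $C_j, C'_i$ change) while at time $t=t_0$ the phases of the relevant objects are placed in the configuration needed to lie in the glued geometric region. Concretely, $A$ should be chosen so that at $t=t_0$ the central charge realizes a stability condition of the form $(z\cdot\sigma(\alpha,\beta))*\tau$ from \Cref{thm_glued_geom_stab_cubic3}: i.e. the Kuznetsov-component charge $\cZ^A_{t_0}|_{\Ku(Y)}$ matches $z\cdot Z(\alpha,\beta)$ for some $(\alpha,\beta)\in V_+$ and $z\in\bf{C}$ with $K^x[2]$ of phase one, and the charges of $\cO_Y(1)[1]$ (or $\cO_Y$, after relabeling) and $\cO_Y(2)$ satisfy the phase inequalities $\phi(K^x[2])>\phi(\cO_Y[1])>\phi(\cO_Y(H))$ and the spiked condition at $\cO_Y(H)$ with respect to $[N^x]$; by \Cref{thm_glued_geom_stab_cubic3} this $\sigma_{t_0}$ is geometric. (2) Extend $\sigma_{t_0}$ backwards to a path on $(0,t_0]$ by gluing: apply \Cref{prop_gluing_paths_Ku}/\Cref{rem_gluing_pathKu} to the decomposition $\langle \cN^L,\cT,\cN^R\rangle$ with $\cN^L$ or $\cN^R$ the appropriate singletons among $\{\cO_Y,\cO_Y(1)\}$. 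The hypothesis $\liminf_{t\to 0}\phi(E)-\phi(E_n)=\infty=\liminf_{t\to 0}\phi(E_{n+1})-\phi(E)$ of \Cref{rem_gluing_pathKu} for $\tau$-stable $E\in\Ku(Y)$ is exactly what the estimates \eqref{eq_asymp_cubic3} guarantee: the $\pm T/t$ terms diverge to $\mp\infty$ in imaginary part (here \eqref{eq_cubic3_imrepart} is used again), while the $\Ku(Y)$ phases stay bounded. (3) Deduce quasi-convergence and the induced SOD \eqref{eq_SOD_gluing}: the outer factors degenerate to $\langle \cO_Y\rangle$ and $\langle \cO_Y(1)\rangle$ and the middle to $\Ku(Y)$, giving (1). (4) Verify the spanning condition of \Cref{conj:NMMPFano}: since each of $\cO_Y,\cO_Y(1)$ has rank-one algebraic K-theory and $\Ku(Y)$ has rank two with $\int_Y\Psi(\widehat\Gamma_Y\Ch(V_i))\neq 0$, the filtration $F^r\H^\bullet_{\rm{alg}}(Y)$ is the split filtration associated to the eigenvalue ordering $\Re(-(-T)e^{-\mathtt i\varphi})<0<\Re(-Te^{-\mathtt i\varphi})$, spanned by Chern characters of limit semistable objects; this is the same argument as \Cref{L:NMMPFanofromConj2(1)} except that the middle factor has rank two, so one must note that any nonzero $\alpha\in\H^\bullet_{\rm{alg}}(\Ku(Y))$ has $\log|\cZ^A_t(\alpha)|=o(t^{-1})$, which follows from $\int_Y\Psi(\widehat\Gamma_Y\Ch(V_i))\neq 0$ exactly as in the proof of \Cref{cor_qcohZ_cubic3}.

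\textbf{Main obstacle.} The delicate point is step (1): one must simultaneously satisfy the geometricity conditions of \Cref{thm_glued_geom_stab_cubic3} at $t=t_0$ \emph{and} respect the exponential asymptotics that make step (2) work, using a \emph{single global} linear map $A$ (not a $t$-dependent rescaling). This is feasible because $A$ only alters the constants in \eqref{eq_asymp_cubic3} while leaving the $\pm T/t$ growth and the boundedness of the $\Ku(Y)$-terms untouched; thus one has enough freedom to move the three phase-data (the $\Ku(Y)$ block, $\cO_Y$, $\cO_Y(1)$) into the region carved out by \Cref{thm_glued_geom_stab_cubic3} at one chosen time $t_0$, and openness of the geometric region then gives geometricity on a neighborhood of $t_0$. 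A secondary subtlety is that $\sigma_{t_0}$ as constructed in \Cref{thm_glued_geom_stab_cubic3} involves a $\bf{C}$-rotation $z\cdot\sigma(\alpha,\beta)$ making $K^x[2]$ simple, so the gluing at $t_0$ is not literally of the form handled by \Cref{prop_gluing_paths_Ku} without first undoing this rotation; as in the final \Cref{rem} of \Cref{section_stab_cuibc4}, one argues via the properties of the (un-rotated) glued stability condition and a parallel of \Cref{thm:gluedgeometricKuz}, rather than by a direct appeal to \Cref{prop_gluing_paths_Ku}. Modulo bookkeeping, the estimates \eqref{eq_asymp_cubic3}–\eqref{eq_cubic3_imrepart} supply everything needed, so I expect no essentially new input beyond a careful choice of $A$.
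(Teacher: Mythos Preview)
Your plan has the right overall shape (glue along the Kuznetsov decomposition, start from the geometric region of \Cref{thm_glued_geom_stab_cubic3}, and let the exponential asymptotics drive quasi-convergence), but the core step is backwards and will not produce $\langle\Ku(Y),\cO,\cO(1)\rangle$.

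The problem is your claim that ``$A$ keeps the exponential estimates intact (only the constants $C_j,C_i'$ change)'' together with ``the $\Ku(Y)$ phases stay bounded while $\cO,\cO(1)$ carry the $\pm T$ terms''. By \eqref{eq_cubic3_imrepart}, the object carrying the $+T/t$ exponent has $\Im\log\cZ_t\to -\infty$; in the limit SOD it must sit \emph{leftmost}. If you give $\cO$ the $+T$ exponent, its phase goes below the bounded $\Ku(Y)$ phases, so the resulting decomposition is $\langle\cO,\cT',\cO(1)\rangle$, not $\langle\Ku(Y),\cO,\cO(1)\rangle$. Equivalently, the hypothesis $\liminf(\phi(E_{n+1})-\phi(E))=+\infty$ of \Cref{rem_gluing_pathKu} with $\cT=\Ku(Y)$ and $\cN^L=\varnothing$ fails for $E_{n+1}=\cO$.

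The paper's $A$ is not a diagonal rescaling but a genuine change of basis that \emph{reassigns} which K-classes carry which exponents: it sends $\ch(V_1)\mapsto\ch(\cO(1))$, $\ch(V_2)\mapsto\ch(\cO(1))+\ch(V_2)$, $\ch(\cO)\mapsto\ch(V_1)$, $\ch(\cO(1))\mapsto\ch(\cO(2))$. Under $\cZ^A_t=\cZ_t\circ A$ one gets the opposite of your picture: $\log\cZ^A_t(V_i)$ carries the $+T/t$ term (so $\phi(V_i)\to -\infty$), $\log\cZ^A_t(\cO)$ is bounded, and $\log\cZ^A_t(\cO(1))$ carries $-T/t$ (so $\phi(\cO(1))\to +\infty$). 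This is exactly the ordering $\Ku(Y)\prec\cO\prec\cO(1)$. The non-obvious check is that $\cZ^A_t(V_2)=\cZ_t(\cO(1))+\cZ_t(V_2)$ still has the $+T/t$ asymptotic, which holds because $|\cZ_t(\cO(1))|\to\infty$ dominates the bounded $|\cZ_t(V_2)|$.

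A second point: you try to match $\cZ^A_{t_0}|_{\Ku(Y)}$ \emph{exactly} to some $z\cdot Z(\alpha,\beta)$. The paper avoids this rigidity by observing that, since both $\cZ^A_t(V_1)$ and $\cZ^A_t(V_2)$ share the same leading asymptotic, the induced $\bf{R}$-bases of $\bf{C}$ have constant orientation for $0<t\le t_0\ll 1$; hence there is $g_t\in\GL_2^+(\bf{R})$ with $\cZ^A_t|_{\Ku(Y)}=g_t\cdot\cZ^A_{t_0}|_{\Ku(Y)}$, and one takes $\tau_t=\widetilde{g}_t\cdot\tau_0$ for any $\tau_0$ in the $\GL_2^+(\bf{R})^\sim$-orbit of a stability condition from \Cref{thm_glued_geom_stab_cubic3}. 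This, together with \Cref{rem:glu_cubic3}, is what makes the gluing hold for all $t\in(0,t_0]$, not just near $t_0$.
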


\begin{proof}
    Let $A$ to be the change of basis matrix from 
    \[
        \ch(V_1),\ch(V_2),\ch(\cO), \ch(\cO(1))
    \]
    to 
    \[
        \ch(\cO(1)),\ch(\cO(1))+\ch(V_2), \ch(V_1),\ch(\cO(2)).
    \]
    The central charge $\cZ^A_t$ satisfies the following asymptotic estimates: 
    \begin{equation}
    \label{E:estimatesthm4.5}
    \begin{split}
        \log\cZ^A_t(V_i) &= C_1 +Te^{-\mathtt{i}\varphi}t^{-1} + \left(\tfrac{3}{2} - m\right)\log(e^{\mathtt{i}\varphi}t) + \epsilon_{\ko(1)}(t)\\
        \log\cZ^A_t(\ko(1)) &= C_2 -Te^{-\mathtt{i}\varphi}t^{-1} + \left(\tfrac{3}{2} - m\right)\log(e^{\mathtt{i}\varphi}t) + \epsilon_{\ko(2)}(t)\\
        \log\cZ^A_t(\ko) &= C'_i + \left(\tfrac{3}{2} - m\right)\log(e^{\mathtt{i}\varphi}t) + \epsilon_{\ko(1)}(t).
    \end{split}
\end{equation}
The only non-trivial case is that of $\cZ^A_t(V_2):=\cZ_t(\ko(1))+\cZ_t(V_2)$; the claimed estimate follows from the observation that $\lvert \cZ_t(\cO(1))\rvert\to \infty$ and $\lvert \cZ_t(V_2)\rvert \to 0$.

Note that ${\cZ^A_t}|_{\Ku(Y)}$ is uniquely determined by the values $\cZ_t^A(V_i)$ for $i=1,2$. Moreover, since $\Im\log\cZ_t(V_2)=\Im C'_2+(3/2-m)\varphi+\Im(\epsilon_2(t))$ converges as $t\to 0^+$, we see that for $t_0\ll1$, the $\bf{R}$-bases $(\cZ_t^A(V_1),\cZ_t^A(V_2))$ of $\bf{C}$ have the same orientation for all $t\in (0,t_0)$. Thus, for some $0<t_0\ll 1$ there exists $g_t\colon (0,t_0]\to \GL_2^+(\bf{R})$ such that $\cZ^A_t=g_t\cdot \cZ^A_{t_0}$ for all $t\in (0,t_0].$

We construct the desired quasi-convergent path by gluing stability conditions along $\DCoh(Y) = \langle \Ku(Y),\cO,\cO(1)\rangle$. By the asymptotics of $\cZ^A_t(\ko(i))$ for $i=0,1$, we have a path $\eta_t\colon (0,t_0]\to \Stab(\langle \ko,\ko(1)\rangle)$ such that $\ko,\ko(1)$ are stable for all $t$, $\phi_t(\ko[1])<\phi_t(\ko(1))$, and $\phi_{t_0}(\ko(1))-\phi_{t_0}(\ko[1])<1$; see \Cref{section_gluingpaths_general} for details. Moreover, for any lift $\widetilde{g}_t$ of $g_t$ to $\GL_2^+(\bf{R})^\sim$ and any $\tau_0 \in \Stab(\Ku(Y))$ with central charge ${\cZ^A_{t_0}}|_{\Ku(Y)}$, we can consider the path $\tau_t:=\widetilde{g}_t\cdot \tau_0, t\in (0,t_0].$

Choose $\tau_0,\eta_{t_0}$ as in \Cref{thm_glued_geom_stab_cubic3} and $t_0\ll 1$ such that the error terms of \eqref{E:estimatesthm4.5} are sufficiently small. One can use these estimates to show that the gluing condition of $\Hom$-vanishing between the hearts of $\tau_t$ and $\eta_t$ verified in the proof of \Cref{thm_glued_geom_stab_cubic3} is satisfied for all $t\in (0,t_0]$. Thus, we can glue $\sigma_t:=\tau_t*\eta_t$ such that $\sigma_{t_0}$ is geometric. We leave as an exercise the verification that $\sigma_t$ is quasi-convergent as $t\to 0$.
\end{proof}

\subsection{Cubic fourfolds}
\label{SS:Cubicfourfolds}
Consider a smooth cubic fourfold $X\subset\bf{P}^5$. By \Cref{section_QH_cubics}, $c_1(X)\star_{0}(-) \in \End(\H^\bullet(X))$ has eigenvalues $Te^{4\pi \mathtt{i}/3},T,0,Te^{2\pi \mathtt{i}/3}$, where $T = 9$. By \Cref{cor_qcohZ_cubic4}, we have a semiorthogonal decomposition
\[
    \DCoh(X)=\langle \ko(2), \bf{R}_{\ko(2)}\ko, \cT,\bf{R}_{\ko(2)}\ko(1)\rangle
\]
such that the quantum cohomology central charge $\cZ_t$ along $w=e^{\mathtt{i}\varphi}t$ for $\varphi$ as in \Cref{R:smallersector} and $t\in \bf{R}_{>0}$ has the following asymptotics as $t\to 0$ 
\begin{equation}\label{eq_asymp_cubic4}
    \begin{split}
        \log\cZ_t(\ko(2)) & = C_1 +Te^{\mathtt{i}(4\pi/3-\varphi)}t^{-1} + \left(2 - m\right)\log(e^{\mathtt{i}\varphi}t) + \epsilon_{\ko(2)}(t)\\
        \log\cZ_t(\bf{R}_{\ko(2)}\ko) & = C_2 +
        Te^{-\mathtt{i}\varphi}t^{-1} 
        + \left(2 - m\right)\log(e^{\mathtt{i}\varphi}t) + \epsilon_{\bf{R}_{\ko(2)}\ko}(t)\\
        \log\cZ_t(V_i) & = C'_i + \left(2 - m\right)\log(e^{\mathtt{i}\varphi}t) + \epsilon_{i}(t)\\
        \log\cZ_t(\bf{R}_{\ko(2)}\ko(1)) & = C_3 +Te^{\mathtt{i}(2\pi/3-\varphi)}t^{-1} + \left(2 - m\right)\log(e^{\mathtt{i}\varphi}t) + \epsilon_{\bf{R}_{\ko(2)}\ko(1)}(t)
    \end{split}
\end{equation}
for objects $V_1,V_2\in \Ku(X)$ such that $(\ch(V_1),\ch(V_2))$ form a basis of $\rm{K}_0^{\rm{top}}(\Ku(X))_{\bf{Q}}$ and such that $\int_X \widehat{\Gamma}_X\Psi(\Ch(V_i))\neq 0$; see \Cref{P:asymptoticsofsection} for the notation. Observe also that
\begin{equation}
    \begin{split}
        \Im(e^{\mathtt{i}(4\pi/3-\varphi)})<\Im(e^{-\mathtt{i}\varphi})<0< \Im(e^{\mathtt{i}(2\pi/3-\varphi)}).
    \end{split}
\end{equation}

As in the case of cubic threefolds,  \Cref{conj:2}\ref{conj2SOD} holds for smooth cubic fourfolds: 

\begin{thm}
\label{T:conj2.1fourfold}
    There is a quasi-convergent path $\sigma_t:(0,t_0] \to \Stab(X)$ with quantum cohomology central charge which satisfies the spanning condition of \Cref{conj:NMMPFano} such that \vspace{-2mm}
    \begin{enumerate}
        \item the induced semiorthogonal decomposition is $ \langle \ko(2), \bf{R}_{\ko(2)}\ko, \cT,\bf{R}_{\ko(2)}\ko(1)\rangle$ \vspace{-2mm}
        \item each factor of the semiorthogonal decomposition corresponds to an eigenvalue of $c_1(X)\star_{0}(-)$, i.e. the asymptotics \eqref{eq_asymp_cubic4} holds.\vspace{-2mm}
    \end{enumerate}
\end{thm}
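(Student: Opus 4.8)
The strategy is to imitate \Cref{T:conj2.1threefold}, replacing the threefold Kuznetsov-type gluing with its fourfold analogue. Concretely, I would work with the mutated decomposition
\[
    \DCoh(X) = \langle \underbrace{\ko(2),\ \bf{R}_{\ko(2)}\ko}_{\cN^L},\ \cT,\ \underbrace{\bf{R}_{\ko(2)}\ko(1)}_{\cN^R}\rangle
\]
of the form \eqref{E:mutatedKuzdecomp} and apply \Cref{prop_gluing_paths_Ku}, or more precisely its ``large-$t$'' reformulation \Cref{rem_gluing_pathKu} (here our parameter degenerates as $t\to 0$ rather than $t\to\infty$, so I would use the reformulation with the roles reversed, exactly as in the proof of \Cref{T:conj2.1threefold}). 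The inputs are: a path $\tau_t$ in $\Stab(\cT)$ obtained by acting with a suitable $g_t\in\GL_2^+(\bf R)^\sim$ on a base stability condition $\tau_0$ (whose central charge restricts to ${\cZ_t}|_{\cT}$, which by \eqref{eq_asymp_cubic4} is, up to the overall $\log$-factor, asymptotically constant, so that $\tau_t$ is genuinely a $\GL_2^+(\bf R)^\sim$-translate of $\tau_0$ for $t$ small); a path $\eta_t^L$ in $\Stab(\cN^L)=\Stab(\langle\ko(2),\bf{R}_{\ko(2)}\ko\rangle)$ with both exceptional objects stable, built from the asymptotics of $\cZ_t(\ko(2))$ and $\cZ_t(\bf{R}_{\ko(2)}\ko)$; and a path $\eta_t^R$ in $\Stab(\cN^R)=\bf C^*$ given by $\cZ_t(\bf{R}_{\ko(2)}\ko(1))$.

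\textbf{Key steps, in order.} First I would record the precise asymptotics \eqref{eq_asymp_cubic4} from \Cref{cor_qcohZ_cubic4}, noting the sign/phase data $\Im(e^{\mathtt{i}(4\pi/3-\varphi)})<\Im(e^{-\mathtt{i}\varphi})<0<\Im(e^{\mathtt{i}(2\pi/3-\varphi)})$; this is what forces the three ``outer'' exceptional objects to separate from $\cT$ in phase, and in the correct cyclic order, as $t\to 0$. Second, I would verify that the hypotheses of \Cref{rem_gluing_pathKu} hold: for every $\tau_0$-stable object $E\in\cT$ one needs $\phi_t(E)-\phi_t(\bf{R}_{\ko(2)}\ko)\to\infty$ and $\phi_t(\bf{R}_{\ko(2)}\ko(1))-\phi_t(E)\to\infty$ (using $\liminf$), together with the analogous separation of $\ko(2)$ from $\cT$; these all follow from the fact that the $\cT$-phases stay bounded while the exceptional objects have phases drifting to $\pm\infty$ at rate $\sim|\Im(Te^{\mathtt i(\cdot-\varphi)})|/t$. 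The internal $\Hom$-vanishings between $\cB_t^L$ (resp. $\cB_t^R$) and $\cA_t$ required in \Cref{prop_gluing_paths_Ku}(3) come from semiorthogonality of \eqref{E:mutatedKuzdecomp} together with the usual argument that these $\Hom$'s are controlled once the phase gaps exceed the bounded cohomological amplitude. Third, with the path $\sigma_t=\eta_t^L*\tau_t*\eta_t^R$ glued for all sufficiently small $t$, I would invoke the second half of \Cref{prop_gluing_paths_Ku} (again via \Cref{rem_gluing_pathKu}) to conclude that $\sigma_t$ is quasi-convergent and induces a semiorthogonal decomposition coarsening \eqref{E:mutatedKuzdecomp}; since $\langle\ko(2),\bf{R}_{\ko(2)}\ko\rangle$ and $\langle\bf{R}_{\ko(2)}\ko(1)\rangle$ are exceptional pairs with $\ko(2)$ and $\bf{R}_{\ko(2)}\ko$ having \emph{distinct} limiting exponents $Te^{\mathtt i 4\pi/3}$ and $T$, no further coarsening occurs and the induced decomposition is exactly $\langle\ko(2),\bf{R}_{\ko(2)}\ko,\cT,\bf{R}_{\ko(2)}\ko(1)\rangle$, with each factor matched to its eigenvalue, establishing (1) and (2). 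Finally, I would check the spanning condition of \Cref{conj:NMMPFano}: the limit semistable objects generating the four factors have central charges obeying \eqref{eq_asymp_cubic4}, so $\log|\cZ_t|$ grows like $r t^{-1}+O(\log t)$ with $r=\Re(-e^{-\mathtt i\varphi}\lambda)$ for the corresponding eigenvalue $\lambda$, and one reads off the required filtration of $\H^\bullet_{\rm alg}(X)$ exactly as in \Cref{L:NMMPFanofromConj2(1)} (the Kuznetsov factor $\cT$ contributing a rank-two piece spanned by $\ch(V_1),\ch(V_2)$).

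\textbf{Main obstacle.} The routine parts are the $\GL_2^+(\bf R)^\sim$-bookkeeping and the phase-gap estimates. The genuinely delicate point is the existence of the base stability condition $\tau_0\in\Stab(\cT)$ whose central charge matches ${\cZ_{t_0}}|_{\cT}$ and, simultaneously, the check that ${\cZ_t}|_{\cT}$ really is a $\GL_2^+(\bf R)$-path so that $\tau_t=g_t\cdot\tau_0$ makes sense — this requires that the two-dimensional vectors $(\cZ_t(V_1),\cZ_t(V_2))$ keep a fixed orientation for all small $t$, which is exactly where the non-vanishing $\int_X\widehat\Gamma_X\Psi(\Ch(V_i))\ne 0$ from \Cref{P:asymptoticsofsection} and the convergence of the imaginary parts of $\log\cZ_t(V_i)$ enter (compare the corresponding step in the proof of \Cref{thm_pqccc_geo_cubic3}). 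One must also ensure $\tau_0$ can be taken inside the region of $\Stab(\Ku(X))$ on which the stability conditions $\tau_\alpha$ of \eqref{eq_def_satb_cubic4} are defined; since here we do not need geometricity of the whole path — only the statement (1)–(2) and the spanning condition — it suffices that $\cZ_w^0|_{\Ku(X)}$ is non-degenerate, which \Cref{cor_qcohZ_cubic4} guarantees, and there is no need to rotate into the geometric region as in \Cref{thm_geomstab_cubic4_noplane}. So the proof is a matter of transporting the cubic-threefold argument, with the only new subtlety being the three-term (rather than two-term) outer collection and the associated cyclic arrangement of phases dictated by the cube roots of unity in $\sigma(c_1(X)\star_0(-))$.
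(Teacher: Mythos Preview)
Your proposal is correct and follows essentially the same approach as the paper: the paper's proof simply cites the asymptotics \eqref{eq_asymp_cubic4} and refers to the gluing machinery of \Cref{section_gluingpaths_general}, specifically \Cref{rem_gluing_pathKu}, exactly as you outline. Your write-up is in fact considerably more detailed than the paper's two-line proof, and the subtleties you flag (orientation of $(\cZ_t(V_1),\cZ_t(V_2))$ to ensure the $\GL_2^+(\bf R)^\sim$-path exists, and the spanning-condition check) are handled in the paper only implicitly or in the parallel threefold arguments.
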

\begin{proof}
    The necessary estimates of $\cZ_t$ were given in \eqref{eq_asymp_cubic4}. To construct the path see \Cref{section_gluingpaths_general} and \Cref{rem_gluing_pathKu}.
\end{proof}

We also have a version of \Cref{thm_pqccc_geo_cubic3} for cubic fourfolds:

\begin{thm}\label{thm_pqccc_geo_cubic4}
    There exists a quasi-convergent path $\sigma_t=(\cZ^A_t,\cP_t):(0,t_0]\to \Stab(X)$, where $\cZ^A_t:=\cZ_t \circ A$ for $A\in \GL(\H^\bullet(X))$, satisfying the spanning condition of \Cref{conj:NMMPFano} such that\vspace{-2mm}
    \begin{enumerate}
        \item the induced semiorthogonal decomposition is $\langle\Ku(X),\ko,\ko(1),\ko(2)\rangle$; and\vspace{-2mm}
        \item if $X$ does not contain a plane, then we can choose the quasi-convergent path in such a way that $\sigma_{t_0}$ is geometric.\vspace{-2mm}
    \end{enumerate}
\end{thm}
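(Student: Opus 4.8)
\textbf{Proof plan for \Cref{thm_pqccc_geo_cubic4}.}
The plan is to run the same strategy as in the proof of \Cref{thm_pqccc_geo_cubic3}, but now using the Kuznetsov-type decomposition $\DCoh(X) = \langle \Ku(X),\cO_X,\cO_X(H),\cO_X(2H)\rangle$ and the gluing machinery of \Cref{thm_geomstab_cubic4_noplane}. First I would choose $A\in \GL(\H^\bullet(X))$ to be the change of basis matrix from the ordered basis $(\ch(V_1),\ch(V_2),\ch(\bf{R}_{\ko(2)}\ko(1)),\ch(\ko(2)),\ch(\bf{R}_{\ko(2)}\ko))$ appearing in the decomposition of \Cref{cor_qcohZ_cubic4} to the ordered basis $(\ch(\ko(2)),\ch(\ko(2))+\ch(\bf{R}_{\ko(2)}\ko),\ch(V_1),\ch(V_2),\ch(\ko(1)))$ adapted to the decomposition $\langle \Ku(X),\ko,\ko(1),\ko(2)\rangle$. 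Using the asymptotics \eqref{eq_asymp_cubic4}, one reads off the asymptotics of $\cZ^A_t$ on each of $V_1,V_2,\ko,\ko(1),\ko(2)$: the only non-trivial cases are those where we add a quantity with $\lvert \cZ_t(\:\cdot\:)\rvert\to \infty$ to one with $\lvert \cZ_t(\:\cdot\:)\rvert\to 0$ (e.g. $\cZ^A_t$ applied to the second basis vector), where the growing term dominates. This gives estimates $\log\cZ^A_t(\ko(i)) \approx C_i - u_i e^{-\mathtt{i}\varphi}t^{-1} + (2-m)\log(e^{\mathtt{i}\varphi}t)$ for $i=0,1,2$ with $u_0$ a positive real multiple of $Te^{\mathtt{i}(4\pi/3)}$, etc., arranged so that $\phi_t(\ko(2)[2]) > \phi_t(\ko(1)[1]) > \phi_t(\ko[0])$ diverge appropriately, while $\log\cZ^A_t(V_j) = C'_j + (2-m)\log(e^{\mathtt{i}\varphi}t) + \epsilon_j(t)$ has bounded imaginary part.

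Next I would produce the path on $\Ku(X)$. Since $\Im\log\cZ_t(V_j)$ converges as $t\to 0^+$, the ordered real bases $(\cZ^A_t(V_1),\cZ^A_t(V_2))$ of $\bf{C}$ keep a fixed orientation for $t\in (0,t_0]$ with $t_0\ll 1$, so there exists $g_t\colon (0,t_0]\to \GL_2^+(\bf{R})$ with $\cZ^A_t = g_t\cdot \cZ^A_{t_0}$ on $\Ku(X)$; lift $g_t$ to $\widetilde{g}_t$ in $\GL_2^+(\bf{R})^\sim$ and set $\tau_t := \widetilde{g}_t\cdot \tau_0$ where $\tau_0\in \Stab(\Ku(X))$ is the condition $\tau_\alpha = (-\tfrac16 Z^\alpha,\cA^\alpha[3])$ of \Cref{thm_geomstab_cubic4_noplane} for some $0<\alpha<\tfrac14$ (here using that $X$ contains no plane, so \Cref{prop:stableK^x} applies and $K^x[3]$ is $\tau_0$-stable of phase one). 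From the asymptotics of $\cZ^A_t$ on $\ko,\ko(1),\ko(2)$, construct $\eta_t\colon (0,t_0]\to \Stab(\cN_X)$ with heart $\langle \ko_X[2-n_0],\ko_X(H)[1-n_1],\ko_X(2H)[-n_2]\rangle_{\rm{ext}}$ for suitable $n_2\ge n_1\ge n_0\ge 0$, as in the construction of paths from exceptional collections in \Cref{section_gluingpaths_general}, tuned so that at $t = t_0$ the phases satisfy $\phi_{t_0}(\ko_X(2H))<\phi_{t_0}(N^x)<\phi_{t_0}(\ko_X(H)[1])<\phi_{t_0}(\ko_X[2])<1$ and $\eta_{t_0}$ is spiked at $\ko_X(2H)$ with respect to $[N^x]$ (possible since we may take $\Re Z_{\eta_{t_0}}(\ko_X(2H))\gg 1$).

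Then I would glue: for $t_0\ll 1$ the error terms $\epsilon_\bullet(t)$ are small enough that the $\Hom^{\le 0}$-vanishing verified in the proof of \Cref{thm_geomstab_cubic4_noplane} (via \Cref{prop:homvanishing_1-2}, \Cref{lem:vanisshing-1}, \Cref{prop:homvanishing_0} and \Cref{C:gluingconditions}) holds not just at $t_0$ but for all $t\in(0,t_0]$ — because those vanishings only depend on which shifts of $\cO_X, \cO_X(H), \cO_X(2H)$ bound the heart and on the containment of the $\Ku(X)$-heart in a finite band $[a,b]$, both of which are controlled uniformly by the estimates. This yields $\sigma_t := \eta_t*\tau_t$ (more precisely, gluing the three pieces as in \Cref{thm_geomstab_cubic4_noplane}), with $\sigma_{t_0}$ geometric by the second part of that theorem. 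Finally, quasi-convergence as $t\to 0$ follows from the estimates exactly as in \Cref{L:extensionoflog} and \Cref{rem_gluing_pathKu}: the three exceptional objects have phases diverging to $\pm\infty$ while $\cT = \Ku(X)$ stays in a bounded band, so the limit HN filtration is the one associated with $\langle \Ku(X),\ko,\ko(1),\ko(2)\rangle$; and the spanning condition of \Cref{conj:NMMPFano} is checked from \eqref{eq_asymp_cubic4} as in the proof of \Cref{L:NMMPFanofromConj2(1)}, noting that each of the three exceptional pieces is rank one. I expect the main obstacle to be verifying that the gluing $\Hom$-vanishings of \Cref{thm_geomstab_cubic4_noplane} persist along the entire path $t\in (0,t_0]$ rather than merely at the endpoint — i.e. making the "up to shrinking $t_0$" uniform — which requires tracking the integer shifts $k^i_t$ (equivalently $n_i$) and the band containing $\widetilde{g}_t\cdot\tau_0$ as $g_t$ travels in $\GL_2^+(\bf{R})$, along the lines of the bookkeeping in \Cref{prop_gluing_paths_Ku}.
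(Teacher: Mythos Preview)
Your overall strategy is essentially the same as the paper's: choose $A\in\GL(\H^\bullet(X))$ to reassign the asymptotics of \eqref{eq_asymp_cubic4} to the Kuznetsov basis, run a $\GL_2^+(\bf{R})^\sim$-path on $\Ku(X)$, run an exceptional-collection path on $\cN_X$, glue via \Cref{thm_geomstab_cubic4_noplane}, and check quasi-convergence. So the architecture is right.

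However, your specific assignment of asymptotics has a genuine gap. You want $\log\cZ^A_t(V_j)$ to have bounded imaginary part (so that $\Ku(X)$ ``stays in a bounded band'') while the three line bundles $\cO,\cO(1),\cO(2)$ receive the three non-zero exponents. But the three non-zero exponents $Te^{\mathtt{i}(4\pi/3)}, T, Te^{\mathtt{i}(2\pi/3)}$ have imaginary parts of $e^{-\mathtt{i}\varphi}\cdot(\text{exponent})$ with signs $-,-,+$ respectively. Hence two of the three line bundles would have phase $\to -\infty$, placing them \emph{before} $\Ku(X)$ in the limit ordering $\prec^i$. You cannot obtain $\langle\Ku(X),\cO,\cO(1),\cO(2)\rangle$ this way; you would instead get something like $\langle\cO,\cO(1),\Ku(X),\cO(2)\rangle$. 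Linear combinations do not help, since a combination is asymptotically dominated by one of the existing exponents.

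The paper's fix is to reverse the roles: assign the most negative exponent to $\Ku(X)$ and the zero exponent to one of the line bundles. Concretely, $A$ sends the Kuznetsov basis $(\ch V_1,\ch V_2,\ch\cO,\ch\cO(1),\ch\cO(2))$ to $(\ch\cO(2),\,\ch\cO(2)+\ch V_2,\,\ch(\bf{R}_{\cO(2)}\cO),\,\ch V_1,\,\ch(\bf{R}_{\cO(2)}\cO(1)))$, so that $\cZ^A_t(V_i)$ carries exponent $Te^{\mathtt{i}(4\pi/3)}$ (most negative imaginary part), $\cZ^A_t(\cO)$ carries $T$, $\cZ^A_t(\cO(1))$ is bounded, and $\cZ^A_t(\cO(2))$ carries $Te^{\mathtt{i}(2\pi/3)}$. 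Now $V_i\prec^i\cO\prec^i\cO(1)\prec^i\cO(2)$ as required. The ``$\ch\cO(2)+\ch V_2$'' trick both preserves linear independence on $\Ku(X)$ and makes the orientation argument for $g_t$ go through. This arrangement also directly resolves your stated obstacle about the persistence of the gluing vanishings: since $V_i$ has phase $\to -\infty$, the heart of $\tau_t$ satisfies $f_t(0)\le 0$ for all $t\le t_0$, and the heart of $\eta_t$ is $\langle\cO[-n_0+2],\cO(1)[-n_1+1],\cO(2)[-n_2]\rangle_{\rm ext}$ with $n_2\ge n_1\ge n_0\ge 0$, so the $\Hom^{\le 0}$-vanishings established in \Cref{thm_geomstab_cubic4_noplane} (via \Cref{rem:glu_cubic4}) persist for all $t\in(0,t_0]$ without further bookkeeping.
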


\begin{proof}
The proof is the same as in the threefold case.
Let $A$ be the change of basis matrix that transforms the basis
\[
    \ch(V_1),\ch(V_2),\ch(\ko), \ch(\ko(1)), \ch(\ko(2))
\]
to 
\[
    \ch(\ko(2)),\ch(\ko(2))+\ch(V_2), \ch(\bf{R}_{\ko(2)}\ko),\ch(V_1),\ch(\bf{R}_{\ko(1)}\ko).
\]
The central charge $\cZ_t^A$ has the following asymptotics
    \begin{equation}
        \begin{split}
        \log\cZ^A_t(V_i) & = C_1 +Te^{\mathtt{i}(4\pi/3-\varphi)}t^{-1} + \left(2 - m\right)\log(e^{\mathtt{i}\varphi}t) + \epsilon_{\ko(2)}(t)\\
        \log\cZ^A_t(\ko) & = C_2 +
        Te^{-\mathtt{i}\varphi}t^{-1} 
        + \left(2 - m\right)\log(e^{\mathtt{i}\varphi}t) + \epsilon_{\bf{R}_{\ko(2)}\ko}(t)\\
        \log\cZ^A_t(\ko(1)) & = C'_1 + \left(2 - m\right)\log(e^{\mathtt{i}\varphi}t) + \epsilon_1(t)\\
        \log\cZ^A_t(\ko(2)) & = C_3 +Te^{\mathtt{i}(2\pi/3-\varphi)}t^{-1} + \left(2 - m\right)\log(e^{\mathtt{i}\varphi}t) + \epsilon_{\bf{R}_{\ko(1)}\ko}(t).
    \end{split}
\end{equation}

As in the threefold case for $0<t_0\ll 1$ there is a $g_t\colon (0,t_0]\to \GL_2^+(\bf{R})$ such that $\cZ^A_t=g_t\cdot \cZ^A_{t_0}$ for all $t\in (0,t_0].$

Now we construct desired the path by gluing stability conditions on the semiorthogonal decomp\-osition in the statement. By the asymptotics of $\cZ^A_t(\cO(i))$ for $i=0,1,2$ there is a path $\eta_t: (0,t_0]\to \Stab(\langle \cO,\cO(1),\cO(2)\rangle)$ such that $\cO,\cO(1),\cO(2)$ are stable for all $t$, $\phi_t(\ko[2])<\phi_t(\ko(1)[1])<\phi_t(\ko(2))$, and $\phi_{t_0}(\ko(1)[1])-\phi_{t_0}(\ko[2]), \phi(\ko(2))-\phi(\ko(1)[1])<1$; see \Cref{section_gluingpaths_general} for details. Moreover, for any lift $\widetilde{g}_t$ of $g_t$ to $\GL_2^+(\bf{R})^\sim$ and any $\tau_0\in \Stab(\Ku(X))$ with central charge ${\cZ'_{t_0}}|_{\Ku(X)}$ we can consider the path $\tau_t:=\widetilde{g}_t\cdot \tau_0:(0,t_0]\to \Stab(X)$.

By \Cref{thm_geomstab_cubic4_noplane}, there is a glued geometric stability condition $\sigma_0=\tau_0*\eta_0$.
By the asymptotics of $\cZ^A_t$, for $t_0\ll 1$ the heart of $\eta_t$ is $\cB_t=\langle \ko[-n_0+2],\ko(1)[-n_1+1],\ko(2)[n_2]\rangle_{\mathrm{ext}}$ where $n_2\geq n_1\geq n_0\geq 0$. Similarly, the heart of $\tau_t$ is $\cP_{\tau_0}(f_t(0),f_t(1)]$ where $g_t=(M_t,f_t)$, see \eqref{eq_pathGL}. By the asymptotics of $\cZ^A_t$ we also see that $f_t(0)\leq 0$ for $t\leq t_0$, whence the vanishing 
\[
    \Hom^{\leq 0}(\cP(f_t(0),f_t(1)],\cB_t)=0
\]
for any $t\leq t_0$. Thus, by \Cref{thm:glued} we obtain a path $\sigma_t=\tau_t*\eta_t$ as claimed. We omit the verification that $\sigma_t$ is quasi-convergent.
\end{proof}

\subsection{Epilogue: Future directions}
\label{SS:Epilogue}
In this final section, we explain some future avenues of investigation suggested by the present work. 

\subsubsection*{\Cref{conj:2}\ref{conj2independence} for cubics}
We expect that \Cref{conj:2}\ref{conj2independence} should hold also for Fano hypersurfaces $X$, including the cubics considered in the present work. Much of the work has been completed in the work of Sanda--Shamoto \cite{SandaShamoto}; their framework of mutation systems dictates that as one varies the phase $\varphi$ and crosses Stokes directions\footnote{I.e. those where $\Im(e^{-\mathtt{i}\varphi}(u_i - u_j))$ changes value for some eigenvalues $u_i$ and $u_j$.} the corresponding decomp\-osition of $\DCoh(X)$ undergoes a mutation. It remains to show that this result can be lifted to the space of stability conditions. Indeed, verifying the conjecture is tantamount to showing that as one varies $\varphi\in \bf{R}$ and $\tau \in \H^2(X)$, there exist quasi-convergent lifts $\sigma_{t,\varphi}^\tau$ to $\Stab(X)$ with continuous dependence on $(\tau,\varphi,t)$ and such that the resulting semiorthogonal decompositions are all related by mutation.

\subsubsection*{Isomonodromic deformations for cubics} We explain here a possible path to complete the proof of \Cref{conj:noncommutativeGamma}\ref{conj2isomonodromic} for cubic hypersurfaces. The key idea can be explained using the case of a smooth cubic threefold $Y\subset \bf{P}^4$. In \Cref{SS:asymptotics}, we observed that for $\tau = 0$ the eigenvalues of $c_1(Y)\star_0(-)$ are $\pm 2\sqrt{6}$ and $0$. The non-zero eigenvalues occur with multiplicity one, while the eigenvalue $0$ occurs with multiplicity $2 + \dim \H^\bullet_{\rm{amb}}(Y)^\perp$. 

In particular, under the \emph{Dubrovin-type conjecture} of Sanda--Shamoto \cite{SandaShamoto}*{\S5}, the Kuznetsov component $\Ku(Y)$ corresponds to the class of limit semistable objects $E$ such that $\log \cZ_t(E)$ grows like $\log(\alpha(\varphi) \cdot t)$ for some $\alpha(\varphi) \in \bf{C}^*$, depending on the admissible phase $\varphi \in \bf{R}$. On the other hand, the category $\cD_+$ (resp. $\cD_-$) corresponding $2\sqrt{6}$ (resp. $-2\sqrt{6}$), is generated by limit semistable objects $F$ such that $\log\cZ_t(F)$ has leading order term $a(\varphi)^{-1}2\sqrt{6}t^{-1}$ (resp. $-a(\varphi)^{-1}2\sqrt{6}t^{-1}$) as $t\to 0$. 

Consequently, to lift $\cZ_t$ to a path in $\Stab(Y)$ in a glued region coming from a semiorthogonal decomposition mutation equivalent to \eqref{E:KuznetsovCubicthreefold}, the semiorthogonal decomposition must be of the form $\langle \cD_-,\cT,\cD_+\rangle$, where $\cT\simeq \Ku(Y)$ and $\cD_{\pm}$ are generated by exceptional objects $E_{\pm}$. Indeed, for any limit semistable $E\in \cT$ we have
\[
    \Im \log \cZ_t(E_-) < \Im \log\cZ_t(E) < \Im\log\cZ_t(E_+) \text{ for all }t\gg0,
\]
which determines the ordering of the categories in the resulting decomposition of $\DCoh(Y)$. In \Cref{SS:cubicthreefolds}, we have identified the corresponding decomposition as $\DCoh(Y) = \langle \cO(1),\cT,\cO(2)\rangle$, obtained from \eqref{E:KuznetsovCubicthreefold} by mutation. On the other hand, in \Cref{S:constructionofstab} we have only constructed geometric stability conditions in the glued region of $\DCoh(Y) = \langle \Ku(Y),\cO,\cO(1)\rangle$. It is unclear if this is a technical limitation, or a special feature of the Kuznetsov decomposition. 

To resolve this issue, one could take inspiration from the isomonodromic deformation of the quantum connection over $\widetilde{\mathscr{U}}_N$ studied in \Cref{SS:isomonodromicdeformation} in the semisimple case. Thus, one might expect the following:

\begin{quest}
\label{Q:isomonodromicexist?}
    Let $X$ denote a cubic hypersurface and $n$ the number of distinct eigenvalues of $c_1(X)\star_0(-) \in \End(\H^\bullet(X))$. Does there exist an isomonodromic deformation (\Cref{D:isomonodromic_deformation}) of the quantum connection $\nabla$ over a complex manifold $M$, containing $\H^2(X)$ as a locally closed subspace such that $\pi_1(M,\tau = 0) \cong \mathfrak{B}_n$?
\end{quest}

Suppose \Cref{Q:isomonodromicexist?} was answered affirmatively in the case of the cubic threefold. In this case, one could embed a small neighborhood $\mathscr{B}$ near $\tau = 0$ in $M$ into $\widetilde{M}$, and analytically continue to a flat connection $\widetilde{\nabla}$ over $\widetilde{M}$. After deforming $\tau = 0$ to a suitable value $v \in \widetilde{M}$ corresponding to the element of $\mathfrak{B}_3$ that passes $2$ over $1$, we might expect that the A-model mutation system decomposition:
\[
    \H^\bullet(X) = \bigoplus_{\lambda \in \lvert \sigma(\cE_0)\rvert} \cA_\lambda,
\]
where $\cA_\lambda$ is as in \eqref{E:Amodelmutationsystem} undergoes a mutation along the corresponding element of $\mathfrak{B}_3$. See \cite{SandaShamoto}*{\S2.5} for a precise definition. In particular, the eigenvalue $2\sqrt{6}$ appears with multiplicity $2 + \dim \H^\bullet_{\rm{amb}}(Y)^\perp$, and the eigenvalues $0$ and $-2\sqrt{6}$ appear with multiplicity one. Consequently, the deformed quantum cohomology central charge $\cZ_w^v$ (\Cref{defn_perturbed_can_sol}) should lift to a family of paths $\sigma_w^v$. Taking $w(t) = e^{\mathtt{i}\varphi}t$ for $0<\varphi \ll1$, the corresponding path $\sigma_{t,\varphi}^v$ should lie in the glued geometric region of $\langle \Ku(Y),\cO,\cO(1)\rangle$ for $t$ near some $t_0$, and recover the Kuznetsov decomposition $\DCoh(Y) = \langle \Ku(Y),\cO,\cO(1)\rangle$ as $t\to 0$.

\subsubsection*{Smooth Fano Complete Intersections} Another possible extension of the results in the present work is to the more general case of Fano complete intersections in $\bf{P}^n$. The \emph{Dubrovin-type conjecture} of \cite{SandaShamoto}*{\S 5} is proven in this level of generality, and most of the results of \Cref{S:nMMPFano} still hold in this case. Consequently, the obstruction to proving \Cref{conj:noncommutativeGamma}\ref{conj2SOD} is the existence of stability conditions on the Kuznetsov components in these examples. 

In the case of cubic fivefolds $X$, stability conditions have been shown to exist on $\Ku(X)$ in \cite{Liustabilityfivefolds}. Consequently, there should be a natural extension of our results to that case.

\appendix 

\section{Asymptotic Estimates}

For use in the proof of \Cref{T:NCgammaforGrandQ}, we prove a version of the classical Watson's lemma, used to produce asymptotic expansions of certain integral functions. For our purposes, we need a version of the classical lemma for families of holomorphic functions. 

\begin{setup}
\label{Setup:Watson}
    Consider a holomorphic function $\varphi(u,\lambda)$ where $\lambda$ varies in an open neighborhood $W$ of $\bf{R}_{\ge 0} \subset \bf{C}$ and $u\in \Omega \subset \bf{C}^N$, for $\Omega$ a compact domain. 
    We assume further that there is a $b>0$ such that for any fixed $u$, $\lvert \varphi(u,\lambda)\rvert \le \lvert p_u(\lambda)\cdot e^{b\lambda}\rvert$ as $\lambda \to \infty$, where $p_u(\lambda)$ is a polynomial depending on $u$. We will consider the Taylor series expansion
    \begin{equation}
    \label{E:taylorseries}
        \varphi(u,\lambda) = \sum_{n\ge 0}a_n(u)\lambda^n,
    \end{equation}
    valid on an open disk $\bf{D}_\delta(0)$ for all $u\in \Omega$. Here, $a_n(u)$ is an analytic function of $u$ for each $n$.
\end{setup}

We first prove an auxiliary lemma:

\begin{lem}
\label{L:watsonbound}
    In \Cref{Setup:Watson}, $p_u(\lambda)$ can be chosen to depend analytically on $u$ locally and consequently there exists a polynomial $P(\lambda)$ such that $\lvert \varphi(u,\lambda)\rvert \le \lvert P(\lambda)\rvert$ as $\lambda \to \infty$ in $\bf{R}$.\footnote{That is, there exists $\lambda_0 \in \bf{R}_{>0}$ such that for all $u \in \Omega$ and all $\lambda\ge \lambda_0$ we have $\lvert \varphi(u,\lambda)\rvert \le \lvert P(\lambda)\rvert$.}
\end{lem}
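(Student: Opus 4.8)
The plan is to upgrade the pointwise polynomial bound on $\varphi(u,\lambda)$ to a bound that is locally uniform in $u$, and then invoke compactness of $\Omega$ to obtain a single polynomial $P(\lambda)$. First I would fix an arbitrary $u_0 \in \Omega$ and use the hypothesis that for this $u_0$ there is a polynomial $p_{u_0}(\lambda)$ with $|\varphi(u_0,\lambda)| \le |p_{u_0}(\lambda)\, e^{b\lambda}|$ as $\lambda\to\infty$. The natural device to make this uniform is to consider, for a large real number $\lambda$, the holomorphic function $u \mapsto e^{-b\lambda}\varphi(u,\lambda)$ on a small polydisc $U_0 \ni u_0$ whose closure is contained in the interior of $\Omega$ (or, at boundary points, a one-sided neighbourhood in the compact domain $\Omega$; since $\Omega$ is a compact domain, finitely many such neighbourhoods cover it). One expects the degree of $p_u$ and the coefficient bounds to vary controllably in $u$; the cleanest way to see this is via a Cauchy-type estimate applied on the $u$-variable together with the regular-singularity / moderate-growth input that produced the bound in the first place.

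Concretely, the key step is the following. By the maximum principle in the $u$-variable, for each fixed $\lambda$ one has
\[
    \bigl|e^{-b\lambda}\varphi(u,\lambda)\bigr| \le \max_{u'\in \partial U_0}\bigl|e^{-b\lambda}\varphi(u',\lambda)\bigr|, \qquad u\in U_0.
\]
Since $\varphi$ is jointly continuous and $\partial U_0 \times \{\lambda : \lambda_0 \le \lambda \le \lambda_1\}$ is compact for any fixed window, the subtlety is only the behaviour as $\lambda\to\infty$. Here I would argue that the function $\Psi(\lambda):=\max_{u'\in\overline{U_0}} |e^{-b\lambda}\varphi(u',\lambda)|$ is itself of at-most-polynomial growth: the hypothesis gives, for each $u'$, a polynomial $p_{u'}$ with $|e^{-b\lambda}\varphi(u',\lambda)|\le |p_{u'}(\lambda)|$ for $\lambda\ge \lambda(u')$, and a further compactness/normal-families argument (applied to the holomorphic family $\lambda\mapsto e^{-b\lambda}\varphi(\cdot,\lambda)$, which is locally bounded on $\overline{U_0}$ by the previous line for each fixed $\lambda$) lets one choose the degree and the threshold $\lambda(u')$ uniformly over $\overline{U_0}$. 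This yields a polynomial $Q_{U_0}(\lambda)$ and $\lambda_{U_0}>0$ with $|\varphi(u,\lambda)| \le |Q_{U_0}(\lambda)\, e^{b\lambda}|$ for all $u\in U_0$ and all $\lambda\ge\lambda_{U_0}$; taking $p_u := Q_{U_0}$ on $U_0$ shows $p_u$ can be chosen to depend analytically (indeed locally constantly) on $u$.

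Finally I would cover the compact $\Omega$ by finitely many such neighbourhoods $U_1,\ldots,U_k$, with associated polynomials $Q_1,\ldots,Q_k$ and thresholds $\lambda_1,\ldots,\lambda_k$, set $\lambda_0 = \max_j \lambda_j$ and let $P(\lambda)$ be a fixed polynomial dominating $\max_j |Q_j(\lambda)\, e^{b\lambda}|$ for $\lambda\ge\lambda_0$ — one can simply take $P(\lambda) = C\lambda^{D}$ with $D \ge \max_j \deg Q_j + 1$ and $C$ large; wait — that omits the factor $e^{b\lambda}$, so one must instead absorb the exponential, which is only legitimate if $b$ is known to be $\le 0$ or if the true claim intends the bound $|\varphi(u,\lambda)| \le |P(\lambda) e^{b\lambda}|$. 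Reading the lemma statement as written (a genuine polynomial bound), this forces $b \le 0$ in the intended application, and then $e^{b\lambda}\le 1$ makes $P(\lambda)=C\lambda^{D}$ work. The \textbf{main obstacle} is precisely this uniformity in $u$ as $\lambda\to\infty$: one must rule out the degree of $p_u$ or its leading coefficient blowing up as $u$ ranges over $\Omega$, and the honest way to do that is the normal-families / maximum-principle argument sketched above rather than a naive "take the max over $u$" (which need not be finite a priori). Once uniformity on each patch is secured, the passage to a global $P$ via compactness is routine.
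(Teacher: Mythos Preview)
Your approach has a genuine gap, and you have correctly sensed---but not correctly diagnosed---a mismatch between the Setup and the Lemma.

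The gap is the ``compactness/normal-families argument'' meant to show that $\Psi(\lambda)=\max_{u'\in\overline{U_0}}\lvert e^{-b\lambda}\varphi(u',\lambda)\rvert$ has polynomial growth. Normal families controls a holomorphic family on compacta of a \emph{fixed} domain; it says nothing about growth as the parameter $\lambda\to\infty$. Knowing only that for each fixed $u'$ there exist a degree $d(u')$ and a threshold $\lambda(u')$ with $\lvert e^{-b\lambda}\varphi(u',\lambda)\rvert\le \lvert p_{u'}(\lambda)\rvert$ gives no a priori control on how $d(u')$ or $\lambda(u')$ vary, and no abstract argument can supply one: under the Setup as literally written the conclusion is \emph{false}. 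Take $\Omega=\{u\in\bf{C}:\lvert u\rvert\le 1/2\}$ and $\varphi(u,\lambda)=e^{u\lambda}$; then the hypothesis holds with $b=1/2$ and $p_u\equiv 1$, yet $\lvert\varphi(1/2,\lambda)\rvert=e^{\lambda/2}$ dominates every polynomial. Your proposed fix ``$b\le 0$'' contradicts the Setup, which stipulates $b>0$.

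What the paper actually uses is a structural hypothesis present in the intended application but not recorded in the Setup: $\varphi(u,\lambda)$ extends holomorphically to a punctured disk about $\infty\in\bf{P}^1$ and has at most a pole there (in the application $\varphi=\hat y_i$ is a flat section for the Laplace-dual connection $\widehat{\nabla}$, which has a \emph{regular} singularity at $\infty$). Granting this, one writes the Laurent expansion $\varphi(u,\lambda)=\sum_{k\ge -n}a_k(u)\,t^{k}$ in $t=\lambda^{-1}$; the Laurent coefficients $a_k(u)$ are automatically holomorphic in $u$, and $p_u(\lambda)=\sum_{k=0}^{n}\lvert a_{-k}(u)\rvert\,\lambda^{k}+C$ gives the locally uniform bound directly. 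A finite cover of the compact $\Omega$ then produces a single $P(\lambda)$. This also dissolves the $e^{b\lambda}$ puzzle you flagged: with at most a pole at $\infty$, $\varphi$ already has polynomial growth and the exponential factor in the Setup is superfluous. A maximum-principle or normal-families argument in the $u$-variable cannot substitute for this Laurent-expansion input.
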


\begin{proof}
    Consider a disk $\bf{D}$ around infinity $\infty \in \bf{P}^1$ with coordinate $\lambda^{-1}$, $\bf{D}^*:=\bf{D}\setminus \{\infty\}$, and for $u\in \Omega$ an open $V \subseteq \bf{C}^N$ a neighborhood of $u$ on which $\varphi(u,\lambda)$ is holomorphic. The function $\varphi(u,\lambda)$ is analytic on $\bf{D}^* \times V$ and, up to shrinking admits, it a Laurent series expansion in $\lambda^{-1} = t$ at $(u,\infty)$, given by 
    \[
        \varphi(u,\lambda) = \sum_{k \ge -n} a_k(u)\cdot t^k
    \]
    where the $a_k(u)$ are analytic functions of $u$. In particular, $p_u(\lambda) = \sum_{k=0}^n \lvert a_k(u)\rvert \cdot \lambda^k + C$, for some $C>0$ gives an upper bound for $\lvert \varphi(u,\lambda)\rvert $ for all $\lambda \gg0$. Up to shrinking $V$ and $\bf{D}$ and taking the maximum of the coefficients, we can find one polynomial $P(\lambda)$ which bounds $\varphi(u,\lambda)$ on $V\times \bf{D}^*$. Since $\Omega$ is compact, a finite cover argument shows that we can choose one $P(\lambda)$ for all $u$.
\end{proof}

\begin{cor}
\label{C:watsonlemma}
    In \Cref{Setup:Watson}, there exist $\epsilon>0$ and $C>0$ such that $\lvert \varphi(u,\lambda)\rvert \le C\lvert e^{(b+\epsilon)\lambda}\rvert$ for all $(u,\lambda) \in \Omega \times \bf{R}_{\ge 0}$.
\end{cor}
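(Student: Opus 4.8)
The plan is to combine the uniform polynomial-type bound at infinity supplied by \Cref{L:watsonbound} with a compactness argument on a bounded range of $\lambda$. First I would fix $\epsilon>0$ once and for all. By \Cref{L:watsonbound} there is a single polynomial $P(\lambda)$ and a threshold $\lambda_0\in\bf{R}_{>0}$ such that $\lvert\varphi(u,\lambda)\rvert\le \lvert P(\lambda)\rvert\cdot e^{b\lambda}$ for every $u\in\Omega$ and every $\lambda\ge\lambda_0$; the essential content here is the uniformity in $u$, which is precisely what that lemma provides via the analytic dependence of the coefficients together with the compactness of $\Omega$. Since every polynomial is subexponential, there is a constant $C_1>0$ and, after enlarging $\lambda_0$ if necessary, we may assume $\lvert P(\lambda)\rvert\le C_1 e^{\epsilon\lambda}$ for all $\lambda\ge\lambda_0$. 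Hence $\lvert\varphi(u,\lambda)\rvert\le C_1 e^{(b+\epsilon)\lambda}$ for all $u\in\Omega$ and $\lambda\ge\lambda_0$.

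For the remaining range $\lambda\in[0,\lambda_0]$, I would use that $\varphi$, being holomorphic on $\Omega\times W$ and in particular on $\Omega\times[0,\lambda_0]$, is continuous there, while $\Omega\times[0,\lambda_0]$ is compact (in \Cref{Setup:Watson} the domain $\Omega$ is assumed compact). Therefore $M:=\sup_{(u,\lambda)\in\Omega\times[0,\lambda_0]}\lvert\varphi(u,\lambda)\rvert<\infty$. Since $b+\epsilon>0$ one has $e^{(b+\epsilon)\lambda}\ge 1$ on $[0,\lambda_0]$, so $\lvert\varphi(u,\lambda)\rvert\le M\le M\,e^{(b+\epsilon)\lambda}$ on that range. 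Taking $C:=\max(C_1,M)$ then gives $\lvert\varphi(u,\lambda)\rvert\le C\,e^{(b+\epsilon)\lambda}$ for all $(u,\lambda)\in\Omega\times\bf{R}_{\ge 0}$, which is the claim.

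I do not expect a genuine obstacle in this corollary: all of the analytic substance—that the bounding polynomial can be chosen with coefficients analytic in $u$ and hence, by compactness of $\Omega$, uniform in $u$—has already been isolated in \Cref{L:watsonbound}. What remains is bookkeeping: splitting $\bf{R}_{\ge 0}$ into a neighborhood of $\infty$, where the polynomial factor is absorbed into $e^{\epsilon\lambda}$, and a compact initial segment, where continuity yields a crude uniform bound. The only point worth flagging is that compactness of $\Omega$ is used twice—once inside \Cref{L:watsonbound} to globalize the local analytic estimate, and once here to bound $\varphi$ on $\Omega\times[0,\lambda_0]$.
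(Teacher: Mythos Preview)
Your proof is correct and follows essentially the same route as the paper: split $\bf{R}_{\ge 0}$ at some $\lambda_0$, invoke \Cref{L:watsonbound} for the uniform polynomial bound at infinity (which you then absorb into $e^{\epsilon\lambda}$), and use compactness of $\Omega\times[0,\lambda_0]$ for the bounded range. The only cosmetic differences are that you fix $\epsilon$ first and carry along an extra $e^{b\lambda}$ factor from the Setup, whereas the paper reads \Cref{L:watsonbound} as giving $\lvert\varphi(u,\lambda)\rvert\le\lvert P(\lambda)\rvert$ directly and chooses $\epsilon$ afterward; neither affects the argument.
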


\begin{proof}
    By \Cref{L:watsonbound}, we can find $\lambda_0>0$ and a polynomial function $P(\lambda)$ such that $\lvert \varphi(u,\lambda)\rvert \le \lvert P(\lambda)\rvert$ for all $\lambda > \lambda_0$. On the other hand, $\lvert \varphi(u,\lambda)\rvert$ is bounded on $\Omega \times [0,\lambda_0]$ by compactness by some $C>0$. Finally, choose $\epsilon>0$ so that $\lvert P(\lambda)\rvert \le e^{\epsilon\lambda}$ for all $\lambda\ge \lambda_0$. The result now follows.
\end{proof}

\begin{prop}
    [Watson's Lemma] In \Cref{Setup:Watson}, the integral function
    \[
        I(u,z) = \int_0^\infty \varphi(u,\lambda) e^{-\lambda/z}\:d\lambda
    \]
    exists for all $u\in \Omega$ and $z$ with $\Re(z)>0$ and admits an asymptotic expansion
    \[
        I(u,z) \sim \sum_{n=0}^\infty a_n(u)z^{n+1} \text{ as } z\to 0
    \]
    in a sector $\mathscr{S}(0,\tfrac{\pi}{2}+\epsilon)$ for a small fixed $\epsilon>0$. In particular, 
    \[
         \left\lvert I(u,z) - \sum_{n=0}^N a_n(u)\cdot z^n \right\rvert \le O(\lvert z\rvert^{N+1})  
    \]
    independently of $u\in \Omega$.
\label{P:Watson}
\end{prop}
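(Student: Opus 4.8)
\textbf{Proof plan for Watson's Lemma (Proposition \ref{P:Watson}).}

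The plan is to follow the classical proof of Watson's lemma, taking care that all estimates are uniform in the parameter $u\in\Omega$. First I would establish existence of the integral: by \Cref{C:watsonlemma} there are constants $C,\epsilon>0$ with $\lvert\varphi(u,\lambda)\rvert\le C\lvert e^{(b+\epsilon)\lambda}\rvert$ for all $(u,\lambda)\in\Omega\times\bf{R}_{\ge 0}$, so whenever $\Re(1/z)>b+\epsilon$ the integrand is dominated by $Ce^{-(\Re(1/z)-b-\epsilon)\lambda}$, an integrable function; hence $I(u,z)$ converges for all such $z$, and in particular for all $z$ in a small sub-sector $\mathscr{S}(0,\tfrac{\pi}{2}+\epsilon')$ with $\lvert z\rvert$ small. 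Holomorphicity in $z$ and analyticity in $u$ follow from differentiation under the integral sign, justified by the same domination.

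Next I would split the integral at the radius $\delta>0$ of the common disk of convergence of the Taylor series \eqref{E:taylorseries}: write $I(u,z)=\int_0^\delta + \int_\delta^\infty$. For the tail $\int_\delta^\infty\varphi(u,\lambda)e^{-\lambda/z}\,d\lambda$, the bound from \Cref{C:watsonlemma} gives $\lvert\int_\delta^\infty\rvert\le C\int_\delta^\infty e^{-(\Re(1/z)-b-\epsilon)\lambda}\,d\lambda = O(e^{-c/\lvert z\rvert})$ for a constant $c>0$ when $z$ ranges over a narrow enough sector around $\bf{R}_{>0}$ with $\lvert z\rvert$ small; this is exponentially small, hence $o(\lvert z\rvert^{N+1})$ for every $N$, and the bound is uniform in $u$ since $C,b,\epsilon$ are. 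For the main part, I would substitute the Taylor expansion with remainder: for $\lambda\in[0,\delta]$,
\[
    \varphi(u,\lambda)=\sum_{n=0}^{N}a_n(u)\lambda^n + R_N(u,\lambda),\qquad \lvert R_N(u,\lambda)\rvert\le M_N\,\lambda^{N+1},
\]
where $M_N=\sup_{u\in\Omega}\sup_{\lvert\lambda\rvert=\delta'}\lvert\varphi(u,\lambda)\rvert/\delta'^{\,N+1}<\infty$ for some $\delta'<\delta$ by Cauchy's estimates and compactness of $\Omega$. Integrating term by term, $\int_0^\delta a_n(u)\lambda^n e^{-\lambda/z}\,d\lambda = a_n(u)\big(z^{n+1}n! - \int_\delta^\infty \lambda^n e^{-\lambda/z}\,d\lambda\big)$, and the correction integrals are again exponentially small in $1/\lvert z\rvert$; combining with the remainder estimate $\lvert\int_0^\delta R_N(u,\lambda)e^{-\lambda/z}\,d\lambda\rvert\le M_N\int_0^\infty\lambda^{N+1}e^{-\lambda\Re(1/z)}\,d\lambda = M_N(N+1)!\,\lvert\Re(1/z)\rvert^{-(N+2)}$ yields $\lvert I(u,z)-\sum_{n=0}^N a_n(u)n!\,z^{n+1}\rvert\le O(\lvert z\rvert^{N+2})$ uniformly in $u$. (One should be mildly careful that the statement as written omits the factorial $n!$; I would either absorb it into the definition of the $a_n(u)$ used elsewhere or state the expansion with the $n!$ made explicit — this is a bookkeeping point, not a real issue.)

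The main obstacle is purely the uniformity in $u$: every constant appearing ($C$, $b$, $\epsilon$, the radius $\delta'$, and the Taylor-coefficient bounds $M_N$) must be shown independent of $u\in\Omega$, and this is exactly what \Cref{L:watsonbound} and \Cref{C:watsonlemma} were set up to provide via the compactness of $\Omega$ together with a finite-cover argument. Once those uniform bounds are in hand, the rest is the textbook argument. I would close by noting that the sector can be taken to be $\mathscr{S}(0,\tfrac{\pi}{2}+\epsilon)$ rather than just a neighborhood of $\bf{R}_{>0}$ because the only constraint used was $\Re(1/z)\to+\infty$ as $z\to 0$, which holds throughout such a sector.
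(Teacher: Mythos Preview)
Your proposal is correct and follows essentially the same route as the paper: split the integral at $\delta$, bound the tail via \Cref{C:watsonlemma}, handle the main term by Taylor expansion with uniform remainder, and integrate term by term. You even flag the missing $n!=\Gamma(n+1)$ in the stated expansion, which the paper's own proof silently reinserts; your remark that this is bookkeeping rather than substance is exactly right.
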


\begin{proof}
    We write $z = x+\mathtt{i}y$. To deduce existence of $I(u,z)$, we note that when $\Re(z)>0$, by \Cref{L:watsonbound} we have $\lvert \varphi(u,\lambda)e^{-\lambda/z}\rvert \le \lvert P(\lambda)\rvert \cdot \exp{\left(-\lambda\left(\frac{\Re z}{|z|^2}-b\right)\right)}$ which decays exponentially. Next, let a small $\delta>0$ be given and write 
    \[
        I(u,z) = \int_0^\delta \varphi(u,\lambda)e^{-\lambda/z}\:d\lambda + \int_\delta^\infty \varphi(u,\lambda)e^{-\lambda/z}\:d\lambda.
    \]
    By \Cref{C:watsonlemma}, we can bound the second term as 
    \begin{align*}
        \left\lvert \int_\delta^\infty e^{-\lambda/z}\varphi(u,\lambda)\:d\lambda\right\rvert & \le \int_\delta^\infty \lvert e^{-\lambda/z}\rvert \cdot \lvert \varphi(u,\lambda)\rvert \:d\lambda \\
        & \le \int_\delta^\infty  C\exp{\left(\lambda\left(b-\frac{x}{|z|^2}+\epsilon\right)\right)}\:d\lambda\\
        & = C\exp{\left(\delta\left(b-\frac{x}{|z|^2}+\epsilon\right)\right)}\cdot\left(\frac{x}{\lvert z\rvert^2} -(b+\epsilon)\right)^{-1}
    \end{align*}
    Now, we consider the first term. Taylor's theorem with remainder applied to \eqref{E:taylorseries} implies that the remainder $R_N(u,\lambda) = \varphi(u,\lambda) - \sum_{n=0}^N a_n(u)\cdot \lambda^n$ can be bounded as $\lvert R_N(u,\lambda)\rvert \le C' \cdot \lvert \lambda\rvert^{N+1}$ for some $C'>0$ which does not depend on $u$. Next,
    \[
        \int_0^\delta \varphi(u,\lambda) e^{-\lambda/z} \:d\lambda = \sum_{n=0}^N a_n(u) \int_0^\delta \lambda^n e^{-\lambda/z}\:d\lambda + \int_0^\delta R_N(u,\lambda) e^{-\lambda/z}\:d\lambda.
    \]
    We estimate the remainder by
    \begin{align*}
        \left\lvert \int_0^\delta R_N(u,\lambda)e^{-\lambda/z}\:d\lambda \right\rvert & \le C' \int_0^\infty \lambda^{N+1}e^{-\frac{\lambda x}{\lvert z\rvert^2}}\: d\lambda = C' \cdot \frac{\Gamma(N+2)}{x^{N+2}}\cdot \lvert z\rvert^{2(N+2)}.
    \end{align*}
    Next, we estimate the summands of the first term as 
    \begin{align*}
        \int_0^\delta \lambda^n e^{-\lambda/z}\:d\lambda & = \int_0^\infty \lambda^ne^{-\lambda/z}\:d\lambda  -\int_\delta^\infty \lambda^ne^{-\lambda/z}\:d\lambda \\
        & = \Gamma(n+1)z^{n+1} + O(e^{-\delta/z})\text{ as } z\to 0.
    \end{align*}
    Combining everything, we obtain an asymptotic expansion 
    \[
        I(u,z) \sim \sum_{n=0}^\infty a_n(u)\cdot \Gamma(n+1)\cdot z^{n+1}.
    \]
    Indeed,
    \begin{align*}
        &\left\lvert I(u,z) - \sum_{n=0}^N a_n(u)\cdot \Gamma(n+1)\cdot z^{n+1}\right\rvert \leq \\
        &\exp{\left(\delta\left(b-\frac{x}{|z|^2}+\epsilon\right)\right)}\cdot\left(\frac{x}{\lvert z\rvert^2} -(b+\epsilon)\right)^{-1}+C' \cdot \frac{\Gamma(N+2)}{x^{N+2}}\cdot \lvert z\rvert^{2(N+2)} + O(e^{-\delta/z})
    \end{align*} 
    as $z\to 0$ in $\mathscr{S}(0,\tfrac{\pi}{2}+\epsilon)$.
\end{proof}

Next, we use \Cref{P:Watson} to derive estimates when $\varphi(u,\lambda)$ is valued in a complex vector space $V$, regarded as a section of the trivial $V$-bundle $\cV\to W\times \Omega$. Consider an analytic section $\Omega \to \Omega \times V$ given by $u\mapsto \Phi_u \in V$. Suppose that $\varphi(u,\lambda)$ is a holomorphic section of $\cV$ such that $\varphi(u,0) = \Phi_u$ for all $u\in \Omega$. 

\begin{cor}
\label{C:estimatevectorvalued}
    In the above notation, 
    \[
        I(u,z) = \int_0^\infty \varphi(u,\lambda)e^{-\lambda/z}\:d\lambda
    \]
    admits an asymptotic expansion $I(u,z) \sim \Phi_uz+\sum_{n\ge 1} a_n(u)z^{n+1}$ as $z\to 0$ such that the error of the first approximation $I(u,z) - \Phi_u \cdot z$ is in $O(\lvert z\rvert^2)$ and can be made independent of $u$.
\end{cor}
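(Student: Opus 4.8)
\textbf{Proof proposal for \Cref{C:estimatevectorvalued}.}

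The plan is to reduce the vector-valued statement to the scalar version of Watson's Lemma (\Cref{P:Watson}) by choosing a basis of $V$ and applying the scalar result componentwise. Concretely, fix a basis $v_1,\ldots,v_r$ of $V$ and write $\varphi(u,\lambda) = \sum_{j=1}^r \varphi_j(u,\lambda)\, v_j$, where each $\varphi_j(u,\lambda)$ is a scalar holomorphic function on $W\times \Omega$ satisfying the hypotheses of \Cref{Setup:Watson}: the polynomial growth bound in $\lambda$ for fixed $u$ is inherited from that of $\varphi$ (pass through \Cref{L:watsonbound} in each coordinate, or just note that the coordinate projections of a polynomially-bounded $V$-valued function are polynomially bounded), and analyticity in $u$ of the Taylor coefficients is clear. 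The condition $\varphi(u,0) = \Phi_u$ translates into $\varphi_j(u,0) = (\Phi_u)_j$, the $j$-th coordinate of $\Phi_u$, which is precisely the zeroth Taylor coefficient $a_0^{(j)}(u)$ appearing in \Cref{P:Watson}.

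Next I would apply \Cref{P:Watson} to each $\varphi_j$: the scalar integral $I_j(u,z) = \int_0^\infty \varphi_j(u,\lambda) e^{-\lambda/z}\,d\lambda$ exists for $\Re(z)>0$ and admits the asymptotic expansion $I_j(u,z) \sim \sum_{n\ge 0} a_n^{(j)}(u)\,\Gamma(n+1)\, z^{n+1}$ as $z\to 0$ in the sector $\mathscr{S}(0,\tfrac{\pi}{2}+\epsilon)$, with the first-order error $\bigl|I_j(u,z) - a_0^{(j)}(u)\, z\bigr| \le O(|z|^2)$ \emph{uniformly in $u\in\Omega$} — this uniformity is exactly what \Cref{P:Watson} provides (the constants in its error estimate do not depend on $u$). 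Since integration of a $V$-valued function against $e^{-\lambda/z}$ commutes with taking coordinates in a fixed basis, $I(u,z) = \sum_{j=1}^r I_j(u,z)\, v_j$, so $I(u,z)$ exists for $\Re(z)>0$ and
\[
    I(u,z) \;\sim\; \sum_{n\ge 0}\Bigl(\sum_{j=1}^r a_n^{(j)}(u)\, v_j\Bigr)\Gamma(n+1)\, z^{n+1} \;=\; \Phi_u\, z + \sum_{n\ge 1} a_n(u)\, z^{n+1},
\]
where $a_n(u) := \Gamma(n+1)\sum_j a_n^{(j)}(u)\, v_j \in V$ depends analytically on $u$ and the $n=0$ term is $\Gamma(1)\sum_j a_0^{(j)}(u)\, v_j = \Phi_u$. (If one wishes to match the normalization in the statement literally, absorb the $\Gamma(n+1)$ into the definition of $a_n(u)$ as I have done here; the $n=0$ term is then unambiguously $\Phi_u z$.)

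Finally, for the uniform first-order error bound: choose a norm $\|\cdot\|$ on $V$; all norms being equivalent, there is a constant $M$ with $\|\sum_j c_j v_j\| \le M \max_j |c_j|$. Then
\[
    \bigl\| I(u,z) - \Phi_u\, z \bigr\| \;=\; \Bigl\| \sum_{j=1}^r \bigl(I_j(u,z) - a_0^{(j)}(u)\, z\bigr)\, v_j \Bigr\| \;\le\; M\,\max_{j} \bigl| I_j(u,z) - a_0^{(j)}(u)\, z \bigr| \;\le\; O(|z|^2),
\]
where the final bound is uniform in $u\in\Omega$ because each of the $r$ scalar bounds from \Cref{P:Watson} is, and $r$ is finite. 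This establishes the claim. There is no real obstacle here; the only point requiring a little care is bookkeeping the $\Gamma$-factor normalization and confirming that the uniformity over the compact domain $\Omega$ in the scalar lemma genuinely passes through the finite sum — both are routine once the componentwise reduction is set up.
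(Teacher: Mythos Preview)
Your proposal is correct and follows essentially the same approach as the paper: the paper's proof simply says to apply \Cref{P:Watson} componentwise in a trivialization $V\cong \mathbf{C}^N$, noting that the zeroth Taylor coefficient of $\varphi(u,\lambda)$ is $\Phi_u$. Your treatment is more detailed (in particular your bookkeeping of the $\Gamma$-factors and the explicit verification that uniformity survives the finite sum), but the underlying argument is identical.
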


\begin{proof}
    This is immediate from \Cref{P:Watson}, applied componentwise in a trivialization $V\cong \bf{C}^N$, only noting that the Taylor expansion of $\varphi(u,\lambda)$ at $0$ reads $\varphi(u,\lambda) = \Phi_u + \sum_{n\ge 1} A_n(u)\cdot \lambda^n$.
\end{proof}

In particular, in the notation of \Cref{C:estimatevectorvalued}, $\lim_{z\to 0} \frac{1}{z} I(u,z) = \Psi_u$ and the convergence is uniform in $u\in \Omega$.

\singlespacing


\begin{thebibliography}{999999}

\bibitem[ABB]{auel2014fibrations}
Asher Auel, Marcello Bernardara, Michele Bolognesi,
\emph{Fibrations in complete intersections of quadrics, Clifford algebras, derived categories, and rationality problems},
J. Math. Pures Appl. (9) 102, No. 1, 249-291 (2014).


\bibitem[ABCH]{arcara2013minimal}
Daniele Arcara, Aaron Bertram, Izzet Coskun, Jack Huizenga,
\emph{The minimal model program for the Hilbert scheme of points on $\bf{P}^2$
 and Bridgeland stability},
Adv. Math. 235, 580-626 (2013).

\bibitem[AL]{Addington2014OnTS}
Nicolas Addington, Manfred Lehn,
\emph{On the symplectic eightfold associated to a Pfaffian cubic fourfold},
J. Reine Angew. Math. 731, 129-137 (2017).

\bibitem[AT]{AddingtonThomas}
Nicolas Addington, Richard Thomas,
\emph{Hodge theory and derived categories of cubic fourfolds},
Duke Math. J. 163, No. 10, 1885-1927 (2014).


\bibitem[Bar]{Barbierirep}
Anna Barbieri,
\emph{Spaces of Bridgeland stability conditions in representation theory}, arXiv:2409.15131
Preprint, (2024).

\bibitem[Bay]{Bayershort}
Arend Bayer,
\emph{A short proof of the deformation property of Bridgeland stability conditions},
Math. Ann. 375, No. 3-4, 1597-1613 (2019).


\bibitem[Bei]{Beilinson1978}
Alexander A. Beilinson,
\emph{Coherent sheaves on $\bf{P}^n$
 and problems of linear algebra},
Funct. Anal. Appl. 12, 214-216 (1979).


\bibitem[BB+]{bayer2024desingularization}
Arend Bayer, Sjoerd Viktor Beentjes, Soheyla Feyzbakhsh, Georg Hein, Diletta Martinelli, Fatemeh Rezaee, Benjamin Schmidt,
\emph{The desingularization of the theta divisor of a cubic threefold as a moduli space},
Geometry and Topology 28, No. 1, 127-160 (2024).


\bibitem[Bla]{BlancTopK}
Anthony Blanc,
\emph{Topological K-theory of complex noncommutative spaces},
Compositio Mathematica 152, No. 3, 489-555 (2016).


\bibitem[Bon]{Bondal1990}
Alexey I. Bondal,
\emph{Representation of associative algebras and coherent sheaves},
Math. USSR, Izv. 34, No. 1, 23-42 (1990); translation from Izv. Akad. Nauk SSSR, Ser. Mat. 53, No. 1, 25-44 (1989).

\bibitem[B1]{bridg98_equivFM}
Tom Bridgeland,
\emph{Equivalences of triangulated categories and Fourier-Mukai transforms},
Bulletin of the London Mathematical Society 31, No. 1, 25-34 (1999).

\bibitem[B2]{bridgelandspacesstabilityconditions}
Tom Bridgeland,
\emph{Spaces of stability conditions},
Abramovich, D. (ed.) et al., Algebraic geometry, Seattle 2005. Proceedings of the 2005 Summer Research Institute, Seattle, WA, USA. Providence, RI: American Mathematical Society (AMS) Proceedings of Symposia in Pure Mathematics 80, Pt. 1, 1-21 (2009).


\bibitem[B3]{BridgelandNCCY3}
Tom Bridgeland,
\emph{Stability conditions on a non-compact Calabi-Yau threefold},
Communications in Mathematical Physics 266, No. 3, 715-733 (2006).

\bibitem[B4]{Br07}
Tom Bridgeland,
\emph{Stability conditions on triangulated categories},
Annals of Mathematics (2) 166, No. 2, 317-345 (2007).


\bibitem[B5]{BridgelandDb(intro)}
Tom Bridgeland,
\emph{Introductory Notes on Derived Categories},
Lecture notes for the Introductory School on Derived Categories, University of Warwick, September 2014,
\url{https://www.cantab.net/users/t.logvinenko/2014-wrwsym/bridg-notes.pdf}.


\bibitem[BGvBS]{JHproperty}
Christian Böhning, Hans-Christian Graf von Bothmer, Pawel Sosna,
\emph{On the Jordan-Hölder property for geometric derived categories},
Advances in Mathematics 256, 479-492 (2014).



\bibitem[BHL+]{XiaoleietalNefHilbschemes}
Barbara Bolognese, Jack Huizenga, Yinbang Lin, Eric Riedl, Benjamin Schmidt, Matthew Woolf, Xiaolei Zhao,
\emph{Nef cones of Hilbert schemes of points on surfaces},
Algebra \& Number Theory 10, No. 4, 907-930 (2016).

\bibitem[BK]{B-KSerre}
Alexey I. Bondal, Mikhail M. Kapranov, 
\emph{Representable functors, Serre functors, and mutations},
Math. USSR, Izv. 35, No. 3, 519-541 (1990); translation from Izv. Akad. Nauk SSSR, Ser. Mat. 53, No. 6, 1183-1205 (1989).

\bibitem[BKR]{BKR2001}
Tom Bridgeland, Alastair King, Miles Reid,
\emph{The McKay correspondence as an equivalence of derived categories},
Journal of the American Mathematical Society 14, No. 3, 535-554 (2001).


\bibitem[BLMS]{Bayer2017StabilityCO}
Arend Bayer, Mart\'{i} Lahoz, Emanuele Macrì, Paolo Stellari,
\emph{Stability conditions on Kuznetsov components (with a joint appendix by Xiaolei Zhao)},
Ann. Sci. Éc. Norm. Supér. (4) 56, No. 2, 517-570 (2023).



\bibitem[BM]{Bayer2013MMPFM}
Arend Bayer, Emanuele Macr\`{i},
\emph{MMP for moduli of sheaves on K3s via wall-crossing: nef and movable cones, Lagrangian fibrations},
Inventiones Mathematicae 198, No. 3, 505-590 (2014).

\bibitem[BMMS]{bernardara2012categorical}
Marcello Bernardara, Emanuele Macrì, Sukhendu Mehrotra, Paolo Stellari,
\emph{A categorical invariant for cubic threefolds},
Advances in Mathematics 229, No. 2, 770-803 (2012).

\bibitem[BMSZ]{Bernardara2016BridgelandSC}
Marcello Bernardara, Emanuele Macr\`{i}, Benjamin Schmidt, Xiaolei Zhao,
\emph{Bridgeland stability conditions on Fano threefolds},
Épijournal de Géom. Algébrique, EPIGA 1, Paper No. 2, 24 p. (2017).


\bibitem[BMT]{Bayer2011BridgelandSC}
Arend Bayer, Emanuele Macrì, Yukinobu Toda,
\emph{Bridgeland stability conditions on threefolds. I Bogomolov-Gieseker type inequalities},
Journal of Algebraic Geometry 23, No. 1, 117-163 (2014).


\bibitem[BTL1]{BridgelandTL}
Tom Bridgeland, Valerio Toledano Laredo,
\emph{Stability conditions and Stokes factors},
Inventiones Mathematicae 187, No. 1, 61-98 (2012).



\bibitem[BTL2]{BridgelandStokesMulti}
Tom Bridgeland, Valerio Toledano Laredo,
\emph{Stokes factors and multilogarithms},
J. Reine Angew. Math. 682, 89-128 (2013).



\bibitem[C]{CottiCoalescence}
Giordano Cotti,
\emph{Coalescence phenomenon of quantum cohomology of Grassmannians and the distribution of prime numbers},
Int. Math. Res. Not. 2022, No. 2, 1454-1493 (2022).


\bibitem[CDG1]{CDGcoalescent}
Giordano Cotti, Boris A. Dubrovin, Davide Guzzetti,
\emph{Local moduli of semisimple Frobenius coalescent structures},
SIGMA, Symmetry Integrability Geom. Methods Appl. 16, Paper 040, 105 p. (2020).


\bibitem[CDG2]{CDGhelix}
Giordano Cotti, Boris A. Dubrovin, Davide Guzzetti,
\emph{Helix structures in quantum cohomology of Fano varieties},
Lecture Notes in Mathematics 2356. Cham: Springer (ISBN 978-3-031-69066-2/pbk; 978-3-031-69067-9/ebook). xiii, 236 p. (2024).


\bibitem[CP]{Collins_Polischuk_2010}
John Collins, Alexander Polishchuk,
\emph{Gluing stability conditions},
Adv. Theor. Math. Phys. 14, No. 2, 563-607 (2010).

\bibitem[Del]{Dellfreeabelian}
Hannah Dell,
\emph{Stability conditions on free abelian quotients},
Épijournal de Géom. Algébr., EPIGA 9, Paper No. 16, 39 p. (2025).

\bibitem[Dou]{DouglasPiStability}
Michael R. Douglas,
\emph{D-branes on Calabi-Yau manifolds},
Casacuberta, Carles (ed.) et al., 3rd European congress of mathematics (ECM), Barcelona, Spain, July 10-14, 2000. Volume II. Basel: Birkhäuser. Prog. Math. 202, 449-466 (2001).

\bibitem[D1]{Dubrovin1998}
Boris A. Dubrovin,
\emph{Geometry and analytic theory of Frobenius manifolds},
Documenta Mathematica, Extra Vol. ICM Berlin 1998, vol. II, 315-326 (1998).

\bibitem[D2]{dubrovin1998painleve}
Boris A. Dubrovin,
\emph{Painlevé transcendents in two-dimensional topological field theory},
Conte, Robert (ed.), The Painlevé property. One century later. New York, NY: Springer. CRM Series in Mathematical Physics. 287-412 (1999).

\bibitem[DHL]{DellHengLicata}
Hannah Dell, Edmund Heng, Anthony M. Licata,
\emph{Fusion-equivariant stability conditions and Morita duality},
Mathematische Zeitschrift 311, No. 3, Paper No. 48, 31 p. (2025).

\bibitem[DKK]{DKKCompactifications}
Colin Diemer, Ludmil Katzarkov, Gabriel Kerr,
\emph{Compactifications of spaces of Landau-Ginzburg models},
Izv. Math. 77, No. 3, 487-508 (2013); translation from Izv. Ross. Akad. Nauk, Ser. Mat. 77, No. 3, 55-76 (2013).

\bibitem[ESS]{SurfaceAtomic}
Alexey Elagin, Julia Schneider, Evgeny Shinder, E.,
\emph{Atomic decompositions for derived categories of G-surfaces}, arXiv:2512.05064
Preprint, (2025).


\bibitem[FKLR]{Feyzbakhsh2025StabilityCO}
Soheyla Feyzbakhsh, Noaki Koseki, Zhiyu Liu, Nick Rekuski,
\emph{Stability conditions on Calabi-Yau threefolds via Brill-Noether theory of curves}, arXiv:2509.24990
Preprint, (2025).

\bibitem[FZ]{FangZhoutoric}
Bohan Fang, Peng Zhou,
\emph{Gamma II for toric varieties from integrals on T-dual branes and homological mirror symmetry}, arXiv:1903.05300
Preprint, (2019).

\bibitem[GGI]{GGI16}
Sergey Galkin, Vasily Golyshev, Hiroshi Iritani,
\emph{Gamma classes and quantum cohomology of Fano manifolds: Gamma conjectures},
Duke Math. J. 165, No. 11, 2005-2077 (2016).

\bibitem[GI]{Galkin_Iritani_Gamma}
Sergey Galkin, Hiroshi Iritani,
\emph{Gamma conjecture via mirror symmetry},
Hori, Kentaro (ed.) et al., Primitive forms and related subjects – Kavli IPMU 2014. Proceedings of the international conference, University of Tokyo, Tokyo, Japan, February 10–14, 2014. Tokyo: Mathematical Society of Japan. Adv. Stud. Pure Math. 83, 55-115 (2019).


\bibitem[H]{haiman2001hilbert}
Mark Haiman,
\emph{Hilbert schemes, polygraphs and the Macdonald positivity conjecture},
Journal of the American Mathematical Society 14, No. 4, 941-1006 (2001).

\bibitem[HK]{Quadrics}
Xiaowen Hu, Hua-Zhong Ke,
\emph{Gamma conjecture II for quadrics},
Mathematische Annalen 387, No. 1-2, 927-983 (2023).

\bibitem[HKK]{HKK}
Fabian Haiden, Ludmil Katzarkov, Maxim Kontsevich,
\emph{Flat surfaces and stability structures},
Publ. Math., Inst. Hautes Étud. Sci. 126, 247-318 (2017).


\bibitem[HL]{NMMP}
Daniel Halpern-Leistner,
\emph{The noncommutative minimal model program},
Preprint, arXiv:2301.13168 [math.AG] (2023).


\bibitem[HLJR]{HLJR}
Daniel Halpern-Leistner, Jeffrey Jiang, Antonios-Alexandros Robotis,
\emph{Quasi-convergence of stability conditions}, arXiv:2401.00600
Preprint, (2023).


\bibitem[HLR]{augmented}
Daniel Halpern-Leistner, Antonios-Alexandros Robotis, 
\emph{The space of augmented stability conditions}, arXiv:2501.00710
Preprint, (2024).


\bibitem[HRS]{happel1996tilting}
Dieter Happel, Idun Reiten, Sverre O. Smalø,
\emph{Tilting in abelian categories and quasitilted algebras},
Memoirs of the American Mathematical Society 575, 88 p. (1996).

\bibitem[HW]{JHpolarizable}
Fabian Haiden, Dongjian Wu,
\emph{A counterexample to the Jordan-Hölder property for polarizable semiorthogonal decompositions}, arXiv:2502.12075
Preprint, (2025).


\bibitem[HYZZ]{Fbundles}
Thorgal Hinault, Tony Yue Yu, Chi Zhang, Shaowu Zhang, 
\emph{Decomposition and framing of F-bundles and applications to quantum cohomology}, arXiv:2411.02266
Preprint,  (2024).


\bibitem[I]{iritani_integral}
Hiroshi Iritani,
\emph{An integral structure in quantum cohomology and mirror symmetry for toric orbifolds},
Advances in Mathematics 222, No. 3, 1016-1079 (2009).


\bibitem[Kap1]{Kapranov85}
Mikhail M. Kapranov,
\emph{On the derived category of coherent sheaves on Grassmann manifolds},
Math. USSR, Izv. 24, 183-192 (1985); translation from Izv. Akad. Nauk SSSR, Ser. Mat. 48, No. 1, 192-202 (1984).


\bibitem[Kap2]{Kapranov1988}
Mikhail M. Kapranov,
\emph{On the derived categories of coherent sheaves on some homogeneous spaces},
Inventiones Mathematicae 92, No. 3, 479-508 (1988).

\bibitem[Kar]{karube2024noncommutative}
Tomohiro Karube,
\emph{The noncommutative MMP for blowup surfaces}, arXiv:2410.18446
Preprint,  (2024).



\bibitem[KKPY]{KKPY}
Ludmil Katzarkov, Maxim Kontsevich, Tony Pantev, Tony Yue Yu,
\emph{Birational Invariants from Hodge Structures and Quantum Multiplication}, arXiv:2508.05105
Preprint,  (2025).


\bibitem[K1]{kuznetsov2004derived}
Alexander G. Kuznetsov,
\emph{Derived categories of cubic and $V_{14}$ threefolds},
Proc. Steklov Inst. Math. 246, 171-194 (2004); translation from Tr. Mat. Inst. Steklova 246, 183-207 (2004).

\bibitem[K2]{KuznetsovFanothreefolds}
Alexander G. Kuznetsov,
\emph{Derived categories of Fano threefolds},
Proc. Steklov Inst. Math. 264, 110-122 (2009); translation from Tr. Mat. Inst. Steklova 264, 116-128 (2009).



\bibitem[K3]{kuznetsov2008derived}
Alexander G. Kuznetsov,
\emph{Derived categories of quadric fibrations and intersections of quadrics},
Advances in Mathematics 218, No. 5, 1340-1369 (2008).



\bibitem[KM]{KontsevichManin}
Maxim Kontsevich, Yuri I. Manin, 
\emph{Gromov-Witten classes, quantum cohomology, and enumerative geometry},
Commununications in Mathematical Physics 164, No. 3, 525-562 (1994).


\bibitem[KS]{KS:08}
Maxim Kontsevich, Yan Soibelman,
\emph{Stability structures, motivic Donaldson-Thomas invariants and cluster transformations}, arXiv:0811.2435
Preprint,  (2008).

\bibitem[Li]{Li2015StabilityCO}
Chunyi Li,
\emph{Stability conditions on Fano threefolds of Picard number 1},
Journal of the European Mathematical Society (JEMS) 21, No. 3, 709-726 (2019).


\bibitem[Liu]{Liustabilityfivefolds}
Peize Liu,
\emph{Stability Conditions and Moduli Spaces on Kuznetsov Component of Cubic Fivefolds}, arXiv:2509.21454
Preprint, (2026).

\bibitem[LLSvS]{lehn2017twisted}
Christian Lehn, Manfred Lehn, Christoph Sorger, Duco van Straten,
\emph{Twisted cubics on cubic fourfolds},
J. Reine Angew. Math. 731, 87-128 (2017).

\bibitem[LPZ]{Li2018TwistedCO}
Chunyi Li, Laura Pertusi, Xiaolei Zhao,
\emph{Twisted cubics on cubic fourfolds and stability conditions},
Algebraic Geometry 10, No. 5, 620-642 (2023).

\bibitem[Mac]{Macristabilityoncurves}
Emanuele Macrì,
\emph{Stability conditions on curves},
Math. Res. Lett. 14, No. 4, 657-672 (2007).

\bibitem[Man]{manin_frob_man}
Yuri I. Manin, 
\emph{Frobenius manifolds, quantum cohomology, and moduli spaces},
Colloquium Publications. American Mathematical Society (AMS). 47. Providence, RI: American Mathematical Society (AMS). xiii, 303 p. (1999).

\bibitem[MMS]{macri2009inducing}
Emanuele Macrì, Sukhendu Mehrotra, Paolo Stellari,
\emph{Inducing stability conditions},
Journal of Algebraic Geometry 18, No. 4, 605-649 (2009).

\bibitem[NY]{Nuer2019MMPVW}
Howard J. Nuer, Kōta Yoshioka,
\emph{MMP via wall-crossing for moduli spaces of stable sheaves on an Enriques surface},
Advances in Mathematics 372, Article ID 107283, 118 p. (2020).

\bibitem[Oka]{Ok06}
So Okada,
\emph{Stability manifold of $\bf{P}^1$},
Journal of Algebraic Geometry 15, No. 3, 487-505 (2006).

\bibitem[Orl]{Orlovmonoidal}
Dmitri O. Orlov,
\emph{Projective bundles, monoidal transformations, and derived categories of coherent sheaves},
Russ. Acad. Sci., Izv., Math. 41, No. 1, 133-141 (1993); translation from Izv. Ross. Akad. Nauk, Ser. Mat. 56, No. 4, 852-862 (1992).

\bibitem[Ouc]{ouchi2017lagrangian}
Genki Ouchi,
\emph{Lagrangian embeddings of cubic fourfolds containing a plane},
Compositio Mathematica 153, No. 5, 947-972 (2017).

\bibitem[Pol]{PolishchukConstant}
Alexander Polishchuk,
\emph{Constant families of t-structures on derived categories of coherent sheaves},
Mosc. Math. J. 7, No. 1, 109-134 (2007).

\bibitem[Sab]{SabbahIsomonodromic}
Claude Sabbah,
\emph{Isomonodromic deformations and Frobenius manifolds, An introduction} Transl. from the French,
Universitext. Berlin: Springer; Les Ulis: EDP Sciences (ISBN 978-1-84800-053-7/pbk; 978-2-7598-0047-6). xiv, 279 p. (2007).

\bibitem[Sch]{Schlichting06}
Marco Schlichting,
\emph{Negative K-theory of derived categories},
Mathematische Zeitschrift 253, No. 1, 97-134 (2006).

\bibitem[SS]{SandaShamoto}
Fumihiko Sanda, Yota Shamoto,
\emph{An analogue of Dubrovin’s conjecture},
Ann. Inst. Fourier 70, No. 2, 621-682 (2020).

\bibitem[Sta]{stacks-project}
The Stacks project authors, The stacks project, 2026.

\bibitem[Tod]{TodaSurface}
Yukinobu Toda,
\emph{Stability conditions and birational geometry of projective surfaces},
Compositio Mathematica 150, No. 10, 1755-1788 (2014).

\bibitem[Z]{Vanjapaper}
Vanja Zuliani,
\emph{Semiorthogonal decompositions of projective spaces from small quantum cohomology}, arXiv:2406.17616
Preprint,  (2024).

\end{thebibliography}

\end{document}